\tikzset{Rightarrow/.style={double equal sign distance,>={Implies},->},
triple/.style={-,preaction={draw,Rightarrow}}}
\title[Categorification of infinite-dimensional $\mathfrak{sl}_2$-modules I]
{Categorification of infinite-dimensional $\mathfrak{sl}_2$-modules and braid group 2-actions I : tensor products}
\author{Benjamin Dupont}
\address{Institut Camille Jordan\\
43 rue du 11 Novembre 1918\\ 
69100 Villeurbanne \\ 
France}
\email{bdupont@math.univ-lyon1.fr}
\author{Gr\'egoire Naisse}
\address{Max-Planck Institute for Mathematics\\
 Vivatsgasse 7 \\ 
53111 Bonn\\ 
Germany}
\email{gregoire.naisse@gmail.com}
\begin{document}

%%%%%%%%%%%%%%%%%%%%%%%%%%%%%%%%%%%%
%												%
%	Abstract										%
%												%
%%%%%%%%%%%%%%%%%%%%%%%%%%%%%%%%%%%%
\begin{abstract}
This is the first part of a series of two papers aiming to construct a categorification of the braiding on tensor products of Verma modules, and in particular of the Lawrence--Krammer--Bigelow representations.  \\
In this part, we categorify all tensor products of Verma modules and integrable modules for quantum $\mathfrak{sl_2}$. 
The categorification is given by derived categories of dg versions of KLRW algebras which generalize both the tensor product algebras of Webster, and the dg-algebras used by Lacabanne, the second author and Vaz.  
We compute a basis for these dgKLRW algebras by using rewriting methods modulo braid-like isotopy, which we develop in an Appendix.  
\end{abstract}

%%%%%%%%%%%%%%%%	End of file	%%%%%%%%%%%%%

\maketitle

%%%%%%%%%%%%%%%%%%%%%%
% Sections						  %
%%%%%%%%%%%%%%%%%%%%%%

%%%%%%%%%%%%%%%%%%%%%%%%%%%%%%%%%%%%
%                 					  				  		 %
%	Introduction   				 					 %
%                 					  						 %
%%%%%%%%%%%%%%%%%%%%%%%%%%%%%%%%%%%%

\section{Introduction}\label{sec:intro}

Categorification was motivated since its beginning by low-dimensional topology and physics. 
For instance, one of the goals of the program of categorifying quantum groups was to give a representation theoretic explanation for the existence of link homology theories. 
Indeed  Khovanov~\cite{khovanovJones} and Khovanov--Rozansky~\cite{KR1}
constructed categorifications of the Reshetikhin--Turaev~\cite{RTinv} polynomial link invariants associated to (the fundamental representations of) quantum $\mathfrak{sl}_n$. However their constructions rely on the categorification of certain combinatorial descriptions of the link invariants, and not on the representation theoretic ones. 

\smallskip

The above-mentioned program has been very fruitful since its start with the seminal work of  Bernstein--Frenkel--Khovanov~\cite{BFK} and Frenkel--Khovanov--Stroppel~\cite{FKS} who gave a categorification of the tensor products of quantum $\mathfrak{sl}_2$ fundamental representations using category~$\mathcal{O}$.
Categorification of Lusztig integral versions of the quantum groups was developed by Khovanov--Lauda~\cite{KL1,KL2,KL3} and independently Rouquier~\cite{rouquier}, extending on the grounding work of Chuang--Rouquier~\cite{chuangrouquier} and Lauda~\cite{laudasl2}. 
At the heart of these constructions are the \emph{KLR algebras}. These are $\bZ$-graded algebras which control the higher structure between compositions of categorical analog of the Chevalley generators. 
Categorification of the integrable modules for all quantum Kac--Moody algebras was conjectured in \cite{KL1} and proved in \cite{kashiwara} and independently in \cite{webster}, using certain finite dimensional quotients of KLR algebras called \emph{cyclotomic quotients}. 
More precisely, to each Kac--Moody algebra $\bg$ is associated a KLR algebra $R_\g$, and to each integral dominant $\bg$-weight $\Lambda$ is associated a quotient $R^\Lambda_\g$. The category of graded modules over $R^\Lambda_\g$ categorifies the integrable $U_q(\bg)$-module $V(\Lambda)$ of highest weight $\Lambda$.
Categorifications of all tensor products of integrable modules were constructed by Webster in \cite{webster}, using KLR-like diagrammatic algebras  that we refer to as \emph{KLRW algebras}, generalizing $R^\Lambda_\g$. He also defined a categorical braid group action on his construction, giving a higher version of the action of the $R$-matrix, as well as higher versions of evaluation and coevalution maps. These allowed the construction of homology link invariants for any $\g$ which coincides with Khovanov--Rozansky for quantum~$\mathfrak{sl}_n$ \cite{MW}. 
Alternatively, these link homologies can also be obtained from higher representation theory of quantum groups through a categorical instance of skew Howe duality~\cite{khovanovIsSkewHowe}.

\smallskip

While the theory of categorification of integrable modules is already well-studied and understood, with deep connections to geometry (e.g. \cite{coherentSheavesCatAction,catgeom}), 
to category $\mathcal{O}$ (e.g. \cite{FKS,stroppel}) and to low-dimensional topology (e.g. \cite{webster, khovanovIsSkewHowe}),
the categorification of infinite dimensional (in the sense non-integrable) representations is still quite new and not so well understood. The second author and Vaz constructed categorifications of universal Verma modules for $\slt$ in \cite{NV1,NV2}, and extended it to any generic parabolic Verma module for any quantum Kac--Moody algebra in \cite{NV3}. They also showed in \cite{NVhomfly} that their construction is related to Khovanov--Rozansky triply-graded link homology \cite{KR2}. Moreover, in a collaboration \cite{LNV} with Lacabanne, they gave a categorification of the tensor product of a Verma module with multiple integrable modules for quantum $\slt$. They also showed that their construction yields a categorification of the blob algebra of Martin--Saleur \cite{blob}, which allow the construction of invariants of tangles in the annulus. 

One of the main ingredients in the categorification of Verma modules in the above-mentioned papers is the notion of a \emph{dg-enhancement}. The idea is to replace the cyclotomic quotient of the KLR algebra by a resolution of the quotiented ideal. It turns out that all cyclotomic quotients can then be encoded by a universal dg-algebra that we refer to as \emph{dgKLR algebra}, with the same underlying graded algebra but equipping it with different differentials $d_\Lambda$ (there is one for each choice of integral highest weight $\Lambda$). 
The dg-algebra with differential $d_\Lambda$ is then quasi-isomorphic to the cyclotomic quotient $R^\Lambda_\g$. 
Setting the differential to zero instead yields a categorification of a Verma module.

\subsection{Content of the paper}

This is the first part of a series of two papers aiming to construct and study more general tensor products of Verma and integrable modules. In this first part, we propose a categorification of any such tensor product for quantum $\slt$ using \emph{dgKLRW algebras}, generalizing the construction in \cite{webster} and in \cite{LNV}. In a second part in preparation \cite{secondPart}, we construct a categorical braid group action lifting the action of the $R$-matrix. By considering the categorical analog of Jackson--Kerler \cite{jacksonkerler}, this yields categorifications of the Burau and of the two parameters Lawrence--Krammer--Bigelow representations by restricting to certain categorified weight spaces.

\subsubsection{The dgKLRW algebras}

KLR algebras are usually defined by generators and relations, and pictured in the form of braid-like diagrams with strands colored by simple roots and decorated by dots. Since we will consider only the $\slt$ case here, all strands will be implicitly colored by the unique simple root of $\slt$, and drawn as a solid black line. 
For a string of dominant integral weights $\und \mu = (\mu_1, \dots, \mu_r)$, one defines the KLRW algebra $T^{\und \mu}$ by considering KLR-like diagrams, but containing $r$ additional red strands labeled from left to right by $\mu_1, \dots, \mu_r$, and that are not allowed to intersect each other. These red strands respect the following local relations with the black strands, depending on their label $\mu_i$:
%\begin{align*}
\begin{equation}\label{eq:R2intro}
\begin{split}
\tikzdiagh[yscale=1.5]{0}{
	\draw (1,0) ..controls (1,.25) and (0,.25) .. (0,.5)..controls (0,.75) and (1,.75) .. (1,1)  ;
	\draw[stdhl] (0,0) node[below]{\small $\mu_i$} ..controls (0,.25) and (1,.25) .. (1,.5) ..controls (1,.75) and (0,.75) .. (0,1)  ;
} 
\ = \ 
\tikzdiagh[yscale=1.5]{0}{
	\draw[stdhl] (0,0) node[below]{\small $\mu_i$} -- (0,1)  ;
	\draw (1,0) -- (1,1)  node[midway,tikzdot]{}  node[midway,xshift=1.75ex,yshift=.75ex]{\small $\mu_i$} ;
} 
\qquad \qquad
\tikzdiagh[yscale=1.5]{0}{
	\draw (0,0) ..controls (0,.25) and (1,.25) .. (1,.5) ..controls (1,.75) and (0,.75) .. (0,1)  ;
	\draw[stdhl] (1,0) node[below]{\small $\mu_i$} ..controls (1,.25) and (0,.25) .. (0,.5)..controls (0,.75) and (1,.75) .. (1,1)  ;
} 
\ = \ 
\tikzdiagh[yscale=1.5]{0}{
	\draw (0,0) -- (0,1)  node[midway,tikzdot]{}   node[midway,xshift=1.75ex,yshift=.75ex]{\small $\mu_i$} ;
	\draw[stdhl] (1,0) node[below]{\small $\mu_i$} -- (1,1)  ;
} 
%\end{align*}
%\begin{align*}
\\
\tikzdiagh[scale=1.5]{0}{
	\draw  (0,0) .. controls (0,0.25) and (1, 0.5) ..  (1,1);
	\draw  (1,0) .. controls (1,0.5) and (0, 0.75) ..  (0,1);
	\draw [stdhl] (0.5,0)node[below]{\small $\mu_i$}  .. controls (0.5,0.25) and (0, 0.25) ..  (0,0.5)
		 	  .. controls (0,0.75) and (0.5, 0.75) ..  (0.5,1);
} 
\ = \ 
\tikzdiagh[scale=1.5,xscale=-1]{0}{
	\draw  (0,0) .. controls (0,0.25) and (1, 0.5) ..  (1,1);
	\draw (1,0)  .. controls (1,0.5) and (0, 0.75) ..  (0,1);
	\draw [stdhl]  (0.5,0) node[below]{\small $\mu_i$} .. controls (0.5,0.25) and (0, 0.25) ..  (0,0.5)
		 	  .. controls (0,0.75) and (0.5, 0.75) ..  (0.5,1);
} 
\ + \sssum{u+v=\\\mu_i-1} \ 
\tikzdiagh[scale=1.5]{0}{
	\draw  (0,0) -- (0,1) node[midway,tikzdot]{} node[midway,xshift=-1.5ex,yshift=.75ex]{\small $u$};
	\draw  (1,0) --  (1,1) node[midway,tikzdot]{} node[midway,xshift=1.5ex,yshift=.75ex]{\small $v$};
	\draw [stdhl] (0.5,0)node[below]{\small $\mu_i$}  --  (0.5,1);
}
\end{split}
\qquad \text{for $\mu_i \in \bN$,}
\end{equation}
%\end{align*}
where a non-negative label $k$ next to a dot means we put $k$ consecutive dots. 
In Webster's setting \cite{webster}, one also has to quotient by the \emph{violating condition} stating that we kill any diagram with a black strand at the left of the leftmost red strand:
\[
\tikzdiagh[yscale=1.5]{0}{
	\draw (0,0) -- (0,1);
	\draw[stdhl] (1,0) node[below]{\small $\mu_1$} -- (1,1)  ;
} 
\ = 0,
\]
which plays role of the cyclotomic quotient condition. 
Categories of (graded) modules over $T^{\und \mu}$ categorify the tensor product $V(\mu_1) \otimes \cdots \otimes V(\mu_r)$.

The dgKLRW algebras that we consider here are similar, but also adding blue strands for the non-integral weights $\mu_i$ (i.e. the Verma tensor factors). These blue strands respect degenerated braid-type relations:
\begin{align}\label{eq:R2bisintro}
	\tikzdiagl[yscale=1.5]{
		\draw (1,0) ..controls (1,.25) and (0,.25) .. (0,.5)..controls (0,.75) and (1,.75) .. (1,1)  ;
		\draw[vstdhl] (0,0)node[below]{\small $\mu_i$} ..controls (0,.25) and (1,.25) .. (1,.5) ..controls (1,.75) and (0,.75) .. (0,1)  ;
	} 
	\ &= 0,
	&
	\tikzdiagl[yscale=1.5]{
		\draw (0,0) ..controls (0,.25) and (1,.25) .. (1,.5) ..controls (1,.75) and (0,.75) .. (0,1)  ;
		\draw[vstdhl] (1,0)node[below]{\small $\mu_i$} ..controls (1,.25) and (0,.25) .. (0,.5)..controls (0,.75) and (1,.75) .. (1,1)  ;
	} 
	\ &= 0,
	%\label{eq:vredR2}
&
	\tikzdiagl[scale=1.5]{
		\draw  (0,0) .. controls (0,0.25) and (1, 0.5) ..  (1,1);
		\draw  (1,0) .. controls (1,0.5) and (0, 0.75) ..  (0,1);
		\draw [vstdhl] (0.5,0)node[below]{\small $\mu_i$}  .. controls (0.5,0.25) and (0, 0.25) ..  (0,0.5)
			 	  .. controls (0,0.75) and (0.5, 0.75) ..  (0.5,1);
	} 
	\ &= \ 
	\tikzdiagl[scale=1.5,xscale=-1]{
		\draw  (0,0) .. controls (0,0.25) and (1, 0.5) ..  (1,1);
		\draw (1,0)  .. controls (1,0.5) and (0, 0.75) ..  (0,1);
		\draw [vstdhl]  (0.5,0)node[below]{\small $\mu_i$}  .. controls (0.5,0.25) and (0, 0.25) ..  (0,0.5)
			 	  .. controls (0,0.75) and (0.5, 0.75) ..  (0.5,1);
	} 
\qquad \text{for $\mu_i$ non-integral.}
	%\label{eq:vredR3}
\end{align}
Moreover, we need to replace the violating quotient condition by a dg-enhancement, meaning we add a new generator connecting the first black strand with the first colored strand, with a differential replacing the relations implied by \cref{eq:R2intro} for the first colored strand:
\begin{align*}
d_{\mu}\left(
\tikzdiag[xscale=2]{
	 \draw (.5,-.5) .. controls (.5,-.25) .. (0,0) .. controls (.5,.25) .. (.5,.5);
          \draw[stdhl] (0,-.5) node[below]{\small $\mu_1$}-- (0,.5) node [midway,nail]{};
  }
  \right)
  &:= 
  \tikzdiag[xscale=2]{
	 \draw (.5,-.5) -- (.5,.5) node[midway,tikzdot]{} node[midway, xshift=2ex,yshift=.75ex]{$\mu_1$};
          \draw[stdhl] (0,-.5) node[below]{\small $\mu_1$}-- (0,.5);
    }
    &&\text{ if $\mu_1 \in \bN$}, 
 \\
 d_{\mu}\left(
\tikzdiag[xscale=2]{
	 \draw (.5,-.5) .. controls (.5,-.25) .. (0,0) .. controls (.5,.25) .. (.5,.5);
          \draw[vstdhl] (0,-.5) node[below]{\small $\mu_1$}-- (0,.5) node [midway,nail]{};
  }
  \right)
  &:= 0, 
  &&\text{ if $\mu_1$ is non-integral},
\end{align*}
see \cref{defn:dgKLRW} for a precise definition. 
The derived category of dg-modules over a dgKLRW algebra categorifies the corresponding tensor product of Verma and integrable modules. Moreover, it comes with a dg-categorical action of quantum $\slt$ (in the sense of \cite[\S 7]{NV3}) by the usual setup of acting by induction/restriction functor along the map that adds a vertical black strand at the right of a diagram. 

One of the difficulties in proving such statements is that one usually relies on the use of an explicit basis of the dgKLRW algebra. While finding a candidate basis and proving that it generates the algebra is not a difficult task, proving the linear independence can be more challenging. A classical way of doing this is to construct a faithful action on a polynomial space, and show that the candidate elements act by linearly independent operators. 
However, the degenerate nature of the braid-moves in \cref{eq:R2bisintro} that we need to consider for the categorification of the Verma modules prevent the construction of such an action (at least in an obvious way). 
To solve this issue, we apply tools from rewriting theory up to braid-like isotopy, as developed in \cref{sec:rewritingmethods}. We refer to Sections \ref{sec:introrewriting} and \ref{sec:prelimrewriting} for more explanation about rewriting theory.

\subsubsection{Derived standardly stratified structure}

An important ingredient in the categorification of tensor products in \cite{FKS, webster} is the notion of standardly stratified categories, which are generalizations of highest weight categories, already abstracting the structure of a BGG category $\mathcal{O}$. Indeed, the KLRW algebras are standardly stratified, and the classes of standard modules correspond to induced basis elements of the tensor product in the Grothendieck group.  Furthermore, the standardization functor can be interpreted as the categorification of the inclusion of each factor into the tensor product. This structure is also mandatory to get uniqueness results as in \cite{losevwebster}.

In the case of the dgKLRW, one does not obtain a standardly stratified category. However, the derived category shares many similarities with a standardly stratified structure: there is a stratification given by certain derived standard modules, and the (relatively) projective modules can be preordered and obtained from iterated extensions of the standard modules with lower weight. Furthermore, the classes of derived standard modules correspond with the induced basis elements in the Grothendieck group, and there is an explicit derived standardization functor categorifying the inclusion of the tensor factors.

\subsubsection{Appendix A: rewriting methods up to braid-like isotopy}
\label{sec:introrewriting}
Rewriting theory is a combinatorial theory of equivalence classes, consisting in transforming an object into another by a successive sequence of oriented moves. In an algebraic context, it consists in orienting relations of presentations by generators and relations of algebraic structures.
In particular, several tools following the principles of rewriting were developed in numerous works in linear algebra, in order to compute normal forms for different types of algebras, with applications to the decision of the ideal membership problem, and to the construction of linear bases, such as Poincar\'e-Birkhoff-Witt bases. For example, Shirshov introduced in \cite{Shirshov62} an algorithm to compute a linear basis of a Lie algebra presented by generators and relations, and deduced a constructive proof of the Poincar\'e-Birkhoff-Witt theorem, and  Gr\"obner basis theory was introduced to compute with ideals of commutative polynomial rings \cite{Buchberger65,Buchberger87}. Buchberger described an algorithm to compute Gr\"obner bases from the notion of $S$-polynomials, describing obstructions to local confluence in terms of overlappings between reductions. These approaches were extended in \cite{GuiraudHoffbeckMalbos19}, where a rewriting theoretical approach was introduced in order to study associative algebras without any assumption of compatibility of the rewriting rules with respect to a well-founded total order. This approach is based on the structure of \emph{linear polygraphs}. Polygraphs have been introduced by Burroni \cite{Bur93} and Street \cite{Street} as generating systems for higher dimensional globular strict categories, and have been extended in a linear setting in \cite{GuiraudHoffbeckMalbos19,AL16}. The computation of linear bases lay on two fundamental rewriting properties: the \emph{termination}, stating that an element can not be reduced infinitely many times, and the \emph{confluence}, stating that if an element can be reduced in two different ways, there has to exist rewriting paths starting from the two resulting elements leading to the same final result. Termination of a linear rewriting system implies that a polynomial can be reduced in finitely many steps into a linear combination of irreducible monomials, so that these latter span the presented algebra. Moreover, confluence ensures the linear independence of irreducible monomials.

 Many works studying diagrammatic presentations through rewriting techniques consist in rewriting on string diagrams in monoidal $\Bbbk$-linear categories, or $\Bbbk$-linear $2$-categories. These latter are presented by $3$-dimensional polygraphs, see for example \cite{GM09,AL16}. In this setting, the braid-like distant isotopy relations correspond to the exchange relations of the $2$-categories, and thus are structural relations that we do not need to orient.
However, if we use rewriting in the dimension of the algebras, which is needed in order to deal with the violating condition that diagrams with a leftmost strand being black are zero, these relations have to be taken into account as oriented rewriting rules. In order to mimic the well-known setting of rewriting in linear $2$-categories, we will use rewriting modulo braid-like planar isotopies. Rewriting modulo extends the usual rewriting techniques by allowing to consider a set $E$ of non-oriented equations together with a set $R$ of oriented rules. It is used mainly to split confluence proofs into many incremental steps, by first proving that the set $E$ forms a convergent rewriting system, and then study the remaining relations on $E$-equivalence classes. Following \cite{DUP19} the usual basis result given by the irreducible monomials of a convergent presentation is extended in that setting, by considering $E$-normal forms of irreducible monomials with respect to $S$.

In \cref{sec:rewritingmethods}, we develop the formalism of rewriting modulo braid-like isotopies for diagrammatic algebras. Given an algebra $\mathbf{A}$, we introduce the linear $2$-polygraph $\text{Iso}(\mathbf{A})$ containing distant isotopy relations as rewriting rules, and prove that it is \emph{convergent}, \emph{i.e.} terminating and confluent. We then describe how to prove that the linear $2$-polygraph containing the remaining relations of $\mathbf{A}$, oriented with respect to a termination order, is confluent modulo braid-like isotopies.

%%%%%%%%%%%%%%%%%%%%%%%%%%%%%%%%%%%%%%%%%%%%%%%%%%%%%%

\subsection*{Acknowledgments}
The authors would like to thank Catharina Stroppel for interesting discussions and suggesting to consider a deformed dgKLRW algebra, which led to the proof of \cref{thm:Tbasis} in \cref{sec:basisthemtri}. 
The authors would also like to thank Philippe Malbos and St\'ephane Gaussent for helpful discussions. 
G.N. is grateful to the Max Planck Institute for Mathematics in Bonn for its hospitality and financial support.

%%%%%%%%%%%%%%%%	End of file	%%%%%%%%%%%%%

%%
%%%%%%%%%%%%%%%%%%%%%%%%%%%%%%%%%%%%
%                 					  				  		 %
%	Quantum sl2, representations 		 					 %
%                 					  						 %
%%%%%%%%%%%%%%%%%%%%%%%%%%%%%%%%%%%%

%Define quantum sl2, the finite dim, the Verma, the tensor product

\section{Quantum \texorpdfstring{$\slt$}{sl2} and its representations}\label{sec:quantumgroups}

Recall that \emph{quantum $\slt$} can be defined as the $\bQ\pp{q}$-algebra $U_q(\slt)$ generated by the elements $K,K^{-1}, E$ and $F$ with relations
\begin{align*}
&KE = q^2EK, &  &KK^{-1} = 1 = K^{-1}K, \\
&KF = q^{-2}FK, & &EF - FE = \frac{K-K^{-1}}{q-q^{-1}}.
\end{align*}
It becomes a bialgebra when endowed with comultiplication
\begin{align*}
\Delta(K^{\pm 1}) &:= K^{\pm 1} \otimes K^{\pm 1}, &
\Delta(F) &:= F \otimes K + 1 \otimes F, &
\Delta(E) &:= E \otimes 1 + K^{-1} \otimes E, 
\end{align*}
and with counit $\varepsilon(K^{\pm 1}) := \pm 1$, $\varepsilon(E) := \varepsilon(F) := 0$.

\begin{rem}
    Usually, one would define $U_q(\slt)$ over the rational fractions $\bQ(q)$ (or the complex numbers $\bC$) instead of the Laurent series $\bQ\pp{q}$. 
    However, $\bQ\pp{q}$ has a natural categorification by considering a certain category of graded vector spaces, while it is not clear what a categorification of $\bQ(q)$ or $\bC$ should be. 
    Therefore we always work with Laurent series in this paper. 
\end{rem}

There is a $\mathbb{Q}\pp{q}$-linear anti-involution $\antimapslt$ of $U_q(\slt)$ defined on the generators by
\begin{align*}
  \antimapslt(E) &:= q^{-1}K^{-1}F, & \antimapslt(F) &:= qEK, & \antimapslt(K) &:= K.
\end{align*}

\subsection{Integrable module \texorpdfstring{$V(N)$}{V(N)}}
For each $N \in \bN$, there is a finite dimensional irreducible $U_q(\slt)$-module $V(N)$ called \emph{integrable module}. It has a basis  $\{v_N := v_{N,0}, v_{N,1}, \dots, v_{N,N}\}$ called \emph{induced basis}, respecting 
\begin{align*}
K \cdot v_{N,i} &:= q^{N-2i} v_{N,i}, \\
F \cdot v_{N,i} &:= v_{N,i+1}, \\
E \cdot v_{N,i} &:= [i]_q [N-i+1]_q v_{N,i-1}, 
\end{align*}
where
\begin{align*}
[k]_q &:= \frac{q^{k}-q^{-k}}{q-q^{-1}}.
\end{align*}
Note that $v_{N,i} = F^{i}(v_{N})$. 
It is also common the consider the \emph{divided power basis} (or canonical basis) given by $\overline{v}_{N,i} := F^{(i)}(v_N)$ where $F^{(i)}$ is the \emph{divided power} defined as
\[
F^{(i)} := \frac{1}{[i]_q!} F^{i},
\]
where $[i]_q! := [i]_q [i-1]_q \cdots [1]_q$ and $[0]_q! := 1$. 

\smallskip

There is a unique non-degenerate bilinear form $\langle \cdot,\cdot \rangle_N : V(N) \otimes V(N) \rightarrow \bQ\pp{q}$ such that $\langle v_0, v_0 \rangle_N = 1$ and which is $\antimapslt$-Hermitian: for any $v,v' \in V(N)$ and $u \in U_q(\slt)$ we have $\langle u\cdot v, v'\rangle_N= \langle v, \antimapslt(u)\cdot v'\rangle_N$. This map is called the \emph{Shapovalov form}.

\subsection{Verma module \texorpdfstring{$M(\mu)$}{M(mu)}}

Let $\beta$ be a formal parameter and write $\lambda := q^{\beta}$ as a formal variable. Let $\bo$ be the standard Borel subalgebra of $\slt$ and $U_q(\bo)$ be its quantum version. It is the $U_q(\slt)$-subalgebra generated by $K,K^{-1}$ and $E$. 
For $\mu = \beta + z \in \beta + \bZ$, let $K_\mu$ be a 1-dimensional $\bQ\pp{q,\lambda}$-vector space with a fixed a basis element $v_\mu$. 
We endow $K_\mu$ with a $U_q(\bo)$-action by declaring that
\begin{align*}
K^{\pm 1} \cdot v_\mu &:= \lambda^{\pm 1} q^{\pm z} v_\lambda, & E \cdot v_\mu &:= 0,
\end{align*}
and extending linearly through the obvious map $\bQ\pp{q} \hookrightarrow \bQ\pp{q,\lambda}$. 
The \emph{Verma module $M(\mu)$} is the induced module
\[
M(\mu) := U_q(\slt) \otimes_{U_q(\bo)} K_\mu. 
\]
It is infinite dimensional over $\bQ\pp{q,\lambda}$ with \emph{induced basis} $\{v_{\mu,i} := F^i(v_\mu)\}_{i \geq 0}$. The action of the quantum group is explicitly given by
\begin{align*}
K \cdot v_{\mu,i} &= \lambda q^{z-2i} v_{\mu,i}, \\
F \cdot v_{\mu,i} &= v_{\mu, i+1}, \\
E \cdot v_{\mu,i} &=  [i]_q [\beta+z-i+1]_q v_{\mu, i-1},
\end{align*}
where
\begin{equation*}%\label{eq:defnbetak}
[k\beta+\ell]_q :=  \frac{q^{k \beta+\ell} - q^{-k \beta -\ell}}{q-q^{-1}} = \frac{\lambda^k q^\ell - \lambda^{-k} q^{-\ell}}{q-q^{-1}},
\end{equation*}
for all $k,\ell \in \bZ$. 
One can also define the \emph{divided power basis} as $\{ \overline{v}_{\mu,i} := F^{(i)}(v_\mu) \}_{i \geq 0}$. 

\smallskip

The Verma module $M(\mu)$ can also be equipped with a Shapovalov form $(\cdot,\cdot)_\mu$, which is again the unique non-degenerate $\bQ\pp{q,\lambda}$-bilinear form such that $(v_\mu, v_\mu)_\mu = 1$ and which is $\antimapslt$-Hermitian: for any $v,v' \in M(\mu)$ and $u \in U_q(\slt)$, we have $(u\cdot v, v')_\mu = (v, \antimapslt(u)\cdot v')_\mu$.

\subsection{Tensor product}

Given two $U_q(\slt)$-modules $M$ and $M'$, one forms the \emph{tensor product representation} $M \otimes M'$ by using the action induced by the comultiplication, explicitly
\begin{align*}
K^{\pm 1} \cdot (m \otimes m') &:= (K^{\pm 1} \cdot m) \otimes (K^{\pm 1} \cdot m'), \\
F \cdot (m \otimes m') &:= (F \cdot m) \otimes (K \cdot m') + m \otimes (F \cdot m'), \\
E \cdot (m \otimes m') &:= (E \cdot m) \otimes m' + (K^{-1} \cdot m) \otimes (E \cdot m'),
\end{align*}
for all $m \in M$ and $m' \in M'$.

\smallskip

For $\mu \in \bN \sqcup (\beta + \bZ)$, we write
\[
L(\mu) := 
\begin{cases}
V(\mu), & \text{if $\mu \in \bN$,} \\
M(\mu), &\text{if $\mu \in (\beta + \bZ)$}.
\end{cases}
\]
For a string of weights $\und \mu = (\mu_1, \dots, \mu_r)$, with $\mu_i \in \bN \sqcup (\beta + \bZ)$, we write 
\[
L(\und \mu) := L(\mu_1) \otimes \cdots \otimes L(\mu_r). 
\]

\subsubsection{Weight spaces}

The module $L(\und \mu)$ decomposes into \emph{weight spaces} 
\[
 L(\und \mu)_{k \beta + \ell} := \{ v \in  L(\und \mu) | K(v) = \lambda^k q^\ell v \}.
\]
Write $|\und \mu| := \sum_{i = 1}^r \mu_i \in \bZ \beta + \bZ$. Note that $ L(\und \mu)_{k \beta + \ell} \neq 0$ only for $k \beta + \ell = |\und \mu| - 2b$ with $b \geq 0$. 
We also write $w(x) := \lambda^k q^\ell$ for $x \in  L(\und \mu)_{k\beta +\ell}$.

\subsubsection{Basis}
Let $\cP_{b}^{r}$ be the set of (weak) compositions of $b$ into $r$ parts, that is:
\[
  \cP_{b}^{r}:=\left\{ \rho = (b_1,\dots,b_r)\in\mathbb{N}^{r}\ \middle\vert\ \sum_{i=1}^r b_i = b\right\}.
\]
Consider also 
\[
\cP_{b}^{r, \und \mu} := \left\{ ( b_1, \dots, b_r)\in\mathcal{P}_{b}^{r} | b_i \leq \mu_i  \text{ for all $\mu_i \in \bN$}\right\} \subset \mathcal{P}_{b}^{r}.
\]

The module $L(\und \mu)$ admits an obvious basis induced by the ones of $L(\mu_i)$:
\[
\bigl\{ \tilde v_\rho := F^{b_1} (v_{\mu_1}) \otimes F^{b_2}(v_{\mu_2})  \otimes \cdots \otimes F^{b_r} (v_{\mu_r}) | \rho \in \cP_{b}^{r, \und \mu} \bigr\}.
\]
It also admits another basis that will reveal to be useful for categorification purposes:
\begin{equation*} %\label{eq:Mnbasis}
\bigl\{
  v_{ \rho} := 
  F^{b_r} \bigl( \cdots F^{b_2}(F^{b_1}(v_{\mu_1}) \otimes v_{\mu_2})  \cdots \otimes v_{\mu_r} \bigr)
 |  \rho \in \cP_{b}^{r, \und \mu}
\bigr\}.
\end{equation*}
Indeed, in $L(\und \mu = (\mu_1, \mu_2))$, we have
\begin{equation}\label{eq:Frewriting}
x \otimes F(y) = F(x \otimes y) - w(y) F(x) \otimes y,
\end{equation}
with $x \in L(\mu_1)$ and $y \in L(\mu_2)$, by definition of $\Delta(F)$. This allows to rewrite any element $\tilde v_{\rho_0}$ in the basis of $\{v_\rho\}$ by bringing recursively all $F$'s to the left. 

\begin{lem}\label{lem:Frewriting}
Any basis element $\tilde v_{\rho_0}$ can be written as a linear combination of elements in $\{v_\rho | \rho \in \mathcal{P}_{b}^{r} \}$. 
\end{lem}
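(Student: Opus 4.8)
The plan is to induct on the total number of tensor factors to the right of the rightmost position where an $F$ still needs to be moved, or equivalently to induct on a suitable complexity measure of the expression $\tilde v_{\rho_0} = F^{b_1}(v_{\mu_1}) \otimes \cdots \otimes F^{b_r}(v_{\mu_r})$ that records ``how far from the $v_\rho$-form'' we are. The base case $r = 1$ is trivial since $\tilde v_{(b_1)} = F^{b_1}(v_{\mu_1}) = v_{(b_1)}$. For the inductive step I would peel off the rightmost tensor factor and repeatedly apply the rewriting identity \cref{eq:Frewriting} in the form $x \otimes F(y) = F(x \otimes y) - w(y)\, F(x) \otimes y$, reading $x$ as the (already partially processed) left part and $y$ as the element in the last slot, so that each application moves one $F$ out of the last slot and to the front of the whole expression, at the cost of introducing one new term in which the last slot carries one fewer $F$ (and an extra $F$ has landed on the left chunk). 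Since $b_r$ is finite, after $b_r$ applications the last slot holds just $v_{\mu_r}$, and each resulting summand has the shape $F^{(\text{something})}\bigl( (\text{partially processed left part}) \otimes v_{\mu_r}\bigr)$.

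The key bookkeeping step is to make precise that every summand produced this way is, after also processing the left part, of the form $v_\rho$ for some $\rho \in \mathcal{P}_b^r$ (note: $\mathcal{P}_b^r$, not $\mathcal{P}_b^{r,\und\mu}$, since the weaker claim drops the $b_i \le \mu_i$ constraint, which only matters because $F^{b_i}(v_{\mu_i})$ may vanish in the integrable case — but a zero term is harmlessly a linear combination of $v_\rho$'s too). Concretely, after clearing the last slot we are left with a linear combination of terms $F^{c}\bigl( w \otimes v_{\mu_r}\bigr)$ where $w$ is a scalar multiple of an expression of the form $F^{c_1'}(v_{\mu_1}) \otimes \cdots \otimes F^{c_{r-1}'}(v_{\mu_{r-1}})$ — i.e.\ exactly a $\tilde v$-type expression in $r-1$ factors — and $c + \sum c_i' = b$. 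By the inductive hypothesis applied to the $(r-1)$-factor module $L(\mu_1) \otimes \cdots \otimes L(\mu_{r-1})$, each such $w$ rewrites as a linear combination of $v_{\rho'}$ with $\rho' \in \mathcal{P}_{\,b-c}^{\,r-1}$; applying $F^c(-\otimes v_{\mu_r})$ to $v_{\rho'}$ gives, by the definition of the $v_\rho$, precisely $v_{(\rho',c)}$ with $(\rho',c) \in \mathcal{P}_b^r$. Linearity then finishes the step, taking care that the scalar $w(y)$-coefficients appearing along the way lie in the base ring $\bQ\pp{q,\lambda}$ and so are legitimate.

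The main obstacle, such as it is, is purely organizational: setting up an induction measure (and an ordering on the intermediate monomials) under which every rewriting move via \cref{eq:Frewriting} strictly decreases complexity, so that the process provably terminates and the claim that ``all $F$'s can be brought to the left recursively'' is rigorous rather than heuristic. A clean choice is lexicographic induction on $(r, b_r)$, or alternatively on the multiset of exponents read from the right; one must check that the term $F(x) \otimes y$ produced by one application, although it has \emph{more} total $F$'s on the left chunk, sits strictly lower in this order because its last-slot exponent dropped. Once the termination order is fixed, the rest is the routine linearity-and-substitution argument sketched above, and no genuinely new input beyond \cref{eq:Frewriting} and the definitions of $\tilde v_\rho$ and $v_\rho$ is needed.
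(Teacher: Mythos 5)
Your argument is correct, and it rests on the same single ingredient as the paper's proof, namely recursive application of \cref{eq:Frewriting}; the difference is purely in how the recursion is organized, and the two schemes buy slightly different things. The paper sweeps left to right: its intermediate normal form $v_{\rho_1,\rho_2}^{t,\ell} = F^t\bigl(v_{\rho_1}\otimes F^{\ell}(v_{\mu_{r_1+1}})\bigr)\otimes \tilde v_{\rho_2}$ keeps the already-processed part in nested $v$-form, and the rewriting \cref{eq:Frewritinglemma2} decreases the pair $(\ell, r_2)$; crucially, the extra $F$ created by the second term lands on the outermost layer of $v_{\rho_1}$, so it is absorbed as $v_{F(\rho_1)}$ and no coproduct expansion or change of basis is ever needed along the way. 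You instead induct on $r$ and peel the rightmost factor, pushing all its $F$'s to the outermost level; this gives a cleaner induction statement, but the price is that the $F$'s accumulating on the left chunk produce elements like $F^{c}\bigl(F^{b_1}(v_{\mu_1})\otimes\cdots\otimes F^{b_{r-1}}(v_{\mu_{r-1}})\bigr)$, which are \emph{not} scalar multiples of a single $\tilde v$-type expression but linear combinations of them (expand the iterated coproduct, or simply note the $\tilde v$'s form a basis of the $(r-1)$-fold product); only after that re-expansion does your inductive hypothesis apply, and then $F^{c}(v_{\rho'}\otimes v_{\mu_r}) = v_{(\rho',c)}$ finishes the step exactly as you say. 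So fix the phrase ``scalar multiple'' to ``linear combination''; everything else, including your remark that terms with $b_i>\mu_i$ in the integrable slots vanish harmlessly and your termination measure on $(r,b_r)$ (which mirrors the paper's decreasing pair $(\ell,r_2)$), is sound.
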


\begin{proof}
Consider an element of the form
\begin{equation}\label{eq:Frewritinglemma}
v_{\rho_1, \rho_2}^{t,\ell} := F^t\bigl( v_{\rho_1} \otimes F^{\ell} (v_{\mu}) \bigr) \otimes \tilde v_{\rho_2},
\end{equation}
where $t, \ell \geq 0$, $\rho_1 \in \bN^{r_1}$, $\rho_2 \in \bN^{r_2}$, $r_1+1+r_2 = r$, and $\mu = \mu_{r_1 + 1}$. 
If $r_1 = 0$, then it is an element of $\{\tilde v_{\rho}\}$, and if $\ell = r_2 = 0$, then of $\{v_\rho\}$. 

Applying \cref{eq:Frewriting} on \cref{eq:Frewritinglemma}, we obtain
\begin{equation}\label{eq:Frewritinglemma2}
\begin{split}
v_{\rho_1, \rho_2}^{t,\ell}
 &=
F^{t+1} \bigl( v_{\rho_1} \otimes F^{\ell-1} (v_\mu) \bigr) \otimes \tilde v_{\rho_2}
-  q^{\mu+2-2\ell} 
F^{t} \bigl( F(v_{\rho_1}) \otimes F^{\ell-1}(v_\mu) \bigr) \otimes \tilde v_{\rho_2}
\\
&=
v_{\rho_1, \rho_2}^{t+1,\ell-1}
 -   q^{\mu+2-2\ell} 
v_{F(\rho_1), \rho_2}^{t,\ell-1},
\end{split}
\end{equation}
where $F(\rho_1)$ is given by increasing the last term of $\rho_1$ by $1$. 
Furthermore, if $\ell-1 = 0$, then  they are of the form $v_{\rho_1'} \otimes \tilde v_{\rho_2'}$ for different $\rho_1' \in \bN^{r_1+1}$ and $\rho_2' \in \bN^{r_2}$. Since $\tilde v_{\rho_2'} = F^{\ell'}(v_{\mu_{r_1+2}}) \otimes v_{\rho_2''}$ with $\rho_2'' \in \bN^{r_2-1}$, we can rewrite the expression as an element of the form \cref{eq:Frewritinglemma} with $r_2$ decreased by $1$. 
In conclusion, applying \cref{eq:Frewritinglemma2} recursively allows to decrease both $\ell$ and $r_2$ to zero, giving the desired expression. 
\end{proof}

\begin{exe}
Consider $\und \mu = (\beta,\beta)$, and $\tilde v_{(0,2)} = v_\beta \otimes F^2(v_\beta)$. We compute
\begin{align*}
v_\lambda \otimes F^2(v_\lambda) &= F\bigl(v_\lambda \otimes F(v_\lambda)\bigr) - \lambda q^{-2} F(v_\lambda) \otimes F(v_\lambda), \\
F\bigl(v_\lambda \otimes F(v_\lambda)\bigr) &= F^2( v_\lambda \otimes v_\lambda) - \lambda F\bigl(F(v_\lambda) - v_{\lambda}\bigr), \\
F(v_\lambda) \otimes F(v_\lambda) &= F\bigl( F(v_\lambda) \otimes v_\lambda \bigr) - \lambda F^2(v_\lambda) \otimes v_\lambda.
\end{align*}
For another example, consider $\und \mu = (\beta,\beta,\beta)$ and $v_{(0,1,1)} = v_\lambda \otimes F(v_\lambda) \otimes F(v_\lambda)$. We compute
\begin{align*}
v_\lambda \otimes F(v_\lambda) \otimes F(v_\lambda) &= F(v_\lambda \otimes v_\lambda) \otimes F(v_\lambda) - \lambda F(v_\lambda) \otimes v_\lambda \otimes F(v_\lambda), \\
F(v_\lambda \otimes v_\lambda) \otimes F(v_\lambda)  &= F\bigl(F(v_\lambda \otimes v_\lambda) \otimes v_\lambda\bigr) - \lambda F^2(v_\lambda \otimes v_\lambda) \otimes v_\lambda, \\
F(v_\lambda) \otimes v_\lambda \otimes F(v_\lambda) &= F\bigl(F(v_\lambda) \otimes v_\lambda \otimes v_\lambda\bigr) - \lambda F\bigl(F(v_\lambda) \otimes v_\lambda\bigr) \otimes v_\lambda.
\end{align*}
\end{exe}

One can also consider the basis induced by the divided power basis
\[
\bigl\{ \tilde{\overline{v}}_\rho := F^{(b_1)} (v_{\mu_1}) \otimes F^{(b_2)}(v_{\mu_2})  \otimes \cdots \otimes F^{(b_r)} (v_{\mu_r}) | \rho \in \cP_{b}^{r, \und \mu} \bigr\},
\]
and
\[
\bigl\{
  \overline{v}_{ \rho} := 
  F^{(b_r)} \bigl( \cdots F^{(b_2)}(F^{(b_1)}(v_{\mu_1}) \otimes v_{\mu_2})  \cdots \otimes v_{\mu_r} \bigr)
 |  \rho \in \cP_{b}^{r, \und \mu}
\bigr\}.
\]

\begin{lem}
For $\rho = (b_1, \dots, b_r) \in    \cP_{b}^{r,\und \mu}$, we have
\begin{equation}\label{eq:Evkappa}
E(v_\rho) = \left(\sum_{i = 1}^{b_r} [|\und \mu|-2b+2i]_q \right) F^{b_r-1}(v_{\rho_{<r}} \otimes v_{\mu_r}) + F^{b_r}(E v_{\rho_{<r}} \otimes v_{\mu_r}),
\end{equation}
where $\rho_{<r} := (b_1, \dots, b_{r-1})$.
\end{lem}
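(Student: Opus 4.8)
The plan is to prove the identity by induction on $b_r$, using only the $\mathfrak{sl}_2$-relation $EF-FE=(K-K^{-1})/(q-q^{-1})$ and the formula for $\Delta(E)$. For the base case $b_r=0$ we have $v_\rho=v_{\rho_{<r}}\otimes v_{\mu_r}$, the sum $\sum_{i=1}^{0}$ is empty, and since $E\cdot v_{\mu_r}=0$ the comultiplication $\Delta(E)=E\otimes 1+K^{-1}\otimes E$ gives $E(v_\rho)=(E v_{\rho_{<r}})\otimes v_{\mu_r}$; this is exactly the claimed identity, under the harmless convention that the empty-sum term (which formally involves $F^{-1}$) is zero.

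For the inductive step I assume $b_r\geq 1$ and set $\rho':=(b_1,\dots,b_{r-1},b_r-1)\in\cP_{b-1}^{r,\und\mu}$ (here $b_r-1\leq\mu_r$ when $\mu_r\in\bN$ follows from $b_r\leq\mu_r$), so that $\rho'_{<r}=\rho_{<r}$ and $v_\rho=F(v_{\rho'})$ with $v_{\rho'}=F^{b_r-1}(v_{\rho_{<r}}\otimes v_{\mu_r})$. Then
\[
E(v_\rho)=EF(v_{\rho'})=FE(v_{\rho'})+\frac{K-K^{-1}}{q-q^{-1}}(v_{\rho'}).
\]
Since $v_{\rho'}$ lies in the weight space $L(\und\mu)_{|\und\mu|-2(b-1)}$, the second term equals $[|\und\mu|-2b+2]_q\,F^{b_r-1}(v_{\rho_{<r}}\otimes v_{\mu_r})$. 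For the first term I apply the induction hypothesis to $\rho'$ (replacing $b$ by $b-1$ and $b_r$ by $b_r-1$), then apply $F$, and reindex the resulting sum by $i\mapsto i-1$; this gives
\[
FE(v_{\rho'})=\Bigl(\sum_{i=2}^{b_r}[|\und\mu|-2b+2i]_q\Bigr)F^{b_r-1}(v_{\rho_{<r}}\otimes v_{\mu_r})+F^{b_r}(E v_{\rho_{<r}}\otimes v_{\mu_r}).
\]
Adding the two contributions, the term $[|\und\mu|-2b+2]_q$ supplies the missing $i=1$ summand, which yields the asserted formula.

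The argument is entirely elementary, so there is no serious obstacle; the only points that require care are correctly reading off the weight of $v_{\rho'}$ (hence the eigenvalue of $(K-K^{-1})/(q-q^{-1})$) and the shift of the summation index. As an alternative not using induction, one could instead expand $EF^{b_r}$ directly via the standard identity $EF^{n}=F^{n}E+[n]_q F^{n-1}\bigl(q^{-(n-1)}K-q^{n-1}K^{-1}\bigr)/(q-q^{-1})$, evaluate on $v_{\rho_{<r}}\otimes v_{\mu_r}$ (which has weight $|\und\mu|-2(b-b_r)$ and on which $E$ acts as $E\otimes 1$ because $E\cdot v_{\mu_r}=0$), and then rewrite the scalar $[b_r]_q\,[|\und\mu|-2b+b_r+1]_q$ as $\sum_{i=1}^{b_r}[|\und\mu|-2b+2i]_q$ using the quantum Clebsch--Gordan identity $[m]_q[n]_q=\sum_{k=0}^{m-1}[n+m-1-2k]_q$; the inductive route above has the advantage of sidestepping this last identity.
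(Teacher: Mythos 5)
Your proof is correct, and it is essentially the paper's argument: the paper simply says "apply the main $\mathfrak{sl}_2$-commutator relation $b_r$ times," which is exactly what your induction on $b_r$ carries out, one commutator per step, with the weight of $v_{\rho'}$ producing the successive quantum integers $[|\und \mu|-2b+2i]_q$.
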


\begin{proof}
We apply the main $\mathfrak{sl}_2$-commutator relation $b_r$ times.
\end{proof}

\subsubsection{Shapovalov forms for tensor products}\label{sec:shepfortensor}

Following \cite[\S4.7]{webster}, we consider a family of bilinear forms $(\cdot,\cdot)_{\underline{\mu}}$ on tensor products of the form $L(\und \mu)$  satisfying the following properties:
\begin{enumerate}
\item each form $(\cdot,\cdot)_{\underline{\mu}}$ is non-degenerate;
\item for any $u\in U_q(\slt)$ we have $(u \cdot v,v')_{\underline{\mu}} = (v, \antimapslt(u)\cdot v')_{\underline{\mu}}$;
\item for any $f\in \mathbb{Q}\pp{q,\lambda}$, we have $(f v,v')_{\underline{\mu}} = (v,fv')_{\underline{\mu}} = f(v,v')_{\underline{\mu}}$;
\item we have $(v,v')_{\underline{\mu}} = (v\otimes v_{\mu_{r+1}},v'\otimes v_{\mu_{r+1}})_{\underline{\mu'}}$ where $\underline{\mu'}=(\mu_1,\ldots,\mu_r, \mu_{r+1})$,
\end{enumerate}
for all $v,v' \in  L(\underline{\mu})$. 

\smallskip
Similarly to \cite[Proposition 4.33]{webster} we have:

\begin{prop}
  There exists a unique system of such bilinear forms which are given by
  \[
    (v , v')_{\underline{\mu}} =  \prod_{i=1}^r (v_i, v_i')_{\mu_i}, 
  \]
for every $v = v_1 \otimes \cdots \otimes v_r,v' = v'_1 \otimes \cdots \otimes v'_r \in  L(\underline{\mu})$.
\end{prop}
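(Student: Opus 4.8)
The plan is to prove existence and uniqueness separately, both by induction on the number $r$ of tensor factors, using the recursive decomposition $L(\underline\mu)\cong L(\underline\mu_{<r})\otimes L(\mu_r)$ with $\underline\mu_{<r}:=(\mu_1,\dots,\mu_{r-1})$; this follows the pattern of \cite[Proposition 4.33]{webster}.

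For \emph{existence}, I would define $(v,v')_{\underline\mu}:=\prod_{i=1}^r(v_i,v_i')_{\mu_i}$ on pure tensors and extend $\bQ\pp{q,\lambda}$-bilinearly. Then property~(3) is immediate, property~(1) is the standard fact that a tensor product of non-degenerate bilinear forms is non-degenerate, and property~(4) follows at once from the normalization $(v_{\mu_{r+1}},v_{\mu_{r+1}})_{\mu_{r+1}}=1$. For the contravariance property~(2) it suffices to check it for $u$ ranging over the algebra generators $E,F,K^{\pm1}$: since $\antimapslt$ is an anti-involution and the form is $\bQ\pp q$-linear in $u$, the set of $u\in U_q(\slt)$ satisfying it is a subalgebra, hence all of $U_q(\slt)$. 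For $r=1$ this is the defining property of the Shapovalov form; for the inductive step one expands $E\cdot(v\otimes m)$, $F\cdot(v\otimes m)$, $K^{\pm1}\cdot(v\otimes m)$ via the coproduct formulas of \cref{sec:quantumgroups} and checks equality termwise, using $\antimapslt$-contravariance of $(\cdot,\cdot)_{\underline\mu_{<r}}$ (inductive hypothesis) and of $(\cdot,\cdot)_{\mu_r}$; e.g.\ for $u=F$ both sides become $(v,qEK\!\cdot\!v')_{\underline\mu_{<r}}(m,K\!\cdot\!m')_{\mu_r}+(v,v')_{\underline\mu_{<r}}(m,qEK\!\cdot\!m')_{\mu_r}$ after applying $\Delta(F)=F\otimes K+1\otimes F$ and $\Delta(EK)=EK\otimes K+1\otimes EK$. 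This part is routine bookkeeping with the comultiplication.

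For \emph{uniqueness}, let $(\cdot,\cdot)'$ be any system satisfying (1)--(4); I would show by induction on $r$ that $(\cdot,\cdot)'_{\underline\mu}$ is forced to equal the product form. For $r=1$, properties (1), (2) make the form non-degenerate and $\antimapslt$-contravariant, and the $r=0$ instance of (4) (with $L(\emptyset)$ the trivial module with its tautological form, which is the built-in normalization convention) gives $(v_{\mu_1},v_{\mu_1})'_{(\mu_1)}=1$, so uniqueness of the Shapovalov form applies. For the inductive step, write $\mathbf v:=v_{\mu_r}$ and $w_0:=w(\mathbf v)$. Two facts do the work: (i) every $\xi\in L(\underline\mu)$ is a finite sum $\xi=\sum_k F^k(x_k\otimes\mathbf v)$ with $x_k\in L(\underline\mu_{<r})$, proved by induction on the $L(\mu_r)$-degree by inverting the identity $F(x\otimes F^j\mathbf v)=w(F^j\mathbf v)(Fx)\otimes F^j\mathbf v+x\otimes F^{j+1}\mathbf v$ (a rearrangement of \cref{eq:Frewriting}); and (ii) since $E\cdot\mathbf v=0$ and $\Delta(EK)=EK\otimes K+1\otimes EK$, one gets $(qEK)^k(y\otimes\mathbf v)=w_0^k\bigl(\antimapslt(F^k)\,y\bigr)\otimes\mathbf v$ for $y\in L(\underline\mu_{<r})$. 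Combining (i), (ii) and contravariance, for arbitrary $\xi\in L(\underline\mu)$ and any $y\in L(\underline\mu_{<r})$,
\begin{align*}
(\xi,\,y\otimes\mathbf v)'_{\underline\mu}
&=\sum_k\bigl(x_k\otimes\mathbf v,\ (qEK)^k(y\otimes\mathbf v)\bigr)'_{\underline\mu}
=\sum_k w_0^k\bigl(x_k\otimes\mathbf v,\ (\antimapslt(F^k)y)\otimes\mathbf v\bigr)'_{\underline\mu}
\\
&=\sum_k w_0^k\bigl(x_k,\ \antimapslt(F^k)\,y\bigr)'_{\underline\mu_{<r}},
\end{align*}
the last step being property~(4); by the inductive hypothesis this quantity is uniquely determined. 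Finally, for arbitrary $\xi,\eta\in L(\underline\mu)$, expand $\eta=\sum_m F^m(\eta_m\otimes\mathbf v)$ using (i) again and apply contravariance in the second slot: $(\xi,\eta)'_{\underline\mu}=\sum_m(\antimapslt(F^m)\xi,\ \eta_m\otimes\mathbf v)'_{\underline\mu}$, which is determined by the previous computation. Hence $(\cdot,\cdot)'_{\underline\mu}$ is unique, and so coincides with the product form built in the existence step.

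I expect the main obstacle to be the uniqueness half — specifically, spotting that property~(4) together with $\antimapslt$-contravariance lets one recursively push all the $F$'s onto one side until the highest-weight vector $v_{\mu_r}$ occupies the last tensor slot, at which point the $(r-1)$-factor form (known by induction) takes over. The supporting technical points are the triangular expansion in (i) and the identity $E\cdot v_{\mu_r}=0$ that makes the $\antimapslt(F)$-action diagonal on $L(\underline\mu_{<r})\otimes v_{\mu_r}$; both are elementary but must be set up carefully.
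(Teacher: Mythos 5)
The paper does not actually prove this proposition: it is stated by analogy with \cite[Proposition 4.33]{webster} and no argument is given, so your write-up supplies the omitted proof rather than paralleling one. Your argument is correct. Existence follows as you say: it suffices to check contravariance on the generators $E,F,K^{\pm1}$ because $\antimapslt$ is an anti-homomorphism, and your computation for $F$ (using $\Delta(\antimapslt(F))=\antimapslt(F)\otimes K+1\otimes\antimapslt(F)$) together with the analogous one for $E$ is exactly what is needed; properties (1), (3), (4) are immediate for the product form. Your uniqueness step is also sound, and it is essentially the same mechanism as \cref{lem:Frewriting}: the expansion $\xi=\sum_k F^k(x_k\otimes v_{\mu_r})$ is the rewriting of \cref{eq:Frewriting} read in reverse, while the identity $\antimapslt(F)^k(y\otimes v_{\mu_r})=w(v_{\mu_r})^k\bigl(\antimapslt(F^k)y\bigr)\otimes v_{\mu_r}$ only uses that $v_{\mu_r}$ is a weight vector killed by $E$, so it applies verbatim whether $\mu_r$ is integral or generic. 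This last point is worth keeping explicit, since it is precisely what justifies the paper's ``similarly to Webster'': the argument does not need finite-dimensionality of the last factor, so it covers the Verma factors for which Webster's statement is not literally available.

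One caveat, which concerns the statement more than your proof: conditions (1)--(4) as listed in \cref{sec:shepfortensor} do not by themselves pin down the normalization. Rescaling every form $(\cdot,\cdot)_{\und\mu}$ by a fixed unit of $\bQ\pp{q,\lambda}$ depending only on $\mu_1$ preserves all four properties, so uniqueness requires an additional normalization --- either your $r=0$ convention (the tautological form on the trivial module), or equivalently the requirement that $(v_{\mu_1}\otimes\cdots\otimes v_{\mu_r},v_{\mu_1}\otimes\cdots\otimes v_{\mu_r})_{\und\mu}=1$, or that the length-one forms are the normalized Shapovalov forms of \cref{sec:quantumgroups}. You noticed this and made the convention explicit, which is the right fix; just be aware that it is an added hypothesis rather than a consequence of (1)--(4), and with it your base case correctly reduces to the uniqueness of the normalized Shapovalov form on a single factor.
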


%%%%%%%%%%%%%%%%	End of file	%%%%%%%%%%%%%

\section{Preliminaries and conventions} \label{sec:conventions}

Before defining the dgKLRW algebras, we fix some conventions, and we recall some common facts about dg-structures (classical references for this are~\cite{keller} and \cite{toen}, see also \cite[Appendix A]{NV3} for a short survey oriented towards categorification), and about rewriting methods. Since we use the same conventions as in \cite{LNV}, a part of this section is almost identical to \cite[\S3.1 and Appendix B]{LNV}. 

\subsection{Homological algebra}
First, let $\Bbbk$ be a commutative unital ring for the remaining of the paper. 

\subsubsection{Dg-algebras}

A \emph{$\bZ^n$-graded dg-($\Bbbk$-)algebra} $(A,d_A)$ is a unital $\bZ \times \bZ^n$-graded ($\Bbbk$-)algebra $A = \bigoplus_{(h,\bg) \in \bZ \times \bZ^n} A_\bg^h$, where we refer to the $\bZ$-grading as homological (or $h$-degree) and the $\bZ^n$-grading as $\bg$-degree, with a differential $d_A : A \rightarrow A$ such that:
\begin{itemize}
\item $d_A(A_\bg^h) \subset A_{\bg}^{h-1}$ for all $\bg \in \bZ^n, h \in \bZ$; \\(the differential preserves the $\bZ^n$-grading and decreases the homological grading)
\item $d_A(xy) = d_A(x)y + (-1)^{\deg_h(x)} x d_A(y)$; \\(the differential respects the \emph{graded Leibniz rule})
\item $d_A^2 = 0$. \\(the differential yields a complex) 
\end{itemize}
The \emph{homology} of $(A,d_A)$ is $H(A,d_A) := \ker(d_A)/\Image(d_A)$, 
which is a $\bZ \times \bZ^n$-graded algebra decomposing as 
\begin{align*}
    H(A,d_A) &\cong \bigoplus_{(h,\bg) \in \bZ \times \bZ^n} H_\bg^h(A,d_A),
    &
    H_\bg^h(A,d_A) &:= \frac{\ker(d_A : A_\bg^{h} \rightarrow A_\bg^{h-1})}{\Image(d_A : A_\bg^{h+1} \rightarrow A_\bg^h)}.
\end{align*}
A morphism of dg-algebras $f: (A,d_A) \rightarrow (A', d_{A'})$ is a morphism of algebras that preserves the $\bZ \times \bZ^n$-grading and commutes with the differentials. 
Such a morphism induces a morphism $f^* : H(A,d_A) \rightarrow H(A',d_{A'})$. We say that $f$ is a \emph{quasi-isomorphism} whenever $f^*$ is an isomorphism. Moreover, we say that $(A,d_A)$ is formal if there is a quasi-isomorphism $(A,d_A) \xrightarrow{\simeq} (H(A,d_A), 0)$. This happens whenever $H(A,d_A)$ is concentrated in homological degree zero. 

\begin{rem}
Note that in contrast to \cite{keller}, our differential decreases the homological degree instead of increasing it.
\end{rem}

Similarly, a $\bZ^n$-graded left dg-module over $(A,d_A)$, or simply $(A,d_A)$-module, is a $\bZ \times \bZ^{n}$-graded $A$-module $M = \bigoplus_{(h,\bg) \in \bZ \times \bZ^n} M_\bg^h$ with a differential $d_M : M \rightarrow M$ such that:
\begin{itemize}
\item $d_M(M_\bg^h) \subset M_{\bg}^{h-1}$ for all $\bg \in \bZ^n, h \in \bZ$;
\item $d_M(x \cdot m) = d_A(x) \cdot y + (-1)^{\deg_h(x)} x  \cdot d_M(y)$;
\item $d_M^2 = 0$. 
\end{itemize}
Homology, maps between dg-modules and quasi-isomorphisms are defined as above. There are similar notions of $\bZ^n$-graded right dg-modules and dg-bimodules, with only subtlety that $d_M(m \cdot x) = d_M(m) \cdot x + (-1)^{\deg_h(m)} m \cdot d_A(x)$.

\begin{rem}
For a dg-algebra $(A,d_A)$, we sometimes talk about a graded $A$-module $M$. This means we consider $M$ as a $\bZ\times\bZ^n$-graded module over $A$, forgetting about the differential $d_A$.
\end{rem}

In our convention,  a \emph{$\bZ^m$-graded category} is a category with a collection of $m$ autoequivalences, strictly commuting with each others. 
The category $(A,d_A)\amod$ of (left) $\bZ^n$-graded dg-modules over a dg-algebra $(A,d_A)$ is a $\bZ \times \bZ^n$-graded abelian category, with kernels and cokernels defined as usual. The action of $\bZ$ is given by the \emph{homological shift functor} $[1] : (A,d_A)\amod \rightarrow (A,d_A)\amod$ sending $M \mapsto M[1] := \{m[1] | m \in M\}$ and such that:
\begin{itemize}
\item $\deg_h(m[1]) := \deg_h(m) + 1$; \\(it increases the $h$-degree of all elements up by 1) 
\item $d_{M[1]} := -d_M$; \\(it switches the sign of the differential) 
\item $r \cdot (m[1])  := (-1)^{\deg_h(r)} (r \cdot m)[1]$, \\(it twists the left action)
\end{itemize}
and sending $f : M \rightarrow N$ to $f[1] : M[1] \rightarrow N[1], m[1] \mapsto f(m)[1]$.
The action of $\bg \in \bZ^n$ is given by increasing the $\bZ^n$-degree of all elements up by $\bg$, in the sense that
\[
(\bg M)_{\bg_0 + \bg} := (M)_{\bg_0},
\]
or in other terms, an element $x \in M$ with degree $\bg_0$ becomes of degree $\bg_0+\bg$ in $\bg M$.
There are similar definitions for categories of right dg-modules and dg-bimodules, with the subtlety that the homological shift functor does not twist the right-action:
\[
(m[1]) \cdot r := (m \cdot r)[1].
\]
As usual, a short exact sequence of dg-(bi)modules induces a long exact sequence in homology. 

\smallskip

Let $f : (M,d_M) \rightarrow (N,d_N)$ be a morphism of dg-(bi)modules. Then, one constructs the \emph{mapping cone} of $f$ as 
\begin{align} \label{eq:cone}
\cone(f) &:= (M[1] \oplus N, d_C), & 
d_C &:= \begin{pmatrix} -d_M & 0 \\ f & d_N \end{pmatrix}.
\end{align}
The mapping cone is a dg-(bi)module, and it fits in a short exact sequence:
\[
0 \rightarrow N \xrightarrow{\imath_N} \cone(f) \xrightarrow{\pi_{M[1]}} M[1] \rightarrow 0,
\]
where $\imath_N$ and $\pi_{M[1]}$ are the canonical inclusion and projection $N \xrightarrow{\imath_N} M[1] \oplus N \xrightarrow{\pi_{M[1]}} M[1]$. 

\subsubsection{Hom and tensor functors}\label{sec:classicalhomandtensor}

Given a left dg-module $(M,d_M)$ and a right dg-module $(N,d_N)$, one constructs the tensor product
\begin{equation}\label{eq:dgtens}
\begin{split}
(N,d_N) \otimes_{(A,d_A)} (M,d_M) &:= \bigl( (M \otimes_A N), d_{M \otimes N} \bigr), \\
d_{M \otimes N}(m \otimes n) &:= d_M(m) \otimes n + (-1)^{\deg_h(m)} m \otimes d_N(n).
\end{split}
\end{equation}
If $(N,d_N)$ (resp. $(M,d_M)$) has the structure of a dg-bimodule, then the tensor product inherits a left (resp. right) dg-module structure. 

Given a pair of left dg-modules $(M,d_M)$ and $(N,d_N)$, one constructs the dg-hom space
\begin{equation}\label{eq:dghom}
\begin{split}
\HOM_{(A,d_A)}\bigl( (M,d_M), (N,d_N) \bigr) &:= \bigl( \HOM_A(M,N), d_{\HOM(M,N)} \bigr), \\
d_{\HOM(M,N)}(f) &:= d_N \circ f - (-1)^{\deg_h(f)} f \circ d_M,
\end{split}
\end{equation}
where $\HOM_A$ is the $\bZ\times \bZ^n$-graded hom space of maps between $\bZ\times \bZ^n$-graded $A$-modules. Again, if $(M,d_M)$ (resp. $(N,d_N)$) has the structure of a dg-bimodule, then it inherits a left (resp. right) dg-module structure.

In particular, given a dg-bimodule $(B,d_B)$ over a pair of dg-algebras $(S,d_{S})$-$(R,d_R)$, we obtain tensor and hom functors
\begin{align*}
(B,d_B) \otimes_{(R,d_R)} (-) &: (R,d_R)\amod \rightarrow(S,d_S)\amod, \\
\HOM_{(S, d_{S})}((B,d_B), -) &: (S,d_{S})\amod \rightarrow(R,d_R)\amod,
\end{align*}
which form a adjoint pair $((B,d_B) \otimes_{(R,d_R)} -) \vdash \HOM_{(S,d_{S})}((B,d_B), -)$. 

\subsubsection{Derived categories} \label{ssec:derivedCat}

The \emph{derived category $\cD(A,d_A)$} of $(A,d_A)$ is the localization of the category $(A,d_A)\amod$ of  $\bZ^n$-graded $(A,d_A)$-dg-modules along quasi-isomorphisms. It is a triangulated category with translation functor induced by the homological shift functor $[1]$, and distinguished triangles are equivalent to 
\[
(M,d_N) \xrightarrow{f} (N,d_N) \xrightarrow{\imath_N} \cone(f) \xrightarrow{\pi_{M[1]}} (M,d_N)[1],
\]
for every maps of dg-modules $f : (M,d_M) \rightarrow (N,d_N)$. 

\subsubsection{Cofibrant replacements}

A \emph{cofibrant} dg-module $(P,d_P)$ is a dg-module such that $P$ is projective as $\bZ\times\bZ^n$-graded $A$-module. 
Equivalently, it is a dg-module $(P,d_P)$ such that for every surjective quasi-isomorphism $(L,d_L) \xrightarrowdbl{\simeq} (M,d_M)$, every morphism $(P,d_P) \rightarrow (M,d_M)$ factors through $(L,d_L)$.
For any dg-module $(N, d_N)$ and cofibrant dg-module $(P,d_P)$, we have
\begin{align*}
\Hom_{\cD(A,d_A)}\bigl((P,d_P), (N,d_N)\bigr) \cong H^0_0 \left(\HOM_{(A,d_A)}\bigl((P,d_P), (N,d_N) \bigr) \right).
\end{align*}
Moreover, tensoring with a cofibrant dg-module preserves quasi-isomorphisms.

Given a left 
dg-module $(M,d_M)$, there exists a cofibrant dg-module $(\br M , d_{\br M })$  
together with a surjective quasi-isomorphism $\pi_M : (\br M, d_{\br M}) \xrightarrowdbl{\simeq}  (M,d_M)$. 
Moreover, the assignment $(M,d_M) \mapsto (\br M, d_{\br M})$ 
is natural, and we refer to $(\br M, d_{\br M})$ as the \emph{cofibrant replacement} of $(M,d_M)$. Thus, we can compute $\Hom_{\cD(A,d_A)}\bigl((M,d_M), (N,d_N)\bigr)$ by taking 
\[
H^0_0\left(\HOM_{(A,d_A)}\bigl((\br M,d_{\br M}), (N,d_N) \bigr) \right) \cong \Hom_{\cD(A,d_A)}\bigl((M,d_M), (N,d_N)\bigr).
\] 

\subsubsection{Dg-derived categories}\label{sec:dgdercat}

One of the issues with triangulated categories is that the category of functors between triangulated categories is in general not triangulated. To fix this, we work with a dg-enhancement of the derived category. In particular, this allows us to talk about distinguished triangles of dg-functors.

Recall that a dg-category is a category where the hom-spaces are dg-modules over $(\Bbbk,0)$, and compositions are compatible with this structure (see \cite[\S1.2]{keller} for a precise definition). The \emph{homotopy category $H^0(\cC)$} of a dg-category $\cC$ is the category with the same objects as $\cC$ but with hom-spaces given by the degree zero homology of the dg-hom spaces of $\cC$. 

The \emph{dg-derived category $\cD_{dg}(A,d_A)$} of a $\bZ^n$-graded dg-algebra $(A,d_A)$ is the $\bZ^n$-graded dg-category with objects being cofibrant dg-modules over $(A,d_A)$, and hom-spaces being subspaces of the graded dg-spaces $\HOM_{(A,d_A)}$ from \eqref{eq:dghom}, given by maps that preserve the $\bZ^n$-grading:
\[
\Hom_{\cD_{dg}(A,d_A)}(M,N) := \HOM_{(A,d_A)}(M,N)_0^*,
\]
for $(M,d_M)$ and $(N,d_N)$ cofibrant dg-modules. 

The dg-derived category $\cD_{dg}(A,d_A)$ is a dg-triangulated category, meaning its homotopy category is canonically triangulated (see \cite{toen} for a precise definition,  or \cite[Appendix A]{NV3} for a summary oriented toward categorification). It turns out that the homotopy category of $\cD_{dg}(A,d_A)$ is triangulated equivalent to the usual derived category $\cD(A,d_A) \cong H^0(\cD_{dg}(A,d_A))$.

\subsubsection{Dg-functors}\label{sec:dgfunctors}

A \emph{dg-functor} between dg-categories is a functor commuting with the differentials. Given a dg-functor $F : \cC \rightarrow \cC'$, it induces a functor on the homotopy categories $[F] : H^0(\cC) \rightarrow H^0(\cC')$. 
We say that a dg-functor is a \emph{quasi-equivalence} if it gives quasi-isomorphisms on the hom-spaces, and induces an equivalence on the homotopy categories. 
We want to consider dg-category up to quasi-equivalences. Let $\Hqe$ be the homotopy category of dg-categories up to quasi-equivalence , and we write $\cRHom_{\Hqe}$ for the dg-space of quasi-functors between dg-categories (see \cite{toen} or  \cite{toenlectures}). These quasi-functors are not strictly speaking functors, but they induce honest functors on the homotopy categories. 
Whenever $\cC'$ is dg-triangulated, then $\cRHom_{\Hqe}(\cC,\cC')$ is dg-triangulated.

\begin{rem}
The space of quasi-functors is equivalent to the space of strictly unital $A_\infty$-functors. 
\end{rem}

It is in general a hard problem to understand the space of quasi-functors between dg-categories. However, by the results of Toen~\cite{toen}, if $\Bbbk$ is a field and $(A,d_A)$ and $(A',d_{A'})$ are dg-algebras, then it is possible to compute the space of `coproduct preserving' quasi-functors $\cRHom_{\Hqe}^{cop}(\cD_{dg}(A,d_A),\cD_{dg}(A',d_{A'}))$. Indeed, in the same way as the category of coproducts preserving functors between categories of modules is equivalent to the category of bimodules, there is a triangulated quasi-equivalence
\begin{equation}\label{eq:quasifunctequiv}
\cRHom_{\Hqe}^{cop}(\cD_{dg}(A,d_A),\cD_{dg}(A',d_{A'})) \cong \cD_{dg}((A',d_{A'}), (A,d_A)),
\end{equation}
where $ \cD_{dg}((A',d_{A'}), (A,d_A))$ is the dg-derived category of dg-bimodules. Composition of functors is equivalent to derived tensor product, and understanding the triangulated structure of $\cRHom_{\Hqe}^{cop}(\cD_{dg}(A,d_A),\cD_{dg}(A',d_{A'}))$ becomes as easy as to understand the structure of $\cD((A,d_A), (A',d_{A'}))$. 
In particular, a short exact sequence of dg-bimodules gives a distinguished triangle of dg-functors.

\subsubsection{Derived hom and tensor dg-functors}\label{sec:deriveddghomtensor}

Let $(R,d_R)$ and $(S,d_S)$ be dg-algebras. Let $(M,d_M)$ and $(N,d_N)$ be $(R,d_R)$-module and $(S,d_S)$-module respectively. Let $(B,d_B)$ be a dg-bimodule over $(S,d_S)$-$(R,d_R)$. 
The \emph{derived tensor product} is
\[
(B,d_B) \Lotimes_{(R,d_R)} (M,d_M) := (B,d_B) \otimes (\br M, d_{\br M}),
\]
and the \emph{derived hom space} is
\[
\RHOM_{(S,d_S)}((B,d_B), (N, d_N)) := \HOM_{(S,d_S)}((\br B, d_{\br B}), (N,d_N)).
\]

This defines in turns triangulated dg-functors
\begin{align*}
(B,d_B) \Lotimes_{(R,d_R)} (-) &: \cD_{dg}(R,d_R) \rightarrow \cD_{dg}(S,d_S),
\intertext{and} 
\RHOM_{(S,d_S)}((B,d_B), -) &: \cD_{dg}(S,d_S) \rightarrow \cD_{dg}(R,d_R), 
\end{align*}
which are adjoint $(B,d_B) \Lotimes_{(R,d_R)} (-)  \vdash \RHOM_{(S,d_S)}((B,d_B), -)$.

\subsection{Diagrammatic algebras}\label{ssec:diagalg}

We always read diagram from bottom to top. We say that a diagram is braid-like when it is given by strands connecting a collection of points on the bottom to a collection of points on the top, without being able to turn back. Suppose these diagrams can have singularities (like dots, 4-valent crossings, or other similar decorations). 

A \emph{braid-like planar isotopy} is an isotopy fixing the endpoints and that does not create any critical point, in particular it means we can exchange distant singularities $f$ and $g$:
 \[
 \tikzdiag{
	\draw (0,-1) -- (0,0) ..controls (0,.5) and (1,.5) .. (1,1);
	\draw (1,-1) -- (1,0) ..controls (1,.5) and (0,.5) .. (0,1);
		\filldraw [fill=white, draw=black,rounded corners] (.5-.25,.5-.25) rectangle (.5+.25,.5+.25) node[midway] { $g$};
}
\quad
\cdots
\quad
\tikzdiag{
	\draw (0,0) ..controls (0,.5) and (1,.5) .. (1,1) -- (1,2);
	\draw (1,0) ..controls (1,.5) and (0,.5) .. (0,1) -- (0,2);
		\filldraw [fill=white, draw=black,rounded corners] (.5-.25,.5-.25) rectangle (.5+.25,.5+.25) node[midway] { $f$};
}
\ =  \ 
\tikzdiag{
	\draw (0,0) ..controls (0,.5) and (1,.5) .. (1,1) -- (1,2);
	\draw (1,0) ..controls (1,.5) and (0,.5) .. (0,1) -- (0,2);
		\filldraw [fill=white, draw=black,rounded corners] (.5-.25,.5-.25) rectangle (.5+.25,.5+.25) node[midway] { $g$};
}
\quad
\cdots
\quad
\tikzdiag{
	\draw (0,-1) -- (0,0) ..controls (0,.5) and (1,.5) .. (1,1);
	\draw (1,-1) -- (1,0) ..controls (1,.5) and (0,.5) .. (0,1);
		\filldraw [fill=white, draw=black,rounded corners] (.5-.25,.5-.25) rectangle (.5+.25,.5+.25) node[midway] { $f$};
}
 \]

\subsection{Rewriting methods}
\label{sec:prelimrewriting}

Rewriting theory is a theory of equivalences that consist in transforming algebraic objects using successive applications of oriented relations. It has been developed in linear settings to solve the problem of membership to an ideal and to compute linear bases, with the theory of Gröbner bases \cite{Buchberger65,Buchberger87}. In this context, rewriting rules are oriented with respect to an ambient monomial order on the algebra.
In this section, we recall the linear context of polygraphic rewriting for associative algebras introduced in \cite{GuiraudHoffbeckMalbos19}, where this restriction on rewriting rules is removed. The calculations lay on two fundamental rewriting properties: 
\begin{enumerate}
    \item \emph{Termination} states that an element can not be rewritten infinitely many times, and therefore reaches a linear combination of irreducible monomials (i.e. monomials that cannot be rewritten) after finitely many steps. In particular these irreducible monomials form a spanning set.
    \item \emph{Confluence} states that if a given element can be reduced in two distinct ways, there have to exist rewriting paths allowing to reduce both resulting elements into a common one. In particular the irreducible monomials are linearly independent.
\end{enumerate}
The combination of termination and confluence, called \emph{convergence}, then ensures that the set of irreducible monomials form a basis of the original algebra.  Moreover, rewriting with polygraphs allows to obtain strong local confluence criteria. In particular, one proves that if a linear polygraph is terminating, its confluence is equivalent to the confluence of the minimal overlappings between any given two relations, called \emph{critical branchings}: 
suppose there are rewriting rules $xy \Rightarrow f$ and $yz \Rightarrow g$, then there is an overlapping over $y$ and we need the check the confluence between $xyz \Rightarrow fz$ and $xyz \Rightarrow xg$. In contrast, we do not need to verify the confluence of $xyzx$ nor $xxyz$ because they are not minimal overlappings in the sense that the rightmost $x$ (resp. leftmost $x$) is never rewritten by a rule. Under the assumption of termination, confluence of a branching of the form $xyyz$ does not have to be verified as well, in the sense that it is not an overlapping but what is called a Peiffer branching, and is automatically confluent as explained below.

Rewriting modulo extends these constructions by allowing to rewrite with respect to a set of non-oriented relations, seen as axioms that one can freely use in rewriting paths. This allows in particular to split the proofs of confluence of rewriting systems into incremental steps.  
We develop in \cref{sec:rewritingmethods} rewriting methods modulo braid-like isotopy, allowing to construct bases for diagrammatic algebras defined up to braid-like isotopy. See \cref{ex:rewriteNH} for an example of this theory applied to the nilHecke algebra. 

\smallskip

In \cite{GuiraudHoffbeckMalbos19}, associative algebras over a field $\Bbbk$ are interpreted as monoidal objects in the category $\mathbf{Vect}_{\Bbbk}$ of $\Bbbk$-vector spaces and linear maps, and are presented by linear $(1-)$-polygraphs. In the sequel, in view of an extension of the constructions of Appendix \ref{sec:basisthemrewriting} towards linear $2$-categories, we interpret associative algebras as categories enriched over $\mathbf{Vect}_{\Bbbk}$ with only one $0$-cell, and in that context they are presented by \emph{linear~$2$-polygraphs}. As a consequence, there is a shift in dimensions of objects compared to \cite{GuiraudHoffbeckMalbos19}, but the terminology and constructions remain the same.
These objects are triples $(P_0,P_1,P_2)$ made of sets containing generating elements for the algebra, and the relations of the algebra. In this context, $P_0$ is always a singleton, $P_1$ contains generating $1$-cells that correspond to the generators of the algebra, so that all the $1$-cells correspond to monomials, \emph{i.e.} products of the generators, and the generating $2$-cells correspond to the relations of the algebra.
 More precisely, a linear~$2$-polygraph is a data of $P = (P_0,P_1,P_2)$ such that:
\begin{enumerate}[{\bf i)}]
\item $(P_0,P_1)$ is an oriented graph with vertices $P_0$ and edges $P_1$, equipped with source and target maps $s_0$, $t_0: P_1 \fl P_0$.
\item $P_2$ is a cellular extension of the free $1$-algebroid $P_1^\ell$, that is a set equipped with two source and target maps $
s_1,t_1: P_2 \to P_1^\ell$
such that the globular relations $s_0 s_1 (\alpha) = s_0 t_1 (\alpha)$ and $t_0 s_1 (\alpha) = t_0 t_1 (\alpha)$ hold for any $\alpha \in P_2$, where the free $1$-algebroid $P_1^\ell$ on $(P_0,P_1)$ is defined as the $1$-category enriched over $\mathbf{Vect}_{\Bbbk}$ whose objects are the elements of $P_0$, and for any $p,q$ in $P_0$, $P_1^\ell(p,q)$ is the free $\Bbbk$-vector space with basis the elements of the free $1$-category generated by $(P_0,P_1)$ with source $p$ and target $q$.
\end{enumerate}

For a linear~$2$-polygraph $P=(P_0,P_1,P_2)$, the elements of $P_i$ are called the generating $i$-cells of $P$.
When $P_0$ is a singleton, then $P_1^\ell$ corresponds to the free associative $\Bbbk$-algebra on the set $P_1$, and thus a linear~$2$-polygraph with only one $0$-cell corresponds to a presentation by generators and oriented relations of an associative algebra, where the rewriting rules are given in $P_2$. More precisely, denote by $I(P)$ the $2$-sided ideal of $P_1^\ell$ generated by the set of elements 
$ \{ s_1 (\alpha) - t_1 (\alpha) \: | \: \alpha \in P_2 \}. $

A linear $2$-polygraph $P$ \emph{presents} an algebra $A$ if $A$ is isomorphic to $P_1^\ell / I(P)$. 
The rewriting sequences will then correspond to $2$-cells in the free $2$-algebra $P_2^\ell$ on $P$, we refer to \cite{GuiraudHoffbeckMalbos19} for more details on these constructions. From now on, we will consider linear $2$-polygraphs with only one $0$-cell. A \emph{monomial} in $P_2^\ell$ is a $1$-cell of the free $1$-category $P_1^\ast$, every $1$-cell $f$ in $P_1^\ell$ can be uniquely decomposed as a linear combination of monomials $f = \lambda_1 f_1 + \dots + \lambda_p f_p$, with $\lambda_i \in \Bbbk \backslash \{ 0 \}$ for all $0 \leq i \leq p$. The set of monomials $\{ f_1,\dots,f_p \}$ is called the \emph{support} of $f$, denoted by $\text{Supp}(f)$. A linear~$2$-polygraph $P$ is called \emph{left-monomial} if, for any $\alpha$ in $P_2$, the $1$-cell $s_1(\alpha)$ is a monomial in $P_1^\ell$.

\medskip
A \emph{rewriting step} is a $2$-cell in $P_2^\ell$ with shape
\[ \lambda
\begin{tikzcd}
\bullet \arrow[r,"u"] & \bullet \arrow[r, bend left=50, ""{name=U, below},"s_1(\alpha)"]\arrow[r, bend right=50, ""{name=D},"t_1(\alpha)"']& \bullet \arrow[r,"v"] & \bullet  \arrow[Rightarrow,from=U,to=D,"\alpha"]
\end{tikzcd} \quad + \quad \begin{tikzcd} \bullet \arrow[r,"g"] & \bullet \end{tikzcd}
\]
where $\alpha \in P_2$, $\lambda \in \Bbbk$, and $g$ is a $1$-cell in $P_1^\ell$ such that the monomial $u s_1(\alpha) v$ does not belong to $\text{Supp}(g)$, see \cite{GuiraudHoffbeckMalbos19}. A \emph{rewriting sequence} is either and identity reduction $f \Rightarrow f$, or a $1$-composite
\[ f_0 \overset{\alpha_1}{\Rightarrow} f_1 \overset{}{\Rightarrow} \dots f_{k-1} \overset{\alpha_k}{\Rightarrow} f_k, \]
of rewriting steps of $P$. The linear~$2$-polygraph $P$ is said to be terminating if there is no infinite rewriting sequence in $P$. A \emph{normal form} of $P$ is a $1$-cell in $P_1^\ell$ that cannot be reduced by any rewriting step. When $P$ is terminating, any $1$-cell admits at least one normal form. A \emph{branching} of $P$ is a pair $(\alpha, \beta)$ of rewriting sequences of $P$ with a common source $s_1(\alpha) = s_1 (\beta)$. It is \emph{local} if both $\alpha$ and $\beta$ are rewriting steps of $P$. A branching $(\alpha, \beta)$ of $P$ is \emph{confluent} if there exist rewriting sequences $\alpha '$ and $\beta'$ in $P$ as in the following diagram:
\[
\begin{tikzcd}[xscale=0.9,yscale=0.5, row sep=0ex]
{} & g \arrow[dr,bend left =25,"\alpha '"] & {} \\
f \arrow[ur, bend left= 25,"\alpha"] \arrow[dr, bend right=25,"\beta"'] & & f' \\
{} & h \arrow[ur, bend right=25,"\beta '"'] & {} 
\end{tikzcd}
\]
We say that $P$ is \emph{confluent} (resp. \emph{locally confluent}) if any branching (resp. local branching) of $P$ is confluent. When $P$ is confluent, every $1$-cell in $P_1^\ell$ admits at most one normal form. When both termination and confluence properties are satisfied, we say that $P$ is \emph{convergent}, and in that case any $1$-cell $f$ in $P_1^\ell$ admits a unique normal form, denoted by $\widehat{f}$. Newman lemma \cite{Newman42} states that if $P$ is terminating and locally confluent, then it is confluent. 

We are particularly interested in convergent presentations of algebras. Indeed, \cite[Theorem 3.4.2]{GuiraudHoffbeckMalbos19} states that if an algebra $A$ is presented by a convergent linear~$2$-polygraph $P$, then the set of monomials in normal form for $P$ form a basis of the algebra $A$. Moreover, there exist some local criteria to reach confluence of a linear~$2$-polygraph.

Following \cite{GuiraudHoffbeckMalbos19}, local branchings of a linear~$2$-polygraph $P$ can be classified into $4$ families: aspherical branchings that are branchings between a rewriting step $f$ and itself, Peiffer branchings that are branchings consisting in applying two rules on a monomial at different positions with no overlapping, additive branchings that are branchigs consisting in applying two rules on two different monomials of a polynomial, and overlapping branchings that are the remaining ones. Aspherical branchings are trivially confluent, and if $P$ is terminating, Peiffer and additive branchings are confluent, \cite[Theorem 4.2.1]{GuiraudHoffbeckMalbos19}. A \emph{critical branching} of $P$ is an overlapping branching $(\alpha, \beta)$
that is minimal for the relation on branchings defined by $(\alpha,\beta) \subseteq (f \alpha f', f \beta f')$ for any monomials $f$, $f'$ in $P_1^\ast$. Following \cite[Theorem 4.2.1]{GuiraudHoffbeckMalbos19}, if $P$ is terminating it is locally confluent if and only if all its critical branchings are confluent. Thus, if $P$ is a terminating linear~$2$-polyraph, proving its confluence amounts to checking the confluence of all its critical branchings. 

In \cite{DMpp18}, a polygraphic context of rewriting modulo was introduced. Given two linear~$2$-polygraphs $(P_0,P_1,E)$ and $(P_0,P_1,R)$, one defines the cellular extension ${}_E R_E$ of $P_1^\ell$ as the set of $2$-cells that can be written as a composition $e \star_1 f \star_1 e'$, where $e$ and $e'$ are $2$-cells in $E^\ell$ and $f$ is a rewriting step of $R$. Namely, there is a rewriting step from $f$ to $g$ in ${}_E R_E$ if and only if there exists $f'$ and $g'$ in $P_1^\ell$ such that $f$ is $E$-equivalent to $f'$, $g$ is $E$-equivalent to $g'$ and there is a rewriting step for $P$ with source $f'$ and target $g'$. Explicitely, this consists in rewriting with respect to $R$ on equivalence classes modulo $E$. The data $(P_0,P_1,{}_E R_E)$ thus defines a linear~$2$-polygraph, that we denote by ${}_E R_E$. A {\emph linear $2$-polygraph modulo} is a data made of a triple $(R,E,S)$ where $R$ and $E$ are linear $2$-polygraphs with the same underlying $1$-polygraph, denoted by $P$, and $S$ is a cellular extension of $P_1^\ell$ such that $R \subseteq S \subseteq {}_E R_E$. A \emph{branching modulo} of $(R,E,S)$ is a triple $(\alpha,e,\beta)$ where $f$ and $g$ are rewriting paths of $S_2^\ell$ and $e$ is a $2$-cell of $E_2^\ell$ such that $s_1(\alpha) = s_1(e)$ and $s_1(\beta)= t_1(e)$. Such a branching is said to be confluent modulo $E$ if there exist rewriting paths $\alpha'$, $\beta'$ in $S_2^\ell$ and a $2$-cell $e'$ in $E_2^\ell$ as in the following diagram: 
\[
\begin{tikzcd}
f \arrow[r,"\alpha"] \arrow[d,"e"'] & f' \arrow[r,"\alpha'"] & f'' \arrow[d,"e'"] \\
g \arrow[r,"\gamma"'] & g' \arrow[r,"\gamma'"'] & g''.
\end{tikzcd}
\]
The linear $2$-polygraph modulo $(R,E,S)$ is said to be \emph{confluent modulo} $E$ if any of its branching modulo is confluent modulo $E$. We refer the reader to \cite{DMpp18,DUP19} for rewriting properties of polygraphs and linear polygraphs modulo. The local confluence criteria in terms of critical branchings for terminating linear rewriting systems has been extended in \cite{DUP19} in the context of linear rewriting modulo. When ${}_E R_E$ is terminating, in order to prove that the linear~$2$-polygraph ${}_E R_E$ is confluent modulo, it suffices to prove that the critical branchings modulo $(\alpha, \beta)$ where $\alpha$ is a rewriting step of $R$ and $\beta$ is a rewriting step of ${}_E R_E$ are confluent. Namely, these critical branchings modulo are given by application of a rewriting step $\alpha$ of $R$ and a rewriting step $\gamma$ of $R$ on two $1$-cells that are $E$-equivalent, with $(\alpha,e,\gamma)$ being minimal for the order $(\alpha, e , \beta) \subseteq (h \alpha h', heh', h \gamma h')$.

Moreover, following \cite{DUP19}, when the linear~$2$-polygraph $E$ is convergent, the basis theorem of \cite{GuiraudHoffbeckMalbos19} extends to that context of rewriting modulo. Explicitely, given an algebra $A$ presented by a linear~$2$-polygraph $P$ that we split into two parts $E$ (non-oriented) and $R$ (oriented), if $E$ is convergent, ${}_E R_E$ is terminating and ${}_E R_E$ is confluent modulo $E$, then the set of $E$-normal forms of monomials in normal form with respect to ${}_E R_E$ yields a basis of $A$.

%%%%%%%%%%%%%%%%%%%%%%%%%%%%%%%%%%%%
%                 					  				  		 %
%	Dg-webster algebras			 					 %
%                 					  						 %
%%%%%%%%%%%%%%%%%%%%%%%%%%%%%%%%%%%%

\section{Dg-enhanced  KLRW algebras}\label{sec:dgWebster}

Inspired by the KLRW algebra in~\cite[\S4]{webster} (called ``tensor product algebra'' in the reference), which we think of as associated to a string of dominant integral $\bg$-weights, and generalizing the dg-enhanced KLRW algebra in~\cite{LNV}, which we think of as associated to a generic weight $\beta$ and a string of dominant integral $\bg$-weights, we introduce a dgKLRW (dg-)algebra associated to a string of weights that can each either be generic or integral.

\begin{defn}\label{defn:dgKLRW}
For $\und \mu = (\mu_1, \dots, \mu_r) \in (\bN \sqcup (\beta + \bZ))^r$,  
the \emph{dgKLRW-algebra} $\dgT_b^{\und \mu}$ is the diagrammatic $\Bbbk$-algebra defined as follows:
\begin{itemize}
\item $\dgT_b^{\und \mu}$  is generated by braid-like diagrams on $b$ black strands and $r$ colored strands. The colored strand are labeled from left to right by $\mu_1, \dots, \mu_r$, and we refer to the colored strands labeled by elements in $\bN$ as \emph{red strands}, while the ones labeled by elements in $\beta + \bZ$ are called \emph{blue strands}. We also require that the left-most strand is always colored (and thus labeled $\mu_1$). 
\item The colored strands cannot intersect each other, but the black strands can intersect all other strands (both black and colored) transversely. Moreover, black strands can carry dots, and can be `nailed' on the left-most colored strand:
\begin{align}
\label{eq:generators}
\tikzdiag{
	\draw (0,0)  ..controls (0,.5) and (1,.5) .. (1,1);
	\draw (1,0)  ..controls (1,.5) and (0,.5) .. (0,1);
          \node[align=center] at(.5,-1){black crossing\\ $q^{-2}$ };
}
&&
\tikzdiag{
	\draw (1,0)  ..controls (1,.5) and (0,.5) .. (0,1);
	\draw[pstdhl] (0,0) node[below]{\small $\mu_i$}  ..controls (0,.5) and (1,.5) .. (1,1);
	\draw (2.5,0)  ..controls (2.5,.5) and (3.5,.5) .. (3.5,1);
	\draw[pstdhl] (3.5,0) node[below]{\small $\mu_i$}   ..controls (3.5,.5) and (2.5,.5) .. (2.5,1);
          \node[align=center]  at(1.75,-1){colored crossings\\ $q^{\mu_i}$};
}
&&
\tikzdiag{
	\draw (0,0) -- (0,1) node [midway,tikzdot]{};
          \node[align=center]  at(0,-1){dot\\ $q^{2}$};
}
&&
\tikzdiag[xscale=2]{
	 \draw (.5,-.5) .. controls (.5,-.25) .. (0,0) .. controls (.5,.25) .. (.5,.5);
          \draw[pstdhl] (0,-.5) node[below]{\small $\mu_1$}-- (0,.5) node [midway,nail]{};
          \node[align=center]  at(.25,-1.5){nail\\ $hq^{2\mu_i}$};
  }
\end{align}
\item The product $x y$ of two diagrams $x$ and $y$ is given by stacking $x$ on top of $y$ if the color of the strands match, and is zero otherwise. 
\item We consider these diagrams up to braid-like planar isotopy, and subject to the following local relations:
\begin{itemize}
\item the \emph{nilHecke} relations:
\begin{align}
\label{eq:nhR2andR3}
\tikzdiag[yscale=1.5]{
	\draw (0,0) ..controls (0,.25) and (1,.25) .. (1,.5) ..controls (1,.75) and (0,.75) .. (0,1)  ;
	\draw (1,0) ..controls (1,.25) and (0,.25) .. (0,.5)..controls (0,.75) and (1,.75) .. (1,1)  ;
} 
\ &=\  
0
&
\tikzdiag[scale=1.5]{
	\draw  (0,0) .. controls (0,0.25) and (1, 0.5) ..  (1,1);
	\draw  (1,0) .. controls (1,0.5) and (0, 0.75) ..  (0,1);
	\draw  (0.5,0) .. controls (0.5,0.25) and (0, 0.25) ..  (0,0.5)
		 	  .. controls (0,0.75) and (0.5, 0.75) ..  (0.5,1);
} 
\ &= \ 
\tikzdiag[scale=1.5,xscale=-1]{
	\draw  (0,0) .. controls (0,0.25) and (1, 0.5) ..  (1,1);
	\draw  (1,0) .. controls (1,0.5) and (0, 0.75) ..  (0,1);
	\draw  (0.5,0) .. controls (0.5,0.25) and (0, 0.25) ..  (0,0.5)
		 	  .. controls (0,0.75) and (0.5, 0.75) ..  (0.5,1);
} 
\\
\label{eq:nhdotslide}
\tikzdiag{
	\draw (0,0) ..controls (0,.5) and (1,.5) .. (1,1) node [near start,tikzdot]{};
	\draw (1,0) ..controls (1,.5) and (0,.5) .. (0,1);
}
\ &= \ 
\tikzdiag{
	\draw (0,0) ..controls (0,.5) and (1,.5) .. (1,1) node [near end,tikzdot]{};
	\draw (1,0) ..controls (1,.5) and (0,.5) .. (0,1);
}
\ + \ 
\tikzdiag{
	\draw (0,0) -- (0,1)  ;
	\draw (1,0)-- (1,1)  ;
}
&
\tikzdiag{
	\draw (0,0) ..controls (0,.5) and (1,.5) .. (1,1);
	\draw (1,0) ..controls (1,.5) and (0,.5) .. (0,1) node [near end,tikzdot]{};
}
\ &= \ 
\tikzdiag{
	\draw (0,0) ..controls (0,.5) and (1,.5) .. (1,1);
	\draw (1,0) ..controls (1,.5) and (0,.5) .. (0,1) node [near start,tikzdot]{};
}
\ + \ 
\tikzdiag{
	\draw (0,0) -- (0,1)  ;
	\draw (1,0)-- (1,1)  ;
} 
\end{align}
\item the sliding relations for all $\mu_i \in \bN \sqcup (\beta + \bZ)$:
\begin{align}
\tikzdiagl[scale=1.5]{
	\draw  (0.5,0) .. controls (0.5,0.25) and (0, 0.25) ..  (0,0.5)
		 	  .. controls (0,0.75) and (0.5, 0.75) ..  (0.5,1);
	\draw (1,0)  .. controls (1,0.5) and (0, 0.75) ..  (0,1);
	\draw  [pstdhl] (0,0) node[below]{$\mu_i$}  .. controls (0,0.25) and (1, 0.5) ..  (1,1);
} 
\ &= \ 
\tikzdiagl[scale=1.5,xscale=-1]{
	\draw  (0,0) .. controls (0,0.25) and (1, 0.5) ..  (1,1);
	\draw  (0.5,0) .. controls (0.5,0.25) and (0, 0.25) ..  (0,0.5)
		 	  .. controls (0,0.75) and (0.5, 0.75) ..  (0.5,1);
	\draw [pstdhl] (1,0) node[below]{\small $\mu_i$}  .. controls (1,0.5) and (0, 0.75) ..  (0,1);
} 
&
\tikzdiagl[scale=1.5]{
	\draw  (0,0) .. controls (0,0.25) and (1, 0.5) ..  (1,1);
	\draw  (0.5,0) .. controls (0.5,0.25) and (0, 0.25) ..  (0,0.5)
		 	  .. controls (0,0.75) and (0.5, 0.75) ..  (0.5,1);
	\draw [pstdhl] (1,0) node[below]{\small $\mu_i$} .. controls (1,0.5) and (0, 0.75) ..  (0,1);
} 
\ &= \ 
\tikzdiagl[scale=1.5,xscale=-1]{
	\draw  (0.5,0) .. controls (0.5,0.25) and (0, 0.25) ..  (0,0.5)
		 	  .. controls (0,0.75) and (0.5, 0.75) ..  (0.5,1);
	\draw (1,0)  .. controls (1,0.5) and (0, 0.75) ..  (0,1);
	\draw  [pstdhl] (0,0) node[below]{\small $\mu_i$} .. controls (0,0.25) and (1, 0.5) ..  (1,1);
} 
\label{eq:crossingslidered}
\\
\tikzdiagl{
	\draw (1,0) ..controls (1,.5) and (0,.5) .. (0,1) node [near end,tikzdot]{};
	\draw[pstdhl] (0,0) node[below]{\small $\mu_i$}  ..controls (0,.5) and (1,.5) .. (1,1);
}
\ &= \ 
\tikzdiagl{
	\draw (1,0) ..controls (1,.5) and (0,.5) .. (0,1) node [near start,tikzdot]{};
	\draw[pstdhl] (0,0) node[below]{\small $\mu_i$}  ..controls (0,.5) and (1,.5) .. (1,1);
}
&
\tikzdiagl{
	\draw (0,0) ..controls (0,.5) and (1,.5) .. (1,1) node [near start,tikzdot]{};
	\draw[pstdhl] (1,0) node[below]{\small $\mu_i$}  ..controls (1,.5) and (0,.5) .. (0,1);
}
\ &= \ 
\tikzdiagl{
	\draw (0,0) ..controls (0,.5) and (1,.5) .. (1,1) node [near end,tikzdot]{};
	\draw[pstdhl] (1,0) node[below]{\small $\mu_i$}  ..controls (1,.5) and (0,.5) .. (0,1);
} 
\label{eq:dotredstrand}
\end{align}
\item the red relations for all $\mu_i \in \bN$ and $i > 1$:
\begin{align}
\tikzdiagh[yscale=1.5]{0}{
	\draw (1,0) ..controls (1,.25) and (0,.25) .. (0,.5)..controls (0,.75) and (1,.75) .. (1,1)  ;
	\draw[stdhl] (0,0) node[below]{\small $\mu_i$} ..controls (0,.25) and (1,.25) .. (1,.5) ..controls (1,.75) and (0,.75) .. (0,1)  ;
} 
\ &= \ 
\tikzdiagh[yscale=1.5]{0}{
	\draw[stdhl] (0,0) node[below]{\small $\mu_i$} -- (0,1)  ;
	\draw (1,0) -- (1,1)  node[midway,tikzdot]{}  node[midway,xshift=1.75ex,yshift=.75ex]{\small $\mu_i$} ;
} 
&
\tikzdiagh[yscale=1.5]{0}{
	\draw (0,0) ..controls (0,.25) and (1,.25) .. (1,.5) ..controls (1,.75) and (0,.75) .. (0,1)  ;
	\draw[stdhl] (1,0) node[below]{\small $\mu_i$} ..controls (1,.25) and (0,.25) .. (0,.5)..controls (0,.75) and (1,.75) .. (1,1)  ;
} 
\ &= \ 
\tikzdiagh[yscale=1.5]{0}{
	\draw (0,0) -- (0,1)  node[midway,tikzdot]{}   node[midway,xshift=1.75ex,yshift=.75ex]{\small $\mu_i$} ;
	\draw[stdhl] (1,0) node[below]{\small $\mu_i$} -- (1,1)  ;
} 
\label{eq:redR2}
\end{align}
\begin{align}
\tikzdiagh[scale=1.5]{0}{
	\draw  (0,0) .. controls (0,0.25) and (1, 0.5) ..  (1,1);
	\draw  (1,0) .. controls (1,0.5) and (0, 0.75) ..  (0,1);
	\draw [stdhl] (0.5,0)node[below]{\small $\mu_i$}  .. controls (0.5,0.25) and (0, 0.25) ..  (0,0.5)
		 	  .. controls (0,0.75) and (0.5, 0.75) ..  (0.5,1);
} 
\ &= \ 
\tikzdiagh[scale=1.5,xscale=-1]{0}{
	\draw  (0,0) .. controls (0,0.25) and (1, 0.5) ..  (1,1);
	\draw (1,0)  .. controls (1,0.5) and (0, 0.75) ..  (0,1);
	\draw [stdhl]  (0.5,0) node[below]{\small $\mu_i$} .. controls (0.5,0.25) and (0, 0.25) ..  (0,0.5)
		 	  .. controls (0,0.75) and (0.5, 0.75) ..  (0.5,1);
} 
\ + \sssum{u+v=\\\mu_i-1} \ 
\tikzdiagh[scale=1.5]{0}{
	\draw  (0,0) -- (0,1) node[midway,tikzdot]{} node[midway,xshift=-1.5ex,yshift=.75ex]{\small $u$};
	\draw  (1,0) --  (1,1) node[midway,tikzdot]{} node[midway,xshift=1.5ex,yshift=.75ex]{\small $v$};
	\draw [stdhl] (0.5,0)node[below]{\small $\mu_i$}  --  (0.5,1);
} \label{eq:redR3}
\end{align}
where a non-negative label $k$ next to a dot means we put $k$ consecutive dots,
\item the blue relations for all $\mu_i \in \beta + \bZ$ and $i > 1$:
\begin{align}\label{eq:vredR}
	\tikzdiagl[yscale=1.5]{
		\draw (1,0) ..controls (1,.25) and (0,.25) .. (0,.5)..controls (0,.75) and (1,.75) .. (1,1)  ;
		\draw[vstdhl] (0,0)node[below]{\small $\mu_i$} ..controls (0,.25) and (1,.25) .. (1,.5) ..controls (1,.75) and (0,.75) .. (0,1)  ;
	} 
	\ &= 0,
	&
	\tikzdiagl[yscale=1.5]{
		\draw (0,0) ..controls (0,.25) and (1,.25) .. (1,.5) ..controls (1,.75) and (0,.75) .. (0,1)  ;
		\draw[vstdhl] (1,0)node[below]{\small $\mu_i$} ..controls (1,.25) and (0,.25) .. (0,.5)..controls (0,.75) and (1,.75) .. (1,1)  ;
	} 
	\ &= 0,
	%\label{eq:vredR2}
&
	\tikzdiagl[scale=1.5]{
		\draw  (0,0) .. controls (0,0.25) and (1, 0.5) ..  (1,1);
		\draw  (1,0) .. controls (1,0.5) and (0, 0.75) ..  (0,1);
		\draw [vstdhl] (0.5,0)node[below]{\small $\mu_i$}  .. controls (0.5,0.25) and (0, 0.25) ..  (0,0.5)
			 	  .. controls (0,0.75) and (0.5, 0.75) ..  (0.5,1);
	} 
	\ &= \ 
	\tikzdiagl[scale=1.5,xscale=-1]{
		\draw  (0,0) .. controls (0,0.25) and (1, 0.5) ..  (1,1);
		\draw (1,0)  .. controls (1,0.5) and (0, 0.75) ..  (0,1);
		\draw [vstdhl]  (0.5,0)node[below]{\small $\mu_i$}  .. controls (0.5,0.25) and (0, 0.25) ..  (0,0.5)
			 	  .. controls (0,0.75) and (0.5, 0.75) ..  (0.5,1);
	} 
	%\label{eq:vredR3}
\end{align}
\item the nail relations:
\begin{align}  \label{eq:nailsrel} %\tag{\ref{eq:dotslidesovernail}}
	\tikzdiagl[xscale=2]{
		 \draw (.5,-.5) .. controls (.5,-.25) .. (0,0) .. controls (.5,.25) .. (.5,.5)  node[midway, tikzdot]{};
	          \draw[pstdhl] (0,-.5) node[below]{\small $\mu_1$} -- (0,.5) node [midway,nail]{};
  	}
\  &= \ 
	\tikzdiagl[xscale=2]{
		 \draw (.5,-.5) .. controls (.5,-.25) .. (0,0)  node[midway, tikzdot]{} .. controls (.5,.25) .. (.5,.5);
	          \draw[pstdhl] (0,-.5) node[below]{\small $\mu_1$} -- (0,.5) node [midway,nail]{};
  	}
  &
%\end{align}
%\begin{align}\tag{\ref{eq:nailscommute}}
	\tikzdiagl[xscale=-1,yscale=.75]{
		\draw (-1.5,-.75) -- (-1.5,0) .. controls (-1.5,.5) .. (0,.75) .. controls (-1.5,1) .. (-1.5,1.5);
		\draw (-.75,-.75) .. controls (-.75,-.25) .. (0,0) .. controls (-.75,.25) .. (-.75,.75) -- (-.75,1.5);
		\draw[pstdhl] (0,-.75) node[below]{\small $\mu_1$} -- (0,0) node[pos=1,nail]{} -- (0,.75) node[pos=1,nail]{}  -- (0,1.5);
	}
\ &= - \ 
	\tikzdiagl[xscale=-1,yscale=-.75]{
		\draw (-1.5,-.75) -- (-1.5,0) .. controls (-1.5,.5) .. (0,.75) .. controls (-1.5,1) .. (-1.5,1.5);
		\draw (-.75,-.75) .. controls (-.75,-.25) .. (0,0) .. controls (-.75,.25) .. (-.75,.75) -- (-.75,1.5);
		\draw[pstdhl] (0,-.75) -- (0,0) node[pos=1,nail]{} -- (0,.75) node[pos=1,nail]{}  -- (0,1.5) node[below]{$\mu_1$} ;
	}
&
%\end{align}
%\begin{align} \tag{\ref{eq:doublenail}}
	\tikzdiagl[xscale=-1,yscale=.75]{
		\begin{scope}
			\clip(0,-.75) rectangle (-.75,1.5);
			\draw (.75,-.75) .. controls (.75,0) and (-.5,0) .. (-.5,.375) 
				.. controls (-.5,.75) and (.75,.75) .. (.75,1.5);
		\end{scope}
		\draw (-.75,-.75) .. controls (-.75,-.25) .. (0,0)
			%.. controls (-.5,.375) .. 
			(0,.75) .. controls (-.75,1.125) .. (-.75,1.5);
		\draw[pstdhl] (0,-.75)  node[below]{\small $\mu_1$} -- (0,0) node[pos=1,nail]{} -- (0,.75) node[pos=1,nail]{}  -- (0,1.5);
	}
\ &= 0.
\end{align}
\end{itemize}
\item We endow $\dgT^{\und \mu}_b$ with a $\bZ \times \bZ^2$ grading, where the first grading is homological and denoted $h$, and the second and third one are extra grading denoted $q$ and $\lambda$ respectively. For this, we declare that the generators are in degree given by the monomial written below them in \cref{eq:generators}, where the monomial $h^a q^{b + c\beta} := h^a q^b \lambda^c$ means the element is in homological degree $a$, $q$-degree $b$ and $\lambda$-degree $c$. 
\item We turn $\dgT^{\und \mu}_b$ into a $\bZ^2$-graded dg-algebra $(\dgT^{\und \mu}_b, d_\mu)$ by defining a differential $d_{\mu}$ as being zero on the dots and crossings, and
\begin{align*}
d_{\mu}\left(
\tikzdiag[xscale=2]{
	 \draw (.5,-.5) .. controls (.5,-.25) .. (0,0) .. controls (.5,.25) .. (.5,.5);
          \draw[pstdhl] (0,-.5) node[below]{\small $\mu_1$}-- (0,.5) node [midway,nail]{};
  }
  \right)
  := \begin{cases}
  \hfill 0, \hfill & \text{if $\mu_1 \in \beta + \bZ$,} \\[.3cm]
  \tikzdiagl[xscale=2]{
	 \draw (.5,-.5) -- (.5,.5) node[midway,tikzdot]{} node[midway, xshift=2ex,yshift=.75ex]{$\mu_1$};
          \draw[stdhl] (0,-.5) node[below]{\small $\mu_1$}-- (0,.5);
  }
  & \text{if $\mu_1 \in \bN$,}
  \end{cases}
\end{align*}
and extending using the graded Leibniz rule (it is straightforward to verify that $d_\mu$ is well-defined).
\end{itemize}
\end{defn}

Note that for $\mu_1 = \beta$ and all $\mu_{i} \in \bN$ for $i > 1$, then the dg-algebra $(\dgT^{\und \mu}_b, d_{\mu})$ coincides with the dg-enhanced KLRW dg-algebra of \cite[\S3.2]{LNV}. When $\mu_1 \in \bN$, then it coincides with the dg-enhanced KLRW dg-algebra of \cite[\S3.4]{LNV} equipped with the non-trivial differential. Thus we get the following:

\begin{prop}[{\cite[Theorem 3.13]{LNV}}]\label{prop:olddgKLRW}
For a string of integral dominant weights $\und \mu \in \bN^r$, there is a quasi-isomorphism
\[
(\dgT^{\und \mu}_b, d_\mu) \xrightarrow{\simeq} (T^{\und \mu}_b,  0),
\]
where $(T^{\und \mu}_b, 0)$ is the KLRW algebra (tensor product algebra) of~\cite[\S4]{webster} viewed as a $\bZ^2$-graded dg-algebra concentrated in homological and $\lambda$-degrees zero. 
\end{prop}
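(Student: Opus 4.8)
The plan is to reduce the statement entirely to \cite{LNV}. The key point is that, for $\und\mu\in\bN^r$, the dg-algebra $(\dgT^{\und\mu}_b,d_\mu)$ of \cref{defn:dgKLRW} has no blue strands at all: every colored strand is red, the blue relations \eqref{eq:vredR} are vacuous, the generators reduce to black crossings, dots, red crossings and a single nail on the leftmost (red) strand, and $d_\mu$ is the non-trivial differential sending that nail to $\mu_1$ consecutive dots on the adjacent black strand. So the first step is to match, generator by generator and relation by relation — while tracking the $\bZ\times\bZ^2$ grading, the normalizations ($q^2$ for a dot, $q^{-2}$ for a black crossing, $q^{\mu_i}$ for a colored crossing, $hq^{2\mu_1}$ for the nail) and the signs in the graded Leibniz rule — the presentation of \cref{defn:dgKLRW} in this special case with the one of the dg-enhanced KLRW algebra of \cite[\S3.4]{LNV} equipped with its non-trivial differential. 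As the preceding discussion already records, these two presentations coincide on the nose.

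Once this identification is in place, the proposition \emph{is} \cite[Theorem 3.13]{LNV}, which produces the quasi-isomorphism onto Webster's tensor product algebra $T^{\und\mu}_b$, regarded as a $\bZ^2$-graded dg-algebra concentrated in homological and $\lambda$-degree zero. For completeness I would recall the mechanism behind that theorem: the nail $n$ is a homological-degree-one generator with $d_\mu(n)=x^{\mu_1}$, where $x$ denotes the dot on the black strand immediately to the right of the leftmost colored strand, and — together with the nail relations \eqref{eq:nailsrel}, which in particular make $n$ square-zero — this exhibits the dot-and-nail part of the algebra as a two-term complex resolving $\Bbbk[x]/(x^{\mu_1})$. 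Using a basis of $\dgT^{\und\mu}_b$ exhibiting it as free over the relevant polynomial subalgebra, one concludes that $H(\dgT^{\und\mu}_b,d_\mu)$ is concentrated in homological degree zero and identifies with the quotient of the nail-free algebra by the cyclotomic (``violating'') ideal, i.e.\ with $T^{\und\mu}_b$; the quasi-isomorphism is then the evident surjection that kills the nail and is the identity on the remaining generators (well-defined since $d_\mu(n)$ maps to the cyclotomic relation in $T^{\und\mu}_b$).

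The genuine homological-algebra content — the Koszulness argument and the underlying basis computation — is entirely packaged in \cite[Theorem 3.13]{LNV}, so the only thing left to verify here is that specializing \cref{defn:dgKLRW} to $\und\mu\in\bN^r$ has neither strengthened nor weakened the presentation relative to \cite{LNV}: one checks the red $R2$ and $R3$ relations \eqref{eq:redR2}--\eqref{eq:redR3}, the sliding relations \eqref{eq:crossingslidered}--\eqref{eq:dotredstrand}, the nilHecke relations \eqref{eq:nhR2andR3}--\eqref{eq:nhdotslide} and the nail relations \eqref{eq:nailsrel}, together with the grading conventions. Since \cref{defn:dgKLRW} was designed precisely to subsume the constructions of \cite{webster} and \cite{LNV}, this comparison is routine; I expect it to be the only place where any real (though entirely mechanical) care is needed, and the proposition then follows immediately.
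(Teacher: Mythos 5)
Your proposal is correct and follows the same route as the paper: the paper simply observes that for $\und\mu \in \bN^r$ the algebra $(\dgT^{\und\mu}_b,d_\mu)$ coincides with the dg-enhanced KLRW algebra of \cite[\S3.4]{LNV} with its non-trivial differential, and then cites \cite[Theorem 3.13]{LNV}, which is exactly your reduction. The additional sketch of the Koszul-resolution mechanism behind the LNV theorem is accurate but not part of the paper's (one-line) argument.
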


For the sake of keeping notations short, we introduce the following:
\[
\tikzdiag{
	\draw (0,0) -- (0,1) node[midway, tikzdot]{} node[midway, xshift=1.5ex, yshift=.75ex]{\small $\beta$};
}
 := 0.
\]
In particular, it allows us to write in general
\[
d_{\mu}\left(
\tikzdiag[xscale=2]{
	 \draw (.5,-.5) .. controls (.5,-.25) .. (0,0) .. controls (.5,.25) .. (.5,.5);
          \draw[pstdhl] (0,-.5) node[below]{\small $\mu_1$}-- (0,.5) node [midway,nail]{};
  }
  \right)
  =
  \tikzdiag[xscale=2]{
	 \draw (.5,-.5) -- (.5,.5) node[midway,tikzdot]{} node[midway, xshift=2ex,yshift=.75ex]{$\mu_1$};
          \draw[pstdhl] (0,-.5) node[below]{\small $\mu_1$}-- (0,.5);
  }
\]
and rewrite the relations \eqref{eq:redR2}-\eqref{eq:vredR} as
\begin{align*}
\tikzdiagh[yscale=1.5]{0}{
	\draw (1,0) ..controls (1,.25) and (0,.25) .. (0,.5)..controls (0,.75) and (1,.75) .. (1,1)  ;
	\draw[pstdhl] (0,0) node[below]{\small $\mu_i$} ..controls (0,.25) and (1,.25) .. (1,.5) ..controls (1,.75) and (0,.75) .. (0,1)  ;
} 
\ &= \ 
\tikzdiagh[yscale=1.5]{0}{
	\draw[pstdhl] (0,0) node[below]{\small $\mu_i$} -- (0,1)  ;
	\draw (1,0) -- (1,1)  node[midway,tikzdot]{}  node[midway,xshift=1.75ex,yshift=.75ex]{\small $\mu_i$} ;
} 
&
\tikzdiagh[yscale=1.5]{0}{
	\draw (0,0) ..controls (0,.25) and (1,.25) .. (1,.5) ..controls (1,.75) and (0,.75) .. (0,1)  ;
	\draw[pstdhl] (1,0) node[below]{\small $\mu_i$} ..controls (1,.25) and (0,.25) .. (0,.5)..controls (0,.75) and (1,.75) .. (1,1)  ;
} 
\ &= \ 
\tikzdiagh[yscale=1.5]{0}{
	\draw (0,0) -- (0,1)  node[midway,tikzdot]{}   node[midway,xshift=1.75ex,yshift=.75ex]{\small $\mu_i$} ;
	\draw[pstdhl] (1,0) node[below]{\small $\mu_i$} -- (1,1)  ;
} 
\end{align*}
\begin{align*}
\tikzdiagh[scale=1.5]{0}{
	\draw  (0,0) .. controls (0,0.25) and (1, 0.5) ..  (1,1);
	\draw  (1,0) .. controls (1,0.5) and (0, 0.75) ..  (0,1);
	\draw [pstdhl] (0.5,0)node[below]{\small $\mu_i$}  .. controls (0.5,0.25) and (0, 0.25) ..  (0,0.5)
		 	  .. controls (0,0.75) and (0.5, 0.75) ..  (0.5,1);
} 
\ &= \ 
\tikzdiagh[scale=1.5,xscale=-1]{0}{
	\draw  (0,0) .. controls (0,0.25) and (1, 0.5) ..  (1,1);
	\draw (1,0)  .. controls (1,0.5) and (0, 0.75) ..  (0,1);
	\draw [pstdhl]  (0.5,0) node[below]{\small $\mu_i$} .. controls (0.5,0.25) and (0, 0.25) ..  (0,0.5)
		 	  .. controls (0,0.75) and (0.5, 0.75) ..  (0.5,1);
} 
\ + \sssum{u+v=\\\mu_i-1} \ 
\tikzdiagh[scale=1.5]{0}{
	\draw  (0,0) -- (0,1) node[midway,tikzdot]{} node[midway,xshift=-1.5ex,yshift=.75ex]{\small $u$};
	\draw  (1,0) --  (1,1) node[midway,tikzdot]{} node[midway,xshift=1.5ex,yshift=.75ex]{\small $v$};
	\draw [pstdhl] (0.5,0)node[below]{\small $\mu_i$}  --  (0.5,1);
} 
\end{align*}
where the sum is zero whenever $\mu_i \in \beta + \bZ$ since there are no pair of non-negative integer $u$ and $v$ such that $u+v = \beta + z -1$. 

\subsection{Basis}

For any $\rho=(b_1,\dots,b_r)\in \mathcal{P}_b^r$, define the idempotent
\[
1_{\rho} := \tikzdiagh{0}{
	\draw[pstdhl] (2,0)  node[below]{\small $\mu_1$} --(2,1);
	\draw (2.5,0) -- (2.5,1);
	\node at(3,.5) {\tiny$\dots$};
	\draw (3.5,0) -- (3.5,1);
	\draw[decoration={brace,mirror,raise=-8pt},decorate]  (2.4,-.35) -- node {$b_1$} (3.6,-.35);
	\draw[pstdhl] (4,0)  node[below]{\small $\mu_2$} --(4,1);
	\node[pcolor] at  (5,.5) {\dots};
	\draw[pstdhl] (6,0)  node[below]{\small ${\mu_r}$} --(6,1);
	\draw (6.5,0) -- (6.5,1);
	\node at(7,.5) {\tiny$\dots$};
	\draw (7.5,0) -- (7.5,1);
	\draw[decoration={brace,mirror,raise=-8pt},decorate]  (6.4,-.35) -- node {$b_r$} (7.6,-.35);
}
\]
of $\muT_{b}$. 
We will construct a $\bZ\times\bZ^2$-graded $\Bbbk$-basis ${}_\kappa B_\rho$ for $1_\kappa \muT_{b}1_\rho$,  similarly as in \cite[Section 3.2.3]{NV3}.

\subsubsection{Left-adjusted expressions}
Let $S_n$ be the symmetric group viewed as a Coxeter group generated by the simple transpositions $\sigma_1, \dots, \sigma_{n-1}$. 
Recall the notion of left-adjusted expressions of \cite[Section 2.2.1]{NV2}: a reduced expression $\sigma_{i_1}\cdots\sigma_{i_k}$ of an element $w\in S_{n}$ is said to be left-adjusted if $i_1+\cdots + i_k$ is minimal. One can obtain a left-adjusted expression of any element of $S_{n}$ by taking recursively its representative in the left coset decomposition
\[
  S_n = \bigsqcup_{t=1}^{n}S_{n-1}\sigma_{n-1}\cdots\sigma_{t}.
\]

If we think of permutations as string diagrams, a left-adjusted reduced expression is obtained by pulling every string as far as possible to the left.

\subsubsection{A basis of $\muT_{b}$}\label{sec:Tbasis}
For an element $\rho \in \mathcal{P}^r_b$ and $1 \leq k \leq b$, we define the tightened nail $\theta_{k,\rho} \in 1_\rho \muT_{b} 1_\rho$ as the following element:
\[
\theta_{k,\rho} :=\tikzdiagh[xscale=1.25]{-1.5ex}{
	\draw (0,-1) -- (0,1);
	\node at(.25,-.85) {\tiny $\dots$};
	\node at(.25,.85) {\tiny $\dots$};
	\draw (.5,-1) -- (.5,1);
	%\draw[decoration={brace,mirror,raise=-8pt},decorate]  (-.1,-1.35) -- node {\small $r_0$} (.6,-1.35);
	%
	%
	\node[pcolor] at(1.125,-.85) { $\dots$};
	\node[pcolor] at(1.125,.85) { $\dots$};
	\draw (1.75,-1) -- (1.75,1);
	\node at(2,-.85) {\tiny $\dots$};
	\node at(2,.85) {\tiny $\dots$};
	\draw (2.25,-1) -- (2.25,1);
	\draw (2.5,-1) % node[below]{\small $k$} 
	    .. controls (2.5,-.25) and (-.5,-.25) ..  (-.25,0) 
	    .. controls (-.5,.25) and (2.5,.25) ..  (2.5,1);
	\draw (2.75,-1) -- (2.75,1);
	\node at(3,-.85) {\tiny $\dots$};
	\node at(3,.85) {\tiny $\dots$};
	\draw (3.25,-1) -- (3.25,1);
	\draw [pcolor]  (3.5,-1)  node[below]{\small $\mu_{i{+}1}$} -- (3.5,1);
	\node[pcolor] at(3.875,-.85) { $\dots$};
	\node[pcolor] at(3.875,.85) { $\dots$};
	\draw [pstdhl]  (4.25,-1)  node[below]{\small $\mu_{r}$} -- (4.25,1);
	\draw (4.5,-1) -- (4.5,1);
	\node at(4.75,-.85) {\tiny $\dots$};
	\node at(4.75,.85) {\tiny $\dots$};
	\draw (5,-1) -- (5,1);
	%\draw[decoration={brace,mirror,raise=-8pt},decorate]  (4.4,-1.35) -- node {\small $r_n$} (5.1,-1.35);
	%
	\draw [pstdhl] (1.5,-1)  node[below]{\small $\mu_i$} -- (1.5,1);
	\draw [pstdhl] (.75,-1)  node[below]{\small $\mu_2$} -- (.75,1);
	\draw [pstdhl] (-.25,-1) node[below]{\small $\mu_1$} -- (-.25,1)  node[midway,nail]{};
	\tikzbraceop{-.25}{2.5}{1}{\small $k+i$};
	}
\]
where the nail is on the $k$-th black strand from the left. This element is of degree $\deg (\theta_{k,\rho}) = q^{2(\mu_1 + \cdots + \mu_i)-4(k-1)}$.

\begin{lem}
  \label{lem:anticom_theta}
  Tightened nails anticommute with each other:
  \begin{align*}
      \theta_{k,\rho} \theta_{\ell,\rho} &= -\theta_{\ell,\rho} \theta_{k,\rho}, & \theta_{k,\rho}^2 &= 0,
  \end{align*}
for all $1 \leq k, \ell \leq b$. 
\end{lem}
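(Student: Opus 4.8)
The plan is to prove both identities diagrammatically, using braid-like planar isotopy together with the nilHecke relations \eqref{eq:nhR2andR3}, the sliding relations \eqref{eq:crossingslidered}--\eqref{eq:dotredstrand}, and the nail relations \eqref{eq:nailsrel}. Recall that, up to braid-like isotopy, $\theta_{k,\rho}$ is the diagram in which the $k$-th black strand is dragged all the way to the left, makes a single nailed excursion touching the blue strand $\mu_1$, and returns to position $k$, crossing transversely every strand it meets on the way out and on the way back; it carries no dots. Since $\theta_{k,\rho},\theta_{\ell,\rho}\in 1_\rho\muT_b 1_\rho$, the products $\theta_{k,\rho}\theta_{\ell,\rho}$ and $\theta_{\ell,\rho}\theta_{k,\rho}$ are obtained by vertical stacking, and the two nailed excursions are stacked one above the other.

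\textbf{The case $\theta_{k,\rho}^2=0$.} Stacking $\theta_{k,\rho}$ on itself, the $k$-th black strand touches $\mu_1$ twice, returning to position $k$ in between. Pulling the portion of that strand lying between the two nails close to $\mu_1$ by a braid-like isotopy -- which is possible because it only needs to be routed to the left of the strand immediately to the right of $\mu_1$, and crosses no further strand in between -- exhibits inside the diagram a local sub-diagram equal to the left-hand side of the last relation of \eqref{eq:nailsrel}. Replacing it by $0$ yields $\theta_{k,\rho}^2=0$.

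\textbf{Anticommutativity.} Assume $k<\ell$, the case $k>\ell$ being symmetric and $k=\ell$ being the previous case. In $\theta_{k,\rho}\theta_{\ell,\rho}$ both excursions reach $\mu_1$, the one of the $\ell$-th strand lying below the one of the $k$-th strand. I would first use braid-like isotopy together with \eqref{eq:crossingslidered}--\eqref{eq:dotredstrand} (which move black strands past the blue strands $\mu_2,\dots,\mu_r$ with no correction term) and \eqref{eq:nhR2andR3} to separate the two excursions as far as possible, bringing the two nails on $\mu_1$ -- the lower one carried by the $\ell$-th strand, the upper one by the $k$-th strand -- into consecutive position, so that locally the diagram matches the left-hand side of the second relation of \eqref{eq:nailsrel}. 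Applying that relation exchanges the vertical order of the two nails at the cost of a sign $-1$; undoing the isotopies -- which requires sliding the $k$-th strand's approach path below the $\ell$-th strand's excursion -- then leaves the $k$-th excursion below the $\ell$-th one, that is, $-\theta_{\ell,\rho}\theta_{k,\rho}$. Equivalently, one may first observe that the colored strands $\mu_2,\dots,\mu_r$ are inert throughout and can be pushed out of the region where the computation takes place, reducing the statement to the anticommutativity of nails in the single-colour situation of \cite[Section 2.2]{NV2} (see also \cite[Section 3.2]{NV3}).

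\textbf{Main obstacle.} The difficulty is entirely the isotopy bookkeeping in the anticommutativity argument: because the $\ell$-th strand's excursion sweeps past position $k$, dragging the $k$-th excursion below it forces one to slide black--black crossings past one another using \eqref{eq:nhR2andR3}, and one must check that the only diagrams this produces are the desired one or diagrams that vanish -- a doubled black crossing by the first relation of \eqref{eq:nhR2andR3}, or a black strand passing to the left of the nail on $\mu_1$ without touching it by the last relation of \eqref{eq:nailsrel}. As $\theta_{k,\rho}$ and $\theta_{\ell,\rho}$ carry no dots, the dot-slide relations \eqref{eq:nhdotslide}, \eqref{eq:dotredstrand} and the first relation of \eqref{eq:nailsrel} never intervene, so this verification -- which splits into a short case distinction according to whether $\mu_1$ is adjacent to the $k$-th (resp.\ $\ell$-th) black strand -- is routine and parallels the argument already carried out in \cite{NV2, NV3}.
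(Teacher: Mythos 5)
There is a genuine gap, and it sits at the heart of both halves of your argument: you treat the removal of double crossings as braid-like planar isotopy, but in $\muT_b$ these are relations, not isotopies. For $\theta_{k,\rho}^2$, the piece of the nailed strand between the two nails returns all the way to position $k$ and comes back, so it crosses \emph{every} intermediate strand twice; it cannot be "pulled close to $\mu_1$" by isotopy (your claim that it "crosses no further strand in between" is only true when the $k$-th black strand sits immediately to the right of $\mu_1$). Cancelling those bigons requires the R2-type relations: \eqref{eq:nhR2andR3} kills a black--black bigon, \eqref{eq:vredR} kills a blue one, but \eqref{eq:redR2} turns a red bigon into $\mu_i$ dots on the excursion strand, after which the remaining black bigons are dotted and the nilHecke relation \eqref{eq:nhdotslide} produces correction terms. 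Showing that all of these still vanish is exactly the content of \cref{prop:doublenailsamestrandiszero} (equivalently \cref{prop:doublenailsamestrandrewritestozero}), which is proved by a nontrivial induction, not read off from the third relation of \eqref{eq:nailsrel}. This also contradicts your claim that dot relations "never intervene": dots are generated along the way even though $\theta_{k,\rho}$ carries none.

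The same issue undermines the anticommutativity step. Bringing the two nails into adjacent position so that the second relation of \eqref{eq:nailsrel} applies forces you to slide crossings past a nail, which is governed by R3-type moves (\eqref{eq:nhR2andR3}, \eqref{eq:redR3}); the red case produces dotted correction terms landing on dotted double-nail configurations, which again only vanish by the proposition above. For the same reason the proposed shortcut of "pushing the colored strands out of the region" and quoting the single-colour computation of NV2/NV3 is not available whenever some $\mu_i\in\bN$ with $i\geq 2$: red strands are not inert, since \eqref{eq:redR2} and \eqref{eq:redR3} carry dot corrections. The paper's proof is precisely the two statements you are missing: it deduces the lemma from \cref{lem:bignailcommutes} (sliding crossings over a nail, corrections killed by the double-nail vanishing) together with \cref{prop:doublenailsamestrandrewritestozero}. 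To repair your write-up you would either have to cite these results or reproduce their inductive computations; the isotopy argument as stated does not go through.
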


\begin{proof}
 It follows from \cref{lem:bignailcommutes} and \cref{prop:doublenailsamestrandrewritestozero}. 
\end{proof}

Fix $\kappa,\rho\in\mathcal{P}^r_b$ and consider the subset of permutations $w\in{}_\kappa S_\rho$ of $S_{r+b}$, viewed as strand diagrams with $b$ black strands and $r$ colored strands, such that:
\begin{itemize}
\item there are no black strand on the left,
\item the strands are ordered at the bottom by $1_\kappa$ and at the top by $1_\rho$,
\item for any reduced expression of $w$, there is no crossing between colored strands.
\end{itemize}

\begin{ex}
  If $\kappa=\rho=(0,1,1)$, the set ${}_\kappa S_\rho$ has two elements, namely
  \[
    \tikzdiagh[xscale=2]{-1.5ex}{
      \draw[pstdhl] (0,0) -- (0,1);
      \draw[pstdhl] (.25,0) -- (.25,1);
      \draw (0.5,0) -- (.5,1);
      \draw[pstdhl] (.75,0) -- (.75,1);
      \draw (1,0) -- (1,1);
    }
    \quad\text{and}\quad
    \tikzdiagh[xscale=2]{-1.5ex}{
      \draw[pstdhl] (0,0) -- (0,1);
      \draw[pstdhl] (.25,0) -- (.25,1);
      \draw (.5,0) ..controls (.5,.5) and (1,.5) .. (1,1);
      \draw (1,0) ..controls (1,.5) and (.5,.5) .. (.5,1);
      \draw[pstdhl] (.75,0) ..controls (.75,.35) and (1,.15) .. (1,.5);
      \draw[pstdhl] (1,.5) ..controls (1,.85) and (.75,.65) ..(.75,1);
    }
  \]
  Note that the second element is not left-adjusted.
\end{ex}

For each $w\in {}_\kappa S_\rho,\ \underline{l}=(l_1,\ldots,l_b)\in \{0,1\}^b$ and $\underline{a}=(a_1,\ldots,a_b)\in\mathbb{N}^b$ we define an element $b_{w,\underline{l},\underline{a}}\in 1_\kappa \muT_{b} 1_\rho$ as follows:
\begin{enumerate}
\item choose a left-adjusted reduced expression of $w$ in terms of diagrams as above,
\item for each $1\leq i \leq b$, if $l_i=1$, nail the $i$-th black strand (counting at the top, from the left) on the left-most colored strand by pulling it from its leftmost position,
\item for each $1\leq i \leq b$, add $a_i$ dots on the $i$-th black strand at the top.
\end{enumerate}

Let ${}_\kappa B_\rho$ be the set of all $b_{w,\underline{l},\underline{a}}$ for $w\in {}_\kappa S_\rho,\ \underline{l}\in \{0,1\}^b$ and $\underline{a}\in\mathbb{N}^b$, where we also assume that the tightened floating dots are ordered such that whenever we have $\theta_{k,\rho} \theta_{\ell,\rho}$, then $\ell > k$. 

\begin{ex}
  We continue the example of $\kappa=\rho=(0,1,1)$. If we choose for $w$ the permutation with a black/black crossing, $\underline{l}=(1,0)$ and $\underline{a}=(0,1)$ we have
  \[
    b_{w,\underline{l},\underline{a}}  =
      \tikzdiagh[xscale=2]{-1.5ex}{
      \draw (.5,0) ..controls (.5,.5) and (1,.5) .. (1,1) node [pos=.85,tikzdot]{};
      \draw (.5,1) ..controls (.5,.85) and (0,.90) .. (0,.75);
      \draw (0,.75) ..controls (0,.60) and (1,.75) .. (1,0);
      \draw[pstdhl] (.25,0) -- (.25,1);
      \draw[pstdhl] (.75,0) ..controls (.75,.35) and (.5,.15) .. (.5,.5);
      \draw[pstdhl] (.5,.5) ..controls (.5,.85) and (.75,.65) ..(.75,1);
      \draw[pstdhl] (0,0) -- (0,1)  node [pos=.75,nail]{};
    }
  \] 
 Note that we added the nail at the top and not the bottom because that is where the black strand is at its left-most position. 
\end{ex}

\begin{thm}\label{thm:Tbasis}
As a $\bZ \times \bZ^2$-graded $\Bbbk$-module, $1_\kappa \muT_{b}  1_\rho$ is free with basis given by ${}_\kappa B_\rho$ .
\end{thm}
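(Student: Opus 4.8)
The plan is to prove the two halves of the statement separately: that ${}_\kappa B_\rho$ \emph{spans} $1_\kappa \muT_b 1_\rho$ over $\Bbbk$, and that it is \emph{linearly independent}. The spanning half is a reduction-to-normal-form argument, run inside the framework of rewriting modulo braid-like planar isotopy of \cref{sec:rewritingmethods}; the linear independence half --- the genuinely delicate point, since the degenerate blue relations \eqref{eq:vredR} obstruct the naive faithful polynomial action --- is obtained from a faithful representation of a \emph{deformation} of $\muT_b$. Since the rewriting system will be terminating, spanning comes for free from termination, and confluence of the system is then \emph{equivalent} to linear independence of the normal forms; so the deformation argument can equally be read as the proof of confluence.

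For spanning, orient the relations of \cref{defn:dgKLRW} as rewriting rules: the nilHecke $R2$ relation in \eqref{eq:nhR2andR3} removes black--black bigons; the nilHecke $R3$ relation together with the dot-slide relations \eqref{eq:nhdotslide} brings any black sub-diagram to a left-adjusted reduced form with its dots floated to the top; the red and blue relations \eqref{eq:redR2}, \eqref{eq:redR3}, \eqref{eq:vredR} resolve a black-over-coloured bigon into dotted strands, or annihilate it in the blue case; the sliding relations \eqref{eq:crossingslidered}--\eqref{eq:dotredstrand} carry dots across coloured strands; and the nail relations \eqref{eq:nailsrel}, together with \cref{lem:anticom_theta}, allow at most one nail per black strand, move nails to the canonical left-most position, and impose the prescribed order on them. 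A black strand forced to the left of the left-most coloured strand can occur only through a nailed configuration --- the analogue here of Webster's violating condition. The point to check is \emph{termination}, with respect to a well-founded order lexicographically combining a weighted count of crossings, the number of nails, and the total dot-height; granting this, the $E$-normal forms of the ${}_E R_E$-irreducible monomials are precisely representatives of the elements of ${}_\kappa B_\rho$, so the latter span.

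For linear independence, following the deformation suggestion recorded in the acknowledgments, introduce an algebra $\widetilde{\muT}_b$ over a polynomial extension $\Bbbk[\Xi]$ of $\Bbbk$ obtained by adjoining a formal ``Euler class'' parameter for each blue strand and replacing the degenerate blue relations \eqref{eq:vredR} by their non-degenerate, red-like analogues. The same reduction as above shows the deformed elements $\widetilde{b}_{w,\underline{l},\underline{a}}$ span $\widetilde{\muT}_b$ over $\Bbbk[\Xi]$. One then builds a faithful action of $\widetilde{\muT}_b$ on the free graded $\Bbbk[\Xi]$-module $\Pol_b \otimes \Lambda_b$, where $\Pol_b$ is a polynomial ring carrying the nilHecke action of dots and black crossings and the multiplication action of the Euler classes of the coloured strands, and $\Lambda_b$ is an exterior algebra on generators $\omega_1, \dots, \omega_b$, the generator $\omega_k$ realising a nail on the $k$-th black strand, so that $\omega_k^2 = 0$ reproduces $\theta_{k,\rho}^2 = 0$ and the anticommutativity of \cref{lem:anticom_theta}. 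The operators by which the $\widetilde{b}_{w,\underline{l},\underline{a}}$ act have $\Bbbk[\Xi]$-linearly independent leading terms for a filtration by permutation length, dot-degree and nail support, so they are linearly independent and hence form a $\Bbbk[\Xi]$-basis of the free module $\widetilde{\muT}_b$. Specialising the parameters $\Xi$ then recovers $\muT_b$ together with its presentation, so the specialised basis, which is exactly ${}_\kappa B_\rho$, is a $\Bbbk$-basis of $1_\kappa \muT_b 1_\rho$; all three gradings are respected throughout.

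I expect the main obstacle to be the construction of the faithful representation of the deformed algebra and the verification that it is faithful. Making the nail generator act consistently --- it must anticommute with itself, commute with dots on its own strand, and satisfy the pull-through relations \eqref{eq:nailsrel} --- is what forces the exterior-algebra factor and a careful sign bookkeeping, and faithfulness amounts to showing that the leading-term operators of \emph{all} the $\widetilde{b}_{w,\underline{l},\underline{a}}$ are simultaneously $\Bbbk[\Xi]$-independent. A secondary obstacle is the termination of the rewriting system used for spanning: nailing a black strand drags it across coloured strands and thereby creates new crossings, so the termination order must be weighted to dominate these against the bigons that get removed --- exactly the situation the modulo-braid-isotopy formalism of \cref{sec:rewritingmethods} is built to handle.
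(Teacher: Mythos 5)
Your proposal is correct and follows essentially the same route as the paper: a deformation of $\muT_b$ in which the degenerate blue relations \eqref{eq:vredR} are replaced by non-degenerate ones, a rewriting system modulo braid-like isotopy whose termination gives spanning by the normal forms ${}_\kappa B_\rho$, and a faithful action of the deformed algebra on a polynomial ring tensored with an exterior-type algebra on $\omega_1,\dots,\omega_b$ (with the nail acting by $\omega_1$) giving linear independence, exactly as in \cref{sec:basisthemtri}. The only harmless divergence is at the end: you specialize the free module over the parameter ring directly by base change, whereas the paper specializes confluence of the rewriting system at $\param=0$ and then invokes the rewriting-modulo basis theorem.
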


\begin{proof}
The statement is given by \cref{cor:basis} in the next section. 
\end{proof}

\subsubsection{Left decomposition}
In the following, we draw $\muT_b 1_{\rho}$ with $\rho = (b_0, \dots, b_r)$ as a box diagram
\[
\tikzdiag[xscale=1.25]{
	\draw [pstdhl] (-.25,0) node[below,yshift={-1ex}]{\small $\mu_1$} -- (-.25,1);
	\draw (0,0) -- (0,1);
	\node at(.25,.25) {\tiny $\dots$};
	\draw (.5,0) -- (.5,1);
	\draw[decoration={brace,mirror,raise=-8pt},decorate]  (-.1,-.35) -- node {\small $b_1$} (.6,-.35);
	\draw [pstdhl] (.75,0)  node[below,yshift={-1ex}]{\small $\mu_2$} -- (.75,1);
	\draw (1,0) -- (1,1);
	\node at(1.25,.25) {\tiny $\dots$};
	\draw (1.5,0) -- (1.5,1);
	\draw[decoration={brace,mirror,raise=-8pt},decorate]  (.9,-.35) -- node {\small $b_{2}$} (1.65,-.35);
	\draw [pstdhl] (1.75,0)  node[below,yshift={-1ex}]{\small $\mu_{3}$} -- (1.75,1);
	\node[pcolor] at(2.15,.25) { $\dots$};
	\draw [pstdhl] (2.5,0)  node[below,yshift={-1ex}]{\small $\mu_{r}$} -- (2.5,1);
	\draw (2.75,0) -- (2.75,1);
	\node at(3,.25) {\tiny $\dots$};
	\draw (3.25,0) -- (3.25,1);
	\draw[decoration={brace,mirror,raise=-8pt},decorate]  (2.65,-.35) -- node {\small $b_r$} (3.35,-.35);
	\filldraw [fill=white, draw=black] (-.375,.5) rectangle (3.375,1.25) node[midway] { $\muT_b$};
}
\]

Let $\rho_{\hat i} := (b_1, \dots, b_{i-1}, b_i -1, b_{i+1}, \dots, b_r)$. 
When we draw a box in a diagram as follows:
\[
\tikzdiag[xscale=1.25]{
	\draw[fill=white, color=white] (-.35,0) circle (.15cm);
	\draw [pstdhl] (-.25,-.5) node[below,yshift={-1ex}]{\small $\mu_1$} -- (-.25,1);
	\draw (0,-.5) -- (0,1);
	\node at(.25,-.35) {\tiny $\dots$};
	\draw (.5,-.5) -- (.5,1);
	\draw[decoration={brace,mirror,raise=-8pt},decorate]  (-.1,-.85) -- node {\small $b_1$} (.6,-.85);
	\draw [pstdhl] (.75,-.5)  node[below,yshift={-1ex}]{\small $\mu_2$} -- (.75,1);
	\node[pcolor] at(1.125,-.35) { $\dots$};
	\draw [pstdhl] (1.5,-.5)  node[below,yshift={-1ex}]{\small $\mu_i$} -- (1.5,1);
	\draw (1.75,-.5) -- (1.75,1);
	\node at(2,-.35) {\tiny $\dots$};
	\draw (2.25,-.5) -- (2.25,1);
	\draw[decoration={brace,mirror,raise=-8pt},decorate]  (1.65,-.85) -- node {\small $t$} (2.35,-.85);
	\draw (2.5,-.5) .. controls (2.5,0) and (5,0) ..  (5,.5) node[pos=.1,tikzdot]{} node[pos=.1, xshift=-.75ex, yshift=1ex]{\small $p$} -- (5,1.25);
	\draw (2.75,-.5) .. controls (2.75,0) and (2.5,0) .. (2.5,.5);
	\node at(3,-.35) {\tiny $\dots$};
	\draw (3.25,-.5) .. controls (3.25,0) and (3,0) .. (3,.5);
	\draw [pstdhl]  (3.5,-.5)  node[below,yshift={-1ex}]{\small $\mu_{i+1}$} .. controls (3.5,0) and (3.25,0) .. (3.25,.5);
	\node[pcolor] at(3.875,-.35) { $\dots$};
	\draw [pstdhl]  (4.25,-.5)  node[below,yshift={-1ex}]{\small $\mu_{r}$} .. controls (4.25,0) and (4,0) .. (4,.5);
	\draw (4.5,-.5) .. controls (4.5,0) and (4.25,0) .. (4.25,.5);
	\node at(4.75,-.35) {\tiny $\dots$};
	\draw (5,-.5) .. controls (5,0) and (4.75,0) .. (4.75,.5);
	\draw[decoration={brace,mirror,raise=-8pt},decorate]  (4.4,-.85) -- node {\small $b_r$} (5.1,-.85);
	\filldraw [fill=white, draw=black] (-.375,.5) rectangle (4.875,1.25) node[midway] { $\muT_{b-1}$};
}
\]
with $p \geq 0$ and $0 \leq t < b_i$, it means we consider the subset of $\muT_b 1_{\rho}$ isomorphic to a grading shift of $\muT_{b-1}1_{\rho_{\hat i}}$ given by replacing the box labeled $\muT_{b-1}$ with any diagram of $\muT_{b-1}$ in the diagram above, and consider it as a diagram of $\muT_b 1_{\rho}$. 
We also write
\[
\tikzdiagl[xscale=2]{
	 \draw (.5,-.5) .. controls (.5,-.25) .. (0,0) .. controls (.5,.25) .. (.5,.5);
          \draw[pstdhl] (0,-.5) node[below]{\small $\mu_1$}-- (0,.5) node [midway,nail]{} node[midway, xshift=-1.25ex,yshift=.75ex, black]{\small $p$};
  }
\ :=\ 
\tikzdiagl[xscale=2]{
	 \draw (.5,-.5) .. controls (.5,-.25) .. (0,0) .. controls (.5,.25) .. (.5,.5) node[pos=.5,tikzdot]{} node[pos=.5, xshift=-1ex, yshift=.75ex]{\small $p$};
          \draw[pstdhl] (0,-.5) node[below]{\small $\mu_1$}-- (0,.5) node [midway,nail]{};
  }
  \ = \ 
\tikzdiagl[xscale=2]{
	 \draw (.5,-.5) .. controls (.5,-.25) .. (0,0) node[pos=.5,tikzdot]{} node[pos=.5, xshift=-1ex, yshift=-.75ex]{\small $p$} .. controls (.5,.25) .. (.5,.5) ;
          \draw[pstdhl] (0,-.5) node[below]{\small $\mu_1$}-- (0,.5) node [midway,nail]{};
  }
\]
for all $p \geq 0$, and
\[
\theta_{k,\rho}(p) :=\tikzdiagh[xscale=1.25]{-1.5ex}{
	\draw (0,-1) -- (0,1);
	\node at(.25,-.85) {\tiny $\dots$};
	\node at(.25,.85) {\tiny $\dots$};
	\draw (.5,-1) -- (.5,1);
	%\draw[decoration={brace,mirror,raise=-8pt},decorate]  (-.1,-1.35) -- node {\small $r_0$} (.6,-1.35);
	%
	%
	\node[pcolor] at(1.125,-.85) { $\dots$};
	\node[pcolor] at(1.125,.85) { $\dots$};
	\draw (1.75,-1) -- (1.75,1);
	\node at(2,-.85) {\tiny $\dots$};
	\node at(2,.85) {\tiny $\dots$};
	\draw (2.25,-1) -- (2.25,1);
	\draw (2.5,-1) % node[below]{\small $k$} 
	    .. controls (2.5,-.25) and (-.5,-.25) ..  (-.25,0) 
	    .. controls (-.5,.25) and (2.5,.25) ..  (2.5,1);
	\draw (2.75,-1) -- (2.75,1);
	\node at(3,-.85) {\tiny $\dots$};
	\node at(3,.85) {\tiny $\dots$};
	\draw (3.25,-1) -- (3.25,1);
	\draw [pcolor]  (3.5,-1)  node[below]{\small $\mu_{i{+}1}$} -- (3.5,1);
	\node[pcolor] at(3.875,-.85) { $\dots$};
	\node[pcolor] at(3.875,.85) { $\dots$};
	\draw [pstdhl]  (4.25,-1)  node[below]{\small $\mu_{r}$} -- (4.25,1);
	\draw (4.5,-1) -- (4.5,1);
	\node at(4.75,-.85) {\tiny $\dots$};
	\node at(4.75,.85) {\tiny $\dots$};
	\draw (5,-1) -- (5,1);
	%\draw[decoration={brace,mirror,raise=-8pt},decorate]  (4.4,-1.35) -- node {\small $r_n$} (5.1,-1.35);
	%
	\draw [pstdhl] (1.5,-1)  node[below]{\small $\mu_i$} -- (1.5,1);
	\draw [pstdhl] (.75,-1)  node[below]{\small $\mu_2$} -- (.75,1);
	\draw [pstdhl] (-.25,-1) node[below]{\small $\mu_1$} -- (-.25,1)  node[midway,nail]{} node[midway, xshift=-1.25ex,yshift=.75ex, black]{\small $p$};
	\tikzbraceop{-.25}{2.5}{1}{\small $k+i$};
	}
\]
Note that $\theta_{k,\rho}(0) = \theta_{k,\rho}$.

\begin{prop}\label{prop:Tdecomp}
As a $\bZ\times \bZ^2$-graded $\Bbbk$-module, $\muT_{b} 1_{\rho}$ decomposes as a direct sum
\begin{align}\label{eq:Tdecomp}
\begin{split}
\tikzdiag[xscale=1.25]{
	\draw [pstdhl] (-.25,-.5) node[below,yshift={-1ex}]{\small $\mu_1$} -- (-.25,1);
	\draw (0,-.5) -- (0,1);
	\node at(.25,0) {\tiny $\dots$};
	\draw (.5,-.5) -- (.5,1);
	\draw[decoration={brace,mirror,raise=-8pt},decorate]  (-.1,-.85) -- node {\small $b_1$} (.6,-.85);
	\draw [pstdhl] (.75,-.5)  node[below,yshift={-1ex}]{\small $\mu_2$} -- (.75,1);
	\draw (1,-.5) -- (1,1);
	\node at(1.25,0) {\tiny $\dots$};
	\draw (1.5,-.5) -- (1.5,1);
	\draw[decoration={brace,mirror,raise=-8pt},decorate]  (.9,-.85) -- node {\small $b_{2}$} (1.65,-.85);
	\draw [pstdhl] (1.75,-.5)  node[below,yshift={-1ex}]{\small $\mu_{3}$} -- (1.75,1);
	\node[pcolor] at(2.15,0) { $\dots$};
	\draw [pstdhl] (2.5,-.5)  node[below,yshift={-1ex}]{\small $\mu_{r}$} -- (2.5,1);
	\draw (2.75,-.5) -- (2.75,1);
	\node at(3,0) {\tiny $\dots$};
	\draw (3.25,-.5) -- (3.25,1);
	\draw[decoration={brace,mirror,raise=-8pt},decorate]  (2.65,-.85) -- node {\small $b_r$} (3.35,-.85);
	\filldraw [fill=white, draw=black] (-.375,.5) rectangle (3.375,1.25) node[midway] { $\muT_b$};
}
\ \cong& \ 
\tikzdiag[xscale=1.25]{
	\draw (0,-.5) -- (0,1);
	\node at(.25,-.35) {\tiny $\dots$};
	\draw (.5,-.5) -- (.5,1);
	%\draw[decoration={brace,mirror,raise=-8pt},decorate]  (-.1,-.85) -- node {\small $b_0$} (.6,-.85);
	%
	\draw [pstdhl] (.75,-.5)  node[below]{\small $\mu_2$} -- (.75,1);
	\node[pcolor] at(1.125,-.35) { $\dots$};
	\draw [pstdhl] (1.5,-.5)  node[below]{\small $\mu_{r{-}1}$} -- (1.5,1);
	\draw (1.75,-.5) -- (1.75,1);
	\node at(2,-.35) {\tiny $\dots$};
	\draw (2.25,-.5) -- (2.25,1);
	%\draw[decoration={brace,mirror,raise=-8pt},decorate]  (1.65,-.85) -- node {\small $b_{r-1}$} (2.35,-.85);
	%
	\draw (2.75,-.5) .. controls (2.75,0) and (2.5,0) .. (2.5,.5) -- (2.5,1);
	\node at(3,-.35) {\tiny $\dots$};
	\draw (3.25,-.5) .. controls (3.25,0) and (3,0) .. (3,.5) -- (3,1);
	%\draw[decoration={brace,mirror,raise=-8pt},decorate]  (2.65,-.85) -- node {\small $b_r$} (3.35,-.85);
	%
	\draw [pstdhl] (2.5,-.5)  node[below]{\small $\mu_r$} .. controls (2.5,0) and (3.5,0) .. (3.5,.5) -- (3.5,1.25);
	\draw [pstdhl] (-.25,-.5) node[below]{\small $\mu_1$} -- (-.25,1);
	\filldraw [fill=white, draw=black] (-.375,.5) rectangle (3.125,1.25) node[midway] { $\dgT^{\und \mu'}_{b}$};
}
\\
&\oplus
\bigoplus_{i=1}^r
\ssbigoplus{0 \leq t < b_i \\ p \geq 0}
\tikzdiag[xscale=1.25]{
	\draw[fill=white, color=white] (-.35,0) circle (.15cm);
	\draw [pstdhl] (-.25,-.5) node[below]{\small $\mu_1$} -- (-.25,1)  node[pos=.33, xshift=-1.25ex,yshift=.75ex, white]{\small $p$};
	\draw (0,-.5) -- (0,1);
	\node at(.25,-.35) {\tiny $\dots$};
	\draw (.5,-.5) -- (.5,1);
	%\draw[decoration={brace,mirror,raise=-8pt},decorate]  (-.1,-.85) -- node {\small $b_0$} (.6,-.85);
	%
	\draw [pstdhl] (.75,-.5)  node[below]{\small $\mu_2$} -- (.75,1);
	\node[pcolor] at(1.125,-.35) { $\dots$};
	\draw [pstdhl] (1.5,-.5)  node[below]{\small $\mu_i$} -- (1.5,1);
	\draw (1.75,-.5) -- (1.75,1);
	\node at(2,-.35) {\tiny $\dots$};
	\draw (2.25,-.5) -- (2.25,1);
	\draw[decoration={brace,mirror,raise=-8pt},decorate]  (1.65,-.85) -- node {\small $t$} (2.35,-.85);
	%
	%\draw (2.5,-.5) .. controls (2.5,0) and (5.25,0) ..  (5.25,.5) -- (5.25,1.25) node[midway,tikzdot]{} node[midway, xshift=1.5ex, yshift=1ex]{\small $p$};
	\draw (2.5,-.5) .. controls (2.5,0) and (5,0) ..  (5,.5) node[pos=.1,tikzdot]{} node[pos=.1, xshift=-.75ex, yshift=1ex]{\small $p$} -- (5,1.25);
	\draw (2.75,-.5) .. controls (2.75,0) and (2.5,0) .. (2.5,.5);
	\node at(3,-.35) {\tiny $\dots$};
	\draw (3.25,-.5) .. controls (3.25,0) and (3,0) .. (3,.5);
	\draw [pstdhl]  (3.5,-.5)  node[below]{\small $\mu_{i{+}1}$} .. controls (3.5,0) and (3.25,0) .. (3.25,.5);
	\node[pcolor] at(3.875,-.35) { $\dots$};
	\draw [pstdhl]  (4.25,-.5)  node[below]{\small $\mu_{r}$} .. controls (4.25,0) and (4,0) .. (4,.5);
	\draw (4.5,-.5) .. controls (4.5,0) and (4.25,0) .. (4.25,.5);
	\node at(4.75,-.35) {\tiny $\dots$};
	\draw (5,-.5) .. controls (5,0) and (4.75,0) .. (4.75,.5);
	%\draw[decoration={brace,mirror,raise=-8pt},decorate]  (4.4,-.85) -- node {\small $b_r$} (5.1,-.85);
	%
	\filldraw [fill=white, draw=black] (-.375,.5) rectangle (4.875,1.25) node[midway] { $\muT_{b-1}$};
}
\\
&\oplus
\bigoplus_{i=1}^r
\ssbigoplus{0 \leq t < b_i \\ p \geq 0}
\tikzdiag[xscale=1.25]{
	\draw (0,-.5) -- (0,1);
	\node at(.25,-.35) {\tiny $\dots$};
	\draw (.5,-.5) -- (.5,1);
	%\draw[decoration={brace,mirror,raise=-8pt},decorate]  (-.1,-.85) -- node {\small $b_0$} (.6,-.85);
	%
	%
	\node[pcolor] at(1.125,-.35) { $\dots$};
	\draw (1.75,-.5) -- (1.75,1);
	\node at(2,-.35) {\tiny $\dots$};
	\draw (2.25,-.5) -- (2.25,1);
	\draw[decoration={brace,mirror,raise=-8pt},decorate]  (1.65,-.85) -- node {\small $t$} (2.35,-.85);
	%
	%\draw (2.5,-.5) .. controls (2.5,-.25) ..  (-.5,0) .. controls (5.25,.25) ..  (5.25,.5) -- (5.25,1.25) node[midway,tikzdot]{} node[midway, xshift=1.5ex, yshift=1ex]{\small $p$};
	\draw (2.5,-.5) .. controls (2.5,-.25) ..  (-.5,0) .. controls (5,.25) ..  (5,.5) -- (5,1.25);
	\draw (2.75,-.5) .. controls (2.75,0) and (2.5,0) .. (2.5,.5);
	\node at(3,-.35) {\tiny $\dots$};
	\draw (3.25,-.5) .. controls (3.25,0) and (3,0) .. (3,.5);
	\draw [pstdhl]  (3.5,-.5)  node[below]{\small $\mu_{i{+}1}$} .. controls (3.5,0) and (3.25,0) .. (3.25,.5);
	\node[pcolor] at(3.875,-.35) { $\dots$};
	\draw [pstdhl]  (4.25,-.5)  node[below]{\small $\mu_{r}$} .. controls (4.25,0) and (4,0) .. (4,.5);
	\draw (4.5,-.5) .. controls (4.5,0) and (4.25,0) .. (4.25,.5);
	\node at(4.75,-.35) {\tiny $\dots$};
	\draw (5,-.5) .. controls (5,0) and (4.75,0) .. (4.75,.5);
	%\draw[decoration={brace,mirror,raise=-8pt},decorate]  (4.4,-.85) -- node {\small $b_r$} (5.1,-.85);
	%
	\draw [pstdhl] (1.5,-.5)  node[below]{\small $\mu_i$} -- (1.5,1);
	\draw [pstdhl] (.75,-.5)  node[below]{\small $\mu_2$} -- (.75,1);
	\draw[fill=white, color=white] (-.35,0) circle (.15cm);
	\draw [pstdhl] (-.25,-.5) node[below]{\small $\mu_1$} -- (-.25,1)  node[nail,pos=.33]{} node[pos=.33, xshift=-1.25ex,yshift=.75ex, black]{\small $p$};
	\filldraw [fill=white, draw=black] (-.375,.5) rectangle (4.875,1.25) node[midway] { $\muT_{b-1}$};
}
\end{split}
\end{align}
where $\und \mu' = (\mu_1,\dots, \mu_{r-1})$, and the isomorphism is given by inclusion. 
\end{prop}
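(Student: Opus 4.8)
The plan is to read \cref{eq:Tdecomp} as an \emph{internal} direct sum decomposition of $\muT_b 1_\rho$, organised according to the behaviour of the strand that is rightmost at the top of a diagram, and to establish it in two steps: surjectivity of the obvious family of grafting maps onto $\muT_b 1_\rho$, by a diagrammatic reduction, and directness, by matching the three families of summands with the partition of the basis ${}_\kappa B_\rho$ of \cref{thm:Tbasis}.

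First I would make the maps precise. Each summand on the right is the image of a grafting map into $\muT_b 1_\rho$. For the first summand, one forgets the rightmost colored strand $\mu_r$ and lets the $b_r$ black strands of the last region cross it once to its left; this identifies $\dgT^{\und\mu'}_b 1_{\rho'}$, where $\rho'=(b_1,\dots,b_{r-2},b_{r-1}+b_r)$, with a sub-$\Bbbk$-module of $\muT_b 1_\rho$. For the other two families, given $1\le i\le r$, $0\le t<b_i$ and $p\ge 0$, one grafts onto a diagram of $\muT_{b-1}1_{\rho_{\hat i}}$ a single new black strand running monotonically from the $(t{+}1)$-st bottom slot of region $i$ to the rightmost top slot, carrying $p$ dots, and — for the third family — nailed once on $\mu_1$ with the $p$ dots placed at the nail (using \cref{eq:nailsrel} to slide them there). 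Each of these is a well-defined morphism of graded $\Bbbk$-modules, since the defining relations of the smaller algebra form a subset of those of $\muT_b$ and the extra strand (resp. $\mu_r$) is inert away from its prescribed crossings; homogeneity holds once the degree of the grafted strand — a product of the crossing, dot and nail degrees of \cref{eq:generators} — is recorded as the grading shift drawn in \cref{eq:Tdecomp}.

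The substantial step is surjectivity, which I would prove by induction on a termination order of the kind built in \cref{sec:rewritingmethods} (lexicographic in the number of crossings, then the number of nails, then the number of dots). Let $D\in\muT_b 1_\rho$ and look at the strand $s$ occupying the rightmost top slot. If $s=\mu_r$ (so $r>1$), every black strand of the last region must end to its left; killing black bigons with \cref{eq:nhR2andR3}, clearing bigons formed with $\mu_r$ of any strands passing through them with \cref{eq:redR3}, and then replacing each such bigon by $\mu_r$ dots via \cref{eq:redR2} (resp. by $0$ via \cref{eq:vredR}), one reduces $D$, modulo strictly smaller diagrams, into the first summand. If $s$ is black, one follows it from top to bottom and straightens it: \cref{eq:nhR2andR3,eq:nhdotslide} pull it past black strands (black bigons vanish, dot-slides drop a crossing), \cref{eq:crossingslidered,eq:dotredstrand,eq:redR2,eq:redR3,eq:vredR} move it across $\mu_2,\dots,\mu_r$ (bigons with $\mu_j$, $j\ge 2$, again becoming dots or $0$), and \cref{eq:nailsrel} together with \cref{prop:doublenailsamestrandrewritestozero} reduce its interaction with $\mu_1$ to at most a single nail bearing $p$ dots. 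Every error term produced is strictly smaller for the order, so by induction $D$ lies in the span of the second family if the straightened $s$ carries no nail, and of the third family if it carries one. I expect this reduction, and above all its termination, to be the main obstacle: the degenerate blue relations \cref{eq:vredR} (where a bigon is $0$ rather than a power of a dot) and the nail relations \cref{eq:nailsrel} create exactly the cases that obstruct the usual faithful-polynomial-representation proof, which is why the rewriting-modulo-braid-like-isotopy formalism of \cref{sec:rewritingmethods} is needed to make the order well-founded.

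Finally, for directness I would use that ${}_\kappa B_\rho$, with $\kappa$ ranging over $\cP_b^r$, is a basis of $\muT_b 1_\rho$ by \cref{thm:Tbasis}, and sort its elements by the rightmost top slot. If $\kappa_r=0$ that slot holds $\mu_r$, and deleting $\mu_r$ sends $b_{w,\underline{l},\underline{a}}$ to a basis element of $\dgT^{\und\mu'}_b 1_{\rho'}$ of the same combinatorial type, the conditions defining ${}_\kappa B_\rho$ (reduced left-adjusted expression, no colored/colored crossing, nail taken from the leftmost position) being inherited; if $\kappa_r\ge 1$ that slot holds a black strand which, by construction of $b_{w,\underline{l},\underline{a}}$, runs monotonically from a well-defined $(i,t{+}1)$, carries $a_b$ dots and is nailed iff $l_b=1$, and deleting it sends $b_{w,\underline{l},\underline{a}}$ to a basis element of $\muT_{b-1}1_{\rho_{\hat i}}$. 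This assignment is a bijection from $\bigsqcup_\kappa {}_\kappa B_\rho$ onto the disjoint union, over the three families, of the images under the grafting maps of the bases of the smaller algebras; hence those maps are injective, their images are complementary, and they sum to $\muT_b 1_\rho$, which is \cref{eq:Tdecomp}. (If one prefers not to invoke \cref{thm:Tbasis} for $\muT_b$ itself, the same bijection together with the linear independence of ${}_\kappa B_\rho$ coming from the rewriting analysis — \cref{cor:basis} — yields the decomposition directly, and this is in fact the inductive step through which \cref{thm:Tbasis} is obtained.)
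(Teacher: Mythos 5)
Your core route---stratifying the basis ${}_\kappa B_\rho$ of \cref{thm:Tbasis} according to the strand occupying the rightmost top slot---is the same as the paper's, so the substance of your argument is the directness step; the separate surjectivity-by-reduction paragraph is redundant once \cref{thm:Tbasis} is granted (the basis already spans), and as a standalone argument it would require the full termination/confluence apparatus of \cref{sec:rewritingmethods} that you only gesture at.

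There is, however, a gap in the directness step. The summands of \cref{eq:Tdecomp} carry their $p$ dots at the \emph{bottom} of the grafted strand (second family) or at the nail (third family), whereas every basis element $b_{w,\underline{l},\underline{a}}$ carries its dots at the \emph{top} of the strands. Consequently, deleting the rightmost-top black strand of a basis element does not exhibit it as lying in the image of your grafting maps: $b_{w,\underline{l},\underline{a}}$ agrees with the corresponding grafted element only modulo terms with strictly fewer crossings, created when the $p$ dots are slid through the crossings (and past the nail) via \cref{eq:nhdotslide}, \cref{eq:dotredstrand} and \cref{eq:nailsrel}. So the bijection you describe matches \emph{strata}, not elements, and the conclusion that the grafting maps are injective with complementary images summing to $\muT_b 1_\rho$ does not follow as written. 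What is missing is precisely the content of the paper's short proof: the set obtained by sorting the basis (dots on top) and the set of generators appearing in \cref{eq:Tdecomp} (dots at the bottom, resp.\ at the nail) are related by a unitriangular change of basis with respect to the number of crossings, since dot-sliding only produces lower terms; adding this observation repairs your argument. (A minor point: your closing parenthetical misstates the logic of the paper---\cref{thm:Tbasis} is not obtained by induction through this decomposition, but via the deformed algebra $\dgT^{\und \mu}_b(\param)$, its faithful polynomial action and rewriting modulo braid-like isotopy, i.e.\ \cref{cor:basis}; \cref{prop:Tdecomp} is then deduced from it.)
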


\begin{proof}
By \cref{thm:Tbasis}, we get a similar decomposition as in \cref{eq:Tdecomp}, but where we put the $p$ dots on the upper-right part of the black strand. Since we can slides dots up to adding terms with a lower number of crossings using \cref{eq:nhdotslide} and \cref{eq:dotredstrand}, it means we get the decomposition of the statement by a diagonal change of basis. 
\end{proof}

Let $1_{b,1} \in \muT_{b+1}$ be the idempotent given by
\[
1_{b,1} := \sum_{\rho \in  \mathcal{P}_b^r}  \ 
\tikzdiagh{0}{
	\draw[pstdhl] (2,0)  node[below]{\small $\mu_1$} --(2,1);
	\draw (2.5,0) -- (2.5,1);
	\node at(3,.5) {\tiny$\dots$};
	\draw (3.5,0) -- (3.5,1);
	\draw[decoration={brace,mirror,raise=-8pt},decorate]  (2.4,-.35) -- node {$b_1$} (3.6,-.35);
	\draw[pstdhl] (4,0)  node[below]{\small $\mu_2$} --(4,1);
	\node[pcolor] at  (5,.5) {\dots};
	\draw[pstdhl] (6,0)  node[below]{\small $\mu_{r}$} --(6,1);
	\draw (6.5,0) -- (6.5,1);
	\node at(7,.5) {\tiny$\dots$};
	\draw (7.5,0) -- (7.5,1);
	\draw[decoration={brace,mirror,raise=-8pt},decorate]  (6.4,-.35) -- node {$b_r$} (7.6,-.35);
	\draw (8,0) -- (8,1);
}
\]
We define
\begin{align*}
G_1(i,t,p) &:= q^{2(b_i - t - 1 + p) + \sum_{s > i} (\mu_s + 2b_s)  } \bigl(\muT_b1_{\rho_{\hat i}}, d_\mu \bigr), \\
G_2(i,t,p) &:= q^{|\und \mu| - 2b + \mu_i + 2(p-t) + \sum_{s < i} (\mu_s -2b_s) } \bigl( \widetilde \muT_b 1_{\rho_{\hat i}},- d_\mu \bigr), 
\end{align*}
where $\widetilde M$ is defined as $M$ but with twisted left-action: $x \cdot \widetilde m := (-1)^{\deg_h(x)} \widetilde{(x \cdot m)}$.
Note that $G_1(i,t,p)$ is isomorphic as $\bZ \times \bZ^2$-graded $\muT_b$-module 
to the subset of $\muT_{b+1} 1_\rho$ given by the diagrams pictured at the second line of \cref{eq:Tdecomp}, and $G_2(i,t,p) [1]$ to the ones at the third line. 

\begin{rem}
    We need to introduce some twist in the definition of $G_2(i,t,p)$ to get the correct signs because in our convention the homological shift twists the left-action, while the inclusion $G_2(i,t,p) \hookrightarrow \muT_{b} 1_{\rho}$ is given by adding diagrams below (i.e. multiplication on the right).  
\end{rem}

Moreover, $d_\mu(\theta_{k,\rho}(p))$ is either $0$ (if $\mu_1 \in \beta + \bZ$) or can be rewritten as a combination of diagrams with dots and crossings only involving the first $i$ colored strands and $k$ black strands. 
Therefore, we obtain an isomorphism of left $(\muT_{b-1}, d_\mu)$-modules:
\begin{equation}\label{eq:dgTdecomp}
(1_{b,1} \muT_{b+1}1_\rho, d_\mu) \cong \cone\left(
\bigoplus_{i=1}^r
\ssbigoplus{0 \leq t < b_i \\ p \geq 0}
G_2(i,t,p)
\xrightarrow{L_\mu}
\bigoplus_{i=1}^r
\ssbigoplus{0 \leq t < b_i \\ p \geq 0}
G_1(i,t,p)
\right), 
\end{equation}
for some morphism $L_\mu$ of left $(\muT_b, d_\mu)$-modules determined by $d_\mu$ and using \cref{prop:Tdecomp}. More precisely, for $x \in G_2(i,t,p)$ we set $L_\mu(x) := (-1)^{\deg_h(x)} x\cdot d_\mu(\theta_{(b_1+\cdots+b_{i-1}+t),\rho}(p))$. The sign is due to the fact we have twisted the left action in the definition of $G_2(i,t,p)$.

\begin{exe}
    Consider $r=2, \und \mu = (\mu_1, \mu_2), b=2, \rho = (2,0)$. \cref{prop:Tdecomp} gives:
    \[
        \tikzdiag[xscale=1.25]{
        	\draw [pstdhl] (-.25,0) node[below]{\small $\mu_1$} -- (-.25,1);
        	\draw (0,0) -- (0,1);
        	\draw (.25,0) -- (.25,1);
        	\draw [pstdhl] (.5,0) node[below]{\small $\mu_2$} -- (.5,1);
        	\filldraw [fill=white, draw=black] (-.375,.5) rectangle (.625,1.25) node[midway] { $\muT_2$};
        }
        \cong 
        \tikzdiag[xscale=1.25]{
        	\draw [pstdhl] (-.25,0) node[below]{\small $\mu_1$} -- (-.25,1);
        	\draw (0,0) -- (0,1);
        	\draw (.25,0) -- (.25,1);
        	\draw [pstdhl] (.5,0) node[below]{\small $\mu_2$} -- (.5,1.25);
        	\filldraw [fill=white, draw=black] (-.375,.5) rectangle (.375,1.25) node[midway] { $\dgT^{\und \mu'}_{2}$};
        }
        \oplus 
        \bigoplus_{p \geq 0}
        \tikzdiag[xscale=1.25]{
        	\draw [pstdhl] (-.25,0) node[below]{\small $\mu_1$} -- (-.25,1);
        	\draw (0,0) .. controls (0,.25) and (.5,.25) .. (.5,.5) node[pos=.2, tikzdot]{} node[pos=.2, xshift=-.85ex, yshift=.5ex]{\small $p$} -- (.5,1.25) ;
        	\draw (.25,0) .. controls (.25,.25) and (0,.25) ..  (0,.5) -- (0,1);
        	\draw [pstdhl] (.5,0) node[below]{\small $\mu_2$} .. controls (.5,.25) and (.25,.25) .. (.25,.5) -- (.25,1);
        	\filldraw [fill=white, draw=black] (-.375,.5) rectangle (.375,1.25) node[midway] { $\muT_1$};
        }
        \oplus 
        \bigoplus_{p \geq 0}
        \tikzdiag[xscale=1.25]{
        	\draw [pstdhl] (-.25,0) node[below]{\small $\mu_1$} -- (-.25,1);
        	\draw (0,0) -- (0,1);
        	\draw (.25,0) .. controls (.25,.25) and (.5,.25) .. (.5,.5)node[pos=.2, tikzdot]{} node[pos=.2, xshift=-.85ex, yshift=.5ex]{\small $p$} -- (.5,1.25);
        	\draw [pstdhl] (.5,0) node[below]{\small $\mu_2$} .. controls (.5,.25) and (.25,.25) .. (.25,.5) -- (.25,1);
        	\filldraw [fill=white, draw=black] (-.375,.5) rectangle (.375,1.25) node[midway] { $\muT_1$};
        }
        \oplus 
        \bigoplus_{p \geq 0}
        \tikzdiag[xscale=1.25]{
        	\draw (0,0) .. controls (0,.125) .. (-.25,.25) .. controls (.5,.375) .. (.5,.5) -- (.5,1.25);
        	\draw (.25,0) .. controls (.25,.25) and (0,.25) ..  (0,.5) -- (0,1);
        	\draw [pstdhl] (.5,0) node[below]{\small $\mu_2$} .. controls (.5,.25) and (.25,.25) .. (.25,.5) -- (.25,1);
        	\draw [pstdhl] (-.25,0) node[below]{\small $\mu_1$} -- (-.25,.5) node[midway, nail]{}  node[midway, xshift=-1.25ex,yshift=.15ex, black]{\small $p$}  -- (-.25,1);
        	\filldraw [fill=white, draw=black] (-.375,.5) rectangle (.375,1.25) node[midway] { $\muT_1$};
        }
        \oplus 
        \bigoplus_{p \geq 0}
        \tikzdiag[xscale=1.25]{
        	\draw (0,0) -- (0,1);
        	\draw (.25,0) .. controls (.25,.125) .. (-.25,.25) .. controls (.5,.375) .. (.5,.5) -- (.5,1.25);
        	\draw [pstdhl] (.5,0) node[below]{\small $\mu_2$} .. controls (.5,.25) and (.25,.25) .. (.25,.5) -- (.25,1);
        	\draw [pstdhl] (-.25,0) node[below]{\small $\mu_1$} --  (-.25,.5) node[midway, nail]{}  node[midway, xshift=-1.25ex,yshift=.15ex, black]{\small $p$} -- (-.25,1);
        	\filldraw [fill=white, draw=black] (-.375,.5) rectangle (.375,1.25) node[midway] { $\muT_1$};
        }
    \]
    where $\und \mu' = (\mu_1)$. 
    Then we have
    \begin{align*}
        G_1(1,0,p) &\cong 
        \tikzdiag[xscale=1.25]{
        	\draw [pstdhl] (-.25,0) node[below]{\small $\mu_1$} -- (-.25,1);
        	\draw (0,0) .. controls (0,.25) and (.5,.25) .. (.5,.5) node[pos=.2, tikzdot]{} node[pos=.2, xshift=-.85ex, yshift=.5ex]{\small $p$} -- (.5,1.25) ;
        	\draw (.25,0) .. controls (.25,.25) and (0,.25) ..  (0,.5) -- (0,1);
        	\draw [pstdhl] (.5,0) node[below]{\small $\mu_2$} .. controls (.5,.25) and (.25,.25) .. (.25,.5) -- (.25,1);
        	\filldraw [fill=white, draw=black] (-.375,.5) rectangle (.375,1.25) node[midway] { $\muT_1$};
        }
        &
        G_1(1,1,p) &\cong 
        \tikzdiag[xscale=1.25]{
        	\draw [pstdhl] (-.25,0) node[below]{\small $\mu_1$} -- (-.25,1);
        	\draw (0,0) -- (0,1);
        	\draw (.25,0) .. controls (.25,.25) and (.5,.25) .. (.5,.5)node[pos=.2, tikzdot]{} node[pos=.2, xshift=-.85ex, yshift=.5ex]{\small $p$} -- (.5,1.25);
        	\draw [pstdhl] (.5,0) node[below]{\small $\mu_2$} .. controls (.5,.25) and (.25,.25) .. (.25,.5) -- (.25,1);
        	\filldraw [fill=white, draw=black] (-.375,.5) rectangle (.375,1.25) node[midway] { $\muT_1$};
        }
        \\
        G_2(1,0,p){[1]} &\cong 
        \tikzdiag[xscale=1.25]{
        	\draw (0,0) .. controls (0,.125) .. (-.25,.25) .. controls (.5,.375) .. (.5,.5) -- (.5,1.25);
        	\draw (.25,0) .. controls (.25,.25) and (0,.25) ..  (0,.5) -- (0,1);
        	\draw [pstdhl] (.5,0) node[below]{\small $\mu_2$} .. controls (.5,.25) and (.25,.25) .. (.25,.5) -- (.25,1);
        	\draw [pstdhl] (-.25,0) node[below]{\small $\mu_1$} -- (-.25,.5) node[midway, nail]{}  node[midway, xshift=-1.25ex,yshift=.15ex, black]{\small $p$}  -- (-.25,1);
        	\filldraw [fill=white, draw=black] (-.375,.5) rectangle (.375,1.25) node[midway] { $\muT_1$};
        }
        &
        G_2(1,1,p){[1]} &\cong 
        \tikzdiag[xscale=1.25]{
        	\draw (0,0) -- (0,1);
        	\draw (.25,0) .. controls (.25,.125) .. (-.25,.25) .. controls (.5,.375) .. (.5,.5) -- (.5,1.25);
        	\draw [pstdhl] (.5,0) node[below]{\small $\mu_2$} .. controls (.5,.25) and (.25,.25) .. (.25,.5) -- (.25,1);
        	\draw [pstdhl] (-.25,0) node[below]{\small $\mu_1$} --  (-.25,.5) node[midway, nail]{}  node[midway, xshift=-1.25ex,yshift=.15ex, black]{\small $p$} -- (-.25,1);
        	\filldraw [fill=white, draw=black] (-.375,.5) rectangle (.375,1.25) node[midway] { $\muT_1$};
        }
    \end{align*}
    In order to compute $L_\mu$, we compute
    \begin{align*}
        d_{\mu}\left(
        \tikzdiag[xscale=2,yscale=2]{
        	\draw (0,0) .. controls (0,.125) .. (-.25,.25) .. controls (.5,.375) .. (.5,.5);
        	\draw (.25,0) .. controls (.25,.25) and (0,.25) ..  (0,.5);
        	\draw [pstdhl] (.5,0) node[below]{\small $\mu_2$} .. controls (.5,.25) and (.25,.25) .. (.25,.5);
        	\draw [pstdhl] (-.25,0) node[below]{\small $\mu_1$} -- (-.25,.5) node[midway, nail]{}  node[midway, xshift=-1.25ex,yshift=.15ex, black]{\small $p$};
        }
        \right)
        &= 
        \tikzdiag[xscale=2,yscale=2]{
        	\draw (0,0) .. controls (0,.25) and (.5,.25) .. (.5,.5) node[pos=.2, tikzdot]{} node[pos=.2, xshift=-.85ex, yshift=.5ex]{\small $p'$} ;
        	\draw (.25,0) .. controls (.25,.25) and (0,.25) ..  (0,.5);
        	\draw [pstdhl] (.5,0) node[below]{\small $\mu_2$} .. controls (.5,.25) and (.25,.25) .. (.25,.5);
        	\draw [pstdhl] (-.25,0) node[below]{\small $\mu_1$} -- (-.25,.5);
        }
        \quad \text{with $p' := p+\mu_1$,}
        \\
        d_{\mu}\left(
        \tikzdiag[xscale=2,yscale=2]{
        	\draw (0,0) -- (0,.5);
        	\draw (.25,0) .. controls (.25,.125) .. (-.25,.25) .. controls (.5,.375) .. (.5,.5);
        	\draw [pstdhl] (.5,0) node[below]{\small $\mu_2$} .. controls (.5,.25) and (.25,.25) .. (.25,.5);
        	\draw [pstdhl] (-.25,0) node[below]{\small $\mu_1$} --  (-.25,.5) node[midway, nail]{}  node[midway, xshift=-1.25ex,yshift=.15ex, black]{\small $p$};
        }
        \right)
        &= 
        \sssum{u+v\\=p'-1}
        \tikzdiag[xscale=2,yscale=2]{
        	\draw (0,0) .. controls (0,.25) and (.5,.25) .. (.5,.5) node[pos=.2, tikzdot]{} node[pos=.2, xshift=-1.5ex, yshift=0ex]{\small $v$} ;
        	\draw (.25,0) .. controls (.25,.25) and (0,.25) ..  (0,.5) node[pos=.7, tikzdot]{} node[pos=.7, xshift=-1.5ex, yshift=0ex]{\small $u$};
        	\draw [pstdhl] (.5,0) node[below]{\small $\mu_2$} .. controls (.5,.25) and (.25,.25) .. (.25,.5);
        	\draw [pstdhl] (-.25,0) node[below]{\small $\mu_1$} -- (-.25,.5);
        }
        \ - \sssum{u+v\\=p'-1} \quad \sssum{a+b\\=v-1} \ 
        \tikzdiag[xscale=2,yscale=2]{
        	\draw [pstdhl] (-.25,0) node[below]{\small $\mu_1$} -- (-.25,.5);
        	\draw (0,0) -- (0,.5) node[pos=.33,tikzdot]{} node[pos=.33,xshift=-1.25ex]{\small $a$} node[pos=.67,tikzdot]{} node[pos=.67,xshift=-1.25ex]{\small $u$} ;
        	\draw (.25,0) .. controls (.25,.25) and (.5,.25) .. (.5,.5)node[pos=.2, tikzdot]{} node[pos=.2, xshift=-1.25ex, yshift=.5ex]{\small $b$};
        	\draw [pstdhl] (.5,0) node[below]{\small $\mu_2$} .. controls (.5,.25) and (.25,.25) .. (.25,.5);
        }
    \end{align*}
    In particular, note that $G_2(1,0,p)$ has its image only in $G_1(1,0,p')$, while $G_2(1,1,p)$ has its image in both $G_1(1,0,p'') $ and $G_1(1,1,p'')$ for $p'' \leq p'-1$.
\end{exe}

%%%%%%%%%%%%%%%%	End of file	%%%%%%%%%%%%%

%%%%%%%%%%%%%%%%%%%%%%%%%%%%%%%%%%%%
%                 					  				  		 %
%	Basis theorems				 					 %
%                 					  						 %
%%%%%%%%%%%%%%%%%%%%%%%%%%%%%%%%%%%%

\section{Basis theorem} \label{sec:basisthemtri}

The goal of this section is to prove \cref{thm:Tbasis}. Usually with KLR-like algebra, one proves such a statement by constructing a faithful action on a polynomial space. However, the degenerate nature of the relations in \cref{eq:vredR} make the construction of such an action a non-obvious problem. To get around this issue, we define a new parametrized algebra $\dgT_b^{\und \mu}(\param)$ where the degenerate relations are replaced by non-degenerate ones, and which gives back $\dgT_b^{\und \mu}$ when specializing the parameter $\param$ to zero. We show that $\dgT_b^{\und \mu}(\param)$ comes with a faithful polynomial action, and use it to prove \cref{thm:Tbasis} through rewriting methods.

\begin{defn}
Let $\dgT_b^{\und \mu}(\param)$ be the $\bZ \times \bZ^2$-graded diagrammatic $\Bbbk[\param]$-algebra defined exactly as $\dgT_b^{\und \mu}$ in \cref{defn:dgKLRW} except that the relations in \cref{eq:vredR} are replaced by
\begin{align}%\label{eq:vredRparam}
	\tikzdiagl[yscale=1.5]{
		\draw (1,0) ..controls (1,.25) and (0,.25) .. (0,.5)..controls (0,.75) and (1,.75) .. (1,1)  ;
		\draw[vstdhl] (0,0)node[below]{\small $\mu_i$} ..controls (0,.25) and (1,.25) .. (1,.5) ..controls (1,.75) and (0,.75) .. (0,1)  ;
	} 
	\ &= \param 
	\tikzdiagl[yscale=1.5]{
		\draw (1,0) -- (1,1)  ;
		\draw[vstdhl] (0,0)node[below]{\small $\mu_i$}-- (0,1)  ;
	}
	&
	\tikzdiagl[yscale=1.5]{
		\draw (0,0) ..controls (0,.25) and (1,.25) .. (1,.5) ..controls (1,.75) and (0,.75) .. (0,1)  ;
		\draw[vstdhl] (1,0)node[below]{\small $\mu_i$} ..controls (1,.25) and (0,.25) .. (0,.5)..controls (0,.75) and (1,.75) .. (1,1)  ;
	} 
	\ &= \param \ 
	\tikzdiagl[yscale=1.5]{
		\draw (0,0) -- (0,1)  ;
		\draw[vstdhl] (1,0)node[below]{\small $\mu_i$}-- (1,1)  ;
	}
	\label{eq:vredR2param}
\end{align}
\begin{align}
	\tikzdiagl[scale=1.5]{
		\draw  (0,0) .. controls (0,0.25) and (1, 0.5) ..  (1,1);
		\draw  (1,0) .. controls (1,0.5) and (0, 0.75) ..  (0,1);
		\draw [vstdhl] (0.5,0)node[below]{\small $\mu_i$}  .. controls (0.5,0.25) and (0, 0.25) ..  (0,0.5)
			 	  .. controls (0,0.75) and (0.5, 0.75) ..  (0.5,1);
	} 
	\ &= \ 
	\tikzdiagl[scale=1.5,xscale=-1]{
		\draw  (0,0) .. controls (0,0.25) and (1, 0.5) ..  (1,1);
		\draw (1,0)  .. controls (1,0.5) and (0, 0.75) ..  (0,1);
		\draw [vstdhl]  (0.5,0)node[below]{\small $\mu_i$}  .. controls (0.5,0.25) and (0, 0.25) ..  (0,0.5)
			 	  .. controls (0,0.75) and (0.5, 0.75) ..  (0.5,1);
	} 
	\label{eq:vredR3param}
\end{align}
\end{defn}

Note that if we specialize $\param = 0$, then we obtain $\dgT_b^{\und \mu}(0) \cong \dgT_b^{\und \mu}$. 

Our goal is to equip $\muT_b$ with a rewriting system up to braid-like isotopy in the sense of \cref{sec:rewritingmethods}, and then specialize it to the case $\param = 0$ in order to prove \cref{thm:Tbasis}.

\subsection{Rewriting rules}\label{ssec:rewritingrulesparam}

Let $\muDiag_{b}(\param)$ be the set of diagrams of the same form as in the definition of $\muT_{b}(\param)$, up to braid-like planar isotopy (see \cref{ssec:diagalg}).

We define a weight function $w : \muDiag_{b}(\param) \rightarrow \bZ^3$ that takes a diagram to the element of $\bZ^3$ given by starting at $(0,0,0)$ and applying the following procedure:
\begin{itemize}
\item for each black or colored crossing, count the number $i$ of strands at its left and add $(i,0,0)$;
\item for each dot, follow the strand above and sum $k$ the amount of crossings and nails involving the strand, then add $(0,k,0)$;
\item for each nail, count the number $\ell$ of crossings in the region at the bottom left delimited by following the nailed strand from the nail to the bottom, then add $(0,0,\ell)$.
\end{itemize}
Clearly, this weight function is well-defined as it is stable under braid-like planar isotopy. Therefore this gives a preorder $\preceq$ on $\muDiag_{b}(\param)$ by saying  $D \preceq D'$ whenever $w(D) \leq w(D')$ for the lexicographic order on $\bZ^3$.

\begin{exe}
    Consider the following diagram:
    \[
        \tikzdiag{
            \draw (.5,0) .. controls (.5,.5) and (1.5,.5) .. (1.5,1) 
                         .. controls (1.5,1.5) and (.5,1.5) .. (.5,2);
            \draw (1.5,0) .. controls (1.5,.6) .. (0,1.2) node[pos=.7, tikzdot]{}
                          .. controls (1,1.6) .. (1,2);
            \draw[pstdhl] (1,0)node[below]{\small $\mu_2$}  .. controls (1,.4) and (.5,.4) .. (.5,.8)
                                .. controls (.5,1.4) and (1.5,1.4) .. (1.5,2);
            \draw[pstdhl] (0,0) node[below]{\small $\mu_1$} -- (0,2) node[nail, pos=.6]{};
        }
    \]
    We obtain that its weight is $(7,3,1)$.
\end{exe}

Following Appendix \ref{sec:rewritingmethods}, we will rewrite in the algebras $\dgT_b^{\und \mu}(\delta)$ modulo braid-like isotopies. 
Let $\mathbb{T}_b^{\und \mu}(\delta)$ be the  linear $2$-polygraph
having one $0$-cell, with generating $1$-cells given by 
\begin{gather*}
\begin{align*}
     \tikzdiagh{0}{
    	\draw[pstdhl] (2,0)  node[below]{\small $\mu_1$} --(2,1);
    	%
    	%\node at (2.5,-0.3) {$1$};
    	\draw (2.5,0) -- (2.5,1);
    	\node at(3,.5) {\tiny$\dots$};
    	\draw (3.5,0) -- (3.5,1);
    	\draw[pstdhl] (4,0)  node[below]{\small $\mu_2$} --(4,1);
        \draw (4.7,0) ..controls (4.7,.5) and (5.3,.5) .. (5.3,1);
    	\draw (5.3,0) ..controls (5.3,.5) and (4.7,.5) .. (4.7,1);
    	%\node at (4.7,-0.3) {$i$};
        \node at (5.6,0.5) {\tiny $\dots$};
        \node at (4.4,0.5) {\tiny $\dots$};
    	%
    	%\node[vcolor] at  (5,.5) {\dots};
    	%
    	\draw[pstdhl] (6,0)  node[below]{\small ${\mu_r}$} --(6,1);
    	\draw (6.5,0) -- (6.5,1);
    	\node at(7,.5) {\tiny$\dots$};
    	\draw (7.5,0) -- (7.5,1); (7.6,-.35);
    }
    && , &&
     \tikzdiagh{0}{
    	\draw[pstdhl] (2,0)  node[below]{\small $\mu_1$} --(2,1);
    	\draw (2.5,0) -- (2.5,1);
    	\node at(3,.5) {\tiny$\dots$};
    	\draw (3.5,0) -- (3.5,1);
    	\draw[pstdhl] (4,0)  node[below]{\small $\mu_2$} --(4,1);
        %\draw (4.7,0) ..controls (4.7,.5) and (5.3,.5) .. (5.3,1);
    	%\draw (5.3,0) ..controls (5.3,.5) and (4.7,.5) .. (4.7,1);
    	\draw (5,0) -- (5,1) node[midway, tikzdot]{};
        \node at (5.6,0.5) {\tiny $\dots$};
        \node at (4.4,0.5) {\tiny $\dots$};
    	%
    	%\node[vcolor] at  (5,.5) {\dots};
    	%
    	\draw[pstdhl] (6,0)  node[below]{\small ${\mu_r}$} --(6,1);
    	\draw (6.5,0) -- (6.5,1);
    	\node at(7,.5) {\tiny$\dots$};
    	\draw (7.5,0) -- (7.5,1); (7.6,-.35);
    }
    \\
%\end{align*}
%\begin{align*}
     \tikzdiagh{0}{
    	\draw[pstdhl] (2,0)  node[below]{\small $\mu_1$} --(2,1);
    	%
    	%\node at (2.5,-0.3) {$1$};
    	\draw (2.5,0) -- (2.5,1);
    	\node at(3,.5) {\tiny$\dots$};
    	\draw (3.5,0) -- (3.5,1);
    	\draw (5.3,0) ..controls (5.3,.5) and (4.7,.5) .. (4.7,1);
        \draw[pstdhl] (4.7,0) node[below]{\small $\mu_i$} ..controls (4.7,.5) and (5.3,.5) .. (5.3,1);
    	%\node at (4.7,-0.3) {$i$};
        \node at (6,0.5) {\tiny $\dots$};
        \node at (4,0.5) {\tiny $\dots$};
    	%
    	%\node[vcolor] at  (5,.5) {\dots};
    	%
    	\draw (6.5,0) -- (6.5,1);
    	\node at(7,.5) {\tiny$\dots$};
    	\draw (7.5,0) -- (7.5,1); (7.6,-.35);
    }
    &&,&&
    \tikzdiagh{0}{
    	\draw[pstdhl] (2,0)  node[below]{\small $\mu_1$} --(2,1);
    	%
    	%\node at (2.5,-0.3) {$1$};
    	\draw (2.5,0) -- (2.5,1);
    	\node at(3,.5) {\tiny$\dots$};
    	\draw (3.5,0) -- (3.5,1);
        \draw (4.7,0) ..controls (4.7,.5) and (5.3,.5) .. (5.3,1);
    	\draw[pstdhl] (5.3,0)  node[below]{\small $\mu_i$}  ..controls (5.3,.5) and (4.7,.5) .. (4.7,1);
    	%\node at (4.7,-0.3) {$i$};
        \node at (6,0.5) {\tiny $\dots$};
        \node at (4,0.5) {\tiny $\dots$};
    	%
    	%\node[vcolor] at  (5,.5) {\dots};
    	%
    	\draw (6.5,0) -- (6.5,1);
    	\node at(7,.5) {\tiny$\dots$};
    	\draw (7.5,0) -- (7.5,1); (7.6,-.35);
    }
\end{align*}
\\
%\begin{align*}
    \tikzdiag[xscale=2]{
        \draw (.25,-.5) .. controls (.25,-.25) .. (0,0) .. controls (.25,.25) .. (.25,.5);
        \draw[pstdhl] (0,-.5) node[below]{\small $\mu_1$}-- (0,.5)  node[midway, nail]{};
        \draw (.5,-.5) -- (.5,.5);
        \node at(0.75,0) {\tiny $\cdots$};
        \draw (1,-0.5) -- (1,0.5);
    	\draw[pstdhl] (1.25,-0.5)  node[below]{\small ${\mu_2}$} --(1.25,0.5);
    	\node at (1.625,0){\tiny $\dots$};
    	\draw[pstdhl] (2,-0.5)  node[below]{\small ${\mu_r}$} --(2,0.5);
    	\draw (2.25,-.5) -- (2.25,.5);
    	\node at (2.5,0){\tiny $\dots$};
    	\draw (2.75,-.5) -- (2.75,.5);
  }
%\end{align*}
\end{gather*}
and containing the following rewriting rules as generating $2$-cells:
\begin{align}
\label{eq:nhR2andR3rewrite}
\tikzdiag[yscale=1.5]{
	\draw (0,0) ..controls (0,.25) and (1,.25) .. (1,.5) ..controls (1,.75) and (0,.75) .. (0,1)  ;
	\draw (1,0) ..controls (1,.25) and (0,.25) .. (0,.5)..controls (0,.75) and (1,.75) .. (1,1)  ;
} 
\ &\Rightarrow\  
0,
&
\tikzdiag[scale=1.5,xscale=-1]{
	\draw  (0,0) .. controls (0,0.25) and (1, 0.5) ..  (1,1);
	\draw  (1,0) .. controls (1,0.5) and (0, 0.75) ..  (0,1);
	\draw  (0.5,0) .. controls (0.5,0.25) and (0, 0.25) ..  (0,0.5)
		 	  .. controls (0,0.75) and (0.5, 0.75) ..  (0.5,1);
} 
\ &\Rightarrow \ 
\tikzdiag[scale=1.5]{
	\draw  (0,0) .. controls (0,0.25) and (1, 0.5) ..  (1,1);
	\draw  (1,0) .. controls (1,0.5) and (0, 0.75) ..  (0,1);
	\draw  (0.5,0) .. controls (0.5,0.25) and (0, 0.25) ..  (0,0.5)
		 	  .. controls (0,0.75) and (0.5, 0.75) ..  (0.5,1);
} 
\\
\label{eq:nhdotsliderewrite}
\tikzdiag{
	\draw (0,0) ..controls (0,.5) and (1,.5) .. (1,1) node [near start,tikzdot]{};
	\draw (1,0) ..controls (1,.5) and (0,.5) .. (0,1);
}
\ &\Rightarrow \ 
\tikzdiag{
	\draw (0,0) ..controls (0,.5) and (1,.5) .. (1,1) node [near end,tikzdot]{};
	\draw (1,0) ..controls (1,.5) and (0,.5) .. (0,1);
}
\ + \ 
\tikzdiag{
	\draw (0,0) -- (0,1)  ;
	\draw (1,0)-- (1,1)  ;
}
&
\tikzdiag{
	\draw (0,0) ..controls (0,.5) and (1,.5) .. (1,1);
	\draw (1,0) ..controls (1,.5) and (0,.5) .. (0,1) node [near start,tikzdot]{};
}
\ &\Rightarrow \ 
\tikzdiag{
	\draw (0,0) ..controls (0,.5) and (1,.5) .. (1,1);
	\draw (1,0) ..controls (1,.5) and (0,.5) .. (0,1) node [near end,tikzdot]{};
}
\ - \ 
\tikzdiag{
	\draw (0,0) -- (0,1)  ;
	\draw (1,0)-- (1,1)  ;
} 
\end{align}
\begin{align}
\tikzdiagl[scale=1.5,xscale=-1]{
	\draw  (0,0) .. controls (0,0.25) and (1, 0.5) ..  (1,1);
	\draw  (0.5,0) .. controls (0.5,0.25) and (0, 0.25) ..  (0,0.5)
		 	  .. controls (0,0.75) and (0.5, 0.75) ..  (0.5,1);
	\draw [pstdhl] (1,0) node[below]{\small $\mu_i$}  .. controls (1,0.5) and (0, 0.75) ..  (0,1);
} 
\ &\Rightarrow \ 
\tikzdiagl[scale=1.5]{
	\draw  (0.5,0) .. controls (0.5,0.25) and (0, 0.25) ..  (0,0.5)
		 	  .. controls (0,0.75) and (0.5, 0.75) ..  (0.5,1);
	\draw (1,0)  .. controls (1,0.5) and (0, 0.75) ..  (0,1);
	\draw  [pstdhl] (0,0) node[below]{$\mu_i$}  .. controls (0,0.25) and (1, 0.5) ..  (1,1);
} 
&
\tikzdiagl[scale=1.5,xscale=-1]{
	\draw  (0.5,0) .. controls (0.5,0.25) and (0, 0.25) ..  (0,0.5)
		 	  .. controls (0,0.75) and (0.5, 0.75) ..  (0.5,1);
	\draw (1,0)  .. controls (1,0.5) and (0, 0.75) ..  (0,1);
	\draw  [pstdhl] (0,0) node[below]{\small $\mu_i$} .. controls (0,0.25) and (1, 0.5) ..  (1,1);
} 
\ &\Rightarrow \ 
\tikzdiagl[scale=1.5]{
	\draw  (0,0) .. controls (0,0.25) and (1, 0.5) ..  (1,1);
	\draw  (0.5,0) .. controls (0.5,0.25) and (0, 0.25) ..  (0,0.5)
		 	  .. controls (0,0.75) and (0.5, 0.75) ..  (0.5,1);
	\draw [pstdhl] (1,0) node[below]{\small $\mu_i$} .. controls (1,0.5) and (0, 0.75) ..  (0,1);
} 
\label{eq:crossingslideredrewrite}
\\
\tikzdiagl{
	\draw (1,0) ..controls (1,.5) and (0,.5) .. (0,1) node [near start,tikzdot]{};
	\draw[pstdhl] (0,0) node[below]{\small $\mu_i$}  ..controls (0,.5) and (1,.5) .. (1,1);
}
\ &\Rightarrow \ 
\tikzdiagl{
	\draw (1,0) ..controls (1,.5) and (0,.5) .. (0,1) node [near end,tikzdot]{};
	\draw[pstdhl] (0,0) node[below]{\small $\mu_i$}  ..controls (0,.5) and (1,.5) .. (1,1);
}
&
\tikzdiagl{
	\draw (0,0) ..controls (0,.5) and (1,.5) .. (1,1) node [near start,tikzdot]{};
	\draw[pstdhl] (1,0) node[below]{\small $\mu_i$}  ..controls (1,.5) and (0,.5) .. (0,1);
}
\ &\Rightarrow \ 
\tikzdiagl{
	\draw (0,0) ..controls (0,.5) and (1,.5) .. (1,1) node [near end,tikzdot]{};
	\draw[pstdhl] (1,0) node[below]{\small $\mu_i$}  ..controls (1,.5) and (0,.5) .. (0,1);
} 
\label{eq:dotredstrandrewrite}
\end{align}
\begin{align}
\tikzdiagh[yscale=1.5]{0}{
	\draw (1,0) ..controls (1,.25) and (0,.25) .. (0,.5)..controls (0,.75) and (1,.75) .. (1,1)  ;
	\draw[stdhl] (0,0) node[below]{\small $\mu_i$} ..controls (0,.25) and (1,.25) .. (1,.5) ..controls (1,.75) and (0,.75) .. (0,1)  ;
} 
\ &\Rightarrow \ 
\tikzdiagh[yscale=1.5]{0}{
	\draw[stdhl] (0,0) node[below]{\small $\mu_i$} -- (0,1)  ;
	\draw (1,0) -- (1,1)  node[midway,tikzdot]{}  node[midway,xshift=1.75ex,yshift=.75ex]{\small $\mu_i$} ;
} 
&
\tikzdiagh[yscale=1.5]{0}{
	\draw (0,0) ..controls (0,.25) and (1,.25) .. (1,.5) ..controls (1,.75) and (0,.75) .. (0,1)  ;
	\draw[stdhl] (1,0) node[below]{\small $\mu_i$} ..controls (1,.25) and (0,.25) .. (0,.5)..controls (0,.75) and (1,.75) .. (1,1)  ;
} 
\ &\Rightarrow \ 
\tikzdiagh[yscale=1.5]{0}{
	\draw (0,0) -- (0,1)  node[midway,tikzdot]{}   node[midway,xshift=1.75ex,yshift=.75ex]{\small $\mu_i$} ;
	\draw[stdhl] (1,0) node[below]{\small $\mu_i$} -- (1,1)  ;
} 
&&
\text{if $\mu _i \in \bN$,}
\label{eq:redR2rewrite}
\\
\tikzdiagh[yscale=1.5]{0}{
	\draw (1,0) ..controls (1,.25) and (0,.25) .. (0,.5)..controls (0,.75) and (1,.75) .. (1,1)  ;
	\draw[vstdhl] (0,0) node[below]{\small $\mu_i$} ..controls (0,.25) and (1,.25) .. (1,.5) ..controls (1,.75) and (0,.75) .. (0,1)  ;
} 
\ &\Rightarrow \param \ 
\tikzdiagh[yscale=1.5]{0}{
	\draw[vstdhl] (0,0) node[below]{\small $\mu_i$} -- (0,1)  ;
	\draw (1,0) -- (1,1) ;
} 
&
\tikzdiagh[yscale=1.5]{0}{
	\draw (0,0) ..controls (0,.25) and (1,.25) .. (1,.5) ..controls (1,.75) and (0,.75) .. (0,1)  ;
	\draw[vstdhl] (1,0) node[below]{\small $\mu_i$} ..controls (1,.25) and (0,.25) .. (0,.5)..controls (0,.75) and (1,.75) .. (1,1)  ;
} 
\ &\Rightarrow \param \ 
\tikzdiagh[yscale=1.5]{0}{
	\draw (0,0) -- (0,1)  ;
	\draw[vstdhl] (1,0) node[below]{\small $\mu_i$} -- (1,1)  ;
} 
&&
\text{if $\mu_i \in \beta + \bZ$, }
\label{eq:blueR2rewrite}
\end{align}
\begin{align}
\tikzdiagh[scale=1.5,xscale=-1]{0}{
	\draw  (0,0) .. controls (0,0.25) and (1, 0.5) ..  (1,1);
	\draw (1,0)  .. controls (1,0.5) and (0, 0.75) ..  (0,1);
	\draw [pstdhl]  (0.5,0) node[below]{\small $\mu_i$} .. controls (0.5,0.25) and (0, 0.25) ..  (0,0.5)
		 	  .. controls (0,0.75) and (0.5, 0.75) ..  (0.5,1);
} 
\ &\Rightarrow \ 
\tikzdiagh[scale=1.5]{0}{
	\draw  (0,0) .. controls (0,0.25) and (1, 0.5) ..  (1,1);
	\draw  (1,0) .. controls (1,0.5) and (0, 0.75) ..  (0,1);
	\draw [pstdhl] (0.5,0)node[below]{\small $\mu_i$}  .. controls (0.5,0.25) and (0, 0.25) ..  (0,0.5)
		 	  .. controls (0,0.75) and (0.5, 0.75) ..  (0.5,1);
} 
\ - \sssum{u+v=\\\mu_i-1} \ 
\tikzdiagh[scale=1.5]{0}{
	\draw  (0,0) -- (0,1) node[midway,tikzdot]{} node[midway,xshift=-1.5ex,yshift=.75ex]{\small $u$};
	\draw  (1,0) --  (1,1) node[midway,tikzdot]{} node[midway,xshift=1.5ex,yshift=.75ex]{\small $v$};
	\draw [pstdhl] (0.5,0)node[below]{\small $\mu_i$}  --  (0.5,1);
} \label{eq:redR3rewrite}
\end{align}
where we recall the sum is $0$ by convention whenever $\mu_i \in \beta + \bZ$,
\begin{align}  \label{eq:nailsrelrewrite} %\tag{\ref{eq:dotslidesovernail}}
	\tikzdiagl[xscale=2]{
		 \draw (.5,-.5) .. controls (.5,-.25) .. (0,0)  node[midway, tikzdot]{} .. controls (.5,.25) .. (.5,.5);
	          \draw[pstdhl] (0,-.5) node[below]{\small $\mu_1$} -- (0,.5) node [midway,nail]{};
  	}
\  &\Rightarrow \ 
	\tikzdiagl[xscale=2]{
		 \draw (.5,-.5) .. controls (.5,-.25) .. (0,0) .. controls (.5,.25) .. (.5,.5)  node[midway, tikzdot]{};
	          \draw[pstdhl] (0,-.5) node[below]{\small $\mu_1$} -- (0,.5) node [midway,nail]{};
  	}
  &
%\end{align}
%\begin{align}\tag{\ref{eq:nailscommute}}
	\tikzdiagl[xscale=-1,yscale=.75]{
		\draw (-1.5,-.75) -- (-1.5,0) .. controls (-1.5,.5) .. (0,.75) .. controls (-1.5,1) .. (-1.5,1.5);
		\draw (-.75,-.75) .. controls (-.75,-.25) .. (0,0) .. controls (-.75,.25) .. (-.75,.75) -- (-.75,1.5);
		\draw[pstdhl] (0,-.75) node[below]{\small $\mu_1$} -- (0,0) node[pos=1,nail]{} -- (0,.75) node[pos=1,nail]{}  -- (0,1.5);
	}
\ &\Rightarrow - \ 
	\tikzdiagl[xscale=-1,yscale=-.75]{
		\draw (-1.5,-.75) -- (-1.5,0) .. controls (-1.5,.5) .. (0,.75) .. controls (-1.5,1) .. (-1.5,1.5);
		\draw (-.75,-.75) .. controls (-.75,-.25) .. (0,0) .. controls (-.75,.25) .. (-.75,.75) -- (-.75,1.5);
		\draw[pstdhl] (0,-.75) -- (0,0) node[pos=1,nail]{} -- (0,.75) node[pos=1,nail]{}  -- (0,1.5) node[below]{$\mu_1$} ;
	}
&
%\end{align}
%\begin{align} \tag{\ref{eq:doublenail}}
	\tikzdiagl[xscale=-1,yscale=.75]{
		\begin{scope}
			\clip(0,-.75) rectangle (-.75,1.5);
			\draw (.75,-.75) .. controls (.75,0) and (-.5,0) .. (-.5,.375) 
				.. controls (-.5,.75) and (.75,.75) .. (.75,1.5);
		\end{scope}
		\draw (-.75,-.75) .. controls (-.75,-.25) .. (0,0)
			%.. controls (-.5,.375) .. 
			(0,.75) .. controls (-.75,1.125) .. (-.75,1.5);
		\draw[pstdhl] (0,-.75)  node[below]{\small $\mu_1$} -- (0,0) node[pos=1,nail]{} -- (0,.75) node[pos=1,nail]{}  -- (0,1.5);
	}
\ &\Rightarrow 0,
\end{align}
and finally the collections of local rewriting rules
{
\allowdisplaybreaks
\begin{align}
\label{eq:crossingnailrewrite1}
\tikzdiagh[xscale=1.25]{0}{
	\draw (1, -1) .. controls(1,-.75) and (.75,-.75) .. (.75,-.5) -- (.75,1);
	%\draw[dashed, pstdhl] (1, -1) .. controls(1,-.75) and (.75,-.75) .. (.75,-.5) -- (.75,1);
	\draw (.75,-1)  .. controls(.75,-.75) and (1,-.75) .. (1,-.5)   -- (1,1);
	%\draw[dashed, pstdhl] (.75,-1)  .. controls(.75,-.75) and (1,-.75) .. (1,-.5)   -- (1,1);
	%
	\draw (0,-1) -- (0,1);
	\draw[dashed, pstdhl] (0,-1) -- (0,1);
	\node at(.25,-.85) {\tiny $\dots$};
	\node at(.25,.85) {\tiny $\dots$};
	\draw (.5,-1) -- (.5,1);
	\draw[dashed, pstdhl] (.5,-1) -- (.5,1);
	\draw (1.25,-1)  .. controls (1.25,-.25) and (-.5,-.25) ..  (-.25,0) .. controls (-.5,.25) and (1.25,.25) ..  (1.25,1);
	\draw [pstdhl] (-.25,-1)  node[below]{\small $\mu_1$}  -- (-.25,1)  node[midway,nail]{};	
	\tikzbrace{0}{.5}{-1}{$\ell$};
}
&\Rightarrow
\tikzdiagh[xscale=1.25]{0}{
	\draw (.75, -1) -- (.75,.5) .. controls (.75,.75) and (1,.75) .. (1,1);
	%\draw[dashed, vstdhl]  (.75, -1) -- (.75,.5) .. controls (.75,.75) and (1,.75) .. (1,1);
	\draw (1,-1)  -- (1,.5) .. controls (1,.75) and (.75,.75) .. (.75,1);
	%\draw[dashed, vstdhl]  (1,-1)  -- (1,.5) .. controls (1,.75) and (.75,.75) .. (.75,1);
	%
	\draw (0,-1) -- (0,1);
	\draw[dashed, pstdhl] (0,-1) -- (0,1);
	\node at(.25,-.85) {\tiny $\dots$};
	\node at(.25,.85) {\tiny $\dots$};
	\draw (.5,-1) -- (.5,1);
	\draw[dashed, pstdhl] (.5,-1) -- (.5,1);
	\draw (1.25,-1)  .. controls (1.25,-.25) and (-.5,-.25) ..  (-.25,0) .. controls (-.5,.25) and (1.25,.25) ..  (1.25,1);
	\draw [pstdhl] (-.25,-1) node[below]{\small $\mu_1$} -- (-.25,1)  node[midway,nail]{};	
	\tikzbrace{0}{.5}{-1}{$\ell$};
}
\\
\label{eq:crossingnailrewrite2}
\tikzdiagh[xscale=1.25]{0}{
	\draw (1.25,-1)  .. controls (1.25,-.25) and (-.5,-.25) ..  (-.25,0) .. controls (-.5,.25) and (1.25,.25) ..  (1.25,1);
	%
	%\draw (1, -1) .. controls(1,-.75) and (.75,-.75) .. (.75,-.5) -- (.75,1);
	\draw (.75,-1)  .. controls(.75,-.75) and (1,-.75) .. (1,-.5)   -- (1,1);
	%\draw[dashed, pstdhl] (.75,-1)  .. controls(.75,-.75) and (1,-.75) .. (1,-.5)   -- (1,1);
	%
	\draw (0,-1) -- (0,1);
	\draw[dashed, pstdhl] (0,-1) -- (0,1);
	\node at(.25,-.85) {\tiny $\dots$};
	\node at(.25,.85) {\tiny $\dots$};
	\draw (.5,-1) -- (.5,1);
	\draw[dashed, pstdhl] (.5,-1) -- (.5,1);
	\draw [pstdhl] (-.25,-1)  node[below]{\small $\mu_1$}  -- (-.25,1)  node[midway,nail]{};	
	\tikzbrace{0}{.5}{-1}{$\ell$};
	\draw[pstdhl] (1, -1)  node[below]{\small $\mu_i$} .. controls(1,-.75) and (.75,-.75) .. (.75,-.5) -- (.75,1);
}
&\Rightarrow
\tikzdiagh[xscale=1.25]{0}{
	\draw (1.25,-1)  .. controls (1.25,-.25) and (-.5,-.25) ..  (-.25,0) .. controls (-.5,.25) and (1.25,.25) ..  (1.25,1);
	\draw (.75, -1) -- (.75,.5) .. controls (.75,.75) and (1,.75) .. (1,1);
	%\draw[dashed, pstdhl]  (.75, -1) -- (.75,.5) .. controls (.75,.75) and (1,.75) .. (1,1);
	%\draw (1,-1)  -- (1,.5) .. controls (1,.75) and (.75,.75) .. (.75,1);
	%
	\draw (0,-1) -- (0,1);
	\draw[dashed, pstdhl] (0,-1) -- (0,1);
	\node at(.25,-.85) {\tiny $\dots$};
	\node at(.25,.85) {\tiny $\dots$};
	\draw (.5,-1) -- (.5,1);
	\draw[dashed, pstdhl] (.5,-1) -- (.5,1);
	\draw [pstdhl] (-.25,-1) node[below]{\small $\mu_1$} -- (-.25,1)  node[midway,nail]{};	
	\tikzbrace{0}{.5}{-1}{$\ell$};
	\draw[pstdhl]  (1,-1) node[below]{\small $\mu_i$}  -- (1,.5) .. controls (1,.75) and (.75,.75) .. (.75,1);
}
\ + \sssum{u+v=\\ \mu_i - 1} \ 
\tikzdiagh[xscale=1.25]{0}{
	\draw (.75,-1)  .. controls (.75,-.25) and (-.5,-.25) ..  (-.25,0) 
		node[pos=.1,tikzdot]{} node[pos=.1, xshift=.85ex,yshift=.85ex]{\small $u$}
		.. controls (-.5,.25) and (1.25,.25) ..  (1.25,1);
	\draw (1.25, -1) -- (1.25,.5)
		node[pos=.15,tikzdot]{} node[pos=.15, xshift=.85ex,yshift=.85ex]{\small $v$}
		 .. controls (1.25,.75) and (1,.75) .. (1,1);
	%\draw[dashed, pstdhl]  (.75, -1) -- (.75,.5) .. controls (.75,.75) and (1,.75) .. (1,1);
	%\draw (1,-1)  -- (1,.5) .. controls (1,.75) and (.75,.75) .. (.75,1);
	%
	\draw (0,-1) -- (0,1);
	\draw[dashed, pstdhl] (0,-1) -- (0,1);
	\node at(.25,-.85) {\tiny $\dots$};
	\node at(.25,.85) {\tiny $\dots$};
	\draw (.5,-1) -- (.5,1);
	\draw[dashed, pstdhl] (.5,-1) -- (.5,1);
	\draw [pstdhl] (-.25,-1) node[below]{\small $\mu_1$} -- (-.25,1)  node[midway,nail]{};	
	\tikzbrace{0}{.5}{-1}{$\ell$};
	\draw[pstdhl]  (1,-1)  node[below]{\small $\mu_i$}  -- (1,.5) .. controls (1,.75) and (.75,.75) .. (.75,1);
}
\\
\label{eq:crossingnailrewrite3}
\tikzdiagh[xscale=1.25]{0}{
	\draw (1.25,-1)  .. controls (1.25,-.25) and (-.5,-.25) ..  (-.25,0)
		 .. controls (-.5,.25) and (1.25,.25) ..  (1.25,1);
	\draw (1, -1) .. controls(1,-.75) and (.75,-.75) .. (.75,-.5) -- (.75,1);
	%\draw (.75,-1)  .. controls(.75,-.75) and (1,-.75) .. (1,-.5)   -- (1,1);
	%
	%
	\draw (0,-1) -- (0,1);
	\draw[dashed, pstdhl] (0,-1) -- (0,1);
	\node at(.25,-.85) {\tiny $\dots$};
	\node at(.25,.85) {\tiny $\dots$};
	\draw (.5,-1) -- (.5,1);
	\draw[dashed, pstdhl] (.5,-1) -- (.5,1);
	\draw [pstdhl] (-.25,-1)  node[below]{\small $\mu_1$}  -- (-.25,1)  node[midway,nail]{};	
	\tikzbrace{0}{.5}{-1}{$\ell$};
	\draw[pstdhl] (.75,-1)   node[below]{\small $\mu_i$} .. controls(.75,-.75) and (1,-.75) .. (1,-.5)   -- (1,1);
}
&\Rightarrow
\tikzdiagh[xscale=1.25]{0}{
	\draw (1.25,-1)  .. controls (1.25,-.25) and (-.5,-.25) ..  (-.25,0) .. controls (-.5,.25) and (1.25,.25) ..  (1.25,1);
	%
	%\draw (.75, -1) -- (.75,.5) .. controls (.75,.75) and (1,.75) .. (1,1);
	%
	\draw (1,-1)  -- (1,.5) .. controls (1,.75) and (.75,.75) .. (.75,1);
	\draw (0,-1) -- (0,1);
	\draw[dashed, pstdhl] (0,-1) -- (0,1);
	\node at(.25,-.85) {\tiny $\dots$};
	\node at(.25,.85) {\tiny $\dots$};
	\draw (.5,-1) -- (.5,1);
	\draw[dashed, pstdhl] (.5,-1) -- (.5,1);
	\draw [pstdhl] (-.25,-1) node[below]{\small $\mu_1$} -- (-.25,1)  node[midway,nail]{};	
	\tikzbrace{0}{.5}{-1}{$\ell$};
	\draw[pstdhl]  (.75, -1)   node[below]{\small $\mu_i$} -- (.75,.5) .. controls (.75,.75) and (1,.75) .. (1,1);
}
\ - \sssum{u+v=\\ \mu_i - 1} \ 
\tikzdiagh[xscale=1.25]{0}{
	\draw (1.25,-1)  .. controls (1.25,-.25) and (-.5,-.25) ..  (-.25,0) .. controls (-.5,.25) and (.75,.25) ..  (.75,1)
		node[pos=.9,tikzdot]{} node[pos=.9, xshift=.85ex,yshift=-.85ex]{\small $u$};
	%
	%\draw (.75, -1) -- (.75,.5) .. controls (.75,.75) and (1,.75) .. (1,1);
	\draw (1,-1)  .. controls (1,-.75) and (1.25,-.75) .. (1.25,-.5) -- (1.25,1)
		node[pos=.85,tikzdot]{} node[pos=.85, xshift=.85ex,yshift=-.85ex]{\small $v$};
	\draw (0,-1) -- (0,1);
	\draw[dashed, pstdhl] (0,-1) -- (0,1);
	\node at(.25,-.85) {\tiny $\dots$};
	\node at(.25,.85) {\tiny $\dots$};
	\draw (.5,-1) -- (.5,1);
	\draw[dashed, pstdhl] (.5,-1) -- (.5,1);
	\draw [pstdhl] (-.25,-1) node[below]{\small $\mu_1$} -- (-.25,1)  node[midway,nail]{};	
	\tikzbrace{0}{.5}{-1}{$\ell$};
	\draw[ pstdhl]  (.75, -1)   node[below]{\small $\mu_i$} .. controls (.75,-.75) and (1,-.75) .. (1,-.5) -- (1,1);
}
\end{align}
}
for all $\ell \geq 0$ and where a dashed strand mean it can either be black or colored. 
Note that going from left to right strictly decreases the weight. Also note that all these relations holds in $\muT_{b}(\param)$, and together they present $\muT_{b}(\param)$. 

In the sequel, we rewrite with the rewriting rules above modulo braid-like planar isotopies. As a consequence, we consider rewriting with respect to the linear $2$-polygraph modulo ${}_{\text{Iso}(\dgT_b^{\und \mu}(\delta))} \mathbb{T}_b^{\und \mu}(\delta)_{\text{Iso}(\dgT_b^{\und \mu}(\delta))}$ consisting in applying the rewriting rules of $\mathbb{T}_b^{\und \mu}(\delta)$ on diagrams of $\muDiag_{b}(\param)$ that are defined up to braid-like planar isotopies. In order to shorten the notations, we will denote by $\widetilde{\mathbb{T}}_b^{\und \mu}(\delta)$ the linear $2$-polygraph modulo ${}_{\text{Iso}(\dgT_b^{\und \mu}(\delta))} \mathbb{T}_b^{\und \mu}(\delta)_{\text{Iso}(\dgT_b^{\und \mu}(\delta))}$.

\begin{rem}
    Note that we added the rewriting rules \cref{eq:crossingnailrewrite1}, \cref{eq:crossingnailrewrite2} and \cref{eq:crossingnailrewrite3},  which do not come from orienting defining relations of the algebra, in order to reach confluence modulo of the linear $2$-polygraph modulo $\widetilde{\mathbb{T}}_b^{\und \mu}(\delta)$. Indeed, there are indexed critical branchings of the form
    \begin{equation*}
        \begin{tikzcd}[row sep=-6ex]
        &
        \tikzdiagh[xscale=1.25]{0}{
        	\draw (1.25,-1)  .. controls (1.25,-.25) and (-.5,-.25) ..  (-.25,0) .. controls (-.5,.25) and (1.25,.25) ..  (1.25,1);
        	\draw (1, -1) .. controls(1,-.75) and (.75,-.75) .. (.75,-.5) -- (.75,1);
        	%\draw[dashed, vstdhl]  (1, -1) .. controls(1,-.75) and (.75,-.75) .. (.75,-.5) -- (.75,1);
        	\draw (.75,-1)  .. controls(.75,-.75) and (1,-.75) .. (1,-.5)   -- (1,1);
        	%\draw[dashed, vstdhl]  (.75,-1)  .. controls(.75,-.75) and (1,-.75) .. (1,-.5)   -- (1,1);
        	%
        	\draw (0,-1) -- (0,1);
        	\draw[dashed, pstdhl] (0,-1) -- (0,1);
        	\node at(.25,-.85) {\tiny $\dots$};
        	\node at(.25,.85) {\tiny $\dots$};
        	\draw (.5,-1) -- (.5,1);
        	\draw[dashed, pstdhl] (.5,-1) -- (.5,1);
        	\draw [pstdhl] (-.25,-1) node[below]{\small $\mu_1$} -- (-.25,1)  node[midway,nail]{};	
        	\tikzbrace{0}{.5}{-1}{$\ell$};
        }
        \ar[Rightarrow]{dd}{\eqref{eq:crossingnailrewrite1}}
        &
        \\
        \tikzdiagh[xscale=1.25]{0}{
        	\draw (1.25,-1)  .. controls (1.25,-.25) and (-.5,-.25) ..  (-.25,0) .. controls (-.5,.25) and (1.25,.25) ..  (1.25,1);
        	\draw (1,-1) -- (1, -.25) .. controls(1,0) and (.75,0) .. (.75,.25) -- (.75,1);
        	%\draw[dashed, vstdhl]  (1,-1) -- (1, -.25) .. controls(1,0) and (.75,0) .. (.75,.25) -- (.75,1);
        	\draw (.75,-1) -- (.75,-.25)  .. controls(.75,0) and (1,0) .. (1,.25)   -- (1,1);
        	%\draw[dashed, vstdhl]  (.75,-1) -- (.75,-.25)  .. controls(.75,0) and (1,0) .. (1,.25)   -- (1,1);
        	%
        	\draw (0,-1) -- (0,1);
        	\draw[dashed, pstdhl] (0,-1) -- (0,1);
        	\node at(.25,-.85) {\tiny $\dots$};
        	\node at(.25,.85) {\tiny $\dots$};
        	\draw (.5,-1) -- (.5,1);
        	\draw[dashed, pstdhl] (.5,-1) -- (.5,1);
        	\draw [pstdhl] (-.25,-1) node[below]{\small $\mu_1$} -- (-.25,1)  node[midway,nail]{};	
        	\tikzbrace{0}{.5}{-1}{$\ell$};
        }
        \ar[Rightarrow]{ur}{\eqref{eq:nhR2andR3rewrite}}
        \ar[Rightarrow,swap]{dr}{\eqref{eq:nhR2andR3rewrite}}
        &
        &
        \\
        &
        \tikzdiagh[xscale=1.25]{0}{
        	%v 
        	\draw (.75, -1) -- (.75,.5) .. controls (.75,.75) and (1,.75) .. (1,1);
        	%\draw[dashed, vstdhl]  (.75, -1) -- (.75,.5) .. controls (.75,.75) and (1,.75) .. (1,1);
        	\draw (1,-1)  -- (1,.5) .. controls (1,.75) and (.75,.75) .. (.75,1);
        	%\draw[dashed, vstdhl]  (1,-1)  -- (1,.5) .. controls (1,.75) and (.75,.75) .. (.75,1);
        	%
        	\draw (0,-1) -- (0,1);
        	\draw[dashed, pstdhl] (0,-1) -- (0,1);
        	\node at(.25,-.85) {\tiny $\dots$};
        	\node at(.25,.85) {\tiny $\dots$};
        	\draw (.5,-1) -- (.5,1);
        	\draw[dashed, pstdhl] (.5,-1) -- (.5,1);
        	\draw (1.25,-1)  .. controls (1.25,-.25) and (-.5,-.25) ..  (-.25,0) .. controls (-.5,.25) and (1.25,.25) ..  (1.25,1);
        	\draw [pstdhl] (-.25,-1) node[below]{\small $\mu_1$} -- (-.25,1)  node[midway,nail]{};	
        	\tikzbrace{0}{.5}{-1}{$\ell$};
        }
        &
        \end{tikzcd}
    \end{equation*}
   that is not confluent if we don't add the relation \cref{eq:crossingnailrewrite1}. Other shapes of indexed critical branchings also impose to add the relations \cref{eq:crossingnailrewrite2} and \cref{eq:crossingnailrewrite3}. Moreover, without these relations we sould still have a terminating rewriting system, but some normal forms would not be basis elements. 
\end{rem}

The rewriting rules above terminate on diagrams up to braid-like isotopy, \emph{i.e.} we have the following proposition:
\begin{prop}\label{prop:rewriteterminates}
The linear $2$-polygraph modulo 
$\widetilde{\mathbb{T}}_b^{\und \mu}(\delta)$
is terminating.
\end{prop}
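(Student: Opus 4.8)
The plan is to produce a well-founded termination order that strictly decreases along every rewriting step, namely the one attached to the weight function $w : \muDiag_{b}(\param) \rightarrow \bZ^3$ introduced above. First I would record the two structural facts we need about $w$: it takes values in $\bZ_{\geq 0}^3$, so the associated lexicographic preorder $\preceq$ is well-founded (there is no infinite strictly $\prec$-decreasing sequence); and, as already observed, $w$ is constant on braid-like planar isotopy classes, so it descends to a function on the $1$-cells of the $1$-polygraph underlying $\widetilde{\mathbb{T}}_b^{\und \mu}(\delta)$ and is invariant under the $2$-cells of $\text{Iso}(\dgT_b^{\und \mu}(\delta))$.

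The core of the argument is the claim that $w$ is \emph{compatible} with the rewriting rules of $\mathbb{T}_b^{\und \mu}(\delta)$: for every generating $2$-cell $\alpha$, every pair of monomials $u,v$ and every monomial $m \in \Supp(u\, t_1(\alpha)\, v)$, one has $w(m) \prec w(u\, s_1(\alpha)\, v)$ for the lexicographic order. This is checked rule by rule. Since $w$ is built from the three nested counts (number of strands left of a crossing / crossings and nails above a dot / crossings enclosed at the bottom left by a nail), and since placing a local picture inside a vertical context $u(-)v$ shifts each of these counts by a context-dependent amount that is the same for the source and for every target monomial, it suffices to compare the local pictures. For \eqref{eq:nhR2andR3rewrite} the double crossing goes to $0$, and the braid move strictly decreases a first-coordinate summand, so the first coordinate drops; for the dot-slide rules \eqref{eq:nhdotsliderewrite}, \eqref{eq:dotredstrandrewrite} the first coordinate is unchanged while the slid dot has exactly one fewer crossing above it, and the correction term (the dotless identity diagram, with one fewer crossing) drops already in the first coordinate; \eqref{eq:crossingslideredrewrite}, \eqref{eq:redR2rewrite}, \eqref{eq:blueR2rewrite}, \eqref{eq:redR3rewrite} pull a crossing to the left or resolve it, so again the first coordinate drops on each resulting monomial; and for \eqref{eq:nailsrelrewrite} together with the indexed rules \eqref{eq:crossingnailrewrite1}, \eqref{eq:crossingnailrewrite2}, \eqref{eq:crossingnailrewrite3} the leading term keeps the first two coordinates fixed while removing a crossing from the region enclosed by the nail, so the third coordinate drops, and the subtracted terms have one fewer crossing, hence drop already in the first coordinate. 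In every case ``going from left to right strictly decreases the weight'', as announced after the rules.

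Granting the compatibility claim, termination is the standard weight argument (see \cite{GuiraudHoffbeckMalbos19} and, for the modulo case, \cite{DUP19}). To a polynomial $f = \sum_i \lambda_i m_i$, written in its basis of monomials, associate the finite multiset $\phi(f) := \{ w(m_i) \}_i$ of elements of $\bZ_{\geq 0}^3$, and order multisets by the Dershowitz--Manna extension of $\preceq$; this is well-founded because $\preceq$ is. A single rewriting step of $\mathbb{T}_b^{\und \mu}(\delta)$ replaces one monomial $m_0$ of the support by a linear combination of monomials each of weight $\prec w(m_0)$, leaving the rest of the support untouched, so $\phi$ strictly decreases: one copy of $w(m_0)$ is removed and only strictly smaller weights are added, and any cancellation only removes further weights. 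A rewriting step of the polygraph modulo $\widetilde{\mathbb{T}}_b^{\und \mu}(\delta) = {}_{\text{Iso}(\dgT_b^{\und \mu}(\delta))} \mathbb{T}_b^{\und \mu}(\delta)_{\text{Iso}(\dgT_b^{\und \mu}(\delta))}$ is an $\text{Iso}(\dgT_b^{\und \mu}(\delta))$-equivalence, then one step of $\mathbb{T}_b^{\und \mu}(\delta)$, then another $\text{Iso}(\dgT_b^{\und \mu}(\delta))$-equivalence; since $w$, hence $\phi$, is invariant under $\text{Iso}(\dgT_b^{\und \mu}(\delta))$, such a composite step also strictly decreases $\phi$. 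Therefore there is no infinite rewriting sequence, i.e. $\widetilde{\mathbb{T}}_b^{\und \mu}(\delta)$ is terminating. The only delicate point is the rule-by-rule verification of the compatibility claim — in particular checking, for the dot-slide rules and for the nail and crossing-nail rules, that moving a dot or a crossing past a crossing or a nail decreases the relevant coordinate by exactly one without increasing an earlier one, uniformly in the ambient context.
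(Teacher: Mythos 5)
Your proposal is correct and follows essentially the same route as the paper: both use the weight function $w$ as a lexicographic termination order, noting its positivity, its strict decrease on every monomial in the target of each rewriting rule, its compatibility with vertical contexts, and its invariance under braid-like isotopy so that the order extends to the polygraph modulo $\text{Iso}(\dgT_b^{\und \mu}(\delta))$. The extra detail you supply (the Dershowitz--Manna multiset extension passing from monomials to polynomials) is the standard argument the paper leaves implicit via the references to \cite{GuiraudHoffbeckMalbos19} and \cite{DUP19}.
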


\begin{proof}
Note that for any $D \in \muDiag_{b}(\param)$, then $w(D) \geq (0,0,0)$. Moreover, we have the following:
\begin{itemize}
    \item the $2$-cells above strictly decrease the weight, that is $w(s_2(\alpha)) > w(h)$ for any $h$ in $\text{Supp}(t_2(\alpha))$.
    \item the weight function is stable under multiplication, that is for any monomials $D$,$D'$, \linebreak $D_1$,$D_2$ of $\dgT_b^{\und \mu}$, $w(D) > w(D')$ implies that $w(D_1 D D_2) > w(D_1 D' D_2)$ since we add to the triples $w(D)$ and $w(D')$ the same elements in each entry.
\end{itemize}
Therefore, the preorder $\preceq$ defines a termination order for the linear $2$-polygraph $R$. As it is stable under the application of braid-like isotopy $2$-cells, it extends to a termination order for the linear $2$-polygraph modulo ${}_E R_E$.
\end{proof}

\subsection{Polynomial action}

Our goal is to construct a faithful action of $\muT_b(\param)$ on a polynomial ring. The construction is similar to \cite[\S 3.3.1]{LNV}. 
Let $R := \Bbbk[\param]$, and let $\Pol_b^{\und \mu} := \bigoplus_{\rho \in   \cP_{b}^{r}} \Pol_b \varepsilon_\rho$ be the free module over the ring $\Pol_b := R[x_1, \dots, x_b] \otimes  \bV^{\bullet}(\omega_1,\dots,\omega_b)$ generated by $\varepsilon_\rho$ for each $\rho \in   \cP_{b}^{r}$. 

There is an $R$-linear action of the symmetric group $S_b$ on $\Pol_b$, similar to the one already used in \cite[\S2.2]{NV2}. For each simple transposition $\sigma_i$ we put
\begin{align*}
  \sigma_i(x_j) &:= x_{\sigma_i(j)},\\
  \sigma_i(\omega_j) &:= \omega_j + \delta_{i,j}(x_i-x_{i+1})\omega_{i+1},
\end{align*}
where $\delta_{i,j} := 1$ if $i=j$ and $\delta_{i,j} := 0$ if $i \neq j$.

For $\kappa,\rho \in  \cP_{b}^{r}$, we let any element of $1_{\kappa}\muT_b(\param)1_{\rho}$ act as zero on $\Pol_b\varepsilon_{\rho'}$ for $\rho'\neq \rho$ and sends elements in $\Pol_b\varepsilon_{\rho}$ to elements in $\Pol_b\varepsilon_{\kappa}$. We now describe the action of the local generators of $\muT_b(\param)$ on a polynomial $f \varepsilon_\rho\in \Pol_b \varepsilon_\rho$. First, similarly as in~\cite[Lemma 4.12]{webster}, we put
\begin{align*}
  \tikzdiagh{-1.5ex}{
  \node at(0,.5) {\small$\dots$};
  \draw (0.5,0) -- (0.5,1) node [midway,tikzdot]{};
  \node at(1,.5) {\small$\dots$};
  }\cdot f \varepsilon_{\rho} &:= x_if \varepsilon_{\kappa},
             &
  \tikzdiagh{-1.5ex}{
  \node at(0,.5) {\small$\dots$};
  \draw (0.5,0) ..controls (0.5,.5) and (1.5,.5) .. (1.5,1);
  \draw (1.5,0) ..controls (1.5,.5) and (0.5,.5) .. (0.5,1);
  \node at(2,.5) {\small$\dots$};
  }\cdot f  \varepsilon_{\rho} &:= \frac{f-\sigma_i(f)}{x_i-x_{i+1}}  \varepsilon_{\kappa},\\
  \tikzdiagh{0}{
  \node at(0,.5) {\small$\dots$};
  \draw (0.5,0) ..controls (0.5,.5) and (1.5,.5) .. (1.5,1);
  \draw[stdhl] (1.5,0) node[below]{\small $N$} ..controls (1.5,.5) and (0.5,.5) .. (0.5,1);
  \node at(2,.5) {\small$\dots$};
  }\cdot f  \varepsilon_{\rho} &:= f  \varepsilon_{\kappa},
             &
  \tikzdiagh{0}{
  \node at(0,.5) {\small$\dots$};
  \draw (1.5,0) ..controls (1.5,.5) and (0.5,.5) .. (0.5,1);
  \draw[stdhl] (0.5,0) node[below]{\small $N$} ..controls (0.5,.5) and (1.5,.5) .. (1.5,1);
  \node at(2,.5) {\small$\dots$};
  }\cdot f  \varepsilon_{\rho} &:= x_i^{N}f  \varepsilon_{\kappa},
\end{align*}
where we only have drawn the $i$-th or the $i$-th and $(i+1)$-th black strands, counting from left to right. We also put
\begin{align*}
  \tikzdiagh{0}{
  \node at(0,.5) {\small$\dots$};
  \draw (0.5,0) ..controls (0.5,.5) and (1.5,.5) .. (1.5,1);
  \draw[vstdhl] (1.5,0) node[below]{\small $\beta+N$} ..controls (1.5,.5) and (0.5,.5) .. (0.5,1);
  \node at(2,.5) {\small$\dots$};
  }\cdot f  \varepsilon_{\rho} &:= f  \varepsilon_{\kappa},
             &
  \tikzdiagh{0}{
  \node at(0,.5) {\small$\dots$};
  \draw (1.5,0) ..controls (1.5,.5) and (0.5,.5) .. (0.5,1);
  \draw[vstdhl] (0.5,0) node[below]{\small $\beta+N$} ..controls (0.5,.5) and (1.5,.5) .. (1.5,1);
  \node at(2,.5) {\small$\dots$};
  }\cdot f  \varepsilon_{\rho} &:= \param f  \varepsilon_{\kappa},
\end{align*}
 Finally, as in \cite[\S2.2]{NV2} we put
\[
  \\
  \tikzdiagh{0}{
  \draw (.5,0) .. controls (.5,.25) .. (0,0.5) .. controls (.5,.75)  .. (.5,1);
  \draw[pstdhl] (0,0) node[below]{\small $\mu_1$} -- (0,1) node [midway,nail]{};
  \node at(1,.5) {\small$\dots$};
}\cdot f  \varepsilon_{\rho} := \omega_1 f  \varepsilon_{\kappa}.
\]

\begin{prop}
The rules above define an action of $\muT_b(\param)$ on $\Pol_b^{\und \mu}$.
\end{prop}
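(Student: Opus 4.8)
The plan is to verify that the proposed assignments respect all the defining relations of $\muT_b(\param)$ listed in \cref{defn:dgKLRW} (with \cref{eq:vredR} replaced by \cref{eq:vredR2param}--\cref{eq:vredR3param}), so that they extend to a well-defined algebra homomorphism $\muT_b(\param) \to \End_R(\Pol_b^{\und\mu})$. Since the generators act diagonally with respect to the decomposition $\Pol_b^{\und\mu} = \bigoplus_\rho \Pol_b\varepsilon_\rho$ by construction (a diagram in $1_\kappa\muT_b(\param)1_\rho$ sends $\Pol_b\varepsilon_\rho$ to $\Pol_b\varepsilon_\kappa$ and kills the other summands), it suffices to check each relation on a fixed summand $\Pol_b\varepsilon_\rho$. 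First I would record that the nilHecke part \cref{eq:nhR2andR3}--\cref{eq:nhdotslide}, the dot action, and the $S_b$-action on $\Pol_b := R[x_1,\dots,x_b]\otimes \bV^\bullet(\omega_1,\dots,\omega_b)$ are exactly as in \cite[\S2.2]{NV2} and \cite[\S3.3.1]{LNV}, so those verifications carry over verbatim; in particular one checks the Leibniz-type identity $\partial_i(fg) = \partial_i(f)g + \sigma_i(f)\partial_i(g)$ and the braid relations for the $\sigma_i$ with the twisted action on the $\omega_j$.

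Next I would deal with the colored-strand relations. The sliding relations \cref{eq:crossingslidered}--\cref{eq:dotredstrand} reduce, after using the explicit formulas $x_i^N$ (resp. $\param$) for a black strand crossing under a red (resp. blue) strand labelled $N$ (resp. $\beta+N$) from the left, and $f\mapsto f$ for crossing over, to the identity $\sigma_i(x_i^N h) = x_{i+1}^N \sigma_i(h)$ and similar elementary checks; the dot-slide past a colored strand is immediate since colored crossings act by multiplication by a polynomial symmetric in $x_i, x_{i+1}$ on the relevant two strands. The red relations \cref{eq:redR2}--\cref{eq:redR3} are verified just as in Webster \cite[Lemma 4.12]{webster}: \cref{eq:redR2} amounts to $\partial_i\bigl(x_i^N h\bigr) - \partial_i(h)\,x_{i+1}^N = $ (monomial correction), and \cref{eq:redR3} is the standard identity for $\partial_i$ applied to $x_i^N$ expanded as $\sum_{u+v=N-1} x_i^u x_{i+1}^v (x_i - x_{i+1})$. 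The new blue relations \cref{eq:vredR2param}--\cref{eq:vredR3param} are the easiest: composing ``under then over'' a blue strand gives multiplication by $\param$ (matching the R2 relation), and the degenerate R3 relation \cref{eq:vredR3param} holds because a blue crossing contributes no polynomial factor, so both sides act by the same nilHecke operator times the identity on the blue strand.

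Then come the nail relations \cref{eq:nailsrel}. The nail acts by multiplication by $\omega_1$, and dots on the nailed strand act by $x_1$; so the first relation in \cref{eq:nailsrel} (dot slides through the nail) holds because $\omega_1$ and $x_1$ commute in $\Pol_b$. The anticommutation relation (second relation in \cref{eq:nailsrel}) for two nails on adjacent strands is forced by the wedge structure: after pulling both strands to the leftmost position one gets the operator $\omega_1 \sigma\, \omega_1 = \omega_1\bigl(\omega_1 + (x_1 - x_2)\omega_2\bigr) = (x_1-x_2)\,\omega_1\wedge\omega_2$ on one side and minus that on the other, using $\omega_1^2 = 0$ and $\omega_1\wedge\omega_2 = -\omega_2\wedge\omega_1$ in $\bV^\bullet$; this is precisely why $\bV^\bullet(\omega_1,\dots,\omega_b)$ is an exterior rather than polynomial algebra, and it is the computation behind \cref{lem:anticom_theta}. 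The last relation in \cref{eq:nailsrel} (a black strand passing a doubly-nailed configuration is zero) follows because the corresponding operator involves $\omega_1^2 = 0$. I expect the nail relations to be the main obstacle: one must be careful that ``pulling a strand to its leftmost position'' is compatible with the $S_b$-action on the $\omega_j$'s, i.e. that the twisted transformation rule $\sigma_i(\omega_j) = \omega_j + \delta_{i,j}(x_i - x_{i+1})\omega_{i+1}$ interacts correctly with the nail action $\cdot\,\omega_1$, so that the various ways of reading a diagram with a nail and crossings give the same operator. Finally I would note that all these relations hold identically in $\param$, so no specialization issue arises, and the homological and $\bZ^2$-gradings are respected since each generator acts by an operator of the prescribed bidegree (the nail raising $\lambda$-degree via $\omega_1$, dots raising $q$-degree by $2$, etc.), completing the proof.
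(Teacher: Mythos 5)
Your proposal is correct and takes essentially the same route as the paper: the paper's own proof just says that one verifies the defining relations of $\muT_b(\param)$ directly on $\Pol_b^{\und \mu}$, as in \cite[Proposition 3.7]{NV3}, leaving the details to the reader, and your generator-by-generator check (nilHecke part imported from \cite{NV2,LNV}, colored crossings acting by $1$, $x_i^N$ or $\param$ so that the R2/R3-type relations reduce to elementary identities for Demazure operators, and the nail relations via $\omega_1^2=0$ together with the twisted rule $\sigma_i(\omega_j)=\omega_j+\delta_{i,j}(x_i-x_{i+1})\omega_{i+1}$) is precisely that verification. The only blemishes are cosmetic — e.g.\ the $\partial_i$-identity you quote pertains to \cref{eq:redR3} (the double crossing in \cref{eq:redR2} acts simply by $1\cdot x_i^N$), and in \cref{eq:vredR3param} one of the two black/blue crossings does contribute the scalar $\param$, though the same scalar appears on both sides — and they do not affect the argument.
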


\begin{proof}
One easily checks that the defining relations of  $\muT_b(\param)$  are satisfied, similarly as in \cite[Proposition 3.7]{NV3}. We leave the details to the reader. 
\end{proof}

Note that the elements in ${}_\kappa B_\rho$ can all be seen as elements in $1_\kappa \muT_b(\param) 1_\rho$.

\begin{thm}\label{thm:basisparam}
As a $\bZ \times \bZ^2$-graded $\Bbbk$-module, $1_\kappa \muT_{b}(\param)  1_\rho$ is free with basis given by ${}_\kappa B_\rho$ .
\end{thm}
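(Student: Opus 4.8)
The plan is to prove \cref{thm:basisparam} by combining the rewriting machinery of \cref{sec:rewritingmethods} with the faithful polynomial action just constructed. By \cref{prop:rewriteterminates}, the linear $2$-polygraph modulo $\widetilde{\mathbb{T}}_b^{\und\mu}(\delta)$ is terminating, and the rewriting rules together present $\muT_b(\param)$; so by the basis theorem for rewriting modulo (\cite{DUP19}, recalled at the end of \cref{sec:prelimrewriting}), it suffices to check two things: that $\text{Iso}(\dgT_b^{\und\mu}(\delta))$ is convergent, and that $\widetilde{\mathbb{T}}_b^{\und\mu}(\delta)$ is confluent modulo $\text{Iso}$. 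The convergence of the isotopy polygraph is one of the main outputs of \cref{sec:rewritingmethods} (it is proved there in general for diagrammatic algebras of this type). For confluence modulo, by the critical-branching criterion of \cite{DUP19} it is enough to examine the critical branchings modulo $\text{Iso}$ between a rule of $\mathbb{T}_b^{\und\mu}(\delta)$ and a rule of ${}_{\text{Iso}}\mathbb{T}_b^{\und\mu}(\delta)_{\text{Iso}}$. These break up into the classical nilHecke/KLRW overlaps (nilHecke $R2$--$R3$, dot-slide versus crossing, the two triple-point families, red $R2$/$R3$, blue $R2$/$R3$ with the parameter $\delta$), together with the overlaps involving the nail: two nails on the same strand, a nail meeting a black crossing or a colored crossing, a nail meeting a dot slide, and the indexed families coming from a crossing passing around a nailed strand, which is exactly why the extra rules \cref{eq:crossingnailrewrite1}--\cref{eq:crossingnailrewrite3} were added. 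Each of these is a finite diagrammatic computation in $\muT_b(\param)$; I would organise them into a handful of lemmas (nilHecke part, red/blue part, nail part) and in each case exhibit the common reduct, invoking the $\delta$-deformed relations \cref{eq:vredR2param}--\cref{eq:vredR3param} in place of the degenerate ones.

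Once confluence modulo is established, the basis theorem gives that the set of $\text{Iso}$-normal forms of $\widetilde{\mathbb{T}}_b^{\und\mu}(\delta)$-irreducible monomials is a $\Bbbk[\param]$-basis of $\muT_b(\param)$, hence a $\bZ\times\bZ^2$-graded $\Bbbk$-basis of $1_\kappa\muT_b(\param)1_\rho$ after restricting idempotents. The next step is to identify this set of normal forms with ${}_\kappa B_\rho$. A monomial is irreducible precisely when: no black--black crossing is reducible (so the black strands form a reduced permutation), no dot can be slid past a crossing, no dot sits below a crossing through which it could be pushed up to the top, no colored crossing with a black strand is reducible (forcing, via the red/blue $R2$ rules, that no black strand lies to the left of the leftmost colored strand except as part of a nail, and no black strand crosses a non-leftmost colored strand in the "wrong" direction), and the nails are in tightened position on the leftmost colored strand with the black strands pulled to their leftmost occurrence. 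Matching this description against the construction of $b_{w,\underline l,\underline a}$ — left-adjusted reduced expression $w\in{}_\kappa S_\rho$, a choice $\underline l\in\{0,1\}^b$ of which strands to nail (pulled from their leftmost position), dots $\underline a\in\bN^b$ placed at the top, with tightened nails ordered so that $\ell>k$ in any product $\theta_{k,\rho}\theta_{\ell,\rho}$ (using \cref{lem:anticom_theta}) — shows the two sets coincide. This identification is the only genuinely bookkeeping-heavy part and I would present it as a separate lemma, say \verb|\begin{lem}|\dots\verb|\end{lem}|, whose proof is a careful case analysis of irreducibility.

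To see the normal forms are genuinely $\Bbbk[\param]$-linearly independent (and not merely that the normal forms span with possible collapse), one uses the faithful polynomial action: this is the classical strategy that fails for $\dgT_b^{\und\mu}$ itself because of the degenerate blue relations, but works for $\dgT_b^{\und\mu}(\delta)$ since \cref{eq:vredR2param}--\cref{eq:vredR3param} are non-degenerate. Concretely, working over the fraction field $\Bbbk(\param)$ (or specialising $\param$ to a suitable element), one checks that the operators by which the elements of ${}_\kappa B_\rho$ act on $\Pol_b^{\und\mu}$ are linearly independent, by a triangularity argument with respect to the number of crossings, the number of nails, and the degrees of the dots — the nilHecke sub-action already gives linear independence of the permutation-plus-dots part via the standard Demazure-operator argument, the nails act by the exterior variables $\omega_i$ which are independent of the polynomial variables, and the colored crossings act by multiplication by $x_i^N$ (red) or by $1$ resp.\ $\param$ (blue). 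The faithfulness/linear independence argument is essentially the one in \cite[\S3.3]{LNV} and \cite[Proposition 3.7]{NV3}, adapted to track the parameter $\param$. Combining: ${}_\kappa B_\rho$ spans $1_\kappa\muT_b(\param)1_\rho$ (from termination and the identification of normal forms) and is linearly independent (from the faithful action), hence is a free $\Bbbk$-basis as a $\bZ\times\bZ^2$-graded module, which is exactly \cref{thm:basisparam}. The main obstacle I anticipate is the complete verification of confluence modulo for the nail-crossing critical branchings and their indexed families — getting all the signs and the $\mu_i-1$ summations in \cref{eq:crossingnailrewrite2}--\cref{eq:crossingnailrewrite3} to match up, and making sure no additional rewriting rule is needed — rather than the polynomial-action part, which is routine once the deformation is in place; and then \cref{thm:Tbasis} follows by specialising $\param=0$, since $\dgT_b^{\und\mu}(0)\cong\dgT_b^{\und\mu}$ and the basis ${}_\kappa B_\rho$ is visibly $\param$-independent.
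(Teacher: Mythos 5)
Your closing combination --- spanning from termination (\cref{prop:rewriteterminates}) together with the identification of ${}_\kappa B_\rho$ with the irreducible monomials, and linear independence from the fact that the elements of ${}_\kappa B_\rho$ act by linearly independent operators on $\Pol_b^{\und\mu}$ --- is exactly the paper's proof of \cref{thm:basisparam}, so the heart of your argument is correct and matches the intended one.

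However, the way you structure the plan inverts the paper's logic and makes the hardest step look like a prerequisite when it is neither needed nor (in general) feasible as you describe. You claim that ``it suffices to check'' confluence modulo braid-like isotopy and that each critical branching is ``a finite diagrammatic computation.'' For the spanning statement you only need termination plus the identification of normal forms; confluence plays no role there. And for independence your polynomial-action argument already suffices, which makes the confluence verification redundant --- in the paper confluence is deduced \emph{from} \cref{thm:basisparam}, not used to prove it, precisely because the indexed critical branchings coming from \cref{eq:crossingnailrewrite1}--\cref{eq:crossingnailrewrite3} are parametrized by arbitrary diagrams plugged into the middle strand and produce infinitely many configurations; the paper only manages a direct hand-check in \cref{sec:basisthemrewriting} under the restriction that all $\mu_i$, $i>1$, are non-integral, and explicitly flags the general case as unresolved by hand. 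So if your proof genuinely depended on the confluence check you outline, it would have a real gap; as written it survives only because the polynomial-action independence argument quietly carries all the weight. One further small caution: avoid passing to a fraction field or specializing $\param$, since $\Bbbk$ is merely a commutative ring and independence must hold over $\Bbbk[\param]$ itself; the triangularity argument (crossings, nails acting by the $\omega_i$, dots) works directly over the base, as in \cite[Theorem 3.11]{LNV}, and note that faithfulness of the action is a corollary of the theorem rather than an input.
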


\begin{proof}
First, we observe that the elements in ${}_\kappa B_\rho$ are the normal forms for the rewriting rules of \cref{ssec:rewritingrulesparam}. Thus, \cref{prop:rewriteterminates} shows that ${}_\kappa B_\rho$ generates $1_\kappa \muT_{b}(\param)  1_\rho$. 
We obtain linear independence by observing that elements in ${}_\kappa B_\rho$ act by linearly independent elements on $\Pol_b^{\und \mu}$ as in \cite[Theorem 3.11]{LNV}. 
\end{proof}

\begin{cor}
The action of $\muT_b(\param)$ on $\Pol_b^{\und \mu}$ described above is faithful.
\end{cor}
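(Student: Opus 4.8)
The final statement is the Corollary asserting that the polynomial action of $\muT_b(\param)$ on $\Pol_b^{\und \mu}$ is faithful.

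\medskip

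The plan is to derive faithfulness immediately from \cref{thm:basisparam}, which has just been established. Indeed, by \cref{thm:basisparam}, the set ${}_\kappa B_\rho$ is a $\Bbbk$-basis of $1_\kappa \muT_b(\param) 1_\rho$, so $\{ b_{w,\und l, \und a}\}$ (ranging over all $\kappa, \rho$ and all admissible data) is a $\Bbbk$-basis of $\muT_b(\param)$. The proof of linear independence in \cref{thm:basisparam} was precisely carried out by showing that these basis elements act by $\Bbbk$-linearly independent operators on $\Pol_b^{\und \mu}$, as in \cite[Theorem 3.11]{LNV}. Thus the representation map $\muT_b(\param) \to \End_\Bbbk(\Pol_b^{\und \mu})$ sends a basis to a linearly independent set, hence is injective. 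This is the whole argument; there is no real obstacle, since the substantial work (the rewriting-theoretic spanning statement of \cref{prop:rewriteterminates} together with the polynomial-action linear independence) has already been done in proving \cref{thm:basisparam}.

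\medskip

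Concretely, I would write: let $x \in \muT_b(\param)$ act as zero on $\Pol_b^{\und \mu}$. Decompose $x = \sum_{\kappa, \rho} 1_\kappa x 1_\rho$ along the idempotents $1_\rho$, $\rho \in \cP_b^r$; since the action of $1_\kappa \muT_b(\param) 1_\rho$ is supported on $\Pol_b \varepsilon_\rho$ with image in $\Pol_b \varepsilon_\kappa$, each component $1_\kappa x 1_\rho$ must act as zero separately. Writing $1_\kappa x 1_\rho = \sum_j c_j\, b_j$ with $b_j \in {}_\kappa B_\rho$ and $c_j \in \Bbbk$, the linear independence of the operators $\{ b_j \cdot (-) \}$ established in the proof of \cref{thm:basisparam} forces all $c_j = 0$, whence $1_\kappa x 1_\rho = 0$ for all $\kappa, \rho$, so $x = 0$.

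\medskip

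The only point worth a remark is that the argument is genuinely circular-free: \cref{thm:basisparam} uses the polynomial action only to get the (easy) \emph{lower} bound (linear independence), while the \emph{upper} bound (spanning by normal forms) comes from termination of the rewriting system, \cref{prop:rewriteterminates}; combining the two gives both that ${}_\kappa B_\rho$ is a basis and that its image under the action is linearly independent, and faithfulness is then a formal consequence. So the proof is a one-line deduction, and I expect no technical difficulty.
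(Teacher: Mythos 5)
Your proposal is correct and matches the paper's (implicit) reasoning: the corollary is an immediate consequence of \cref{thm:basisparam}, since the basis elements ${}_\kappa B_\rho$ span $1_\kappa \muT_b(\param) 1_\rho$ and were shown to act by linearly independent operators on $\Pol_b^{\und \mu}$, so any element acting by zero must vanish. Your extra care with the idempotent decomposition is fine but adds nothing beyond what the paper treats as a one-line deduction.
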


\begin{rem}
Note that the action of $\muT_b(0)$ on $\Pol_b^{\und \mu}$ after specializing $\param  = 0$ is no longer faithful since 
\[
  \tikzdiagh{0}{
  \node at(0,.5) {\small$\dots$};
  \draw (1.5,0) ..controls (1.5,.5) and (0.5,.5) .. (0.5,1);
  \draw[vstdhl] (0.5,0) node[below]{\small $\beta+N$} ..controls (0.5,.5) and (1.5,.5) .. (1.5,1);
  \node at(2,.5) {\small$\dots$};
  }
  \]
  acts as zero. 
\end{rem}

\subsection{Basis for \texorpdfstring{$\param = 0$}{param=zero}}

The rewriting rules on $\muDiag_{b}(\param)$ defined above are confluent modulo braid-like isotopies:

\begin{prop}
The linear $2$-polygraph modulo $\widetilde{\mathbb{T}}_b^{\und \mu}(\delta)$ is confluent modulo $\text{Iso}(\dgT_b^{\und \mu}(\delta))$.
\end{prop}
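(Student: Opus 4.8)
The plan is to apply the local confluence criterion for linear $2$-polygraphs modulo from \cite{DUP19}, as recalled in \cref{sec:prelimrewriting}: since \cref{prop:rewriteterminates} already gives termination of $\widetilde{\mathbb{T}}_b^{\und \mu}(\delta)$, it suffices to check that every critical branching modulo $\text{Iso}(\dgT_b^{\und \mu}(\delta))$ is confluent modulo braid-like isotopy, where such a branching is given by applying a rewriting step of $\mathbb{T}_b^{\und \mu}(\delta)$ and a rewriting step of ${}_{\text{Iso}}\mathbb{T}_b^{\und \mu}(\delta)_{\text{Iso}}$ on two diagrams that are equal up to braid-like planar isotopy. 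Since all the rewriting rules are local (they involve only a bounded number of strands in a small disk), the overlaps between two such rules — possibly after an isotopy has slid distant pieces together — occur in a bounded region, so there are only finitely many shapes of critical branchings to inspect, each modulo the (distant) isotopy moves.

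First I would enumerate the critical branchings by the type of the overlapping cells. These fall into the usual families: (i) overlaps entirely among the nilHecke rules \cref{eq:nhR2andR3rewrite}--\cref{eq:nhdotsliderewrite}, which are the classical nilHecke confluences and are handled exactly as in the literature on the nilHecke algebra (cf. \cref{ex:rewriteNH}); (ii) overlaps between a black/colored crossing-slide rule \cref{eq:crossingslideredrewrite}, \cref{eq:dotredstrandrewrite}, \cref{eq:redR2rewrite}, \cref{eq:blueR2rewrite}, \cref{eq:redR3rewrite} and a nilHecke rule, where one must track the ``error terms'' with dots — here the parameter $\delta$ only ever multiplies whole diagrams and never interferes with the overlap combinatorics, so the verification is uniform in $\mu_i$ being red or blue, with the red/blue dichotomy absorbed into the convention that the sum over $u+v=\mu_i-1$ is empty in the blue case; (iii) overlaps involving the nail rules \cref{eq:nailsrelrewrite}; and (iv) the indexed (parametrized by $\ell \geq 0$) critical branchings involving the auxiliary rules \cref{eq:crossingnailrewrite1}--\cref{eq:crossingnailrewrite3}, including the one displayed in the Remark above. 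For each of these I would exhibit a common reduct of the two branches, up to braid-like isotopy, possibly after further rewriting; the termination of \cref{prop:rewriteterminates} guarantees that it is enough to reduce both sides to \emph{a} common normal form, and by \cref{thm:basisparam} the normal forms are precisely the elements of ${}_\kappa B_\rho$, which gives an independent sanity check that the reductions land in the right place.

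The most delicate family, and the reason the rules \cref{eq:crossingnailrewrite1}--\cref{eq:crossingnailrewrite3} were added in the first place, consists of the indexed critical branchings where a black/black crossing near the nail can be resolved either by the nilHecke relation \cref{eq:nhR2andR3rewrite} or by sliding a strand past the nail via \cref{eq:crossingnailrewrite1}--\cref{eq:crossingnailrewrite3}. Here confluence is not formal: one branch produces a double strand configuration that must be unwound through a nail-slide, while the other produces nested nail rules \cref{eq:nailsrelrewrite}, and one must check that after applying \cref{eq:nailsrelrewrite} (which introduces a sign and may collapse diagrams to zero via the last relation of \cref{eq:nailsrelrewrite}) the two sides agree. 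I expect this to be the main obstacle: it requires carefully chasing the dotted correction terms $\sum_{u+v=\mu_i-1}$ through the slide, checking the signs coming from the second nail relation in \cref{eq:nailsrelrewrite}, and doing so uniformly in the index $\ell$. Once these are settled, the remaining overlaps with the isotopy $2$-cells $\text{Iso}(\dgT_b^{\und \mu}(\delta))$ are handled by the general theory of \cref{sec:rewritingmethods}, since $\text{Iso}(\dgT_b^{\und \mu}(\delta))$ is itself convergent and its normal forms are stable under the rules above. Combining all the confluent critical branchings with termination yields confluence modulo $\text{Iso}(\dgT_b^{\und \mu}(\delta))$, as claimed.
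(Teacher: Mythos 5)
Your plan takes the direct route (enumerate and check all critical branchings modulo braid-like isotopy), whereas the paper's proof of this proposition is a two-line indirect argument: by \cref{thm:basisparam} the candidate normal forms ${}_\kappa B_\rho$ act by linearly independent operators on $\Pol_b^{\und \mu}$, hence are linearly independent, and a terminating linear rewriting system whose monomials in normal form are linearly independent is automatically confluent. You actually have this ingredient in hand — you cite \cref{thm:basisparam} — but you demote it to a ``sanity check'' instead of letting it do all the work; note there is no circularity in using it, since its proof relies only on termination (\cref{prop:rewriteterminates}) and the faithful polynomial action, not on confluence.

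The direct route as you sketch it has a genuine gap. Your claim that ``there are only finitely many shapes of critical branchings to inspect'' is false: the branchings involving \cref{eq:crossingnailrewrite1}--\cref{eq:crossingnailrewrite3} are \emph{indexed}, parametrized both by the number $\ell$ of strands trapped under the nail and by an arbitrary diagram (in practice, arbitrary powers of dots and resolved crossings) plugged into the index, so one must produce a uniform confluence argument for infinite families, not a finite case check. You correctly identify these nail/crossing overlaps as ``the main obstacle,'' but you do not resolve them: chasing the correction sums $\sum_{u+v=\mu_i-1}$ and the signs from \cref{eq:nailsrelrewrite} through a slide past the nail is exactly the computation the authors found unwieldy — in \cref{sec:basisthemrewriting} they carry out the direct verification only after specializing $\delta=0$ and, for the indexed branchings, only when all $\mu_i$ with $i>1$ are non-integral (so that the red correction terms vanish), explicitly leaving the mixed red/blue case open. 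So as written your argument establishes confluence only for the regular branchings and defers precisely the cases on which the statement could fail; to close the proposition in full generality you should either supply the uniform treatment of the indexed branchings with red strands, or switch to the paper's argument via linear independence of normal forms.
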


\begin{proof}
By \cref{thm:basisparam}, we know that the normal forms are linearly independent. Therefore the rewriting rules are confluent. 
\end{proof}

\begin{rem}
One can also verify by hand that all the regular critical branchings modulo of $\widetilde{\mathbb{T}}_b^{\und \mu}(\delta)$ are confluent modulo braid-like isotopies. However, indexed critical branchings given by overlappings of the rewriting rules \eqref{eq:crossingnailrewrite1}, \eqref{eq:crossingnailrewrite2} and \eqref{eq:crossingnailrewrite3} produce infinitely many cases to check, which can be unwieldy in practice. We show that they are confluent in the case of tensor products of Verma modules (i.e. $\mu_i \in \beta + \bZ$ for all$i$) but the general case is more complicated. Since we find this to be an interesting problem in terms of confluence, we describe this in details in \cref{sec:basisthemrewriting}. 
\end{rem}

\begin{cor}\label{cor:specializedconfluent}
After specializing $\param = 0$, the linear $2$-polygraph modulo $\widetilde{\mathbb{T}}_b^{\und \mu}(0)$ is confluent modulo braid-like isotopies.
\end{cor}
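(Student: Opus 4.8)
The plan is to transfer the confluence modulo of $\widetilde{\mathbb{T}}_b^{\und \mu}(\delta)$ just obtained to the case $\param = 0$ along the ring specialization $\Bbbk[\param] \twoheadrightarrow \Bbbk$, $\param \mapsto 0$. First I would observe that the weight function $w$ and the associated preorder $\preceq$ of \cref{ssec:rewritingrulesparam} do not involve $\param$, so the argument of \cref{prop:rewriteterminates} applies unchanged and $\widetilde{\mathbb{T}}_b^{\und \mu}(0)$ is terminating; moreover $\text{Iso}(\dgT_b^{\und \mu}(\delta))$ and $\text{Iso}(\dgT_b^{\und \mu}(0))$ are literally the same linear $2$-polygraph, as braid-like isotopies are insensitive to $\param$, so confluence modulo $\text{Iso}(\dgT_b^{\und \mu}(0))$ is exactly confluence modulo braid-like isotopies. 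Hence, by the modulo version of Newman's lemma recalled in \cref{sec:prelimrewriting} (see \cite{DUP19}), it is enough to prove that every critical branching modulo of $\widetilde{\mathbb{T}}_b^{\und \mu}(0)$ is confluent modulo.

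Next I would set up the comparison of critical branchings under specialization. The generating $1$-cells of $\mathbb{T}_b^{\und \mu}(\delta)$, and the source $s_1(\alpha)$ of every generating $2$-cell $\alpha$, are monomials not involving $\param$; the only rules whose target mentions $\param$ are those of \eqref{eq:blueR2rewrite}, and these specialize at $\param=0$ precisely to the relations \eqref{eq:vredR}. Therefore the underlying $1$-polygraph and the sources of all rules are unaffected, so the overlaps — hence the critical branchings modulo — of $\widetilde{\mathbb{T}}_b^{\und \mu}(0)$ are exactly the specializations (the identity on the $\param$-free sources) of those of $\widetilde{\mathbb{T}}_b^{\und \mu}(\delta)$. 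I would then check that the rewriting data specializes: a rewriting step $\lambda\, u\, s_1(\alpha)\, v + g \Rightarrow \lambda\, u\, t_1(\alpha)\, v + g$ of $\widetilde{\mathbb{T}}_b^{\und \mu}(\delta)$, with $\lambda \in \Bbbk$, maps to a rewriting step of $\widetilde{\mathbb{T}}_b^{\und \mu}(0)$ when $\lambda \neq 0$ — the side condition $u\, s_1(\alpha)\, v \notin \Supp(g)$ persists since specialization can only shrink supports — and to an identity reduction when $\lambda$ vanishes mod $\param$; consequently every rewriting path, and every $2$-cell of $\text{Iso}(\dgT_b^{\und \mu}(\delta))_2^\ell$, specializes (possibly after deleting the resulting identity steps). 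Thus a confluence-modulo diagram for a critical branching modulo of $\widetilde{\mathbb{T}}_b^{\und \mu}(\delta)$ specializes to one for the corresponding critical branching modulo of $\widetilde{\mathbb{T}}_b^{\und \mu}(0)$; since the preceding Proposition guarantees that such diagrams exist for $\widetilde{\mathbb{T}}_b^{\und \mu}(\delta)$, the conclusion follows together with termination.

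The main obstacle I anticipate is purely bookkeeping: making rigorous that critical branchings and their confluence diagrams correspond under the specialization $\param \mapsto 0$ — that no new overlap appears after setting $\param = 0$, and that the rewriting-modulo data (rewriting steps, the support side conditions, and the $\text{Iso}$-cells) behaves functorially under a scalar specialization, the only subtlety being that a rewriting step may collapse to an identity reduction. All of this is routine but must be spelled out. (Alternatively, one could bypass rewriting entirely: \cref{thm:basisparam} exhibits ${}_\kappa B_\rho$ as a free $\Bbbk[\param]$-basis of $\muT_b(\param)$, so by base change along $\param \mapsto 0$ it is a $\Bbbk$-basis of $\muT_b$, which is precisely the set of normal forms of the terminating linear $2$-polygraph modulo $\widetilde{\mathbb{T}}_b^{\und \mu}(0)$; a terminating polygraph whose normal forms form a basis has unique normal forms and is therefore confluent modulo, giving \cref{cor:specializedconfluent} directly.)
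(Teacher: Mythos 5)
Your proposal is correct and is essentially the paper's own argument: the paper's entire proof of this corollary is the one-line remark that an identity holding for generic $\param$ specializes to $\param=0$, and your careful bookkeeping (termination and critical branchings are $\param$-independent, confluence diagrams specialize with steps possibly collapsing to identities) is just a detailed unwinding of that. Your parenthetical alternative via base change of the $\Bbbk[\param]$-basis from \cref{thm:basisparam} is also sound and mirrors how the paper deduces confluence from linear independence of normal forms in the generic case, but it is not needed here.
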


\begin{proof}
If an equation holds for generic $\param$, then it holds for $\param = 0$. 
\end{proof}

\begin{cor}\label{cor:basis}
As a $\bZ \times \bZ^2$-graded $\Bbbk$-module, $1_\kappa \muT_{b} 1_\rho$ is free with basis given by ${}_\kappa B_\rho$ .
\end{cor}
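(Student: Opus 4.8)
The plan is to deduce \cref{cor:basis} from the convergence modulo braid-like isotopies of the rewriting system specialized at $\param = 0$, using the basis theorem for linear $2$-polygraphs modulo recalled at the end of \cref{sec:prelimrewriting} (following \cite{DUP19,GuiraudHoffbeckMalbos19}). So the corollary should not require any new computation, only the right assembly of results already in hand.

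First I would note that $\widetilde{\mathbb{T}}_b^{\und\mu}(0)$ is still terminating: the proof of \cref{prop:rewriteterminates} only uses that every generating $2$-cell strictly decreases the weight $w$, that $w$ is additive under horizontal composition, and that it is invariant under braid-like isotopy — none of which depends on the value of $\param$ — so $\preceq$ remains a termination order after setting $\param = 0$. Combined with \cref{cor:specializedconfluent}, this makes $\widetilde{\mathbb{T}}_b^{\und\mu}(0)$ convergent modulo braid-like isotopies. Since the isotopy polygraph $\text{Iso}(\dgT_b^{\und\mu}(0))$ is convergent (\cref{sec:rewritingmethods}), the modulo basis theorem then applies: the $\text{Iso}(\dgT_b^{\und\mu}(0))$-normal forms of the monomials that are in normal form for $\widetilde{\mathbb{T}}_b^{\und\mu}(0)$ constitute a $\Bbbk$-basis of $\muT_b = \dgT_b^{\und\mu}(0)$, and this basis is homogeneous for the $\bZ\times\bZ^2$-grading because every rewriting rule and every isotopy relation is homogeneous. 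Restricting to the idempotent truncation $1_\kappa \muT_b 1_\rho$ (which is compatible with the rewriting, as the rules act within fixed source/target colorings) gives a basis of $1_\kappa \muT_b 1_\rho$ by the corresponding normal-form monomials.

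It then remains to identify this set of normal-form monomials with ${}_\kappa B_\rho$, and this is exactly the combinatorial observation already used in the proof of \cref{thm:basisparam}: a monomial of $1_\kappa \muT_b 1_\rho$ is irreducible for the rules of \cref{ssec:rewritingrulesparam} modulo isotopy precisely when, after pulling all strands as far to the left as possible so as to read a left-adjusted reduced expression, it has no removable black/black crossing, no colored crossing involving a black strand, no crossing above a dot, all nails sitting on the leftmost colored strand at the top of the black strands they decorate, all remaining dots pushed to the top, and its tightened floating dots $\theta_{k,\rho}$ in the prescribed order — i.e.\ exactly when it is one of the $b_{w,\underline l,\underline a}$. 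This description makes no reference to $\param$, so the normal forms at $\param = 0$ form the same set ${}_\kappa B_\rho$ as at generic $\param$, and \cref{cor:basis} follows.

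The step I expect to need the most care is not any of these deductions but pinning down that last identification: one must verify that no left-hand side of \cref{eq:nhR2andR3rewrite}--\cref{eq:crossingnailrewrite3} occurs in $b_{w,\underline l,\underline a}$ even after an arbitrary braid-like isotopy, and conversely that any monomial avoiding all these left-hand sides is carried by isotopy to a \emph{unique} such $b_{w,\underline l,\underline a}$ — in particular that a left-adjusted reduced expression together with the ordering convention on the $\theta_{k,\rho}$'s selects a unique representative of each $\text{Iso}$-class. This is bookkeeping about left-adjusted expressions (\cref{sec:Tbasis}) and the anticommutation of tightened nails (\cref{lem:anticom_theta}), but it is where the genuine content of matching the abstract ``normal forms'' with the explicit list ${}_\kappa B_\rho$ resides.
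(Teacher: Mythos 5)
Your proposal is correct and follows essentially the same route as the paper: the paper derives \cref{cor:basis} directly from termination (\cref{prop:rewriteterminates}, whose weight argument is indeed independent of $\param$), confluence modulo isotopy at $\param=0$ (\cref{cor:specializedconfluent}), the basis theorem for rewriting modulo a convergent polygraph, and the observation already made in the proof of \cref{thm:basisparam} that the normal forms are exactly the elements of ${}_\kappa B_\rho$. Your extra care about homogeneity of the grading and the uniqueness of the normal-form representative within each isotopy class is a reasonable elaboration of the same argument rather than a different approach.
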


%%%%%%%%%%%%%%%%	End of file	%%%%%%%%%%%%%

%%%%%%%%%%%%%%%%%%%%%%%%%%%%%%%%%%%%
%                 					  				  		 %
%	Categorification of MxMxMxMxM		 					 %
%                 					  						 %
%%%%%%%%%%%%%%%%%%%%%%%%%%%%%%%%%%%%

\section{Categorification of \texorpdfstring{$L(\und \mu)$}{L(undmu)}}\label{sec:catTensProd}

In this section, we explain how derived categories of $(\muT,d_\mu)$-modules categorify $L(\und \mu)$. Since the categorical action is similar to \cite{NV2} and \cite{NV3}, we will rely heavily on the references for the details.

Recall we write $\cD_{dg}(\muT_{b}, d_\mu)$ for the (dg-enhanced) derived dg-category of $\bZ^2$-graded $(\muT_{b}, d_\mu)$-dg-modules, see \cref{sec:dgdercat}. 
We will also write $\otimes$ for $\otimes_\Bbbk$ and $\otimes_b$ for $\otimes_{(\muT_b, d_\mu)}$. Similarly $\RHOM_b$ denotes $\RHOM_{(\muT_b, d_\mu)}$

\subsection{Categorical action}\label{sec:catAction}

There is a (non-unital) map 
$
(\muT_{b}, d_\mu) \rightarrow(\muT_{b+1}, d_\mu)
$
 given by adding a vertical black strand on the right:
\begin{equation}\label{eq:addblackstrand}
\tikzdiag[xscale=1.25]{
	\draw [pstdhl] (-.25,0) -- (-.25,1);
	\draw (0,0) -- (0,1);
	\node at(.25,.125) {\tiny $\dots$};
	\node at(.25,.875) {\tiny $\dots$};
	\draw (.5,0) -- (.5,1);
	\draw [pstdhl] (.75,0)   -- (.75,1);
	\node[pcolor] at(1.125,.125) { $\dots$};
	\node[pcolor] at(1.125,.875) { $\dots$};
	\draw [pstdhl] (1.5,0) -- (1.5,1);
	\draw (1.75,0) -- (1.75,1);
	\node at(2,.125) {\tiny $\dots$};
	\node at(2,.875) {\tiny $\dots$};
	\draw (2.25,0) -- (2.25,1);
	\draw [pstdhl] (2.5,0)   -- (2.5,1);
	\draw (2.75,0) -- (2.75,1);
	\node at(3,.125) {\tiny $\dots$};
	\node at(3,.875) {\tiny $\dots$};
	\draw (3.25,0) -- (3.25,1);
	\filldraw [fill=white, draw=black] (-.375,.25) rectangle (3.375,.75) node[midway] { $D$};
}
\ \mapsto \ 
\tikzdiag[xscale=1.25]{
	\draw [pstdhl] (-.25,0) -- (-.25,1);
	\draw (0,0) -- (0,1);
	\node at(.25,.125) {\tiny $\dots$};
	\node at(.25,.875) {\tiny $\dots$};
	\draw (.5,0) -- (.5,1);
	\draw [pstdhl] (.75,0)  -- (.75,1);
	\node[pcolor] at(1.125,.125) { $\dots$};
	\node[pcolor] at(1.125,.875) { $\dots$};
	\draw [pstdhl] (1.5,0)  -- (1.5,1);
	\draw (1.75,0) -- (1.75,1);
	\node at(2,.125) {\tiny $\dots$};
	\node at(2,.875) {\tiny $\dots$};
	\draw (2.25,0) -- (2.25,1);
	\draw [pstdhl] (2.5,0)  -- (2.5,1);
	\draw (2.75,0) -- (2.75,1);
	\node at(3,.125) {\tiny $\dots$};
	\node at(3,.875) {\tiny $\dots$};
	\draw (3.25,0) -- (3.25,1);
	\filldraw [fill=white, draw=black] (-.375,.25) rectangle (3.375,.75) node[midway] { $D$};
	\draw (3.5,0) -- (3.5,1);
}
\end{equation}
It sends $1 \in \muT_{b}$ to $1_{b,1} \in \muT_{b+1}$. 
Moreover, it gives rise to derived induction and restriction dg-functors
\begin{align*}
\Ind_b^{b+1} &: \cD_{dg}(\muT_{b},d_\mu) \rightarrow \cD_{dg}(\muT_{b+1},d_\mu), \\ 
 &\Ind_b^{b+1}(-) \cong (\muT_{b+1} 1_{b,1} ,d_\mu)\Lotimes_b -,\\
\Res_b^{b+1} &: \cD_{dg}(\muT_{b+1},d_\mu) \rightarrow \cD_{dg}(\muT_{b},d_\mu), \\
 &\Res_b^{b+1}(-) \cong  \RHOM_{b+1}((\muT_{b+1}1_{b,1},d_\mu),-), 
\end{align*}
which are adjoint. By \cref{prop:Tdecomp}, we know that $(\muT_{b+1} 1_{\rho,1} ,d_\mu)$ is a cofibrant right dg-module over $(\muT_{b},d_\mu)$, so that we can replace derived tensor product (resp. derived hom) by usual tensor products:
\begin{align*}
\Ind_b^{b+1}(-) &\cong (\muT_{b+1}1_{b,1},d_\mu)  \otimes_b -,
&
\Res_b^{b+1}(-) &\cong  (1_{b,1}\muT_{b+1},d_\mu) \otimes_{b+1} -.
\end{align*}
 Then we define
\begin{align*}
\F_b &:= \Ind_b^{b+1},
&
\E_b &:=  q^{2b+1-|\und \mu|} \Res_b^{b+1},
\end{align*}
and $\id_b$ is the identity dg-functor on $\cD_{dg}(\muT_{b},d_\mu)$.

Consider the map 
\[
\psi : q^{-2} (\muT_{b}  1_{b-1,1}\otimes_{b-1} 1_{b-1,1} \muT_b) \rightarrow1_{b,1} \muT_{b+1} 1_{b,1},
\]
given by 
\[
x \otimes_{b-1} y \mapsto x \tau_b y,
\]
where $\tau_b$ is a crossing between the $b$-th and $(b+1)$-th black strands. Diagrammatically, one can picture it as
\[
\tikzdiag[xscale=.75,yscale=.75]{
	\draw (0,-1.25) -- (0,1.25);
	\draw (.5,-1.25) -- (.5,1.25);
	\draw (1.5,-1.25) -- (1.5,1.25);
	\draw (2,-1.25) -- (2,-.5) .. controls (2,-.25) .. (2.25,-.25);
	\draw (2,1.25) -- (2,.5) .. controls (2,.25) .. (2.25,.25);
	\node at(1,1.2) {\small $\dots$};
	\filldraw [fill=white, draw=black] (-.25,-1) rectangle (2.25,-.5);% node[midway] {\small $m$};
	\node at(1,0) {\small $\dots$}; %\filldraw [fill=white, draw=black] (-.25,-.25) rectangle (1.75,.25) node[midway] {$n{-}1$};
	\filldraw [fill=white, draw=black] (-.25,.5) rectangle (2.25,1);% node[midway] {\small $m$};
	\node at(1,-1.2) {\small $\dots$};
}
\ \mapsto \  
\tikzdiag[xscale=.75,yscale=.75]{
	\draw (0,-1.25) -- (0,1.25);
	\draw (.5,-1.25) -- (.5,1.25);
	\draw (1.5,-1.25) -- (1.5,1.25);
	\draw (2,-1.25) -- (2,-.5) .. controls (2,0) and (2.5,0) .. (2.5,.5) -- (2.5,1.25);%  node[above]{\small $j$};
	\draw (2,1.25) -- (2,.5) .. controls (2,0) and (2.5,0) .. (2.5,-.5) -- (2.5,-1.25);% node[below]{\small $i$};
	\node at(1,1.2) {\small $\dots$};
	\filldraw [fill=white, draw=black] (-.25,-1) rectangle (2.25,-.5);% node[midway] {\small $m$};
	\node at(1,0) {\small $\dots$};%\filldraw [fill=white, draw=black] (-.25,-.25) rectangle (1.75,.25) node[midway] {$n{-}1$};
	\filldraw [fill=white, draw=black] (-.25,.5) rectangle (2.25,1);% node[midway] {\small $m$};
	\node at(1,-1.2) {\small $\dots$};
}
\]
where the bent black strands depict the induction/restriction functors. 
Consider also the map
\[
\phi : 1_{b,1} \muT_{b+1} 1_{b,1} \rightarrow  \bigoplus_{p\geq 0} q^{2p}  (\muT_{b}) \oplus q^{2p+2|\underline{\mu}|-4b} (\muT_b)[1],
\]
given by projection onto the following summands 
\begin{align*}
\bigoplus_{p \geq 0} \ 
\tikzdiag[xscale=1.25]{
	\draw (0,-.5) -- (0,1);
	\node at(.25,-.35) {\tiny $\dots$};
	\draw (.5,-.5) -- (.5,1);
	\draw[decoration={brace,mirror,raise=-8pt},decorate]  (-.1,-.85) -- node {\small $b_1$} (.6,-.85);
	\draw [pstdhl] (.75,-.5)  node[below,yshift={-1ex}]{\small $\mu_2$} -- (.75,1);
	\node[pcolor] at(1.125,-.35) { $\dots$};
	\draw [pstdhl] (1.5,-.5)  node[below,yshift={-1ex}]{\small $\mu_{r}$} -- (1.5,1);
	\draw (1.75,-.5) -- (1.75,1);
	\node at(2,-.35) {\tiny $\dots$};
	\draw (2.25,-.5) -- (2.25,1);
	\draw[decoration={brace,mirror,raise=-8pt},decorate]  (1.65,-.85) -- node {\small $b_{r}$} (2.35,-.85);
	\draw (2.5, -.5) -- (2.5, 1.25) node[pos=.25,tikzdot]{} node[pos=.25, xshift=1.5ex, yshift=1ex]{\small $p$};
	%
	%\draw [stdhl] (2.5,-.5)  node[below,yshift={-1ex}]{\small $N_{r}$} .. controls (2.5,0) and (3.5,0) .. (3.5,.5) -- (3.5,1.25);
	\draw [pstdhl] (-.25,-.5) node[below,yshift={-1ex}]{\small $\mu_1$} -- (-.25,1);
	\filldraw [fill=white, draw=black] (-.375,.5) rectangle (2.375,1.25) node[midway] { $\muT_{b}$};
}
\oplus
 \ 
\tikzdiag[xscale=1.25]{
	\draw (0,-.5) -- (0,1);
	\node at(.25,-.35) {\tiny $\dots$};
	\draw (.5,-.5) -- (.5,1);
	\draw[decoration={brace,mirror,raise=-8pt},decorate]  (-.1,-.85) -- node {\small $b_1$} (.6,-.85);
	\draw (2.5, -.5) .. controls (2.5,-.25) .. (-.5,0) .. controls (2.5,.25) ..  (2.5, .75) ;
	\draw [pstdhl] (.75,-.5)  node[below,yshift={-1ex}]{\small $\mu_2$} -- (.75,1);
	\node[pcolor] at(1.125,-.35) { $\dots$};
	\draw [pstdhl] (1.5,-.5)  node[below,yshift={-1ex}]{\small $\mu_{r}$} -- (1.5,1);
	\draw (1.75,-.5) -- (1.75,1);
	\node at(2,-.35) {\tiny $\dots$};
	\draw (2.25,-.5) -- (2.25,1);
	\draw[decoration={brace,mirror,raise=-8pt},decorate]  (1.65,-.85) -- node {\small $b_{r}$} (2.35,-.85);
	%
	%\draw (2.5, .75) -- (2.5, 1.25) node[pos=.5,tikzdot]{} node[midway, xshift=1.5ex, yshift=1ex]{\small $p$};
	\draw (2.5, .75) -- (2.5, 1.25);
	%
	%\draw [stdhl] (2.5,-.5)  node[below,yshift={-1ex}]{\small $N_{r}$} .. controls (2.5,0) and (3.5,0) .. (3.5,.5) -- (3.5,1.25);
	\draw[fill=white, color=white] (-.35,0) circle (.15cm);
	\draw [pstdhl] (-.25,-.5) node[below,yshift={-1ex}]{\small $\mu_1$} -- (-.25,1) node[pos=.33, nail]{} node[black, pos=.33,xshift=-1.25ex, yshift=.75ex]{\small $p$};
	\filldraw [fill=white, draw=black] (-.375,.5) rectangle (2.375,1.25) node[midway] { $\muT_{b}$};
}
\end{align*}
of \cref{prop:Tdecomp} (i.e. when $i=r$ and $t=b_r$). 

\begin{lem}\label{lem:SES}
There is a short exact sequence
\begin{align*}
0 \rightarrow q^{-2} (\muT_{b}  1_{b-1,1}\otimes_{b-1} 1_{b-1,1} \muT_b) &\xrightarrow{\ \psi\ }1_{b,1}  \muT_{b+1} 1_{b,1}
\\
 &\xrightarrow{\ \phi\ } \bigoplus_{p\geq 0} q^{2p}  (\muT_{b}) \oplus  q^{2p+2|\underline{\mu}|-4b} (\muT_b)[1] \rightarrow 0,
\end{align*}
of $\bZ \times \bZ^2$-graded $\muT_b$-$\muT_b$-bimodules.
\end{lem}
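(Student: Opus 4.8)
The statement claims that the three bimodules fit into a short exact sequence with the given maps. I would organize the proof around the decomposition in \cref{prop:Tdecomp}, specialized to $\muT_{b+1} 1_\rho$ with $\rho$ of the form $(b_1,\dots,b_r)$ extended by adding one extra black strand on the far right (i.e. the idempotent $1_{b,1}$). Restricting \cref{eq:Tdecomp} to the relevant summands, $1_{b,1}\muT_{b+1}1_{b,1}$ breaks up as: (a) the $i=r$, $t=b_r$ summands, which are exactly the target of $\phi$ (the $\dgT^{\und\mu'}_b$ piece of the first line corresponds to the rightmost strand being pulled into a non-violating position giving $\bigoplus_{p}q^{2p}\muT_b$, and the $i=r$, $t=b_r$ summands of lines two and three of \cref{eq:Tdecomp} give the $q^{2p}\muT_b$ and $q^{2p+2|\und\mu|-4b}\muT_b[1]$ pieces respectively — the latter picking up the homological shift because it involves a nail and $G_2$ comes with a shift); and (b) the remaining summands, those with $i<r$ or $t<b_r$, which should be identified with the image of $\psi$. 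So the first and main step is a careful bookkeeping argument matching the summands of \cref{eq:Tdecomp} for $\muT_{b+1}1_{b,1}$ against $\muT_b 1_{b-1,1}\otimes_{b-1}1_{b-1,1}\muT_b$ on one side and $\bigoplus_p q^{2p}\muT_b\oplus q^{2p+2|\und\mu|-4b}\muT_b[1]$ on the other, keeping track of the grading shifts.

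\textbf{Key steps in order.} First, I would check that $\psi$ and $\phi$ are well-defined morphisms of graded bimodules: $\psi$ is right multiplication by the crossing $\tau_b$ after the induction/restriction identifications, so one verifies it lands in $1_{b,1}\muT_{b+1}1_{b,1}$ and has the claimed degree $q^{-2}$ (the crossing $\tau_b$ has $q$-degree $-2$), and it is a bimodule map because the extra crossing is at the far right and commutes past everything coming from $\muT_b$ on either side up to braid-like isotopy; $\phi$ is a projection onto direct summands in the decomposition of \cref{prop:Tdecomp}, hence automatically a graded bimodule map. Second, I would prove $\phi\circ\psi=0$: a diagram of the form $x\tau_b y$ with $x,y$ lying in the image of the induction maps always has a black strand coming from the left box that crosses $\tau_b$, so after resolving $\tau_b$ using the nilHecke relations \cref{eq:nhR2andR3,eq:nhdotslide} the result never has the rightmost strand in the "isolated" position required to lie in the $i=r$, $t=b_r$ summands — so the projection kills it. Third, exactness: surjectivity of $\phi$ is immediate since it is a projection onto a direct sum of summands that genuinely appear in \cref{eq:Tdecomp}; injectivity of $\psi$ and exactness in the middle follow by a basis count using \cref{thm:Tbasis} — the basis elements $b_{w,\und l,\und a}$ of $1_{b,1}\muT_{b+1}1_{b,1}$ split into those where the rightmost black strand is "free" at the top (no crossing, no nail) versus those where it crosses something or is nailed; the former biject with the basis of the target of $\phi$, the latter with a basis of $\muT_b 1_{b-1,1}\otimes_{b-1}1_{b-1,1}\muT_b$ via $\psi$ (writing such a $w$ as "everything below $\tau_b$" composed with $\tau_b$ composed with "everything above", using left-adjustedness to make this factorization canonical). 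Comparing graded dimensions on each weight space then forces exactness.

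\textbf{The main obstacle.} I expect the delicate point to be the identification of $\mathrm{im}(\psi)$ with the "remaining summands" of \cref{eq:Tdecomp}, i.e. showing that every diagram in $1_{b,1}\muT_{b+1}1_{b,1}$ whose rightmost strand is involved in a crossing (or nail) can be written uniquely as $x\tau_b y$ modulo lower terms, and conversely. The subtlety is that a single rightmost strand can cross $\tau_b$ more than once and can also be nailed, so one must use the nilHecke and nail relations to reduce to the case of exactly one crossing $\tau_b$ at the expense of lower-order terms, and then argue that the induced filtration by number-of-crossings-of-the-last-strand makes the associated graded map an isomorphism — this is essentially a "triangularity" argument of the same flavor as the diagonal change of basis in the proof of \cref{prop:Tdecomp}. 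Matching the grading shift $q^{-2}$ on the $\psi$ side with the shifts appearing in the $G_1$, $G_2$ summands of \cref{prop:Tdecomp} requires care with the weight of $\tau_b$ and of the extra strand, but is a finite check. Once the associated-graded isomorphism is established, the short exact sequence follows formally, and the long exact sequence in homology (needed later) is automatic since everything is an honest short exact sequence of dg-bimodules.
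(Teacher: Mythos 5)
Your overall strategy coincides with the paper's: the proof there is exactly a "bookkeeping plus dimension count" argument, establishing exactness by an immediate dimensional argument from \cref{prop:Tdecomp} (together with \cref{thm:Tbasis}), with $\psi$ visibly a bimodule map of the right degree. So the skeleton of your plan is fine.

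There is, however, one genuine gap: your claim that $\phi$ is ``a projection onto direct summands in the decomposition of \cref{prop:Tdecomp}, hence automatically a graded bimodule map.'' The decomposition in \cref{prop:Tdecomp} is only a decomposition of $\bZ\times\bZ^2$-graded $\Bbbk$-modules, equivalently of \emph{left} $\muT_b$-modules (the box $\muT_{b-1}$ sits on top, so left multiplication preserves each summand), but it is \emph{not} a decomposition of bimodules: right multiplication glues diagrams at the bottom, and acting on the right by a dot or a crossing involving the last black strand moves elements between the $(i,t,p)$-summands. Hence the projection onto the $i=r$, $t=b_r$ summands is a priori only a map of graded left modules, and the compatibility with the right action is a nontrivial computation — this is precisely why the paper only says $\phi$ is ``clearly'' a morphism of left modules and invokes a computation as in \cite[Lemma 5.4]{NV3} for the bimodule property; the same subtlety reappears in \cref{lem:hmudg}, where the right-module structure is obtained by comparing with the vertically flipped decomposition rather than being taken for granted. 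To close the gap you should either redo that computation (check the right action of the generators — dots, black/black crossings, black/colored crossings — on the distinguished summands and verify the components landing outside them are killed consistently), or use the flip-symmetry argument as in \cref{lem:hmudg}. The remaining steps of your plan (degree of $\tau_b$, $\phi\circ\psi=0$, surjectivity of $\phi$, and injectivity/middle exactness via the basis of \cref{thm:Tbasis}) are sound and match the paper's dimensional argument.
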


\begin{proof}
The map $\psi$ is clearly a morphism of graded bimodules, while the map $\phi$ is clearly a morphism of graded left modules. 
By similar computations as in~\cite[Lemma 5.4]{NV3}, one can show that $\phi$ defines a map of bimodules, and we omit the details. 
Exactness follows from an immediate dimensional argument using \cref{prop:Tdecomp}. 
\end{proof}

We observe that $\psi$ lift immediately to a map of dg-bimodules
\[
\hat \psi : q^{-2} (\muT_{b} 1_{b-1,1}, d_\mu)  \otimes_{b-1} (1_{b-1,1} \muT_b,d_\mu) \xrightarrow{\psi} (1_{b,1} \muT_{b+1} 1_{b,1}, d_\mu).
\]
Define
\[
h_\mu : \bigoplus_{p \geq 0}  q^{2p+2|\underline{\mu}|-4b} (\muT_b) \rightarrow \bigoplus_{p\geq 0} q^{2p}  (\muT_{b}),
\]
as the morphism of left $(\muT_b,d_\mu)$-modules
\[
h_\mu(x) := \phi \circ L_\mu \circ \phi^{-1}(x),
\]
where we recall $L_\mu$ is defined in \cref{eq:dgTdecomp}. 

\begin{lem}\label{lem:hmudg}
The map $h_\mu$ defined above is a morphism of graded dg-bimodules.
\end{lem}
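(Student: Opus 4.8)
The plan is to show that $h_\mu$ commutes with the differentials on the nose, using the observation that $h_\mu$ was constructed precisely as a conjugate of $L_\mu$, which in turn encodes the action of $d_\mu$ on the tightened nails. Concretely, I would unwind the definition $h_\mu = \phi \circ L_\mu \circ \phi^{-1}$ and recall from \cref{eq:dgTdecomp} that, under the direct sum decomposition of \cref{prop:Tdecomp}, the restriction bimodule $(1_{b,1}\muT_{b+1}1_\rho, d_\mu)$ is the cone of $L_\mu$. Since $h_\mu$ is already known to be left $(\muT_b,d_\mu)$-linear (stated just above), the only thing left is to check (a) compatibility with the differential and (b) right $(\muT_b,d_\mu)$-linearity.

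For the differential, I would argue as follows. The map $\phi$ is the isomorphism of graded left modules coming from \cref{prop:Tdecomp}, identifying $1_{b,1}\muT_{b+1}1_\rho$ with $\bigoplus_{p} q^{2p}\muT_b \oplus \bigoplus_p q^{2p+2|\und\mu|-4b}\muT_b[1]$ (restricting, in the relevant piece, to the summands with $i=r$, $t=b_r$). Under this identification the differential $d_\mu$ on $1_{b,1}\muT_{b+1}1_\rho$ becomes the cone differential $\begin{pmatrix} d_\mu & 0 \\ L_\mu & -d_\mu\end{pmatrix}$ by the very construction in \cref{eq:dgTdecomp}, where I am using that $d_\mu(\theta_{k,\rho}(p))$ only involves diagrams with dots and crossings on the first $r$ colored strands (as noted just before \cref{eq:dgTdecomp}), so it lands in the $\muT_b$-part. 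The statement that $h_\mu$ is a chain map then amounts to the identity $L_\mu d_\mu = d_\mu L_\mu$ together with $L_\mu^2 = 0$ on the relevant summands; but these are exactly the conditions equivalent to $d_\mu^2 = 0$ on the cone, i.e. to $(\muT_{b+1},d_\mu)$ being a genuine dg-algebra, which is part of \cref{defn:dgKLRW}. So I would extract the chain-map property of $h_\mu$ from $d_{1_{b,1}\muT_{b+1}1_\rho}^2 = 0$, transported through $\phi$.

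For right $(\muT_b,d_\mu)$-linearity: the right action on $1_{b,1}\muT_{b+1}1_\rho$ is by stacking diagrams of $\muT_b$ below (via the non-unital inclusion $\muT_b \hookrightarrow \muT_{b+1}$ adding a vertical black strand on the right). The subtle point is the sign twist in $G_2(i,t,p)$: the inclusion $G_2 \hookrightarrow \muT_b 1_\rho$ is multiplication on the right, whereas the homological shift $[1]$ twists the left action, so the identification of the shifted summand requires the twisted module $\widetilde{\muT_b 1_{\rho_{\hat i}}}$ and the sign $(-1)^{\deg_h(x)}$ in the definition of $L_\mu$. I would check that with this sign convention $\phi$ intertwines the right actions, so that $h_\mu = \phi L_\mu \phi^{-1}$ is right-linear because $L_\mu$ is (it is given by $x \mapsto \pm\, x\cdot d_\mu(\theta)$, i.e. right multiplication by a fixed element, up to the homological sign). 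This is essentially the content of \cref{rem:} — the remark explaining the twist — and of the analogous computation \cite[Lemma 5.4]{NV3} invoked for $\phi$ itself in \cref{lem:SES}.

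The main obstacle I expect is bookkeeping the Koszul signs correctly: between the twisted left action on $\widetilde M$, the sign in the cone differential, the sign in $L_\mu$, and the fact that the homological shift does \emph{not} twist the right action in our conventions (\cref{sec:classicalhomandtensor}), there are several places a sign can slip. I would handle this by fixing one reference convention — namely that everything is forced by requiring $d_\mu^2 = 0$ on $(1_{b,1}\muT_{b+1}1_\rho, d_\mu)$ and that $\phi$ is an isomorphism of graded left modules — and then deriving the signs in $h_\mu$ rather than positing them, so that the Leibniz identity for $h_\mu$ becomes automatic. A secondary but routine point is checking that $\phi^{-1}$ is well-defined as a map of graded (not dg) bimodules, which follows from \cref{prop:Tdecomp} and the diagonal change of basis in its proof (sliding dots changes the decomposition only by lower-crossing terms).
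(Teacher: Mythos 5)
Your handling of the chain-map property is fine, but it is not where the content of the lemma lies: $h_\mu$ was already \emph{defined} as a morphism of left $(\muT_b,d_\mu)$-modules via \cref{eq:dgTdecomp}, so compatibility with $d_\mu$ and left-linearity come for free from the construction (and, incidentally, there is no condition "$L_\mu^2=0$", since $L_\mu$ maps $\bigoplus G_2$ to $\bigoplus G_1$). The actual point of the lemma is right $(\muT_b,d_\mu)$-linearity, and there your argument has a genuine gap. You assert that $L_\mu$ is right-linear "because it is right multiplication by a fixed element $d_\mu(\theta)$ up to a homological sign"; this is backwards: right multiplication by a fixed element commutes with the \emph{left} action, not the right one, which is exactly why $L_\mu$ is only a map of left modules. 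Moreover, the summands $G_k(i,t,p)$ of \cref{prop:Tdecomp} and \cref{eq:dgTdecomp} are summands of a decomposition of $1_{b,1}\muT_{b+1}1_\rho$ as a graded \emph{left} module only; the right action of $\muT_b$ on $1_{b,1}\muT_{b+1}1_{b,1}$ (stacking diagrams below) does not preserve them, so the splitting $\phi^{-1}$ onto the $(i,t)=(r,b_r)$ summands is not right-equivariant, even though the projection $\phi$ of \cref{lem:SES} is a map of graded bimodules. Hence "checking that $\phi$ intertwines the right actions" cannot, by itself, give right-linearity of $h_\mu=\phi\circ L_\mu\circ\phi^{-1}$; the hard part of the lemma is assumed rather than proved, and no amount of Koszul-sign bookkeeping repairs this. (Your citation of the remark about the twist is also left as a dangling reference.)

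For comparison, the paper closes exactly this gap by a symmetry argument: it redoes the decomposition of \cref{prop:Tdecomp} flipped vertically, obtaining a decomposition of $1_{b,1}\muT_{b+1}1_{b,1}$ as a \emph{right} $(\muT_b,d_\mu)$-module into summands $\tilde G_k(i,t,p)$, with connecting map $R_\mu$ defined like $L_\mu$ but using left multiplication by the differentiated (flipped) nail, so that $R_\mu$ is right-linear by construction. Since $\phi$ is a bimodule map, the projections onto the distinguished summands $G_k(r,b_r,p)$ and $\tilde G_k(r,b_r,p)$ coincide, and because all defining relations of $\muT_b$ and $d_\mu(\theta_{b,\rho}(p))$ are invariant under the vertical flip, $L_\mu$ and $R_\mu$ agree on these summands; therefore $h_\mu=\phi\circ R_\mu\circ\phi^{-1}$ as well, which gives right-linearity. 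If you want to avoid the flip, you would instead have to show directly that the induced differential on the quotient graded bimodule $1_{b,1}\muT_{b+1}1_{b,1}/\Image\psi$ is block-triangular with diagonal blocks $\pm d_\mu$ \emph{with respect to a bimodule splitting}, which is essentially equivalent content; some argument of this kind must appear, and your proposal does not contain one.
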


\begin{proof}
There is a similar decomposition as in \cref{prop:Tdecomp} of $1_{b,1}\muT_{b+1}1_{b,1}$, but flipped vertically, yielding a decomposition as right $\muT_b$-module. We denote the decomposition summand as $\tilde G_1(i,t,p)$ and $\tilde G_2(i,t,p)$. 
Then, we get an isomorphism of right $(\muT_b, d_\mu)$-modules
\[
(1_{b,1} \muT_{b+1}1_{b,1}, d_\mu) \cong 
\cone\left(
\bigoplus_{i=1}^r
\ssbigoplus{0 \leq t < b_i \\ p \geq 0}
\tilde G_2(i,t,p)
\xrightarrow{R_\mu}
\bigoplus_{i=1}^r
\ssbigoplus{0 \leq t < b_i \\ p \geq 0}
\tilde G_1(i,t,p)
\right), 
\]
for a certain map of right modules $R_\mu$ defined similarly as $L_\mu$. 
Since $\phi$ is a map of bimodules, it appears that the projections on $G_k(r,b_r,p)$ and on $\tilde G_k(r,b_r,p)$ coincides for all $k \in \{1,2\}$ and $p \geq 0$. 
Finally, we observe that $L_\mu(G_2(r,b_r,p))|_{\oplus_{\ell \geq 0}G_1(r,b_r,\ell)} \cong R_\mu(\tilde G_2(r,b_r,p))|_{\oplus_{\ell \geq 0}\tilde G_1(r,b_r,\ell)}$ under the above mentioned identification, because all defining relations of $\muT_b$ are symmetric with respect to a vertical flip, and so is $d_\mu(\theta_{\rho,b})$. 
Therefore we have $h_\mu = \phi \circ R_\mu \circ \phi^{-1}$ as well, and we conclude that $h_\mu$ is a morphism of right modules. 
\end{proof}

Consequently, we get an induced morphism 
\[
\hat \phi : 
(1_{b,1} \muT_{b+1} 1_{b,1}, d_\mu) \xrightarrow{\phi} 
\cone\left(  
\bigoplus_{p\geq 0}  q^{2p+2|\underline{\mu}|-4b} (\muT_b) \xrightarrow{h_\mu} \bigoplus_{p\geq 0}  q^{2p}  (\muT_{b}) 
\right),
\]
of dg-bimodules.

\begin{exe}
Take $\und \mu = (N,\beta)$. 
We compute
\[
h_\mu\left(
\tikzdiag{
	\draw (.5,0) -- (.5,1);
	%\draw (1.5, 0) -- (1.5,1); 
	%
	\draw[vstdhl] (1,0) node[below]{\small $\beta$} -- (1,1);
	\draw[stdhl] (0,0) node[below]{\small $N$} -- (0,1);
}
\in q^{2p +2|\und \mu|-4b}(\muT_b)
\right)
=
\phi \circ L_\mu\left(
\tikzdiag{
	\draw (.5,0) -- (.5,1);
	\draw (1.5, 0) .. controls (1.5,.25) .. (0,.5) .. controls (1.5,.75) .. (1.5,1); 
	\draw[vstdhl] (1,0) node[below]{\small $\beta$} -- (1,1);
	\draw[stdhl] (0,0) node[below]{\small $N$} -- (0,1) node[midway, nail]{} node[midway, xshift=-1.25ex, yshift=.75ex,black]{\small $p$};
}
\right)
= \phi \left(
\sssum{k+\ell=\\p+N-1}\ 
\tikzdiag{
	\draw (.5,0) .. controls (.5,.5) and (1.5,.5) .. (1.5,1) node[pos=.1, tikzdot]{} node[pos=.1, xshift=-1.25ex, yshift=-.1ex]{\small$k$};
	\draw (1.5, 0) .. controls (1.5,.5) and (.5,.5) .. (.5,1) node[pos=.9, tikzdot]{} node[pos=.9, xshift=-1.25ex, yshift=.1ex]{\small$\ell$};
	\draw[vstdhl] (1,0) node[below]{\small $\beta$} .. controls (1,.25) and (.5,.25) .. (.5,.5) .. controls (.5,.75) and (1,.75) .. (1,1);
	\draw[stdhl] (0,0) node[below]{\small $N$} -- (0,1);
}
\right)
= 0.
\]
For $\und \mu = (N,1)$, we compute
\begin{align*}
h_\mu&\left(
\tikzdiag{
	\draw (.5,0) -- (.5,1);
	%\draw (1.5, 0) -- (1.5,1); 
	%
	\draw[stdhl] (1,0) node[below]{\small $1$} -- (1,1);
	\draw[stdhl] (0,0) node[below]{\small $N$} -- (0,1);
}
\in q^{2p +2|\und \mu|-4b}(\muT_b)
\right)
\\
&= \phi \left(
\sssum{k+\ell=\\p+N-1}\ 
\tikzdiagl{
	\draw (.5,0) .. controls (.5,.5) and (1.5,.5) .. (1.5,1) node[pos=.1, tikzdot]{} node[pos=.1, xshift=-1.25ex, yshift=-.1ex]{\small$k$};
	\draw (1.5, 0) .. controls (1.5,.5) and (.5,.5) .. (.5,1) node[pos=.9, tikzdot]{} node[pos=.9, xshift=-1.25ex, yshift=.1ex]{\small$\ell$};
	\draw[stdhl] (1,0) node[below]{\small $1$} .. controls (1,.25) and (.5,.25) .. (.5,.5) .. controls (.5,.75) and (1,.75) .. (1,1);
	\draw[stdhl] (0,0) node[below]{\small $N$} -- (0,1) ;
}
\ - \ 
\tikzdiagl{
	\draw (.5,0) -- (.5,1) 
	    node[pos=.33, tikzdot]{} node[pos=.33, xshift=1.25ex, yshift=1ex]{\small$k$}
	    node[pos=.66, tikzdot]{} node[pos=.66, xshift=1.25ex, yshift=1ex]{\small$\ell$};
	\draw (1.5, 0)  -- (1.5,1);
	\draw[stdhl] (1,0) node[below]{\small $1$} -- (1,1);
	\draw[stdhl] (0,0) node[below]{\small $N$} -- (0,1);
}
\ - \sssum{s+t=\\ \ell-1} \ 
\tikzdiagl{
	\draw (.5,0) -- (.5,1) 
	    node[pos=.33, tikzdot]{} node[pos=.33, xshift=1.25ex, yshift=1ex]{\small$k$}
	    node[pos=.66, tikzdot]{} node[pos=.66, xshift=1.25ex, yshift=1ex]{\small$s$};
	\draw (1.5, 0)  -- (1.5,1) node[pos=.5, tikzdot]{} node[pos=.5, xshift=3ex, yshift=1ex]{\small$t+1$};
	\draw[stdhl] (1,0) node[below]{\small $1$} -- (1,1);
	\draw[stdhl] (0,0) node[below]{\small $N$} -- (0,1);
}
\right)
\\
&= 
- \sum_{\ell = 0}^{p+N-1} (\ell+1) \left( 
\tikzdiag{
	\draw (.5,0) -- (.5,1) node[midway, tikzdot]{} node[midway, xshift=1ex, yshift=.75ex]{\small $\ell$};
	%\draw (1.5, 0) -- (1.5,1); 
	%
	\draw[stdhl] (1,0) node[below]{\small $1$} -- (1,1);
	\draw[stdhl] (0,0) node[below]{\small $N$} -- (0,1);
}
\in q^{2(p+N-1-\ell)}(\muT_b)
\right)
\end{align*}
\end{exe}

\begin{prop}
If $|\und \mu| \notin \bN$, then $h_\mu = 0$ and we obtain an isomorphism
\[
\cone\left(  
\bigoplus_{p\geq 0}  q^{2p+2|\underline{\mu}|-4b} (\muT_b) \xrightarrow{h_\mu} \bigoplus_{p\geq 0}  q^{2p}  (\muT_{b}) 
\right)
\cong 
\bigoplus_{p\geq 0}  q^{2p+2|\underline{\mu}|-4b} (\muT_b) [1] \oplus q^{2p}  (\muT_{b}).
\]
of dg-bimodules. 
If $|\und \mu| \in \bN$, then we have a quasi-isomorphism
\[
\cone\left(  
\bigoplus_{p\geq 0}  q^{2p+2|\underline{\mu}|-4b} (\muT_b) \xrightarrow{h_\mu} \bigoplus_{p\geq 0}  q^{2p}  (\muT_{b}) 
\right)
\xrightarrow{\simeq}
\begin{cases} 
\bigoplus_{p = 0}^{|\und \mu|-2b-1} q^{2p}\muT_b, &\text{if $|\und \mu|-2b \geq 0$}, \\
\bigoplus_{p = 0}^{2b-|\und \mu|-1} q^{2p}\muT_b[1], &\text{if $|\und \mu|-2b \leq 0$}, 
\end{cases}
\]
of dg-bimodules.
\end{prop}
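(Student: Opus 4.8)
The plan is to treat the two regimes $|\und\mu|\notin\bN$ and $|\und\mu|\in\bN$ separately, but in both to start from the explicit description of $h_\mu=\phi\circ L_\mu\circ\phi^{-1}$ coming from \cref{eq:dgTdecomp} and the definition of $\phi$. Concretely, $\phi$ picks out the $i=r$, $t=b_r$ summands of \cref{prop:Tdecomp}, so $h_\mu$ is governed entirely by the elements $d_\mu(\theta_{b,\rho}(p))$, where $\theta_{b,\rho}(p)$ is the tightened nail on the rightmost ($b$-th) black strand — pulled to the far left so that it wraps around all $r$ colored strands — with $p$ extra dots, and $d_\mu(\theta_{b,\rho}(p))$ is obtained by the graded Leibniz rule, i.e.\ by replacing the nail with $\mu_1$ dots on that strand (it is $0$ if $\mu_1\in\beta+\bZ$). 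So the first step, common to both cases, is to rewrite $d_\mu(\theta_{b,\rho}(p))$ in the basis of \cref{thm:Tbasis} by straightening this wrapping strand past the colored strands one at a time using \cref{eq:redR2}--\cref{eq:vredR}, and to record which terms survive the projection $\phi$.

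For $|\und\mu|\notin\bN$ the outcome should be $h_\mu=0$. If $\mu_1\in\beta+\bZ$ this is immediate: $d_\mu$ kills the nail, so $d_\mu(\theta_{b,\rho}(p))=0$ and $L_\mu=0$. If $\mu_1\in\bN$ but some later $\mu_j$ is non-integral, I would argue that any term of the straightened $d_\mu(\theta_{b,\rho}(p))$ whose $\phi$-projection can be nonzero must have the black strand completely unwrapped (straight, sitting in the rightmost block), and unwrapping it past every colored strand forces, at a blue strand, a bigon around a blue strand — which is zero by \cref{eq:vredR}. Hence $h_\mu=0$, exactly as in the displayed $\und\mu=(N,\beta)$ computation, and $\cone(h_\mu)$ is literally the direct sum with the shifted differential, giving the asserted isomorphism of dg-bimodules.

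For $|\und\mu|\in\bN$ the strands are all red, nothing obstructs unwrapping, $d_\mu$ is the genuine nail differential, and $h_\mu$ is a chain map by \cref{lem:hmudg}. Here I would carry out the straightening completely and collect terms to obtain (generalizing the displayed $\und\mu=(N,1)$ computation) a formula of the shape
\[
h_\mu(1_p)=\sum_{\ell=0}^{\,p+|\und\mu|-2b}c_{p,\ell}\,\zeta_\ell \;\in\; \bigoplus_{p'\ge 0}q^{2p'}\muT_b ,
\]
for the image of the bimodule generator $1$ of the $p$-th source summand $q^{2p+2|\und\mu|-4b}\muT_b$, where $\zeta_\ell\in\muT_b$ is central, $d_\mu$-closed, of $q$-degree $2\ell$, landing in the summand with index $p'=p+|\und\mu|-2b-\ell$, with $\zeta_0=1$ and leading coefficient $c_{p,0}=\pm1$ (and with the sum empty, so $h_\mu(1_p)=0$, when $p+|\und\mu|-2b<0$). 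Writing $\sigma(p):=p+|\und\mu|-2b$, this says $h_\mu$ is ``lower triangular'' with respect to the index $p'$: it restricts, on each source summand with $\sigma(p)\ge 0$, to an isomorphism onto $q^{2\sigma(p)}\muT_b$ (right multiplication by $\pm1$) plus maps into summands of strictly smaller index.

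The endgame is then Gaussian elimination for dg-modules. Since $d_\mu$ respects the splitting into individual summands, every sub-sum of the source or target is a sub-dg-bimodule, and the matching $p\leftrightarrow\sigma(p)$ (for $p\ge\max(0,2b-|\und\mu|)$) is injective and compatible with the triangular shape above; cancelling the contractible pieces $\cone(\text{iso})$ one at a time then yields a homotopy equivalence — hence a quasi-isomorphism — of dg-bimodules from $\cone(h_\mu)$ onto the cone of the restriction of $h_\mu$ to the unmatched summands. When $|\und\mu|-2b\ge0$ these are the target summands $q^{2p'}\muT_b$ with $0\le p'\le|\und\mu|-2b-1$ (in homological degree $0$); when $|\und\mu|-2b\le0$ they are the source summands $q^{2p}\muT_b$ with $0\le p\le 2b-|\und\mu|-1$, which carry zero image and sit in homological degree $1$ in the cone; the induced differential on this leftover is just $\pm d_\mu$ on each summand, because the degree bookkeeping forces every Gaussian-elimination correction to originate from a cancelled summand. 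This matches the two stated formulas, and the edge case $|\und\mu|=2b$ gives the empty (zero) bimodule on both sides. I expect the main difficulties to be (i) the explicit straightening computation yielding the displayed formula for $h_\mu$, which is long but routine, and (ii) justifying the Gaussian elimination over an \emph{infinite} direct sum — one must check that the triangular change of basis is locally finite and locally nilpotent for the filtration by $p'$, so that it really defines an automorphism of dg-bimodules, and that the cancellation does not reintroduce non-triangular terms.
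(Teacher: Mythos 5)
Your plan diverges from the paper in both halves, and in both places the divergence hides the real work. For $|\und \mu|\notin\bN$ the paper does not straighten the diagram $d_\mu(\theta_{b,\rho}(p))$ at all: it argues structurally that $h_\mu(1_\rho)$ is symmetric under the vertical flip and commutes with dots, hence contains no crossings and is a polynomial of dots; then, using that $h_\mu$ is a bimodule map, it transports the question along $\tau_w$ to the extreme idempotent $\rho_0=(0,\dots,0,b)$, where the vanishing is immediate from \cref{eq:vredR}, and concludes by injectivity of adding crossings (\cref{prop:Tdecomp}). Your substitute step --- ``unwrapping past every colored strand forces a bigon around a blue strand'' --- is exactly the point that needs an argument and does not follow from the relations as stated: between the two crossings of the wandering black strand with a blue strand $\mu_j$ the strand still carries its excursion past $\mu_1,\dots,\mu_{j-1}$, the intermediate black strands and the $\mu_1+p$ dots, and while black/black crossings and dots can be slid through a blue crossing by \cref{eq:vredR} and \cref{eq:dotredstrand}, the red double-crossings inside must first be resolved via \cref{eq:redR2}--\cref{eq:redR3}, producing correction terms with dots and fewer crossings whose $\phi$-projection you then have to control term by term. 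This can presumably be pushed through by an induction from the innermost strand outward, but as written it is an assertion, and it is precisely the mess the paper's flip-symmetry/extreme-idempotent argument is designed to avoid.

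For $|\und\mu|\in\bN$ the paper gives no computation at all: it cites \cite[Proposition 4.3]{LNV}. Your proposal to reprove this via an explicit triangular formula for $h_\mu$ plus Gaussian elimination over the infinite direct sum is a legitimate alternative route, but it rests entirely on the unproved claims that the component of $h_\mu$ from the $p$-th source summand into the top target summand $q^{2(p+|\und\mu|-2b)}\muT_b$ is $\pm\mathrm{id}$ (a unit, over an arbitrary $\Bbbk$), that the lower terms $\zeta_\ell$ are central and $d_\mu$-closed, and that sources with $p+|\und\mu|-2b<0$ map to zero; only the last of these is a formal degree argument, the others require carrying out the straightening you defer. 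One more warning: if you do run your elimination, the surviving summands in the case $|\und\mu|-2b\le 0$ are the \emph{source} summands and therefore carry the shifts $q^{2p+2|\und\mu|-4b}$, $0\le p\le 2b-|\und\mu|-1$, not $q^{2p}$; your claim that this ``matches the stated formula'' skips the grading check (compare with the Euler characteristic $\tfrac{1-q^{2(|\und\mu|-2b)}}{1-q^2}$ of the cone), so either reconcile the shift bookkeeping or flag the discrepancy rather than asserting agreement.
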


\begin{proof}
If $|\und \mu| \in \bN$, then it is \cite[Proposition 4.3]{LNV}. 
Suppose $|\und \mu| \notin \bN$.
Since $h_\mu(1_\rho)$ is symmetric w.r.t. vertical flip of diagrams, and commutes with dots, we can conclude it is given by a linear combination of diagram without black crossing, and thus also without colored crossing. Therefore $h_\mu(1_\rho)$ is a polynomial of dots on $1_\rho$.
By \cref{prop:Tdecomp}, adding crossings at the top or bottom of the subset of polynomials of dots in $\muT_{b+1}$ is an injective operation. 
Let $\rho_0 := (0, \dots, 0, b)$, and $w = \sigma_{i_k} \cdots \sigma_{i_1} \in S^{r+n}$ be a reduced expression such that $w(\rho) = \rho_0$: 
\[
1_{\rho_0}\tau_w 1_{\rho} := \ 
\tikzdiagl[xscale=1.25, yscale=1.5]{
	\draw (.25, 0) .. controls (.25,.5) and (1.25,.5) .. (1.25,1);
	\node at(.575,.15) {\tiny $\dots$};
	\node at(1.5,.85) {\tiny $\dots$};
	\draw (.75,0) .. controls (.75,.5) and (1.75,.5) .. (1.75,1);
	\draw (2.25, 0) .. controls (2.25, .5) and (2.5,.5) .. (2.5,1);
	\node at(2.5,.15) {\tiny $\dots$};
	\node at(2.75,.85) {\tiny $\dots$};
	\draw (2.75,0) .. controls (2.75,.5) and (3,.5) .. (3,1);
	\draw (3.25, 0) -- (3.25,1);
	\node at(3.5,.15) {\tiny $\dots$};
	\node at(3.5,.85) {\tiny $\dots$};
	\draw (3.75,0) -- (3.75,1);
	\node[pcolor] at(1.5,.15) { $\dots$};
	\node at(2.125,.85) { $\dots$};
	\draw[pstdhl] (3,0) node[below]{\small $\mu_r$} .. controls (3,.5) and (1,.5) .. (1,1);
	\draw[pstdhl] (2,0) node[below]{\small $\mu_{r-1}$} .. controls (2,.5) and (.75,.5) .. (.75,1);
	\node[pcolor] at(.575,.85) { \tiny $\dots$};
	\draw[pstdhl] (1,0) node[below]{\small $\mu_2$} .. controls (1,.5) and (.25,.5) .. (.25,1);
	\draw[pstdhl] (0,0) node[below]{\small $\mu_1$} -- (0,1);
}
\]
Then we have $\tau_w h_\mu(1_\rho) = h_\mu(1_{\rho_0}) \tau_w $.  We obviously have $h_\mu(1_{\rho_0}) = 0$ by \cref{eq:vredR}, thus $h_\mu(1_\rho) = 0$, and we conclude that $h_\mu = 0$. 
\end{proof}

Let $\muT := \bigoplus_{b \geq 0} \muT_{b}$, and $\F := \bigoplus_{b \geq 0} \F_b$, $\E := \bigoplus_{b \geq 0} \E_b$.  
Let $\K : \cD_{dg}(\muT, d_\mu) \rightarrow \cD_{dg}(\muT, d_\mu) $ denotes the auto-equivalence functor given by the grading shift
\[
\K  \muT_{b}  :=  q^{|\und \mu|-2b}  (\muT_{b}). 
\]
Let $[\K]_q$ denotes 
\[
[\K]_q := \cone\left( \bigoplus_{p \geq 0} q^{2p+1} \K \xrightarrow{ \ h_\mu \ } \bigoplus_{p \geq 0} q^{2p+1} \K^{-1} \right),
\]
which we think of as a categorification of $(K^{-1}-K)/(q^{-1}-q)$. 

\begin{thm}\label{thm:sl2comqi}
There is a quasi-isomorphism
\[
\cone(\F\E \xrightarrow{\hat \psi} \E\F) \xrightarrow{\simeq} [\K]_q,
\]
of dg-functors. 
\end{thm}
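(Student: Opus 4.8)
The plan is to deduce the theorem formally from \cref{lem:SES}, read through the dictionary \eqref{eq:quasifunctequiv} between coproduct-preserving dg-functors and dg-bimodules. I would fix a block $b\geq 1$; the block $b=0$ is the degenerate case, where $\F\E$ vanishes and the assertion is the $b=0$ instance of the identification of $\E_0\F_0$ below. By \cref{prop:Tdecomp} the modules $(\muT_b 1_{b-1,1},d_\mu)$ and $(1_{b-1,1}\muT_b, d_\mu)$ are cofibrant as one-sided dg-modules, so on $\cD_{dg}(\muT_b, d_\mu)$ the dg-functor $\E\F$ is computed by honest tensor with the dg-bimodule $q^{2b+1-|\und\mu|}(1_{b,1}\muT_{b+1}1_{b,1}, d_\mu)$ and $\F\E$ by tensor with $q^{2b-1-|\und\mu|}(\muT_b 1_{b-1,1}\otimes_{b-1}1_{b-1,1}\muT_b, d_\mu)$, the transformation $\hat\psi$ corresponding, up to the overall shift $q^{2b+1-|\und\mu|}$, to the dg-bimodule map $\hat\psi$ of \cref{sec:catAction}.

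Next I would record the short exact sequence of $\bZ\times\bZ^2$-graded dg-bimodules
\[
0 \to q^{-2}(\muT_b 1_{b-1,1}\otimes_{b-1}1_{b-1,1}\muT_b, d_\mu) \xrightarrow{\ \hat\psi\ } (1_{b,1}\muT_{b+1}1_{b,1}, d_\mu) \xrightarrow{\ \hat\phi\ } \cone(h_\mu) \to 0,
\]
where $\cone(h_\mu) = \cone\bigl(\bigoplus_{p\geq 0}q^{2p+2|\und\mu|-4b}\muT_b \xrightarrow{h_\mu} \bigoplus_{p\geq 0}q^{2p}\muT_b\bigr)$. Exactness of the underlying graded sequence is \cref{lem:SES}; that $\hat\psi$ and $\hat\phi$ are maps of dg-bimodules is exactly what is recorded in the construction of $\hat\psi$, $\hat\phi$ and in \cref{lem:hmudg}. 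Applying the grading shift $q^{2b+1-|\und\mu|}$ and using $\K\muT_b = q^{|\und\mu|-2b}\muT_b$, I would check that the left and middle terms become precisely the two bimodules identified above, while the cokernel becomes $\cone\bigl(\bigoplus_{p\geq 0}q^{2p+1}\K \xrightarrow{h_\mu} \bigoplus_{p\geq 0}q^{2p+1}\K^{-1}\bigr) = [\K]_q$, by definition of $[\K]_q$.

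Since $\hat\psi$ is injective, the canonical surjection $\cone(\hat\psi) \to \cone(h_\mu)$ is a quasi-isomorphism of dg-bimodules. And because a short exact sequence of dg-bimodules induces a distinguished triangle of the associated dg-functors under \eqref{eq:quasifunctequiv} (see \cref{sec:dgfunctors}), the dg-functor $\cone(\F\E \xrightarrow{\hat\psi}\E\F)$ is computed by derived tensor with $\cone(\hat\psi)$. Combining this with the quasi-isomorphism above, after the shift $q^{2b+1-|\und\mu|}$, produces the asserted quasi-isomorphism $\cone(\F\E \xrightarrow{\hat\psi}\E\F) \xrightarrow{\simeq} [\K]_q$ of dg-functors.

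Essentially all the content is contained in \cref{lem:SES} and \cref{lem:hmudg}, which are already available, so the remaining work is bookkeeping. The two things to be careful about are: keeping the grading shifts straight --- the $q^{-2}$ in the source of $\psi$, the homological shift $[1]$ on the $\K$-summand, and the sign twists $\widetilde{(\cdot)}$ entering $h_\mu$, all of which have been normalized in the preceding subsections; and checking that the passage from a short exact sequence of dg-bimodules to a distinguished triangle of dg-functors is valid here, which is where the cofibrancy in \cref{prop:Tdecomp} and the equivalence \eqref{eq:quasifunctequiv} enter, since they let us replace a priori derived tensor products by honest ones and read off the cone directly at the level of bimodules.
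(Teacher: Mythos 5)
Your proposal is correct and follows the paper's own route: the paper's proof is exactly "follows from \cref{lem:SES} and \cref{lem:hmudg}", and your argument is that deduction with the bookkeeping (cofibrancy from \cref{prop:Tdecomp}, the bimodule--functor dictionary \eqref{eq:quasifunctequiv}, and the shift $q^{2b+1-|\und\mu|}$ turning $\cone(h_\mu)$ into $[\K]_q$) written out explicitly, and the grading check is accurate.
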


\begin{proof}
The statement follows from \cref{lem:SES} and \cref{lem:hmudg}.
\end{proof}

We also obtain the following immediately from the induction/restriction adjunction:

\begin{prop}\label{prop:adjEF}
The dg-functor $\F$ is left-adjoint to $q\E\K$. 
\end{prop}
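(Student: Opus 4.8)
The statement to prove is \cref{prop:adjEF}: the dg-functor $\F$ is left-adjoint to $q\E\K$. The plan is to track the grading shifts through the induction/restriction adjunction established just above, namely that $\Ind_b^{b+1} \dashv \Res_b^{b+1}$ as a consequence of \cref{sec:deriveddghomtensor} (derived tensor is left-adjoint to derived hom). Concretely, $\F_b = \Ind_b^{b+1}$ and $\E_b = q^{2b+1-|\und\mu|}\Res_b^{b+1}$, so the only issue is to verify that the composite $q\E\K$, when restricted to the summand $\cD_{dg}(\muT_b, d_\mu)$, agrees with $\Res_b^{b+1}$ up to the correct internal degree shift, and that a degree shift commutes with the adjunction in the expected way.

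First I would recall that grading-shift autoequivalences are adjoint to their inverses and, being exact autoequivalences commuting with everything, pass freely through adjunction isomorphisms: for any $\bg \in \bZ^2$, $\Hom_{\cD_{dg}}(\bg M, N) \cong \Hom_{\cD_{dg}}(M, -\bg N)$ naturally. Then I would compute the internal degree of $q\E\K$ on the relevant summand. By definition $\K$ sends $\muT_b$-modules to a $q^{|\und\mu|-2b}$-shift; but one must be careful about which $b$ is used after $\F$ has been applied — the codomain of $\F_b$ lives over $\muT_{b+1}$, so $\E\K$ acting there contributes the shift $q \cdot q^{2(b+1)+1-|\und\mu|} \cdot q^{|\und\mu|-2(b+1)} = q^{1+1} \cdot q^{-1}\cdots$; I would carefully bookkeep these exponents so that the net shift applied to $\Res_{b}^{b+1}$ is trivial, making $q\E_{b+1}\K$ (as a functor on $\cD_{dg}(\muT_{b+1},d_\mu)$, then pre/post-composed appropriately) literally equal to $\Res_b^{b+1}$ up to canonical isomorphism. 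This reduces the statement to the bare induction/restriction adjunction.

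Concretely the steps are: (1) state the adjunction $((\muT_{b+1}1_{b,1}, d_\mu) \otimes_b -) \dashv ((1_{b,1}\muT_{b+1}, d_\mu)\otimes_{b+1} -)$ from \cref{sec:classicalhomandtensor} and \cref{sec:deriveddghomtensor}, using that $(\muT_{b+1}1_{\rho,1}, d_\mu)$ is cofibrant as a right $(\muT_b, d_\mu)$-module by \cref{prop:Tdecomp} so the underived and derived versions coincide; (2) insert the grading-shift autoequivalences $\K$ and the scalar power of $q$, using that these are invertible exact dg-functors that commute with the hom-tensor adjunction; (3) check the exponent arithmetic so that the shifts cancel and the claimed adjunction $\F \dashv q\E\K$ drops out; (4) assemble over all $b$ by taking the direct sum, noting the adjunction is compatible with the decompositions $\F = \bigoplus_b \F_b$, $\E = \bigoplus_b \E_b$.

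\textbf{Main obstacle.} I expect no deep difficulty here — the result genuinely is ``immediate from the induction/restriction adjunction'' as the paper says. The one place requiring care is the bookkeeping of the $q$- and $\lambda$-degree shifts (the exponent in $\K \muT_b := q^{|\und\mu|-2b}(\muT_b)$ and the $q^{2b+1-|\und\mu|}$ in $\E_b$), together with making sure the shift is applied over the correct algebra $\muT_b$ versus $\muT_{b+1}$ so that the powers telescope to give exactly the identity shift; a sign or an off-by-one in $b$ would change the final answer from $q\E\K$ to something else. A secondary subtlety is confirming that the homological-shift conventions (which twist the left action, per \cref{sec:dgfunctors} and the earlier remarks) do not introduce an unwanted $[1]$, but since neither $\F$ nor $\E$ nor $\K$ involves a homological shift this is automatic.
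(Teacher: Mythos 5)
Your proposal is correct and follows essentially the same route as the paper, which deduces the statement directly from the induction/restriction adjunction $\Ind_b^{b+1}\dashv\Res_b^{b+1}$ together with the grading-shift bookkeeping. Just note in your exponent check that $\E_b = q^{2b+1-|\und\mu|}\Res_b^{b+1}$ is indexed by the \emph{target} $b$, so on $\cD_{dg}(\muT_{b+1},d_\mu)$ one gets $q\cdot q^{2b+1-|\und\mu|}\cdot q^{|\und\mu|-2(b+1)} = q^{0}$, i.e.\ $q\E\K\cong\Res_b^{b+1}$ exactly as you intended (your displayed fragment with $2(b+1)+1$ is the off-by-one you yourself flagged).
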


\subsubsection{Induction along colored strands}\label{sec:redind}

Take $\underline{\mu} = (\mu_1, \dots, \mu_r)$ and $\underline \mu ' = (\underline \mu, \mu_{r+1})$. 
Consider the (non-unital) map of dg-algebras $(\muT_b,d_\mu) \rightarrow (\dgT^{\und \mu'}_b, d_{\mu'})$ that consists in adding a vertical colored strand labeled $\mu_{r+1}$ at the right of a diagram:
\begin{equation*}%\label{eq:addredstrand}
\tikzdiagh[xscale=1.25]{0}{
	\draw [pstdhl] (-.25,0) node[below]{\small $\mu_1$} -- (-.25,1);
	\draw (0,0) -- (0,1);
	\node at(.25,.125) {\tiny $\dots$};
	\node at(.25,.875) {\tiny $\dots$};
	\draw (.5,0) -- (.5,1);
	\draw [pstdhl] (.75,0)  node[below]{\small $\mu_2$} -- (.75,1);
	\node[pcolor] at(1.125,.125) { $\dots$};
	\node[pcolor] at(1.125,.875) { $\dots$};
	\draw [pstdhl] (1.5,0)  node[below]{\small $\mu_{r-1}$} -- (1.5,1);
	\draw (1.75,0) -- (1.75,1);
	\node at(2,.125) {\tiny $\dots$};
	\node at(2,.875) {\tiny $\dots$};
	\draw (2.25,0) -- (2.25,1);
	\draw [pstdhl] (2.5,0)  node[below]{\small $\mu_{r}$} -- (2.5,1);
	\draw (2.75,0) -- (2.75,1);
	\node at(3,.125) {\tiny $\dots$};
	\node at(3,.875) {\tiny $\dots$};
	\draw (3.25,0) -- (3.25,1);
	\filldraw [fill=white, draw=black] (-.375,.25) rectangle (3.375,.75) node[midway] { $D$};
}
\ \mapsto \ 
\tikzdiagh[xscale=1.25]{0}{
	\draw [pstdhl] (-.25,0) node[below]{\small $\mu_1$} -- (-.25,1);
	\draw (0,0) -- (0,1);
	\node at(.25,.125) {\tiny $\dots$};
	\node at(.25,.875) {\tiny $\dots$};
	\draw (.5,0) -- (.5,1);
	\draw [pstdhl] (.75,0)  node[below]{\small $\mu_2$} -- (.75,1);
	\node[pcolor] at(1.125,.125) { $\dots$};
	\node[pcolor] at(1.125,.875) { $\dots$};
	\draw [pstdhl] (1.5,0)  node[below]{\small $\mu_{r-1}$} -- (1.5,1);
	\draw (1.75,0) -- (1.75,1);
	\node at(2,.125) {\tiny $\dots$};
	\node at(2,.875) {\tiny $\dots$};
	\draw (2.25,0) -- (2.25,1);
	\draw [pstdhl] (2.5,0)  node[below]{\small $\mu_{r}$} -- (2.5,1);
	\draw (2.75,0) -- (2.75,1);
	\node at(3,.125) {\tiny $\dots$};
	\node at(3,.875) {\tiny $\dots$};
	\draw (3.25,0) -- (3.25,1);
	\filldraw [fill=white, draw=black] (-.375,.25) rectangle (3.375,.75) node[midway] { $D$};
	\draw [pstdhl]  (3.5,0)  node[below]{\small $\mu_{r+1}$}  -- (3.5,1);
}
\end{equation*}
Let $\mathfrak{I} : \cD_{dg}(\muT_{b},d_\mu) \rightarrow \cD_{dg}(\dgT^{\und \mu'}_b,d_{\mu'})$ be the corresponding induction dg-functor, and let $ \mathfrak{\bar I} : \cD_{dg}(\dgT^{\und \mu'}_b, d_{\mu'}) \rightarrow \cD_{dg}(\muT_b,d_\mu) $ be the restriction dg-functor. 

\begin{prop}\label{prop:indresredstrand}
There is a natural isomorphism $ \mathfrak{\bar I} \circ \mathfrak{I} \cong \id$. 
\end{prop}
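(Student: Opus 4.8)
The plan is to use the explicit bimodule describing the composition $\mathfrak{\bar I} \circ \mathfrak{I}$ and show it is isomorphic to the identity bimodule $(\muT_b, d_\mu)$ as a dg-bimodule. First I would identify the relevant bimodule: the induction functor $\mathfrak{I}$ is given by tensoring with the $(\dgT^{\und\mu'}_b, \muT_b)$-bimodule $(\dgT^{\und\mu'}_b \cdot e, d_{\mu'})$, where $e$ is the image of $1 \in \muT_b$ under the (non-unital) algebra map adding the rightmost colored strand labeled $\mu_{r+1}$; the restriction functor $\mathfrak{\bar I}$ is given by tensoring with $(e \cdot \dgT^{\und\mu'}_b, d_{\mu'})$ viewed as a $(\muT_b, \dgT^{\und\mu'}_b)$-bimodule (or equivalently by $\RHOM$, but the bimodule is cofibrant on the appropriate side, cf. the use of \cref{prop:Tdecomp} earlier, so the two agree). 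Thus $\mathfrak{\bar I} \circ \mathfrak{I}$ is tensoring with $(e \cdot \dgT^{\und\mu'}_b \cdot e, d_{\mu'})$, and the claim reduces to the statement that the multiplication map induces an isomorphism of dg-bimodules $(\muT_b, d_\mu) \xrightarrow{\ \sim\ } (e\, \dgT^{\und\mu'}_b\, e, d_{\mu'})$.

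The key step is then a basis computation using \cref{thm:Tbasis} (equivalently \cref{cor:basis}). The idempotent $e$ selects diagrams of $\dgT^{\und\mu'}_b$ in which the rightmost strand at the very right is the colored strand labeled $\mu_{r+1}$, at both top and bottom. I would argue that for such a diagram, no black strand can ever be to the right of the $\mu_{r+1}$-strand (there is no black strand starting or ending there, and colored strands cannot be crossed to move one past), so the $\mu_{r+1}$-strand is a straight vertical line carrying nothing, disjoint from the rest of the diagram. Concretely: the basis ${}_\kappa B_\rho$ of $1_\kappa \muT_b 1_\rho$ consists of $b_{w, \underline{l}, \underline{a}}$ where $w \in {}_\kappa S_\rho$, and the analogous basis for $e\, \dgT^{\und\mu'}_b\, e$ indexed by $\und\mu'$-data is in bijection with it precisely because the extra colored strand can only sit idle at the far right — permutations in $_\kappa S_\rho$ for $\und\mu'$ fixing the last colored strand are the same as those for $\und\mu$, nails still land on the leftmost colored strand (unchanged), and dots are unaffected. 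Hence the multiplication map sends the basis of $\muT_b$ bijectively onto the basis of $e\, \dgT^{\und\mu'}_b\, e$, so it is an isomorphism of $\bZ\times\bZ^2$-graded $\Bbbk$-modules; it is manifestly a bimodule map and commutes with the differential since $d_{\mu'}$ restricted to diagrams with the idle rightmost strand agrees with $d_\mu$ (the nail generator and its differential only involve the leftmost colored strand $\mu_1$, which is the same for $\und\mu$ and $\und\mu'$). Finally, one translates the bimodule isomorphism back into the natural isomorphism of dg-functors $\mathfrak{\bar I}\circ\mathfrak{I} \cong \id$, using the equivalence \eqref{eq:quasifunctequiv} between dg-functors and dg-bimodules.

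The main obstacle I expect is the bookkeeping in the basis bijection: one must verify carefully that the combinatorial data $(w, \underline{l}, \underline{a})$ parametrizing ${}_\kappa B_\rho$ genuinely does not change when we pass from $\und\mu$ to $\und\mu' = (\und\mu, \mu_{r+1})$ at the level of the $e$-cut-down subalgebra — in particular that the ``left-adjusted'' condition and the ``no colored-colored crossing'' condition on $w \in {}_\kappa S_\rho$ behave well, and that the tightened-nail ordering convention is preserved. This is essentially the same argument as a standard ``adding an idempotent-isolated strand'' lemma for KLRW algebras, but since the dg-structure (via the nail) is only attached to $\mu_1$, the verification that $d_{\mu'}$ and $d_\mu$ match on the relevant subspace is clean. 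A secondary, purely formal point is to make sure the identification of $\mathfrak{\bar I}$ with the correct bimodule (and the cofibrancy needed to replace $\RHOM$ by $\otimes$, or to avoid it) is justified; this follows the same pattern as \cref{sec:catAction}, so I would simply cite the analogous arguments there and in \cite{NV3}.
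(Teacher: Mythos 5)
Your proposal is correct and is essentially the paper's argument: the paper's proof just cites \cref{prop:Tdecomp} applied to $\und \mu' = (\und \mu, \mu_{r+1})$ with no black strands to the right of $\mu_{r+1}$, where left multiplication by the idempotent $e$ kills all summands except the first, giving exactly your dg-bimodule isomorphism $e\,\dgT^{\und \mu'}_b\,e \cong \muT_b$; and since \cref{prop:Tdecomp} is itself proved from \cref{thm:Tbasis}, your direct basis-bijection route is the same computation unpacked. (Only a cosmetic point: arbitrary diagrams in $e\,\dgT^{\und \mu'}_b\,e$ can have black strands wandering right of the $\mu_{r+1}$-strand; it is only at the level of the basis elements, as you then argue, that the last strand is idle.)
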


\begin{proof}
The statement follows from \cref{prop:Tdecomp}. 
\end{proof}

\subsection{Categorification theorem}

In this section, we suppose $\Bbbk$ is a field. 
Recall that $\bZ\pp{\lambda,q}$ is given by Laurent series with non-zero coefficients contained in certain cones of $\bZ^2$ (see \cite{laurent} for a nice exposition, or \cite[\S5]{asympK0} for categorification). 
For a $\bZ^2$-graded dg-algebra $(A,d)$, let $\cD_{dg}^{cblf}(A,d)$ be its c.b.l.f. derived category, that is the full sub-category of dg-modules having a cone bounded, locally finite dimensional homology, or in other words having graded Euler characteristic contained in $\bZ\pp{\lambda,q}$. 
We denote by $\bKO^\Delta(A,d)$ the asymptotic Grothendieck group  (it is a version of Grothendieck group where we mod out relations coming from infinite iterated extensions, see \cite{asympK0} for details) of  $\cD_{dg}^{cblf}(A,d)$. 
Since $(\muT_b,d_\mu)$ is a positive c.b.l.f. dimensional $\bZ^2$-graded dg-algebra (in the sense of~\cite[\S9]{asympK0}), we know that $\bKO^\Delta(\muT_b,d_\mu)$ is a free $\bZ\pp{q,\lambda}$-module and is spanned by the classes of indecomposable relatively projective $(\muT_b,d_\mu)$-modules (i.e. direct summands of $(\muT_b,d_\mu)$). The action of $q$ (resp. $\lambda$) is given by a grading shift up in the $q$-degree (resp. $\lambda$-degree). 
We also write ${}_\bQ\bKO^\Delta(\muT_b,d_\mu) := \bKO^\Delta(\muT_b,d_\mu) \otimes_{\bZ\pp{q,\lambda}} \bQ\pp{q,\lambda}$.

\smallskip 

For an element $f = \sum_{a,b} \alpha_{a,b} q^a \lambda^b \in \bZ\pp{q,\lambda}$ where $\alpha_{a,b} \geq 0$, we write
\[
\oplus_f (M) := \bigoplus_{a,b} q^a \lambda^b  (\underbrace{M \oplus M \oplus \cdots \oplus M}_{\alpha_{a,b}}),
\]
for any module $M$. Therefore we have in  $\bKO^\Delta(\muT_b,d_\mu)$ that 
$[\oplus_f(M)] = f [M]$.

\smallskip

For each $\rho \in \mathcal{P}_{b}^{r}$ there is a relatively projective $(\muT_b,d_\mu)$-module given by $(\muP_{\rho}, d_\mu)$ where
\[
\muP_{\rho} := 
\muT_b 1_\rho = \ 
\tikzdiag[xscale=1.25]{
	\draw [pstdhl] (-.25,0)  node[below]{\small $\mu_1$} -- (-.25,1);
	\draw (0,0) -- (0,1);
	\node at(.25,.25) {\tiny $\dots$};
	\draw (.5,0) -- (.5,1);
	\tikzbrace{0}{.5}{0}{\small $b_1$};
	\draw [pstdhl] (.75,0) node[below]{\small $\mu_2$} -- (.75,1);
	\node[pcolor] at(1.125,.25) { $\dots$};
	\draw [pstdhl] (1.5,0)   -- (1.5,1);
	\draw (1.75,0) -- (1.75,1);
	\node at(2,.25) {\tiny $\dots$};
	\draw (2.25,0) -- (2.25,1);
	\tikzbrace{1.75}{2.25}{0}{\small $b_{r-1}$};
	\draw [pstdhl] (2.5,0) node[below]{\small $\mu_r$}   -- (2.5,1);
	\draw (2.75,0) -- (2.75,1);
	\node at(3,.25) {\tiny $\dots$};
	\draw (3.25,0) -- (3.25,1);
	\tikzbrace{2.75}{3.25}{0}{\small $b_{r}$};
	\filldraw [fill=white, draw=black] (-.375,.5) rectangle (3.375,1.25) node[midway] { $\muT_b$};
}
\]
Let $\nh_n$ be the nilHecke algebra on $n$-strands (it is presented as a diagrammatic algebra with only black strands and dots, subject to the relations \cref{eq:nhR2andR3} and \cref{eq:nhdotslide}). There is an inclusion (because of \cref{thm:Tbasis})
\begin{equation}\label{eq:nhinclusion}
\imath : \nh_{b_1} \otimes \nh_{b_2} \otimes \cdots \otimes \nh_{b_r} \hookrightarrow \muT_{b},
\end{equation}
given by
\[
\tikzdiag[xscale=1.5]{
	\draw (0,-.5) -- (0,.5);
	\node at(.25,-.4) {\tiny $\dots$};
	\node at(.25,.4) {\tiny $\dots$};
	\draw (.5,-.5) -- (.5,.5);
	%\draw[decoration={brace,mirror,raise=-8pt},decorate]  (-.1,-.85) -- node {\small $\kappa_1$} (.6,-.85);
	\filldraw [fill=white, draw=black] (-.1,-.25) rectangle (.6,.25) node[midway] { $\nh_{b_1}$};
}
\otimes
\tikzdiag[xscale=1.5]{
	\draw (0,-.5) -- (0,.5);
	\node at(.25,-.4) {\tiny $\dots$};
	\node at(.25,.4) {\tiny $\dots$};
	\draw (.5,-.5) -- (.5,.5);
	%\draw[decoration={brace,mirror,raise=-8pt},decorate]  (-.1,-.85) -- node {\small $\kappa_1$} (.6,-.85);
	\filldraw [fill=white, draw=black] (-.1,-.25) rectangle (.6,.25) node[midway] { $\nh_{b_{2}}$};
}
\otimes
\cdots
\otimes
\tikzdiag[xscale=1.5]{
	\draw (0,-.5) -- (0,.5);
	\node at(.25,-.4) {\tiny $\dots$};
	\node at(.25,.4) {\tiny $\dots$};
	\draw (.5,-.5) -- (.5,.5);
	%\draw[decoration={brace,mirror,raise=-8pt},decorate]  (-.1,-.85) -- node {\small $\kappa_1$} (.6,-.85);
	\filldraw [fill=white, draw=black] (-.1,-.25) rectangle (.6,.25) node[midway] { $\nh_{b_r}$};
}
\ \mapsto \ 
\tikzdiagl[xscale=1.5]{
	\draw [pstdhl] (-.25,-.5) node[below]{\small $\mu_1$} -- (-.25,.5);
	\draw (0,-.5) -- (0,.5);
	\node at(.25,-.4) {\tiny $\dots$};
	\node at(.25,.4) {\tiny $\dots$};
	\draw (.5,-.5) -- (.5,.5);
	%\draw[decoration={brace,mirror,raise=-8pt},decorate]  (-.1,-.85) -- node {\small $\kappa_1$} (.6,-.85);
	%
	\draw [pstdhl] (.75,-.5) node[below]{\small $\mu_2$} -- (.75,.5);
	\draw (1,-.5) -- (1,.5);
	\node at(1.25,-.4) {\tiny $\dots$};
	\node at(1.25,.4) {\tiny $\dots$};
	\draw (1.5,-.5) -- (1.5,.5);
	%\draw[decoration={brace,mirror,raise=-8pt},decorate]  (1.65,-.85) -- node {\small $\kappa_{r-1}$} (2.35,-.85);
	%
	\draw [pstdhl] (1.75,-.5) node[below]{\small $\mu_3$}   -- (1.75,.5);
	\node[pcolor] at(2.125,0) { $\dots$};
	\draw [pstdhl] (2.5,-.5) node[below]{\small $\mu_r$}  -- (2.5,.5);
	\draw (2.75,-.5) -- (2.75,.5);
	\node at(3,-.4) {\tiny $\dots$};
	\node at(3,.4) {\tiny $\dots$};
	\draw (3.25,-.5) -- (3.25,.5);
	%\draw[decoration={brace,mirror,raise=-8pt},decorate]  (2.65,-.85) -- node {\small $\kappa_r$} (3.35,-.85);
	%
	%\filldraw [fill=white, draw=black] (-.375,.5) rectangle (3.375,1.25) node[midway] { $\betaWebster{r}{b}$};
	\filldraw [fill=white, draw=black] (-.1,-.25) rectangle (.6,.25) node[midway] { $\nh_{b_1}$};
	\filldraw [fill=white, draw=black] (1-.1,-.25) rectangle (1+.6,.25) node[midway] { $\nh_{b_2}$};
	\filldraw [fill=white, draw=black] (2.75-.1,-.25) rectangle (2.75+.6,.25) node[midway] { $\nh_{b_r}$};
}
\]
Furthermore, it is well-known (see for example~\cite[Section~2.2]{KL1}) that $\nh_n$ admits a unique primitive idempotent up to equivalence given by
\[
e_{n} := \tau_{\vartheta_n} x_1^{n-1} x_2^{n-2} \cdots x_{n-1} \in \nh_n,
\]
where $\vartheta_n \in S_n$ is the longest element, $\tau_{w_1w_2\cdots w_k} := \tau_{w_1}\tau_{w_2}\cdots\tau_{w_k}$, with $\tau_i$ being a crossing between the $i$-th and $(i+1)$-th strands, and $x_i$ is a dot on the $i$-th strand. 
Moreover, for degree reasons and using \cite[Lemma 4.37]{webster}, any primitive idempotent of $\muT_b$ is equivalent to the image of a collection of idempotents under the inclusion \cref{eq:nhinclusion}, and thus is of the form
\[
e_{\rho} := \imath\left( e_{b_1} \otimes \cdots \otimes e_{b_n} \right).
\]

We say that a $\bZ^2$-graded dg-category $\cC$ is \emph{c.b.l.f. generated} by a collection of objects $\{X_j\}_{j \in J}$ if any object in $\cC$ is isomorphic to an iterated extensions of shifted copies of elements from a finite subset of $\{X_j\}_{j \in J}$, with coefficients contained in $\bZ\pp{q,\lambda}$ (see \cite[Appendix B]{LNV} for a precise definition). In this case, we also have that $\bKO^\Delta(\cC) $ is spanned as $\bZ\pp{q,\lambda}$ by the classes of  $[X_j]$ for all $j \in J$. 
As a consequence of the explanations above, we obtain the following:

\begin{prop}
The dg-category $\cD_{dg}^{cblf}(\muT_b, d_\mu)$ is c.b.l.f. generated by $\{(\muT_b e_\rho, d_\mu) | \rho \in \cP_b^{r}\}$. 
\end{prop}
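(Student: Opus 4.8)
The plan is to reduce the statement to a purely homological-algebra fact about the derived category of a non-negatively graded dg-algebra with finite-dimensional homology in each bidegree, and then use the structural results recalled in \cref{sec:basisthemtri} together with the nilHecke idempotent description of the relatively projective modules. First I would recall that, by \cref{cor:basis}, $1_\rho \muT_b 1_\rho$ decomposes, via the inclusion \eqref{eq:nhinclusion}, into a sum of copies of $\nh_{b_1} \otimes \cdots \otimes \nh_{b_r}$ with $q$-degree shifts that are bounded below and locally finite (there are only finitely many $\rho$, and for each the shifts lie in a single cone). Since each $\nh_{n}$ is a matrix algebra over $\Pol_n^{S_n}$ up to a grading shift, with primitive idempotent $e_n$, we get a Peirce-type decomposition
\[
(\muT_b 1_\rho, d_\mu) \cong \oplus_{f_\rho}\bigl(\muT_b e_\rho, d_\mu\bigr)
\]
for a suitable $f_\rho \in \bZ\pp{q,\lambda}$ with non-negative coefficients, by the same reasoning as in \cite[Lemma 4.37]{webster} adapted to the dg-setting (the differential $d_\mu$ only involves the leftmost colored strand and a dot, so it is compatible with this splitting). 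This already shows that every $(\muT_b 1_\rho, d_\mu)$, hence the free module $(\muT_b, d_\mu)$ itself, is a finite direct sum of shifted copies of the $(\muT_b e_\rho, d_\mu)$.

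Next I would invoke the fact, recalled just before the statement, that for a positive c.b.l.f.-dimensional $\bZ^2$-graded dg-algebra every object of $\cD_{dg}^{cblf}$ is built as a c.b.l.f. iterated extension of shifted copies of the indecomposable relatively projective modules, i.e. of the direct summands of $(\muT_b, d_\mu)$; this is \cite[\S9]{asympK0}. Combining this with the previous paragraph, every object of $\cD_{dg}^{cblf}(\muT_b,d_\mu)$ is a c.b.l.f. iterated extension of shifted copies of the $(\muT_b e_\rho, d_\mu)$ for $\rho \in \cP_b^r$, which is exactly the assertion that this collection c.b.l.f.-generates the category in the sense of \cite[Appendix B]{LNV}. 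The last clause about $\bKO^\Delta$ being spanned by the $[\muT_b e_\rho, d_\mu]$ is then automatic from the definition of c.b.l.f. generation, as noted in the paragraph preceding the proposition.

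The main obstacle I expect is the bookkeeping needed to make the decomposition $(\muT_b 1_\rho, d_\mu) \cong \oplus_{f_\rho}(\muT_b e_\rho, d_\mu)$ genuinely compatible with the differential and with the cone-boundedness requirement. One has to check that $d_\mu$ restricts correctly under the idempotent splitting — this is where the identity $d_\mu(\theta_{k,\rho}(p))$ being expressible through diagrams on the first $i$ colored and $k$ black strands (used already in \eqref{eq:dgTdecomp}) is the relevant input — and that the multiplicities $f_\rho$, coming from the graded ranks of $\nh_{b_1}\otimes\cdots\otimes\nh_{b_r}$ over its idempotent, indeed lie in $\bZ\pp{q,\lambda}$ and not merely in some formal completion. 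Both points are routine given \cref{thm:Tbasis} and the standard structure of $\nh_n$, so the proof is short: it is essentially the combination of \cref{cor:basis}, the nilHecke idempotent decomposition as in \cite[Lemma 4.37]{webster}, and \cite[\S9]{asympK0}, and I would present it as such with a reference to the analogous argument in \cite[Appendix B]{LNV}.
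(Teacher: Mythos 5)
Your decomposition step is fine as far as it goes: the idempotents $e_\rho$ are built only from dots and crossings, so $d_\mu(e_\rho)=0$, and the splitting $\muT_b 1_\rho \cong q^{\sum_i b_i(b_i-1)/2}\bigoplus_{\prod_i [b_i]_q!}\muT_b e_\rho$ (this is exactly \eqref{eq:dividedsummands}, which the paper states right after the proposition) is a splitting of dg-modules with multiplicities in $\bZ[q]\subset\bZ\pp{q,\lambda}$. The gap is in how you feed this into \cite[\S 9]{asympK0}. That structural result, as recalled just before the proposition, provides c.b.l.f. generation by the \emph{indecomposable} relatively projective modules, i.e. by the indecomposable direct summands of $(\muT_b,d_\mu)$ --- not by the free modules $\muT_b 1_\rho$ themselves. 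Knowing that $\muT_b 1_\rho$ is a direct sum of shifted copies of $\muT_b e_\rho$ does not let you convert a filtration whose subquotients are indecomposable summands into one whose subquotients are copies of the $\muT_b e_\rho$: a priori some $\muT_b e_\rho$ could itself decompose, and then its proper summands (which are the actual generators supplied by \cite{asympK0}) need not be iterated extensions of shifted copies of the $\muT_b e_{\rho'}$, so generation by $\{(\muT_b e_\rho,d_\mu)\}$ would not follow.

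What is missing is precisely the paper's identification step: every primitive idempotent of $\muT_b$ is equivalent to some $e_\rho=\imath(e_{b_1}\otimes\cdots\otimes e_{b_r})$, which the paper gets ``for degree reasons'' together with \cite[Lemma 4.37]{webster} and the uniqueness, up to equivalence, of the primitive idempotent $e_n\in\nh_n$. Note that primitivity of $e_\rho$ in all of $\muT_b$ is not automatic from primitivity in the nilHecke subalgebra \eqref{eq:nhinclusion}: an idempotent primitive in a subalgebra can split in the larger algebra, and ruling this out (as well as ruling out primitive idempotents not coming from the subalgebra) is exactly the content of that step. With it, the indecomposable relatively projective $(\muT_b,d_\mu)$-modules are precisely the shifted $(\muT_b e_\rho,d_\mu)$ and the proposition follows; without it, your argument only shows that the free modules lie in the c.b.l.f. closure of $\{(\muT_b e_\rho,d_\mu)\}$, which is not the required statement unless you also upgrade the appeal to \cite{asympK0} to a generation result whose generators are the $\muT_b 1_\rho$. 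Your remaining checks (compatibility of $d_\mu$ with the idempotent splitting, multiplicities lying in $\bZ\pp{q,\lambda}$) are correct but are not where the difficulty lies; the opening claim that $1_\rho\muT_b 1_\rho$ is a sum of copies of $\nh_{b_1}\otimes\cdots\otimes\nh_{b_r}$ is also unnecessary for the argument and not justified as stated.
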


It is also well-known (see \cite[\S2.2.3]{KL1} for example) that there is a decomposition
\[
\nh_n \cong q^{n(n-1)/2} \bigoplus_{[n]_q!} \nh_n e_n,
\]
as left $\nh_n$-modules. For the same reasons, we obtain
\begin{equation}\label{eq:dividedsummands}
\muP_\rho \cong q^{\sum_{i=1}^r b_i(b_i-1)/2}  \bigoplus_{\prod_{i=1}^r [b_i]_q!} \muT_b e_\rho.
\end{equation}
In the other direction, one can construct a free resolution of $\nh_n e_n$ over $\nh_n$ with coefficients (i.e. grading shifts) corresponding to $1/(q^{n(n-1)/2}[n]_q!)$ and contained in $\bZ\pp{q}$. Similarly, we one can construct a c.b.l.f. resolution of $\muT_b e_\rho$ over $\muP_\rho$, and thus we obtain the following:

\begin{cor}\label{cor:muPgenerates}
The dg-category $\cD_{dg}^{cblf}(\muT_b, d_\mu)$ is c.b.l.f. generated by $\{(\muP_\rho, d_\mu) | \rho \in \cP_b^{r}\}$. 
\end{cor}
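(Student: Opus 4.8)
The plan is to bootstrap the statement from \cref{cor:muPgenerates}'s predecessor (the proposition that $\cD_{dg}^{cblf}(\muT_b,d_\mu)$ is c.b.l.f. generated by $\{(\muT_b e_\rho, d_\mu)\}$), exactly as the analogous step is done in \cite{LNV} for the nilHecke algebra. Since c.b.l.f. generation is transitive (an iterated extension of iterated extensions is again an iterated extension, and the relevant coefficient ring $\bZ\pp{q,\lambda}$ is closed under the operations involved), it suffices to show that each $(\muT_b e_\rho, d_\mu)$ is c.b.l.f. generated by the collection $\{(\muP_{\rho'}, d_\mu)\}_{\rho' \in \cP_b^r}$ — in fact by $(\muP_\rho, d_\mu)$ alone. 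Conversely, one also notes (though this direction is not strictly needed) that \cref{eq:dividedsummands} already expresses $\muP_\rho$ as a finite direct sum of shifted copies of $\muT_b e_\rho$, so the two generating families are interchangeable up to the finiteness bookkeeping.

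First I would recall the resolution of $\nh_n e_n$ by free $\nh_n$-modules. The key input is that $\nh_n$, as a left module over itself, decomposes as $q^{n(n-1)/2}\bigoplus_{[n]_q!} \nh_n e_n$ (stated just above \cref{eq:dividedsummands}); dually, one builds a (typically infinite, but cone-bounded and locally finite) complex of free modules $\cdots \to \nh_n^{\oplus a_2} \to \nh_n^{\oplus a_1} \to \nh_n^{\oplus a_0}$ quasi-isomorphic to $\nh_n e_n$, whose total graded Euler characteristic is $1/(q^{n(n-1)/2}[n]_q!) \in \bZ\pp q$. The positivity and c.b.l.f.-ness of $\nh_n$ guarantee this complex is a legitimate object witnessing c.b.l.f. generation in the sense of \cite[Appendix B]{LNV}. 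Then I would tensor this resolution along the inclusion $\imath$ of \cref{eq:nhinclusion}: applying $\muT_b \otimes_{\nh_{b_1}\otimes\cdots\otimes\nh_{b_r}} (-)$ to the external tensor product of resolutions of $\nh_{b_i} e_{b_i}$ produces a c.b.l.f. complex of copies of $\muP_\rho$ (possibly with shifts) quasi-isomorphic to $\muT_b e_\rho$, because $\muT_b$ is free — hence flat — as a right module over $\nh_{b_1}\otimes\cdots\otimes\nh_{b_r}$ by \cref{thm:Tbasis}. One must check that the differential $d_\mu$ is compatible throughout: since $d_\mu$ vanishes on dots and crossings, it acts trivially on the image of $\imath$ and on the idempotents $e_\rho$, so the resolution can be taken in the category of $(\muT_b, d_\mu)$-dg-modules with $d_\mu$ acting only through the $\muT_b$-factor, and no subtlety arises.

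Having shown each $(\muT_b e_\rho, d_\mu)$ lies in the c.b.l.f.-closure of $\{(\muP_{\rho'}, d_\mu)\}$, and knowing from the cited proposition that these $(\muT_b e_\rho, d_\mu)$ c.b.l.f.-generate the whole category, transitivity of c.b.l.f. generation gives that $\{(\muP_\rho, d_\mu) \mid \rho \in \cP_b^r\}$ c.b.l.f.-generates $\cD_{dg}^{cblf}(\muT_b, d_\mu)$, which is the claim. I expect the main obstacle to be the bookkeeping around the resolution: verifying that the external-tensor-product resolution remains cone-bounded and locally finite after inducing along $\imath$ (i.e. that the grading shifts stay inside the appropriate cone of $\bZ^2$), and that the Euler-characteristic coefficient $q^{\sum b_i(b_i-1)/2}\prod[b_i]_q!$ inverts to an element of $\bZ\pp q$ — this is where one genuinely uses the positivity hypothesis on the grading of $\muT_b$. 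Everything else is a routine transcription of the nilHecke case from \cite[Appendix B]{LNV}, so I would keep the write-up short and point to that reference for the detailed estimates.
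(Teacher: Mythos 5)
Your proposal is correct and follows essentially the same route as the paper: the paper likewise invokes the proposition that $\{(\muT_b e_\rho,d_\mu)\}$ c.b.l.f.\ generates the category, constructs a c.b.l.f.\ resolution of $\nh_n e_n$ over $\nh_n$ with grading shifts encoding $1/(q^{n(n-1)/2}[n]_q!)\in\bZ\pp{q}$, and transports it to a resolution of $\muT_b e_\rho$ by shifted copies of $\muP_\rho$, concluding by transitivity. Your additional remarks (inducing along $\imath$ using freeness of $\muT_b$ from \cref{thm:Tbasis}, and the vanishing of $d_\mu$ on the nilHecke image) are exactly the details the paper leaves implicit.
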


In particular, we have that $\bKO^\Delta(\muT_b, d_\mu) $ is spanned either by the classes of  $[(\muT_b e_\rho, d_\mu)]$ for all $\rho \in \cP_b^r$, or by the classes of $[(\muP_\rho, d_\mu)]$. 
The following lemma is well-known, and one can find a proof of it for example in \cite[Proposition 3.17]{NV2}.

\begin{lem}%[{\cite[Proposition 3.17]{NV2}}]
\label{lem:NdotsleftNH}
For $k > n$ we have 
\[
\tikzdiagl{
	\draw (0,0) -- (0,1);
	\draw (.5,0) -- (.5,1);
	\node at(1,.5) {\small $\dots$};
	\draw (1.5,0) -- (1.5,1);
	\tikzbrace{0}{1.5}{0}{\small $k$};
}
\ =
\sum_{i} \ 
\tikzdiag{
	\draw (0,-.5) -- (0,1.5) node[midway, tikzdot]{} node[midway, xshift=-1.5ex, yshift=.75ex]{\small $n$};
	\draw (.5,-.5) -- (.5,1.5);
	\node at(1,.5) {\small $\dots$};
	\draw (1.5,-.5) -- (1.5,1.5);
	\filldraw [fill=white, draw=black] (-.1,-.25) rectangle (1.6,.25) node[midway] { $v_i$};
	\filldraw [fill=white, draw=black] (-.1,.75) rectangle (1.6,1.25) node[midway] { $u_i$};
}
\]
for a certain finite collection of elements $u_i, v_i \in \nh_k$.
\end{lem}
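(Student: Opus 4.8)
The statement to prove is \cref{lem:NdotsleftNH}: if we have $k > n$ black strands, all without dots, then this element of $\nh_k$ can be rewritten as a sum $\sum_i u_i \cdot (\text{diagram with $n$ dots on the first strand}) \cdot v_i$ with $u_i, v_i \in \nh_k$. The plan is to reduce this to a well-known fact about the nilHecke algebra, namely that the ``bottom'' of $\nh_k$ (the subalgebra, or rather the left ideal, generated suitably) contains the identity via an expression involving enough dots. Concretely, I would argue as follows.

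\textbf{Step 1: reduce to the symmetric polynomial statement.} Recall that $\nh_k \cong \Mat_{k!}(\Pol_k^{S_k})$ where $\Pol_k = \Bbbk[x_1,\dots,x_k]$ acts on itself and $\nh_k = \End_{\Pol_k^{S_k}}(\Pol_k)$; in particular $\Pol_k$ is a free $\Pol_k^{S_k}$-module of rank $k!$ with basis the Schubert-type monomials $x_1^{a_1}\cdots x_k^{a_k}$ for $0 \le a_i \le k-i$. The element in the statement is the identity of $\nh_k$ (all strands vertical, no dots). So it suffices to show that $1 \in \nh_k$ lies in the two-sided ideal-like span $\sum_i \nh_k \cdot x_1^n \cdot \nh_k$, i.e. that $x_1^n$ generates $\nh_k$ as a bimodule over itself. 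Equivalently, translating through the matrix-algebra picture, it suffices to show the $\Pol_k^{S_k}$-linear operator ``multiplication by $x_1^n$'' on $\Pol_k$ is invertible after composing on both sides with suitable nilHecke operators — but $1 = (\text{something}) \circ (x_1^n \cdot) \circ (\text{something})$ is really a statement that $x_1^n$ appears as a ``matrix coefficient'' that one can invert.

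\textbf{Step 2: the key computation with divided difference operators.} The clean way is: since $k > n$, we have $n \le k-1$, so $x_1^n$ is itself one of the free-module basis elements $x_1^{a_1}\cdots x_k^{a_k}$ (take $a_1 = n \le k-1$ and $a_i = 0$ for $i \ge 2$). A standard fact (e.g. as in Khovanov--Lauda, \cite[Section 2.2]{KL1}, or the references therein) is that the divided difference operator $\partial_{w_0}$ applied to the ``top'' monomial $x_1^{k-1}x_2^{k-2}\cdots x_{k-1}$ gives $1$, and more generally applying nilHecke operators to any basis monomial $x_1^{a_1}\cdots x_k^{a_k}$ one can produce $1$ up to multiplying by a further polynomial: precisely, there exist nilHecke elements realizing the projection of $\Pol_k$ onto the rank-one free summand generated by $x_1^n$ and the inclusion back, whose composite is the identity. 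Writing these out: there is $u \in \nh_k$ with $u \cdot x_1^n \cdot x_1^{k-1-n}x_2^{k-2}\cdots x_{k-1} = u \cdot x_1^{k-1}x_2^{k-2}\cdots x_{k-1}$ and then $\tau_{\vartheta_k}$ times this is a nonzero multiple (in fact $\pm 1$ worth) of $e_k$; combined with the decomposition $\nh_k \cong \bigoplus \nh_k e_k$ one extracts $1 = \sum_i u_i x_1^n v_i$. I would phrase it simply: dot-slide relations \cref{eq:nhdotslide} let us move the $n$ dots on the first strand to the top, and then the claim is purely about the polynomial representation, where it follows from the freeness of $\Pol_k$ over $\Pol_k^{S_k}$ with $x_1^n$ among the basis elements since $n < k$.

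\textbf{Main obstacle.} The only real subtlety is bookkeeping the ``up to polynomial multiple'' and making sure the $u_i, v_i$ genuinely lie in $\nh_k$ (dots and crossings only, no extra generators) and that the sign/coefficient works out — but over a nilHecke algebra everything is defined over $\bZ$ and this is entirely standard. Since the lemma is explicitly flagged as well-known with a reference (\cite[Proposition 3.17]{NV2}), I would keep the proof to a few lines: invoke that $\Pol_k$ is free of rank $k!$ over $\Pol_k^{S_k}$ with basis $\{x_1^{a_1}\cdots x_k^{a_k} : 0 \le a_i \le k-i\}$, note $x_1^n$ is one such basis element because $n \le k-1$, and conclude that in $\nh_k = \End_{\Pol_k^{S_k}}(\Pol_k)$ the identity factors through the rank-one summand $\Pol_k^{S_k}\cdot x_1^n$, which after transporting back to the diagrammatic picture via \cref{eq:nhdotslide} is exactly the asserted expression. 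I expect no genuine difficulty here; the step that needs the most care is simply citing the standard structure theory correctly and noting that the hypothesis $k>n$ is precisely what places $x_1^n$ inside the basis.
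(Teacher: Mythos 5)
The paper never proves this lemma: it is flagged as well known and deferred to \cite[Proposition 3.17]{NV2}, so there is no in-paper argument to compare against; judged on its own, your proof is correct. The essential point, which you identify, is the structure theorem $\nh_k \cong \End_{\Pol_k^{S_k}}(\Pol_k)\cong \operatorname{Mat}_{k!}(\Pol_k^{S_k})$, with $\Pol_k$ free over $\Pol_k^{S_k}$ on the monomials $x_1^{a_1}\cdots x_k^{a_k}$, $0\le a_i\le k-i$. Since $k>n$ places $x_1^n$ among these basis monomials, the matrix of multiplication by $x_1^n$ has an entry equal to $1$; as $\Pol_k^{S_k}$ is commutative, the two-sided ideal of $\operatorname{Mat}_{k!}(\Pol_k^{S_k})$ generated by a matrix is $\operatorname{Mat}_{k!}$ of the ideal generated by its entries, hence is the whole algebra, and transporting back gives $1=\sum_i u_i\, x_1^n\, v_i$ with $u_i,v_i\in\nh_k$ (one may take them to be the diagrammatic matrix units). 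This is a clean structural route, and since the isomorphism with the matrix algebra holds over any commutative $\Bbbk$, there is no coefficient issue; the hypothesis $k>n$ is used exactly where it must be (for $n\ge k$ the statement fails already for $k=1$), and it buys a shorter argument than an explicit diagrammatic induction with the nilHecke relations, at the cost of invoking the full matrix-algebra realization rather than just the defining relations.

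Two small criticisms of the write-up. First, the middle of your Step 2 (the manipulation with $u\cdot x_1^n\cdot x_1^{k-1-n}x_2^{k-2}\cdots x_{k-1}$ and $\tau_{\vartheta_k}$) is muddled and unnecessary: the clean finish is the matrix-unit identity, namely if $A\in\operatorname{Mat}_N(R)$ has $(a_0,b_0)$-entry equal to $1$ then $1=\sum_{c} E_{c,a_0}\,A\,E_{b_0,c}$, applied to $A$ the matrix of multiplication by $x_1^n$. Second, the appeal to the dot-slide relation \cref{eq:nhdotslide} plays no role once you argue through the matrix picture and should be dropped; as written it suggests an extra reduction step that is not actually performed.
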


\begin{lem}\label{lem:K0surjection}
There is a surjection
\[
L(\und \mu)_{|\und \mu| - 2b} \twoheadrightarrow {}_\bQ\bKO^\Delta(\muT_b,d_\mu), \quad
v_\rho \mapsto [(\muP_{\rho}, d_\mu)],
\]
of $\bQ\pp{q,\lambda}$-modules. 
\end{lem}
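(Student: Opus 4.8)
The plan is to prove \cref{lem:K0surjection} in two steps: first produce a well-defined $\bQ\pp{q,\lambda}$-linear map from $L(\und\mu)_{|\und\mu|-2b}$ to ${}_\bQ\bKO^\Delta(\muT_b,d_\mu)$, then show it is surjective. For surjectivity, recall from \cref{cor:muPgenerates} that $\bKO^\Delta(\muT_b,d_\mu)$ is spanned over $\bZ\pp{q,\lambda}$ by the classes $[(\muP_\rho,d_\mu)]$ for $\rho \in \cP_b^r$; hence if the map sends $v_\rho \mapsto [(\muP_\rho,d_\mu)]$ it is automatically surjective once we know it is well-defined. So the entire content is the well-definedness.

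For well-definedness I would exploit the categorical $\slt$-action built in \cref{sec:catAction}. The module $L(\und\mu)$ is a quotient (in fact here exactly, but a priori we only need a surjection) of a tensor product, and the key point is that the classes $[(\muP_\rho,d_\mu)]$ in the asymptotic Grothendieck group satisfy the same defining recursions as the elements $v_\rho = F^{b_r}(\cdots F^{b_1}(v_{\mu_1})\otimes v_{\mu_2}\cdots)$. Concretely: the functor $\F = \Ind$ categorifies the action of $F$ up to the appropriate grading shifts, and $\muP_\rho$ for $\rho=(b_1,\dots,b_r)$ is obtained from $\muP_{\rho'}$ with $\rho' = (b_1,\dots,b_{r-1})$ placed in $\dgT^{\und\mu'}_b$, then applying $\F$ a total of $b_r$ times together with the colored-strand induction functor $\mathfrak I$ of \cref{sec:redind}. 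One builds the map $L(\und\mu) \to \bigoplus_b {}_\bQ\bKO^\Delta(\muT_b,d_\mu)$ recursively on the number $r$ of tensor factors: for $r=1$ it is the standard identification of ${}_\bQ\bKO^\Delta$ of (dg-)cyclotomic-type algebras with $V(\mu_1)$ or $M(\mu_1)$ (this is where \cref{prop:olddgKLRW} and the $\slt$-categorification results of \cite{NV2,NV3,LNV} enter), and the inductive step uses \cref{prop:indresredstrand} ($\mathfrak{\bar I}\circ\mathfrak I \cong \id$) to see that $\mathfrak I$ categorifies the inclusion $v \mapsto v\otimes v_{\mu_{r+1}}$ of tensor factors, which matches property (4) of the Shapovalov forms in \cref{sec:shepfortensor} and is exactly the recursion defining the basis $\{v_\rho\}$. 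One then needs: the relation $[\K]_q$ from \cref{thm:sl2comqi} descends to the relation $EF - FE = (K-K^{-1})/(q-q^{-1})$ in the Grothendieck group; the grading-shift auto-equivalences $q,\lambda$ act as multiplication by $q,\lambda$; and $\K$ acts by $q^{|\und\mu|-2b}$, matching $K\cdot v = \lambda^? q^{|\und\mu|-2b}v$ on the weight space. Assembling these, the assignment $v_\rho \mapsto [(\muP_\rho,d_\mu)]$ extends $\bQ\pp{q,\lambda}$-linearly and commutes with the $U_q(\slt)$-action, hence is well-defined on all of $L(\und\mu)$ (not just on the spanning set $\{v_\rho\}$, where it could a priori fail to respect linear relations — but $\{v_\rho\}$ is a \emph{basis}, so there are no relations to check, and the only thing to verify is that the image lands in the span of the $[(\muP_\rho,d_\mu)]$, which \cref{cor:muPgenerates} guarantees).

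The main obstacle I expect is controlling the asymptotic Grothendieck group carefully enough: because we work with $\bZ\pp{q,\lambda}$-coefficients and allow infinite iterated extensions, one must check that the recursive constructions (the resolutions of $\muT_b e_\rho$ over $\muP_\rho$ from \cref{eq:dividedsummands}, the iterated cones realizing $\F^{b_r}$, and the cone defining $[\K]_q$) all stay within the c.b.l.f. derived category $\cD^{cblf}_{dg}$ so that their classes are genuinely defined and the relations hold in $\bKO^\Delta$. This is precisely the sort of bookkeeping handled in \cite{asympK0} and in \cite[Appendix B]{LNV}, so I would invoke those results rather than redo the estimates. A secondary subtlety is the precise grading shifts: the factor $q^{2b+1-|\und\mu|}$ in the definition of $\E$ and the shifts in $\F$, $\mathfrak I$ must be tracked so that $[\F]$ literally acts by $F$ and $[\E]$ by $E$ on the nose; this is a finite check against the explicit action formulas for $L(\und\mu)$ recorded in \cref{sec:quantumgroups}, in particular \cref{eq:Evkappa}. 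Once those are matched the statement follows formally; I would write "the proof is entirely analogous to \cite[Lemma 5.?]{NV3} / \cite{LNV}" for the routine parts and only spell out the inductive reduction along colored strands via \cref{prop:indresredstrand}.
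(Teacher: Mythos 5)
There is a genuine gap: surjectivity is not automatic, and it is precisely the content of the lemma. The weight space $L(\und{\mu})_{|\und{\mu}|-2b}$ is free with basis $\{v_\rho \mid \rho\in\cP_b^{r,\und{\mu}}\}$, so the image of your map is only the span of the classes $[(\muP_\rho,d_\mu)]$ with $\rho$ satisfying $b_i\le\mu_i$ for every integral $\mu_i$, whereas \cref{cor:muPgenerates} only gives spanning by the classes indexed by \emph{all} of $\cP_b^r$. When some $\mu_i\in\bN$ these index sets genuinely differ, so "automatically surjective once well-defined" is false. Conversely, the well-definedness you devote the bulk of the argument to is a non-issue: since the weight space is free on the $v_\rho$, any assignment on this basis extends $\bQ\pp{q,\lambda}$-linearly, and no compatibility with the $U_q(\slt)$-action is needed for this lemma (invoking that $[\F]$ and $[\E]$ act by $F$ and $E$ here is also close to circular, since that identification is part of what \cref{thm:isocat}, which relies on this lemma, establishes).

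The missing step, which is how the paper argues, is the reduction of the spanning set from $\cP_b^r$ to $\cP_b^{r,\und{\mu}}$: if $b_i>\mu_i$ for some red strand with $i>1$, then \cref{lem:NdotsleftNH} rewrites $1_\rho$ as a sum of elements factoring through $\mu_i$ dots on the leftmost black strand of that block, and by \cref{eq:redR2} such a dot package equals a double crossing with the red strand, so $1_\rho$ factors through idempotents $1_{\rho'}$ with smaller $b_i$; iterating, $(\muP_\rho,d_\mu)$ becomes a direct sum of shifted copies of $(\muP_{\rho'},d_\mu)$ with $\rho'\in\cP_b^{r,\und{\mu}}$ (up to the first entry). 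Finally, if $\mu_1\in\bN$ and $b_1>\mu_1$, the same count of $\mu_1$ dots on the leftmost black strand is $d_\mu$ applied to a nail, so $1_\rho$ is null-homotopic and $(\muP_\rho,d_\mu)$ is acyclic, hence its class vanishes. Without an argument of this kind your proposal does not establish the surjection.
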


\begin{proof}
We want to show that $\bKO^\Delta(\muT_b,d_\mu)$ is spanned by the classes of $ [(\muP_{\rho}, d_\mu)]$ for all $\rho \in \cP_{b}^{r, \und \mu}$. 
Take any $\rho \in \cP_{b}^{r}$, and also assume that $b_1 \leq \mu_1$ if $\mu_1 \in \bN$. Because of \cref{lem:NdotsleftNH}, we have that $1_\rho$ can be rewritten as a sum of elements factorizing through $1_{\rho'}$ for various $\rho' \in \cP_{b}^{r, \und \mu}$  by \cref{eq:redR2}. 
Then $(\muP_{\rho}, d_\mu)$ is isomorphic to a direct sum of shifted copies of $(\muP_{\rho'}, d_\mu)$  for various $\rho' \in \cP_{b}^{r, \und \mu}$.  
If $\mu_1 \in \beta + \bZ$ we are done. Suppose $\mu_1 \in \bN$ and $b_1 > \mu_1$. Then  
$(\muP_{\rho}, d_\mu)$ is acyclic by 
 \cref{lem:NdotsleftNH}, concluding the proof. 
\end{proof}

\begin{exe}
We consider $\und \mu = (\mu_1, 1)$ and $\rho = (b_1, 2)$. We have
\begin{align*}
\tikzdiagl{
	\draw [pstdhl] (0,0) node[below]{\small $\mu_1$} -- (0,1);
	\draw (.5,0) -- (.5,1);
	\draw (1.5,0) -- (1.5,1);
	\node at(1,.5) {\small $\dots$};
	\tikzbrace{.5}{1.5}{-.1}{\small $b_1$};
	\draw [stdhl] (2,0) node[below]{\small $1$} -- (2,1);
	\draw (2.5,0) -- (2.5,1);
	\draw (3,0) -- (3,1);
}
\ &= \  
\tikzdiagl{
	\draw [pstdhl] (0,0) node[below]{\small $\mu_1$} -- (0,1);
	\draw (.5,0) -- (.5,1);
	\draw (1.5,0) -- (1.5,1);
	\node at(1,.5) {\small $\dots$};
	\tikzbrace{.5}{1.5}{-.1}{\small $b_1$};
	\draw [stdhl] (2,0) node[below]{\small $1$} -- (2,1);
	\draw (2.5,0) .. controls (2.5,.25) and (3,.25) .. (3,.5) .. controls (3,.75) and (2.5,.75) .. (2.5,1)
		node[pos=.75, tikzdot]{};
	\draw (3,0)  .. controls (3,.25) and (2.5,.25) .. (2.5,.5)
		node[pos=1, tikzdot]{} 
		.. controls (2.5,.75) and (3,.75) .. (3,1);
}
\ - \ 
\tikzdiagl{
	\draw [pstdhl] (0,0) node[below]{\small $\mu_1$} -- (0,1);
	\draw (.5,0) -- (.5,1);
	\draw (1.5,0) -- (1.5,1);
	\node at(1,.5) {\small $\dots$};
	\tikzbrace{.5}{1.5}{-.1}{\small $b_1$};
	\draw [stdhl] (2,0) node[below]{\small $1$} -- (2,1);
	\draw (2.5,0) .. controls (2.5,.25) and (3,.25) .. (3,.5) .. controls (3,.75) and (2.5,.75) .. (2.5,1);
	\draw (3,0)  .. controls (3,.25) and (2.5,.25) .. (2.5,.5)
		node[pos=.25, tikzdot]{}
		node[pos=1, tikzdot]{} 
		.. controls (2.5,.75) and (3,.75) .. (3,1);
}
\\
\ &= \ 
\tikzdiagl{
	\draw [pstdhl] (0,0) node[below]{\small $\mu_1$} -- (0,1);
	\draw (.5,0) -- (.5,1);
	\draw (1.5,0) -- (1.5,1);
	\node at(1,.5) {\small $\dots$};
	\tikzbrace{.5}{1.5}{-.1}{\small $b_1$};
	\draw (2.5,0) .. controls (2.5,.25) and (3,.25) .. (3,.5) .. controls (3,.75) and (2.5,.75) .. (2.5,1)
		node[pos=.8, tikzdot]{};
	\draw (3,0)  .. controls (3,.25) and (2,.25) .. (2,.5)
		.. controls (2,.75) and (3,.75) .. (3,1);
	\draw [stdhl] (2,0) node[below]{\small $1$}   .. controls (2,.25) and (2.5,.25) .. (2.5,.5) .. controls (2.5,.75) and (2,.75) ..  (2,1);
}
\ - \ 
\tikzdiagl{
	\draw [pstdhl] (0,0) node[below]{\small $\mu_1$} -- (0,1);
	\draw (.5,0) -- (.5,1);
	\draw (1.5,0) -- (1.5,1);
	\node at(1,.5) {\small $\dots$};
	\tikzbrace{.5}{1.5}{-.1}{\small $b_1$};
	\draw (2.5,0) .. controls (2.5,.25) and (3,.25) .. (3,.5) .. controls (3,.75) and (2.5,.75) .. (2.5,1);
	\draw (3,0)  .. controls (3,.25) and (2,.25) .. (2,.5)
		node[pos=.2, tikzdot]{}
		.. controls (2,.75) and (3,.75) .. (3,1);
	\draw [stdhl] (2,0) node[below]{\small $1$}   .. controls (2,.25) and (2.5,.25) .. (2.5,.5) .. controls (2.5,.75) and (2,.75) ..  (2,1);
}
\end{align*}
If $\mu_1 = 1 \in \bN$, then we have similarly that
\[
d_\mu\left(
\tikzdiag{
	\draw (.5,0) .. controls (.5,.25) and (1,.25) .. (1,.5) .. controls (1,.75) and (.5,.75) ..(.5,1) node[pos=.8, tikzdot]{};
	\draw (1,0) .. controls (1,.25) .. (0,.5) .. controls (1,.75) .. (1,1);
	\draw [stdhl] (0,0) node[below]{\small $1$} -- (0,1) node[midway, nail]{};
}
\ - \ 
\tikzdiag{
	\draw (.5,0) .. controls (.5,.25) and (1,.25) .. (1,.5) .. controls (1,.75) and (.5,.75) ..(.5,1);
	\draw (1,0) .. controls (1,.25) .. (0,.5) node[pos=.2, tikzdot]{} .. controls (1,.75) .. (1,1);
	\draw [stdhl] (0,0) node[below]{\small $1$} -- (0,1) node[midway, nail]{};
}
\right)
= \ 
\tikzdiag{
	\draw (.5,0) -- (.5,1);
	\draw (1,0) -- (1,1);
	\draw [stdhl] (0,0) node[below]{\small $1$} -- (0,1);
}
\]
and thus $\muP_\rho$ is acyclic whenever $b_1 \geq 2$. 
\end{exe}

\subsubsection{Categorifed Shapovalov form} 
As in \cite[\S2.5]{KL1}, let $\psi : \muT \rightarrow \opalg{(\muT)}$ be the map that takes the mirror image of diagrams along the horizontal axis.
Given a left $(\muT,d_\mu)$-module $M$, we obtain a right $(\muT,d_\mu)$-module $M^\psi$ with action given by 
\[
m^\psi \cdot r := (-1)^{\deg_h(r) \deg_h(m)} \psi(r) \cdot m,
\] 
for $m \in M$ and $r \in \muT$. 
Then we define the dg-bifunctor
\begin{align*}
(-,-) &:  \cD_{dg}(\muT,d_\mu)  \times \cD_{dg}(\muT, d_\mu)   \rightarrow \cD_{dg}(\Bbbk, 0), 
&
(W,W') := W^\psi \Lotimes_{(\muT, d_\mu)} W'.
\end{align*}

\begin{prop}\label{prop:catshap}
The dg-bifunctor defined above satisfies:
\begin{itemize}
\item $((\muT_0, d_\mu),(\muT_0, d_\mu)) \cong (\Bbbk,0)$;
\item $(\Ind_b^{b+1} M,M') \cong (M, \Res_b^{b+1} M')$ for all $M,M' \in \cD_{dg}(\muT, d_\mu)$; 
\item $(\oplus_f M,M') \cong (M, \oplus_f M') \cong \oplus_f (M,M')$ for all $f \in \bZ\pp{q,\lambda}$;
\item $(M,M') \cong (\mathfrak{I}(M), \mathfrak{I}(M'))$.
\end{itemize}
\end{prop}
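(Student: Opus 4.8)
The plan is to reduce the bifunctor $(-,-)$ to derived tensor products of dg-bimodules over the algebras $(\muT_b,d_\mu)$ and to deduce each of the four properties from the corresponding identity for those bimodules. The one structural ingredient I would set up first is the standard compatibility of $\psi$ with the tensor product: for a dg-bimodule $(B,d_B)$ over $(S,d_S)$-$(R,d_R)$ and a left dg-module $(M,d_M)$ over $(R,d_R)$ there is a natural isomorphism $\bigl(B\Lotimes_{(R,d_R)}M\bigr)^\psi\cong M^\psi\Lotimes_{(R,d_R)}B^\psi$ of right $(S,d_S)$-modules, where $B^\psi$ carries the two $\psi$-twisted actions and the unchanged grading shift. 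This is proven verbatim as in \cite[\S2.5]{KL1} and as in \cite{NV2,NV3}, using that the horizontal mirror $\psi\colon\muT\to\opalg{(\muT)}$ is an anti-automorphism of dg-algebras preserving the whole $\bZ\times\bZ^2$-grading (a dot, crossing, colored crossing, or nail is sent to a generator of the same type and degree, cf.\ \cref{eq:generators}), so that the only signs that appear are the Koszul signs already built into $M^\psi$. Granting this, property (i) is immediate: every generator in \cref{eq:generators} involves a black strand, so $(\muT_0,d_\mu)\cong(\Bbbk,0)$ and $(\muT_0)^\psi\Lotimes_{(\muT_0,d_\mu)}\muT_0\cong\Bbbk\Lotimes_\Bbbk\Bbbk\cong\Bbbk$; and so is property (iii), since $\psi$ and $\Lotimes$ are additive and commute with grading shifts, whence $(\oplus_fM)^\psi\cong\oplus_f(M^\psi)$ and therefore $(\oplus_fM,M')\cong\oplus_f(M,M')\cong(M,\oplus_fM')$ for $f\in\bZ\pp{q,\lambda}$.

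For property (ii) I would use the explicit models $\Ind_b^{b+1}(-)\cong(\muT_{b+1}1_{b,1},d_\mu)\otimes_b(-)$ and $\Res_b^{b+1}(-)\cong(1_{b,1}\muT_{b+1},d_\mu)\otimes_{b+1}(-)$ from \cref{sec:catAction} (underived, since $\muT_{b+1}1_{b,1}$ is cofibrant as a right $(\muT_b,d_\mu)$-module by \cref{prop:Tdecomp}). Applying the compatibility above with $B=\muT_{b+1}1_{b,1}$ gives $(\Ind_b^{b+1}M)^\psi\cong M^\psi\otimes_b(\muT_{b+1}1_{b,1})^\psi$, and since the idempotent $1_{b,1}$ is fixed by the horizontal mirror one has $(\muT_{b+1}1_{b,1})^\psi\cong(1_{b,1}\muT_{b+1},d_\mu)$ as $(\muT_b,d_\mu)$-$(\muT_{b+1},d_\mu)$-bimodules. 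Associativity of the tensor product then yields $(\Ind_b^{b+1}M,M')\cong M^\psi\otimes_b\bigl(1_{b,1}\muT_{b+1}\otimes_{b+1}M'\bigr)=(M,\Res_b^{b+1}M')$.

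Property (iv) is the only one that needs input beyond formalities. Recall from \cref{sec:redind} that $\mathfrak{I}$ is induction along the non-unital dg-algebra map $(\muT_b,d_\mu)\to(\dgT^{\und\mu'}_b,d_{\mu'})$ adding a vertical $\mu_{r+1}$-strand at the far right; write $e\in\dgT^{\und\mu'}_b$ for the image of the unit, i.e.\ $e=\sum_{\rho\in\cP_b^r}1_{(\rho,0)}$. Then $\mathfrak{I}(-)\cong(\dgT^{\und\mu'}_be,d_{\mu'})\Lotimes_b(-)$ and $\mathfrak{\bar I}(-)\cong(e\dgT^{\und\mu'}_b,d_{\mu'})\Lotimes_{\dgT^{\und\mu'}_b}(-)$. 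As $e$ is mirror-symmetric, the compatibility of the first paragraph gives $(\mathfrak{I}M)^\psi\cong M^\psi\Lotimes_b(e\dgT^{\und\mu'}_b,d_{\mu'})$, hence $(\mathfrak{I}M,\mathfrak{I}M')\cong M^\psi\Lotimes_b(e\dgT^{\und\mu'}_be,d_{\mu'})\Lotimes_bM'$. It therefore suffices to exhibit an isomorphism of dg-bimodules $(e\dgT^{\und\mu'}_be,d_{\mu'})\cong(\muT_b,d_\mu)$, which is the bimodule incarnation of \cref{prop:indresredstrand} ($\mathfrak{\bar I}\circ\mathfrak{I}\cong\id$) and follows directly from \cref{prop:Tdecomp} applied to $\und\mu'$ on the block $1_{(\rho,0)}$: the $e$-projection on both sides annihilates every summand of \cref{eq:Tdecomp} in which a black strand is dragged past the new $\mu_{r+1}$-strand, leaving only the first summand, which reassembles to $\muT_b1_\rho$; summing over $\rho$ gives $e\dgT^{\und\mu'}_be\cong\muT_b$, an identification visibly compatible with the $\muT_b$-bimodule structure and with the differentials (which only ever modify nails on the first colored strand $\mu_1$). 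Substituting back, $(\mathfrak{I}M,\mathfrak{I}M')\cong M^\psi\Lotimes_bM'=(M,M')$.

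The only part that is not pure formalism is the mirror/twist compatibility of the first paragraph together with the sign bookkeeping it entails, and the verification that $\psi$ intertwines the bimodule actions on the nose rather than up to a shift; but since the horizontal mirror preserves all three gradings, this argument is line-for-line that of \cite[\S2.5]{KL1} and \cite{NV3}, so I do not anticipate a genuine obstacle. The remaining ingredients — cofibrancy of $\muT_{b+1}1_{b,1}$ and of $\dgT^{\und\mu'}_be$, the mirror-symmetry of $1_{b,1}$ and $e$, and the bimodule identification $e\dgT^{\und\mu'}_be\cong\muT_b$ — all drop out of \cref{prop:Tdecomp}.
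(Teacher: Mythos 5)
Your proposal is correct and takes essentially the same route as the paper: the first three points are treated as formal, and the last follows from the chain $(\mathfrak{I}(M),\mathfrak{I}(M'))\cong(M,\mathfrak{\bar I}\circ\mathfrak{I}(M'))\cong(M,M')$, using the adjunction $\mathfrak{I}\vdash\mathfrak{\bar I}$ together with \cref{prop:indresredstrand}. Your explicit bimodule identity $e\dgT^{\und \mu'}_b e\cong \muT_b$ (and the $\psi$-compatibility with $\Lotimes$) is just an unpacking of these same two steps at the bimodule level, so it only supplies detail the paper leaves as ``straightforward''.
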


\begin{proof}
Straightforward, except for the last point which follows from:
\[
    (\mathfrak{I}(M), \mathfrak{I}(M')) \cong ( M,\mathfrak{\bar I}\circ \mathfrak{I}(M')) \cong (M,M'),
\]
using \cref{prop:indresredstrand} together with the adjunction $\mathfrak{I} \vdash  \mathfrak{\bar I}$.
\end{proof}

Comparing \cref{prop:catshap} to \cref{sec:shepfortensor}, we deduce 
that $(-,-)$ has the same properties on the asymptotic Grothendieck group of $(\muT,d_\mu)$ as the Shapovalov form on $L(\und \mu)$. 

\subsubsection{Categorification theorem}

Because of \cref{thm:sl2comqi}, we know that the functors $\E$ and $\F$ induce an $U_q(\slt)$-action on ${}_\bQ\bKO^\Delta(\muT, d_\mu) \cong \bigoplus_{b \geq 0} {}_\bQ\bKO^\Delta(\muT_b, d_\mu)$.

\begin{thm}\label{thm:isocat}
There is an isomorphism of $U_q(\slt)$-modules
\begin{equation*}%\label{eq:isocat}
\gamma : L(\und \mu) \xrightarrow{\simeq} {}_\bQ\bKO^\Delta(\muT, d_\mu), \quad v_\rho \mapsto  [(\muP_\rho, d_\mu)].
\end{equation*}
Moreover the divided power basis elements are sent to $\overline{v}_\rho \mapsto [(\muT_b e_\rho, d_\mu)]$.
\end{thm}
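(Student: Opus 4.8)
The plan is to show that the map $\gamma$ is well-defined, $U_q(\slt)$-equivariant, and bijective, by combining the structural results already established. First I would check that $\gamma$ is well-defined and surjective: by \cref{cor:muPgenerates} the dg-category $\cD_{dg}^{cblf}(\muT_b, d_\mu)$ is c.b.l.f. generated by $\{(\muP_\rho, d_\mu) \mid \rho \in \cP_b^r\}$, so $\bKO^\Delta(\muT_b, d_\mu)$ is spanned over $\bZ\pp{q,\lambda}$ by the classes $[(\muP_\rho,d_\mu)]$; moreover, by the argument in the proof of \cref{lem:K0surjection} (using \cref{lem:NdotsleftNH} and the red relations \cref{eq:redR2}), one may restrict to $\rho \in \cP_b^{r,\und\mu}$, since $\muP_\rho$ for $b_1 > \mu_1$ (with $\mu_1 \in \bN$) is acyclic and the others decompose into shifted copies of $\muP_{\rho'}$ with $\rho' \in \cP_b^{r,\und\mu}$. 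Thus $\gamma$ sending $v_\rho \mapsto [(\muP_\rho,d_\mu)]$ is at least surjective after tensoring with $\bQ\pp{q,\lambda}$; this is precisely \cref{lem:K0surjection}.

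Next I would establish equivariance. The $\slt$-action on ${}_\bQ\bKO^\Delta(\muT,d_\mu)$ is induced by $\E$ and $\F$, and \cref{thm:sl2comqi} together with \cref{prop:adjEF} guarantees that these functors descend to operators satisfying the defining relations of $U_q(\slt)$, so $[\F]$ and $[\E]$ act as $F$ and $E$ up to the grading-shift normalizations built into the definitions of $\F_b$ and $\E_b$. For $F$-equivariance I would compute $\F(\muP_\rho) = \Ind_b^{b+1}(\muT_b 1_\rho) \cong \muT_{b+1} 1_{b,1} \otimes_b \muT_b 1_\rho$, which is a sum of $\muP_{\rho'}$'s indexed by adding a black strand in each of the $r$ ``slots'', matching the combinatorial formula for the action of $F$ on $v_\rho$ in the induced basis (cf. \cref{eq:Frewriting} and the surrounding discussion, and \cref{eq:Evkappa} for $E$); the grading shifts on $\muP_\rho$ under this decomposition should reproduce exactly the powers of $q$ and $\lambda$ appearing in $F \cdot v_\rho$, which I would read off from the weight $w(v_\rho) = \lambda^k q^\ell$. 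For $E$-equivariance I would instead use $\Res$ and \cref{prop:Tdecomp}: restricting $\muP_\rho$ gives the direct sum decomposition of \cref{eq:Tdecomp}, and the coefficients (finite $q$-integers of the form $[i]_q[\,\cdot\,\beta + \cdot\,]_q$) should match \cref{eq:Evkappa}. This is where I expect to do the only genuine bookkeeping.

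Finally I would upgrade surjectivity to an isomorphism. Here the key input is that $(\muT_b, d_\mu)$ is a positive c.b.l.f.-dimensional $\bZ^2$-graded dg-algebra, so $\bKO^\Delta(\muT_b,d_\mu)$ is a \emph{free} $\bZ\pp{q,\lambda}$-module on the classes of indecomposable relatively projective modules. I would show these indecomposables are exactly the $(\muT_b e_\rho, d_\mu)$ for $\rho \in \cP_b^{r,\und\mu}$: primitivity of $e_\rho$ follows from \cref{eq:dividedsummands} (via the nilHecke idempotent $e_n$ and the inclusion \cref{eq:nhinclusion}), non-isomorphism for distinct $\rho$ follows from looking at which idempotent $1_{\rho'}$ they factor through, and acyclicity of $\muT_b e_\rho$ for $b_1 > \mu_1$ (when $\mu_1 \in \bN$) follows as in the examples using $d_\mu$ of a nail. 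Counting: the rank of the free module ${}_\bQ\bKO^\Delta(\muT_b,d_\mu)$ over $\bQ\pp{q,\lambda}$ equals $|\cP_b^{r,\und\mu}| = \dim_{\bQ\pp{q,\lambda}} L(\und\mu)_{|\und\mu|-2b}$, so the surjection of \cref{lem:K0surjection} between free modules of equal finite rank is an isomorphism. Tracking the relation \cref{eq:dividedsummands} (and the $[b_i]_q!$ factors) then shows that $[(\muT_b e_\rho, d_\mu)]$ corresponds to $\overline v_\rho$ under $\gamma$, giving the last assertion. The main obstacle is the precise matching of grading shifts in the $\E$-equivariance computation, i.e. checking that the coefficients coming from \cref{prop:Tdecomp} and the twist in $G_2(i,t,p)$ reproduce the $q,\lambda$-weights in \cref{eq:Evkappa}; everything else is assembly of results already in the excerpt.
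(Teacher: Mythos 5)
There is a genuine gap, and it sits exactly where the paper does its real work: the injectivity step. The only structural input available is that $\bKO^\Delta(\muT_b,d_\mu)$ is a free $\bZ\pp{q,\lambda}$-module \emph{spanned} by classes of indecomposable relatively projective modules. Spanning (together with the acyclicity/redundancy arguments of \cref{lem:K0surjection}) only gives an \emph{upper} bound on the rank; your counting argument needs the classes $[(\muT_b e_\rho,d_\mu)]$, $\rho\in\cP_b^{r,\und\mu}$, to be linearly independent in the asymptotic Grothendieck group, and that does not follow from the modules being pairwise non-isomorphic and non-acyclic. The asymptotic $\bKO^\Delta$ is a quotient by relations coming from infinite (c.b.l.f.) iterated extensions, so, unlike classical $K_0$ with Krull--Schmidt, non-isomorphic indecomposables can a priori have dependent (or collapsing) classes; asserting ``the rank equals $|\cP_b^{r,\und\mu}|$'' is essentially equivalent to the injectivity you are trying to prove. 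The paper obtains the missing lower bound differently: by \cref{prop:catshap} the bifunctor $(-,-)$ categorifies the Shapovalov form, so $\gamma$ intertwines a non-degenerate form, which forces injectivity (and, once surjectivity from \cref{lem:K0surjection} is in hand, bijectivity). You never invoke the pairing, so your argument is incomplete at this point. (Also note that your acyclicity claim only treats $b_1>\mu_1$; for $i>1$ with $b_i>\mu_i$ the modules are not acyclic but merely redundant, which again only helps the upper bound.)

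A second, smaller problem is the $F$-equivariance computation. Induction adds a black strand at the far right, so $\F\,\muP_\rho\cong\muP_{(b_1,\dots,b_{r-1},b_r+1)}$ is a \emph{single} projective, not a sum over the $r$ slots; the ``one summand per slot'' picture describes the action on the tensor-factor basis $\tilde v_\rho$, which is categorified by the standard modules $\muS_\rho$, not by $\muP_\rho$. With the correct identification the computation looks trivial ($F\,v_\rho=v_{(b_1,\dots,b_r+1)}$), but the genuine content is hidden in the case $b_r+1>\mu_r$ (with $\mu_r\in\bN$), where $v_{(b_1,\dots,b_r+1)}$ must be re-expanded in the basis and one has to check that $[\muP_{(b_1,\dots,b_r+1)}]$ satisfies the same relation in $\bKO^\Delta$ --- again the independence/compatibility issue above. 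This is precisely why the paper checks only $K^{\pm1}$- and $E$-equivariance directly (via \cref{prop:Tdecomp} and \cref{eq:Evkappa}, as you also propose) and then \emph{deduces} $F$-equivariance from non-degeneracy of the Shapovalov form together with the form-intertwining property, rather than computing $\F$ on projectives. Your treatment of the divided-power statement via \cref{eq:dividedsummands} is fine and matches the paper.
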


\begin{proof}
The argument is similar as in \cite[Theorem 4.7]{LNV}. 
By \cref{lem:K0surjection}, we know that the $\bQ\pp{q,\lambda}$-linear map $\gamma$ 
 is surjective. 
Moreover, the map $\gamma$ clearly commutes with the action of $K^{\pm 1}$, and with $E$ because of \cref{prop:Tdecomp} together with \cref{eq:Evkappa}.  
By \cref{prop:catshap}, $\gamma$ intertwines the Shapovalov form with the bilinear form induced by the bifunctor $(-,-)$ on ${}_\bQ\bKO^\Delta(\muT,d_\mu)$. 
Therefore $\gamma$ is a $\bQ\pp{q,\lambda}$-linear isomorphism by non-degeneracy of the Shapovalov form. 
Since the map $\gamma$ intertwines the Shapovalov form with the bifunctor $(-,-)$, and commutes with the action of $E$ and $K^{\pm 1}$, we also deduce by non-degeneracy of the Shapovalov form that $\gamma$ commutes with the action of $F$. In conclusion, $\gamma$ is an isomorphism of $U_q(\slt)$-modules. 

The statement with the divided power basis elements is immediate from \cref{eq:dividedsummands}. 
\end{proof}

%%%%%%%%%%%%%%%%	End of file	%%%%%%%%%%%%%

%%%%%%%%%%%%%%%%%%%%%%%%%%%%%%%%%%%%
%                 					  				  		 %
%	Derived standard stratification	 					 %
%                 					  						 %
%%%%%%%%%%%%%%%%%%%%%%%%%%%%%%%%%%%%

\section{Derived standard stratification}\label{sec:standard}

In \cite{webster}, the change of basis corresponding to \cref{lem:Frewriting} is categorified by introducing a standard module (with respect to some standard stratification on $T_b^\mu\amod$) for each $\rho$. This standard module categorifies the basis elements $\tilde v_{\rho}$. The change of basis is encoded in the fact that the projective module $T_b^\mu 1_\rho$ that categorifies the basis element $v_{\rho}$ admits a filtration with quotient being the standard modules. 
We introduce similar modules for $\muT$ that play the role of the standard modules. Strictly speaking, they do not give a standard stratification of $(\muT, d_\mu)\amod$, but they do have a similar behavior in a derived way, see \cref{sec:stratification} below.

\subsection{Standard modules}

There are two ways to construct the standard modules: either directly, or as an iterated mapping cone construction. We describe both constructions in this order. 

\subsubsection{Definition of standard modules}

Fix $\rho = (b_1, \dots, b_r) \in \cP_b^r$. Let
\begin{align*}
J_\rho &:= \bigsqcup_{\ell = 2}^r  J_{\ell, \rho}, 
&
J_{\ell, \rho} &:= \{1, \dots, b_\ell\}.
\end{align*}
For $\bj \subset J_\rho$ we write $\bj_\ell = \{\bj_{\ell,1}, \dots, \bj_{\ell, |\bj_\ell|} \} := \bj \cap J_{\ell,\rho}$ with $\bj_{\ell,1} < \cdots < \bj_{\ell, |\bj_\ell|}$. We define
\begin{align*}
\rho_\bj &:= (b_1 + |\bj_2|, b_2 - |\bj_2| + |\bj_3|, \dots,  b_r - |\bj_{r-1}| + |\bj_{r}|, b_r - |\bj_{r}|),
\end{align*}
or in others words we obtain $\rho_\bj$ from $\rho$ by increasing $b_{j-1}$ by $1$ and decreasing $b_j$ by $1$ for each $j \in \bj \cap J_{\ell, \rho}$. Then we define
\[
\muS_{\rho,\bj} := 
q^{\sum_{\ell = 2}^{r}\sum_{t \in \bj_\ell }(\mu_\ell - 2 t +2)}
\muP_{\rho_\bj} [|\bj|].
\]
Consider $\bj' \subset \bj$ such that $|\bj| = |\bj'| + 1$. 
We have $\bj' = \bj \setminus \{ b' \in  J_{\ell,\rho}  \} $ for some $b'$ and $\ell$.
We obtain a map of left $(\muT,d_\mu)$-modules $ (\muS_{\rho,\bj}, d_\mu) \rightarrow (\muS_{\rho,\bj'}, d_\mu)$ by gluing on the bottom the element:
\[
\tau_{\bj,\bj'} :=
\tikzdiagh{0}{
	\node at(.5,.5) {$\dots$};
	\draw 		(1.5,0) .. controls (1.5,.5) and (2,.5) .. (2,1);
	\node at (2,.15){$\dots$};
	\node at (2.5,.85){$\dots$};
	\draw 		(2.5,0)  .. controls (2.5,.5) and (3,.5) .. (3,1);
	\draw 		(3.5,0)  .. controls (3.5,.5) and (4,.5) .. (4,1);
	\node at (4,.15){$\dots$};
	\node at (4.5,.85){$\dots$};
	\draw 		(4.5,0)  .. controls (4.5,.5) and (5,.5) .. (5,1);
	\draw		(5,0) .. controls (5,.5) and (1.5,.5)  .. (1.5,1);
	\tikzbrace{1.5}{2.5}{-.15}{\small $p_1$};
	\tikzbrace{3.5}{4.5}{-.15}{\small $p_2$};
	\node at(6,.5) {$\dots$};
	\draw[pstdhl] 	(3,0) node[below]{\small $\mu_\ell$}  .. controls (3,.5) and (3.5,.5) .. (3.5,1);
}
\]
where $p_1 + p_2 = b'-1$ and $p_1 = \#\{ j \in \bj \cap J_{\ell,\rho} | j < b' \}$, and extending on the left and right with vertical strands with color and label matching $1_{\rho_\bj'}$.

\begin{lem}\label{lem:stdtaucom}
Consider $\bj''' \subset \bj' \subset \bj$ and $\bj''' \subset \bj'' \subset \bj$ such that $|\bj| = |\bj'| + 1  = |\bj''| + 1  = |\bj'''| +2 $ and $\bj' \neq \bj''$. We have
\[
\tau_{\bj,\bj'} \tau_{\bj',\bj'''} = \tau_{\bj,\bj''} \tau_{\bj'',\bj'''}.
\]
\end{lem}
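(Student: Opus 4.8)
The statement is a commutativity relation between two composites of the ``$\tau$'' maps defining the standard modules, and I expect it to follow from a purely diagrammatic computation: both composites are given by gluing explicit braid-like diagrams at the bottom of $(\muS_{\rho,\bj'''}, d_\mu)$, and the claim is that these two gluings produce the same element of $\muT$. The first thing I would do is unwind the definitions: write $\bj' = \bj \setminus \{b_1' \in J_{\ell_1,\rho}\}$ and $\bj'' = \bj \setminus \{b_2' \in J_{\ell_2,\rho}\}$, so that $\bj''' = \bj \setminus \{b_1', b_2'\}$, and then spell out $\tau_{\bj,\bj'}$, $\tau_{\bj',\bj'''}$, $\tau_{\bj,\bj''}$, $\tau_{\bj'',\bj'''}$ as the corresponding diagrams (each being a single black strand traveling past a block of vertical black strands and one colored strand, as in the displayed picture for $\tau_{\bj,\bj'}$).

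\textbf{Case split.} I would separate into two cases according to whether the two ``moved'' indices lie over different colored strands or the same one.

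\emph{Case 1: $\ell_1 \neq \ell_2$.} Here the two elementary moves act in disjoint horizontal regions of the diagram (one near the $\mu_{\ell_1}$-strand, the other near the $\mu_{\ell_2}$-strand), so the two composites differ only by a braid-like planar isotopy exchanging distant singularities, as recalled in \cref{ssec:diagalg}. Since diagrams are taken up to braid-like planar isotopy in $\dgT_b^{\und\mu}$ by \cref{defn:dgKLRW}, the two composites are literally equal. One must only check that the counts $p_1, p_2$ appearing in each $\tau$ are the same on both sides; this is immediate since removing an index over $\mu_{\ell_2}$ does not change $\#\{j \in \bj \cap J_{\ell_1,\rho} \mid j < b_1'\}$ and vice versa.

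\emph{Case 2: $\ell_1 = \ell_2 =: \ell$, with (say) $b_1' > b_2'$ in $J_{\ell,\rho}$.} This is the substantive case. Both composites consist of pulling two black strands (the ones indexed $b_1'$ and $b_2'$) leftward past the same block and past the colored strand $\mu_\ell$; the difference between the two sides is the order in which these two pulls are performed. I would track the indices carefully: after the first pull on one side, the ``$p_1$'' count for the second pull changes, and this must match the corresponding shift on the other side, together with the permutation being performed on the intermediate black strands. The key diagrammatic input is that the two black strands being moved do not cross each other in either composite (each is pulled to a position determined by $1_{\rho_{\bj'''}}$), and that pulling a black strand past the colored strand $\mu_\ell$ introduces no correction term for $\mu_\ell \in \beta+\bZ$ by \cref{eq:vredR} and for $\mu_\ell \in \bN$ only the dot terms of \cref{eq:redR2}, which cancel in the comparison because they appear identically on both sides (the two compositions end at the same $\rho_{\bj'''}$ and the relevant strand carries the same decorations). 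So after reducing via braid-like isotopy and the colored-strand relations, both sides equal the same diagram: a fixed reduced expression for the permutation sending $1_{\rho_{\bj'''}}$ to the relevant configuration, with the appropriate grading shift $q^{(\mu_\ell - 2b_1' + 2) + (\mu_\ell - 2b_2' + 2)}$.

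\textbf{Expected obstacle.} The main bookkeeping hazard is getting the index shifts $p_1 \mapsto p_1'$ (and signs, since there is a homological shift $[|\bj|]$ and the maps are between shifted modules) exactly right in Case 2, so that the two reduced expressions genuinely coincide rather than merely being equal up to a lower-order term. I would handle this by choosing, on both sides, the \emph{left-adjusted} reduced expression (in the sense recalled before \cref{thm:Tbasis}) for the underlying permutation: a left-adjusted expression is unique given the permutation, so once I check that the two composites realize the same permutation and the same decorations, equality is automatic. The sign check is routine: both $\tau_{\bj,\bj'}\tau_{\bj',\bj'''}$ and $\tau_{\bj,\bj''}\tau_{\bj'',\bj'''}$ are compositions of two maps each of which is ``glue a diagram at the bottom'', i.e. right multiplication, and right multiplication does not interact with the homological shifts twisting the left action, so no sign discrepancy arises.
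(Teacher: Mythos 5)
There is a genuine gap in your treatment of the same-$\ell$ case, and it starts with a false geometric claim: the two moved black strands \emph{do} cross each other, exactly once, in each of the two composites. Indeed, at the bottom the $b'$-strand sits to the left of the $b''$-strand (with $b'<b''$), while at the top both have been appended to the left of $\mu_\ell$ with the $b''$-strand ending up leftmost, so an odd number of crossings between them is forced. What distinguishes $\tau_{\bj,\bj'}\tau_{\bj',\bj'''}$ from $\tau_{\bj,\bj''}\tau_{\bj'',\bj'''}$ is precisely the position of this black--black crossing relative to the colored strand $\mu_\ell$ (and to the other crossings). Because of this, the relations you invoke are the wrong ones: each moved strand crosses $\mu_\ell$ exactly once and never crosses back, so no R2 configuration with the colored strand occurs and \cref{eq:redR2}, \cref{eq:vredR} play no role --- there are no ``dot terms that cancel.'' The comparison is a matter of R3-type moves, and here your fallback argument also breaks: in $\muT$ it is \emph{not} true that any two reduced expressions of the same permutation (with the same decorations) are equal, because the triple move with a red strand in the middle, \cref{eq:redR3}, carries dot correction terms when $\mu_\ell\in\bN$. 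So ``both sides are left-adjusted reduced expressions of the same permutation, hence equal'' is not automatic; one must verify that passing from one composite to the other uses only the exact moves.

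That verification is exactly what the paper's proof does: it draws both composites explicitly and observes that they differ by applications of the exact black braid move in \cref{eq:nhR2andR3} and the exact sliding move \cref{eq:crossingslidered}, in which the colored strand traverses the region from one side to the other while the \emph{black} $b'$-strand plays the role of the middle strand --- so the dangerous configuration of \cref{eq:redR3} (colored strand in the middle) never arises, and no correction terms appear for any value of $\mu_\ell$. Your Case 1 (different $\ell$'s, distant isotopy) matches the paper and is fine, as is the remark about signs, but Case 2 needs to be redone along these lines: identify the single black--black crossing between the two moved strands, note that $\mu_\ell$ is crossed once by each and is never the middle strand of a braid move, and then conclude by the exact relations \cref{eq:nhR2andR3} and \cref{eq:crossingslidered} rather than by an appeal to uniqueness of reduced (or left-adjusted) expressions.
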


\begin{proof}
We first assume that $\bj' = \bj  \setminus \{ b' \in  J_{\ell,\rho} \} $ and  $\bj'' = \bj  \setminus \{ b'' \in  J_{\ell,\rho} \}$ for the same $\ell$, and thus $b' \neq b''$. Without loss of generality, we can also assume that $b' < b''$. 
Then we obtain
\[
\tau_{\bj,\bj''} \tau_{\bj'',\bj'''}  =
\tikzdiagh{0}{
	\draw (0,0) .. controls (0,.5) and (.5,.5) .. (.5,1) 
			.. controls (.5,1.5) and (1,1.5) .. (1,2);
	\draw (1,0) .. controls (1,.5) and (1.5,.5) .. (1.5,1)
			.. controls (1.5,1.5) and (2,1.5) .. (2,2);
	\node at (.5,.1){\small $\dots$}; \node at (1,1){\small $\dots$}; \node at (1.5,1.9){\small $\dots$};
	%
	%THICKSTRAND
	%
	\draw (2,0) .. controls (2,.5) and (2.5,.5) .. (2.5,1)
		.. controls (2.5,1.5) and (3,1.5) .. (3,2);
	\draw (3,0) .. controls (3,.5) and (3.5,.5) .. (3.5,1)
		.. controls (3.5,1.5) and (4,1.5) .. (4,2);
	\node at (2.5,.1){\small $\dots$}; \node at (3,1){\small $\dots$}; \node at (3.5,1.9){\small $\dots$};
	\draw (3.5,0) .. controls (3.5,.5) and (4,.5) .. (4,1)
		.. controls (4,1.5) and (4.5,1.5) .. (4.5,2);
	\draw (4,0) .. controls (4,.5) and (0,.5) .. (0,1)
		.. controls (0,1.5) and (.5,1.5) .. (.5,2);
	\draw (4.5,0) -- (4.5,1) 
		.. controls (4.5,1.5) and (5,1.5) .. (5,2);
	\draw (5.5,0) -- (5.5,1)
		.. controls (5.5,1.5) and (6,1.5) .. (6,2);
	\node at (5,.1){\small $\dots$}; \node at (5,1){\small $\dots$}; \node at (5.5,1.9){\small $\dots$};
	\draw (6,0) -- (6,1)
		.. controls (6,1.5) and (0,1.5) .. (0,2);
	\tikzbrace{0}{1}{-.15}{\small $p_1$};
	\tikzbrace{2}{3}{-.15}{\small $p_2$};
	\tikzbrace{4.5}{5.5}{-.15}{\small $b''-b'-1$};
	\node at(-1,1){$\dots$};
	\node at(7,1){$\dots$};
	\draw [pstdhl] (1.5,0) node[below]{\small $\mu_\ell$} .. controls (1.5,.5) and (2,.5) .. (2,1)
		.. controls (2,1.5) and (2.5,1.5) .. (2.5,2);
}
\]
where $p_1 + p_2 = b'-1$ and $p_1 = \#\{ j \in \bj \cap J_{\ell,\rho} | j < b' \}$, 
and
\[
\tau_{\bj,\bj'} \tau_{\bj',\bj'''}  =
\tikzdiagh{0}{
	\draw (0,0) .. controls (0,.5) and (.5,.5) .. (.5,1) 
			.. controls (.5,1.5) and (1,1.5) .. (1,2);
	\draw (1,0) .. controls (1,.5) and (1.5,.5) .. (1.5,1)
			.. controls (1.5,1.5) and (2,1.5) .. (2,2);
	\node at (.5,.1){\small $\dots$}; \node at (1,1){\small $\dots$}; \node at (1.5,1.9){\small $\dots$};
	%
	%THICKSTRAND
	%
	\draw (2,0) .. controls (2,.5) and (2.5,.5) .. (2.5,1)
		.. controls (2.5,1.5) and (3,1.5) .. (3,2);
	\draw (3,0) .. controls (3,.5) and (3.5,.5) .. (3.5,1)
		.. controls (3.5,1.5) and (4,1.5) .. (4,2);
	\node at (2.5,.1){\small $\dots$}; \node at (3,1){\small $\dots$}; \node at (3.5,1.9){\small $\dots$};
	\draw (3.5,0) .. controls (3.5,.5) and (4,.5) .. (4,1)
		.. controls (4,1.5) and (4.5,1.5) .. (4.5,2);
	\draw (4,0) .. controls (4,.5) and (4.5,.5) .. (4.5,1)
		.. controls (4.5,1.5) and (.5,1.5) .. (.5,2);
	\draw (4.5,0) .. controls (4.5,.5) and (5,.5) .. (5,1) 
		-- (5,2);
	\draw (5.5,0) .. controls (5.5,.5) and (6,.5) .. (6,1)
		-- (6,2);
	\node at (5,.1){\small $\dots$}; \node at (5.5,1){\small $\dots$}; \node at (5.5,1.9){\small $\dots$};
	\draw (6,0) .. controls (6,.5) and (0,.5) .. (0,1)
		-- (0,2);
	\tikzbrace{0}{1}{-.15}{\small $p_1$};
	\tikzbrace{2}{3}{-.15}{\small $p_2$};
	\tikzbrace{4.5}{5.5}{-.15}{\small $b''-b'-1$};
	\node at(-1,1){$\dots$};
	\node at(7,1){$\dots$};
	\draw [pstdhl] (1.5,0) node[below]{\small $\mu_\ell$} .. controls (1.5,.5) and (2,.5) .. (2,1)
		.. controls (2,1.5) and (2.5,1.5) .. (2.5,2);
}
\]
Thus we have $\tau_{\bj,\bj'} \tau_{\bj',\bj'''} = \tau_{\bj,\bj''} \tau_{\bj'',\bj'''}$ by the braid moves in \cref{eq:nhR2andR3} and \cref{eq:crossingslidered}.

We now assume that $\bj' = \bj  \setminus \{ b' \in  J_{\ell',\rho} \}$ and $\bj' = \bj  \setminus  \{ b'' \in  J_{\ell'',\rho}  \} $ for $\ell' \neq \ell''$. Then we have 
$\tau_{\bj,\bj'} \tau_{\bj',\bj'''} = \tau_{\bj,\bj''} \tau_{\bj'',\bj'''}$ by a braid-like planar isotopy, exchanging distant crossings. 
\end{proof}

We extend the natural order on each $J_{\ell,\rho}$ to a total order on $J_\rho$ by declaring that $b' < b''$ whenever $b' \in J_{\ell',\rho}$ and $b'' \in J_{\ell'',\rho}$ and $\ell' < \ell''$. 

\begin{defn}
The \emph{standard module $(\muS_{\rho}, d_{\dgS})$} is defined as 
\[
\muS_{\rho} := \bigoplus_{\bj \subset J_\rho} \muS_{\rho,\bj},
\]
with
\begin{align*}
d_{\dgS} &:= \sum_{\bj \subset J_\rho} (-1)^{|\bj|}(d_\mu : \muS_{\rho,\bj}  \rightarrow \muS_{\rho,\bj} ) +  \bigl( d_{\dgS,\bj} : \muS_{\rho,\bj} \rightarrow \muS_\rho  \bigr),
\\
d_{\dgS,\bj} &:= \sssum{
\bj' = \bj \setminus \{b'\} 
} (-1)^{\#\{ b'' \in \bj | b'' > b' \}} \tau_{\bj,\bj'}.
\end{align*}
We have $d_{\dgS}^2 = 0$ by \cref{lem:stdtaucom}.
\end{defn}

\begin{exe}
We take $\und \mu = (\mu_1, \mu_2)$ and $\rho = (0,2)$. We have $J_\rho = J_{2,\rho}$ with $J_{2,\rho} = \{1,2\}$. We draw all possible $\bj \subset J_{\rho}$ as
\[
\begin{tikzcd}[row sep=0ex]
&\{1\} \ar{dr}&
\\
\{1,2\} \ar{ur} \ar{dr} && \emptyset
\\
&\{2\} \ar{ur}&
\end{tikzcd}
\]
where the arrows represent the $\tau_{\bj,\bj'}$.
Then we can picture $\muS_{(0,2)}$ as the complex
\[
\muS_{(0,2)} = 
\begin{tikzcd}[row sep=0ex]
&
 q^{\mu_2}
\tikzdiagh[xscale=.75]{-1ex}{
	\draw[pstdhl]	(0,0) node[below]{\tiny $\mu_1$} -- (0,1);
	\draw 	 	(.5,0) -- (.5,1);
	\draw[pstdhl]	(1,0) node[below]{\tiny $\mu_2$} -- (1,1);
	\draw		(1.5,0) -- (1.5,1);
	\filldraw [fill=white, draw=black] (-.15,.5) rectangle (1.65,1) node[midway]{\small $\muT$};
}
\ar{dr}
\ar[dash]{dr}{
\tikzdiag[xscale=.75,yscale=.5]{
	\draw[pstdhl]	(0,0) -- (0,1);
	\draw		(1,0) .. controls (1,.5) and (.5,.5) .. (.5,1);
	\draw[pstdhl]	(.5,0) .. controls (.5,.5) and (1,.5) .. (1,1);
	\draw 	 	(1.5,0) -- (1.5,1);
}
}
&
\\
 q^{2\mu_2-2}
 \tikzdiagh[xscale=.75]{-1ex}{
	\draw[pstdhl]	(0,0) node[below]{\tiny $\mu_1$} -- (0,1);
	\draw 	 	(.5,0) -- (.5,1);
	\draw		(1,0) -- (1,1);
	\draw[pstdhl]	(1.5,0) node[below]{\tiny $\mu_2$} -- (1.5,1);
	\filldraw [fill=white, draw=black] (-.15,.5) rectangle (1.65,1) node[midway]{\small $\muT$};
}
\ar{ur}
\ar[dash]{ur}{
\tikzdiag[xscale=.75,yscale=.5]{
	\draw[pstdhl]	(0,0) -- (0,1);
	\draw		(.5,0) .. controls (.5,.5) and (1,.5) .. (1,1);
	\draw 	 	(1.5,0)  .. controls (1.5,.5) and (.5,.5) .. (.5,1);
	\draw[pstdhl]	(1,0) .. controls (1,.5) and (1.5,.5) .. (1.5,1);
}
}
\ar{dr}
\ar[dash,swap]{dr}{
- \ 
\tikzdiag[xscale=.75,yscale=.5]{
	\draw[pstdhl]	(0,0) -- (0,1);
	\draw 	 	(.5,0) -- (.5,1);
	\draw		(1.5,0) .. controls (1.5,.5) and (1,.5) .. (1,1);
	\draw[pstdhl]	(1,0) .. controls (1,.5) and (1.5,.5) .. (1.5,1);
}
}
&\oplus&
\tikzdiagh[xscale=.75]{-1ex}{
	\draw[pstdhl]	(0,0) node[below]{\tiny $\mu_1$} -- (0,1);
	\draw[pstdhl]	(.5,0) node[below]{\tiny $\mu_2$} -- (.5,1);
	\draw		(1,0) -- (1,1);
	\draw		(1.5,0) -- (1.5,1);
	\filldraw [fill=white, draw=black] (-.15,.5) rectangle (1.65,1) node[midway]{\small $\muT$};
}
\\
&
 q^{\mu_2-2}
\tikzdiagh[xscale=.75]{-1ex}{
	\draw[pstdhl]	(0,0) node[below]{\tiny $\mu_1$} -- (0,1);
	\draw 	 	(.5,0) -- (.5,1);
	\draw[pstdhl]	(1,0) node[below]{\tiny $\mu_2$} -- (1,1);
	\draw		(1.5,0) -- (1.5,1);
	\filldraw [fill=white, draw=black] (-.15,.5) rectangle (1.65,1) node[midway]{\small $\muT$};
}
\ar{ur}
\ar[dash,swap]{ur}{
\tikzdiag[xscale=.75,yscale=.5]{
	\draw[pstdhl]	(0,0) -- (0,1);
	\draw 	 	(1.5,0)  .. controls (1.5,.5) and (.5,.5) .. (.5,1);
	\draw		(1,0) .. controls (1,.5) and (1.5,.5) .. (1.5,1);
	\draw[pstdhl]	(.5,0) .. controls (.5,.5) and (1,.5) .. (1,1);
}
}
&
\end{tikzcd}
\]
where the $d_\mu$ part of the differential is implicit. 

As another example, take $\und \mu= (\mu_1,\mu_2,\mu_3)$ and $\rho = (0,1,1)$. We have $J_{\rho} = J_{2,\rho} \sqcup J_{3,\rho}$ with $J_{2,\rho} = \{1\}$ and $J_{3,\rho} = \{1\}$. Similarly as above, we draw $\bj \subset J_{\rho}$ as
\[
\begin{tikzcd}[row sep=0ex]
&\{1\} \sqcup \emptyset \ar{dr}&
\\
\{1\} \sqcup \{1\} \ar{ur} \ar{dr} && \emptyset \sqcup \emptyset
\\
&\emptyset \sqcup \{1\} \ar{ur}&
\end{tikzcd}
\]
Then we picture $\muS_{(0,1,1)}$ as
\[
\muS_{(0,1,1)} = 
\begin{tikzcd}[row sep=0ex]
&
 q^{\mu_3}
\tikzdiagh[xscale=.75]{-1ex}{
	\draw[pstdhl]	(0,0) node[below]{\tiny $\mu_1$} -- (0,1);
	\draw[pstdhl] 	(.5,0) node[below]{\tiny $\mu_2$} -- (.5,1);
	\draw		(1,0) -- (1,1);
	\draw		(1.5,0) -- (1.5,1);
	\draw[pstdhl]	(2,0) node[below]{\tiny $\mu_3$} -- (2,1);
	\filldraw [fill=white, draw=black] (-.15,.5) rectangle (2.15,1) node[midway]{\small $\muT$};
}
\ar{dr}
\ar[dash]{dr}{
\tikzdiag[xscale=.75,yscale=.5]{
	\draw[pstdhl]	(0,0) -- (0,1);
	\draw[pstdhl] 	 	(.5,0) -- (.5,1);
	\draw		(1,0) -- (1,1);
	\draw		(2,0) .. controls (2,.5) and (1.5,.5) .. (1.5,1);
	\draw[pstdhl]	(1.5,0) .. controls (1.5,.5) and (2,.5) .. (2,1);
}
}
&
\\
 q^{\mu_2+\mu_3}
\tikzdiagh[xscale=.75]{-1ex}{
	\draw[pstdhl]	(0,0) node[below]{\tiny $\mu_1$} -- (0,1);
	\draw 	 	(.5,0) -- (.5,1);
	\draw[pstdhl]	(1,0) node[below]{\tiny $\mu_2$} -- (1,1);
	\draw		(1.5,0) -- (1.5,1);
	\draw[pstdhl]	(2,0) node[below]{\tiny $\mu_3$} -- (2,1);
	\filldraw [fill=white, draw=black] (-.15,.5) rectangle (2.15,1) node[midway]{\small $\muT$};
}
\ar{ur}
\ar[dash]{ur}{
\tikzdiag[xscale=.75,yscale=.5]{
	\draw[pstdhl]	(0,0) -- (0,1);
	\draw 	 	(1,0)  .. controls (1,.5) and (.5,.5) .. (.5,1);
	\draw		(1.5,0)  -- (1.5,1);
	\draw[pstdhl]	(.5,0) .. controls (.5,.5) and (1,.5) .. (1,1);
	\draw[pstdhl]	(2,0) -- (2,1);
}
}
\ar{dr}
\ar[dash,swap]{dr}{
- \ 
\tikzdiag[xscale=.75,yscale=.5]{
	\draw[pstdhl]	(0,0) -- (0,1);
	\draw 	 	(.5,0) -- (.5,1);
	\draw[pstdhl]		(1,0) -- (1,1);
	\draw		(2,0) .. controls (2,.5) and (1.5,.5) .. (1.5,1);
	\draw[pstdhl]	(1.5,0) .. controls (1.5,.5) and (2,.5) .. (2,1);
}
}
&\oplus&
\tikzdiagh[xscale=.75]{-1ex}{
	\draw[pstdhl]	(0,0) node[below]{\tiny $\mu_1$} -- (0,1);
	\draw[pstdhl] 	(.5,0) node[below]{\tiny $\mu_2$} -- (.5,1);
	\draw		(1,0) -- (1,1);
	\draw[pstdhl]	(1.5,0) node[below]{\tiny $\mu_3$} -- (1.5,1);
	\draw		(2,0) -- (2,1);
	\filldraw [fill=white, draw=black] (-.15,.5) rectangle (2.15,1) node[midway]{\small $\muT$};
}
\\
&
 q^{\mu_2}
\tikzdiagh[xscale=.75]{-1ex}{
	\draw[pstdhl]	(0,0) node[below]{\tiny $\mu_1$} -- (0,1);
	\draw 	 	(.5,0) -- (.5,1);
	\draw[pstdhl]	(1,0) node[below]{\tiny $\mu_2$} -- (1,1);
	\draw[pstdhl]	(1.5,0)  node[below]{\tiny $\mu_3$}-- (1.5,1);
	\draw		(2,0) -- (2,1);
	\filldraw [fill=white, draw=black] (-.15,.5) rectangle (2.15,1) node[midway]{\small $\muT$};
}
\ar{ur}
\ar[dash,swap]{ur}{
\tikzdiag[xscale=.75,yscale=.5]{
	\draw[pstdhl]	(0,0) -- (0,1);
	\draw 	 	(1,0)  .. controls (1,.5) and (.5,.5) .. (.5,1);
	\draw[pstdhl]		(1.5,0)  -- (1.5,1);
	\draw[pstdhl]	(.5,0) .. controls (.5,.5) and (1,.5) .. (1,1);
	\draw	(2,0) -- (2,1);
}
}
&
\end{tikzcd}
\]
\end{exe}

\subsubsection{Standard modules as iterated mapping cones}

Alternatively, we can build the standard modules recursively as iterated mapping cones by categorifying the following equation from \cref{lem:Frewriting}:
\begin{equation}\tag{\ref{eq:Frewritinglemma2}}
v_{\rho_1, \rho_2}^{t,\ell}
=
v_{\rho_1, \rho_2}^{t+1,\ell-1}
 -   q^{\mu+2-2\ell} 
v_{F(\rho_1), \rho_2}^{t,\ell-1},
\end{equation}
where $\mu := \mu_{r_1 + 1}$. 
In particular, we will lift all the intermediate elements 
\[
v_{\rho_1, \rho_2}^{t,\ell} := F^t\bigl( v_{\rho_1} \otimes F^{\ell} (v_{\mu}) \bigr) \otimes \tilde v_{\rho_2},
\]
 with $\rho_1 \in \bN^{r_1}$ and $\rho_2 \in \bN^{r_2}$, $r = r_1 + 1 + r_2$. 

\smallskip

Define the element
\[
\tau_{\rho_1, \rho_2}^{t,\ell}
:=
\sum_{\rho_2' \in \cP_{b_2}^{r_2}} \ \sssum{\ell_1 + \ell_2 \\= \ell-1}
1_{\rho_1} \boxtimes \ 
\tikzdiagh{-2.5ex}{
	\draw 		(1.5,0) .. controls (1.5,.5) and (2,.5) .. (2,1);
	\node at (2,.15){$\dots$};
	\node at (2.5,.85){$\dots$};
	\draw 		(2.5,0)  .. controls (2.5,.5) and (3,.5) .. (3,1);
	\draw 		(3.5,0)  .. controls (3.5,.5) and (4,.5) .. (4,1);
	\node at (4,.15){$\dots$};
	\node at (4.5,.85){$\dots$};
	\draw 		(4.5,0)  .. controls (4.5,.5) and (5,.5) .. (5,1);
	\tikzbraceop{2}{3}{1.1}{\small $\ell_1$};
	\tikzbraceop{4}{5}{1.1}{\small $\ell_2$};
	\draw		(5,0) .. controls (5,.5) and (1.5,.5)  .. (1.5,1);
	\draw		(5.5,0) -- (5.5,1);
	\node at(6,.5) {$\dots$};
	\draw		(6.5,0) -- (6.5,1);
	\tikzbraceop{5.5}{6.5}{1.1}{\small $t$};
	\draw[pstdhl] 	(3,0) node[below]{\small $\mu$}  .. controls (3,.5) and (3.5,.5) .. (3.5,1);
}
\ \boxtimes 1_{\rho_2'}
\]
where $\boxtimes$ means we put diagrams next to each other.

\begin{defn}\label{def:recdefstd}
We define recursively $(\BV_{\rho_1,\rho_2}^{t,\ell}, d_\BV)$ as 
%\begin{gather*}
\begin{align*}
(\BV_{\rho_1,\emptyset}^{t,0}, d_\BV) &:= (\muP_{(\rho_1,t)}, d_\mu),
\qquad
(\BV_{\rho_1, \rho_2 = (\ell',\rho_2')}^{t,0}, d_\BV) := (\BV_{(\rho_1,t),\rho_2'}^{0,\ell'}, d_\BV), 
%\end{align*}\\
\\
(\BV_{\rho_1,\rho_2}^{t,\ell}, d_\BV) &:=
 \cone\left(q^{\mu-2\ell+2} (\BV_{F(\rho_1), \rho_2}^{t,\ell-1}, d_\BV) \xrightarrow{\tau_{\rho_1, \rho_2}^{t,\ell}}  (\BV_{\rho_1, \rho_2}^{t+1,\ell-1}, d_\BV) \right). 
%\end{gather*}
\end{align*}
for $\ell > 0$ and $\rho_2 \neq \emptyset$, and where $\tau_{\rho_1, \rho_2}^{t,\ell}$ defines a map of left $(\muT, d_\mu)$-modules for the same reasons as in the proof of \cref{lem:stdtaucom}.
\end{defn}

Note that we have $ (\muS_{\rho = (b_1, \rho')}, d_\dgS) \cong (\BV_{\emptyset, \rho'}^{b_1,0}, d_\BV) $. Moreover  
$[(\muS_\rho, d_\dgS)]$ (resp. $[(\BV_{\rho_1,\rho_2}^{t,\ell}, d_\BV) ]$) coincides with $\tilde v_\rho$ (resp. $v_{\rho_1, \rho_2}^{t,\ell}$) under the isomorphism of \cref{thm:isocat}. 

\begin{exe}\label{ex:stdmodule}
We take $\und \mu = (\mu_1, \mu_2)$. We have 
\begin{align*}
\muS_{(0,2)} \cong \BV_{\emptyset,(2)}^{0,0} = \BV_{(0),\emptyset}^{0,2} &=  \cone(q^{\mu_2-2}\BV_{(1),\emptyset}^{0,1} \xrightarrow{\tau_{(0),\emptyset}^{0,2}} \BV_{(0),\emptyset}^{1,1}),
\\
\BV_{(1),\emptyset}^{0,1} &= \cone( q^{\mu_2} \BV_{(2),\emptyset}^{0,0} = \muP_{(2,0)} \xrightarrow{\tau_{(1),\emptyset}^{0,1}} \BV^{1,0}_{(1),\emptyset} = \muP_{(1,1)}  ),
\\
\BV_{(0),\emptyset}^{1,1} &= \cone(  q^{\mu_2} \BV_{(1),\emptyset}^{1,0} = \muP_{(1,1)} \xrightarrow{\tau_{(0),\emptyset}^{1,1}} \BV_{(0),\emptyset}^{2,0} = \muP_{(0,2)} ),
\end{align*}
 which we can picture as
\[
\muS_{(0,2)} \cong 
\cone\left(
 q^{\mu_2-2}
\left(
\begin{tikzcd}[row sep=4ex]
q^{\mu_2}\ 
\tikzdiagh[xscale=.75]{-1ex}{
	\draw[pstdhl]	(0,0) node[below]{\tiny $\mu_1$} -- (0,1);
	\draw 	 	(.5,0) -- (.5,1);
	\draw		(1,0) -- (1,1);
	\draw[pstdhl]	(1.5,0) node[below]{\tiny $\mu_2$} -- (1.5,1);
	\filldraw [fill=white, draw=black] (-.15,.5) rectangle (1.65,1) node[midway]{\small $\muT$};
}
\ar{d}
\ar[dash, "{
\tikzdiag[xscale=.75,yscale=.5]{
	\draw[pstdhl]	(0,0) -- (0,1);
	\draw 	 	(.5,0) -- (.5,1);
	\draw		(1.5,0) .. controls (1.5,.5) and (1,.5) .. (1,1);
	\draw[pstdhl]	(1,0) .. controls (1,.5) and (1.5,.5) .. (1.5,1);
} \ 
}"' %description%'
]{d}
\\
\phantom{q^{\mu_2}}\ 
\tikzdiagh[xscale=.75]{-1ex}{
	\draw[pstdhl]	(0,0) node[below]{\tiny $\mu_1$} -- (0,1);
	\draw 	 	(.5,0) -- (.5,1);
	\draw[pstdhl]	(1,0) node[below]{\tiny $\mu_2$} -- (1,1);
	\draw		(1.5,0) -- (1.5,1);
	\filldraw [fill=white, draw=black] (-.15,.5) rectangle (1.65,1) node[midway]{\small $\muT$};
}
\end{tikzcd}
\right)
\begin{tikzcd}[column sep=10ex]
{}
\ar{r}
\ar[dash]{r}{
\tikzdiag[xscale=.75,yscale=.5]{
	\draw[pstdhl]	(0,0) -- (0,1);
	\draw		(.5,0) .. controls (.5,.5) and (1,.5) .. (1,1);
	\draw 	 	(1.5,0)  .. controls (1.5,.5) and (.5,.5) .. (.5,1);
	\draw[pstdhl]	(1,0) .. controls (1,.5) and (1.5,.5) .. (1.5,1);
}
}
&
{}
\\
{}
\ar{r}
\ar[dash,swap]{r}{
\tikzdiag[xscale=.75,yscale=.5]{
	\draw[pstdhl]	(0,0) -- (0,1);
	\draw 	 	(1.5,0)  .. controls (1.5,.5) and (.5,.5) .. (.5,1);
	\draw		(1,0) .. controls (1,.5) and (1.5,.5) .. (1.5,1);
	\draw[pstdhl]	(.5,0) .. controls (.5,.5) and (1,.5) .. (1,1);
}
}
&
{}
\end{tikzcd}
\left(
\begin{tikzcd}[row sep=4ex]
q^{\mu_2}\ 
\tikzdiagh[xscale=.75]{-1ex}{
	\draw[pstdhl]	(0,0) node[below]{\tiny $\mu_1$} -- (0,1);
	\draw 	 	(.5,0) -- (.5,1);
	\draw[pstdhl]	(1,0) node[below]{\tiny $\mu_2$} -- (1,1);
	\draw		(1.5,0) -- (1.5,1);
	\filldraw [fill=white, draw=black] (-.15,.5) rectangle (1.65,1) node[midway]{\small $\muT$};
}
\ar{d}
\ar[dash, "\ {
\tikzdiag[xscale=.75,yscale=.5]{
	\draw[pstdhl]	(0,0) -- (0,1);
	\draw		(1,0) .. controls (1,.5) and (.5,.5) .. (.5,1);
	\draw[pstdhl]	(.5,0) .. controls (.5,.5) and (1,.5) .. (1,1);
	\draw 	 	(1.5,0) -- (1.5,1);
} \ 
}" %description%'
]{d}
\\
\phantom{\lambda}\ 
\tikzdiagh[xscale=.75]{-1ex}{
	\draw[pstdhl]	(0,0) node[below]{\tiny $\mu_1$} -- (0,1);
	\draw[pstdhl]	(.5,0) node[below]{\tiny $\mu_2$} -- (.5,1);
	\draw		(1,0) -- (1,1);
	\draw		(1.5,0) -- (1.5,1);
	\filldraw [fill=white, draw=black] (-.15,.5) rectangle (1.65,1) node[midway]{\small $\muT$};
}
\end{tikzcd}
\right)
\right)
\]
As another example, take $\und \mu= (\mu_1,\mu_2,\mu_3)$ and we obtain
\begin{align*}
\muS_{(0,1,1)} \cong 
\BV_{\emptyset,(1,1)}^{0,0} =
\BV_{(0),(1)}^{0,1} &=
\cone(
q^{\mu_2} \BV_{(1),(1)}^{0,0}
\xrightarrow{\tau_{\emptyset,(1,1)}^{0,0} }
\BV_{(0),(1)}^{1,0}
),
\\
 \BV_{(1),(1)}^{0,0} = \BV_{(1,0),\emptyset}^{0,1} &= \cone(q^{\mu_3} \BV_{(1,1),\emptyset}^{0,0} = \muP_{(1,1,0)} \xrightarrow{\tau_{(1,0),\emptyset}^{0,1} } \BV_{(1,0),\emptyset}^{1,0} = \muP_{(1,0,1)} ),
 \\
 \BV_{(0),(1)}^{1,0} = \BV_{(0,1),\emptyset}^{0,1} &= \cone(q^{\mu_3} \BV_{(0,2),\emptyset}^{0,0} = \muP_{(0,2,0)} \xrightarrow{\tau_{(0,1),\emptyset}^{0,1}} \BV_{(0,1),\emptyset}^{1,0} = \muP_{(0,1,1)} ),
\end{align*}
which we picture as
\[
\muS_{(0,1,1)} \cong
\cone\left(
q^{\mu_2} \left(
\begin{tikzcd}[row sep=4ex]
q^{\mu_3}\ 
\tikzdiagh[xscale=.75]{-1ex}{
	\draw[pstdhl]	(0,0) node[below]{\tiny $\mu_1$} -- (0,1);
	\draw 	 	(.5,0) -- (.5,1);
	\draw[pstdhl]	(1,0) node[below]{\tiny $\mu_2$} -- (1,1);
	\draw		(1.5,0) -- (1.5,1);
	\draw[pstdhl]	(2,0) node[below]{\tiny $\mu_3$} -- (2,1);
	\filldraw [fill=white, draw=black] (-.15,.5) rectangle (2.15,1) node[midway]{\small $\muT$};
}
\ar{d}
\ar[dash, "{
\tikzdiag[xscale=.75,yscale=.5]{
	\draw[pstdhl]	(0,0) -- (0,1);
	\draw 	 	(.5,0) -- (.5,1);
	\draw[pstdhl]		(1,0) -- (1,1);
	\draw		(2,0) .. controls (2,.5) and (1.5,.5) .. (1.5,1);
	\draw[pstdhl]	(1.5,0) .. controls (1.5,.5) and (2,.5) .. (2,1);
} \ 
}"' %description%'
]{d}
\\
\phantom{q^{\mu_3}}\ 
\tikzdiagh[xscale=.75]{-1ex}{
	\draw[pstdhl]	(0,0) node[below]{\tiny $\mu_1$} -- (0,1);
	\draw 	 	(.5,0) -- (.5,1);
	\draw[pstdhl]	(1,0) node[below]{\tiny $\mu_2$} -- (1,1);
	\draw[pstdhl]	(1.5,0)  node[below]{\tiny $\mu_3$}-- (1.5,1);
	\draw		(2,0) -- (2,1);
	\filldraw [fill=white, draw=black] (-.15,.5) rectangle (2.15,1) node[midway]{\small $\muT$};
}
\end{tikzcd}
\right)
\begin{tikzcd}[column sep=12ex]
{}
\ar{r}
\ar[dash]{r}{
\tikzdiag[xscale=.75,yscale=.5]{
	\draw[pstdhl]	(0,0) -- (0,1);
	\draw 	 	(1,0)  .. controls (1,.5) and (.5,.5) .. (.5,1);
	\draw		(1.5,0)  -- (1.5,1);
	\draw[pstdhl]	(.5,0) .. controls (.5,.5) and (1,.5) .. (1,1);
	\draw[pstdhl]	(2,0) -- (2,1);
}
}
&
{}
\\
{}
\ar{r}
\ar[dash,swap]{r}{
\tikzdiag[xscale=.75,yscale=.5]{
	\draw[pstdhl]	(0,0) -- (0,1);
	\draw 	 	(1,0)  .. controls (1,.5) and (.5,.5) .. (.5,1);
	\draw[pstdhl]		(1.5,0)  -- (1.5,1);
	\draw[pstdhl]	(.5,0) .. controls (.5,.5) and (1,.5) .. (1,1);
	\draw	(2,0) -- (2,1);
}
}
&
{}
\end{tikzcd}
\left(
\begin{tikzcd}[row sep=4ex]
q^{\mu_3}\ 
\tikzdiagh[xscale=.75]{-1ex}{
	\draw[pstdhl]	(0,0) node[below]{\tiny $\mu_1$} -- (0,1);
	\draw[pstdhl] 	(.5,0) node[below]{\tiny $\mu_2$} -- (.5,1);
	\draw		(1,0) -- (1,1);
	\draw		(1.5,0) -- (1.5,1);
	\draw[pstdhl]	(2,0) node[below]{\tiny $\mu_3$} -- (2,1);
	\filldraw [fill=white, draw=black] (-.15,.5) rectangle (2.15,1) node[midway]{\small $\muT$};
}
\ar{d}
\ar[dash, "{\ 
\tikzdiag[xscale=.75,yscale=.5]{
	\draw[pstdhl]	(0,0) -- (0,1);
	\draw[pstdhl] 	 	(.5,0) -- (.5,1);
	\draw		(1,0) -- (1,1);
	\draw		(2,0) .. controls (2,.5) and (1.5,.5) .. (1.5,1);
	\draw[pstdhl]	(1.5,0) .. controls (1.5,.5) and (2,.5) .. (2,1);
} \ 
}"
]{d}
\\
\phantom{\lambda}\ 
\tikzdiagh[xscale=.75]{-1ex}{
	\draw[pstdhl]	(0,0) node[below]{\tiny $\mu_1$} -- (0,1);
	\draw[pstdhl] 	(.5,0) node[below]{\tiny $\mu_2$} -- (.5,1);
	\draw		(1,0) -- (1,1);
	\draw[pstdhl]	(1.5,0) node[below]{\tiny $\mu_3$} -- (1.5,1);
	\draw		(2,0) -- (2,1);
	\filldraw [fill=white, draw=black] (-.15,.5) rectangle (2.15,1) node[midway]{\small $\muT$};
}
\end{tikzcd}
\right)
\right)
\]
\end{exe}

\begin{rem}
If $\und \mu$ contains only integral weights, then the underlying complex of the standard module is exact everywhere except in the last rightmost term. In this case we can replace it by the quotient of $\muP_{\rho}$ by the ideal given by diagrams with a black/colored crossing of the type:
\[
\tikzdiag{
	\draw (1,0)  ..controls (1,.5) and (0,.5) .. (0,1);
	\draw[pstdhl] (0,0) node[below]{\small $\mu_i$}  ..controls (0,.5) and (1,.5) .. (1,1);
}
\]
This coincides up to quasi-isomorphism with the standard modules in \cite{webster} (viewed as dg-modules concentrated in homological and $\lambda$-degrees zero).
\end{rem}

\subsubsection{Preorder}

Inspired by~\cite{webster}, we say that there is an arrow $\rho \leftarrow \rho'$ for $\rho, \rho' \in \cP_b^r$ whenever there is some $1 \leq j \leq r$ such that $b_i = b_i'$ for all $i \neq j,j+1$ and $b_j = b_j' + 1$ and $b_{j+1} = b'_{j+1} - 1$. 
Consider the preorder on $ \cP_b^r$ given by $\rho \leq \rho'$ whenever there is a chain of arrows $\rho = \rho_0 \leftarrow \rho_1 \leftarrow \cdots \leftarrow \rho_t = \rho'$. Note that there is a maximal element given by $(0,0,\dots,b)$ and a minimal element given by $(b,\dots,0,0)$. 
If we think in terms of idempotents $1_\rho$, then $\rho \leq \rho'$ whenever we can obtain $1_{\rho'}$ from $1_{\rho}$ by sliding colored strands to the left. 

\begin{exe}
Writing the idempotent $1_\rho$ to picture the element $\rho$, we have the following arrows:
\[
\begin{tikzcd}[row sep=-1ex]
&&
\tikzdiag[scale=.35]{
	\draw[pstdhl] (0,0) -- (0,1);
	\draw		(1,0) -- (1,1);
	\draw[pstdhl] (2,0) -- (2,1);
	\draw[pstdhl] (3,0) -- (3,1);
	\draw	 	(4,0) -- (4,1);
}
\ar[leftarrow]{dr}
&&
\\
\tikzdiag[scale=.35]{
	\draw[pstdhl] (0,0) -- (0,1);
	\draw		(1,0) -- (1,1);
	\draw 		(2,0) -- (2,1);
	\draw[pstdhl] (3,0) -- (3,1);
	\draw[pstdhl] 	(4,0) -- (4,1);
}
\ar[leftarrow]{r}
&
\tikzdiag[scale=.35]{
	\draw[pstdhl] (0,0) -- (0,1);
	\draw		(1,0) -- (1,1);
	\draw[pstdhl] (2,0) -- (2,1);
	\draw	 	(3,0) -- (3,1);
	\draw[pstdhl]	(4,0) -- (4,1);
}
\ar[leftarrow]{ur}
\ar[leftarrow]{dr}
&&
\tikzdiag[scale=.35]{
	\draw[pstdhl] (0,0) -- (0,1);
	\draw[pstdhl]	(1,0) -- (1,1);
	\draw 		(2,0) -- (2,1);
	\draw[pstdhl] (3,0) -- (3,1);
	\draw	 	(4,0) -- (4,1);
}
\ar[leftarrow]{r}
&
\tikzdiag[scale=.35]{
	\draw[pstdhl] (0,0) -- (0,1);
	\draw[pstdhl]	(1,0) -- (1,1);
	\draw[pstdhl] (2,0) -- (2,1);
	\draw		(3,0) -- (3,1);
	\draw	 	(4,0) -- (4,1);
}
\\
&&
\tikzdiag[scale=.35]{
	\draw[pstdhl] (0,0) -- (0,1);
	\draw[pstdhl]	(1,0) -- (1,1);
	\draw 		(2,0) -- (2,1);
	\draw 		(3,0) -- (3,1);
	\draw[pstdhl] 	(4,0) -- (4,1);
}
\ar[leftarrow]{ur}
&&
\end{tikzcd}
\]
\end{exe}

\begin{prop}\label{prop:Pstrat}
The dg-module $(\muP_\rho, d_\mu)$ can be obtained as a mapping cone 
\[
(\muP_\rho, d_\mu) \cong \cone((\muS_\rho, d_\dgS)[-1] \rightarrow (\BQ_{< \rho}, d_\BQ)),
\]
where $(\BQ_{< \rho}, d_\BQ)$ is a finite iterated extension of shifted copies of elements in the set $\{ (\muS_{\rho'}, d_\dgS) | \rho' < \rho \}$.
\end{prop}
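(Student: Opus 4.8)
The plan is to prove the statement by induction on $\rho \in \cP_b^r$ for the preorder $\leq$; this preorder is in fact a well-founded partial order, since each elementary arrow $\leftarrow$ strictly increases $\sum_i i\, b_i$. The base case is the minimal element $\rho = (b,0,\dots,0)$: here $J_\rho = \emptyset$, so the complex defining the standard module collapses to its single term $\muS_{\rho,\emptyset} = \muP_\rho$ with differential $d_\mu$, there is nothing strictly below $\rho$, and one takes $\BQ_{<\rho} = 0$, so that $\cone\bigl((\muS_\rho,d_\dgS)[-1] \to 0\bigr) \cong (\muS_\rho, d_\dgS) = (\muP_\rho, d_\mu)$.

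For the inductive step, the central observation is that $(\muP_\rho, d_\mu)$ sits inside $(\muS_\rho, d_\dgS)$ as a sub-dg-module, namely as the summand $\muS_{\rho,\emptyset}$: in the definition of $d_\dgS$ the component $d_{\dgS,\emptyset}$ is a sum over the empty set, hence zero, so $d_\dgS$ restricts to $d_\mu$ on $\muS_{\rho,\emptyset}$ and preserves it. Let $(\BQ'_\rho, d_\BQ) := (\muS_\rho, d_\dgS)/(\muP_\rho, d_\mu) = \bigoplus_{\emptyset \neq \bj \subseteq J_\rho} \muS_{\rho,\bj}$ with the induced differential. Its terms are the shifted relatively projective modules $q^{(\cdots)}(\muP_{\rho_\bj}[|\bj|], d_\mu)$, and filtering by $|\bj|$ realises $\BQ'_\rho$ as a finite iterated mapping cone, via the maps built from the $\tau_{\bj,\bj'}$, of these shifted copies of $(\muP_{\rho_\bj}, d_\mu)$ with $\bj \neq \emptyset$.

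For $\bj \neq \emptyset$ the composition $\rho_\bj$ is obtained from $\rho$ by pulling black strands to the left across colored strands, which strictly decreases $\sum_i i\, b_i$; hence $\rho_\bj < \rho$. The induction hypothesis applied to each such $\rho_\bj$ presents $(\muP_{\rho_\bj}, d_\mu)$ as a finite iterated extension of shifted copies of $(\muS_{\rho'}, d_\dgS)$ with $\rho' \leq \rho_\bj$, hence with $\rho' < \rho$. Since a finite iterated extension of finite iterated extensions is again a finite iterated extension of the smaller pieces, $\BQ'_\rho$, and therefore also $\BQ_{<\rho} := \BQ'_\rho[-1]$, is a finite iterated extension of shifted copies of $(\muS_{\rho'}, d_\dgS)$ with $\rho' < \rho$.

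Finally, the short exact sequence $0 \to (\muP_\rho, d_\mu) \to (\muS_\rho, d_\dgS) \to (\BQ'_\rho, d_\BQ) \to 0$ splits as $\bZ\times\bZ^2$-graded modules because $\muS_{\rho,\emptyset}$ is a graded direct summand of $\muS_\rho$; writing $d_\dgS$ in the corresponding block-triangular form, the relation $d_\dgS^2 = 0$ (\cref{lem:stdtaucom}) shows that its off-diagonal component is a morphism of dg-modules $\BQ'_\rho[-1] \to \muP_\rho$ realising $(\muS_\rho, d_\dgS)$ as the mapping cone of that morphism in the sense of \cref{eq:cone}. Rotating the associated distinguished triangle yields $(\muP_\rho, d_\mu) \cong \cone\bigl((\muS_\rho, d_\dgS)[-1] \to (\BQ_{<\rho}, d_\BQ)\bigr)$, which closes the induction. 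The argument is essentially bookkeeping once the sub-dg-module $\muS_{\rho,\emptyset} = \muP_\rho$ is identified; the only points requiring a little care are the sign and shift conventions in this last cone identification, and the verification that iterated extensions compose as claimed. One could instead unwind the recursive presentation of $\muS_\rho$ from \cref{def:recdefstd} and \cref{eq:Frewritinglemma2} to the same effect, but the sub-dg-module description is the most direct.
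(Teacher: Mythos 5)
Your proof is correct and follows essentially the same route as the paper: induct on the partial order with base case the minimal $\rho=(b,\dots,0)$, realise $(\muP_\rho,d_\mu)=(\muS_{\rho,\emptyset},d_\mu)$ as a sub-dg-module of $(\muS_\rho,d_\dgS)$, identify the cokernel (up to shift, your $\BQ'_\rho$) with the sum of shifted $\muP_{\rho_\bj}$ for $\bj\neq\emptyset$ with $\rho_\bj<\rho$, apply the induction hypothesis, and rotate the resulting triangle. The extra details you supply (well-foundedness via $\sum_i i\,b_i$, the filtration by $|\bj|$, and the explicit $[-1]$ on $\BQ_{<\rho}$) are consistent with, and if anything slightly more careful than, the paper's argument.
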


\begin{proof}
If $\rho = (b,0,\dots,0)$ is minimal, then $\muS_{\rho} \cong \muP_\rho$, and we are done by setting $\BQ_{> \rho} := 0$. Suppose by induction that the theorem is true for $\rho' < \rho$. 

We have an injection of $(\muT, d_\mu)$-modules
\[
f_\rho : (\muP_\rho, d_\mu) = (\muS_{\rho,\emptyset}, d_\dgS) \hookrightarrow (\muS_\rho, d_\dgS),
\]
and we define $(\BQ_{<\rho}, d_\BQ) := \cok f_\rho$, so that we get a distinguished triangle
\[
 (\muP_\rho, d_\mu) \rightarrow (\muS_\rho, d_\dgS) \rightarrow (\BQ_{<\rho}, d_\BQ) \rightarrow
\]
implying that
\[
(\muP_\rho, d_\mu) \cong \cone((\muS_\rho, d_\dgS)[-1] \rightarrow (\BQ_{< \rho}, d_\BQ)).
\]
 We observe that $\BQ_{<\rho} \cong  \ssbigoplus{\bj \subset J_\rho \\ \bj \neq \emptyset} \muP_{\rho_\bj}$, and $\rho_\bj < \rho$ for $\bj \neq \emptyset$. 
 Therefore, by induction hypothesis,  $(\BQ_{<\rho}, d_\BQ)$ is isomorphic to an iterated extension of various shifted $(\muS_{\rho''}, d_\dgS)$ with $\rho'' < \rho$. 
\end{proof}

\begin{cor}\label{cor:stdcblfgen}
The dg-category $\cD_{dg}^{cblf}(\muT_b, d_\mu)$ is c.b.l.f. generated by $\{(\muS_\rho, d_\dgS) | \rho \in \cP_b^{r, \und \mu}\}$. 
\end{cor}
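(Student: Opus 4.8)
The plan is to deduce this from \cref{cor:muPgenerates}, \cref{prop:Pstrat}, and the rewriting argument used in the proof of \cref{lem:K0surjection}, by an induction along the preorder $\le$ on $\cP_b^r$. Write $\cS := \{(\muS_{\rho'}, d_\dgS) \mid \rho' \in \cP_b^{r,\und\mu}\}$. By \cref{cor:muPgenerates} it is enough to show that $(\muP_\rho, d_\mu)$ is c.b.l.f.\ generated by $\cS$ for \emph{every} $\rho \in \cP_b^r$, and I would prove this by induction on $\rho$ for the preorder $\le$; this induction is well-founded because $\rho' < \rho$ forces $\sum_i i\,b_i' < \sum_i i\,b_i$, as each elementary step of sliding a colored strand to the left strictly decreases this statistic.

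In the inductive step, suppose first that $\rho \in \cP_b^{r,\und\mu}$. By \cref{prop:Pstrat}, and by its proof which identifies $(\BQ_{<\rho}, d_\BQ)$ up to grading shifts with $\bigoplus_{\emptyset \neq \bj \subseteq J_\rho}(\muP_{\rho_\bj}, d_\mu)$ (with $\rho_\bj < \rho$ for all $\bj \neq \emptyset$), the module $(\muP_\rho, d_\mu)$ sits in a distinguished triangle exhibiting it as an extension of $(\muS_\rho, d_\dgS)$ by a finite iterated extension of grading-shifted copies of the $(\muP_{\rho_\bj}, d_\mu)$. The latter are c.b.l.f.\ generated by $\cS$ by the induction hypothesis, and $(\muS_\rho, d_\dgS) \in \cS$ since $\rho \in \cP_b^{r,\und\mu}$; as c.b.l.f.\ generation is stable under cones and extensions, $(\muP_\rho, d_\mu)$ is c.b.l.f.\ generated by $\cS$.

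Now suppose $\rho \notin \cP_b^{r,\und\mu}$. Here I would reuse the reduction from the proof of \cref{lem:K0surjection}: sliding the offending black strands to the left past colored strands using \cref{eq:redR2} and \cref{lem:NdotsleftNH} shows that $(\muP_\rho, d_\mu)$ is either acyclic (this happens exactly when more than $\mu_1$ black strands are pushed to the left of the leftmost colored strand, whose label is then integral and at which $d_\mu$ is cyclotomic) or isomorphic in $\cD_{dg}^{cblf}(\muT_b, d_\mu)$ to a finite direct sum of grading shifts of modules $(\muP_{\rho'}, d_\mu)$ with $\rho' \in \cP_b^{r,\und\mu}$. Since left-sliding strictly decreases $\sum_i i\,b_i$ and $\rho' \neq \rho$, each such $\rho'$ satisfies $\rho' < \rho$, so the induction hypothesis applies and $(\muP_\rho, d_\mu)$ is again c.b.l.f.\ generated by $\cS$. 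This closes the induction and proves the corollary.

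The routine points are the stability of c.b.l.f.\ generation under cones and iterated extensions (immediate from the definition, cf.\ \cite[Appendix B]{LNV}) and the bookkeeping of grading shifts, which stay in $\bZ\pp{q,\lambda}$. The one genuinely delicate step is the case $\rho \notin \cP_b^{r,\und\mu}$: one must check that the reduction of \cref{lem:K0surjection} never reintroduces an index that fails to be strictly $\le$-smaller than $\rho$ --- so that the induction does not become circular --- and that the branches in which a black strand crosses to the left of an integral leftmost colored strand genuinely contribute acyclic modules rather than new projectives. Both are consequences of the strict monotonicity of $\sum_i i\,b_i$ under left-sliding and of the cyclotomic behaviour of $d_\mu$ at the leftmost strand built into \cref{defn:dgKLRW}.
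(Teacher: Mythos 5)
Your proof is correct and follows essentially the same route as the paper, whose own proof just declares the corollary immediate from \cref{cor:muPgenerates} and \cref{prop:Pstrat}. Your added induction along the preorder, together with the \cref{lem:K0surjection}-style reduction for $\rho \notin \cP_b^{r,\und\mu}$, simply makes explicit the step the paper leaves implicit, namely why the generating set can be cut down from all of $\cP_b^{r}$ to $\cP_b^{r,\und\mu}$.
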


\begin{proof}
 This is immediate by \cref{cor:muPgenerates} and \cref{prop:Pstrat}. 
\end{proof}

\subsection{Standardization functor}

We want to construct a standardization functor
\[
\stdFunct : \cD_{dg}(\dgT^{\mu_1}\otimes\cdots\otimes \dgT^{\mu_r}, d_{\mu_1} + \cdots + d_{\mu_r}) \rightarrow \cD_{dg}(\muT,d_\mu),
\]
such that it is exact and sends
\[
\dgP^{\mu_1}_{b_1} \otimes \cdots \otimes \dgP^{\mu_r}_{b_r} \mapsto \muS_\rho.
\]
In order to do this, we endow $\muS_\rho$ with a dg-bimodule structure. 

\subsubsection{Bimodule structure on $\muS_\rho$}
We start by defining the right action of $x \otimes 1 \otimes \cdots \otimes 1 \in \dgT^{\mu_1}_{b_1} \otimes \cdots \otimes  \dgT^{\mu_r}_{b_r}$ as gluing diagrams at the bottom of each summand $\muS_{\rho,\bj} \subset \muS_\rho$:
\[
\tikzdiagh[xscale=.5,yscale=1]{0}{
	\draw[pstdhl] (0,0) node[below]{\small $\mu_1$} -- (0,1);
	\draw	 	(1,0) -- (1,1);
	\node at(2,.1){\small $\cdots$};
	\node at(2,.85){\small $\cdots$};
	\draw	 	(3,0) -- (3,1);
	\filldraw [fill=white, draw=black] (-.25,.25) rectangle (3.25,.75) node[midway] { $x$};
}
\otimes 1 \otimes \cdots \otimes 1 \ : 
\tikzdiagh[xscale=.5,yscale=1]{0}{
	\draw[pstdhl] (0,0) node[below]{\small $\mu_1$} -- (0,.5);
	\draw	 	(1,0) -- (1,.5);
	\node at(2,.25){\small $\cdots$};
	\draw	 	(3,0) -- (3,.5);
	\draw[pstdhl] (4,0) node[below]{\small $\mu_2$} -- (4,.5);
	%\draw	 	(5,0) -- (5,.5);
	\node at(6,.25){\small $\cdots$};
	%\draw	 	(7,0) -- (7,.5);
	%
	\filldraw [fill=white, draw=black] (-.25,.5) rectangle (7.25,1.25) node[midway] { $\muS_{\rho,\bj}$};
}
\ \mapsto \
\tikzdiagh[xscale=.5,yscale=1]{0}{
	\draw[pstdhl] (0,-.5) node[below]{\small $\mu_1$} -- (0,.5);
	\draw	 	(1,-.5) -- (1,.5);
	\node at(2,0){\small $\cdots$};
	\draw	 	(3,-.5) -- (3,.5);
	\draw	 	(4,-.5) -- (4,.5);
	\node at(5,0){\small $\cdots$};
	\draw	 	(6,-.5) -- (6,.5);
	\draw[pstdhl] (7,-.5) node[below]{\small $\mu_2$} -- (7,.5);
	%\draw	 	(8,-.5) -- (8,.5);
	\node at(9,0){\small $\cdots$};
	%\draw	 	(10,-.5) -- (10,.5);
	%
	\node at(2,.1-.5){\small $\cdots$};
	\node at(2,.85-.5){\small $\cdots$};
	\filldraw [fill=white, draw=black] (-.25,-.25) rectangle (3.25,.25) node[midway] { $x$};
	\filldraw [fill=white, draw=black] (-.25,.5) rectangle (10.25,1.25) node[midway] { $\muS_{\rho,\bj}$};
}
\]
Since the differential $d_{\dgS}$ only touches the strands on the right, except for the $d_\mu$ part which is already in $(\dgT^{\mu_1}_{b_1}, d_\mu)$, the action of $\dgT_{b_1}^{\mu_1}$ respects the graded Leibniz rule.

Since we want to define a bimodule structure, it is enough to define the right action on each generating elements of $\muS_\rho$ as left-module. We fix $\ell$ and we describe below the right action of $\dgT^{\mu_\ell}_{b_\ell}$ on $\muS_\rho$. 

We need some preparation. For $j \in J_{\ell,\rho}$ we define
\[
\omega_j := \ 
\tikzdiagl{
	\draw (.5, 0) .. controls (.5,.5) and (1,.5) .. (1, 1);
	\draw (1.5, 0) .. controls (1.5,.5) and (2,.5) .. (2, 1);
	\node at(1,.1) {\small $\dots$}; \node at(1.5,.9) {\small $\dots$};
	\tikzbrace{.5}{1.5}{-.1}{\small $j-1$};
	\draw (2,0) .. controls (2,.5) .. (0,.75) .. controls (.5,.825) .. (.5,1);
	\draw (2.5,0) -- (2.5,1);
	\draw (3.5,0) -- (3.5,1);
	\node at(3,.5) {\small $\dots$};
	\draw [pstdhl] (0,0) node[below]{\small $\mu_\ell$} -- (0,1) node[pos=.75, nail]{};
}
\ \in \dgT^{\mu_\ell}_{b_\ell}.
\]
and for $\bj_\ell = \{\bj_{\ell,1}, \dots, \bj_{\ell, |\bj|_\ell} \} :=  \bj \cap J_{\ell,\rho}$ with $\bj_{\ell,1} < \cdots < \bj_{\ell,|\bj_\ell|}$ we put
\[
\omega_{\bj_\ell} := \omega_{ \bj_{\ell,|\bj_\ell|}} \cdots \omega_{\bj_{\ell, 1}}.
\]
In terms of pictures, we can draw this as
\begin{equation}\label{eq:omegapicture}
\omega_{\bj_\ell} = 
\tikzdiag{
	\draw (.5,0) .. controls (.5,.5) and (1,.5) .. (1,1)
		.. controls (1,1.5) and (1.5,1.5) .. (1.5,2);
	\draw [dashed] (1.5,2) .. controls (1.5,3) and (3,3) .. (3,4);
	\draw (3,4) .. controls (3,4.5) and (3.5,4.5) .. (3.5,5);
	\draw (1.5,0) .. controls (1.5,.5) and (2,.5) .. (2,1)
		.. controls (2,1.5) and (2.5,1.5) .. (2.5,2);
	\draw [dashed] (2.5,2) .. controls (2.5,3) and (4,3) .. (4,4);
	\draw (4,4) .. controls (4,4.5) and (4.5,4.5) .. (4.5,5);
	\node at(1,.1) {\small $\dots$}; 
	\node at(1.5,1) {\small $\dots$};  
	\node at(2,2) {\small $\dots$};
	\node at(3.5,4) {\small $\dots$};
	\node at(4,4.9) {\small $\dots$};
	\tikzbrace{.5}{1.5}{-.1}{\tiny $\bj_{\ell,1}-1$};
	\draw (2,0) .. controls (2,.25) .. (0,.5) .. controls (.5,.75) .. (.5,1)
		.. controls (.5,1.5) and (1,1.5) .. (1,2);
	\draw [dashed] (1,2) .. controls (1,3) and (2.5,3) .. (2.5,4);
	\draw (2.5,4) .. controls (2.5,4.5) and (3,4.5) .. (3,5);
	\draw (2.5,0) -- (2.5,1) .. controls (2.5,1.5) and (3,1.5) .. (3,2);
	\draw [dashed] (3,2) .. controls (3,3) and (4.5,3) .. (4.5,4);
	\draw (4.5,4) .. controls (4.5,4.5) and (5,4.5) .. (5,5);
	\draw (3.5,0) -- (3.5,1) .. controls (3.5,1.5) and (4,1.5) .. (4,2);
	\draw [dashed] (4,2) .. controls (4,3) and (5.5,3) .. (5.5,4);
	\draw (5.5,4) .. controls (5.5,4.5) and (6,4.5) .. (6,5);
	\node at(3,.1) {\small $\dots$}; 
	\node at(3,1) {\small $\dots$};  
	\node at(3.5,2) {\small $\dots$};
	\node at(5,4) {\small $\dots$};
	\node at(5.5,4.9) {\small $\dots$};
	\tikzbrace{2.5}{3.5}{-.1}{\tiny $\bj_{\ell,2}-\bj_{\ell,1}-1$};
	\draw  (4,0) -- (4,1) .. controls (4,1.25) .. (0,1.5) .. controls (.5,1.75) .. (.5,2);
	\draw [dashed] (.5,2) .. controls (.5,3) and (2,3) .. (2,4);
	\draw (2,4) .. controls (2,4.5) and (2.5,4.5) .. (2.5,5);
	\node at(5,1){\dots};
	\node at(1.5,4.9){\dots};
	\draw (6,0) -- (6,4)
		.. controls (6,4.5) and (6.5,4.5) .. (6.5,5);
	\draw (7,0) -- (7,4)
		.. controls (7,4.5) and (7.5,4.5) .. (7.5,5);
	\node at(6.5,.1){\small $\dots$};
	\node at(6.5,1){\small $\dots$};
	\node at(6.5,2){\small $\dots$};
	\node at(6.5,4){\small $\dots$};
	\node at(7,4.9){\small $\dots$};
	\tikzbrace{6}{7}{-.1}{\tiny $\bj_{\ell,|\bj_\ell|}-\bj_{\ell,|\bj_\ell|-1}-1$};
	\draw (7.5,0) -- (7.5,4)
		.. controls (7.5,4.25) .. (0,4.5) .. controls (.5,4.75) .. (.5,5);
	\draw (8,0) -- (8,5);
	\draw (9,0) -- (9,5);
	\node at(8.5,.1){\small $\dots$};
	\node at(8.5,1){\small $\dots$};
	\node at(8.5,2){\small $\dots$};
	\node at(8.5,4){\small $\dots$};
	\node at(8.5,4.9){\small $\dots$};
	\draw [pstdhl] (0,0) node[below]{\small $\mu_\ell$} 
		-- (0,1) node[pos=.5,nail]{} 
		-- (0,2) node[pos=.5,nail]{} 
		-- (0,4)
		-- (0,5) node[pos=.5,nail]{};
	\tikzbraceop{.5}{3}{5.1}{\small $|\bj_\ell|$};
	\tikzbraceop{3.5}{9}{5.1}{\small $b_\ell - |\bj_\ell|$};
}
\end{equation}
By \cref{thm:Tbasis} we have that $\dgT^{\mu_\ell}_{b_\ell}$ decomposes as a graded $\Bbbk$-module as
\begin{equation}\label{eq:dgnhdecomp}
\tikzdiag{
	\draw[pstdhl] (0,0) node[below]{\small $\mu_\ell$} -- (0,1);
	\draw	 	(.5,0) -- (.5,1);
	\node at(1,.1){\small $\cdots$};
	%\node at(1,1){\small $\cdots$};
	\draw	 	(1.5,0) -- (1.5,1);
	\filldraw [fill=white, draw=black] (-.15,.25) rectangle (1.65,1) node[midway] { $\dgT^{\mu_\ell}_{b_\ell}$};
}
\ \cong
\bigoplus_{\bj_\ell \subset J_{\ell, \rho} }
\tikzdiag{
	\draw[pstdhl] (0,0)  node[below]{\small $\mu_\ell$} -- (0,2);
	\draw	 	(.5,0) -- (.5,2);
	\node at(1,.1){\small $\cdots$};
	%\node at(1,1){\small $\cdots$};
	\draw	 	(1.5,0) -- (1.5,2);
	\filldraw [fill=white, draw=black] (-.15,.25) rectangle (1.65,1) node[midway] { $\omega_{\bj_\ell}$};
	\filldraw [fill=white, draw=black] (.35,1.25) rectangle (1.65,2) node[midway] { $\nh_{b_\ell}$};
}
\end{equation}
where $\nh_{b_\ell}$ is the nilHecke algebra on $b_\ell$ strands, that is the diagrammatic algebra on $b_\ell$ black strands with dots subject to the relations in \cref{eq:nhR2andR3} and \cref{eq:nhdotslide}.

\begin{exe}
We have
\[
\tikzdiag{
	\draw[pstdhl] (0,0) node[below]{\small $\mu_\ell$} -- (0,1);
	\draw	 	(.5,0) -- (.5,1);
	%\node at(1,1){\small $\cdots$};
	\draw	 	(1,0) -- (1,1);
	\filldraw [fill=white, draw=black] (-.15,.25) rectangle (1.15,1) node[midway] { $\dgT^{\mu_\ell}_{2}$};
}
\ \cong \ 
\tikzdiag{
	\draw[pstdhl] (0,0) node[below]{\small $\mu_\ell$} -- (0,2);
	\draw	 	(.5,0) -- (.5,2);
	\draw	 	(1,0) -- (1,2);
	\filldraw [fill=white, draw=black] (.3,1.25) rectangle (1.2,2) node[midway] { $\nh_{2}$};
}
\ \oplus \ 
\tikzdiag{
	\draw	 	(.5,0) .. controls (.5,.4) .. (0,.5) .. controls (.5,.6) .. (.5,1) -- (.5,2);
	\draw	 	(1,0) -- (1,2);
	\filldraw [fill=white, draw=black] (.3,1.25) rectangle (1.2,2) node[midway] { $\nh_{2}$};
	\draw[pstdhl] (0,0) node[below]{\small $\mu_\ell$} -- (0,2) node[pos=.25,nail]{};
}
\ \oplus \ 
\tikzdiag{
	\draw	 	(.5,0) .. controls (.5,.5) and (1,.5) .. (1,1) -- (1,2);
	\draw	 	(1,0)  .. controls (1,.4) .. (0,.5) .. controls (.5,.6) .. (.5,1) -- (.5,2);
	\filldraw [fill=white, draw=black] (.3,1.25) rectangle (1.2,2) node[midway] { $\nh_{2}$};
	\draw[pstdhl] (0,0) node[below]{\small $\mu_\ell$} -- (0,2) node[pos=.25,nail]{};
}
\ \oplus \ 
\tikzdiag{
	\draw	 	(.5,0) .. controls (.5,.2) .. (0,.4) .. controls (1,.8) .. (1,1) -- (1,2);
	\draw	 	(1,0)  .. controls (1,.5) .. (0,.8) .. controls (.5,.9) .. (.5,1.2) -- (.5,2);
	\filldraw [fill=white, draw=black] (.3,1.25) rectangle (1.2,2) node[midway] { $\nh_{2}$};
	\draw[pstdhl] (0,0) node[below]{\small $\mu_\ell$} -- (0,2) node[pos=.2,nail]{}  node[pos=.4,nail]{};
}
\]
\end{exe}

To define the action of $x \in \dgT^{\mu_\ell}_{b_\ell}$ on 
\[
1_{\rho_\bj} = 
\tikzdiagl{
	\node at(-1, .5){\small $\dots$};
	\draw (0,0) -- (0,1);
	\draw (1,0) -- (1,1);
	\node at(.5,.5) {\small $\dots$};
	\tikzbrace{0}{1}{-.1}{\small $|\bj_\ell|$};
	\draw [pstdhl] (1.5,0) node[below]{\small $\mu_\ell$} -- (1.5,1);
	\draw (2,0) -- (2,1);
	\draw (3,0) -- (3,1);
	\node at(2.5,.5) {\small $\dots$};
	\tikzbrace{2}{3}{-.1}{\small $b_\ell - |\bj_\ell|$};
	\node at(4, .5){\small $\dots$};
}
\in \muS_{\rho,\bj},
\]
we consider the collection of unique $x_{\bj'_\ell} \in \nh_{b_\ell}$ such that
\begin{equation}\label{eq:omegaxdecomp}
\omega_{\bj_\ell} x = \sum_{\bj'_\ell} x_{\bj'_\ell} \omega_{\bj'_\ell}, 
\end{equation}
given by the decomposition in \cref{eq:dgnhdecomp}. Note that $x_{\bj'_\ell} = 0$ whenever $|\bj'_\ell| < |\bj_\ell|$.

\begin{lem}\label{lem:omegaexchange}
We have 
\[
\omega_i \omega_j = 
\begin{cases}
0, & \text{if $i = 1$}, \\
-\tau_1 \omega_j \omega_{i-1} & \text{if $i \leq j$ and $i > 1$}.
\end{cases}
\]
\end{lem}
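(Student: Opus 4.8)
The plan is to prove the identity $\omega_i\omega_j$ by direct diagrammatic computation, reducing everything to manipulations of nails and black strands that are already governed by the nail relations \eqref{eq:nailsrel} and the nilHecke relations \eqref{eq:nhR2andR3}, \eqref{eq:crossingslidered}. Recall from \eqref{eq:omegapicture} (or rather its single-index case) that $\omega_j$ is the diagram which pulls the $j$-th black strand all the way to the left, nails it on the leftmost colored strand $\mu_\ell$, and brings it back, leaving the other strands vertical. So $\omega_i\omega_j$ is obtained by first performing this operation for the $j$-th strand, then for the $i$-th strand (counting positions after the first move).

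First I would treat the case $i=1$. Here, after applying $\omega_j$, the first black strand is still in its leftmost position and vertical; applying $\omega_1$ then pulls this first strand to the nail, but it is already adjacent to the colored strand, so we get a configuration with the first black strand nailed and then immediately pulled back over the colored strand. The last relation in \eqref{eq:nailsrel} (the one asserting that a strand which goes to the nail and comes straight back over the colored strand is zero) kills this, giving $\omega_1\omega_j = 0$. I would draw this carefully to make sure the intermediate isotopy genuinely matches the left-hand side of that relation; this is the one spot where one must be a little careful about which strand sits where.

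Next, for $1 < i \le j$: after applying $\omega_j$, the strand that will be moved by $\omega_i$ is the $i$-th strand, which sits in position $i$ and is vertical (the only strand that has been disturbed by $\omega_j$ is the $j$-th one, and since $i\le j$ the $i$-th position is untouched — actually one must check that $\omega_j$ leaves positions $1,\dots,j-1$ vertical, which it does by construction). Applying $\omega_i$ pulls this strand left across strands $1,\dots,i-1$, up to the nail, and back. Now I would use the braid-like isotopies together with the second and third nail relations of \eqref{eq:nailsrel} (anticommuting two nails on the same colored strand, and resolving a crossing of a nailed strand) to slide the new nail past the old one. Moving the freshly-pulled strand past the previously-nailed strand produces exactly one black crossing $\tau_1$ at the top (between the two leftmost black strands), the sign $-1$ from the nail-anticommutation relation, and then the remaining configuration is $\omega_j$ composed with $\omega_{i-1}$ (the index drops by one because one black strand has been permanently moved to the far left by the first $\omega$, shifting all position counts). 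Assembling these pieces yields $\omega_i\omega_j = -\tau_1\,\omega_j\,\omega_{i-1}$.

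The main obstacle I anticipate is bookkeeping: making sure the position indices shift correctly when composing the two operations, and verifying that the intermediate diagram after applying $\omega_j$ really does present the strand to be moved by $\omega_i$ in the ``clean'' vertical position assumed above, so that the nail relations apply verbatim. The actual relation-manipulations (one application of the nail-anticommutation relation, one application of the crossing-resolution nail relation, and a few braid-like isotopies) are routine once the picture is set up correctly. I would also note that the symmetry between the $i=1$ case and the $i>1$ case is only apparent — the $i=1$ case genuinely needs the ``goes to the nail and returns over the colored strand equals zero'' relation, which is a degeneration of the $i>1$ argument where $\omega_{i-1}=\omega_0$ would be meaningless.
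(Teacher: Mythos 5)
Your overall strategy is exactly the paper's: the paper proves this lemma by a direct diagrammatic computation from the nilHecke relations \eqref{eq:nhR2andR3}, \eqref{eq:nhdotslide} and the nail relations \eqref{eq:nailsrel} and explicitly leaves the details to the reader, and your two-case analysis (double nail kills the $i=1$ case; nail anticommutation produces the sign, the extra $\tau_1$, and the index shift in the case $1<i\le j$) is the intended computation and arrives at the right formula.

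However, one assertion in your sketch is simply false and should be repaired, because as written it is a ``check'' that does not check out: $\omega_j$ does \emph{not} leave the strands in positions $1,\dots,j-1$ vertical. By the very definition in \eqref{eq:omegapicture} (single-index case), $\omega_j$ pulls the $j$-th black strand to the nail and parks it in position $1$, and in doing so shifts each of the strands $1,\dots,j-1$ one position to the right. Consequently the strand that $\omega_i$ grabs at the interface (the one in position $i$ after $\omega_j$) is the \emph{original $(i-1)$-st} strand, not the original $i$-th one; this rightward shift is precisely the reason the index drops from $i$ to $i-1$ when the second nail is pushed below the first, as your own later parenthetical correctly says — so your ``vertical by construction'' claim contradicts the mechanism you then invoke. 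With that corrected, the $1<i\le j$ case is one application of the nail-anticommutation relation (second relation of \eqref{eq:nailsrel}) plus braid-like isotopies, creating the single crossing $\tau_1$ at the top, as you describe. Similarly, in the $i=1$ case the diagram dies because the \emph{same} black strand is nailed twice in succession (last relation of \eqref{eq:nailsrel}; cf.\ \cref{prop:doublenailsamestrandiszero}); your phrasing ``comes straight back over the colored strand'' is a misdescription, since no black strand ever crosses the leftmost colored strand in these diagrams — the relevant configuration is the bare double nail, not a crossing with the colored strand.
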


\begin{proof}
This is a straightforward computation using the nilHecke relations in \cref{eq:nhR2andR3} and \cref{eq:nhdotslide} together with the nail relations in \cref{eq:nailsrel}. We leave the details to the reader. 
\end{proof}

\begin{lem}\label{lem:xjdecomp}
In \cref{eq:omegaxdecomp}, we have that 
\[
x_{\bj'_\ell} = x_{\bj'_\ell}^1 \boxtimes x_{\bj'_\ell}^2
 \in 
\tikzdiagl[xscale=1.25]{
	\draw [pstdhl] (0,0) node[below]{\small $\mu_\ell$} -- (0,1);
	\draw (.5,0) -- (.5,1);
	\draw (1.5,0) -- (1.5,1);
	\node at(1,.1) {\small $\dots$}; %\node at(1,.9) {\small $\dots$};
	\filldraw [fill=white, draw=black] (.35,.25) rectangle (1.65,1) node[midway] { $\nh_{|\bj'_\ell|}$};
	\draw (2,0) -- (2,1);
	\draw (3,0) -- (3,1);
	\node at(2.5,.1) {\small $\dots$}; %\node at(2.5,.9) {\small $\dots$};
	\filldraw [fill=white, draw=black] (1.85,.25) rectangle (3.15,1) node[midway] { $\nh_{b_\ell - |\bj'_\ell|}$};
}
\]
for some $x_{\bj'_\ell}^1 \in \nh_{|\bj'_\ell|}$ and $x_{\bj'_\ell}^2 \in \nh_{b_\ell - |\bj'_\ell|}$.
\end{lem}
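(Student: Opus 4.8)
The plan is to prove \cref{lem:xjdecomp} by tracking the diagrammatic structure of the decomposition \cref{eq:dgnhdecomp} more carefully, showing that the "splitting point" of the nailed diagram $\omega_{\bj'_\ell}$ forces any element sitting above it to respect the corresponding factorization of strands. First I would recall the picture \cref{eq:omegapicture} of $\omega_{\bj'_\ell}$: the first $|\bj'_\ell|$ black strands (counting at the top) are exactly those that get pulled across the nailed colored strand $\mu_\ell$, while the remaining $b_\ell - |\bj'_\ell|$ strands pass to the right of every nail without ever crossing the colored strand. The key observation is that in $\omega_{\bj'_\ell}$, the colored strand together with its nails acts as a "barrier": on its left there is nothing, and immediately to its right, after the nailed portion, the diagram has the property that no black strand among the last $b_\ell - |\bj'_\ell|$ is ever to the left of any of the first $|\bj'_\ell|$ strands except at the very bottom where the crossings with the colored strand occur.

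The main step is then to argue about $x_{\bj'_\ell}$ itself. By definition $x_{\bj'_\ell} \in \nh_{b_\ell}$ is the unique element (via the basis of \cref{thm:Tbasis}) such that $x_{\bj'_\ell}\omega_{\bj'_\ell}$ is the $\bj'_\ell$-component of $\omega_{\bj_\ell} x$. I would use the basis of $\nh_{b_\ell}$ given by $\tau_w \cdot (\text{dots})$ for $w \in S_{b_\ell}$ together with \cref{eq:dotredstrand} and the sliding relations: a crossing $\tau_i$ for $i = |\bj'_\ell|$ appearing in $x_{\bj'_\ell}$ — i.e. a crossing between the last of the first block and the first of the second block — can be slid downward past $\omega_{\bj'_\ell}$, where it would create a crossing between a nailed strand and a non-nailed strand; using the nail relations \cref{eq:nailsrelrewrite} (specifically the rewriting rules \cref{eq:crossingnailrewrite2} and \cref{eq:crossingnailrewrite3}) this can be re-expressed in terms of diagrams of the form $x' \omega_{\bj''_\ell}$ with either $|\bj''_\ell| > |\bj'_\ell|$ or with the crossing moved, ultimately contradicting uniqueness/minimality in the decomposition unless the coefficient of such a crossing vanishes. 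Hence $x_{\bj'_\ell}$ lies in the subalgebra $\nh_{|\bj'_\ell|} \otimes \nh_{b_\ell - |\bj'_\ell|} \subset \nh_{b_\ell}$ generated by crossings and dots that do not mix the two blocks, which is exactly the claimed factorization $x_{\bj'_\ell} = x^1_{\bj'_\ell} \boxtimes x^2_{\bj'_\ell}$.

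Concretely, I would organize the argument as an induction on the number of generators of $x$ (dots and crossings), reducing via \cref{eq:omegaxdecomp} to the case where $x$ is a single crossing $\tau_i$ or a single dot: dots slide freely and stay within their block, while for $\tau_i$ one splits into cases according to whether both endpoints of the crossing, as they travel down through $\omega_{\bj_\ell}$, end up on nailed strands, on non-nailed strands, or one of each, and one uses \cref{lem:omegaexchange} to handle the mixed case (which is precisely where the "$-\tau_1\omega_j\omega_{i-1}$" identity converts a mixed crossing into a crossing living in the first block composed with a longer $\omega$). This reduces everything to checking that the components indexed by $\bj'_\ell$ with $|\bj'_\ell| = |\bj_\ell|$ only pick up contributions factoring as $x^1 \boxtimes x^2$, which follows by inspection of the three cases above.

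The main obstacle will be bookkeeping: keeping track of exactly which strands are nailed in $\omega_{\bj'_\ell}$ versus $\omega_{\bj_\ell}$, and making the "slide the crossing down and observe it cannot mix blocks" argument rigorous rather than merely pictorially convincing — in particular ensuring that when one re-expresses a mixed crossing using \cref{lem:omegaexchange} and the nail relations, the resulting terms genuinely live in strictly higher components $|\bj''_\ell| > |\bj'_\ell|$ (where by the remark after \cref{eq:omegaxdecomp} the coefficient in the $|\bj_\ell|$-graded piece is forced to be zero) or already manifestly factor. This is essentially a careful compatibility check between the nailHecke action and the block decomposition, so I expect no conceptual surprises, only a somewhat delicate case analysis that I would carry out by reducing to the two-strand model computations illustrated in the example after \cref{eq:dgnhdecomp}.
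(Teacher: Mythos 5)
Your overall plan (reduce via \cref{eq:omegaxdecomp} to $x$ a single generator and analyse how $\omega_{\bj_\ell}x$ rewrites into the decomposition \cref{eq:dgnhdecomp}) is the same as the paper's, but two of your key steps do not hold up. First, the ``contradiction with uniqueness'' argument in your second paragraph fails: the summand $\nh_{b_\ell}\,\omega_{\bj'_\ell}$ appearing in \cref{eq:dgnhdecomp} is a free $\nh_{b_\ell}$-module piece, so it perfectly well contains elements of the form $\tau_{|\bj'_\ell|}\,\omega_{\bj'_\ell}$ with a block-mixing crossing. Uniqueness of the decomposition only says the coefficients $x_{\bj'_\ell}$ are well defined; it puts no constraint on which elements of $\nh_{b_\ell}$ can occur, so re-expressing a hypothetical mixed crossing after sliding it down cannot produce any contradiction. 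Ruling out block-mixing crossings genuinely requires the explicit computation of $\omega_{\bj_\ell}x$ for each generator, which is what the paper does.

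Second, in your reduction to single generators the statement that ``dots slide freely and stay within their block'' is false and hides exactly the hard part of the proof. Sliding a dot up through $\omega_{\bj_\ell}$ via \cref{eq:nhdotslide} produces correction terms in which a crossing of $\omega_{\bj_\ell}$ is deleted; deleting a crossing in the upper-left region gives a diagram with two nails on one strand, which vanishes (cf.\ \cref{prop:doublenailsamestrandiszero}), while deleting one elsewhere turns some $\omega_t$ into $\omega_{t'}$ with $t'<t$, and it is here --- not in the crossing case, as you suggest --- that \cref{lem:omegaexchange} is needed to reorder the $\omega$'s at the cost of crossings that land in the \emph{left} block, so that one can check no crossing between the $|\bj_\ell|$-th and $(|\bj_\ell|+1)$-th strands is ever created at the top. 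The crossing case, by contrast, is handled by braid moves, the vanishing relation in \cref{eq:nhR2andR3}, and sliding through nails via \cref{eq:nailsrel}; the genuinely mixed configuration there simply yields $\omega_{\bj_\ell\setminus\{t\}\sqcup\{t+1\}}$ with scalar coefficient. Finally, you omit the generator of $\dgT^{\mu_\ell}_{b_\ell}$ given by the nail, which, while easy (it gives $0$ or $\omega_{\bj_\ell\sqcup\{1\}}$ by \cref{eq:nailsrel}), must be included for the bimodule action to be fully checked.
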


\begin{proof}
We need to investigate how $\omega_{\bj_\ell} x$ decomposes when $x$ is either a dot, a crossing or a nail. 

First assume that $x$ is a nail. By looking at the diagram in \cref{eq:omegapicture}, we observe that adding a nail at the bottom gives $0$ by  \cref{eq:nailsrel} if $1 \in \bj_\ell$, and we get $\omega_\bj x = \omega_{\bj \sqcup \{1 \in J_{\ell, \rho}\}}$ otherwise. Thus, $x_{\bj'_\ell} $ is either $0$ or $1_{\bj'_\ell}$. 

Suppose $x$ is a crossing between the $i$-th and $(i+1)$-th black strands. Looking at the diagram in \cref{eq:omegapicture}, if the crossing is below one of the horizontal brackets at the bottom, then we can use the braid move in \cref{eq:nhR2andR3} to slide it to the top right, so that $x^1_{\bj_\ell} = 1$ and $x^2_{\bj_\ell}$ is a crossing. If $i+1 = \bj_{\ell,t}$ for some $t$, then $\omega_\bj x = 0$ by \cref{eq:nhR2andR3}. For the remaining cases, suppose $i = \bj_{\ell,t}$ for some $t$. If $i+1 = \bj_{\ell,t+1}$ then we can use the braid move to bring the crossing to the levels of nail, slide it through the nails using \cref{eq:nailsrel}, and finally slide it to the top. Thus we obtain that $x^1_{\bj_\ell}$ is a crossing and $x^2_{\bj_\ell} = 1$. Otherwise, $\omega_{\bj_\ell} x = \omega_{\bj_\ell \setminus \{t\} \sqcup \{t+1\}}$, and thus $x_{\bj'_\ell} $ is either $0$ or $1_{\bj'_\ell}$. 

Finally, suppose $x$ is a dot on the $i$-th black strand. We can slide the dot to the top using the nilHecke relations in \cref{eq:nhdotslide} at the cost of adding diagrams with one fewer crossings. Therefore, we consider what happens whenever we remove a crossing from the diagram in \cref{eq:omegapicture}. 
If we remove a crossing situated in the upper left triangle below the bracket $|\bj_\ell|$, then we obtain zero because we would have two nails on the same black strand. 
If we remove a crossing elsewhere, we can first slide to the top right all crossings at the bottom right of the crossing we removed using the braid move in \cref{eq:nhR2andR3}, giving an element $x^2_{\bj_\ell'}$. Then we observe that having removed a crossing turned some $\omega_t$ to $\omega_{t'}$ with $t' < t$. Thus we use \cref{lem:omegaexchange} to reorder the $\omega_t$'s, at the cost of adding crossings that can be slided to the top left part, giving the elements $x^1_{\bj_\ell'}$. In particular, we never obtain a crossing at the top between the $|\bj_\ell|$-th and $(|\bj_\ell|+1)$-th black strands, concluding the proof.
\end{proof}

Because of \cref{lem:xjdecomp}, we can define
\[
1_{\rho_\bj} \bullet x := 
(-1)^{\deg_h(x) (\sum_{t > \ell} |\bj_t|)}
\sum_{\bj'_\ell} 
\ 
\tikzdiag{
	\node at(-2.5, 1){\small $\dots$};
	%
	%\draw (-1.5,0) -- (-1.5,1) .. controls (-1.5,1.5) and (-.5,1.5) .. (-.5,2);
	%\draw (-.5,0) -- (-.5,1) .. controls (-.5,1.5) and (.5,1.5) .. (.5,2);
	\draw (-1.5,0) -- (-1.5,2);
	\draw (-.5,0) -- (-.5,2);
	\node at(-1,.1) {\small $\dots$};
	\node at(-1,1.1) {\small $\dots$};
	\node at(-1,1.9) {\small $\dots$};
	%
	%\tikzbraceop{-.5}{.5}{2}{$|\bj'_\ell| - |\bj_\ell|$};
	%
	\draw (0,0) -- (0,1).. controls (0,1.5) and (.5,1.5) .. (.5,2);
	\draw (1,0) -- (1,1)  .. controls (1,1.5) and (1.5,1.5) .. (1.5,2);
	\tikzbraceop{.5}{1.5}{2}{$|\bj'_\ell| - |\bj_\ell|$};
	\node at(.5,.1) {\small $\dots$};
	\node at(.5,1.1) {\small $\dots$};
	\node at(1,1.9) {\small $\dots$};
	\filldraw [fill=white, draw=black] (-1.5-.15,.25) rectangle (1.15,1) node[midway] { $x^1_{\bj'_\ell}$};
	%\tikzbrace{0}{1}{-.1}{\small $|\bj_\ell|$};
	%
	\draw [pstdhl] (1.5,0) node[below]{\small $\mu_\ell$} -- (1.5,1) .. controls (1.5,1.5) and (0,1.5) ..  (0,2);
	\draw (2,0) -- (2,2);
	\draw (3,0) -- (3,2);
	\node at(2.5,.1) {\small $\dots$};
	\node at(2.5,1.1) {\small $\dots$};
	\node at(2.5,1.9) {\small $\dots$};
	%\tikzbrace{2}{3}{-.1}{\small $b_\ell - |\bj_\ell|$};
	\filldraw [fill=white, draw=black] (2-.15,.25) rectangle (3+.15,1) node[midway] { $x^2_{\bj'_\ell}$};
	\node at(4, 1){\small $\dots$};
}
\ 
\in \muS_{\rho,\bj'}, 
\]
where $\bj'$ is obtain from $\bj$ by replacing $\bj_\ell$ with $\bj'_\ell$.  
Note that this is well-defined because of the isomorphism in \cref{eq:dgnhdecomp}. 
Moreover, it is homogeneous because $q^{|\bj_\ell| \mu_\ell + \sum_{t \in \bj_\ell} \mu_\ell -2t + 2} h^{|\bj_\ell|} = \deg(\omega_{\bj_\ell}) $.

\begin{exe}
Take $b_1 \geq 0,  b_2 = 3$. We have 
\begin{align*}
\tikzdiagl[xscale=.5]{
	\draw (1,0) .. controls (1,1) and (3,1) .. (3,2);
	\draw (2,0) .. controls (2,.6) .. (0,.8) .. controls (2,1) .. (2,2);
	\draw (3,0) -- (3,1) node[near start, tikzdot]{} .. controls (3,1.4) .. (0,1.6) .. controls (1,1.8) .. (1,2);
	\draw[pstdhl] (0,0) node[below]{\small $\mu_2$} -- (0,2) node[pos=.8, nail]{} node[pos=.4, nail]{};
}
=
\tikzdiagl[xscale=.5]{
	\draw (1,0) .. controls (1,1) and (3,1) .. (3,2);
	\draw (2,0) .. controls (2,.6) .. (0,.8) .. controls (2,1) .. (2,2);
	\draw (3,0) -- (3,1) .. controls (3,1.4) .. (0,1.6) .. controls (1,1.8) .. (1,2)  node[pos=.5, tikzdot]{};
	\draw[pstdhl] (0,0)  node[below]{\small $\mu_2$}  -- (0,2) node[pos=.8, nail]{} node[pos=.4, nail]{};
}
+ 
\tikzdiagl[xscale=.5,yscale=-1]{
	\draw (1,0) .. controls (1,.4) and (2,.4) .. (2,.8) .. controls (2,1.2) .. (0,1.6) .. controls (1,1.8) .. (1,2);
	\draw (2,0) .. controls (2,.6) .. (0,.8) .. controls (2,1) .. (2,2);
	\draw (3,0) -- (3,2);
	\draw[pstdhl] (0,0) -- (0,2)   node[below]{\small $\mu_2$} node[pos=.8, nail]{} node[pos=.4, nail]{};
}
\end{align*}
and thus we obtain
%Thus, we have
\[
1_{\rho_{(2,3)}} \bullet \left(1_{(b_1)} \otimes\tikzdiagl[xscale=1]{
	\draw[pstdhl] (0,0)  node[below]{\small $\mu_1$}  -- (0,1);
	\draw (.5,0) -- (.5,1);
	\draw (1,0) -- (1,1);
	\draw (1.5,0) -- (1.5,1) node[midway,tikzdot]{};
} \right) = 
\tikzdiagl[xscale=.5]{
	\draw[pstdhl] (0,0)  node[below]{\small $\mu_1$}  -- (0,1);
	\draw		(1,0) -- (1,1);
	\node at (1.75,.5) {\small $\dots$};
	\draw		(2.5,0) -- (2.5,1);
	\draw		(3.5,0) -- (3.5,1) node[midway, tikzdot]{};
	\draw		(4.5,0) -- (4.5,1);
	\draw[pstdhl]	(5.5,0)  node[below]{\small $\mu_2$}  -- (5.5,1);
	\draw		(6.5,0) -- (6.5,1);
	\tikzbrace{1}{2.5}{-.15}{\small $b_1$};
}
\ .1_{\rho_{(2,3)}} + 
\tikzdiagl[xscale=.5]{
	\draw[pstdhl] (0,0)  node[below]{\small $\mu_1$}  -- (0,1);
	\draw		(1,0) -- (1,1);
	\node at (1.75,.5) {\small $\dots$};
	\draw		(2.5,0) -- (2.5,1);
	\draw		(3.5,0) .. controls(3.5,.5) and (4.5,.5) .. (4.5,1);
	\draw		(4.5,0) .. controls(4.5,.5) and (3.5,.5) .. (3.5,1);
	\draw[pstdhl]	(5.5,0)  node[below]{\small $\mu_2$}  -- (5.5,1);
	\draw		(6.5,0) -- (6.5,1);
	\tikzbrace{1}{2.5}{-.15}{\small $b_1$};
}
\ .1_{\rho_{(1,2)}}.
\]
\end{exe}

\begin{lem}\label{lem:stdNdotsaction}
We have
\begin{align*}
&1_{\rho_{\bj}} \bullet \left( 1 \otimes
\tikzdiagl[xscale=1]{
	\draw[pstdhl] (0,0)  node[below]{\small $\mu_\ell$}  -- (0,1);
	\draw (.5,0) -- (.5,1)  node[midway,tikzdot]{} node[midway, xshift=1.5ex, yshift=.75ex]{\small $N$};
	\draw (1,0) -- (1,1);
	\node at(1.5,.5){\small $\dots$};
	\draw (2,0) -- (2,1);
 }
 \otimes 1
 \right) \\
& = 
\begin{cases}
   \tikzdiagl[yscale=1, xscale=.75]{
  	%\node at(-.5,.5){\small $\dots$};
	\draw (1,0) -- (1,1)  node[midway,tikzdot]{} node[midway,xshift=2ex,yshift=.75ex]{\small $N$};
	\draw[pstdhl] (0,0)node[below]{\small $\mu_\ell$}-- (0,1)  ;
  	%\node at(2,.5){\small $\dots$};
} \in \muS_{\rho,\bj},
& \text{if $1 \in \bj_\ell$}, \\
  \tikzdiagl[yscale=1, xscale=.75]{
  	%\node at(-.5,.5){\small $\dots$};
	\draw (1,0) -- (1,1)  node[midway,tikzdot]{} node[midway,xshift=2ex,yshift=.75ex]{\small $N$};
	\draw[pstdhl] (0,0)node[below]{\small $\mu_\ell$}-- (0,1)  ;
  	%\node at(2,.5){\small $\dots$};
} \in \muS_{\rho,\bj}
-
(-1)^{|\bj_\ell|}
\sssum{\bj' = \\ \bj \setminus \{t \in \bj_\ell\} \sqcup \{1\}} \ 
\alpha_{\bj,t}
\sssum{u+v=\\ N -1} \ 
\tikzdiagl[xscale=.9]{
	\draw (-.5,0) .. controls (-.5,.5) and (-2,.5) .. (-2,1) node[pos=.2,tikzdot]{} node[pos=.2,xshift=1.5ex,yshift=.75ex]{\small $u$};
	%\draw (.5,0) .. controls (.5,.5) and (-.5,.5) .. (-.5,1);
	%
	\draw (-2,0) .. controls (-2,.5) and (-1.5,.5) .. (-1.5,1);
	\draw (-1,0) .. controls (-1,.5) and (-.5,.5) .. (-.5,1);
	\node at(-1.5,.1) {\small $\dots$};
	\node at(-1,.9) {\small $\dots$};
	\draw (.5,0) .. controls (.5,.5) and (1,.5) .. (1,1);
	\draw (1.5,0) .. controls (1.5,.5) and (2,.5) .. (2,1);
	\node at(1,.1) {\small $\dots$};
	\node at(1.5,.9) {\small $\dots$};
	\draw (2,0) .. controls (2,.5) and (.5,.5) .. (.5,1)  node[pos=.8,tikzdot]{} node[pos=.8,xshift=-1.5ex,yshift=-.75ex]{\small $v$};
	\draw [pstdhl] (0,0) node[below]{\small $\mu_\ell$} -- (0,1);
}
\in \muS_{\rho,\bj'},
& \text{if $1 \notin \bj_\ell$}.
 \end{cases}
\end{align*}
\end{lem}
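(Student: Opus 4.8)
I would prove this by the same mechanism as \cref{lem:xjdecomp}, namely by computing the decomposition \cref{eq:omegaxdecomp} in the special case where $x = 1 \otimes (N\text{ dots on the leftmost black strand}) \otimes 1$, and then unwinding the definition of the $\bullet$-action. The key object to analyze is $\omega_{\bj_\ell} x$, where $\omega_{\bj_\ell}$ is the composite of nails pictured in \cref{eq:omegapicture}. The leftmost black strand of $1_{\rho_\bj}$ is the $(|\bj_\ell|+1)$-st strand in the ambient $\dgT^{\mu_\ell}_{b_\ell}$ (there are $|\bj_\ell|$ black strands pulled through the nails to its left), so placing $N$ dots at the bottom of that strand and sliding them up through $\omega_{\bj_\ell}$ is the computation I must carry out.

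\textbf{Key steps.} First, if $1 \in \bj_\ell$, then the leftmost black strand of $1_{\rho_\bj}$ does not interact with any nail (it is already to the right of all nailed strands), so the $N$ dots simply slide straight to the top by \cref{eq:nhdotslide}, with no lower-order correction and no change of stratum; this gives the first case. Second, if $1 \notin \bj_\ell$, I slide the $N$ dots up through the diagram \cref{eq:omegapicture}. The dots start on a strand that, reading upward, is pulled leftward across all the nails and ends up as the leftmost of the $b_\ell - |\bj_\ell|$ strands to the right of the bracket; along the way it crosses each of the $|\bj_\ell|$ nailed strands. Using \cref{eq:nhdotslide} to commute the dots past each black/black crossing produces the ``main term'' (dots reach the top cleanly, staying in $\muS_{\rho,\bj}$) together with lower-order terms in which one crossing has been resolved. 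Resolving a crossing between the dotted strand and a nailed strand turns that $\omega_t$ into $\omega_{t'}$ with $t'<t$; reordering via \cref{lem:omegaexchange} and the nail relations \cref{eq:nailsrel} then forces the nail onto position $1$ (all other placements give two nails on one strand, hence zero by \cref{eq:nailsrel}). This is exactly the passage from $\bj$ to $\bj' = \bj \setminus\{t\}\sqcup\{1\}$, and the resolved crossing becomes, after dot-sliding \cref{eq:nhdotslide} on the doubled-up dots, a sum $\sum_{u+v=N-1}$ of the pictured diagrams with a $u$-dotted strand on the far left and a $v$-dotted strand looping back. Third, I must pin down the sign $(-1)^{|\bj_\ell|}$ and the combinatorial coefficient $\alpha_{\bj,t}$: the sign comes from the homological degrees of the nails crossed (each nail contributes via the twist in the definition of $\bullet$ and via \cref{lem:anticom_theta}/\cref{lem:omegaexchange}), and $\alpha_{\bj,t}$ records how many of the $\omega$'s must be transposed, i.e. the position of $t$ among $\bj_{\ell,1}<\cdots<\bj_{\ell,|\bj_\ell|}$, analogously to the sign $(-1)^{\#\{b''\in\bj\mid b''>b'\}}$ appearing in $d_{\dgS,\bj}$.

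\textbf{Main obstacle.} The genuine difficulty is bookkeeping, not conceptual: tracking precisely which lower-order term lands in which stratum $\muS_{\rho,\bj'}$, and getting the sign $(-1)^{|\bj_\ell|}$ and the coefficient $\alpha_{\bj,t}$ exactly right, since every application of \cref{eq:nhdotslide}, \cref{lem:omegaexchange}, and the nail relations introduces a sign or kills a term. I expect the cleanest route is to first record, as a sublemma, the identity ``$N$ dots on the strand immediately right of a single nail $= N$ dots on top, minus (nail absorbed) $\sum_{u+v=N-1}(\cdots)$'', which is essentially \cref{eq:crossingnailrewrite2}–\cref{eq:crossingnailrewrite3} combined with the nail relations, and then induct on $|\bj_\ell|$, peeling off one nail at a time from left to right. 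Everything else — that the result is homogeneous of the right degree, and that it is independent of the chosen reduced expressions — follows from the well-definedness established in \cref{eq:dgnhdecomp} and \cref{lem:xjdecomp}, so no new argument is needed there.
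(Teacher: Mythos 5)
Your overall framework is the right one: the action is computed from the decomposition $\omega_{\bj_\ell}x=\sum_{\bj'_\ell}x_{\bj'_\ell}\omega_{\bj'_\ell}$ of \cref{eq:omegaxdecomp}, and your plan for the case $1\notin\bj_\ell$ (a one-nail sublemma ``$N$ dots below a nail $=$ $N$ dots above $\,-\sum_{u+v=N-1}(\cdots)$'' followed by induction peeling off one nail at a time) is essentially the machinery the paper invokes, namely \cref{lem:Ndotsonestep} and \cref{prop:Ndotsslideovernails}. There is, however, a genuine gap, and it originates in a misidentification of where the dots sit. In the definition of $\bullet$, the element $x$ is multiplied below $\omega_{\bj_\ell}$ inside $\dgT^{\mu_\ell}_{b_\ell}$, and the bottom of $\omega_{\bj_\ell}$ carries the black strands in their original order; so the $N$ dots are attached at the bottom of the strand in original position $1$, not to ``the $(|\bj_\ell|+1)$-st strand'' and not to a strand of $1_{\rho_\bj}$. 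Consequently, when $1\in\bj_\ell$ that strand is exactly the first strand to be nailed, and on its way up it is crossed by each of the other $|\bj_\ell|-1$ strands heading to the nail. Your claim that in this case the dots ``slide straight to the top with no lower-order correction'' is therefore false as stated: each application of \cref{eq:nhdotslide} does produce correction terms. The missing idea --- which is the paper's one-line argument --- is that each such correction resolves a crossing between the dotted, already nailed strand and a strand heading to the nail, thereby re-routing the dotted strand through a second nail, so the term vanishes by \cref{eq:nailsrel} (equivalently \cref{prop:doublenailsamestrandiszero}). Without this vanishing argument the first case is unproven.

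For $1\notin\bj_\ell$ your mechanism is correct in outline but the geometry should be fixed: the dotted strand is not ``pulled leftward across all the nails''; it remains in the un-nailed block and is crossed once by each of the $|\bj_\ell|$ strands being nailed, and resolving the crossing at the $i$-th nailing transfers that nail directly to the dotted strand (original position $1$) while freeing the strand $t=\bj_{\ell,i}$, which is how $\bj'=\bj\setminus\{t\}\sqcup\{1\}$ arises --- no further ``forcing to position $1$'' is involved. What does remain is rewriting the result in the normal form $x^1_{\bj'}\boxtimes x^2_{\bj'}$ times $\omega_{\bj'}$ via \cref{lem:omegaexchange}, which is precisely where $\alpha_{\bj,t}$ and the sign $(-1)^{|\bj_\ell|}$ come from and which you defer; this bookkeeping is exactly what \cref{lem:Ndotsonestep} and \cref{prop:Ndotsslideovernails} carry out. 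Once the case $1\in\bj_\ell$ is repaired and the one-nail sublemma is actually proved (or cited), your plan coincides with the paper's proof.
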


\begin{proof}
The case $1 \in \bj_{\ell}$ is immediate by looking at \cref{eq:omegapicture} and observing that sliding the dots to the top using \cref{eq:nhdotslide} produces diagrams with fewer crossings in the top left region, so that they all have two nails on a single black strand, and are zero. 

The case $1 \notin  \bj_{\ell}$ follows immediately from \cref{prop:Ndotsslideovernails}.
\end{proof}

\begin{rem}\label{rem:diffomegaj}
Using the diagrams $\omega_{\bj_\ell}$, there is a convenient way to write how $d_{\dgS, \bj}$ acts on $\muS_{\rho,\bj}$. For each $\ell$, there is a differential $d_0$ (not preserving the degree) on $\dgT^{\mu_\ell}_{b_\ell}$ given by 
\[
d_{0}\left(
\tikzdiag[xscale=2]{
	 \draw (.5,-.5) .. controls (.5,-.25) .. (0,0) .. controls (.5,.25) .. (.5,.5);
          \draw[pstdhl] (0,-.5) node[below]{\small $\mu_\ell$}-- (0,.5) node [midway,nail]{};
  }
  \right)
  :=
  \tikzdiag[xscale=2]{
	 \draw (.5,-.5) -- (.5,.5);
          \draw[pstdhl] (0,-.5) node[below]{\small $\mu_\ell$}-- (0,.5);
  }
\]
and $d_0$ is zero on the other generators (note that it coincides with $d_0  = d_\mu$ for $\und \mu = (0)$).
Let $\omega_\bj := \omega_{\bj_2} \otimes \cdots \otimes \omega_{\bj_r}$ and we extend $d_0$ by the graded Leibniz rule to the tensor product $\dgT^{\mu_2}_{b_2} \otimes \cdots \otimes \dgT^{\mu_r}_{b_r}$. 
  By the decomposition in \cref{eq:dgnhdecomp} we have 
\[
d_0(\omega_{\bj}) = \sssum{\bj' = \bj \setminus \{b'\} } y_{b'} \omega_{\bj'},
\]
where 
%$y_{b'} := \alpha_{b'} y_{b'}^2 \otimes \cdots y_{b'}^\ell$ and
\[
y_{b'} =  \pm
1 \otimes \ 
\tikzdiagh{0}{
	\draw [pstdhl] (0,0)   node[below]{\small $\mu_\ell$}  -- (0,1);
	\draw (.5,0) -- (.5,1);
	\node at(1,.5) {$\dots$};
	\draw (1.5,0) -- (1.5,1);
	\draw 		(2,0) .. controls (2,.5) and (2.5,.5) .. (2.5,1);
	\node at (2.5,.15){$\dots$};
	\node at (3,.85){$\dots$};
	\draw 		(3,0)  .. controls (3,.5) and (3.5,.5) .. (3.5,1);
%	\draw 		(3.5,0)  .. controls (3.5,.5) and (4,.5) .. (4,1);
%	\node at (4,.15){$\dots$};
%	\node at (4.5,.85){$\dots$};
	%\draw 		(4.5,0)  .. controls (4.5,.5) and (5,.5) .. (5,1);
	%
	\draw		(3.5,0) .. controls (3.5,.5) and (2,.5)  .. (2,1);
	\draw (4,0) -- (4,1);
	\node at(4.5,.5) {$\dots$};
	\draw (5,0) -- (5,1);
	\tikzbrace{2}{3}{-.15}{\small $b'-1$};
	%
	%\draw[pstdhl] 	(3,0) node[below]{\small $\mu_\ell$}  .. controls (3,.5) and (3.5,.5) .. (3.5,1);
}
\ \otimes 1
\]
for $b' \in \bj_\ell$. 
%the definition of $\bar y_{b'}$ as $d_0(\omega_\bj) =  \sssum{\bj' = \bj \setminus \{b' \} } y_{b'} \omega_{\bj'}$.  
Then if we define 
\begin{align*}
\overline{\omega_{\bj'}}& := 1_{\rho_{\bj'}},
&
\overline{y_{b'}} &:= \pm \ 
\tikzdiag{
	%\draw[pstdhl] (0,0) -- (0,1);
	%\draw 		(1,0) -- (1,1);
	%\tikzbrace{0}{1}{-.15}{\small $\rho_1$};
	\node at(-.75,.5) {$\dots$};
	\draw (0,0) -- (0,1);
	\node at(.5,.5) {$\dots$};
	\draw (1,0) -- (1,1);
	\draw 		(1.5,0) .. controls (1.5,.5) and (2,.5) .. (2,1);
	\node at (2,.15){$\dots$};
	\node at (2.5,.85){$\dots$};
	\draw 		(2.5,0)  .. controls (2.5,.5) and (3,.5) .. (3,1);
	\draw 		(3.5,0)  .. controls (3.5,.5) and (4,.5) .. (4,1);
	\node at (4,.15){$\dots$};
	\node at (4.5,.85){$\dots$};
	\draw 		(4.5,0)  .. controls (4.5,.5) and (5,.5) .. (5,1);
	%\tikzbrace{1.5}{4.5}{-.15}{\small $\ell$};
	%
	\draw		(5,0) .. controls (5,.5) and (1.5,.5)  .. (1.5,1);
	\tikzbrace{1.5}{2.5}{-.1}{\small $p_1$};
	\tikzbrace{3.5}{4.5}{-.1}{\small $p_2$};
	\tikzbraceop{0}{3}{1.1}{\small $|\bj_\ell|$};
	\draw (5.5,0) -- (5.5,1);
	\node at(6,.5) {$\dots$};
	\draw (6.5,0) -- (6.5,1);
	%\draw		(5.5,0) -- (5.5,1);
	\node at(7.25,.5) {$\dots$};
	%\draw		(6.5,0) -- (6.5,1);
	%\tikzbrace{5.5}{6.5}{-.15}{\small $t$};
	%
%	\draw[black] (7,0) -- (7,1);
%	\draw[pstdhl,dashed] (7,0) -- (7,1);
%	\node at(7.5,.5) {$\dots$};
%	\draw[black] (8,0) -- (8,1);
%	\draw[pstdhl,dashed]	(8,0) -- (8,1);
%	\tikzbrace{7}{8}{-.15}{\small $|\rho_2|$};
	%
	\draw[pstdhl] 	(3,0) node[below]{\small $\mu_\ell$}  .. controls (3,.5) and (3.5,.5) .. (3.5,1);
}
\end{align*}
where $p_1 + p_2 = b'-1$, 
we have 
\[
d_{\dgS}(1_{\rho_\bj}) = \overline{d_0(\omega_\bj)}.
\]
For example consider $\und \mu = (\mu_1, \mu_2)$ and $\rho = (0, 2)$,  and we compute
\begin{align*}
d_0( \omega_{\{1,2\}}) &= d_0\left(
\tikzdiagl{
	\draw	 	(.5,0) .. controls (.5,.125) .. (0,.25) .. controls (1,.625) .. (1,1);
	\draw	 	(1,0)  .. controls (1,.375) .. (0,.75) .. controls (.5,.875) .. (.5,1);
	\draw[pstdhl] (0,0) node[below]{\small $\mu_2$} -- (0,1) node[pos=.25,nail]{}  node[pos=.75,nail]{};
}
\right)
= \ 
\tikzdiagl[yscale=-1]{
	\draw	 	(.5,0) .. controls (.5,.5) and (1,.5) .. (1,1);
	\draw	 	(1,0)  .. controls (1,.4) .. (0,.5) .. controls (.5,.6) .. (.5,1);
	\draw[pstdhl] (0,0) -- (0,1) node[pos=.5,nail]{}  node[below]{\small $\mu_2$};
}
\ - \ 
\tikzdiagl{
	\draw	 	(.5,0) .. controls (.5,.5) and (1,.5) .. (1,1);
	\draw	 	(1,0)  .. controls (1,.4) .. (0,.5) .. controls (.5,.6) .. (.5,1);
	\draw[pstdhl] (0,0) node[below]{\small $\mu_2$} -- (0,1) node[pos=.5,nail]{};
}
=
\tikzdiagl{
	\draw	 	(.5,0) .. controls (.5,.5) and (1,.5) .. (1,1);
	\draw	 	(1,0)  .. controls (1,.5) and (.5,.5) ..  (.5,1);
	\draw[pstdhl] (0,0) node[below]{\small $\mu_2$} -- (0,1)   ;
}
\ \omega_{\{1\}} - \omega_{\{2\}},
\\
d_\dgS(1_{\rho_{\{1,2\}}})
 &= 
\overline{d_0( \omega_{\{1,2\}})}
= 
\tikzdiagl{
	\draw[pstdhl]	(0,0)  node[below]{\small $\mu_1$}-- (0,1);
	\draw		(.5,0) .. controls (.5,.5) and (1,.5) .. (1,1);
	\draw 	 	(1.5,0)  .. controls (1.5,.5) and (.5,.5) .. (.5,1);
	\draw[pstdhl]	(1,0)  node[below]{\small $\mu_2$}.. controls (1,.5) and (1.5,.5) .. (1.5,1);
}
\ 1_{\rho_{\{1\}}} - \ 
\tikzdiagl{
	\draw[pstdhl]	(0,0) node[below]{\small $\mu_1$} -- (0,1);
	\draw 	 	(.5,0) -- (.5,1);
	\draw		(1.5,0) .. controls (1.5,.5) and (1,.5) .. (1,1);
	\draw[pstdhl]	(1,0) node[below]{\small $\mu_2$} .. controls (1,.5) and (1.5,.5) .. (1.5,1);
}
\ 1_{\rho_{\{2\}}},
\end{align*}
which agrees with \cref{ex:stdmodule}. 
\end{rem}

\begin{prop}
The construction described above gives $(\muS_\rho, d_\dgS)$ the structure of a $(\muT, d_\mu)$-$(\dgT^{\mu_1} \otimes \cdots \otimes \dgT^{\mu_r}, d_{\mu_1} + \cdots + d_{\mu_r})$-bimodule.
\end{prop}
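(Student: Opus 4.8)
The plan is to verify the four axioms of a dg-bimodule for $(\muS_\rho, d_\dgS)$ with the left $(\muT,d_\mu)$-action already fixed by its definition as a direct sum of shifted copies of $\muP_{\rho_\bj}$, and the right action defined above factor-by-factor. Concretely I need to check: (i) the right action of $\dgT^{\mu_1}_{b_1} \otimes \cdots \otimes \dgT^{\mu_r}_{b_r}$ is well-defined (i.e. the assignment on generators respects all the defining relations of each $\dgT^{\mu_\ell}_{b_\ell}$); (ii) the left and right actions commute; (iii) the two actions are homogeneous of the correct bidegree; and (iv) the differential $d_\dgS$ satisfies the graded Leibniz rule with respect to both actions. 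Since the right action of the first tensor factor $\dgT^{\mu_1}_{b_1}$ is simply gluing diagrams on the black strands far to the left (which never interact with the differential-carrying strands on the right), that case was already disposed of in the text; so the work concentrates on the factors $\dgT^{\mu_\ell}_{b_\ell}$ with $\ell \geq 2$.

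First I would establish that the rule $1_{\rho_\bj} \bullet x$ is well-defined: this amounts to the fact that the decomposition \cref{eq:dgnhdecomp} of $\dgT^{\mu_\ell}_{b_\ell}$ as a graded $\Bbbk$-module is a decomposition of $(\nh_{b_\ell}, \nh_{b_\ell})$-bimodules in an appropriate sense, which is exactly what \cref{lem:xjdecomp} encodes — it tells us $\omega_{\bj_\ell} x$ re-expands as $\sum_{\bj'_\ell} x^1_{\bj'_\ell} \boxtimes x^2_{\bj'_\ell} \, \omega_{\bj'_\ell}$ with the factors $x^1, x^2$ living in the correct nilHecke pieces, so that the resulting element of $\muS_{\rho,\bj'}$ makes sense and is uniquely determined. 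Associativity of the right action then follows because $(\omega_{\bj_\ell} x) x' = \omega_{\bj_\ell}(x x')$ in $\dgT^{\mu_\ell}_{b_\ell}$ and the decomposition is unique; respecting the defining relations of $\dgT^{\mu_\ell}_{b_\ell}$ is automatic from this since we pulled the action through an honest algebra quotient. The homogeneity in (iii) is the bidegree bookkeeping already noted: $\deg(\omega_{\bj_\ell}) = q^{|\bj_\ell|\mu_\ell + \sum_{t\in\bj_\ell}(\mu_\ell - 2t+2)} h^{|\bj_\ell|}$ matches precisely the grading shift in the definition of $\muS_{\rho,\bj}$, so conjugating the action of $x$ by $\omega_{\bj_\ell}$ keeps degrees straight, and the sign $(-1)^{\deg_h(x)(\sum_{t>\ell}|\bj_t|)}$ is inserted exactly to make the Koszul signs work across factors.

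The commutativity (ii) of left and right actions splits by which side a black generator of $\muT$ sits relative to the colored strand $\mu_\ell$. Generators of $\muT$ acting on the left act on strands to the left of (or passing over) the colored strands being moved by $\omega_{\bj_\ell}$, while the right action glues in diagrams at the bottom; the two commute by braid-like planar isotopy and the nilHecke braid/slide relations \cref{eq:nhR2andR3}, \cref{eq:crossingslidered}, exactly as in the proof of \cref{lem:stdtaucom}, so this is a diagrammatic check with no surprises. The genuinely delicate point, and what I expect to be the main obstacle, is (iv): showing $d_\dgS(m \cdot x) = d_\dgS(m)\cdot x + (-1)^{\deg_h(m)} m \cdot d_\mu(x)$, equivalently that the "connecting" part $d_{\dgS,\bj}$ of the differential — built from the maps $\tau_{\bj,\bj'}$ — intertwines with the right action. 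This is where \cref{rem:diffomegaj} is designed to help: it reinterprets $d_{\dgS}(1_{\rho_\bj})$ as $\overline{d_0(\omega_\bj)}$ where $d_0$ is the auxiliary (degree-nonpreserving) differential on $\dgT^{\mu_\ell}_{b_\ell}$ turning a nail into a plain strand. With that reformulation, the Leibniz rule for $d_\dgS$ against the right $\dgT^{\mu_\ell}_{b_\ell}$-action reduces to the Leibniz rule for $d_0$ (plus $d_\mu$ on the nilHecke part), i.e. to the identity $d_0(\omega_{\bj_\ell} x) = d_0(\omega_{\bj_\ell}) x \pm \omega_{\bj_\ell} d_\mu(x)$ inside $\dgT^{\mu_\ell}_{b_\ell}$, which holds because $d_0$ is itself a differential on that algebra satisfying the graded Leibniz rule. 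I would carry this out by: (a) checking $d_0$ is a well-defined differential on $\dgT^{\mu_\ell}_{b_\ell}$ (this is essentially the $\und\mu = (\mu_\ell)$ case of \cref{defn:dgKLRW} — note $d_0$ only differs from $d_\mu$ in how it treats the label on the nail); (b) verifying that the expansion $d_0(\omega_\bj) = \sum_{\bj'=\bj\setminus\{b'\}} y_{b'}\omega_{\bj'}$ from \cref{rem:diffomegaj} is correct and that $\overline{y_{b'}} = \pm\tau_{\bj,\bj'}$, matching the signs $(-1)^{\#\{b''\in\bj \mid b''>b'\}}$; (c) deducing $d_\dgS^2 = 0$ (already noted, via \cref{lem:stdtaucom}) and the Leibniz rule from the corresponding facts for $d_0 + (\text{nilHecke }d_\mu)$ via the bar-map $\omega_{\bj'}\mapsto 1_{\rho_{\bj'}}$, $y_{b'}\mapsto \overline{y_{b'}}$ being compatible with multiplication by $\nh_{b_\ell}$ on the relevant strands. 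The sign-tracking in (b)–(c) across the multiple tensor factors is the place where care is needed, but conceptually everything is forced once $d_0$ is in hand.
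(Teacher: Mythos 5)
Your items (i)--(iii) are fine (the paper dismisses them as clear, and your extra detail via \cref{lem:xjdecomp} is harmless), and your treatment of the Leibniz rule for $x$ a dot or a crossing matches the paper's: there $d_{\mu_\ell}(x)=0$, so the statement reduces to $d_\dgS(m\bullet x)=d_\dgS(m)\bullet x$, which follows from \cref{rem:diffomegaj} because $d_0$ kills dots and crossings and commutes with pushing $x$ to the top. The genuine gap is in the nail case, which is the crux of the proposition. You reduce it to the identity $d_0(\omega_{\bj_\ell}x)=d_0(\omega_{\bj_\ell})x\pm\omega_{\bj_\ell}d_{\mu_\ell}(x)$ ``inside $\dgT^{\mu_\ell}_{b_\ell}$'' and justify it by the Leibniz rule for $d_0$. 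But $d_0$ and $d_{\mu_\ell}$ are different differentials on $\dgT^{\mu_\ell}_{b_\ell}$: $d_0$ sends the nail to the identity strand, while $d_{\mu_\ell}$ sends it to $\mu_\ell$ dots (or to $0$ if $\mu_\ell$ is non-integral). The Leibniz rule for $d_0$ produces the term $\pm\,\omega_{\bj_\ell}\cdot d_0(x)$, not $\pm\,\omega_{\bj_\ell}\cdot d_{\mu_\ell}(x)$, and your hybrid identity is simply false in $\dgT^{\mu_\ell}_{b_\ell}$: already for $b_\ell=1$ and $\bj_\ell=\emptyset$ it would assert that the identity strand equals $\mu_\ell$ dots.

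What actually makes the bimodule structure work is that this discrepancy (identity versus $\mu_\ell$ dots) is absorbed only after mapping into $\muS_\rho$, i.e.\ inside $\muT$, where the image of the ``de-nailed'' term is a black strand crossing the $\mu_\ell$-colored strand and coming back, and the double-crossing relations \eqref{eq:redR2}--\eqref{eq:vredR} convert that double crossing into exactly $\mu_\ell$ dots (red case) or $0$ (blue case). This is why the paper's proof does an explicit computation for the nail generator, comparing $d_\dgS(1_{\rho_\bj}\bullet x)$, $d_\dgS(1_{\rho_\bj})\bullet x$ and $1_{\rho_\bj}\bullet d_{\mu_\ell}(x)$ in the two subcases $1\in\bj_\ell$ and $1\notin\bj_\ell$, and why it needs \cref{lem:stdNdotsaction} (resting on \cref{prop:Ndotsslideovernails}) to identify $1_{\rho_\bj}\bullet(\mu_\ell\text{ dots})$ with the correction terms coming from sliding dots past the nails. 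Your argument never invokes these ingredients, so as written the key step of the proof is missing; to repair it you must abandon the purely formal $d_0$-reduction for the nail and carry out (or cite) this diagrammatic comparison in $\muS_\rho$.
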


\begin{proof}
Clearly, the action of each $(\dgT^{\mu_\ell}, d_\mu)$ (graded) commute with each other, and with the left-action of $(\muT, d_\mu)$. Thus we only need to check that it respects the differentials. In particular, we need to verify that
\begin{equation}\label{eq:gradedLeibniz}
d_{\dgS}(m \bullet x) = d_{\dgS}(m) \bullet x + (-1)^{|m|} m \bullet d_{\mu_\ell}(x),
\end{equation}
for all homogeneous $m \in \muS_\rho$ and $x \in \dgT_{b_\ell}^{\mu_\ell}$. We can assume $m = 1_{\rho_\bj}$ and $x$ is either a nail, a crossing or a dot. 
If $x$ is a nail, we compute 
\begin{align*}
d_{\dgS}(1_{\rho_\bj} \bullet x)  &=
\begin{cases}
 0, \hfill & \text{if $1 \in \bj_\ell$}, \\
 (-1)^{|\bj_\ell|}
  \tikzdiagl[yscale=1, xscale=.7]{
  	%\node at(-.5,.5){\small $\dots$};
	\draw (1,0) ..controls (1,.25) and (0,.25) .. (0,.5)..controls (0,.75) and (1,.75) .. (1,1)  ;
	\draw[pstdhl] (0,0)node[below]{\small $\mu_\ell$} ..controls (0,.25) and (1,.25) .. (1,.5) ..controls (1,.75) and (0,.75) .. (0,1)  ;
  	%\node at(2,.5){\small $\dots$};
} \in \muS_{\rho,\bj}
+ \sssum{\bj' = \\ \bj \setminus \{t \in \bj_\ell\} \sqcup \{1\}} 
\alpha_{\bj,t}\ 
\tikzdiagl[xscale=.8]{
	\draw (-.5,0) .. controls (-.5,.5) and (.5,.5) .. (.5,1);
	%\draw (.5,0) .. controls (.5,.5) and (-.5,.5) .. (-.5,1);
	%
	\draw (-2,0) .. controls (-2,.5) and (-1.5,.5) .. (-1.5,1);
	\draw (-1,0) .. controls (-1,.5) and (-.5,.5) .. (-.5,1);
	\node at(-1.5,.1) {\small $\dots$};
	\node at(-1,.9) {\small $\dots$};
	\draw (.5,0) .. controls (.5,.5) and (1,.5) .. (1,1);
	\draw (1.5,0) .. controls (1.5,.5) and (2,.5) .. (2,1);
	\node at(1,.1) {\small $\dots$};
	\node at(1.5,.9) {\small $\dots$};
	\draw (2,0) .. controls (2,.5) and (-2,.5) .. (-2,1);
	\draw [pstdhl] (0,0) node[below]{\small $\mu_\ell$} .. controls (0,.25) and (.5,.25) .. (.5,.5) .. controls (.5,.75) and (0,.75) .. (0,1);
}
\in \muS_{\rho,\bj'},
& \text{if $1 \notin \bj_\ell$},
\end{cases}
\\
d_{\dgS}(1_{\rho_\bj}) \bullet x &= 
\begin{cases}
 (-1)^{|\bj_\ell|-1}
   \tikzdiagl[yscale=1, xscale=-.75]{
  	%\node at(-.5,.5){\small $\dots$};
	\draw (1,0) ..controls (1,.25) and (0,.25) .. (0,.5)..controls (0,.75) and (1,.75) .. (1,1)  ;
	\draw[pstdhl] (0,0)node[below]{\small $\mu_\ell$} ..controls (0,.25) and (1,.25) .. (1,.5) ..controls (1,.75) and (0,.75) .. (0,1)  ;
  	%\node at(2,.5){\small $\dots$};
} \in \muS_{\rho,\bj}, 
& \text{if $1 \in \bj_\ell$}, \\
\sssum{\bj' = \\ \bj \setminus \{t \in \bj_\ell\} \sqcup \{1\}} 
\alpha_{\bj,t}\ 
\tikzdiagl{
	\draw (-.5,0) .. controls (-.5,.5) and (.5,.5) .. (.5,1);
	%\draw (.5,0) .. controls (.5,.5) and (-.5,.5) .. (-.5,1);
	%
	\draw (-2,0) .. controls (-2,.5) and (-1.5,.5) .. (-1.5,1);
	\draw (-1,0) .. controls (-1,.5) and (-.5,.5) .. (-.5,1);
	\node at(-1.5,.1) {\small $\dots$};
	\node at(-1,.9) {\small $\dots$};
	\draw (.5,0) .. controls (.5,.5) and (1,.5) .. (1,1);
	\draw (1.5,0) .. controls (1.5,.5) and (2,.5) .. (2,1);
	\node at(1,.1) {\small $\dots$};
	\node at(1.5,.9) {\small $\dots$};
	\draw (2,0) .. controls (2,.5) and (-2,.5) .. (-2,1);
	\draw [pstdhl] (0,0) node[below]{\small $\mu_\ell$} .. controls (0,.25) and (-.5,.25) .. (-.5,.5) .. controls (-.5,.75) and (0,.75) .. (0,1);
}
\in \muS_{\rho,\bj'}, 
& \text{if $1 \notin \bj_\ell$},
\end{cases}
\end{align*}
and using \cref{lem:stdNdotsaction} we also have
\begin{align*}
 &1_{\rho_\bj} \bullet d_{\mu_\ell}(x) \\ &= 
 \begin{cases}
   \tikzdiagl[yscale=1, xscale=.75]{
  	%\node at(-.5,.5){\small $\dots$};
	\draw (1,0) -- (1,1)  node[midway,tikzdot]{} node[midway,xshift=1.5ex,yshift=.75ex]{\small $\mu_\ell$};
	\draw[pstdhl] (0,0)node[below]{\small $\mu_\ell$}-- (0,1)  ;
  	%\node at(2,.5){\small $\dots$};
} \in \muS_{\rho,\bj},
& \text{if $1 \in \bj_\ell$}, \\
  \tikzdiagl[yscale=1, xscale=.75]{
  	%\node at(-.5,.5){\small $\dots$};
	\draw (1,0) -- (1,1)  node[midway,tikzdot]{} node[midway,xshift=1.5ex,yshift=.75ex]{\small $\mu_\ell$};
	\draw[pstdhl] (0,0)node[below]{\small $\mu_\ell$}-- (0,1)  ;
  	%\node at(2,.5){\small $\dots$};
} \in \muS_{\rho,\bj}
-
(-1)^{|\bj_\ell|}
\sssum{\bj' = \\ \bj \setminus \{t \in \bj_\ell\} \sqcup \{1\}} \ 
\alpha_{\bj,t}
\sssum{u+v=\\ \mu_\ell -1} \ 
\tikzdiagl[xscale=.9]{
	\draw (-.5,0) .. controls (-.5,.5) and (-2,.5) .. (-2,1) node[pos=.2,tikzdot]{} node[pos=.2,xshift=1.5ex,yshift=.75ex]{\small $u$};
	%\draw (.5,0) .. controls (.5,.5) and (-.5,.5) .. (-.5,1);
	%
	\draw (-2,0) .. controls (-2,.5) and (-1.5,.5) .. (-1.5,1);
	\draw (-1,0) .. controls (-1,.5) and (-.5,.5) .. (-.5,1);
	\node at(-1.5,.1) {\small $\dots$};
	\node at(-1,.9) {\small $\dots$};
	\draw (.5,0) .. controls (.5,.5) and (1,.5) .. (1,1);
	\draw (1.5,0) .. controls (1.5,.5) and (2,.5) .. (2,1);
	\node at(1,.1) {\small $\dots$};
	\node at(1.5,.9) {\small $\dots$};
	\draw (2,0) .. controls (2,.5) and (.5,.5) .. (.5,1)  node[pos=.8,tikzdot]{} node[pos=.8,xshift=-1.5ex,yshift=-.75ex]{\small $v$};
	\draw [pstdhl] (0,0) node[below]{\small $\mu_\ell$} -- (0,1);
}
\in \muS_{\rho,\bj'},
& \text{if $1 \notin \bj_\ell$}.
 \end{cases}
\end{align*}
where each one of the diagrams are embedded in bigger diagrams with only vertical strand whose colors are determined by the idempotents $1_{\rho_\bj}$ and  $1_{\rho_{\bj'}}$,
and $\alpha_{\bj,t} := (-1)^{\#\{t' \in \bj_\ell | t' > t\}}$. Then \cref{eq:gradedLeibniz} follows from Eq. (\ref{eq:redR2}-\ref{eq:vredR}).

If $x$ is a dot or a crossing, then we obtain immediately $d_{\dgS}(m \bullet x) = d_{\dgS}(m) \bullet x $ by \cref{rem:diffomegaj}, since $d_0$ is well-defined and thus pushing $x$ to the top and then applying $d_0$ is the same as applying $d_0$ and then pushing $x$ to the top. 
\end{proof}

\subsubsection{Standardization functor}

\begin{defn}
We define the \emph{standardization functor} as
\[
\stdFunct : \cD_{dg}(\dgT^{\mu_1}\otimes\cdots\otimes \dgT^{\mu_r}, d_{\mu_1} + \cdots + d_{\mu_r}) \rightarrow \cD_{dg}(\muT,d_\mu), \quad M \mapsto \muS \Lotimes M,
\]
where $\muS := \bigoplus_{\rho \in \bN^r} \muS_\rho$. 
\end{defn}

For $1 \leq i \leq r$, let $\E^{[i]}$, $\F^{[i]}$ and $\K^{\pm [i]}$ denotes the categorical action of $U_q(\slt)$ on each $\dgT^{\mu_i}_{b_i}$ in $ \cD_{dg}(\dgT^{\mu_1}\otimes\cdots\otimes \dgT^{\mu_r}, d_{\mu_1} + \cdots + d_{\mu_r}) $, defined by induction/restriction along a black strand as in \cref{sec:catAction}. Let us write $\id_\rho$ with $\rho = (b_1,\dots, b_r)$ for the functor given by tensoring with $(\dgT^{\mu_1}_{b_1} \otimes \cdots \otimes \dgT^{\mu_r}_{b_r} )$. 

\begin{prop}
    The standardization functor is exact and essentially surjective. 
    In particular, it induces a surjection
    \[
      {}_\bQ\bKO^\Delta(\dgT^{\mu_1}, d_{\mu_1})
      \otimes \cdots \otimes
      {}_\bQ\bKO^\Delta(\dgT^{\mu_r}, d_{\mu_r}) 
      \twoheadrightarrow {}_\bQ\bKO^\Delta(\muT, d_\mu),
    \]
    which sends $v_{\mu_1,b_1} \otimes \cdots \otimes v_{\mu_{r}, b_r} = [\dgP^{\mu_1}_{b_1}] \otimes  \cdots \otimes [\dgP^{\mu_r}_{b_r}] \mapsto [(\muS_\rho, d_\dgS)] = v_\rho$ under the isomorphism of \cref{thm:isocat}.
\end{prop}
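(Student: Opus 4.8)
The plan is to establish the three assertions of the proposition --- exactness, essential surjectivity, and the induced surjection on asymptotic Grothendieck groups --- in that order, leaning heavily on the bimodule structure just constructed for $(\muS_\rho, d_\dgS)$ and on the structural results of \cref{sec:standard} and \cref{sec:catTensProd}.

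First I would prove exactness. By \cref{prop:Tdecomp} (or more precisely the left-decomposition it gives) each summand $\muS_{\rho,\bj} = q^{(\cdots)}\muP_{\rho_\bj}[|\bj|]$ is a shifted relatively projective left $(\muT, d_\mu)$-module, hence cofibrant up to grading and homological shift. Since $\muS_\rho$ is a finite direct sum of such summands (the differential $d_\dgS$ being upper/lower triangular with respect to the partial order on subsets $\bj$), it is itself cofibrant as a left dg-module. Therefore tensoring with $(\muS, d_\dgS)$ sends quasi-isomorphisms to quasi-isomorphisms and preserves cones, so $\stdFunct = \muS \Lotimes (-) \cong \muS \otimes (-)$ is exact (i.e. triangulated). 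This is the routine part.

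Next, essential surjectivity. The key computation is that $\stdFunct(\dgP^{\mu_1}_{b_1}\otimes\cdots\otimes\dgP^{\mu_r}_{b_r}) = \muS \otimes_{(\otimes_i \dgT^{\mu_i})} (\dgT^{\mu_1}_{b_1}\otimes\cdots\otimes\dgT^{\mu_r}_{b_r}) \cong (\muS_\rho, d_\dgS)$, which follows directly from the construction of the right action: the right action of $1\otimes\cdots\otimes 1$ on $\muS_\rho$ is the identity, and $\muS_{\rho'} \otimes_{\otimes_i \dgT^{\mu_i}} 1_{(b_1,\dots,b_r)}$ picks out precisely $\muS_\rho$ by idempotent truncation. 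Then I would invoke \cref{cor:stdcblfgen}: the dg-category $\cD_{dg}^{cblf}(\muT_b, d_\mu)$ is c.b.l.f. generated by $\{(\muS_\rho, d_\dgS)\mid \rho \in \cP_b^{r,\und\mu}\}$. Since each such generator is in the essential image of $\stdFunct$ and $\stdFunct$ is exact, closing under shifts, cones and the c.b.l.f. iterated extensions shows $\stdFunct$ is essentially surjective.

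Finally, the statement about Grothendieck groups follows formally: an exact essentially surjective dg-functor induces a surjection on asymptotic Grothendieck groups (after tensoring with $\bQ\pp{q,\lambda}$), and on classes it sends $[\dgP^{\mu_1}_{b_1}]\otimes\cdots\otimes[\dgP^{\mu_r}_{b_r}] = v_{\mu_1,b_1}\otimes\cdots\otimes v_{\mu_r,b_r}$ to $[\stdFunct(\dgP^{\mu_1}_{b_1}\otimes\cdots\otimes\dgP^{\mu_r}_{b_r})] = [(\muS_\rho,d_\dgS)]$, which equals $v_\rho$ under the isomorphism $\gamma$ of \cref{thm:isocat} by the remark following \cref{def:recdefstd} (the class $[(\muS_\rho, d_\dgS)]$ coincides with $\tilde v_\rho$, and $\tilde v_\rho$ is expressed in the $v_\rho$-basis via \cref{lem:Frewriting}; here the claim is simply that the image of the projective $\dgP^{\mu_i}_{b_i}$ tensor factor goes to $\muS_\rho$, i.e. to $\tilde v_\rho$ --- I would state it in whichever normalization matches the theorem, being careful that $v_{\mu_i, b_i} = F^{b_i}(v_{\mu_i})$). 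The main obstacle I anticipate is the bookkeeping for the second point: verifying cleanly that $\muS \otimes_{\otimes_i\dgT^{\mu_i}} 1_\rho \cong \muS_\rho$ as dg-modules, including that the differential $d_\dgS$ is compatible under this identification, which requires unwinding the right-action definitions (the $\omega_{\bj_\ell}$ decomposition and \cref{rem:diffomegaj}). Everything else is a formal consequence of the c.b.l.f.-generation results already in hand.
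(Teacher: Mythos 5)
Your proposal is correct and follows essentially the same route as the paper, whose proof consists precisely of the two facts you identify: $\stdFunct(\dgT^{\mu_1}_{b_1}\otimes\cdots\otimes\dgT^{\mu_r}_{b_r})\cong(\muS_\rho,d_\dgS)$ (equivalently with $\dgP^{\mu_i}_{b_i}$, since $\dgP^{\mu_i}_{b_i}=\dgT^{\mu_i}_{b_i}$ when $r=1$) together with \cref{cor:stdcblfgen}. The extra details you supply (cofibrancy of $\muS_\rho$ for exactness, the idempotent-truncation check, and the $\tilde v_\rho$ versus $v_\rho$ normalization) are just an expansion of that same argument.
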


\begin{proof}
    This follows immediately from the fact that $\stdFunct(\dgT^{\mu_1}_{b_1}\otimes\cdots\otimes \dgT^{\mu_r}_{b_r}, d_{\mu_1} + \cdots + d_{\mu_r}) \cong (\muS_\rho, d_\dgS)$ together with \cref{cor:stdcblfgen}. 
\end{proof}

Note that we have
\[
\K^{\pm 1} \stdFunct \cong \stdFunct \K^{\pm [1]} \cdots \K^{\pm[r]},
\]
which lift the equality $\Delta(K^{\pm 1}) = K^{\pm 1} \otimes K^{\pm 1}$. 
Furthermore, as in \cite[Proposition 5.5]{webster}, we can lift the equality 
\begin{align*}
F(1 \otimes 1 \otimes \cdots \otimes 1) =& F \otimes K \otimes K \otimes \cdots \otimes K + 1 \otimes F \otimes K \otimes \cdots \otimes K + \cdots
\\
&\cdots + 1  \otimes \cdots \otimes 1 \otimes F \otimes K +  1 \otimes \cdots \otimes 1 \otimes 1 \otimes F,
\end{align*}
to the categorical setting as follows:

\begin{prop}\label{prop:stratFfunct}
There is a natural isomorphism 
$\F\stdFunct  \cong Q_r$ 
with $Q_r$ being obtained as an iterated extension
\[ 
\begin{tikzcd}[column sep=1ex]
0 = Q_0 \ar{rr}  && Q_1 \ar{rr} \ar{dl} && Q_2 \ar{rr} \ar{dl} 
&&  \ar{dl} \cdots  \ar{rr} && Q_{r} \cong \F\stdFunct, %\id_\kappa  
\ar{dl}
\\
&Q_1/Q_0 \ar{ul}{[1]} && Q_2/Q_1 \ar{ul}{[1]}  && Q_3/Q_2 \ar{ul}{[1]} && Q_r/Q_{r-1} \ar{ul}{[1]}  &
\end{tikzcd}
\]
where
\[
Q_\ell/Q_{\ell-1} \cong \stdFunct\F^{[\ell]} \K^{[\ell+1]} \cdots \K^{[r]},
\]
for $1 \leq \ell \leq r$.
\end{prop}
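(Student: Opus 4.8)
The strategy is to categorify the standard $\mathfrak{sl}_2$-coproduct computation by unfolding $\F\stdFunct$ along a filtration that reflects the recursive mapping-cone description of the standard modules in \cref{def:recdefstd}. Recall that $\F = \Ind_b^{b+1}$ is given by gluing a vertical black strand on the right, and $\stdFunct = \muS \Lotimes -$. Since $\muS_\rho$ is a cofibrant left $\muT$-module (being a direct sum of shifted copies of the $\muP_{\rho_\bj}$, which are direct summands of $\muT$) with a compatible right $(\dgT^{\mu_1}\otimes\cdots\otimes\dgT^{\mu_r})$-action, the composite $\F\stdFunct$ is computed by the dg-bimodule $(\muT_{b+1}1_{b,1}, d_\mu) \otimes_b \muS$. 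The first step is therefore to analyze this bimodule. Adding a black strand on the right of $\muS_\rho$ creates a new black strand that can be slid leftward past the colored strands, producing — after applying \cref{prop:Tdecomp} to each summand $\muS_{\rho,\bj}$ — a decomposition into pieces indexed by which colored strand the new black strand ends up to the left of. This is exactly the categorical shadow of the fact that $\Delta(F)$ has one term of the form $1\otimes\cdots\otimes F\otimes K\otimes\cdots\otimes K$ for each position.

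\textbf{Key steps.} First I would set up the filtration $0 = Q_0 \subset Q_1 \subset \cdots \subset Q_r = \F\stdFunct$ at the level of dg-bimodules: $Q_\ell$ should consist of those summands of $(\muT_{b+1}1_{b,1})\otimes_b\muS_\rho$ in which the extra black strand has been pushed to the left of the colored strand labeled $\mu_{\ell+1}, \dots$ or rather, more precisely, $Q_\ell$ collects the contributions where the new strand acts "on the $i$-th tensor factor" for $i \le \ell$. The key structural input is \cref{prop:Tdecomp} (and its iterated application through the recursive structure of \cref{def:recdefstd}), which tells us that sliding a black strand leftward past a configuration of colored and black strands decomposes $\muT_{b+1}1_\rho$ as a direct sum whose summands are either $\dgT^{\und\mu'}$-type (strand stays to the right) or $\muT_b$-type pieces obtained by "absorbing" the strand into a nail or crossing (strand moves to the left of a colored strand). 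Second, I would identify each subquotient: the summands in $Q_\ell/Q_{\ell-1}$ are precisely those where the black strand has been absorbed at the $\ell$-th colored strand, and grouping the grading shifts (which track the powers of $K$ picked up when commuting $F$ past the earlier factors — these come out as $\K^{[\ell+1]}\cdots\K^{[r]}$, matching the $K\otimes\cdots\otimes K$ tail) shows $Q_\ell/Q_{\ell-1} \cong \stdFunct\,\F^{[\ell]}\,\K^{[\ell+1]}\cdots\K^{[r]}$. Here one uses that $\F^{[\ell]}$ is induction along a black strand inside the $\ell$-th factor $\dgT^{\mu_\ell}_{b_\ell}$, and that the bimodule structure on $\muS_\rho$ defined via the elements $\omega_{\bj_\ell}$ and the decomposition \cref{eq:dgnhdecomp} is exactly what makes the absorbed-strand summand into $\stdFunct$ applied to $\F^{[\ell]}$ of the input. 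Third, I would check that the differential $d_\dgS$ is compatible with this filtration — this is where \cref{rem:diffomegaj} and \cref{lem:xjdecomp} enter, ensuring that the $\tau_{\bj,\bj'}$ components of $d_\dgS$ and the $d_\mu$-part respect the splitting into the $Q_\ell$'s up to the triangle maps $Q_\ell/Q_{\ell-1} \to Q_{\ell-1}[1]$, so that we genuinely get an iterated extension (distinguished triangles $Q_{\ell-1} \to Q_\ell \to Q_\ell/Q_{\ell-1} \xrightarrow{[1]}$) rather than a direct sum.

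\textbf{Main obstacle.} The hardest part will be establishing the compatibility of $d_\dgS$ (together with the differential on the bimodule $\muT_{b+1}1_{b,1}$) with the filtration and pinning down the connecting maps, i.e.\ verifying that the extra black strand's interactions with the nails and colored crossings appearing in $d_\dgS$ — governed by \cref{rem:diffomegaj}, \cref{lem:stdNdotsaction} and the nail relations \cref{eq:nailsrel} — land the "boundary" terms in the correct lower filtration step $Q_{\ell-1}$ with the correct signs and grading shifts. This is a bookkeeping-heavy computation, and the cleanest route is to argue, as in \cite[Proposition 5.5]{webster}, that after passing to Grothendieck groups the desired equality $[\F\stdFunct] = \sum_\ell [\stdFunct\F^{[\ell]}\K^{[\ell+1]}\cdots\K^{[r]}]$ holds by \cref{eq:Frewriting} and the coproduct formula for $\Delta(F)$, so that the filtration pieces are forced to have the stated form; then one only needs the dimension/support count from \cref{prop:Tdecomp} to conclude the filtration is exhaustive and its subquotients are as claimed, rather than checking every sign by hand. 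I would present the dg-bimodule filtration explicitly and reduce the verification of $d_\dgS$-compatibility to the single-strand-slide computations already isolated in \cref{rem:diffomegaj}, deferring the routine sign chase to the reader as the paper does elsewhere.
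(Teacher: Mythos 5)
Your overall strategy — categorify the coproduct formula for $F$ by filtering $\F\stdFunct$ according to which tensor factor the added black strand acts on — is the same idea the paper uses, and your identification of the grading shifts $\K^{[\ell+1]}\cdots\K^{[r]}$ with the $K\otimes\cdots\otimes K$ tail is correct. However, the way you propose to close the argument has a genuine gap. You correctly identify the hard point (compatibility of the differential with the filtration and identification of the subquotients as dg-bimodules), and then propose to bypass it by passing to Grothendieck groups: the equality $[\F\stdFunct]=\sum_\ell[\stdFunct\F^{[\ell]}\K^{[\ell+1]}\cdots\K^{[r]}]$ cannot ``force the filtration pieces to have the stated form.'' Equality of classes in $\bKO^\Delta$ does not determine the isomorphism type of a subquotient dg-bimodule, and the statement to be proved is an isomorphism of dg-functors, not an identity in the Grothendieck group; moreover the decomposition of \cref{prop:Tdecomp} that you want to filter by is only a decomposition of left modules (graded $\Bbbk$-modules), so neither the right $(\dgT^{\mu_1}\otimes\cdots\otimes\dgT^{\mu_r})$-action nor $d_\dgS$ need respect it summandwise — exactly the point your $K_0$ shortcut cannot address.

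The paper avoids this by working entirely at the level of explicit dg-bimodules: for each $\ell$ it defines $\widetilde{\BQ}_\ell:=\bigoplus_{\bj\subset J_\rho}q^{\sum_{t\ge 2}\sum_{s\in\bj_t}(\mu_t-2s+2)}\muP_{F^{[\ell]}(\rho_\bj)}[|\bj|]$ with a differential built exactly like $d_\dgS$ except that at the $\ell$-th colored strand the crossing used in $\tau_{\bj,\bj'}$ is replaced by one that leaves the extra strand to the right; these are dg-bimodules for the same reasons as $(\muS,d_\dgS)$, with $\widetilde{\BQ}_1=\muS_{F^{[1]}(\rho)}$ and $\widetilde{\BQ}_r\cong\F\muS_\rho$, and there are bimodule maps $\tau_{\ell-1,\ell}:q^{\mu_\ell-2b_\ell}\widetilde{\BQ}_{\ell-1}\to\widetilde{\BQ}_\ell$ given by gluing a single black/colored crossing. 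The recursive definition of the standard modules (\cref{def:recdefstd}) then gives $\muS_{F^{[\ell]}\rho}\cong\cone(\tau_{\ell-1,\ell})$ with no further computation, and setting $\BQ_\ell:=q^{\sum_{t>\ell}(\mu_t-2b_t)}\widetilde{\BQ}_\ell$, $Q_\ell:=\BQ_\ell\Lotimes-$ yields the iterated extension with the stated subquotients. If you want to salvage your version, you must replace the $K_0$ argument by such an explicit bimodule-level construction (or prove directly that your filtration of $(\muT_{b+1}1_{b,1})\otimes_b\muS_\rho$ is by dg-sub-bimodules and compute its subquotients), since that is precisely the content of the proposition.
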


\begin{proof}
Take $\rho = (b_1, \dots, b_r)$ with $\sum b_i = b$. 
Since the functor $\stdFunct$ is given by derived tensor product with a bimodule which is cofibrant as left module, we have 
\[
\F\stdFunct\id_\rho(-) \cong \bigl((\muT_{b+1},d_\mu) \otimes_b \muS_\rho \bigr) \Lotimes - \cong (\F\muS_\rho) \Lotimes -.
\]
Similarly, we have 
\[
\stdFunct\F^{[\ell]} \id_\rho(-) 
 \cong
 \muS_{F^{[\ell]}(\rho)} \Lotimes -,
\] 
where $F^{[\ell]}(\rho) := (b_1,\dots,b_{\ell-1}, b_{\ell} +1, b_{\ell+1}, \dots, b_r)$. 

We want to construct categorifications of the elements $F(\tilde v_{\rho_1}) \otimes \tilde v_{\rho_2}$ for various decompositions $\rho = (\rho_1, \rho_2)$, and these will give the functors $Q_\ell$. 

Let 
\[
 \widetilde{\BQ}_\ell  := \bigoplus_{\bj \subset J_\rho} q^{\sum_{\ell = 2}^{r}\sum_{t \in \bj_\ell }(\mu_\ell - 2 t +2)}
\muP_{F^{[\ell]}(\rho_\bj)} [|\bj|],
\]
and define $d_{\BQ}$ similarly as $d_{\dgS}$ but using 
\[
%\widetilde \tau_{\bj,\bj'} :=
\tikzdiagh{0}{
	%\draw[pstdhl] (0,0) -- (0,1);
	\node at(.5,.5) {$\dots$};
	%\draw 		(1,0) -- (1,1);
	%\tikzbrace{0}{1}{-.15}{\small $\rho_1$};
	%
	\draw 		(1.5,0) .. controls (1.5,.5) and (2,.5) .. (2,1);
	\node at (2,.15){$\dots$};
	\node at (2.5,.85){$\dots$};
	\draw 		(2.5,0)  .. controls (2.5,.5) and (3,.5) .. (3,1);
	\draw 		(3.5,0)  .. controls (3.5,.5) and (4,.5) .. (4,1);
	\node at (4,.15){$\dots$};
	\node at (4.5,.85){$\dots$};
	\draw 		(4.5,0)  .. controls (4.5,.5) and (5,.5) .. (5,1);
	%\tikzbrace{1.5}{4.5}{-.15}{\small $\ell$};
	%
	\draw		(5,0) .. controls (5,.5) and (1.5,.5)  .. (1.5,1);
	\tikzbrace{1.5}{2.5}{-.15}{\small $p_1$};
	\tikzbrace{3.5}{4.5}{-.15}{\small $p_2$};
	\draw (5.5,0) -- (5.5,1);
	%
	%\draw		(5.5,0) -- (5.5,1);
	\node at(6.5,.5) {$\dots$};
	%\draw		(6.5,0) -- (6.5,1);
	%\tikzbrace{5.5}{6.5}{-.15}{\small $t$};
	%
%	\draw[black] (7,0) -- (7,1);
%	\draw[pstdhl,dashed] (7,0) -- (7,1);
%	\node at(7.5,.5) {$\dots$};
%	\draw[black] (8,0) -- (8,1);
%	\draw[pstdhl,dashed]	(8,0) -- (8,1);
%	\tikzbrace{7}{8}{-.15}{\small $|\rho_2|$};
	%
	\draw[pstdhl] 	(3,0) node[below]{\small $\mu_\ell$}  .. controls (3,.5) and (3.5,.5) .. (3.5,1);
}
\]
instead of $\tau_{\bj,\bj'}$ whenever $\bj$ differs from $\bj'$ by an element in $\bj_\ell$. For the same reasons as $(\muS, d_\dgS)$,  $( \widetilde{\BQ}_\ell, d_\BQ)$ is a dg-bimodule. 
Note that $\widetilde{\BQ}_1 = \muS_{F^{[1]}(\rho)}$ and $\widetilde{\BQ}_r \cong \F  \muS_{\rho}$. 
Moreover, we have a map of dg-bimodules  
\[
\tau_{\ell-1, \ell} : q^{\mu_{\ell}-2b_{\ell}} \widetilde{\BQ}_{\ell-1} \rightarrow \widetilde{\BQ}_{\ell}
\]
given by gluing on the bottom
\begin{equation*} %\label{eq:mapQlQl}
\tikzdiagh{0}{
	%\draw[black] (0,0) -- (0,1);
	%\draw[vstdhl, dashed] (0,0) -- (0,1);
	\node at(.5,.5) {$\dots$};
	%\draw[black]	(1,0) -- (1,1);
	%\draw[vstdhl, dashed]	(1,0) -- (1,1);
	%\tikzbrace{0}{1}{-.15}{\small $\kappa_1$};
	%
	\draw 		(1.5,0) .. controls (1.5,.5) and (2,.5) .. (2,1);
	\node at (2,.15){$\dots$};
	\node at (2.5,.85){$\dots$};
	\draw 		(2.5,0)  .. controls (2.5,.5) and (3,.5) .. (3,1);
	\draw 		(3.5,0)  .. controls (3.5,.5) and (4,.5) .. (4,1);
	\node at (4,.15){$\dots$};
	\node at (4.5,.85){$\dots$};
	\draw 		(4.5,0)  .. controls (4.5,.5) and (5,.5) .. (5,1);
	%\tikzbrace{1.5}{4.5}{-.15}{\small $b_{\ell+1}+1$};
	\tikzbrace{1.5}{2.5}{-.15}{\small $|\bj_{\ell}|$};
	\tikzbrace{3.5}{4.5}{-.15}{\small $(b_{\ell}-|\bj_{\ell}|)$};
	\draw		(5,0) .. controls (5,.5) and (1.5,.5)  .. (1.5,1);
	%
	%\draw[black]	(5.5,0) -- (5.5,1);
	%\draw[vstdhl, dashed] (5.5,0) -- (5.5,1);
	\node at(6,.5) {$\dots$};
	%\draw[black] (6.5,0) -- (6.5,1);
	%\draw[vstdhl, dashed] (6.5,0) -- (6.5,1);
	%\tikzbrace{5.5}{6.5}{-.15}{\small $t$};
	%
%	\draw[blue] (7,0) -- (7,1);
%	\draw[vstdhl,dashed,blue] (7,0) -- (7,1);
%	\node at(7.5,.5) {$\dots$};
%	\draw[blue] (8,0) -- (8,1);
%	\draw[vstdhl,dashed,blue]	(8,0) -- (8,1);
%	\tikzbrace{7}{8}{-.15}{\small $|\kappa_2|$};
	%
	\draw[pstdhl] 	(3,0)  node[below]{\small $\mu_{\ell}$} .. controls (3,.5) and (3.5,.5) .. (3.5,1);
}
\end{equation*}
By construction of $\muS_{F^{[\ell+1]}\rho}$, we have
\[
\muS_{F^{[\ell]}\rho} \cong \cone\left(
q^{\mu_{\ell}-2b_{\ell}} \widetilde{\BQ}_{\ell-1} 
\xrightarrow{\tau_{\ell-1,\ell}}
\widetilde{\BQ}_{\ell}
\right).
\]
Thus, putting $
\BQ_\ell :=  q^{\sum_{t > \ell} \mu_t - 2b_t} \widetilde{\BQ}_\ell
$ and $Q_\ell := \BQ_\ell \Lotimes -$ concludes the proof. 
\end{proof}

\subsection{Stratification}\label{sec:stratification}

Fix $b \geq 0$. 
Let $\cD := \cD_{dg}^{cblf}(\muT_b, d_\mu)$. Define $\cD_{\succeq \rho}$ as the full subcategory of $\cD$ c.b.l.f. generated by $\{\muS_{\rho'} | \rho' \succeq \rho\}$. 
Define similarly $\cD_{\succ \rho} \subset \cD_{\succeq \rho}$. 

Consider the exact sequence
\[
    \cD_{\succ \rho} \rightarrow \cD_{\succeq \rho} \rightarrow \cD_{\rho},
\]
of dg-categories where $\cD_\rho$ is Verdier  dg-quotient (see \cite{dgquotient1, dgquotient2}) of $\cD_{\succeq \rho}$ by $\cD_{\succ \rho}$.

\begin{lem}\label{lem:derivedPtostdhom}
    We have 
    $ %$\begin{align*}
        \RHOM_{(\muT_b,d_\mu)}((\muP_\rho,d_\mu), (\muS_{\rho'},d_\dgS)) \cong 0
    $ %\end{align*}
    whenever $\rho' \npreceq \rho$. 
\end{lem}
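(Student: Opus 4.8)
The plan is to compute $\RHOM_{(\muT_b,d_\mu)}((\muP_\rho,d_\mu),(\muS_{\rho'},d_\dgS))$ using the fact that $(\muP_\rho,d_\mu) = (\muT_b 1_\rho, d_\mu)$ is relatively projective, so that taking $\RHOM$ out of it is the same as taking ordinary $\HOM$ and then applying the idempotent: $\RHOM_b((\muT_b 1_\rho, d_\mu), (\muS_{\rho'},d_\dgS)) \cong 1_\rho \cdot (\muS_{\rho'},d_\dgS)$ as a complex of $\Bbbk$-modules. Thus the statement reduces to showing that the complex $1_\rho \muS_{\rho'}$ is acyclic whenever $\rho' \npreceq \rho$. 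Recall $\muS_{\rho'} = \bigoplus_{\bj \subset J_{\rho'}} \muS_{\rho',\bj}$ with $\muS_{\rho',\bj}$ a shift of $\muP_{\rho'_\bj} = \muT_b 1_{\rho'_\bj}$; applying $1_\rho$ on the left gives $\bigoplus_{\bj} 1_\rho \muT_b 1_{\rho'_\bj}$ (up to shifts), and the differential $d_\dgS$ restricted to the $d_\mu$-part plus the $\tau_{\bj,\bj'}$ maps. Since each $\rho'_\bj$ is obtained from $\rho'$ by moving colored strands only to the \emph{left} (decreasing $b'_j$, increasing $b'_{j-1}$), every $\rho'_\bj$ satisfies $\rho'_\bj \succeq \rho'$ in the preorder of \cref{sec:standard}. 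So if $\rho' \npreceq \rho$, then every $\rho'_\bj \npreceq \rho$ as well.

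\textbf{Key step.} The heart of the argument is the combinatorial observation that $1_\rho \muT_b 1_{\sigma} = 0$ whenever $\sigma \npreceq \rho$. This follows from \cref{thm:Tbasis}: the basis ${}_\rho B_\sigma$ of $1_\rho \muT_b 1_\sigma$ is indexed by permutations $w \in {}_\rho S_\sigma$ such that no black strand appears to the left of the leftmost colored strand and no two colored strands cross, with the bottom ordered by $1_\sigma$ and the top by $1_\rho$. A diagram realizing such a $w$ can only permute colored strands past black strands, and since colored strands cannot cross each other and black strands cannot end up left of the leftmost colored strand, the relative order of the colored strands is preserved while the number of black strands to the left of each colored strand can only decrease from bottom to top or stay the same — in other words, reading the idempotent at the top produces some $\rho''$ with $\sigma \preceq \rho''$ forced, and consistency with $1_\rho$ at the top requires $\rho = \rho''$, hence $\sigma \preceq \rho$. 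Contrapositively, $\sigma \npreceq \rho$ forces ${}_\rho S_\sigma = \emptyset$, so $1_\rho \muT_b 1_\sigma = 0$. (This is exactly the interpretation of the preorder in terms of sliding colored strands to the left, stated just before the excerpt ends.)

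\textbf{Conclusion.} Combining these: when $\rho' \npreceq \rho$, every summand $1_\rho \muT_b 1_{\rho'_\bj}$ of $1_\rho \muS_{\rho'}$ vanishes, so the whole complex is zero, and a fortiori $\RHOM_b((\muP_\rho,d_\mu),(\muS_{\rho'},d_\dgS)) \cong 0$.

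\textbf{Main obstacle.} The main point requiring care is the rigorous justification that $\sigma \npreceq \rho$ implies ${}_\rho S_\sigma = \emptyset$ — that is, translating the diagrammatic/basis description of \cref{thm:Tbasis} into the statement that nonzero morphisms $\muT_b 1_\sigma \to \muT_b 1_\rho$ (equivalently nonzero elements of $1_\rho \muT_b 1_\sigma$) force $\sigma \preceq \rho$. One should argue this by tracking, for each colored strand, the count of black strands to its left as one reads the diagram from bottom to top; any such count is non-increasing along a strand in the class of diagrams allowed in ${}_\rho S_\sigma$ (a black strand leaving the left region can never re-enter past a colored strand without a colored-colored crossing, which is forbidden), which is precisely the condition defining the arrows $\rho \leftarrow \rho'$ and hence the preorder. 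Everything else is bookkeeping with the shifts, which do not affect acyclicity.
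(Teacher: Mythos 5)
Your reduction of $\RHOM_{(\muT_b,d_\mu)}((\muP_\rho,d_\mu),(\muS_{\rho'},d_\dgS))$ to the complex $1_\rho\,\muS_{\rho'}$ is fine, but the key step of your argument --- that $1_\rho \muT_b 1_\sigma = 0$ whenever $\sigma \npreceq \rho$ --- is false, and the proof collapses with it. By \cref{thm:Tbasis} the space $1_\kappa \muT_b 1_\sigma$ has a basis indexed by ${}_\kappa S_\sigma$, and the only constraints on these permutation diagrams are that colored strands never cross \emph{each other} and that no black strand sits to the left of the leftmost colored strand; black strands may cross colored strands in either direction. Consequently ${}_\kappa S_\sigma$ is never empty (connect the $i$-th colored strand at the bottom to the $i$-th colored strand at the top), so $1_\kappa \muT_b 1_\sigma \neq 0$ for \emph{all} pairs $\kappa,\sigma$. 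Concretely, take $r=2$, $b=1$, $\rho=(1,0)$, $\sigma=(0,1)$: then $\sigma\npreceq\rho$, yet the diagram in which the single black strand crosses the strand labeled $\mu_2$ once is a nonzero element of $1_\rho\muT_b 1_\sigma$. Your monotonicity claim ("the number of black strands to the left of each colored strand can only decrease from bottom to top") has no basis in the relations; that count can increase and decrease freely. There is also a direction error earlier: since $\rho'_\bj$ is obtained from $\rho'$ by moving black strands to the \emph{left}, one has $\rho'_\bj \preceq \rho'$, not $\rho'_\bj \succeq \rho'$, so even the combinatorial deduction "$\rho'\npreceq\rho$ implies $\rho'_\bj\npreceq\rho$ for all $\bj$" fails: in the example above $\rho'=(0,1)$ and $\rho'_{\{1\}}=(1,0)=\rho$.

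The upshot is that $1_\rho\,\muS_{\rho'}$ is genuinely a nonzero complex; the content of the lemma is its \emph{acyclicity}, which cannot be obtained by termwise vanishing. The paper's proof instead rearranges the hypercube defining $(\muS_{\rho'},d_\dgS)$ so that it is an iterated extension of two-term cones $\cone\bigl((\muP_{\rho_2},d_\mu)\to(\muP_{\rho_1},d_\mu)\bigr)$ whose connecting map is a single black/colored crossing and where $\rho_1\npreceq\rho$; applying $1_\rho$ (i.e.\ the dg-hom out of the cofibrant module $\muP_\rho$), one checks that the closed maps, generated by the least-crossing diagram $1_\rho W_1 1_{\rho_1}$, are boundaries, because the cone differential applied to the least-crossing diagram $1_\rho W_2 1_{\rho_2}$ returns exactly $1_\rho W_1 1_{\rho_1}$. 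Your proposal is missing this homological mechanism entirely, so the gap is genuine.
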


\begin{proof}
    We know that $(\muS_{\rho'},d_\dgS)$ can be constructed as an iterated mapping cone, and thus takes the form of a hypercube of $\muP_{\rho''}$ for $\rho'' \preceq \rho'$. We can reaarrange the hypercube so that the first mapping cones are all of the form
    \[
        (\BS, d_S) :=
        \cone\left( (\muP_{\rho_2},d_\mu) \xrightarrow{
        \tikzdiag[scale=.5]{
            \node at(-.5,.5) {\small $\dots$};
            \draw (1,0) .. controls (1,.5) and (0,.5) .. (0,1);
            \draw[pstdhl] (0,0) node[below]{\small $\mu_i$} .. controls (0,.5) and (1,.5) .. (1,1);
            \node at(1.5,.5) {\small $\dots$};
        }
        } (\muP_{\rho_1},d_\mu) \right),
    \]
    for various $i$ and $\rho_1, \rho_2$ such that $\rho_1 \npreceq \rho$. 
    We claim that $\RHOM_{(\muT_b,d_\mu)}((\muP_\rho,d_\mu), (\BS,d_S)) \cong 0$ and then the statement of the lemma follows from exactness of the derived hom functor. 

    Since $(\muP_\rho, d_\mu)$ is cofibrant, we can replace the derived hom-space by the dg-hom-space. We only need to show that the homology of $\HOM_{(\muT_b,d_\mu)}((\muP_\rho,d_\mu), (\BS,d_S))$ is zero. 
    Recall that a map in the dg-hom-space is in the kernel of the differential if and only if it graded commutes with the differentials of the target and source dg-modules. 
    All these maps are generated by the map $(\muP_\rho,d_\mu) \rightarrow (\muP_{\rho_1},d_\mu)$ that sends $1_\rho$ to the diagrams with the least number of crossings $1_\rho W_1 1_{\rho_1}$. Then we can consider the map $(\muP_\rho,d_\mu) \rightarrow (\muP_{\rho_2},d_\mu)$ (that does not commute with the differentials) that sends $1_\rho$ to the diagram with the least number of crossings $1_\rho W_2 1_{\rho_2}$. But then we have $d_S(1_\rho W_2 1_{\rho_2}) = 1_\rho W_1 1_{\rho_1}$. Therefore the dg-hom-space is acyclic, concluding the proof. 
\end{proof}

\begin{lem}\label{lem:derivedstdhom}
    We have 
    $ %$\begin{align*}
        \RHOM_{(\muT_b,d_\mu)}((\muS_\rho,d_\dgS), (\muS_{\rho'},d_\dgS)) \cong 0
    $ %\end{align*}
    whenever $\rho' \succ \rho$.
\end{lem}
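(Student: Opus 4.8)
The statement to prove is that $\RHOM_{(\muT_b,d_\mu)}((\muS_\rho,d_\dgS),(\muS_{\rho'},d_\dgS)) \cong 0$ whenever $\rho' \succ \rho$. The natural strategy is to reduce this to the already-proven vanishing for the relatively projective modules, namely \cref{lem:derivedPtostdhom}, by resolving the source $(\muS_\rho,d_\dgS)$ by (shifted copies of) modules of the form $(\muP_{\rho''}, d_\mu)$ and then applying exactness of $\RHOM$ in the first slot together with a bookkeeping argument on the indices $\rho''$ that actually occur. The key point to isolate is: which $\rho''$ appear in the iterated mapping cone expressing $(\muS_\rho,d_\dgS)$?

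First I would recall, using \cref{def:recdefstd} (or equivalently the direct definition via the hypercube $\bigoplus_{\bj \subset J_\rho}\muS_{\rho,\bj}$), that $(\muS_\rho, d_\dgS)$ is an iterated mapping cone built from the summands $\muS_{\rho,\bj} = q^{\bullet}\muP_{\rho_\bj}[|\bj|]$ for $\bj \subset J_\rho$. By construction, each $\rho_\bj$ is obtained from $\rho$ by repeatedly increasing some $b_{j-1}$ by $1$ and decreasing $b_j$ by $1$, i.e. by moving black strands to the left; in the preorder language of \cref{sec:standard}, this means $\rho_\bj \preceq \rho$ for every $\bj$, and more precisely $\rho_\bj \prec \rho$ whenever $\bj \neq \emptyset$. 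Since $\RHOM_{(\muT_b,d_\mu)}(-, (\muS_{\rho'},d_\dgS))$ is exact (it sends mapping cones to mapping cones up to a shift), it suffices to show that $\RHOM_{(\muT_b,d_\mu)}((\muP_{\rho_\bj},d_\mu), (\muS_{\rho'},d_\dgS)) \cong 0$ for all $\bj \subset J_\rho$. By \cref{lem:derivedPtostdhom}, this derived hom vanishes as soon as $\rho' \npreceq \rho_\bj$.

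So the whole proof comes down to the combinatorial claim: if $\rho' \succ \rho$, then $\rho' \npreceq \rho_\bj$ for every $\bj \subset J_\rho$. This follows from transitivity of the preorder: we have $\rho_\bj \preceq \rho \prec \rho'$, so $\rho_\bj \prec \rho'$ (strictly, using $\rho \prec \rho'$), hence it is impossible to have $\rho' \preceq \rho_\bj$ — otherwise $\rho' \preceq \rho_\bj \preceq \rho \prec \rho'$ would force $\rho' \prec \rho'$. Here I am using that the preorder is genuinely a partial order on $\cP_b^r$ up to the antisymmetry that holds because each arrow $\rho \leftarrow \rho'$ strictly increases a nonnegative "leftness" statistic (e.g. $\sum_i i\, b_i$ decreases strictly along each arrow), so there are no nontrivial cycles. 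I would spell this out in a sentence: the function $\rho = (b_1,\dots,b_r) \mapsto \sum_{i=1}^r i\, b_i$ strictly decreases along every arrow, hence $\rho \preceq \rho'$ and $\rho' \preceq \rho$ together force $\rho = \rho'$, and $\rho \prec \rho'$ (meaning $\rho \preceq \rho'$ and $\rho \neq \rho'$) is a strict partial order.

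\textbf{Main obstacle.} There is no serious analytic obstacle; the content is entirely in the reduction and the order-theoretic bookkeeping. The one place requiring care is making sure the iterated-mapping-cone description of $(\muS_\rho, d_\dgS)$ really does only involve $\muP_{\rho_\bj}$ with $\rho_\bj \preceq \rho$ — this is visible directly from the formula for $\rho_\bj$ in terms of $\bj$, since each element of $\bj$ contributes a single "move a strand one step to the left" operation, which is exactly one arrow $\rho_{\bj} \leftarrow (\text{something closer to }\rho)$; iterating gives $\rho_\bj \preceq \rho$. The other minor point is to invoke exactness of $\RHOM$ correctly: $(\muS_\rho,d_\dgS)$ lies in the triangulated hull of $\{(\muP_{\rho_\bj},d_\mu)\}$, and $\RHOM$ is a (contravariant) triangulated functor in the first variable, so it sends this hull to the triangulated hull of $0$, which is $0$. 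I would phrase the argument exactly as in the proof of \cref{lem:derivedPtostdhom}: rearrange the hypercube so the relevant vanishing is applied summand by summand, and conclude.
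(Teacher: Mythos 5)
Your proposal is correct and follows essentially the same route as the paper: the paper's proof likewise writes $(\muS_\rho,d_\dgS)$ as an iterated extension of the $(\muP_{\rho''},d_\mu)$ with $\rho''\preceq\rho$ and concludes by exactness of the derived hom together with \cref{lem:derivedPtostdhom}. Your extra bookkeeping (the explicit $\rho_\bj\preceq\rho$ check and the antisymmetry of the preorder via a strictly monotone statistic) simply spells out details the paper leaves implicit.
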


\begin{proof}
    It follows by exactness of the derived hom functor together with \cref{lem:derivedPtostdhom} and the fact that $(\muS_\rho,d_\dgS)$ is an iterated extension of $(\muP_{\rho''},d_\mu)$ for various $\rho'' \preceq \rho$.
\end{proof}

\begin{prop}
    There is a quasi-equivalence $\cD_{\rho} \cong \cD_{dg}^{cblf}(\dgT_{b_1}^{\mu_1}\otimes\cdots\otimes \dgT_{b_r}^{\mu_r}, d_{\mu_1} + \cdots + d_{\mu_r})$. Moreover the projection $ \cD_{\succeq \rho} \rightarrow \cD_{\rho}$ is equivalent to the dg-functor 
    \[
        \RHOM_{(\muT_b,d_\mu)}((\muS_\rho,d_\dgS), -) : \cD_{\succeq \rho} \rightarrow \cD_{dg}^{cblf}(\dgT_{b_1}^{\mu_1}\otimes\cdots\otimes \dgT_{b_r}^{\mu_r}, d_{\mu_1} + \cdots + d_{\mu_r}),
    \]
    which is right adjoint to the standardization functor $\stdFunct$. 
\end{prop}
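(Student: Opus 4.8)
The plan is to combine three inputs: the c.b.l.f. generation statements of \cref{cor:stdcblfgen} and \cref{cor:muPgenerates}, the Hom-vanishing lemmas \cref{lem:derivedPtostdhom} and \cref{lem:derivedstdhom}, and the adjunction $\stdFunct \vdash \RHOM_{(\muT_b,d_\mu)}((\muS,d_\dgS),-)$ coming from the fact that $\stdFunct$ is given by tensoring with the bimodule $(\muS,d_\dgS)$. First I would identify the candidate equivalence: since $\cD_{\succeq\rho}$ is c.b.l.f. generated by the $(\muS_{\rho'},d_\dgS)$ with $\rho'\succeq\rho$, and $\cD_{\succ\rho}$ by those with $\rho'\succ\rho$, the Verdier dg-quotient $\cD_\rho$ should be c.b.l.f. generated by the single object $(\muS_\rho,d_\dgS)$. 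The endomorphism dg-algebra of this generator should be computed via \cref{lem:derivedstdhom} (together with the analogous statement for $\rho'\prec\rho$, which is killed in the quotient) and a direct computation of $\RHOM_{(\muT_b,d_\mu)}((\muS_\rho,d_\dgS),(\muS_\rho,d_\dgS))$. The claim will be that this is quasi-isomorphic to $\dgT_{b_1}^{\mu_1}\otimes\cdots\otimes\dgT_{b_r}^{\mu_r}$ with differential $d_{\mu_1}+\cdots+d_{\mu_r}$ — this is exactly the point where the bimodule structure on $(\muS_\rho,d_\dgS)$ constructed in the previous subsection is used, as the right action realizes the needed map from the tensor product of dgKLRW algebras into the endomorphism algebra.

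Concretely, I would proceed as follows. Step one: show that $\RHOM_{(\muT_b,d_\mu)}((\muS_\rho,d_\dgS),(\muS_\rho,d_\dgS))$, computed after resolving the first argument by its iterated mapping-cone decomposition into $(\muP_{\rho''},d_\mu)$'s with $\rho''\preceq\rho$, is quasi-isomorphic to $(\dgT_{b_1}^{\mu_1}\otimes\cdots\otimes\dgT_{b_r}^{\mu_r},d_{\mu_1}+\cdots+d_{\mu_r})$; the right-module structure on $(\muS_\rho,d_\dgS)$ gives a dg-algebra map from the latter into the former, and one checks it is a quasi-isomorphism using a dimension count via \cref{prop:Tdecomp}/\cref{thm:Tbasis} and the vanishing in \cref{lem:derivedstdhom} to control the ``error terms'' coming from $\rho''\prec\rho$. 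Step two: invoke the standard dg-Morita theory (as in Toen, using \eqref{eq:quasifunctequiv} and the discussion in \cref{sec:dgfunctors}): a compact generator with the correct endomorphism dg-algebra gives a quasi-equivalence $\cD_\rho\cong\cD_{dg}^{cblf}(\dgT_{b_1}^{\mu_1}\otimes\cdots\otimes\dgT_{b_r}^{\mu_r},d_{\mu_1}+\cdots+d_{\mu_r})$, with the equivalence implemented by $\RHOM_{(\muT_b,d_\mu)}((\muS_\rho,d_\dgS),-)$. Step three: identify this dg-functor with the projection $\cD_{\succeq\rho}\to\cD_\rho$: by \cref{lem:derivedstdhom} the functor $\RHOM_{(\muT_b,d_\mu)}((\muS_\rho,d_\dgS),-)$ annihilates $\cD_{\succ\rho}$, so it factors through the quotient, and by the universal property of the Verdier dg-quotient the induced functor is the claimed equivalence. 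Step four: the right adjunction to $\stdFunct$ is immediate from the tensor-hom adjunction of \cref{sec:deriveddghomtensor}, since $\stdFunct = (\muS,d_\dgS)\Lotimes(-)$ and the right adjoint of $(B,d_B)\Lotimes(-)$ is $\RHOM((B,d_B),-)$; restricting to the subcategory $\cD_{\succeq\rho}$ and the $\rho$-graded piece of $(\muS,d_\dgS)$ gives precisely the stated adjoint pair.

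\textbf{Main obstacle.} The hard part will be Step one: verifying that the natural map $(\dgT_{b_1}^{\mu_1}\otimes\cdots\otimes\dgT_{b_r}^{\mu_r},d_{\mu_1}+\cdots+d_{\mu_r})\to\RHOM_{(\muT_b,d_\mu)}((\muS_\rho,d_\dgS),(\muS_\rho,d_\dgS))$ is a quasi-isomorphism. One direction (injectivity on homology, or rather that the map is split mono up to homotopy) should follow from faithfulness of the right action, but surjectivity on homology requires showing that any cocycle in the dg-hom-space is cohomologous to one in the image of the right action; this is where the combinatorics of the $\tau_{\bj,\bj'}$ maps, the cube differential $d_\dgS$, and the acyclicity arguments in the style of \cref{lem:derivedPtostdhom} (pairing up ``extra'' generators) all come together. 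I expect this to reduce, after using \cref{prop:Tdecomp} repeatedly to peel off black strands, to a statement about the nilHecke pieces $\nh_{b_i}$ and their interaction with the $\omega_{\bj_\ell}$ decomposition \eqref{eq:dgnhdecomp}, which is essentially a bookkeeping argument but a delicate one. An alternative, possibly cleaner route would be to bypass the explicit endomorphism-algebra computation: show directly that $\stdFunct : \cD_{dg}^{cblf}(\dgT_{b_1}^{\mu_1}\otimes\cdots\otimes\dgT_{b_r}^{\mu_r},d_{\mu_1}+\cdots+d_{\mu_r})\to\cD_{\succeq\rho}/\cD_{\succ\rho}$ is fully faithful (using \cref{lem:derivedstdhom} to kill the Hom's into $\cD_{\succ\rho}$) and essentially surjective onto $\cD_\rho$ by generation, which would give the quasi-equivalence and simultaneously identify the quotient functor as its adjoint; I would likely present the proof this way to keep it short, and cite \cref{sec:dgfunctors} for the dg-enhanced quotient formalism.
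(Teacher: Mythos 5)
Your plan is correct and follows essentially the same route as the paper, whose proof is just the observation that \cref{lem:derivedstdhom} makes $\RHOM_{(\muT_b,d_\mu)}((\muS_\rho,d_\dgS),-)$ vanish on $\cD_{\succ\rho}$ and hence factor through the Verdier dg-quotient, combined with exactness of the derived hom functor and the tensor--hom adjunction for $\stdFunct$. The one point you flag as the main obstacle — identifying $\RHOM_{(\muT_b,d_\mu)}((\muS_\rho,d_\dgS),(\muS_\rho,d_\dgS))$ with $(\dgT_{b_1}^{\mu_1}\otimes\cdots\otimes\dgT_{b_r}^{\mu_r}, d_{\mu_1}+\cdots+d_{\mu_r})$ via the right bimodule action — is exactly the content the paper leaves implicit in its one-line proof, so your more explicit treatment of it is an elaboration of, not a departure from, the paper's argument.
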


\begin{proof}
    It follows from \cref{lem:derivedstdhom} and exactness of the derived hom functor. 
\end{proof}

%%%%%%%%%%%%%%%%	End of file	%%%%%%%%%%%%%

\appendix
%%%%%%%%%%%%%%%%%%%%%%%%%%%%%%%%%%%%
%                 					  				  		 %
%	Basis theorems				 					 %
%                 					  						 %
%%%%%%%%%%%%%%%%%%%%%%%%%%%%%%%%%%%%

\section{Rewriting methods} \label{sec:rewritingmethods}

\subsection{Diagrammatic rewriting}
Let $\mathbf{A}$ be a diagrammatic algebra presented by generators and relations. It is defined by a set of generators, denoted by $\mathbf{A}_g$, containing diagrams that are of the form 
\begin{equation}
\label{eq:shapegenerators}
\tikzdiag{
\draw (-1.25,-1) to (-1.25,2);
\draw (-2,-1) to (-2,2);
\node[scale=0.75] at (-1.6,0.5) {$\dots$};
\node[scale=0.75] at (-1.25,-1.2) {$\lambda_k$};
\node[scale=0.75] at (-2,-1.2) {$\lambda_1$};
	\draw (0,-1) -- (0,0) ..controls (0.2,.5) and (1,.5) .. (1,1);
	\draw (-0.5,-1) -- (-0.5,0) ..controls (0,0.5) and (1,.5) .. (1,1);
	\draw (1,-1) -- (1,0) ..controls (1,.5) and (0,.5) .. (0,1);
		\draw (1.5,-1) -- (1.5,0) ..controls (1,.5) and (0,.5) .. (0,1);
		\draw (1,1) to (1,2);
		\draw (0,1) to (0,2);
			\draw (-0.5,0) ..controls (0,.5) and (1.5,.5) .. (1.5,1) -- (1.5,2);
	\draw (1.5,0) ..controls (1,.5) and (-0.5,.5) .. (-0.5,1) -- (-0.5,2);
		\filldraw [fill=white, draw=black,rounded corners] (.5-.25,.5-.25) rectangle (.5+.25,.5+.25) node[midway] { $x$};
		\node[scale=0.75] at (0.5,1.5) {$\cdots$};
		\node[scale=0.75] at (0.5,-0.5) {$\cdots$};
		\node[scale=0.75] at (-0.5,2.15) {$\mu_1$};
		\node[scale=0.75] at (0,2.15) {$\mu_2$};
		\node[scale=0.75] at (1.5,2.15) {$\mu_m$};
		\node[scale=0.75] at (-0.5,-1.15) {$\eta_1$};
		\node[scale=0.75] at (0,-1.15) {$\eta_2$};
		\node[scale=0.75] at (1.5,-1.15) {$\eta_n$};
\draw (2.25,-1) to (2.25,2);
\draw (3,-1) to (3,2);
\node[scale=0.75] at (2.6,0.5) {$\dots$};
\node[scale=0.75] at (2.25,-1.2) {$\lambda'_1$};
\node[scale=0.75] at (3,-1.2) {$\lambda'_\ell$};
} 
\end{equation}
where $m$,$n$, $k$, $\ell$ are integers, and $\lambda_1 \dots, \lambda_k, \lambda'_1, \dots, \lambda'_\ell, \mu_1, \dots, \mu_m, \eta_1, \dots, \eta_n$ are labels (or colors) that belong to an indexing set $I_{\mathbf{A}}$. Such a diagram can be considered \emph{locally}, by forgetting the vertical strands on the left and on the right, and we say that a diagram $x$ as in \cref{eq:shapegenerators} has {\em arity} $n$ and {\em coarity} $m$. To simplify the notations, we will write this as $x: \eta_1 \dots \eta_n \fl \mu_1 \dots \mu_m$. In other words, the generators of $\mathbf{A}$ are represented by diagrams, with vertical labelled strands in the leftmost and the rightmost region, and in between such a diagram with arity $n$ and coarity $m$, corresponding to a diagram that has $n$ labelled strands as input and $m$ labelled strand as output. We allow $m$ and $n$ to be $0$, however we assume in the sequel that any generator $x$ in $\mathbf{A}_g$ has same arity and coarity, that can be $0$. Therefore, we have the following disjoint decomposition for $\mathbf{A}_g$:
\[ \mathbf{A}_g = \underset{n \in \mathbb{N}}{\sqcup} \mathbf{A}_g(n) \]
where $\mathbf{A}_g(n)$ denotes the set of generators with arity and coarity $n$. Moreover, we assume that $\mathbf{A}_g^0$ is equipped with a total order $\prec_0$.  We also assume that the diagrams in an algebra $\mathbf{A}$ admit a constant number of strands, so that the sum $k + n + \ell$ for a diagram $x$ as in \cref{eq:shapegenerators} is constant, equal to a fixed number $s(\mathbf{A})$ giving the number of strands of $\mathbf{A}$.

\begin{comment}
We assume that $\mathbf{A}_g$ does not contain any generator of arity $m > 0$ (resp. of coarity $m >0$) and of coarity $0$ (resp. of arity $0$) such as 
\[ \tikzdiag{
	\draw (0,-1) -- (0,0) ..controls (0,.5) and (1,.5) .. (1,0.5);
	\draw (-0.5,-1) -- (-0.5,0) ..controls (0,0.5) and (1,.5) .. (1,0.5);
	\draw (1,-1) -- (1,0) ..controls (1,.5) and (0,.5) .. (0,0.5);
		\draw (1.5,-1) -- (1.5,0) ..controls (1,.5) and (1,.5) .. (0,0.5);
		\filldraw [fill=white, draw=black,rounded corners] (.5-.25,.5-.25) rectangle (.5+.25,.5+.25) node[midway] { $x$};
		\node[scale=0.75] at (0.5,-0.5) {$\cdots$};
		\node[scale=0.75] at (-0.5,-1.15) {$\eta_1$};
		\node[scale=0.75] at (0,-1.15) {$\eta_2$};
		\node[scale=0.75] at (1.5,-1.15) {$\eta_n$};
} \]
Therefore, one might split the set $\mathbf{A}_g$ into the disjoint union of $\mathbf{A}_g^+$ containing generators of positive arity and coarity and $\mathbf{A}_g^0$ containing generators of arity $0$ and coarity $0$. We assume that $\mathbf{A}_g^0$ is equipped with a total order $\prec_0$.
\end{comment}

The product of two generators $x : \eta_1 \dots \eta_n \fl \mu_1 \dots \mu_n$ and $y : \mu_1 \dots \mu_n \fl \delta_1 \dots \delta_n$ (that can admit vertical strands) is obtained by vertically composing the two diagrams, from bottom to top. It is zero if the common sequence of labels $\mu_1 \dots \mu_n$ do not match. A {\emph monomial} of $\mathbf{A}$ is a product in the elements of $\mathbf{A}_g$, that is a diagram containing layers of generating pieces, in which any generator has a given {\em height}. Explicitly, a generator $x_i$ in a monomial $x_1 \dots x_{i-1} x_i x_{i+1} \dots x_n$ admits a diagrammatic height, denoted by $h(x_i) := i$. This extends to monomials of $\mathbf{A}$: if $x_k \dots x_{k+m}$ is a monomial dividing a monomial $x_1 \dots x_n$, then we set $h(x_k \dots x_{k+m}) := k$.

The presentation of a diagrammatic algebra is then given by choosing a set of diagrammatic relations between polynomials made of these monomials, with common source and target labels. As a consequence, the algebra $\mathbf{A}_g$ can be presented by a linear $2$-polygraph $P_{\mathbf{A}}$ with only one $0$-cells, whose generating $1$-cells are given by the elements of $\mathbf{A}_g$ and whose generating $2$-cells correspond to a fixed orientation of these relations. The generating $1$-cells of $P_{\mathbf{A}}$ are thus also equipped with an arity and coarity, that extends to the monomials of $P_1^\ell$. We denote by $P_1^\ell[n,m]$ the set of monomials with arity $n$ and coarity $m$.

\begin{exe}
For the nilHecke algebra $\nh_n$ of degree $n$, the set $I_{\nh_n}$ is a singleton, so that we may omit labels in the diagrams, $s(\nh_n) = n$ and the set of generators is given by $(\nh_n)_g:= \{ x_i \: \mid \: 1 \leq i \leq n \} \cup \{ \tau_k \: \mid 1 \leq k \leq n-1 \}$ of respective (co)arity $1$ and $2$ that are diagrammatically depicted as follows:
\begin{align}
\label{eq:Gensnilhecke}
x_i &:= \tikzdiagh{0}{
	\draw(2,0) -- (2,1);
%	\node[scale=0.8] at (2,-0.3) {$1$};
		\node at(2.5,.5) {\tiny$\dots$};
		\draw (3,0) -- (3,1);
		\node at (3,0.5) {$\bullet$};
		\node[scale=0.8] at (3,-0.3) {$i$};
		\node at (3.5,0.5) {\tiny$\dots$};
		\draw (4,0) -- (4,1);
}
&
\tau_k &:= \tikzdiagh{0}{
%	\node[scale=0.8] at (2,-0.3) {$1$};
	\draw (2,0) -- (2,1);
	\node at(2.5,.5) {\tiny$\dots$};
%	\draw (3.5,0) -- (3.5,1);
%	\node[scale=0.8] at (3.5,-0.3) {$i-1$};
\draw (2.7,0) ..controls (2.7,.5) and (3.3,.5) .. (3.3,1);
	\draw (3.3,0) ..controls (3.3,.5) and (2.7,.5) .. (2.7,1);
	\node[scale=0.8] at (2.7,-0.3) {$k$};
    \node at (3.5,0.5) {\tiny $\dots$};
	\draw (4,0) -- (4,1);
}
\end{align}
where the label $i$ indicates that this is the $i$-th strand at the bottom from left to right.
\end{exe}

\subsection{The linear~2-polygraph of distant isotopies}

Given a linear $2$-polygraph $P_{\mathbf{A}}$ presenting a diagrammatic algebra $\mathbf{A}$ with set of generators $\mathbf{A}_g$ and indexing set $I_{\mathbf{A}}$, we define the linear~$2$-polygraph $\text{Iso}(\mathbf{A})$ of planar isotopies of $\mathbf{A}$ that has only one $0$-cell and whose:
\begin{enumerate}[{\bf i)}]
\item generating $1$-cells are given by the $1$-cells of $(P_\mathbf{A})_1^\ast$, that correspond to the monomials of $\mathbf{A}$,
\item generating $2$-cells are given by the following local relations:
\[
\tikzdiag[yscale=1]{
\draw (0,0.5) to (1,0.5) to (1,1) to (0,1) to (0,0.5);
\draw[dashed] (0.2,1) to (0.2,1.4);
\node[scale=0.75] at (0.2,1.6) {$\eta_1$};
\node[scale=0.75] at (0.5,1.6) {$\cdots$};
\node[scale=0.75] at (0.8,1.6) {$\eta_k$};
\draw[dashed] (0.8,1) to (0.8,1.4);
\node at (0.5,0.75) {$D$};
\draw[dashed] (0.2,0.5) to (0.2,-0.5);
\draw[dashed] (0.8,0.5) to (0.8,-0.5);
\node[scale=0.75] at (0.2,-0.7) {$\mu_1$};
\node[scale=0.75] at (0.5,-0.7) {$\cdots$};
\node[scale=0.75] at (0.8,-0.7) {$\mu_k$};
\node[scale=0.75] at (1.5,0.25) {$\cdots$};
\draw (2,-0.1) to (2,0.4) to (3,0.4) to (3,-0.1) to (2,-0.1);
\node at (2.5, 0.15) {$D'$};
\draw[dashed] (2.2,-0.1) to (2.2,-0.5);
\draw[dashed] (2.8,-0.1) to (2.8,-0.5);
\draw[dashed] (2.2,0.4) to (2.2,1.4);
\draw[dashed] (2.8,0.4) to (2.8,1.4);
\node[scale=0.75] at (2.2,1.6) {$\eta'_1$};
\node[scale=0.75] at (2.5,1.6) {$\cdots$};
\node[scale=0.75] at (2.8,1.6) {$\eta'_m$};
\node[scale=0.75] at (2.2,-0.7) {$\mu'_1$};
\node[scale=0.75] at (2.5,-0.7) {$\cdots$};
\node[scale=0.75] at (2.8,-0.7) {$\mu'_m$};
}  \: \overset{E_{D,D'}}{\Rrightarrow} \: \tikzdiag[yscale=1]{
\draw (2,0.5) to (3,0.5) to (3,1) to (2,1) to (2,0.5);
\draw[dashed] (2.2,1) to (2.2,1.4);
\draw[dashed] (2.8,1) to (2.8,1.4);
\draw[dashed] (2.2,0.5) to (2.2,-0.5);
\draw[dashed] (2.8,0.5) to (2.8,-0.5);
\node[scale=0.75] at (1.5,0.25) {$\cdots$};
\draw (0,-0.1) to (0,0.4) to (1,0.4) to (1,-0.1) to (0,-0.1);
\node at (0.5, 0.15) {$D$};
\node at (2.5,0.75) {$D'$};
\draw[dashed] (0.1,-0.1) to (0.1,-0.5);
\draw[dashed] (0.8,-0.1) to (0.8,-0.5);
\draw[dashed] (0.2,0.4) to (0.2,1.4);
\draw[dashed] (0.8,0.4) to (0.8,1.4);
\node[scale=0.75] at (0.2,1.6) {$\eta_1$};
\node[scale=0.75] at (0.5,1.6) {$\cdots$};
\node[scale=0.75] at (0.8,1.6) {$\eta_k$};
\node[scale=0.75] at (0.2,-0.7) {$\mu_1$};
\node[scale=0.75] at (0.5,-0.7) {$\cdots$};
\node[scale=0.75] at (0.8,-0.7) {$\mu_k$};
\node[scale=0.75] at (2.2,1.6) {$\eta'_1$};
\node[scale=0.75] at (2.5,1.6) {$\cdots$};
\node[scale=0.75] at (2.8,1.6) {$\eta'_m$};
\node[scale=0.75] at (2.2,-0.7) {$\mu'_1$};
\node[scale=0.75] at (2.5,-0.7) {$\cdots$};
\node[scale=0.75] at (2.8,-0.7) {$\mu'_m$};
}
\]
for any monomials $D: \mu_1 \dots \mu_k \to \eta_1 \dots \eta_k$ and $D': \mu'_1 \dots \mu'_m \to \eta'_1 \dots \eta'_n$ in $P_1^\ell$ of respective heights $i$ and $j$, with $i > j$, provided that $D \prec_0 D'$ if $D$ and $D$' are both of arity and coarity $0$, and for any number of strands with any label bewteen $D$ and $D'$.
\end{enumerate}

In the sequel, we will prove rewriting properties of the linear $2$-polygraph $\text{Iso}(A)$ that are independant of the labels of the generators. Therefore, we omit the labels in the diagrams in the proofs of termination and confluence for $\text{Iso}(A)$. Let us first prove the following statement:

\begin{prop}
Given a diagrammatic algebra $\mathbf{A}$ with the above assumptions, the linear $2$-polygraph $\text{Iso}(\mathbf{A})$ is terminating.
\end{prop}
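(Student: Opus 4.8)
The plan is to exhibit an explicit termination order on the monomials of $\mathbf{A}$ that strictly decreases under every generating $2$-cell $E_{D,D'}$. The key observation is that the $2$-cells of $\text{Iso}(\mathbf{A})$ only swap the \emph{heights} of two distant subdiagrams $D$ and $D'$, in a way that strictly reduces a suitable ``inversion'' statistic. The strategy is the same in spirit as the termination proof for $\widetilde{\mathbb{T}}_b^{\und \mu}(\delta)$ (\cref{prop:rewriteterminates}): build a weight taking values in a well-ordered set, show it strictly decreases, and show it is compatible with the categorical (vertical) composition so that it extends from generating $2$-cells to all rewriting steps.

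\textbf{Key steps.} First I would fix, for a monomial $w = x_1 \cdots x_n$ of $\mathbf{A}$ (a product of generating $1$-cells listed from bottom to top, so $h(x_i) = i$), the generator $x_i$'s horizontal position, i.e. the index $c(x_i) \in \{1, \dots, s(\mathbf{A})\}$ of the leftmost strand entering $x_i$; this is well-defined since the number of strands is the constant $s(\mathbf{A})$. Then I would define the weight of $w$ to be the number of \emph{disordered distant pairs}:
\[
W(w) := \#\bigl\{ (i,j) \ \big|\ i < j,\ x_i \text{ and } x_j \text{ are ``distant'' in the sense of } \text{Iso}(\mathbf{A}),\ \text{and } (x_i,x_j) \text{ is reducible} \bigr\},
\]
where a pair $(x_i, x_j)$ with $i<j$ is counted when it matches the left-hand side of some $E_{D,D'}$, namely when $x_j$ lies entirely to the left of $x_i$ (or, in the arity/coarity $0$ case, when $x_j \prec_0 x_i$), so that the rule $E_{D,D'}$ applies to bring $x_j$ below $x_i$. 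A single application of $E_{D,D'}$ inside a larger monomial takes $\cdots x_j \cdots x_i \cdots$ (with $x_j$ above $x_i$, the pair disordered) to $\cdots x_i \cdots x_j \cdots$ (now ordered), and does not change the relative order of any other distant pair — because $D$ and $D'$ occupy disjoint sets of strands, no third generator's position relative to either of them can change. Hence $W$ drops by exactly $1$ under each generating $2$-cell. Next I would check compatibility with vertical composition: if $w \Rrightarrow w'$ is a rewriting step then for any monomials $u, v$ we have $W(u w v) > W(u w' v)$, since stacking $u$ below and $v$ above only adds pairs whose ordering is untouched by the step. This shows $\preceq_W$ is a termination order for $\text{Iso}(\mathbf{A})$ in the sense recalled in \cref{sec:prelimrewriting}. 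Finally, since rewriting steps in the linear $2$-polygraph are applied to single monomials in the support of a $1$-cell and the support is finite, no infinite rewriting sequence can exist; termination follows. For the degenerate pieces of arity and coarity $0$ (elements of $\mathbf{A}_g^0$), the total order $\prec_0$ guarantees that ``reducible'' is an asymmetric relation, so that a pair is counted in at most one orientation and $W$ is still well-defined and strictly decreasing.

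\textbf{Main obstacle.} The delicate point is not the bookkeeping of inversions but making precise the notion of ``distant reducible pair'' so that it exactly captures the left-hand sides of the $2$-cells $E_{D,D'}$ and so that the claim ``applying $E_{D,D'}$ changes the ordering of no other pair'' is genuinely true. In particular one must be careful that $D$ and $D'$ in the rule are allowed to be \emph{composite} monomials (not just single generators), and that the ``height'' condition $i > j$ in the definition of $\text{Iso}(\mathbf{A})$ refers to the heights of these composite blocks; I would therefore phrase $W$ in terms of pairs of \emph{generators} rather than pairs of blocks, and argue that a block-level swap is a product of generator-level transpositions each of which is an honest inversion-reducing move, so the count still decreases (by the number of generator pairs swapped, which is positive). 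The other thing to watch is that the strands drawn to the left and right of a generator in \cref{eq:shapegenerators} do not interfere: since they carry no singularities, a generator's horizontal position $c(x_i)$ is unambiguous and stable under isotopy, which is what makes $W$ invariant under braid-like planar isotopy and hence well-defined on $\muDiag$-style equivalence classes.
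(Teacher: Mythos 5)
Your termination order is genuinely different from the paper's, and for the strand-anchored part of the argument it works. The paper also proceeds by exhibiting a termination order compatible with contexts, but its invariant is a strand-indexed tuple $\delta(D)=(\delta_1(D),\dots,\delta_{s(\mathbf{A})}(D))\in\mathbb{Z}^{s(\mathbf{A})}$ ordered lexicographically, where $\delta_i(D)$ follows the $i$-th strand from bottom to top and, at each generator met on that strand, adds the number of generators lying below it. Under an exchange $E_{D,D'}$ the coordinates attached to the strands of $D'$ actually \emph{increase}; the strict decrease is detected at the leftmost strand of $D$, and the lexicographic order is what saves the day. Your single $\mathbb{N}$-valued inversion count avoids this bookkeeping: every cross pair (one generator from $D$, one from $D'$) passes from disordered to ordered, nothing else changes, so $W$ drops by $|D|\cdot|D'|$ (not ``exactly $1$'' as you first write, but you correct this for composite blocks), and compatibility with vertical composition is immediate. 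For algebras in which every generator has positive arity --- which covers all the algebras this paper actually rewrites --- your argument is a correct and arguably simpler alternative.

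The gap is in the arity-and-coarity-$0$ case, which the standing assumptions allow and which you only wave at. You count a floating/floating pair as disordered according to $\prec_0$ rather than according to horizontal position, and this interacts badly with composite blocks. Concretely, let $D$ consist of a strand generator $s$ with a floating generator $g$ above it, and let $D'=g'$ be a single floating generator to the right of $D$ with $g'\prec_0 g$; the layers read $g',s,g$ from bottom to top, and the generating $2$-cell $E_{D,D'}$ rewrites this to $s,g,g'$. The pair $(s,g')$ stops being counted ($-1$), but the pair $(g,g')$, uncounted before since $g\not\prec_0 g'$, becomes counted after ($+1$, as now $g'$ lies above $g$ and $g'\prec_0 g$), so $W$ is unchanged and your order does not strictly decrease on this generating $2$-cell. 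You cannot dismiss this by factoring the block move into generator-level transpositions: the single exchange of $g$ past $g'$ is forbidden by the $\prec_0$ proviso, so this composite $2$-cell is not a composite of single-generator exchanges. (The paper's $\delta$ does handle this configuration, since the count on the strand of $s$ drops, although it is blind in the opposite corner where $D$ consists only of floating generators.) To get the statement in the stated generality you should refine your weight --- for instance order lexicographically by your strand-anchored inversion count first and a separate statistic governing the arity-$0$ generators second --- or explicitly restrict to algebras with $\mathbf{A}_g^0=\emptyset$.
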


\begin{proof} Consider the mapping 
\[
\delta : (P_\mathbf{A})_1^\ast \to \mathbb{Z}^{s(\mathbf{A})}
\]
that sends any monomial $D$ onto $(\delta_1(D), \dots, \delta_{s(\mathbf{A})}(D))$ where 
$\delta_i(D)$ is computed as follows: follow the $i$-strand (counted from left) from the bottom to the top, and each time we encounter a generator that intersects this line, add the number of generators (intersecting or not) that are below. 
One may check that for any $2$-cell $\alpha$ of $\text{Iso}(\mathbf{A})$, the inequality
$\delta(s_1(\alpha)) >_{\text{lex}} \delta(t_1(\alpha))$ for the lexicographic order on $\mathbb{Z}^{s(A)}$. Moreover, this order is admissible, that is $\delta(D) >_{\text{lex}} \delta(D')$ implies that $\delta(D_1 D D_2) >_{\text{lex}} \delta(D_1 D' D_2)$ for any monomials $D$,$D'$, $D_1$, $D_2$ such that the products are well-defined, since we add on bottom and top of $D$ and $D'$ a constant number of generators below any height. Therefore, the order on $P_1^\ell$ defined by $D < D'$ if and only if $\delta(D) <_{\text{lex}} \delta(D')$ defines a termination order for $Iso(\mathbf{A})$.
\end{proof}

\begin{exe} 
Consider the nilHecke algebra $\nh_6$ on $6$ strands, we have the following:
\[ \delta\left( \: \raisebox{-6mm}{$\begin{tikzpicture}[scale=1.2] \begin{scope} [ x = 10pt, y = 10pt, join = round, cap = round, thick ]  \draw (0.00,2.75)--(0.00,2.25) (1.00,2.75)--(1.00,2.50) (2.00,2.75)--(2.00,2.50) (3.00,2.75)--(3.00,2.25) (4.00,2.75)--(4.00,2.25) (5.00,2.75)--(5.00,2.25) ; 
\draw (1.00,2.50) ..controls (1,2.25) and (2,2.25) .. (2,2);
\draw (2.00,2.50) ..controls (2,2.25) and (1,2.25) .. (1.00,2.00) ;
  \draw (0.00,2.25)--(0.00,1.75) (1.00,2.00)--(1.00,1.75) (2.00,2.00)--(2.00,1.50) (3.00,2.25)--(3.00,1.50) (4.00,2.25)--(4.00,1.50) (5.00,2.25)--(5.00,1.50) ; \draw (0.00,1.75) ..controls (0,1.5) and (1,1.5) .. (1.00,1.25);
  \draw (1.00,1.75) ..controls(1,1.5) and (0,1.5) .. (0.00,1.25); 
  \draw (0.00,1.25)--(0.00,0.75) (1.00,1.25)--(1.00,0.75) (2.00,1.50)--(2.00,0.75) (3.00,1.50)--(3.00,1.00) (4.00,1.50)--(4.00,1.00) (5.00,1.50)--(5.00,0.75) ; 
  \draw (3.00,1.00) ..controls (3,0.75) and (4,0.75) .. (4.00,0.50);
  \draw (4.00,1.00) ..controls (4,0.75) and (3,0.75) .. (3.00,0.50) ;
  \draw (0.00,0.75)--(0.00,0.00) (1.00,0.75)--(1.00,0.00) (2.00,0.75)--(2.00,0.00) (3.00,0.50)--(3.00,0.00) (4.00,0.50)--(4.00,0.25) (5.00,0.75)--(5.00,0.25) ; 
  \draw (4.00,0.25) ..controls (4,0) and (5,0) .. (5.00,-0.25);
  \draw (5.00,0.25) ..controls (5,0) and (4,0) .. (4.00,-0.25) ; \draw (0.00,0.00)--(0.00,-0.50) (1.00,0.00)--(1.00,-0.50) (2.00,0.00)--(2.00,-0.50) (3.00,0.00)--(3.00,-0.50) (4.00,-0.25)--(4.00,-0.50) (5.00,-0.25)--(5.00,-0.50) ; 
\node at (2,1.1) {$\bullet$};
\end{scope} \end{tikzpicture}$} \: \right) =  \raisebox{0mm}{$ (3,7,6,1,1,0)$}, 
\qquad \qquad 
\delta\left( \: \raisebox{-7mm}{\begin{tikzpicture}[scale=1.2] \begin{scope} [ x = 10pt, y = 10pt, join = round, cap = round, thick ]  \draw (0.00,2.75)--(0.00,2.25) (1.00,2.75)--(1.00,2.50) (2.00,2.75)--(2.00,2.50) (3.00,2.75)--(3.00,2.25) (4.00,2.75)--(4.00,2.25) (5.00,2.75)--(5.00,2.25) ; 
\draw (1.00,2.50) ..controls (1,2.25) and (2,2.25) .. (2,2);
\draw (2.00,2.50) ..controls (2,2.25) and (1,2.25) .. (1.00,2.00) ;
  \draw (0.00,2.25)--(0.00,1.75) (1.00,2.00)--(1.00,1.75) (2.00,2.00)--(2.00,1.50) (3.00,2.25)--(3.00,1.50) (4.00,2.25)--(4.00,1.50) (5.00,2.25)--(5.00,1.50) ; \draw (0.00,1.75) ..controls (0,1.5) and (1,1.5) .. (1.00,1.25);
  \draw (1.00,1.75) ..controls(1,1.5) and (0,1.5) .. (0.00,1.25); 
  \draw (0.00,1.25)--(0.00,0.75) (1.00,1.25)--(1.00,0.75) (2.00,1.50)--(2.00,0.75) (3.00,1.50)--(3.00,1.00) (4.00,1.50)--(4.00,1.00) (5.00,1.50)--(5.00,0.75) ; 
  \draw (3.00,1.00) ..controls (3,0.75) and (4,0.75) .. (4.00,0.50);
  \draw (4.00,1.00) ..controls (4,0.75) and (3,0.75) .. (3.00,0.50) ;
  \draw (0.00,0.75)--(0.00,0.00) (1.00,0.75)--(1.00,0.00) (2.00,0.75)--(2.00,0.00) (3.00,0.50)--(3.00,0.00) (4.00,0.50)--(4.00,0.25) (5.00,0.75)--(5.00,0.25) ; 
  \draw (4.00,0.25) ..controls (4,0) and (5,0) .. (5.00,-0.25);
  \draw (5.00,0.25) ..controls (5,0) and (4,0) .. (4.00,-0.25) ; \draw (0.00,0.00)--(0.00,-0.50) (1.00,0.00)--(1.00,-0.50) (2.00,0.00)--(2.00,-0.50) (3.00,0.00)--(3.00,-0.50) (4.00,-0.25)--(4.00,-0.50) (5.00,-0.25)--(5.00,-0.50) ; 
\node at (2,-0.3) {$\bullet$};
 \end{scope} 
 \end{tikzpicture}} \: \right) = \raisebox{0mm}{$(3,7,4,2,3,1)$}
 \]
\vspace{-0.2cm}
\[ \delta\left( \: \raisebox{-8mm}{$\begin{tikzpicture}[scale=1.2] \begin{scope} [ x = 10pt, y = 10pt, join = round, cap = round, thick ] \draw (0.00,3.00)--(0.00,2.50) (1.00,3.00)--(1.00,2.50) (2.00,3.00)--(2.00,2.50) (3.00,3.00)--(3.00,2.75) (4.00,3.00)--(4.00,2.75) (5.00,3.00)--(5.00,2.50) ; 
\draw (3.00,2.75) ..controls (3,2.5) and (4,2.5) .. (4.00,2.25);
\draw (4.00,2.75) ..controls (4,2.5) and (3,2.5) .. (3.00,2.25) ;  \draw (0.00,2.50)--(0.00,1.75) (1.00,2.50)--(1.00,1.75) (2.00,2.50)--(2.00,1.75) (3.00,2.25)--(3.00,1.75) (4.00,2.25)--(4.00,2.00) (5.00,2.50)--(5.00,2.00) ;   
\draw (4.00,2.00) ..controls (4,1.75) and (5,1.75) .. (5.00,1.50);
\draw (5.00,2.00) ..controls (5,1.75) and (4,1.75) .. (4.00,1.50); \draw (0.00,1.75)--(0.00,1.00) (1.00,1.75)--(1.00,1.25) (2.00,1.75)--(2.00,1.25) (3.00,1.75)--(3.00,1.00) (4.00,1.50)--(4.00,1.00) (5.00,1.50)--(5.00,1.00) ;  
\draw (1.00,1.25) ..controls (1,1) and (2,1) .. (2.00,0.75);
\draw (2.00,1.25) ..controls (2,1) and (1,1) .. (1.00,0.75) ;  \draw (0.00,1.00)--(0.00,0.50) (1.00,0.75)--(1.00,0.50) (2.00,0.75)--(2.00,0.25) (3.00,1.00)--(3.00,0.25) (4.00,1.00)--(4.00,0.25) (5.00,1.00)--(5.00,0.25) ; 
\draw (0.00,0.50) ..controls (0,0.25) and (1,0.25) .. (1.00,0.00);
\draw (1.00,0.50) ..controls (1,0.25) and (0,0.25) .. (0.00,0.00) ;  \draw (0.00,0.00)--(0.00,-0.25) (1.00,0.00)--(1.00,-0.25) (2.00,0.25)--(2.00,-0.25) (3.00,0.25)--(3.00,-0.25) (4.00,0.25)--(4.00,-0.25) (5.00,0.25)--(5.00,-0.25) ;
\node at (2,-0.3) {$\bullet$};
 \end{scope} \end{tikzpicture}$} \: \right) = \raisebox{0mm}{$(1,3,2,4,7,3)$}, 
 \qquad \qquad  
 \delta \left( \: \raisebox{-6mm}{$\begin{tikzpicture}[scale=1.2] \begin{scope} [ x = 10pt, y = 10pt, join = round, cap = round, thick ] \draw (0.00,3.00)--(0.00,2.50) (1.00,3.00)--(1.00,2.50) (2.00,3.00)--(2.00,2.50) (3.00,3.00)--(3.00,2.75) (4.00,3.00)--(4.00,2.75) (5.00,3.00)--(5.00,2.50) ; 
\draw (3.00,2.75) ..controls (3,2.5) and (4,2.5) .. (4.00,2.25);
\draw (4.00,2.75) ..controls (4,2.5) and (3,2.5) .. (3.00,2.25) ;  \draw (0.00,2.50)--(0.00,1.75) (1.00,2.50)--(1.00,1.75) (2.00,2.50)--(2.00,1.75) (3.00,2.25)--(3.00,1.75) (4.00,2.25)--(4.00,2.00) (5.00,2.50)--(5.00,2.00) ;   
\draw (4.00,2.00) ..controls (4,1.75) and (5,1.75) .. (5.00,1.50);
\draw (5.00,2.00) ..controls (5,1.75) and (4,1.75) .. (4.00,1.50); \draw (0.00,1.75)--(0.00,1.00) (1.00,1.75)--(1.00,1.25) (2.00,1.75)--(2.00,1.25) (3.00,1.75)--(3.00,1.00) (4.00,1.50)--(4.00,1.00) (5.00,1.50)--(5.00,1.00) ;  
\draw (1.00,1.25) ..controls (1,1) and (2,1) .. (2.00,0.75);
\draw (2.00,1.25) ..controls (2,1) and (1,1) .. (1.00,0.75) ;  \draw (0.00,1.00)--(0.00,0.50) (1.00,0.75)--(1.00,0.50) (2.00,0.75)--(2.00,0.25) (3.00,1.00)--(3.00,0.25) (4.00,1.00)--(4.00,0.25) (5.00,1.00)--(5.00,0.25) ; 
\draw (0.00,0.50) ..controls (0,0.25) and (1,0.25) .. (1.00,0.00);
\draw (1.00,0.50) ..controls (1,0.25) and (0,0.25) .. (0.00,0.00) ;  \draw (0.00,0.00)--(0.00,-0.25) (1.00,0.00)--(1.00,-0.25) (2.00,0.25)--(2.00,-0.25) (3.00,0.25)--(3.00,-0.25) (4.00,0.25)--(4.00,-0.25) (5.00,0.25)--(5.00,-0.25) ;
\node at (2,0.6) {$\bullet$};
 \end{scope} \end{tikzpicture}$} \: \right) = \raisebox{0mm}{$ (0,2,3,4,7,3)$} \]
 On this example, the last element is the normal form of the corresponding diagram with respect to $\text{Iso}(\nh_6)$.
\end{exe}

The linear~$2$-polygraph $Iso(\mathbf{A})$ is also confluent, since all the critical branchings of $\text{Iso}(\mathbf{A})$ are given by local overlappings of the form
\[
\tikzdiag[yscale=0.85,xscale=0.85]{
\draw (0,0.5) to (1,0.5) to (1,1) to (0,1) to (0,0.5);
\draw[dashed] (0.2,1) to (0.2,1.4);
\draw[dashed] (0.8,1) to (0.8,1.4);
\draw[dashed] (0.2,0.5) to (0.2,-1.1);
\draw[dashed] (0.8,0.5) to (0.8,-1.1);
\node at (0.5,0.75) {$D$};
\node[scale=0.75] at (1.35,0) {$\cdots$};
\draw (2,-0.1) to (2,0.4) to (3,0.4) to (3,-0.1) to (2,-0.1);
\node at (2.5, 0.15) {$D'$};
\draw[dashed] (2.2,-0.1) to (2.2,-1.1);
\draw[dashed] (2.8,-0.1) to (2.8,-1.1);
\draw[dashed] (2.2,0.4) to (2.2,1.4);
\draw[dashed] (2.8,0.4) to (2.8,1.4);
\node[scale=0.75] at (3.5,0) {$\cdots$};
\draw (4,-0.2) to (4,-0.7) to (5,-0.7) to (5,-0.2) to (4,-0.2);
\node at (4.5,-0.45) {$D''$};
\draw[dashed] (4.2,-0.7) to (4.2,-1.1);
\draw[dashed] (4.8,-0.7) to (4.7,-1.1);
\draw[dashed] (4.2,-0.2) to (4.2,1.4);
\draw[dashed] (4.8,-0.2) to (4.8,1.4);
} \]
where $h(D) > h(D') > h(D'')$, for any labels of the strands provided the products are well-defined. They are proved confluent as follows:
\[
\hspace{-0.5cm}
\begin{tikzcd}[column sep=-2ex]
{} & \criticalbdash \arrow[r,Rightarrow,"E_{D',D''}"] & [6ex] \criticalcdash \arrow[dr,Rightarrow,"E_{D,D'}"] & \\
\criticaladash \arrow[ur,Rightarrow,"E_{D,D'}"] \arrow[dr,Rightarrow,"E_{D',D''}"] &  & & \criticalenddash\\
 & \bcriticaldash \arrow[r,Rightarrow,"E_{D,D'}"] & \ccriticaldash \arrow[ur,Rightarrow,"E_{D',D''}"] & {} 
\end{tikzcd}
\]

We then rewrite with the linear $2$-polygraph $P$ modulo the convergent linear $2$-polygraph $\text{Iso}(\mathbf{A})$. Therefore, it is similar to the usual rewriting context on string diagrams in the monoidal category (seen as a $2$-category with only one object) admitting as generating $1$-cells the elements of $I_{\mathbf{A}}$, so that the $1$-cells of $\mathcal{C}$ are words of the form $\mu_1 \mu_2 \dots \mu_n$ for any $\mu_i \in I_{\mathbf{A}}$, and as generating $2$-cells the generating diagrams of $\mathbf{A}_g$ considered locally, that is by forgetting the vertical strands on the left and on the right.

\begin{exe}
For the nilHecke algebra $\nh_n$, rewriting modulo $\text{Iso}(\nh_n)$ is similar to rewriting in the monoidal category whose $1$-cells are generated by $1$, and thus isomorphic to $N$, whose generating $2$-cells are given by
\[ 
\tikzdiag{
\draw (2.7,0) ..controls (2.7,.5) and (3.3,.5) .. (3.3,1);
	\draw (3.3,0) ..controls (3.3,.5) and (2.7,.5) .. (2.7,1);} : 2 \to 2, \qquad \qquad 
	\tikzdiag{
	\draw (1,0) -- (1,1);
	\node at (1,0.5) {$\bullet$};} : 1 \to 1
	\]
	and are subject to the relations \eqref{eq:nhR2andR3} and \eqref{eq:nhdotslide}.
\end{exe}

As a consequence, the classification of critical branchings modulo in that context is the same as in the case of rewriting in string diagrams in the monoidal category $\curl{C}$, and most of them can be considered locally. Following \cite{GM09}, there are $3$ different forms of critical branchings in that context. For $2$-cells $\alpha, \beta$ of $P_2^\ell$, any $1$-cells $f$,$g$,$h$ of $P_1^\ell$ and any context $C$ of $P_1^\ast$, as defined in \cite{GM09}, there are:
\begin{itemize}
\item Regular critical branchings of the form
\begin{center}
$ \regcba{s(\alpha)}{g} \quad = \quad \regcbb{f}{h}{g} \quad = \regcbc{f}{s(\beta)} \: ,$ 
\end{center}
These amount to application on two local relations overlapping on the central part $h$ of the diagram. Since we rewrite modulo distant isotopies, these can be considered locally as in the $2$-category case, and one may forget about the diagrams that are on the left and on the right of this overlapping.
\item Inclusion critical branchings of the form
\begin{center}
$ \inccba{s(\alpha)} \quad = \quad \inccbb{s(\beta)}{C} \: , $
\end{center}
These branchings are given by application of a relation $\beta$ inside a diagram that is also reducible by a rule $\alpha$. There is no such example of branching for the linear $2$-polygraph modulo $(R,E,{}_E R_E)$, and one may in general avoid these branchings, since there always exist a linear~$2$-polygraph that does not contain such branchings and present the same $2$-category.
\item Left-indexed critical branchings (also right-indexed, multi-indexed) of the form
\begin{center}
$ \leftindcba{s(\alpha)}{k}{g} \quad = \quad \leftindcbb{f}{h}{k}{g} \quad = \quad \leftindcbc{f}{k}{s(\beta)} \: .$
\end{center}
These branchings come from the overlapping of two rewriting rules $\alpha$ and $\beta$ with an identity strand in the middle, in which we can plug new diagrams, giving new critical branchings to consider. Following \cite{GM09}, it suffices to check the confluence of the indexed branchings for the instance $k$ being in normal form.
\end{itemize}

\begin{exe}\label{ex:rewriteNH}
Let us consider the nilHecke algebra $\nh_n$ on $n$ strands, presented by the linear $2$-polygraph $P$ having as generating $1$-cells the elements $\tau_i$ and $x_l$ for $1 \leq i \leq n$ and $1 \leq l \leq n-1$ as in \eqref{eq:Gensnilhecke}, and as generating $2$-cells the relations \eqref{eq:nhR2andR3rewrite} and \eqref{eq:nhdotsliderewrite}. One might prove that $P$ is convergent modulo braid-like isotopies. Indeed, it is terminating using the weight order introduced in Section \ref{ssec:rewritingrulesparam}. Moreover, one might check its confluence modulo by examining its critical branching. It has regular critical branchings whose sources are given by:
\[ \tikzdiag{
\draw (2.7,0) ..controls (2.7,.5) and (3.3,.5) .. (3.3,1);
	\draw (3.3,0) ..controls (3.3,.5) and (2.7,.5) .. (2.7,1);
	\node at (3.25,0.25) {$\bullet$};
	\node at (2.75,0.25) {$\bullet$};} \qquad  
\tikzdiag{	\draw (0,0) ..controls (0,0.25) and (0.5,0.25) .. (0.5,0.5);
\draw (0,0.5) ..controls (0,0.25) and (0.5,0.25) .. (0.5,0);
\draw (0,0.5) ..controls (0,0.75) and (0.5,0.75) .. (0.5,1);
\draw (0,1) ..controls (0,0.75) and (0.5,0.75) .. (0.5,0.5);
\draw (0,1) ..controls (0,1.25) and (0.5,1.25) .. (0.5,1.5);
\draw (0,1.5) ..controls (0,1.25) and (0.5,1.25) .. (0.5,1);}
	\qquad
	\tikzdiag{
\draw (0,0) ..controls (0,0.25) and (0.5,0.25) .. (0.5,0.5);
\draw (0,0.5) ..controls (0,0.25) and (0.5,0.25) .. (0.5,0);
\draw (-0.5,0) to (-0.5,0.5);
\draw (0.5,0.5) to (0.5,1);
\draw (-0.5,0.5) ..controls (-0.5,0.75) and (0,0.75) .. (0,1);
\draw (0,0.5) ..controls (0,0.75) and (-0.5,0.75) .. (-0.5,1);
\draw (-0.5,1) to (-0.5,1.5);
\draw (0,1) ..controls (0,1.25) and (0.5,1.25) .. (0.5,1.5);
\draw (0.5,1) ..controls (0.5,1.25) and (0,1.25) .. (0,1.5);
\node at (-0.5,0.15) {$\bullet$};
	} \qquad  
	\tikzdiag{
\draw (0,0) ..controls (0,0.25) and (0.5,0.25) .. (0.5,0.5);
\draw (0,0.5) ..controls (0,0.25) and (0.5,0.25) .. (0.5,0);
\draw (-0.5,0) to (-0.5,0.5);
\draw (0.5,0.5) to (0.5,1);
\draw (-0.5,0.5) ..controls (-0.5,0.75) and (0,0.75) .. (0,1);
\draw (0,0.5) ..controls (0,0.75) and (-0.5,0.75) .. (-0.5,1);
\draw (-0.5,1) to (-0.5,1.5);
\draw (0,1) ..controls (0,1.25) and (0.5,1.25) .. (0.5,1.5);
\draw (0.5,1) ..controls (0.5,1.25) and (0,1.25) .. (0,1.5);
\node at (0.06,0.10) {$\bullet$};
	} \qquad 
	\tikzdiag{
\draw (0,0) ..controls (0,0.25) and (0.5,0.25) .. (0.5,0.5);
\draw (0,0.5) ..controls (0,0.25) and (0.5,0.25) .. (0.5,0);
\draw (-0.5,0) to (-0.5,0.5);
\draw (0.5,0.5) to (0.5,1);
\draw (-0.5,0.5) ..controls (-0.5,0.75) and (0,0.75) .. (0,1);
\draw (0,0.5) ..controls (0,0.75) and (-0.5,0.75) .. (-0.5,1);
\draw (-0.5,1) to (-0.5,1.5);
\draw (0,1) ..controls (0,1.25) and (0.5,1.25) .. (0.5,1.5);
\draw (0.5,1) ..controls (0.5,1.25) and (0,1.25) .. (0,1.5);
\node at (0.42,0.10) {$\bullet$};
	} \qquad \tikzdiag{
\draw (0,-0.5) ..controls (0,-0.25) and (0.5,-0.25) .. (0.5,0);
\draw (0.5,-0.5) ..controls (0.5,-0.25) and (0,-0.25) .. (0,0);
\draw (0,0) ..controls (0,0.25) and (0.5,0.25) .. (0.5,0.5);
\draw (0,0.5) ..controls (0,0.25) and (0.5,0.25) .. (0.5,0);
\draw (-0.5,-0.5) to (-0.5,0.5);
\draw (0.5,0.5) to (0.5,1);
\draw (-0.5,0.5) ..controls (-0.5,0.75) and (0,0.75) .. (0,1);
\draw (0,0.5) ..controls (0,0.75) and (-0.5,0.75) .. (-0.5,1);
\draw (-0.5,1) to (-0.5,1.5);
\draw (0,1) ..controls (0,1.25) and (0.5,1.25) .. (0.5,1.5);
\draw (0.5,1) ..controls (0.5,1.25) and (0,1.25) .. (0,1.5);
	} \qquad \tikzdiag{
\draw (0,0) ..controls (0,0.25) and (0.5,0.25) .. (0.5,0.5);
\draw (0,0.5) ..controls (0,0.25) and (0.5,0.25) .. (0.5,0);
\draw (-0.5,0) to (-0.5,0.5);
\draw (0.5,0.5) to (0.5,1);
\draw (-0.5,0.5) ..controls (-0.5,0.75) and (0,0.75) .. (0,1);
\draw (0,0.5) ..controls (0,0.75) and (-0.5,0.75) .. (-0.5,1);
\draw (-0.5,1) to (-0.5,2);
\draw (0,1) ..controls (0,1.25) and (0.5,1.25) .. (0.5,1.5);
\draw (0.5,1) ..controls (0.5,1.25) and (0,1.25) .. (0,1.5);
\draw (0,1.5) ..controls (0,1.75) and (0.5,1.75) .. (0.5,2);
\draw (0.5,1.5) ..controls (0.5,1.75) and (0,1.75) .. (0,2);
	} \qquad
\tikzdiag[scale=1,xscale=-1]{
	\draw   (0,1) to (0,1.5);
	\draw  (0.5,0.5) .. controls (0.5,0.75) and (0, 0.75) ..  (0,1);
	\draw  (0,0.5) .. controls (0,0.75) and (0.5, 0.75) ..  (0.5,1);
	\draw  (1,1) .. controls (1,1.25) and (0.5,1.25) .. (0.5,1.5);
	\draw (0.5,1) .. controls (0.5,1.25) and (1,1.25) .. (1,1.5);
	\draw (1,1.5) to (1,2);
	\draw (0.5,1.5) .. controls (0.5,1.75) and (0,1.75) .. (0,2);
	\draw (0,1.5) .. controls (0,1.75) and (0.5,1.75) .. (0.5,2);
	\draw (0,0.5) to (0,0);
	\draw  (0.5,0) .. controls (0.5,0.25) and (1, 0.25) ..  (1,0.5);
	\draw  (1,0) .. controls (1,0.25) and (0.5, 0.25) ..  (0.5,0.5);
	\draw  (0.5,0) .. controls (0.5,0.25) and (1, 0.25) ..  (1,0.5);
	\draw  (0.5,-0.5) .. controls (0.5,-0.25) and (0,-0.25) ..  (0,0);
	\draw  (0,-0.5) .. controls (0,-0.25) and (0.5,-0.25) ..  (0.5,0);
	\draw (1,0) to (1,-0.5);
	\draw (1,0.5) to (1,1);
}
\]
and left-indexed critical branchings given by the overlapping of the Reidemeister $3$ relation with itself (the orientation of the indexation depends on the orientation of the Reidemeister $3$-relation):
\[ \tikzdiag[scale=1,xscale=-1]{
	\draw   (0,1) to (0,1.5);
	\draw  (0.5,0.5) .. controls (0.5,0.75) and (0, 0.75) ..  (0,1);
	\draw  (0,0.5) .. controls (0,0.75) and (0.5, 0.75) ..  (0.5,1);
	\draw  (1,1) .. controls (1,1.25) and (0.5,1.25) .. (0.5,1.5);
	\draw (0.5,1) .. controls (0.5,1.25) and (1,1.25) .. (1,1.5);
	\draw (1,1.5) to (1,2);
	\draw (0.5,1.5) .. controls (0.5,1.75) and (0,1.75) .. (0,2);
	\draw (0,1.5) .. controls (0,1.75) and (0.5,1.75) .. (0.5,2);
	\draw (0,0.5) to (0,0);
	\draw  (0.5,0) .. controls (0.5,0.25) and (1, 0.25) ..  (1,0.5);
	\draw  (1,0) .. controls (1,0.25) and (0.5, 0.25) ..  (0.5,0.5);
	\draw  (0.5,0) .. controls (0.5,0.25) and (1, 0.25) ..  (1,0.5);
	\draw  (0.5,-0.5) .. controls (0.5,-0.25) and (0,-0.25) ..  (0,0);
	\draw  (0,-0.5) .. controls (0,-0.25) and (0.5,-0.25) ..  (0.5,0);
	\draw (1,0) to (1,-0.5);
	\draw (0.75,0.5) to (1.55,0.5) to (1.55,1) to (0.75,1) to (0.75,0.5);
	\node at (1.2,0.75) {$D$};
} 
 \]
for any monomial $D$. Following \cite{DUP21}, it suffices to check the confluence of these indexed critical branchings for 
\[ D = \tikzdiag{
\draw (0,0) ..controls (0,0.5) and (1,0.5) .. (1,1);
\draw (1,0) ..controls (1,0.5) and (0,0.5) .. (0,1);
\node at (0.9,0.75) {$\bullet$};
\node at (1.15,0.75) {$p$};
}, \qquad \qquad \tikzdiag{
\draw (0,0) to (0,1);
\node at (0,0.5) {$\bullet$};
\node at (0.2,0.5) {$p$};
} \quad \text{for any $p \in \mathbb{N}$}. \]
One proves following the proof of convergence for the KLR algebras of \cite{DUP21}, that all these critical branchings are confluent modulo braid-like isotopies. As a consequence, $P$ is a convergent presentation of $\nh_n$ and the monomials in normal form with respect to $P$ yield a linear basis of $\nh_n$, recovering the usual basis for the nilHecke algebra (see for example~\cite[Section~2.3]{KL1}).

\end{exe}

\input{sections/basisthmrewriting.tex}
%%%%%%%%%%%%%%%%%%%%%%%%%%%%%%%%%%%%
%                 					  				  		 %
%	Extra computations				 					 %
%                 					  						 %
%%%%%%%%%%%%%%%%%%%%%%%%%%%%%%%%%%%%

\section{Additional computations}\label{sec:computations}

This appendix contains some extra computations that are helpful for some proofs in the main text and the other appendices. 

\begin{lem}\label{lem:dotslideoversevercrossings}
We have
\[
\tikzdiagl{
	\draw (0,0) .. controls (0,.5) and (.5,.5) .. (.5,1);
	\draw (1,0) .. controls (1,.5) and (1.5,.5) .. (1.5,1);
	\node at(.5,.1) {\small $\dots$};
	\node at(1,.9) {\small $\dots$};
	\tikzbrace{0}{1}{0}{\small $k$};
	\draw (1.5,0) .. controls (1.5,.5) and (0,.5) .. (0,1) 
		node[pos=.8, tikzdot]{} node[pos=.8, xshift=-1.5ex, yshift=-.75ex]{\small $u$};
}
\ = \
\tikzdiagl{
	\draw (0,0) .. controls (0,.5) and (.5,.5) .. (.5,1);
	\draw (1,0) .. controls (1,.5) and (1.5,.5) .. (1.5,1);
	\node at(.5,.1) {\small $\dots$};
	\node at(1,.9) {\small $\dots$};
	\tikzbrace{0}{1}{0}{\small $k$};
	\draw (1.5,0) .. controls (1.5,.5) and (0,.5) .. (0,1) 
		node[pos=.2, tikzdot]{} node[pos=.2, xshift=1.5ex, yshift=.75ex]{\small $u$};
}
\ +
\sum_{\ell=0}^{k-1} \sssum{s+t=\\u-1} \ 
\tikzdiagl{
	\draw (0,0) .. controls (0,.5) and (.5,.5) .. (.5,1);
	\draw (1,0) .. controls (1,.5) and (1.5,.5) .. (1.5,1);
	\node at(.5,.1) {\small $\dots$};
	\node at(1,.9) {\small $\dots$};
	\tikzbrace{0}{1}{0}{\small $\ell$};
	\draw (1.5,0) .. controls (1.5,.5) and (0,.5) .. (0,1) 
		node[pos=.8, tikzdot]{} node[pos=.8, xshift=-1.5ex, yshift=-.75ex]{\small $s$};
	\draw (2,0) .. controls (2,.5) and (2.5,.5) .. (2.5,1);
	\draw (3,0) .. controls (3,.5) and (3.5,.5) .. (3.5,1);
	\node at(2.5,.1) {\small $\dots$};
	\node at(3,.9) {\small $\dots$};
	\tikzbrace{2}{3}{0}{\small $k-\ell-1$};
	\draw (3.5,0) .. controls (3.5,.5) and (2,.5) .. (2,1) 
		node[pos=.2, tikzdot]{} node[pos=.2, xshift=1.5ex, yshift=.75ex]{\small $t$};
}
\]
for all $u,k \geq 0$. 
\end{lem}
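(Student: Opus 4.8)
The statement is a dot-slide identity for the nilHecke (or KLRW) calculus: moving a dot of multiplicity $u$ from the bottom-left of a $(k{+}1)$-strand crossing bundle (one strand passing under $k$ parallel strands) to the top, one incurs correction terms for each intermediate position where the moving strand has crossed $\ell$ of the $k$ strands, with the remaining dot multiplicity $u-1$ split as $s+t$ between the left (already-passed) strand and the right (not-yet-passed) strand. I would prove this by a double induction: the outer induction on $k$ (the number of strands being passed), and the inner induction on $u$ (the dot multiplicity).

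\textbf{Base cases.} For $k=0$ there is nothing to cross, the sum on the right is empty, and both sides equal a dotted vertical strand; the identity is trivial. For $u=0$ there is no dot, again both sides agree and the sum is empty (since $s+t=-1$ is impossible). These handle the starts of both inductions.

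\textbf{Inductive step.} The engine is the basic nilHecke dot-past-one-crossing relation \cref{eq:nhdotsliderewrite}: $\tikzdiag{\draw (0,0) ..controls (0,.5) and (1,.5) .. (1,1) node [near start,tikzdot]{};\draw (1,0) ..controls (1,.5) and (0,.5) .. (0,1);} = \tikzdiag{\draw (0,0) ..controls (0,.5) and (1,.5) .. (1,1) node [near end,tikzdot]{};\draw (1,0) ..controls (1,.5) and (0,.5) .. (0,1);} + \tikzdiag{\draw (0,0) -- (0,1);\draw (1,0)-- (1,1);}$, together with its iterated form \cref{eq:nhdotsumsliderewrite} for a dot of multiplicity $p$. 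To slide the multiplicity-$u$ dot past the leftmost of the $k$ strands, I apply \cref{eq:nhdotsumsliderewrite}, which produces: the dot on the far side of that single crossing, plus a sum over $a+b=u-1$ of diagrams with the crossing resolved (i.e., $\ell=0$ term of the target sum, with the under-strand now only crossing $k-1$ strands, carrying $a$ dots on one strand and $b$ on the other). To the first of these I apply the induction hypothesis for $k-1$ strands with the same dot multiplicity $u$; this yields the $\ell=1,\dots,k-1$ terms (reindexed) of the desired right-hand side, each with its $s+t=u-1$ split. The resolved terms with $a+b=u-1$ combine to give exactly the $\ell=0$ term. One then collects everything using the straightening relations \cref{eq:nhR2andR3} (Reidemeister~2 and~3 / the vanishing of a double crossing) to put each summand in the standard left-adjusted form, and matches coefficients. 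A cleaner alternative is to induct purely on $u$: write $u$ dots as $u-1$ dots times one dot, slide the single dot across the full bundle by $k$ applications of \cref{eq:nhdotsliderewrite} (tracking which crossing gets resolved at each step, giving the $\ell$-indexed correction terms with a single dot on each side), then feed the $u-1$ remaining dots through the induction hypothesis on the top. The bookkeeping of where the $u-1$ residual dots land — on the long under-strand versus on the short resolved pieces — is what forces the $\sum_{s+t=u-1}$ convolution in the correction terms.

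\textbf{Main obstacle.} The genuine difficulty is purely combinatorial: keeping the indices straight when the two inductions interact. After one application of \cref{eq:nhdotsumsliderewrite} the correction terms from the $a+b=u-1$ split and the correction terms produced by the inductive hypothesis (which themselves carry an $s+t=u-1$ sum over a shifted range of $\ell$) must be reconciled so that the total is a single clean sum $\sum_{\ell=0}^{k-1}\sum_{s+t=u-1}$; verifying that the ranges glue correctly at $\ell=0$ and that no term is double-counted or dropped is the fiddly part. The diagrammatic manipulations themselves (applying \cref{eq:nhR2andR3} to straighten strands, pushing dots to the top via \cref{eq:nhdotsliderewrite}) are entirely routine, and since everything takes place in the nilHecke subalgebra we may freely use all its standard relations without any subtlety about colored strands or differentials. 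I would present the $u$-induction version as it localizes the combinatorial bookkeeping to a single clean step.
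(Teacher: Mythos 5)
Your proposal is correct and is essentially the paper's argument: the paper proves this lemma in one line by "applying the relations in \cref{eq:nhdotslide} recursively," which is exactly the recursion your induction on $k$ (resolving one crossing at a time via the multiplicity-$u$ dot-slide and reindexing the correction terms) makes explicit. The index bookkeeping you flag is the only content of the proof, and your handling of it is sound.
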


\begin{proof}
It follows from applying the relations in \cref{eq:nhdotslide} recursively. 
\end{proof}

\begin{lem}\label{lem:dotslidenails}
We have
\begin{align*}
\tikzdiagl{
	%\draw (.5,0) .. controls (.5,.75) and (2.5,.75) .. (2.5,2.5) node[pos=.85, tikzdot]{} node[pos=.85,xshift=1.5ex,yshift=-.75ex]{\small $N$};
	%
	\draw (.5,0) .. controls (.5,.25) .. (0,.5) .. controls (2,1.5) .. (2,2.5);
	\draw (1.5,0) .. controls (1.5,.75) .. (0,1.5) .. controls (1,2) .. (1,2.5);
	\node at(1,.1) {\small $\dots$};
	\node at(1.5,2.4) {\small $\dots$};
	\node at(.25,1.1) {\small $\vdots$};
	\tikzbrace{.5}{1.5}{0}{\small $k$};
	\draw (2,0) .. controls (2,1.5) .. (0,2) node[pos=.2,tikzdot]{} .. controls (.5,2.25) .. (.5,2.5);
	\draw [pstdhl] (0,0) node[below]{\small $\mu$} -- (0,2.5) node[pos=.2,nail]{}  node[pos=.6,nail]{} node[pos=.8,nail]{}  ;
}
\ = \
\tikzdiagl{
	%\draw (.5,0) .. controls (.5,.75) and (2.5,.75) .. (2.5,2.5) node[pos=.85, tikzdot]{} node[pos=.85,xshift=1.5ex,yshift=-.75ex]{\small $N$};
	%
	\draw (.5,0) .. controls (.5,.25) .. (0,.5) .. controls (2,1.5) .. (2,2.5);
	\draw (1.5,0) .. controls (1.5,.75) .. (0,1.5) .. controls (1,2) .. (1,2.5);
	\node at(1,.1) {\small $\dots$};
	\node at(1.5,2.4) {\small $\dots$};
	\node at(.25,1.1) {\small $\vdots$};
	\tikzbrace{.5}{1.5}{0}{\small $k$};
	\draw (2,0) .. controls (2,1.5) .. (0,2).. controls (.5,2.25) .. (.5,2.5)  node[pos=.5,tikzdot]{} ;
	\draw [pstdhl] (0,0) node[below]{\small $\mu$} -- (0,2.5) node[pos=.2,nail]{}  node[pos=.6,nail]{} node[pos=.8,nail]{}  ;
}
\end{align*}
\end{lem}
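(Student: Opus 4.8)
The plan is to recognise the dot as living on the outermost of the nailed black strands — call it $S$: it runs from the bottom right, winds once past each of the $k$ other black strands, passes the topmost nail of the colored strand, and exits immediately to the right of that colored strand — and to slide the dot up $S$ from its starting position to its target position, showing that every obstruction it meets along the way is crossed for free.

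First I would list the obstructions encountered: a nail passing is free by the first relation of \cref{eq:nailsrel} (it moves a dot past a nail on the same strand with no correction term), so the only real work is at the transverse crossings of $S$ with the $k$ other (nailed) black strands. After a braid-like planar isotopy that, in the slab between the initial and target dot heights, straightens those $k$ strands so that $S$ meets them one at a time, I would apply the nilHecke dot-slide relation \cref{eq:nhdotslide} (equivalently, package them via \cref{lem:dotslideoversevercrossings}): sliding the dot past the crossing of $S$ with the $j$-th of these strands yields the term with the dot moved up past the crossing, plus a correction term in which that crossing has been resolved into two parallel vertical strands (and the dot has disappeared).

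The heart of the argument is that every such correction term vanishes. Resolving the crossing of $S$ with strand $j$ splices the portion of strand $j$ below the crossing to the portion of $S$ above the crossing. A short planar-position bookkeeping shows that this crossing sits strictly above the nail-detour of strand $j$ and strictly below the nail-detour of $S$: ordering the nails so that strand $j$ is tightened on the nail at height $h_j$ and $S$ on the topmost nail at height $h_{k+1}$ with $h_1<\dots<h_k<h_{k+1}$, the strand $S$ lies to the right of strand $j$ at every height up to $h_j$ and to its left at height $h_{k+1}$, so its unique crossing with strand $j$ occurs at some height in $(h_j,h_{k+1})$. Hence the spliced strand touches the colored strand at two distinct nails — the $j$-th and the topmost — and the whole diagram is therefore zero by \cref{prop:doublenailsamestrandiszero} (equivalently, in the rewriting picture, by \cref{prop:doublenailsamestrandrewritestozero} together with the confluence of $\muDiag_b$ in \cref{cor:specializedconfluent}), after, if needed, using \cref{lem:bignailcommutes} to make the two nail-passings of the spliced strand consecutive.

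Iterating over the finitely many crossings the dot must traverse — an induction on their number — discards all correction terms and carries the dot to its target position, and together with the free passage past the nail from \cref{eq:nailsrel} this gives the asserted equality. I expect the only delicate point to be the bookkeeping in the previous paragraph: one must track exactly which portions of which strands get reconnected when a crossing is resolved, and check that the reconnected strand really does meet two nails; once that is in hand, everything reduces to the nilHecke and nail relations already established.
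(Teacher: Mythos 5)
Your overall strategy is the natural one and is in the spirit of the paper's (very terse) proof, which simply invokes \cref{eq:nhdotslide}, \cref{eq:nhR2andR3} and \cref{eq:nailsrel}: slide the dot up the outer strand, pass the nail for free by the first relation in \cref{eq:nailsrel}, and kill the correction terms coming from \cref{eq:nhdotslide} because each of them contains a strand that is nailed twice. There is also no circularity in quoting \cref{prop:doublenailsamestrandiszero}, since its proof does not rely on this lemma.

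The gap is in the step where you declare each correction term zero. Resolving the crossing of the outer strand with the $j$-th nailed strand splices the lower part of strand $j$ (through its nail at height $h_j$) onto the upper part of the outer strand (through the topmost nail), so the spliced strand is indeed nailed twice; but between those two nails the diagram is \emph{not} of the form covered by \cref{prop:doublenailsamestrandiszero}. That proposition (like the third relation of \cref{eq:nailsrel} and \cref{prop:doublenailsamestrandrewritestozero}) only treats the tight configuration in which the strands separating the two nails are plain vertical strands; in your correction term the strands $j+1,\dots,k$ are themselves nailed to the colored strand at heights strictly between $h_j$ and the top nail, and they cross the doubly-nailed strand inside the resulting bigon. \cref{lem:bignailcommutes} does not supply the missing reduction: it exchanges the heights of two tightened nails sitting on \emph{different} strands (and is itself stated conditionally on confluence), so it cannot by itself clear the foreign nails and crossings out from between the two nails of the spliced strand; moreover, any attempt to pull those strands out by braid moves involving the colored strand reintroduces correction terms via \cref{eq:redR3}/\cref{eq:vredR}. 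So you either need a strengthened version of \cref{prop:doublenailsamestrandiszero} (any diagram in which one black strand carries two nails, with arbitrary nailed strands and crossings in between, vanishes --- proved, say, by an induction like the one in that proposition's proof, exchanging the intermediate nails past one of the two nails), or you must carry out the reduction to the tight configuration explicitly before quoting it. Until that is done, the central claim ``every obstruction is crossed for free'' is asserted rather than proved.
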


\begin{proof}
The statement follows from \cref{eq:nhdotslide}, \cref{eq:nhR2andR3} and \cref{eq:nailsrel}.
\end{proof}

\begin{lem}\label{lem:Ndotsonestep}
We have
\begin{align*}
&\tikzdiagl{
	\draw (.5,0) .. controls (.5,.75) and (2.5,.75) .. (2.5,2.5) node[pos=.6, tikzdot]{} node[pos=.6,xshift=1.5ex,yshift=-.75ex]{\small $N$};
	\draw (1,0) .. controls (1,.25) .. (0,.5) .. controls (2,1.5) .. (2,2.5);
	\draw (2,0) .. controls (2,.75) .. (0,1.5) .. controls (1,2) .. (1,2.5);
	\node at(1.5,.1) {\small $\dots$};
	\node at(1.5,2.4) {\small $\dots$};
	\node at(.25,1.1) {\small $\vdots$};
	\tikzbrace{1}{2}{0}{\small $k$};
	\draw (2.5,0) .. controls (2.5,1.5) .. (0,2) .. controls (.5,2.25) .. (.5,2.5);
	\draw [pstdhl] (0,0) node[below]{\small $\mu$} -- (0,2.5) node[pos=.2,nail]{}  node[pos=.6,nail]{} node[pos=.8,nail]{}  ;
}
+ \sum_{\ell=0}^{k-1} (-1)^{\ell} \sssum{u+v\\=N-1}
\tikzdiagl{
	\draw (.5,0) .. controls (.5,.25) .. (0,.5) .. controls (3.5,1.5) .. (3.5,2) node[pos=1, tikzdot]{} node[pos=1, xshift=-1.5ex, yshift=.75ex]{\small $u$}
		.. controls (3.5,2.5) and (2.5,2.5) .. (2.5,4.5);
	\draw (1,0) .. controls (1,.5) .. (0,1) .. controls (4,3) .. (4,4.5);
	\draw (2,0) .. controls (2,1) .. (0,2) .. controls (3,3.5) .. (3,4.5); 
	\node at(1.5,.1) {\small $\dots$};
	\node at(3.5,4.4) {\small $\dots$};
	\node at(.25,1.6) {\small $\vdots$};
	\tikzbrace{1}{2}{0}{\small $\ell$};
	%
	%\draw (2.5, 0) .. controls (2.5,.5) and (4.5,.5) .. (4.5,4.5)  node[pos=.65, tikzdot]{};
	\draw (2.5,0) .. controls (2.5,1) and (4,1) .. (4,2) node[pos=1, tikzdot]{}  node[pos=1, xshift=1.5ex, yshift=.75ex]{\small $v$}
		 .. controls (4,3) and (4.5,3) .. (4.5,4.5);
	\draw (3,0) .. controls (3,1) .. (0,2.5) .. controls (2,3.5) .. (2,4.5);
	\draw (4,0) .. controls (4,1.5) .. (0,3.5) .. controls (1,4) .. (1,4.5); 
	\node at(3.5,.1) {\small $\dots$};
	\node at(1.5,4.4) {\small $\dots$};
	\node at(.25,3.1) {\small $\vdots$};
	\tikzbrace{3}{4}{0}{\small $k-\ell-1$};
	%
	%\draw (4.5,0) .. controls (4.5,3) .. (0,4) .. controls (.5,4.25) .. (.5,4.5);
	\draw (4.5,0) -- (4.5,2) .. controls (4.5,3.5) .. (0,4) .. controls (.5,4.25) .. (.5,4.5);
	\draw [pstdhl] (0,0) node[below]{\small $\mu$} -- (0,4.5) 
		node[pos=.11,nail]{}  node[pos=.22,nail]{} node[pos=.44,nail]{} node[pos=.55,nail]{} node[pos=.77,nail]{} node[pos=.88,nail]{};
}
\\
\ =& \ 
\tikzdiagl{
	\draw (.5,0) .. controls (.5,.75) and (2.5,.75) .. (2.5,2.5) node[pos=.85, tikzdot]{} node[pos=.85,xshift=1.5ex,yshift=-.75ex]{\small $N$};
	\draw (1,0) .. controls (1,.25) .. (0,.5) .. controls (2,1.5) .. (2,2.5);
	\draw (2,0) .. controls (2,.75) .. (0,1.5) .. controls (1,2) .. (1,2.5);
	\node at(1.5,.1) {\small $\dots$};
	\node at(1.5,2.4) {\small $\dots$};
	\node at(.25,1.1) {\small $\vdots$};
	\tikzbrace{1}{2}{0}{\small $k$};
	\draw (2.5,0) .. controls (2.5,1.5) .. (0,2) .. controls (.5,2.25) .. (.5,2.5);
	\draw [pstdhl] (0,0) node[below]{\small $\mu$} -- (0,2.5) node[pos=.2,nail]{}  node[pos=.6,nail]{} node[pos=.8,nail]{}  ;
}
+ \sum_{\ell=0}^{k} (-1)^{\ell} \sssum{u+v\\=N-1}
\tikzdiagl{
	\draw (.5,0) .. controls (.5,.25) .. (0,.5) .. controls (3.5,1.5) .. (3.5,2) node[pos=1, tikzdot]{} node[pos=1, xshift=-1.5ex, yshift=.25ex]{\small $u$}
		.. controls (3.5,2.5) and (2,2.5) .. (2,4);
	\draw (1,0) .. controls (1,.5) .. (0,1) .. controls (3.5,3) .. (3.5,4);
	\draw (2,0) .. controls (2,1) .. (0,2) .. controls (2.5,3.5) .. (2.5,4); 
	\node at(1.5,.1) {\small $\dots$};
	\node at(3,3.9) {\small $\dots$};
	\node at(.25,1.6) {\small $\vdots$};
	\tikzbrace{1}{2}{0}{\small $\ell$};
	%
	%\draw (2.5, 0) .. controls (2.5,.5) and (4.5,.5) .. (4.5,4.5)  node[pos=.65, tikzdot]{};
	\draw (2.5,0) .. controls (2.5,1) and (4,1) .. (4,2) node[pos=1, tikzdot]{}  node[pos=1, xshift=1.5ex, yshift=.25ex]{\small $v$}
		 .. controls (4,3) and (4,3) .. (4,4);
	\draw (3,0) .. controls (3,1) .. (0,2.5) .. controls (1.5,3.5) .. (1.5,4);
	\draw (4,0) .. controls (4,1.5) .. (0,3.5) .. controls (.5,3.75) .. (.5,4); 
	\node at(3.5,.1) {\small $\dots$};
	\node at(1,3.9) {\small $\dots$};
	\node at(.25,3.1) {\small $\vdots$};
	\tikzbrace{3}{4}{0}{\small $k-\ell$};
	\draw [pstdhl] (0,0) node[below]{\small $\mu$} -- (0,4) 
		node[pos=.125,nail]{}  node[pos=.25,nail]{} node[pos=.5,nail]{} node[pos=.625,nail]{} node[pos=.875,nail]{};
}
\end{align*}
\end{lem}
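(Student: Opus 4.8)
The plan is to prove \cref{lem:Ndotsonestep} by a direct diagrammatic computation using only the defining relations of $\dgT_b^{\und\mu}$, together with the two auxiliary computations just established. The key observation is that the two sides differ structurally only in (i) the position of the bunch of $N$ dots in the ``main'' diagram and (ii) the range of the index $\ell$ in the attached sum (which is $0\le \ell\le k-1$ on the left and $0\le\ell\le k$ on the right); both sides carry the same summand for $0\le\ell\le k-1$. So, after cancelling the common part of the two sums, what must be verified is that the main diagram on the left equals the main diagram on the right plus exactly the $\ell=k$ summand of the right-hand sum.

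First I would take the main diagram on the left (the strand carrying $N$ dots in its lower position, before it travels up past the nailed strand) and push the bunch of $N$ dots upward across the single crossing separating the two positions, applying the nilHecke dot-slide relation \cref{eq:nhdotslide}, or equivalently \cref{lem:dotslideoversevercrossings} with one extra strand in the bunch. This produces the main diagram with the $N$ dots relocated to the upper position (this is precisely the main diagram on the right) plus a family of correction terms, each of the form $\sum_{u+v=N-1}$, in which the crossing has been resolved into two parallel strands decorated by $u$ and $v$ dots respectively. Next, for each such correction term I would normalise it into the shape appearing on the right-hand side: the dots that must travel past the $k+1$ nails on the leftmost colored strand are slid through using \cref{lem:dotslidenails}, which crucially introduces \emph{no} new terms, and the remaining black/black crossings are reordered using the braid move \cref{eq:nhR2andR3} and the nail relations \cref{eq:nailsrel}. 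The reorderings of crossings past nailed strands are exactly what produce the sign $(-1)^\ell$ recorded in the statement — this is the same mechanism as in \cref{lem:omegaexchange}, where exchanging $\omega_i$ past $\omega_j$ costs a sign and lowers an index. Matching the routing of the strands in the resulting pictures with the $\ell=k$ summand of the right-hand sum then finishes the argument.

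An alternative, and probably cleaner, organisation is an induction on $k$. For $k=0$ the identity reduces to a single application of \cref{eq:nhdotslide} followed by \cref{lem:dotslidenails}, and the inductive step adds one further strand to the bunch of $k$: this inserts exactly one additional crossing, which upon the dot-slide contributes precisely one new correction term, accounting for the discrepancy between the ranges $0\le\ell\le k-1$ and $0\le\ell\le k$, while all previously present terms transform in the same way on both sides. Either way, termination of the bookkeeping is guaranteed because every move strictly simplifies the configuration in the sense of the weight preorder of \cref{ssec:rewritingrulesparam}.

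The main obstacle I anticipate is the sign and strand-routing bookkeeping: one must check that sliding dots through the two (``$\vdots$''-abbreviated) groups of $k$ nails and reordering the intervening crossings yields exactly the signs $(-1)^\ell$ and exactly the strand configurations drawn, with no spurious terms left over. In particular one has to use carefully that \cref{lem:dotslidenails} contributes nothing while the nilHecke moves among the bunched black strands do contribute, and that the symmetry of the relations under a vertical flip lets one treat the two groups of nails on equal footing. Once this is set up, each individual step is routine and can be left to the reader, as is done elsewhere in this appendix.
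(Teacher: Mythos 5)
Your reduction at the start — ``both sides carry the same summand for $0\le \ell\le k-1$, cancel them, and it remains to show that the left main diagram equals the right main diagram plus the $\ell=k$ summand'' — is where the argument breaks. The $\ell$-indexed summands on the two sides are \emph{not} the same elements: on the left the $u$- and $v$-dotted strands are routed through a configuration with an extra strand wrapping an additional nail (braces $\ell$ and $k-\ell-1$ plus a separate nailed strand), whereas on the right the corresponding summand has braces $\ell$ and $k-\ell$ with the dots sitting in a different position relative to the nails and crossings. Relating a left summand to the matching right summand is itself a nontrivial dot-slide computation: moving the $v$ dots into place produces further correction terms of the form $\sum_{s+t=v-1}$, and these do not vanish termwise. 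The paper's proof (Appendix C) is organised around exactly this point: it first records how the main diagram transforms (via \cref{eq:nhdotslide}, \cref{lem:dotslidenails} and \cref{eq:nailsrel}, picking up a $(-1)^k$-signed correction), then computes the discrepancy between the $\ell$-summands on the two sides, and finally shows that the $s,t$-indexed by-products — after a $(-1)^{k-\ell-1}$ rearrangement using \cref{lem:dotslideoversevercrossings} — cancel \emph{in aggregate} against the separated $\ell=k$ piece and the middle terms of the expanded right-hand side. Your bookkeeping, which assumes each left summand transforms ``in the same way'' as the corresponding right summand and that \cref{lem:dotslidenails} is the only mechanism in play, never produces these cross-cancelling terms, so the identity does not close up.

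The induction-on-$k$ variant you sketch suffers from the same issue: the inductive step does not merely ``insert one additional crossing whose dot-slide contributes one new correction term,'' because adding a strand changes the nail pattern inside every existing summand, and the induced corrections (again the $\sum_{s+t=v-1}$ terms) must be tracked and cancelled against terms attached to other values of $\ell$. Your sign heuristic via \cref{lem:omegaexchange} is in the right spirit, but the actual signs in the proof come from \cref{eq:nailsrel} when strands are pulled around the nails, with factors $(-1)^k$ and $(-1)^{k-\ell-1}$ appearing at specific stages; these need to be exhibited, not inferred from the shape of the answer. In short, the missing idea is the global cancellation scheme between the corrections generated inside the $\ell$-summands and the correction generated by relocating the $N$ dots in the main diagram; without it the proposal is a plausibility argument rather than a proof.
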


\begin{proof}
First, we rewrite the RHS of the equation in the statement as
\begin{align} \label{eq:RHSeqlem_Ndotsonestep}
\tikzdiagl[xscale=.9]{
	\draw (.5,0) .. controls (.5,.75) and (2.5,.75) .. (2.5,2.5) node[pos=.85, tikzdot]{} node[pos=.85,xshift=1.5ex,yshift=-.75ex]{\small $N$};
	\draw (1,0) .. controls (1,.25) .. (0,.5) .. controls (2,1.5) .. (2,2.5);
	\draw (2,0) .. controls (2,.75) .. (0,1.5) .. controls (1,2) .. (1,2.5);
	\node at(1.5,.1) {\small $\dots$};
	\node at(1.5,2.4) {\small $\dots$};
	\node at(.25,1.1) {\small $\vdots$};
	\tikzbrace{1}{2}{0}{\small $k$};
	\draw (2.5,0) .. controls (2.5,1.5) .. (0,2) .. controls (.5,2.25) .. (.5,2.5);
	\draw [pstdhl] (0,0) node[below]{\small $\mu$} -- (0,2.5) node[pos=.2,nail]{}  node[pos=.6,nail]{} node[pos=.8,nail]{}  ;
}
+
(-1)^k
\sssum{u+v\\=N-1}
\tikzdiagl[xscale=.9]{
	\draw (.5,0) .. controls (.5,.25) .. (0,.5) .. controls (2,1) .. (2,1.5) node[pos=1, tikzdot]{} node[pos=1, xshift=-1.5ex, yshift=-.25ex]{\small $u$}
		.. controls (2,2) and (.5,2) .. (.5,2.5);
	\draw (1,0) .. controls (1,.5) .. (0,1) .. controls (2,1.75) .. (2,2.5);
	\draw (2,0) .. controls (2,1) .. (0,2) .. controls (1,2.25) .. (1,2.5);
	\node at(1.5,.1) {\small $\dots$};
	\node at(1.5,2.4) {\small $\dots$};
	\node at(.25,1.6) {\small $\vdots$};
	\tikzbrace{1}{2}{0}{\small $k$};
	\draw (2.5,0) -- (2.5,2.5)  node[pos=.6, tikzdot]{} node[pos=.6, xshift=1.5ex, yshift=-.25ex]{\small $v$};
	\draw [pstdhl] (0,0) node[below]{\small $\mu$} -- (0,2.5) node[pos=.2,nail]{}  node[pos=.4,nail]{} node[pos=.8,nail]{}  ;
}
+ \sum_{\ell=0}^{k-1} (-1)^{\ell} \sssum{u+v\\=N-1}
\tikzdiagl[xscale=.9]{
	\draw (.5,0) .. controls (.5,.25) .. (0,.5) .. controls (4,2) .. (4,2.5) node[pos=1, tikzdot]{} node[pos=1, xshift=-1.5ex, yshift=0ex]{\small $u$}
		.. controls (4,3.5) and (2.5,3.5) .. (2.5,4.5);
	\draw (1,0) .. controls (1,.5) .. (0,1) .. controls (4,3) .. (4,4.5);
	\draw (2,0) .. controls (2,1) .. (0,2) .. controls (3,3.5) .. (3,4.5); 
	\node at(1.5,.1) {\small $\dots$};
	\node at(3.5,4.4) {\small $\dots$};
	\node at(.25,1.6) {\small $\vdots$};
	\tikzbrace{1}{2}{0}{\small $\ell$};
	%
	%\draw (2.5, 0) .. controls (2.5,.5) and (4.5,.5) .. (4.5,4.5)  node[pos=.65, tikzdot]{};
	\draw (2.5,0) .. controls (2.5,1) and (4.5,1) .. (4.5,2.5) node[pos=1, tikzdot]{}  node[pos=1, xshift=1.5ex, yshift=0ex]{\small $v$}
		 .. controls (4.5,3) and (4.5,3) .. (4.5,4.5);
	\draw (3,0) .. controls (3,1) .. (0,2.5) .. controls (2,3.5) .. (2,4.5);
	\draw (4,0) .. controls (4,1.5) .. (0,3.5) .. controls (1,4) .. (1,4.5); 
	\node at(3.5,.1) {\small $\dots$};
	\node at(1.5,4.4) {\small $\dots$};
	\node at(.25,3.1) {\small $\vdots$};
	\tikzbrace{3}{4}{0}{\small $k-\ell-1$};
	\draw (4.5,0) .. controls (4.5,1.75) .. (0,4) .. controls (.5,4.25) .. (.5,4.5);
	%\draw (4.5,0) -- (4.5,2) .. controls (4.5,3.5) .. (0,4) .. controls (.5,4.25) .. (.5,4.5);
	%
	\draw [pstdhl] (0,0) node[below]{\small $\mu$} -- (0,4.5) 
		node[pos=.11,nail]{}  node[pos=.22,nail]{} node[pos=.44,nail]{} node[pos=.55,nail]{} node[pos=.77,nail]{} node[pos=.88,nail]{};
}
\end{align}
Then we compute
\begin{align}\label{eq:lem_Ndotsonestep_eq2}
\tikzdiagl{
	\draw (.5,0) .. controls (.5,.75) and (2.5,.75) .. (2.5,2.5) node[pos=.85, tikzdot]{} node[pos=.85,xshift=1.5ex,yshift=-.75ex]{\small $N$};
	\draw (1,0) .. controls (1,.25) .. (0,.5) .. controls (2,1.5) .. (2,2.5);
	\draw (2,0) .. controls (2,.75) .. (0,1.5) .. controls (1,2) .. (1,2.5);
	\node at(1.5,.1) {\small $\dots$};
	\node at(1.5,2.4) {\small $\dots$};
	\node at(.25,1.1) {\small $\vdots$};
	\tikzbrace{1}{2}{0}{\small $k$};
	\draw (2.5,0) .. controls (2.5,1.5) .. (0,2) .. controls (.5,2.25) .. (.5,2.5);
	\draw [pstdhl] (0,0) node[below]{\small $\mu$} -- (0,2.5) node[pos=.2,nail]{}  node[pos=.6,nail]{} node[pos=.8,nail]{}  ;
}
\ &= \ 
\tikzdiagl{
	\draw (.5,0) .. controls (.5,.75) and (2.5,.75) .. (2.5,2.5) node[pos=.6, tikzdot]{} node[pos=.6,xshift=1.5ex,yshift=-.75ex]{\small $N$};
	\draw (1,0) .. controls (1,.25) .. (0,.5) .. controls (2,1.5) .. (2,2.5);
	\draw (2,0) .. controls (2,.75) .. (0,1.5) .. controls (1,2) .. (1,2.5);
	\node at(1.5,.1) {\small $\dots$};
	\node at(1.5,2.4) {\small $\dots$};
	\node at(.25,1.1) {\small $\vdots$};
	\tikzbrace{1}{2}{0}{\small $k$};
	\draw (2.5,0) .. controls (2.5,1.5) .. (0,2) .. controls (.5,2.25) .. (.5,2.5);
	\draw [pstdhl] (0,0) node[below]{\small $\mu$} -- (0,2.5) node[pos=.2,nail]{}  node[pos=.6,nail]{} node[pos=.8,nail]{}  ;
}
\ - \
(-1)^k
 \sssum{u+v=\\N-1}
\tikzdiagl{
	\draw (.5,0) .. controls (.5,.25) .. (0,.5) .. controls (2,1) .. (2,1.5) 
		.. controls (2,2) and (.5,2) .. (.5,2.5) node[pos=.9, tikzdot]{} node[pos=.9, xshift=-1.5ex, yshift=-.25ex]{\small $u$};
	\draw (1,0) .. controls (1,.5) .. (0,1) .. controls (2,1.75) .. (2,2.5);
	\draw (2,0) .. controls (2,1) .. (0,2) .. controls (1,2.25) .. (1,2.5);
	\node at(1.5,.1) {\small $\dots$};
	\node at(1.5,2.4) {\small $\dots$};
	\node at(.25,1.6) {\small $\vdots$};
	\tikzbrace{1}{2}{0}{\small $k$};
	\draw (2.5,0) -- (2.5,2.5)  node[pos=.6, tikzdot]{} node[pos=.6, xshift=1.5ex, yshift=-.25ex]{\small $v$};
	\draw [pstdhl] (0,0) node[below]{\small $\mu$} -- (0,2.5) node[pos=.2,nail]{}  node[pos=.4,nail]{} node[pos=.8,nail]{}  ;
}
\end{align}
using first \cref{eq:nhdotslide}, then \cref{lem:dotslidenails} and finally \cref{eq:nailsrel}.

We also compute
\begin{equation}\label{eq:lem_Ndotsonestep_eq3}
\tikzdiagl[xscale=.9]{
	\draw (.5,0) .. controls (.5,.25) .. (0,.5) .. controls (4,2) .. (4,2.5) node[pos=1, tikzdot]{} node[pos=1, xshift=-1.5ex, yshift=0ex]{\small $u$}
		.. controls (4,3.5) and (2.5,3.5) .. (2.5,4.5);
	\draw (1,0) .. controls (1,.5) .. (0,1) .. controls (4,3) .. (4,4.5);
	\draw (2,0) .. controls (2,1) .. (0,2) .. controls (3,3.5) .. (3,4.5); 
	\node at(1.5,.1) {\small $\dots$};
	\node at(3.5,4.4) {\small $\dots$};
	\node at(.25,1.6) {\small $\vdots$};
	\tikzbrace{1}{2}{0}{\small $\ell$};
	%
	%\draw (2.5, 0) .. controls (2.5,.5) and (4.5,.5) .. (4.5,4.5)  node[pos=.65, tikzdot]{};
	\draw (2.5,0) .. controls (2.5,1) and (4.5,1) .. (4.5,2.5) node[pos=1, tikzdot]{}  node[pos=1, xshift=1.5ex, yshift=0ex]{\small $v$}
		 .. controls (4.5,3) and (4.5,3) .. (4.5,4.5);
	\draw (3,0) .. controls (3,1) .. (0,2.5) .. controls (2,3.5) .. (2,4.5);
	\draw (4,0) .. controls (4,1.5) .. (0,3.5) .. controls (1,4) .. (1,4.5); 
	\node at(3.5,.1) {\small $\dots$};
	\node at(1.5,4.4) {\small $\dots$};
	\node at(.25,3.1) {\small $\vdots$};
	\tikzbrace{3}{4}{0}{\small $k-\ell-1$};
	\draw (4.5,0) .. controls (4.5,1.75) .. (0,4) .. controls (.5,4.25) .. (.5,4.5);
	%\draw (4.5,0) -- (4.5,2) .. controls (4.5,3.5) .. (0,4) .. controls (.5,4.25) .. (.5,4.5);
	%
	\draw [pstdhl] (0,0) node[below]{\small $\mu$} -- (0,4.5) 
		node[pos=.11,nail]{}  node[pos=.22,nail]{} node[pos=.44,nail]{} node[pos=.55,nail]{} node[pos=.77,nail]{} node[pos=.88,nail]{};
}
\ = \ 
\tikzdiagl[xscale=.9]{
	\draw (.5,0) .. controls (.5,.25) .. (0,.5) .. controls (4,2) .. (4,2.5) node[pos=1, tikzdot]{} node[pos=1, xshift=-1.5ex, yshift=0ex]{\small $u$}
		.. controls (4,3.5) and (2.5,3.5) .. (2.5,4.5);
	\draw (1,0) .. controls (1,.5) .. (0,1) .. controls (4,3) .. (4,4.5);
	\draw (2,0) .. controls (2,1) .. (0,2) .. controls (3,3.5) .. (3,4.5); 
	\node at(1.5,.1) {\small $\dots$};
	\node at(3.5,4.4) {\small $\dots$};
	\node at(.25,1.6) {\small $\vdots$};
	\tikzbrace{1}{2}{0}{\small $\ell$};
	%
	%\draw (2.5, 0) .. controls (2.5,.5) and (4.5,.5) .. (4.5,4.5)  node[pos=.65, tikzdot]{};
	\draw (2.5,0) .. controls (2.5,1) and (4.5,1) .. (4.5,2.5) node[pos=.65, tikzdot]{}  node[pos=.65, xshift=1.5ex, yshift=-.75ex]{\small $v$}
		 .. controls (4.5,3) and (4.5,3) .. (4.5,4.5);
	\draw (3,0) .. controls (3,1) .. (0,2.5) .. controls (2,3.5) .. (2,4.5);
	\draw (4,0) .. controls (4,1.5) .. (0,3.5) .. controls (1,4) .. (1,4.5); 
	\node at(3.5,.1) {\small $\dots$};
	\node at(1.5,4.4) {\small $\dots$};
	\node at(.25,3.1) {\small $\vdots$};
	\tikzbrace{3}{4}{0}{\small $k-\ell-1$};
	\draw (4.5,0) .. controls (4.5,1.75) .. (0,4) .. controls (.5,4.25) .. (.5,4.5);
	%\draw (4.5,0) -- (4.5,2) .. controls (4.5,3.5) .. (0,4) .. controls (.5,4.25) .. (.5,4.5);
	%
	\draw [pstdhl] (0,0) node[below]{\small $\mu$} -- (0,4.5) 
		node[pos=.11,nail]{}  node[pos=.22,nail]{} node[pos=.44,nail]{} node[pos=.55,nail]{} node[pos=.77,nail]{} node[pos=.88,nail]{};
}
\ - \sssum{t+s=\\v-1} \ 
\tikzdiagl[xscale=.9]{
	\draw (.5,0) .. controls (.5,.25) .. (0,.5) .. controls (4,2) .. (4,2.5) node[pos=1, tikzdot]{} node[pos=1, xshift=-1.5ex, yshift=0ex]{\small $u$}
		.. controls (4,3.5) and (2.5,3.5) .. (2.5,4.5);
	\draw (1,0) .. controls (1,.5) .. (0,1) .. controls (4,3) .. (4,4.5);
	\draw (2,0) .. controls (2,1) .. (0,2) .. controls (3,3.5) .. (3,4.5); 
	\node at(1.5,.1) {\small $\dots$};
	\node at(3.5,4.4) {\small $\dots$};
	\node at(.25,1.6) {\small $\vdots$};
	\tikzbrace{1}{2}{0}{\small $\ell$};
	%
	%\draw (2.5, 0) .. controls (2.5,.5) and (4.5,.5) .. (4.5,4.5)  node[pos=.65, tikzdot]{};
	%\draw (2.5,0) .. controls (2.5,1) and (4.5,1) .. (4.5,2.5) node[pos=1, tikzdot]{}  node[pos=1, xshift=1.5ex, yshift=0ex]{\small $v$}
	%	 .. controls (4.5,3) and (4.5,3) .. (4.5,4.5);
	\draw (2.5, 0) .. controls (2.5,.75) and (4,.75) .. (4,1.5) node[pos=1, tikzdot]{} node[pos=1, xshift=-1.5ex, yshift=.5ex]{\small $s$}
		.. controls (4,2) .. (0,4) .. controls (.5,4.25) .. (.5,4.5);
	\draw (3,0) .. controls (3,1) .. (0,2.5) .. controls (2,3.5) .. (2,4.5);
	\draw (4,0) .. controls (4,1.5) .. (0,3.5) .. controls (1,4) .. (1,4.5); 
	\node at(3.5,.1) {\small $\dots$};
	\node at(1.5,4.4) {\small $\dots$};
	\node at(.25,3.1) {\small $\vdots$};
	\tikzbrace{3}{4}{0}{\small $k-\ell-1$};
	%
	%\draw (4.5,0) .. controls (4.5,1.75) .. (0,4) .. controls (.5,4.25) .. (.5,4.5);
	%\draw (4.5,0) -- (4.5,2) .. controls (4.5,3.5) .. (0,4) .. controls (.5,4.25) .. (.5,4.5);
	\draw (4.5,0) -- (4.5,4.5) node[pos=.33, tikzdot]{} node[pos=.33, xshift=1.5ex, yshift=.75ex]{\small $t$};
	\draw [pstdhl] (0,0) node[below]{\small $\mu$} -- (0,4.5) 
		node[pos=.11,nail]{}  node[pos=.22,nail]{} node[pos=.44,nail]{} node[pos=.55,nail]{} node[pos=.77,nail]{} node[pos=.88,nail]{};
}
\end{equation}
and for similar reasons as in \cref{eq:lem_Ndotsonestep_eq2} we have
\[
\tikzdiagl{
	\draw (.5,0) .. controls (.5,.25) .. (0,.5) .. controls (4,2) .. (4,2.5) node[pos=1, tikzdot]{} node[pos=1, xshift=-1.5ex, yshift=0ex]{\small $u$}
		.. controls (4,3.5) and (2.5,3.5) .. (2.5,4.5);
	\draw (1,0) .. controls (1,.5) .. (0,1) .. controls (4,3) .. (4,4.5);
	\draw (2,0) .. controls (2,1) .. (0,2) .. controls (3,3.5) .. (3,4.5); 
	\node at(1.5,.1) {\small $\dots$};
	\node at(3.5,4.4) {\small $\dots$};
	\node at(.25,1.6) {\small $\vdots$};
	\tikzbrace{1}{2}{0}{\small $\ell$};
	%
	%\draw (2.5, 0) .. controls (2.5,.5) and (4.5,.5) .. (4.5,4.5)  node[pos=.65, tikzdot]{};
	%\draw (2.5,0) .. controls (2.5,1) and (4.5,1) .. (4.5,2.5) node[pos=1, tikzdot]{}  node[pos=1, xshift=1.5ex, yshift=0ex]{\small $v$}
	%	 .. controls (4.5,3) and (4.5,3) .. (4.5,4.5);
	\draw (2.5, 0) .. controls (2.5,.75) and (4,.75) .. (4,1.5) node[pos=1, tikzdot]{} node[pos=1, xshift=-1.5ex, yshift=.5ex]{\small $s$}
		.. controls (4,2) .. (0,4) .. controls (.5,4.25) .. (.5,4.5);
	\draw (3,0) .. controls (3,1) .. (0,2.5) .. controls (2,3.5) .. (2,4.5);
	\draw (4,0) .. controls (4,1.5) .. (0,3.5) .. controls (1,4) .. (1,4.5); 
	\node at(3.5,.1) {\small $\dots$};
	\node at(1.5,4.4) {\small $\dots$};
	\node at(.25,3.1) {\small $\vdots$};
	\tikzbrace{3}{4}{0}{\small $k-\ell-1$};
	%
	%\draw (4.5,0) .. controls (4.5,1.75) .. (0,4) .. controls (.5,4.25) .. (.5,4.5);
	%\draw (4.5,0) -- (4.5,2) .. controls (4.5,3.5) .. (0,4) .. controls (.5,4.25) .. (.5,4.5);
	\draw (4.5,0) -- (4.5,4.5) node[pos=.33, tikzdot]{} node[pos=.33, xshift=1.5ex, yshift=.75ex]{\small $t$};
	\draw [pstdhl] (0,0) node[below]{\small $\mu$} -- (0,4.5) 
		node[pos=.11,nail]{}  node[pos=.22,nail]{} node[pos=.44,nail]{} node[pos=.55,nail]{} node[pos=.77,nail]{} node[pos=.88,nail]{};
}
\ = \ 
(-1)^{k-\ell-1}
\tikzdiagl{
	\draw (.5,0) .. controls (.5,.25) .. (0,.5) .. controls (4,2) .. (4,2.5) node[pos=1, tikzdot]{} node[pos=1, xshift=-1.5ex, yshift=0ex]{\small $u$}
		.. controls (4,3.5) and (2.5,3.5) .. (2.5,4.5);
	\draw (1,0) .. controls (1,.5) .. (0,1) .. controls (4,3) .. (4,4.5);
	\draw (2,0) .. controls (2,1) .. (0,2) .. controls (3,3.5) .. (3,4.5); 
	\node at(1.5,.1) {\small $\dots$};
	\node at(3.5,4.4) {\small $\dots$};
	\node at(.25,1.6) {\small $\vdots$};
	\tikzbrace{1}{2}{0}{\small $\ell$};
	%
	%\draw (2.5, 0) .. controls (2.5,.5) and (4.5,.5) .. (4.5,4.5)  node[pos=.65, tikzdot]{};
	%\draw (2.5,0) .. controls (2.5,1) and (4.5,1) .. (4.5,2.5) node[pos=1, tikzdot]{}  node[pos=1, xshift=1.5ex, yshift=0ex]{\small $v$}
	%	 .. controls (4.5,3) and (4.5,3) .. (4.5,4.5);
	%\draw (2.5, 0) .. controls (2.5,.75) and (4,.75) .. (4,1.5) node[pos=1, tikzdot]{} node[pos=1, xshift=-1.5ex, yshift=.5ex]{\small $s$}
	%	.. controls (4,2) .. (0,4) .. controls (.5,4.25) .. (.5,4.5);
	\draw (2.5,0) .. controls (2.5,1.25) .. (0,2.5)	
		.. controls (2,3.25) .. (2,3.5)
		.. controls  (2,4) and (.5,4) .. (.5,4.5) node[pos=.9, tikzdot]{} node[pos=.9,xshift=-1.5ex,yshift=0ex]{\small $s$};
	\draw (3,0) .. controls (3,1.5) .. (0,3) .. controls (2,3.75) .. (2,4.5);
	\draw (4,0) .. controls (4,2) .. (0,4) .. controls (1,4.25) .. (1,4.5); 
	\node at(3.5,.1) {\small $\dots$};
	\node at(1.5,4.4) {\small $\dots$};
	\node at(.25,3.6) {\small $\vdots$};
	\tikzbrace{3}{4}{0}{\small $k-\ell-1$};
	%
	%\draw (4.5,0) .. controls (4.5,1.75) .. (0,4) .. controls (.5,4.25) .. (.5,4.5);
	%\draw (4.5,0) -- (4.5,2) .. controls (4.5,3.5) .. (0,4) .. controls (.5,4.25) .. (.5,4.5);
	\draw (4.5,0) -- (4.5,4.5) node[pos=.33, tikzdot]{} node[pos=.33, xshift=1.5ex, yshift=.75ex]{\small $t$};
	\draw [pstdhl] (0,0) node[below]{\small $\mu$} -- (0,4.5) 
		node[pos=.11,nail]{}  node[pos=.22,nail]{} node[pos=.44,nail]{} node[pos=.55,nail]{} node[pos=.66,nail]{} node[pos=.88,nail]{};
}
\]
Therefore, by \cref{lem:dotslideoversevercrossings}, the rightmost term of \cref{eq:lem_Ndotsonestep_eq2} together with the the rightmost term of \cref{eq:lem_Ndotsonestep_eq3} gives
\[
\ - \
(-1)^k
 \sssum{u+v=\\N-1}
\tikzdiagl{
	\draw (.5,0) .. controls (.5,.25) .. (0,.5) .. controls (2,1) .. (2,1.5) 
		.. controls (2,2) and (.5,2) .. (.5,2.5) node[pos=0, tikzdot]{} node[pos=0, xshift=-1.5ex, yshift=-.25ex]{\small $u$};
	\draw (1,0) .. controls (1,.5) .. (0,1) .. controls (2,1.75) .. (2,2.5);
	\draw (2,0) .. controls (2,1) .. (0,2) .. controls (1,2.25) .. (1,2.5);
	\node at(1.5,.1) {\small $\dots$};
	\node at(1.5,2.4) {\small $\dots$};
	\node at(.25,1.6) {\small $\vdots$};
	\tikzbrace{1}{2}{0}{\small $k$};
	\draw (2.5,0) -- (2.5,2.5)  node[pos=.6, tikzdot]{} node[pos=.6, xshift=1.5ex, yshift=-.25ex]{\small $v$};
	\draw [pstdhl] (0,0) node[below]{\small $\mu$} -- (0,2.5) node[pos=.2,nail]{}  node[pos=.4,nail]{} node[pos=.8,nail]{}  ;
}
\]
These elements cancel with the middle terms of \cref{eq:RHSeqlem_Ndotsonestep}, so that what remains is
\[
\tikzdiagl{
	\draw (.5,0) .. controls (.5,.75) and (2.5,.75) .. (2.5,2.5) node[pos=.6, tikzdot]{} node[pos=.6,xshift=1.5ex,yshift=-.75ex]{\small $N$};
	\draw (1,0) .. controls (1,.25) .. (0,.5) .. controls (2,1.5) .. (2,2.5);
	\draw (2,0) .. controls (2,.75) .. (0,1.5) .. controls (1,2) .. (1,2.5);
	\node at(1.5,.1) {\small $\dots$};
	\node at(1.5,2.4) {\small $\dots$};
	\node at(.25,1.1) {\small $\vdots$};
	\tikzbrace{1}{2}{0}{\small $k$};
	\draw (2.5,0) .. controls (2.5,1.5) .. (0,2) .. controls (.5,2.25) .. (.5,2.5);
	\draw [pstdhl] (0,0) node[below]{\small $\mu$} -- (0,2.5) node[pos=.2,nail]{}  node[pos=.6,nail]{} node[pos=.8,nail]{}  ;
}
+ \sum_{\ell=0}^{k-1} (-1)^{\ell} \sssum{u+v\\=N-1}
\tikzdiagl{
	\draw (.5,0) .. controls (.5,.25) .. (0,.5) .. controls (4,2) .. (4,2.5) node[pos=1, tikzdot]{} node[pos=1, xshift=-1.5ex, yshift=0ex]{\small $u$}
		.. controls (4,3.5) and (2.5,3.5) .. (2.5,4.5);
	\draw (1,0) .. controls (1,.5) .. (0,1) .. controls (4,3) .. (4,4.5);
	\draw (2,0) .. controls (2,1) .. (0,2) .. controls (3,3.5) .. (3,4.5); 
	\node at(1.5,.1) {\small $\dots$};
	\node at(3.5,4.4) {\small $\dots$};
	\node at(.25,1.6) {\small $\vdots$};
	\tikzbrace{1}{2}{0}{\small $\ell$};
	%
	%\draw (2.5, 0) .. controls (2.5,.5) and (4.5,.5) .. (4.5,4.5)  node[pos=.65, tikzdot]{};
	\draw (2.5,0) .. controls (2.5,1) and (4.5,1) .. (4.5,2.5) node[pos=.65, tikzdot]{}  node[pos=.65, xshift=1.5ex, yshift=-.75ex]{\small $v$}
		 .. controls (4.5,3) and (4.5,3) .. (4.5,4.5);
	\draw (3,0) .. controls (3,1) .. (0,2.5) .. controls (2,3.5) .. (2,4.5);
	\draw (4,0) .. controls (4,1.5) .. (0,3.5) .. controls (1,4) .. (1,4.5); 
	\node at(3.5,.1) {\small $\dots$};
	\node at(1.5,4.4) {\small $\dots$};
	\node at(.25,3.1) {\small $\vdots$};
	\tikzbrace{3}{4}{0}{\small $k-\ell-1$};
	\draw (4.5,0) .. controls (4.5,1.75) .. (0,4) .. controls (.5,4.25) .. (.5,4.5);
	%\draw (4.5,0) -- (4.5,2) .. controls (4.5,3.5) .. (0,4) .. controls (.5,4.25) .. (.5,4.5);
	%
	\draw [pstdhl] (0,0) node[below]{\small $\mu$} -- (0,4.5) 
		node[pos=.11,nail]{}  node[pos=.22,nail]{} node[pos=.44,nail]{} node[pos=.55,nail]{} node[pos=.77,nail]{} node[pos=.88,nail]{};
}
\]

We compute
\[
\tikzdiagl{
	\draw (.5,0) .. controls (.5,.25) .. (0,.5) .. controls (4,2) .. (4,2.5) node[pos=1, tikzdot]{} node[pos=1, xshift=-1.5ex, yshift=0ex]{\small $u$}
		.. controls (4,3.5) and (2.5,3.5) .. (2.5,4.5);
	\draw (1,0) .. controls (1,.5) .. (0,1) .. controls (4,3) .. (4,4.5);
	\draw (2,0) .. controls (2,1) .. (0,2) .. controls (3,3.5) .. (3,4.5); 
	\node at(1.5,.1) {\small $\dots$};
	\node at(3.5,4.4) {\small $\dots$};
	\node at(.25,1.6) {\small $\vdots$};
	\tikzbrace{1}{2}{0}{\small $\ell$};
	%
	%\draw (2.5, 0) .. controls (2.5,.5) and (4.5,.5) .. (4.5,4.5)  node[pos=.65, tikzdot]{};
	\draw (2.5,0) .. controls (2.5,1) and (4.5,1) .. (4.5,2.5) node[pos=.65, tikzdot]{}  node[pos=.65, xshift=1.5ex, yshift=-.75ex]{\small $v$}
		 .. controls (4.5,3) and (4.5,3) .. (4.5,4.5);
	\draw (3,0) .. controls (3,1) .. (0,2.5) .. controls (2,3.5) .. (2,4.5);
	\draw (4,0) .. controls (4,1.5) .. (0,3.5) .. controls (1,4) .. (1,4.5); 
	\node at(3.5,.1) {\small $\dots$};
	\node at(1.5,4.4) {\small $\dots$};
	\node at(.25,3.1) {\small $\vdots$};
	\tikzbrace{3}{4}{0}{\small $k-\ell-1$};
	\draw (4.5,0) .. controls (4.5,1.75) .. (0,4) .. controls (.5,4.25) .. (.5,4.5);
	%\draw (4.5,0) -- (4.5,2) .. controls (4.5,3.5) .. (0,4) .. controls (.5,4.25) .. (.5,4.5);
	%
	\draw [pstdhl] (0,0) node[below]{\small $\mu$} -- (0,4.5) 
		node[pos=.11,nail]{}  node[pos=.22,nail]{} node[pos=.44,nail]{} node[pos=.55,nail]{} node[pos=.77,nail]{} node[pos=.88,nail]{};
}
\ = \ 
\tikzdiagl{
	\draw (.5,0) .. controls (.5,.25) .. (0,.5) .. controls (4,2) .. (4,2.5) node[pos=.5, tikzdot]{} node[pos=.5, xshift=-1.5ex, yshift=.75ex]{\small $u$}
		.. controls (4,3.5) and (2.5,3.5) .. (2.5,4.5);
	\draw (1,0) .. controls (1,.5) .. (0,1) .. controls (4,3) .. (4,4.5);
	\draw (2,0) .. controls (2,1) .. (0,2) .. controls (3,3.5) .. (3,4.5); 
	\node at(1.5,.1) {\small $\dots$};
	\node at(3.5,4.4) {\small $\dots$};
	\node at(.25,1.6) {\small $\vdots$};
	\tikzbrace{1}{2}{0}{\small $\ell$};
	%
	%\draw (2.5, 0) .. controls (2.5,.5) and (4.5,.5) .. (4.5,4.5)  node[pos=.65, tikzdot]{};
	\draw (2.5,0) .. controls (2.5,1) and (4.5,1) .. (4.5,2.5) node[pos=.65, tikzdot]{}  node[pos=.65, xshift=1.5ex, yshift=-.75ex]{\small $v$}
		 .. controls (4.5,3) and (4.5,3) .. (4.5,4.5);
	\draw (3,0) .. controls (3,1) .. (0,2.5) .. controls (2,3.5) .. (2,4.5);
	\draw (4,0) .. controls (4,1.5) .. (0,3.5) .. controls (1,4) .. (1,4.5); 
	\node at(3.5,.1) {\small $\dots$};
	\node at(1.5,4.4) {\small $\dots$};
	\node at(.25,3.1) {\small $\vdots$};
	\tikzbrace{3}{4}{0}{\small $k-\ell-1$};
	\draw (4.5,0) .. controls (4.5,1.75) .. (0,4) .. controls (.5,4.25) .. (.5,4.5);
	%\draw (4.5,0) -- (4.5,2) .. controls (4.5,3.5) .. (0,4) .. controls (.5,4.25) .. (.5,4.5);
	%
	\draw [pstdhl] (0,0) node[below]{\small $\mu$} -- (0,4.5) 
		node[pos=.11,nail]{}  node[pos=.22,nail]{} node[pos=.44,nail]{} node[pos=.55,nail]{} node[pos=.77,nail]{} node[pos=.88,nail]{};
}
\ = \ 
\tikzdiagl{
	\draw (.5,0) .. controls (.5,.25) .. (0,.5) .. controls (3.5,1.5) .. (3.5,2) node[pos=1, tikzdot]{} node[pos=1, xshift=-1.5ex, yshift=.75ex]{\small $u$}
		.. controls (3.5,2.5) and (2.5,2.5) .. (2.5,4.5);
	\draw (1,0) .. controls (1,.5) .. (0,1) .. controls (4,3) .. (4,4.5);
	\draw (2,0) .. controls (2,1) .. (0,2) .. controls (3,3.5) .. (3,4.5); 
	\node at(1.5,.1) {\small $\dots$};
	\node at(3.5,4.4) {\small $\dots$};
	\node at(.25,1.6) {\small $\vdots$};
	\tikzbrace{1}{2}{0}{\small $\ell$};
	%
	%\draw (2.5, 0) .. controls (2.5,.5) and (4.5,.5) .. (4.5,4.5)  node[pos=.65, tikzdot]{};
	\draw (2.5,0) .. controls (2.5,1) and (4,1) .. (4,2)  node[pos=1, tikzdot]{}  node[pos=1, xshift=1.5ex, yshift=.75ex]{\small $v$}
		 .. controls (4,3) and (4.5,3) .. (4.5,4.5);
	\draw (3,0) .. controls (3,1) .. (0,2.5) .. controls (2,3.5) .. (2,4.5);
	\draw (4,0) .. controls (4,1.5) .. (0,3.5) .. controls (1,4) .. (1,4.5); 
	\node at(3.5,.1) {\small $\dots$};
	\node at(1.5,4.4) {\small $\dots$};
	\node at(.25,3.1) {\small $\vdots$};
	\tikzbrace{3}{4}{0}{\small $k-\ell-1$};
	%
	%\draw (4.5,0) .. controls (4.5,3) .. (0,4) .. controls (.5,4.25) .. (.5,4.5);
	\draw (4.5,0) -- (4.5,2) .. controls (4.5,3.5) .. (0,4) .. controls (.5,4.25) .. (.5,4.5);
	\draw [pstdhl] (0,0) node[below]{\small $\mu$} -- (0,4.5) 
		node[pos=.11,nail]{}  node[pos=.22,nail]{} node[pos=.44,nail]{} node[pos=.55,nail]{} node[pos=.77,nail]{} node[pos=.88,nail]{};
}
\]
using \cref{lem:dotslidenails} again. 
Putting all of the above together yields the equation in the statement. 
\end{proof}

\begin{prop}\label{prop:Ndotsslideovernails}
We have
\[
\tikzdiagl{
	\draw (.5,0) .. controls (.5,.75) and (2,.75) .. (2,2) node[pos=.1, tikzdot]{} node[pos=.1,xshift=-1.5ex,yshift=-.75ex]{\small $N$};
	\draw (1,0) .. controls (1,.25) .. (0,.5) .. controls (1.5,1.25) .. (1.5,2);
	\draw (2,0) .. controls (2,.75) .. (0,1.5) .. controls (.5,1.75) .. (.5,2);
	\node at(1.5,.1) {\small $\dots$};
	\node at(1,1.9) {\small $\dots$};
	\node at(.25,1.1) {\small $\vdots$};
	\tikzbrace{1}{2}{0}{\small $k$};
	%
	%\draw (2.5,0) .. controls (2.5,1.5) .. (0,2) .. controls (.5,2.25) .. (.5,2.5);
	%
	\draw [pstdhl] (0,0) node[below]{\small $\mu$} -- (0,2) node[pos=.25,nail]{}  node[pos=.75,nail]{}  ;
}
\ = \ 
\tikzdiagl{
	\draw (.5,0) .. controls (.5,.75) and (2,.75) .. (2,2) node[pos=.85, tikzdot]{} node[pos=.85,xshift=1.5ex,yshift=-.75ex]{\small $N$};
	\draw (1,0) .. controls (1,.25) .. (0,.5) .. controls (1.5,1.25) .. (1.5,2);
	\draw (2,0) .. controls (2,.75) .. (0,1.5) .. controls (.5,1.75) .. (.5,2);
	\node at(1.5,.1) {\small $\dots$};
	\node at(1,1.9) {\small $\dots$};
	\node at(.25,1.1) {\small $\vdots$};
	\tikzbrace{1}{2}{0}{\small $k$};
	%
	%\draw (2.5,0) .. controls (2.5,1.5) .. (0,2) .. controls (.5,2.25) .. (.5,2.5);
	%
	\draw [pstdhl] (0,0) node[below]{\small $\mu$} -- (0,2) node[pos=.25,nail]{}  node[pos=.75,nail]{}  ;
}
+ \sum_{\ell=0}^{k-1} (-1)^{\ell} \sssum{u+v\\=N-1}
\tikzdiagl{
	\draw (.5,0) .. controls (.5,.25) .. (0,.5) .. controls (3.5,1.5) .. (3.5,2) node[pos=1, tikzdot]{} node[pos=1, xshift=-1.5ex, yshift=.25ex]{\small $u$}
		.. controls (3.5,2.5) and (2,2.5) .. (2,4);
	\draw (1,0) .. controls (1,.5) .. (0,1) .. controls (3.5,3) .. (3.5,4);
	\draw (2,0) .. controls (2,1) .. (0,2) .. controls (2.5,3.5) .. (2.5,4); 
	\node at(1.5,.1) {\small $\dots$};
	\node at(3,3.9) {\small $\dots$};
	\node at(.25,1.6) {\small $\vdots$};
	\tikzbrace{1}{2}{0}{\small $\ell$};
	%
	%\draw (2.5, 0) .. controls (2.5,.5) and (4.5,.5) .. (4.5,4.5)  node[pos=.65, tikzdot]{};
	\draw (2.5,0) .. controls (2.5,1) and (4,1) .. (4,2) node[pos=1, tikzdot]{}  node[pos=1, xshift=1.5ex, yshift=.25ex]{\small $v$}
		 .. controls (4,3) and (4,3) .. (4,4);
	\draw (3,0) .. controls (3,1) .. (0,2.5) .. controls (1.5,3.5) .. (1.5,4);
	\draw (4,0) .. controls (4,1.5) .. (0,3.5) .. controls (.5,3.75) .. (.5,4); 
	\node at(3.5,.1) {\small $\dots$};
	\node at(1,3.9) {\small $\dots$};
	\node at(.25,3.1) {\small $\vdots$};
	\tikzbrace{3}{4}{0}{\small $k-\ell-1$};
	\draw [pstdhl] (0,0) node[below]{\small $\mu$} -- (0,4) 
		node[pos=.125,nail]{}  node[pos=.25,nail]{} node[pos=.5,nail]{} node[pos=.625,nail]{} node[pos=.875,nail]{};
}
\]
\end{prop}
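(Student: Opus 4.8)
The statement asserts that an $N$-th power of dots placed on the innermost black strand of the diagram $\theta_{k,\rho}$-type configuration (the tightened nail with $k$ black strands pulled across it) can be slid from the bottom to the top, at the cost of a correction sum indexed by $\ell \in \{0, \dots, k-1\}$ and splittings $u+v = N-1$, each correction term being a diagram with two extra crossings separating off the first $\ell$ black strands and involving an extra pair of nails. My plan is to prove this by induction on $k$ (the number of black strands pulled across the nail), using the one-step relation \cref{lem:Ndotsonestep} as the inductive engine, together with the auxiliary dot-sliding lemmas already established: \cref{lem:dotslideoversevercrossings}, \cref{lem:dotslidenails}, and the nilHecke/nail relations \cref{eq:nhdotslide}, \cref{eq:nhR2andR3}, \cref{eq:nailsrel}.

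\textbf{Key steps.} First I would set up the notation carefully: the configuration in question is a black strand carrying $N$ dots at the bottom, which is pulled all the way across the leftmost ($\mu$-labelled) colored strand over which $k$ other black strands are already nailed, then returned to its original position on the right. The base case $k=0$ is just the statement that $N$ dots slide through a single nail, which is \cref{lem:Ndotsonestep} with $k=0$ (only the first term survives, as the correction sum is empty), or more elementarily a consequence of \cref{eq:nailsrel} and \cref{eq:nhdotslide}. For the inductive step, I would peel off the outermost of the $k$ nailed black strands: apply \cref{lem:Ndotsonestep} to move the $N$ dots past one nail at a time, producing the ``main'' term (dots now above that one crossing) plus a correction sum over one splitting $u + v = N-1$ and over $\ell \in \{0, \dots, k-1\}$. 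Then apply the induction hypothesis to the main term (which now has effectively $k-1$ strands between the dots and the top), and combine the two correction sums. The bookkeeping of signs $(-1)^\ell$ and of the splitting $u+v=N-1$ is exactly engineered so that the telescoping works: the correction terms produced at level $k$ by \cref{lem:Ndotsonestep} plus those inherited from level $k-1$ reassemble into the single sum $\sum_{\ell=0}^{k-1}(-1)^\ell \sum_{u+v=N-1}(\cdots)$ claimed in the statement. I would verify the index ranges match ($\ell$ runs up to $k-1$ after, versus $k$ in the intermediate form of \cref{lem:Ndotsonestep}) and that the extra nails/crossings in the correction diagrams align.

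\textbf{Main obstacle.} The hard part will be the precise matching of the correction terms: \cref{lem:Ndotsonestep} as stated already contains a sum over $\ell$ from $0$ to $k$ on its right-hand side with a shifted configuration of nails, and one must check that feeding this into the induction hypothesis (which has its own $\ell$-sum from $0$ to $k-2$ say, plus main term) produces no leftover or doubled terms. This is a purely combinatorial sign-and-index verification, but it is delicate because the diagrams in the correction terms have a different number of nails than the diagrams one starts with, so the ``shape'' changes at each inductive step and one must track which nailed strands have been crossed. A secondary subtlety is that some correction terms may be annihilated by the double-nail-on-one-strand relation (analogous to \cref{prop:doublenailsamestrandrewritestozero} / the last relation of \cref{eq:nailsrel}); I expect exactly the terms with $\ell$ outside the claimed range to vanish this way, which is what collapses the intermediate $0 \le \ell \le k$ range down to $0 \le \ell \le k-1$. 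I would handle this by invoking \cref{lem:dotslidenails} to normalize the positions of the dots relative to the nails before applying any annihilation, exactly as in the proof of \cref{lem:Ndotsonestep} itself. No genuinely new diagrammatic identity should be needed beyond those already in \cref{sec:computations}.
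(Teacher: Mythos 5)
Your proposal is correct and follows essentially the same route as the paper: the paper's entire proof of \cref{prop:Ndotsslideovernails} is a recursive application of \cref{lem:Ndotsonestep}, which is exactly the inductive engine you use, with the accumulated correction sums telescoping into the claimed $\sum_{\ell=0}^{k-1}(-1)^\ell\sum_{u+v=N-1}$ term. Your additional remarks on index bookkeeping and the vanishing of terms via the double-nail relation are consistent with how \cref{lem:Ndotsonestep} itself is proved, so no further comparison is needed.
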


\begin{proof}
We apply recursively the lemma. 
\end{proof}

\subsection{Detailed computations for rewriting}

In order the make the following proofs less notational heavy, we introduce the following shorthand. Fix $p \geq 0$. When in a diagram we draw $m$ stars on the black strands, it means we consider the sum over all diagrams where we replace each star by $p_i$ dots for $\sum_i p_i = p - m  +1$, where we assume the sum is  empty whenever $p - m  +1 < 0$. For example
\[
\tikzdiag{
	\draw (0,0) -- (0,1) node[midway,tikzstar]{};
	\draw (.5,0) -- (.5,1) node[midway,tikzstar]{};
	\draw (1,0) -- (1,1) node[midway,tikzstar]{};
}
\ = \sssum{p_1+p_2+p_3 \\ = p-2}
\tikzdiag{
	\draw (0,0) -- (0,1) node[midway,tikzdot]{} node[midway,xshift=1.25ex,yshift=1ex]{\small $p_1$};
	\draw (.5,0) -- (.5,1) node[midway,tikzdot]{} node[midway,xshift=1.25ex,yshift=1ex]{\small $p_2$};
	\draw (1,0) -- (1,1) node[midway,tikzdot]{} node[midway,xshift=1.25ex,yshift=1ex]{\small $p_3$};
}
\]
This allows us to write local relations as
\begin{align}
\tikzdiag{
	\draw (0,0) .. controls (0,.5) and (1,.5) .. (1,1);
	\draw (1,0) .. controls (1,.5) and (0,.5) .. (0,1) node[near start, tikzstar]{}; 
}
\ &= \ 
\tikzdiag{
	\draw (0,0) .. controls (0,.5) and (1,.5) .. (1,1);
	\draw (1,0) .. controls (1,.5) and (0,.5) .. (0,1) node[near end, tikzstar]{}; 
}
\ - \ 
\tikzdiag{
	\draw (0,0) -- (0,1) node[midway, tikzstar]{};
	\draw (1,0) -- (1,1) node[midway, tikzstar]{};
}
\\
\tikzdiag{
	\draw (0,0) .. controls (0,.5) and (1,.5) .. (1,1) node[near start, tikzstar]{};
	\draw (1,0) .. controls (1,.5) and (0,.5) .. (0,1) node[near start, tikzstar]{}; 
}
\ &= \ 
\tikzdiag{
	\draw (0,0) .. controls (0,.5) and (1,.5) .. (1,1) node[near end, tikzstar]{};
	\draw (1,0) .. controls (1,.5) and (0,.5) .. (0,1) node[near end, tikzstar]{}; 
}
\label{eq:doublestarcommutes}
\end{align}

\begin{lem}\label{lem:doublecrossingnailzero}
We have
\[
\tikzdiagl
{
	\draw (1,-1) 	.. controls (1,-.75) .. (0,-.5) 
			.. controls (1,0) .. (1,.5)
			.. controls (1,.75) and (.5,.75) .. (.5,1);
	\draw (1,1) 	.. controls (1,.75) .. (0,.5) 
			.. controls (1,0) .. (1,-.5)
			.. controls (1,-.75) and (.5,-.75) .. (.5,-1);
	\draw[pstdhl] (0,-1)node[below]{\small $\mu_1$} -- (0,0) node[midway, nail]{}
			 -- (0,1) node[midway, nail]{};
}
\ = 0,
\]
in $\muT_b$.
\end{lem}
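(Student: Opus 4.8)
The plan is to read the displayed diagram as an explicit product of black--black crossings and tightened nails, and then kill it using one nilHecke relation. Only the colored strand $\mu_1$ and two black strands are involved, and since these two black strands sit immediately to the right of the colored strand, every crossing between them is the elementary crossing $\sigma$ of the two leftmost black strands, and every time one of them is pulled onto the colored strand it realizes the tightened nail $\theta := \theta_{1,\rho}$ of \cref{sec:Tbasis} on the leftmost black strand. Following the two black strands from bottom to top, I would check that the diagram has exactly three crossings and two nailings, occurring in the order: cross; nail the (now) leftmost black strand (the lower nail); cross; nail the (now) leftmost black strand (the upper nail); cross. Hence, after moving the spectator strands and the straight portions of the colored strand into standard position by braid-like planar isotopy, the diagram equals the element $\sigma\,\theta\,\sigma\,\theta\,\sigma$ of the relevant corner $1_\rho\muT_b1_\rho$.

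Next I would recall that the tightened nail $\theta' := \theta_{2,\rho}$ on the \emph{second} black strand is, by its definition in \cref{sec:Tbasis}, obtained by pulling that strand over the first, nailing it, and pulling it back; as a diagram this is precisely $\theta' = \sigma\theta\sigma$. Substituting into the last three factors gives $\sigma\,\theta\,\sigma\,\theta\,\sigma = \sigma\,\theta\,\theta'$. By \cref{lem:anticom_theta} the tightened nails anticommute, so $\sigma\theta\theta' = -\sigma\theta'\theta$. On the other hand, using $\theta' = \sigma\theta\sigma$ again together with the nilHecke relation $\sigma^2 = 0$ (the first relation of \eqref{eq:nhR2andR3}), one gets $\sigma\theta' = \sigma\sigma\theta\sigma = \sigma^2\theta\sigma = 0$. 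Therefore
\[
\sigma\,\theta\,\sigma\,\theta\,\sigma \;=\; \sigma\,\theta\,\theta' \;=\; -\,\sigma\,\theta'\,\theta \;=\; -(\sigma\theta')\,\theta \;=\; 0,
\]
which is the claim.

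The main obstacle is really just the first step: correctly identifying the pictured diagram with $\sigma\theta\sigma\theta\sigma$. One must verify which black strand touches which of the two nails and the exact cyclic order of the three black--black crossings, and then carry out the routine but slightly fiddly isotopy that puts the remaining strands (and the straight pieces of the colored strand between nails) into standard form so that the three crossings literally become $\sigma$ and the two nailings literally become $\theta_{1,\rho}$. Everything after that is the two-line computation above. Finally, to avoid any concern about the order in which results are established, I would remark that the anticommutation $\theta\theta' = -\theta'\theta$ used here is, in this two-strand situation, exactly the content of \cref{lem:bignailcommutes}, so one may cite that in place of \cref{lem:anticom_theta}; in either case the proof reduces to that single fact together with $\sigma^2 = 0$.
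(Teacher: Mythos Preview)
Your algebraic computation is correct and is essentially what the paper does: the paper's proof is the single line ``immediately follows from \eqref{eq:nailsrel} and \eqref{eq:nhR2andR3}'', and your argument unpacks exactly those two ingredients. The reading of the diagram as $\sigma\theta\sigma\theta\sigma$ is right, and the manipulation $\sigma\theta\sigma\theta\sigma = \sigma\theta_1\theta_2 = -\sigma\theta_2\theta_1 = -(\sigma^2\theta\sigma)\theta = 0$ is exactly the intended one.

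However, your citations are circular. Both \cref{lem:anticom_theta} and \cref{lem:bignailcommutes} ultimately rest on \cref{prop:doublenailsamestrandiszero}, whose proof in turn invokes the present lemma. Concretely, \cref{lem:bignailcommutes} is even stated conditionally on confluence for fewer strands, and its proof uses \cref{prop:doublenailsamestrandrewritestozero}, which appeals to \cref{prop:doublenailsamestrandiszero}, which uses \cref{lem:doublecrossingnailzero}. So neither of your suggested workarounds breaks the cycle.

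The fix is simple and does not change the substance of your argument: the anticommutation $\theta_1\theta_2 = -\theta_2\theta_1$ you need, in this specific configuration of two black strands adjacent to the first colored strand with nothing in between, is not a derived fact but a \emph{defining relation} of $\muT_b$---it is precisely the second relation in \eqref{eq:nailsrel}. Cite that directly, together with $\sigma^2=0$ from \eqref{eq:nhR2andR3}, and your proof is complete and agrees with the paper's.
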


\begin{proof}
The statement immediately follows from \cref{eq:nailsrel} and \cref{eq:nhR2andR3}. 
\end{proof}

\begin{prop}\label{prop:doublenailsamestrandiszero}
We have
\[
\tikzdiagl
{
	\draw (2,-1) 	.. controls (2,-.75) .. (0,-.5) 
			.. controls (2,-.25) .. (2,0) node[pos=1, tikzdot]{}  node[pos=1,xshift=1.25ex,yshift=1ex]{\small $p$}
			.. controls (2,.25) .. (0,.5)
			.. controls (2,.75) .. (2,1);
	\draw (.5,-1) -- (.5,1);
	\draw[pstdhl,dashed] (.5,-1) -- (.5,1);
	\draw (1.5,-1) -- (1.5,1);
	\draw[pstdhl,dashed] (1.5,-1) -- (1.5,1);
	\node at (1,-.9){\small $\dots$}; \node at (1,0){\small $\dots$}; \node at (1,.9){\small $\dots$};
	\draw[pstdhl] (0,-1) node[below]{\small $\mu_1$} -- (0,0) node[midway, nail]{}
			 -- (0,1) node[midway, nail]{};
	\tikzbrace{.5}{1.5}{-1}{\small $\ell$};
}
\ =  0,
\]
in $\muT_b$.
\end{prop}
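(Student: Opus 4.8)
The statement claims that a black strand which starts at the leftmost position, crosses over some $\ell$ other strands, is nailed on the first colored strand, comes back, is nailed again, and carries $p$ dots, is zero in $\muT_b$. The natural strategy is induction on $\ell$, the number of strands straddled by the two nails, reducing the general configuration to the ``bare'' case handled by \cref{lem:doublecrossingnailzero}. The key tool for peeling off strands one at a time is the pair of relations \cref{eq:nailsrel}: the middle relation lets one slide a crossing over a nail at the cost of error terms, and the rightmost relation kills any diagram with a crossing that passes below a nail and comes back. The $p$ dots should not be an obstruction: using \cref{lem:dotslidenails} and the dot-slide relations \cref{eq:nhdotslide}, one can move the dots out of the way (e.g. onto the portion of the strand between the two nails, or past the nails), and in any case error terms produced by dot-slides all factor through diagrams with strictly fewer crossings on the relevant strand, so they fall under the induction hypothesis or vanish by the rightmost relation of \cref{eq:nailsrel}.

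\textbf{Base case.} When $\ell = 0$, the two nails are consecutive on the nailed strand with nothing between them except possibly $p$ dots. Sliding the $p$ dots through using \cref{eq:nailsrel} (the first relation there moves dots across a nail without error, and \cref{eq:nhdotslide} resolves dots on crossings), we reduce to the doubly-nailed double-crossing configuration with no dots, which is exactly \cref{lem:doublecrossingnailzero}. Concretely one applies \cref{prop:Ndotsslideovernails} to absorb the $p$ dots: all correction terms there contain an extra nail on the same black strand sandwiched between existing nails, hence are zero by the rightmost relation of \cref{eq:nailsrel}, and the leading term is $\cref{lem:doublecrossingnailzero}$.

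\textbf{Inductive step.} Suppose the claim holds for fewer than $\ell$ straddled strands. Starting from the configuration with $\ell$ strands between the nails, use the middle relation of \cref{eq:nailsrel} to pull the outermost straddled strand (say the $\ell$-th) across the lower nail. This produces: (i) a term in which that strand no longer passes under the lower nail — but then the upper part of the diagram still has the strand passing under the upper nail and coming back, so this term is zero by the rightmost relation of \cref{eq:nailsrel} after a braid-like isotopy; and (ii) correction terms (from the dot-generating part of the nail-slide relation) which have strictly fewer crossings on the nailed strand, hence factor through the $\ell-1$ case and vanish by induction. Here one should be careful to first slide any dots on the straddled strand clear using \cref{lem:dotslideoversevercrossings}, and to invoke \cref{prop:Ndotsslideovernails} / \cref{lem:dotslidenails} to keep track of the $p$ dots on the nailed strand; every correction term these produce again carries a superfluous nail-on-nail configuration or has fewer crossings, so it dies. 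Iterating across all $\ell$ straddled strands collapses the diagram to the base case.

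\textbf{Main obstacle.} The routine but genuinely fiddly part is bookkeeping the correction terms: each application of the nail-slide relation \cref{eq:nailsrel} and each dot-slide \cref{eq:nhdotslide} spawns sums of diagrams, and one must argue uniformly that every such term either (a) contains a black strand carrying two nails with the same endpoints, hence is zero by the last relation of \cref{eq:nailsrel}, or (b) has strictly smaller crossing-complexity on the nailed strand and is therefore covered by the induction hypothesis. Organizing this ``every error term is killed'' argument cleanly — rather than a case explosion — is where the real work lies; the algebra itself is elementary given \cref{lem:doublecrossingnailzero}, \cref{lem:dotslidenails}, \cref{lem:dotslideoversevercrossings} and \cref{prop:Ndotsslideovernails}. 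Note also that the relation \cref{eq:nailsrel}'s rightmost identity is precisely ``a black strand crossing below a nail and returning is zero,'' which is the ultimate source of all the vanishing.
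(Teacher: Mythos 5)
Your overall skeleton (induction on $\ell$, slide the $p$ dots out of the way, reduce to a dot-free doubly-nailed configuration killed by \cref{eq:nailsrel}) is the same as the paper's, and your base case is essentially correct (though the dot-free $\ell=0$ diagram dies directly by the last relation of \cref{eq:nailsrel}, not by \cref{lem:doublecrossingnailzero}, which concerns two \emph{distinct} black strands each nailed once). The problem is the inductive step, which is where all the content of this proposition lives, and your mechanism for it does not work. The ``middle relation of \cref{eq:nailsrel}'' is the anticommutation of two nails sitting on two different black strands; the $\ell$ straddled strands in the present diagram are not nailed at all, they merely cross the doubly-nailed strand, so that relation is simply not applicable to ``pull the outermost straddled strand across the lower nail.'' What actually governs the peeling-off of a straddled strand is the double-crossing (R2) relation with that strand, and this forces a case distinction you never make: if the straddled strand is colored, the double crossing resolves into $\mu_i$ dots (or zero, by \cref{eq:redR2}--\cref{eq:vredR}), the strand leaves the lobes, and the induction hypothesis applies with $p$ replaced by $p+\mu_i$ --- this is the easy half of the paper's argument; but if the straddled strand is \emph{black}, the black R2 relation \cref{eq:nhR2andR3} is \emph{zero}, so the strand cannot be slid out of a lobe at all, and no term of the form you describe in (i) ever arises.

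For the black case the paper instead keeps the strand in place and runs a genuine computation: it redistributes the $p$ dots (the ``star'' bookkeeping), uses \cref{eq:nhdotslide} and the R3 moves to trade dots for crossing-resolved diagrams, slides dots/stars over all crossings picking up signed sums of resolutions, and then shows that every resulting term either contains the configuration of \cref{lem:doublecrossingnailzero}, has an extra nail killed by \cref{eq:nailsrel}, or cancels in pairs against another resolution. Your blanket claim that every correction term ``has two nails on one strand or strictly fewer crossings, hence dies by induction'' is precisely the conclusion of this bookkeeping, not an argument for it, and it is false as stated in the black case: the correction terms there are crossing-resolved diagrams with redistributed dots, not extra-nail diagrams, and their vanishing comes from sign cancellation rather than from the induction hypothesis. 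So the proposal has a genuine gap: the hard half of the inductive step (black straddled strands) is missing, and the relation you invoke in its place does not apply.
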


\begin{proof}
We prove the statement by induction on the number of strands $\ell$. The base case $\ell = 0$ is given by
\[
\tikzdiagl
{
	\draw (.5,-1) 	.. controls (.5,-.75) .. (0,-.5) 
			.. controls (.5,-.25) .. (.5,0) node[pos=1, tikzdot]{}  node[pos=1,xshift=1.25ex,yshift=1ex]{\small $p$}
			.. controls (.5,.25) .. (0,.5)
			.. controls (.5,.75) .. (.5,1);
	\draw[pstdhl] (0,-1)node[below]{\small $\mu_1$} -- (0,0) node[midway, nail]{}
			 -- (0,1) node[midway, nail]{};
}
\ = \ 
\tikzdiagl
{
	\draw (.5,-1) 	.. controls (.5,-.75) .. (0,-.5)  
			.. controls (.5,-.25) .. (.5,0)
			.. controls (.5,.25) .. (0,.5)
			.. controls (.5,.75) .. (.5,1)  node[pos=.5, tikzdot]{}  node[pos=.5,xshift=1.25ex,yshift=-1ex]{\small $p$};
	\draw[pstdhl] (0,-1) node[below]{\small $\mu_1$} -- (0,0) node[midway, nail]{}
			 -- (0,1) node[midway, nail]{};
}
\ =  0.
\]
Suppose the statement is true for $\ell - 1$. We have two cases to consider:
\begin{align*}
\tikzdiagl
{
	\draw (2.5,-1) 	.. controls (2.5,-.75) .. (0,-.5) 
				.. controls (2.5,-.25) .. (2.5,0) node[pos=1, tikzdot]{}  node[pos=1,xshift=1.25ex,yshift=1ex]{\small $p$}
				.. controls (2.5,.25) .. (0,.5)
				.. controls (2.5,.75) .. (2.5,1);
	\draw[pstdhl] (2,-1) node[below]{\small $\mu_i$} --  (2,1);
	\draw (.5,-1) -- (.5,1);
	\draw[pstdhl,dashed] (.5,-1) -- (.5,1);
	\draw (1.5,-1) -- (1.5,1);
	\draw[pstdhl,dashed] (1.5,-1) -- (1.5,1);
	\node at (1,-.9){\small $\dots$}; \node at (1,0){\small $\dots$}; \node at (1,.9){\small $\dots$};
	\draw[pstdhl] (0,-1) node[below]{\small $\mu_1$} -- (0,0) node[midway, nail]{}
			 -- (0,1) node[midway, nail]{};
}
&&
\text{and}
&&
\tikzdiagl
{
	\draw (2.5,-1) 	.. controls (2.5,-.75) .. (0,-.5) 
				.. controls (2.5,-.25) .. (2.5,0) node[pos=1, tikzdot]{}  node[pos=1,xshift=1.25ex,yshift=1ex]{\small $p$}
				.. controls (2.5,.25) .. (0,.5)
				.. controls (2.5,.75) .. (2.5,1);
	\draw (2,-1) --  (2,1);
	\draw (.5,-1) -- (.5,1);
	\draw[pstdhl,dashed] (.5,-1) -- (.5,1);
	\draw (1.5,-1) -- (1.5,1);
	\draw[pstdhl,dashed] (1.5,-1) -- (1.5,1);
	\node at (1,-.9){\small $\dots$}; \node at (1,0){\small $\dots$}; \node at (1,.9){\small $\dots$};
	\draw[pstdhl] (0,-1) node[below]{\small $\mu_1$} -- (0,0) node[midway, nail]{}
			 -- (0,1) node[midway, nail]{};
}
\end{align*}
For the first one, we directly have
\[
\tikzdiagl
{
	\draw (2.5,-1) 	.. controls (2.5,-.75) .. (0,-.5) 
				.. controls (2.5,-.25) .. (2.5,0) node[pos=1, tikzdot]{}  node[pos=1,xshift=1.25ex,yshift=1ex]{\small $p$}
				.. controls (2.5,.25) .. (0,.5)
				.. controls (2.5,.75) .. (2.5,1);
	\draw[pstdhl] (2,-1) node[below]{\small $\mu_i$} --  (2,1);
	\draw (.5,-1) -- (.5,1);
	\draw[pstdhl,dashed] (.5,-1) -- (.5,1);
	\draw (1.5,-1) -- (1.5,1);
	\draw[pstdhl,dashed] (1.5,-1) -- (1.5,1);
	\node at (1,-.9){\small $\dots$}; \node at (1,0){\small $\dots$}; \node at (1,.9){\small $\dots$};
	\draw[pstdhl] (0,-1) node[below]{\small $\mu_1$} -- (0,0) node[midway, nail]{}
			 -- (0,1) node[midway, nail]{};
}
\ = \ 
\tikzdiagl
{
	\draw (2.5,-1) 	.. controls (2.5,-.75) .. (0,-.5) 
				.. controls (2,-.25) .. (2,0) node[pos=1, tikzdot]{}  node[pos=1,xshift=1.5ex,yshift=.25ex]{\small $p'$}
				.. controls (2,.25) .. (0,.5)
				.. controls (2.5,.75) .. (2.5,1);
	\draw[pstdhl] (2,-1) node[below]{\small $\mu_i$} .. controls (2,-.5) and (2.5,-.5) .. (2.5,0) 
								     .. controls (2.5,.5) and (2,.5) ..  (2,1);
	\draw (.5,-1) -- (.5,1);
	\draw[pstdhl,dashed] (.5,-1) -- (.5,1);
	\draw (1.5,-1) -- (1.5,1);
	\draw[pstdhl,dashed] (1.5,-1) -- (1.5,1);
	\node at (1,-.9){\small $\dots$}; \node at (1,0){\small $\dots$}; \node at (1,.9){\small $\dots$};
	\draw[pstdhl] (0,-1) node[below]{\small $\mu_1$} -- (0,0) node[midway, nail]{}
			 -- (0,1) node[midway, nail]{};
}
\]
where $p' := p+\mu_i$, 
and we conclude by using the induction hypothesis.
For the second case, we compute
\[
\tikzdiagl
{
	\draw (2.5,-1) 	.. controls (2.5,-.75) .. (0,-.5) 
				.. controls (2.5,-.25) .. (2.5,0) node[pos=1, tikzstar]{} 
				.. controls (2.5,.25) .. (0,.5)
				.. controls (2.5,.75) .. (2.5,1);
	\draw (2,-1) --  (2,1);
	\draw (.5,-1) -- (.5,1);
	\draw[pstdhl,dashed] (.5,-1) -- (.5,1);
	\draw (1.5,-1) -- (1.5,1);
	\draw[pstdhl,dashed] (1.5,-1) -- (1.5,1);
	\node at (1,-.9){\small $\dots$}; \node at (1,0){\small $\dots$}; \node at (1,.9){\small $\dots$};
	\draw[pstdhl] (0,-1) node[below]{\small $\mu_1$} -- (0,0) node[midway, nail]{}
			 -- (0,1) node[midway, nail]{};
}
\ = - \ 
\tikzdiagl
{
	\draw (2.5,-1) 	.. controls (2.5,-.75) .. (0,-.5) 
				.. controls (2.5,0) .. (2.5,.5) node[pos=1, tikzstar]{}
				.. controls (2.5,.75) and (2,.75) .. (2,1);
	\draw (2.5,1) 	.. controls (2.5,.75) .. (0,.5) 
				.. controls (2.5,0) .. (2.5,-.5) node[pos=1, tikzstar]{}
				.. controls (2.5,-.75) and (2,-.75) .. (2,-1);
	\draw (.5,-1) -- (.5,1);
	\draw[pstdhl,dashed] (.5,-1) -- (.5,1);
	\draw (1.5,-1) -- (1.5,1);
	\draw[pstdhl,dashed] (1.5,-1) -- (1.5,1);
	\node at (1,-.9){\small $\dots$}; \node at (1,0){\small $\dots$}; \node at (1,.9){\small $\dots$};
	\draw[pstdhl] (0,-1) node[below]{\small $\mu_1$} -- (0,0) node[midway, nail]{}
			 -- (0,1) node[midway, nail]{};
}
\ - \ 
\tikzdiagl
{
	\draw (2.5,-1) 	.. controls (2.5,-.75) .. (0,-.5) 
				.. controls (2,-.25) .. (2,0) node[pos=1, tikzstar]{} 
				.. controls (2,.25) .. (0,.5)
				.. controls (2.5,.75) .. (2.5,1);
	\draw		 (2,-1) node[below]{\small $\mu_i$} .. controls (2,-.5) and (2.5,-.5) .. (2.5,0) node[near end, tikzstar]{}
								     .. controls (2.5,.5) and (2,.5) ..  (2,1) node[near start, tikzstar]{};
	\draw (.5,-1) -- (.5,1);
	\draw[pstdhl,dashed] (.5,-1) -- (.5,1);
	\draw (1.5,-1) -- (1.5,1);
	\draw[pstdhl,dashed] (1.5,-1) -- (1.5,1);
	\node at (1,-.9){\small $\dots$}; \node at (1,0){\small $\dots$}; \node at (1,.9){\small $\dots$};
	\draw[pstdhl] (0,-1) node[below]{\small $\mu_1$} -- (0,0) node[midway, nail]{}
			 -- (0,1) node[midway, nail]{};
}
\]
The rightmost term is zero by induction hypothesis. Then we compute
\[
\tikzdiagl
{
	\draw (2.5,-1) 	.. controls (2.5,-.75) .. (0,-.5) 
				.. controls (2.5,0) .. (2.5,.5) node[pos=1, tikzstar]{}
				.. controls (2.5,.75) and (2,.75) .. (2,1);
	\draw (2.5,1) 	.. controls (2.5,.75) .. (0,.5) 
				.. controls (2.5,0) .. (2.5,-.5) node[pos=1, tikzstar]{}
				.. controls (2.5,-.75) and (2,-.75) .. (2,-1);
	\draw (.5,-1) -- (.5,1);
	\draw[pstdhl,dashed] (.5,-1) -- (.5,1);
	\draw (1.5,-1) -- (1.5,1);
	\draw[pstdhl,dashed] (1.5,-1) -- (1.5,1);
	\node at (1,-.9){\small $\dots$}; \node at (1,0){\small $\dots$}; \node at (1,.9){\small $\dots$};
	\draw[pstdhl] (0,-1) node[below]{\small $\mu_1$} -- (0,0) node[midway, nail]{}
			 -- (0,1) node[midway, nail]{};
}
\ = \ 
\tikzdiagl
{
	\draw (2.5,-1) 	.. controls (2.5,-.75) .. (0,-.5) 
				.. controls (.5-.125,.125) .. (.5,.25)
				.. controls (.5+.125,.375) and (2.5,.375) .. (2.5,.5) node[pos=1, tikzstar]{}
				.. controls (2.5,.75) and (2,.75) .. (2,1);
	\draw (2.5,1) 	.. controls (2.5,.75) .. (0,.5) 
				.. controls (.5-.125,-.125) .. (.5,-.25)
				.. controls (.5+.125,-.375) and (2.5,-.375) .. (2.5,-.5) node[pos=1, tikzstar]{}
				.. controls (2.5,-.75) and (2,-.75) .. (2,-1);
	\draw (.5,-1) .. controls (.5,-.5) and (1,-.5) .. (1,0) .. controls (1,.5) and (.5,.5) .. (.5,1);
	\draw[pstdhl,dashed] (.5,-1) .. controls (.5,-.5) and (1,-.5) .. (1,0) .. controls (1,.5) and (.5,.5) .. (.5,1);
	\draw (1.5,-1) .. controls (1.5,-.5) and (2,-.5) .. (2,0) .. controls (2,.5) and (1.5,.5) .. (1.5,1);
	\draw[pstdhl,dashed] (1.5,-1) .. controls (1.5,-.5) and (2,-.5) .. (2,0) .. controls (2,.5) and (1.5,.5) .. (1.5,1);
	\node at (1,-.9){\small $\dots$}; \node at (1.5,0){\small $\dots$}; \node at (1,.9){\small $\dots$};
	\draw[pstdhl] (0,-1) node[below]{\small $\mu_1$} -- (0,0) node[midway, nail]{}
			 -- (0,1) node[midway, nail]{};
}
\ + \sum_{i} \sssum{u+v= \\ \mu_i-1} \  
\tikzdiagl
{
	\draw (5.5,-1) 	.. controls (5.5,-.75) .. (0,-.5)
			.. controls (2,-.25) .. (2,0) node[pos=1, tikzdot]{}  node[pos=1,xshift=1ex,yshift=1ex]{\small $u$}
			.. controls (2,.25) .. (0,.5)
			.. controls (5.5,.75) .. (5.5,1);
	\draw (.5,-1) -- (.5,1);
	\draw[pstdhl,dashed] (.5,-1) -- (.5,1);
	\draw (1.5,-1) -- (1.5,1);
	\draw[pstdhl,dashed] (1.5,-1) -- (1.5,1);
	\node at (1,-.9){\small $\dots$}; \node at (1,0){\small $\dots$}; \node at (1,.9){\small $\dots$};
	\draw[pstdhl](2.5,-1) node[below]{\small $\mu_i$} -- (2.5,1);
	\draw (5,-1) 	.. controls (5,-.75) and (5.5,-.75) .. (5.5,-.5) node[pos=1, tikzstar]{}
			.. controls (5.5,-.25) and (3,-.25) .. (3,0) node[pos=1,tikzdot]{} node[pos=1, xshift=-1ex,yshift=1ex]{\small $v$}
			.. controls (3,.25) and (5.5,.25) .. (5.5,.5) node[pos=1, tikzstar]{}
			.. controls (5.5,.75) and (5,.75) .. (5,1);
	\draw (3.5,-1) -- (3.5,1);
	\draw[pstdhl,dashed] (3.5,-1) -- (3.5,1);
	\draw (4.5,-1) -- (4.5,1);
	\draw[pstdhl,dashed] (4.5,-1) -- (4.5,1);
	\node at (4,-.9){\small $\dots$}; \node at (4,0){\small $\dots$}; \node at (4,.9){\small $\dots$};
	\draw[pstdhl] (0,-1) node[below]{\small $\mu_1$} -- (0,0) node[midway, nail]{}
			 -- (0,1) node[midway, nail]{};
}
\]
where the elements in the sum are all zero by induction hypothesis. Then we compute
\begin{align*}
&\tikzdiagl
{
	\draw (2.5,-1) 	.. controls (2.5,-.75) .. (0,-.5) 
				.. controls (.5-.125,.125) .. (.5,.25)
				.. controls (.5+.125,.375) and (2.5,.375) .. (2.5,.5) node[pos=1, tikzstar]{}
				.. controls (2.5,.75) and (2,.75) .. (2,1);
	\draw (2.5,1) 	.. controls (2.5,.75) .. (0,.5) 
				.. controls (.5-.125,-.125) .. (.5,-.25)
				.. controls (.5+.125,-.375) and (2.5,-.375) .. (2.5,-.5) node[pos=1, tikzstar]{}
				.. controls (2.5,-.75) and (2,-.75) .. (2,-1);
	\draw (.5,-1) .. controls (.5,-.5) and (1,-.5) .. (1,0) .. controls (1,.5) and (.5,.5) .. (.5,1);
	\draw[pstdhl,dashed] (.5,-1) .. controls (.5,-.5) and (1,-.5) .. (1,0) .. controls (1,.5) and (.5,.5) .. (.5,1);
	\draw (1.5,-1) .. controls (1.5,-.5) and (2,-.5) .. (2,0) .. controls (2,.5) and (1.5,.5) .. (1.5,1);
	\draw[pstdhl,dashed] (1.5,-1) .. controls (1.5,-.5) and (2,-.5) .. (2,0) .. controls (2,.5) and (1.5,.5) .. (1.5,1);
	\node at (1,-.9){\small $\dots$}; \node at (1.5,0){\small $\dots$}; \node at (1,.9){\small $\dots$};
	\draw[pstdhl] (0,-1) node[below]{\small $\mu_1$} -- (0,0) node[midway, nail]{}
			 -- (0,1) node[midway, nail]{};
} \\
&\ = \ 
\tikzdiagl
{
	\draw (2,-1) 	.. controls (2,-.75) .. (0,-.5)   node[pos=.2, tikzstar]{}
				.. controls (.5-.125,.125) .. (.5,.25)
				.. controls (.5+.125,.375) and (2.5,.375) .. (2.5,.5) 
				-- (2.5,1) node[pos=.5, tikzstar]{};
	\draw (2,1) 	.. controls (2,.75) .. (0,.5)   node[pos=.2, tikzstar]{}
				.. controls (.5-.125,-.125) .. (.5,-.25)
				.. controls (.5+.125,-.375) and (2.5,-.375) .. (2.5,-.5)
				-- (2.5,-1)  node[pos=.5, tikzstar]{};
	\draw (.5,-1) .. controls (.5,-.5) and (1,-.5) .. (1,0) .. controls (1,.5) and (.5,.5) .. (.5,1);
	\draw[pstdhl,dashed] (.5,-1) .. controls (.5,-.5) and (1,-.5) .. (1,0) .. controls (1,.5) and (.5,.5) .. (.5,1);
	\draw (1.5,-1) .. controls (1.5,-.5) and (2,-.5) .. (2,0) .. controls (2,.5) and (1.5,.5) .. (1.5,1);
	\draw[pstdhl,dashed] (1.5,-1) .. controls (1.5,-.5) and (2,-.5) .. (2,0) .. controls (2,.5) and (1.5,.5) .. (1.5,1);
	\node at (1,-.9){\small $\dots$}; \node at (1.5,0){\small $\dots$}; \node at (1,.9){\small $\dots$};
	\draw[pstdhl] (0,-1) node[below]{\small $\mu_1$} -- (0,0) node[midway, nail]{}
			 -- (0,1) node[midway, nail]{};
}
\ - \ 
\tikzdiagl
{
	\draw (2,-1) 	.. controls (2,-.75) .. (0,-.5) node[pos=.2, tikzstar]{}
				.. controls (.5-.125,.125) .. (.5,.25)
				.. controls (.5+.125,.375) and (2.5,.375) .. (2.5,.5) 
				.. controls (2.5,.75) and (2,.75) .. (2,1)  node[pos=.8, tikzstar]{};
	\draw (2.5,1) 	.. controls (2.5,.75) .. (0,.5) 
				.. controls (.5-.125,-.125) .. (.5,-.25)
				.. controls (.5+.125,-.375) and (2.5,-.375) .. (2.5,-.5)
				-- (2.5,-1)  node[pos=.5, tikzstar]{};
	\draw (.5,-1) .. controls (.5,-.5) and (1,-.5) .. (1,0) .. controls (1,.5) and (.5,.5) .. (.5,1);
	\draw[pstdhl,dashed] (.5,-1) .. controls (.5,-.5) and (1,-.5) .. (1,0) .. controls (1,.5) and (.5,.5) .. (.5,1);
	\draw (1.5,-1) .. controls (1.5,-.5) and (2,-.5) .. (2,0) .. controls (2,.5) and (1.5,.5) .. (1.5,1);
	\draw[pstdhl,dashed] (1.5,-1) .. controls (1.5,-.5) and (2,-.5) .. (2,0) .. controls (2,.5) and (1.5,.5) .. (1.5,1);
	\node at (1,-.9){\small $\dots$}; \node at (1.5,0){\small $\dots$}; \node at (1,.9){\small $\dots$};
	\draw[pstdhl] (0,-1) node[below]{\small $\mu_1$} -- (0,0) node[midway, nail]{}
			 -- (0,1) node[midway, nail]{};
}
\ - \ 
\tikzdiagl
{
	\draw (2.5,-1) 	.. controls (2.5,-.75) .. (0,-.5) 
				.. controls (.5-.125,.125) .. (.5,.25)
				.. controls (.5+.125,.375) and (2.5,.375) .. (2.5,.5) 
				-- (2.5,1) node[pos=.5, tikzstar]{};
	\draw (2,1) 	.. controls (2,.75) .. (0,.5)   node[pos=.2, tikzstar]{}
				.. controls (.5-.125,-.125) .. (.5,-.25)
				.. controls (.5+.125,-.375) and (2.5,-.375) .. (2.5,-.5) 
				.. controls (2.5,-.75) and (2,-.75) .. (2,-1) node[pos=.8, tikzstar]{};
	\draw (.5,-1) .. controls (.5,-.5) and (1,-.5) .. (1,0) .. controls (1,.5) and (.5,.5) .. (.5,1);
	\draw[pstdhl,dashed] (.5,-1) .. controls (.5,-.5) and (1,-.5) .. (1,0) .. controls (1,.5) and (.5,.5) .. (.5,1);
	\draw (1.5,-1) .. controls (1.5,-.5) and (2,-.5) .. (2,0) .. controls (2,.5) and (1.5,.5) .. (1.5,1);
	\draw[pstdhl,dashed] (1.5,-1) .. controls (1.5,-.5) and (2,-.5) .. (2,0) .. controls (2,.5) and (1.5,.5) .. (1.5,1);
	\node at (1,-.9){\small $\dots$}; \node at (1.5,0){\small $\dots$}; \node at (1,.9){\small $\dots$};
	\draw[pstdhl] (0,-1) node[below]{\small $\mu_1$} -- (0,0) node[midway, nail]{}
			 -- (0,1) node[midway, nail]{};
}
\ + \ 
\tikzdiagl
{
	\draw (2.5,-1) 	.. controls (2.5,-.75) .. (0,-.5) 
				.. controls (.5-.125,.125) .. (.5,.25)
				.. controls (.5+.125,.375) and (2.5,.375) .. (2.5,.5) 
				.. controls (2.5,.75) and (2,.75) .. (2,1) node[pos=.8, tikzstar]{};
	\draw (2.5,1) 	.. controls (2.5,.75) .. (0,.5) 
				.. controls (.5-.125,-.125) .. (.5,-.25)
				.. controls (.5+.125,-.375) and (2.5,-.375) .. (2.5,-.5)
				.. controls (2.5,-.75) and (2,-.75) .. (2,-1)  node[pos=.8, tikzstar]{};
	\draw (.5,-1) .. controls (.5,-.5) and (1,-.5) .. (1,0) .. controls (1,.5) and (.5,.5) .. (.5,1);
	\draw[pstdhl,dashed] (.5,-1) .. controls (.5,-.5) and (1,-.5) .. (1,0) .. controls (1,.5) and (.5,.5) .. (.5,1);
	\draw (1.5,-1) .. controls (1.5,-.5) and (2,-.5) .. (2,0) .. controls (2,.5) and (1.5,.5) .. (1.5,1);
	\draw[pstdhl,dashed] (1.5,-1) .. controls (1.5,-.5) and (2,-.5) .. (2,0) .. controls (2,.5) and (1.5,.5) .. (1.5,1);
	\node at (1,-.9){\small $\dots$}; \node at (1.5,0){\small $\dots$}; \node at (1,.9){\small $\dots$};
	\draw[pstdhl] (0,-1) node[below]{\small $\mu_1$} -- (0,0) node[midway, nail]{}
			 -- (0,1) node[midway, nail]{};
}
\\
& \ = \ 
\tikzdiagl
{
	\draw (2,-1) 	.. controls (2,-.75) .. (0,-.25)   node[pos=.2, tikzstar]{}
				.. controls (2.5,.5) .. (2.5,.75) 
				-- (2.5,1) node[pos=.5, tikzstar]{};
	\draw (2,1) 	.. controls (2,.75) .. (0,.25)   node[pos=.2, tikzstar]{}
				.. controls (2.5,-.5) .. (2.5,-.75) 
				-- (2.5,-1) node[pos=.5, tikzstar]{};
	\draw (.5,-1) .. controls (.5,-.5) and (1.5,-.5) .. (1.5,0) .. controls (1.5,.5) and (.5,.5) .. (.5,1);
	\draw[pstdhl,dashed] (.5,-1) .. controls (.5,-.5) and (1.5,-.5) .. (1.5,0) .. controls (1.5,.5) and (.5,.5) .. (.5,1);
	\draw (1.5,-1) .. controls (1.5,-.5) and (2.5,-.5) .. (2.5,0) .. controls (2.5,.5) and (1.5,.5) .. (1.5,1);
	\draw[pstdhl,dashed] (1.5,-1) .. controls (1.5,-.5) and (2.5,-.5) .. (2.5,0) .. controls (2.5,.5) and (1.5,.5) .. (1.5,1);
	\node at (1,-.9){\small $\dots$}; \node at (2,0){\small $\dots$}; \node at (1,.9){\small $\dots$};
	\draw[pstdhl] (0,-1) node[below]{\small $\mu_1$} -- (0,0) node[pos=.75, nail]{}
			 -- (0,1) node[pos=.25, nail]{};
}
\ - \ 
\tikzdiagl
{
	\draw (2,-1) 		.. controls (2,-.75) .. (0,-.25) node[pos=.2, tikzstar]{}
		 		.. controls (1,0) .. (1,.25)
				.. controls (1,.375) and (.4,.375) .. (.4,.6)
				.. controls (.4,.75) and (2,.75) .. (2,1) node[pos=.85, tikzstar]{};
	\draw (2.5,1) 	.. controls (2.5,.75) .. (0,.25) 
				.. controls (2.5,-.5) .. (2.5,-.75) 
				-- (2.5,-1) node[pos=.5, tikzstar]{};
	\draw (.5,-1) .. controls (.5,-.5) and (1.5,-.5) .. (1.5,0) .. controls (1.5,.5) and (.5,.5) .. (.5,1);
	\draw[pstdhl,dashed] (.5,-1) .. controls (.5,-.5) and (1.5,-.5) .. (1.5,0) .. controls (1.5,.5) and (.5,.5) .. (.5,1);
	\draw (1.5,-1) .. controls (1.5,-.5) and (2.5,-.5) .. (2.5,0) .. controls (2.5,.5) and (1.5,.5) .. (1.5,1);
	\draw[pstdhl,dashed] (1.5,-1) .. controls (1.5,-.5) and (2.5,-.5) .. (2.5,0) .. controls (2.5,.5) and (1.5,.5) .. (1.5,1);
	\node at (1,-.9){\small $\dots$}; \node at (2,0){\small $\dots$}; \node at (1,.9){\small $\dots$};
	\draw[pstdhl] (0,-1) node[below]{\small $\mu_1$} -- (0,0) node[pos=.75, nail]{}
			 -- (0,1) node[pos=.25, nail]{};
}
\ - \ 
\tikzdiagl
{
	\draw (2.5,-1) 	.. controls (2.5,-.75) .. (0,-.25) 
				.. controls (2.5,.5) .. (2.5,.75) 
				-- (2.5,1) node[pos=.5, tikzstar]{};
	\draw (2,1) 		.. controls (2,.75) .. (0,.25)  node[pos=.2, tikzstar]{}
		 		.. controls (1,0) .. (1,-.25)
				.. controls (1,-.375) and (.4,-.375) .. (.4,-.6)
				.. controls (.4,-.75) and (2,-.75) .. (2,-1) node[pos=.85, tikzstar]{};
	\draw (.5,-1) .. controls (.5,-.5) and (1.5,-.5) .. (1.5,0) .. controls (1.5,.5) and (.5,.5) .. (.5,1);
	\draw[pstdhl,dashed] (.5,-1) .. controls (.5,-.5) and (1.5,-.5) .. (1.5,0) .. controls (1.5,.5) and (.5,.5) .. (.5,1);
	\draw (1.5,-1) .. controls (1.5,-.5) and (2.5,-.5) .. (2.5,0) .. controls (2.5,.5) and (1.5,.5) .. (1.5,1);
	\draw[pstdhl,dashed] (1.5,-1) .. controls (1.5,-.5) and (2.5,-.5) .. (2.5,0) .. controls (2.5,.5) and (1.5,.5) .. (1.5,1);
	\node at (1,-.9){\small $\dots$}; \node at (2,0){\small $\dots$}; \node at (1,.9){\small $\dots$};
	\draw[pstdhl] (0,-1) node[below]{\small $\mu_1$} -- (0,0) node[pos=.75, nail]{}
			 -- (0,1) node[pos=.25, nail]{};
}
\ + \ 
\tikzdiagl
{
	\draw (2.5,-1) 	.. controls (2.5,-.75) .. (0,-.25) 
		 		.. controls (1,0) .. (1,.25)
				.. controls (1,.375) and (.4,.375) .. (.4,.6)
				.. controls (.4,.75) and (2,.75) .. (2,1) node[pos=.85, tikzstar]{};
	\draw (2.5,1) 	.. controls (2.5,.75) .. (0,.25) 
		 		.. controls (1,0) .. (1,-.25)
				.. controls (1,-.375) and (.4,-.375) .. (.4,-.6)
				.. controls (.4,-.75) and (2,-.75) .. (2,-1) node[pos=.85, tikzstar]{};
	\draw (.5,-1) .. controls (.5,-.5) and (1.5,-.5) .. (1.5,0) .. controls (1.5,.5) and (.5,.5) .. (.5,1);
	\draw[pstdhl,dashed] (.5,-1) .. controls (.5,-.5) and (1.5,-.5) .. (1.5,0) .. controls (1.5,.5) and (.5,.5) .. (.5,1);
	\draw (1.5,-1) .. controls (1.5,-.5) and (2.5,-.5) .. (2.5,0) .. controls (2.5,.5) and (1.5,.5) .. (1.5,1);
	\draw[pstdhl,dashed] (1.5,-1) .. controls (1.5,-.5) and (2.5,-.5) .. (2.5,0) .. controls (2.5,.5) and (1.5,.5) .. (1.5,1);
	\node at (1,-.9){\small $\dots$}; \node at (2,0){\small $\dots$}; \node at (1,.9){\small $\dots$};
	\draw[pstdhl] (0,-1) node[below]{\small $\mu_1$} -- (0,0) node[pos=.75, nail]{}
			 -- (0,1) node[pos=.25, nail]{};
}
\end{align*}
where the last term is zero by \cref{lem:doublecrossingnailzero}. 
For the remaining terms, we can gather them by number of dots distributed on the two stars on the left, so that we only need to compare the following terms:
\begin{align*}
&\tikzdiagl
{
	\draw (2,-1) 	.. controls (2,-.75) .. (0,-.25)  
				.. controls (2.5,.5) .. (2.5,.75) 
				-- (2.5,1) node[pos=.5, tikzstar]{};
	\draw (2,1) 	.. controls (2,.75) .. (0,.25)  
				.. controls (2.5,-.5) .. (2.5,-.75) 
				-- (2.5,-1) node[pos=.5, tikzstar]{};
	\draw (.5,-1) .. controls (.5,-.5) and (1.5,-.5) .. (1.5,0) .. controls (1.5,.5) and (.5,.5) .. (.5,1);
	\draw[pstdhl,dashed] (.5,-1) .. controls (.5,-.5) and (1.5,-.5) .. (1.5,0) .. controls (1.5,.5) and (.5,.5) .. (.5,1);
	\draw (1.5,-1) .. controls (1.5,-.5) and (2.5,-.5) .. (2.5,0) .. controls (2.5,.5) and (1.5,.5) .. (1.5,1);
	\draw[pstdhl,dashed] (1.5,-1) .. controls (1.5,-.5) and (2.5,-.5) .. (2.5,0) .. controls (2.5,.5) and (1.5,.5) .. (1.5,1);
	\node at (1,-.9){\small $\dots$}; \node at (2,0){\small $\dots$}; \node at (1,.9){\small $\dots$};
	\draw[pstdhl] (0,-1) node[below]{\small $\mu_1$} -- (0,0) node[pos=.75, nail]{}
			 -- (0,1) node[pos=.25, nail]{};
}
\ - \ 
\tikzdiagl
{
	\draw (2,-1) 		.. controls (2,-.75) .. (0,-.25)
		 		.. controls (1,0) .. (1,.25)
				.. controls (1,.375) and (.4,.375) .. (.4,.6)
				.. controls (.4,.75) and (2,.75) .. (2,1);
	\draw (2.5,1) 	.. controls (2.5,.75) .. (0,.25) 
				.. controls (2.5,-.5) .. (2.5,-.75) 
				-- (2.5,-1) node[pos=.5, tikzstar]{};
	\draw (.5,-1) .. controls (.5,-.5) and (1.5,-.5) .. (1.5,0) .. controls (1.5,.5) and (.5,.5) .. (.5,1);
	\draw[pstdhl,dashed] (.5,-1) .. controls (.5,-.5) and (1.5,-.5) .. (1.5,0) .. controls (1.5,.5) and (.5,.5) .. (.5,1);
	\draw (1.5,-1) .. controls (1.5,-.5) and (2.5,-.5) .. (2.5,0) .. controls (2.5,.5) and (1.5,.5) .. (1.5,1);
	\draw[pstdhl,dashed] (1.5,-1) .. controls (1.5,-.5) and (2.5,-.5) .. (2.5,0) .. controls (2.5,.5) and (1.5,.5) .. (1.5,1);
	\node at (1,-.9){\small $\dots$}; \node at (2,0){\small $\dots$}; \node at (1,.9){\small $\dots$};
	\draw[pstdhl] (0,-1) node[below]{\small $\mu_1$} -- (0,0) node[pos=.75, nail]{}
			 -- (0,1) node[pos=.25, nail]{};
}
\ - \ 
\tikzdiagl
{
	\draw (2.5,-1) 	.. controls (2.5,-.75) .. (0,-.25) 
				.. controls (2.5,.5) .. (2.5,.75) 
				-- (2.5,1) node[pos=.5, tikzstar]{};
	\draw (2,1) 		.. controls (2,.75) .. (0,.25) 
		 		.. controls (1,0) .. (1,-.25)
				.. controls (1,-.375) and (.4,-.375) .. (.4,-.6)
				.. controls (.4,-.75) and (2,-.75) .. (2,-1);
	\draw (.5,-1) .. controls (.5,-.5) and (1.5,-.5) .. (1.5,0) .. controls (1.5,.5) and (.5,.5) .. (.5,1);
	\draw[pstdhl,dashed] (.5,-1) .. controls (.5,-.5) and (1.5,-.5) .. (1.5,0) .. controls (1.5,.5) and (.5,.5) .. (.5,1);
	\draw (1.5,-1) .. controls (1.5,-.5) and (2.5,-.5) .. (2.5,0) .. controls (2.5,.5) and (1.5,.5) .. (1.5,1);
	\draw[pstdhl,dashed] (1.5,-1) .. controls (1.5,-.5) and (2.5,-.5) .. (2.5,0) .. controls (2.5,.5) and (1.5,.5) .. (1.5,1);
	\node at (1,-.9){\small $\dots$}; \node at (2,0){\small $\dots$}; \node at (1,.9){\small $\dots$};
	\draw[pstdhl] (0,-1) node[below]{\small $\mu_1$} -- (0,0) node[pos=.75, nail]{}
			 -- (0,1) node[pos=.25, nail]{};
}
\\
\ = \ &
\tikzdiagl
{
	\draw (2,-1) 	.. controls (2,-.75) .. (0,-.25)  
				.. controls (2.5,.5) .. (2.5,.75) 
				-- (2.5,1) node[pos=.5, tikzstar]{};
	\draw (2,1) 	.. controls (2,.75) .. (0,.25)  
				.. controls (2.5,-.5) .. (2.5,-.75) 
				-- (2.5,-1) node[pos=.5, tikzstar]{};
	\draw (.5,-1) .. controls (.5,-.5) and (1.5,-.5) .. (1.5,0) .. controls (1.5,.5) and (.5,.5) .. (.5,1);
	\draw[pstdhl,dashed] (.5,-1) .. controls (.5,-.5) and (1.5,-.5) .. (1.5,0) .. controls (1.5,.5) and (.5,.5) .. (.5,1);
	\draw (1.5,-1) .. controls (1.5,-.5) and (2.5,-.5) .. (2.5,0) .. controls (2.5,.5) and (1.5,.5) .. (1.5,1);
	\draw[pstdhl,dashed] (1.5,-1) .. controls (1.5,-.5) and (2.5,-.5) .. (2.5,0) .. controls (2.5,.5) and (1.5,.5) .. (1.5,1);
	\node at (1,-.9){\small $\dots$}; \node at (2,0){\small $\dots$}; \node at (1,.9){\small $\dots$};
	\draw[pstdhl] (0,-1) node[below]{\small $\mu_1$} -- (0,0) node[pos=.75, nail]{}
			 -- (0,1) node[pos=.25, nail]{};
}
\ + \ 
\tikzdiagl
{
	\draw (2.5,-1) 	.. controls (2.5,-.75) .. (0,-.25)  node[pos=.2, tikzstar]{}
				.. controls (2.5,.5) .. (2.5,.75) 
				-- (2.5,1) ;
	\draw (2,1) 		.. controls (2,.75) .. (0,.25) 
		 		.. controls (1,0) .. (1,-.25)
				.. controls (1,-.375) and (.4,-.375) .. (.4,-.6)
				.. controls (.4,-.75) and (2,-.75) .. (2,-1);
	\draw (.5,-1) .. controls (.5,-.5) and (1.5,-.5) .. (1.5,0) .. controls (1.5,.5) and (.5,.5) .. (.5,1);
	\draw[pstdhl,dashed] (.5,-1) .. controls (.5,-.5) and (1.5,-.5) .. (1.5,0) .. controls (1.5,.5) and (.5,.5) .. (.5,1);
	\draw (1.5,-1) .. controls (1.5,-.5) and (2.5,-.5) .. (2.5,0) .. controls (2.5,.5) and (1.5,.5) .. (1.5,1);
	\draw[pstdhl,dashed] (1.5,-1) .. controls (1.5,-.5) and (2.5,-.5) .. (2.5,0) .. controls (2.5,.5) and (1.5,.5) .. (1.5,1);
	\node at (1,-.9){\small $\dots$}; \node at (2,0){\small $\dots$}; \node at (1,.9){\small $\dots$};
	\draw[pstdhl] (0,-1) node[below]{\small $\mu_1$} -- (0,0) node[pos=.75, nail]{}
			 -- (0,1) node[pos=.25, nail]{};
}
\ - \ 
\tikzdiagl
{
	\draw (2.5,-1) 	.. controls (2.5,-.75) .. (0,-.25) 
				.. controls (2.5,.5) .. (2.5,.75) 
				-- (2.5,1) node[pos=.5, tikzstar]{};
	\draw (2,1) 		.. controls (2,.75) .. (0,.25) 
		 		.. controls (1,0) .. (1,-.25)
				.. controls (1,-.375) and (.4,-.375) .. (.4,-.6)
				.. controls (.4,-.75) and (2,-.75) .. (2,-1);
	\draw (.5,-1) .. controls (.5,-.5) and (1.5,-.5) .. (1.5,0) .. controls (1.5,.5) and (.5,.5) .. (.5,1);
	\draw[pstdhl,dashed] (.5,-1) .. controls (.5,-.5) and (1.5,-.5) .. (1.5,0) .. controls (1.5,.5) and (.5,.5) .. (.5,1);
	\draw (1.5,-1) .. controls (1.5,-.5) and (2.5,-.5) .. (2.5,0) .. controls (2.5,.5) and (1.5,.5) .. (1.5,1);
	\draw[pstdhl,dashed] (1.5,-1) .. controls (1.5,-.5) and (2.5,-.5) .. (2.5,0) .. controls (2.5,.5) and (1.5,.5) .. (1.5,1);
	\node at (1,-.9){\small $\dots$}; \node at (2,0){\small $\dots$}; \node at (1,.9){\small $\dots$};
	\draw[pstdhl] (0,-1) node[below]{\small $\mu_1$} -- (0,0) node[pos=.75, nail]{}
			 -- (0,1) node[pos=.25, nail]{};
}
\end{align*}
We observe that 
\begin{align*}
\tikzdiagl{
	\draw (2,0) .. controls (2,.5) and (.5,.5) .. (.5,1) node[pos=.1,tikzstar]{};
	\draw (.5,0) .. controls (.5,.5) and (1,.5) .. (1,1);
	\draw[dashed, pstdhl] (.5,0) .. controls (.5,.5) and (1,.5) .. (1,1);
	\draw (1.5,0) .. controls (1.5,.5) and (2,.5) .. (2,1);
	\draw[dashed, pstdhl] (1.5,0) .. controls (1.5,.5) and (2,.5) .. (2,1);
	\node at(1,.1){\small $\dots$}; \node at(1.5,.9){\small $\dots$};
	\draw [pstdhl] (0,0) node[below]{\small $\mu_1$} -- (0,1); 
}
\ =
 \ 
\tikzdiagl{
	\draw (2,0) .. controls (2,.5) and (.5,.5) .. (.5,1) node[pos=.9,tikzstar]{};
	\draw (.5,0) .. controls (.5,.5) and (1,.5) .. (1,1);
	\draw[dashed, pstdhl] (.5,0) .. controls (.5,.5) and (1,.5) .. (1,1);
	\draw (1.5,0) .. controls (1.5,.5) and (2,.5) .. (2,1);
	\draw[dashed, pstdhl] (1.5,0) .. controls (1.5,.5) and (2,.5) .. (2,1);
	\node at(1,.1){\small $\dots$}; \node at(1.5,.9){\small $\dots$};
	\draw [pstdhl] (0,0) node[below]{\small $\mu_1$} -- (0,1); 
}
\ -
 \sum \ 
\tikzdiagl{
	\draw (2,0) .. controls (2,.5) and (.5,.5) .. (.5,1) node[pos=.1,tikzstar]{};
	\draw (.5,0) .. controls (.5,.5) and (1,.5) .. (1,1);
	\draw[dashed, pstdhl] (.5,0) .. controls (.5,.5) and (1,.5) .. (1,1);
	\draw (1.5,0) .. controls (1.5,.5) and (2,.5) .. (2,1);
	\draw[dashed, pstdhl] (1.5,0) .. controls (1.5,.5) and (2,.5) .. (2,1);
	\node at(1,.1){\small $\dots$}; \node at(1.5,.9){\small $\dots$};
	\begin{scope}[xshift=2cm]
		\draw (2,0) .. controls (2,.5) and (.5,.5) .. (.5,1) node[pos=.9,tikzstar]{};
		\draw (.5,0) .. controls (.5,.5) and (1,.5) .. (1,1);
		\draw[dashed, pstdhl] (.5,0) .. controls (.5,.5) and (1,.5) .. (1,1);
		\draw (1.5,0) .. controls (1.5,.5) and (2,.5) .. (2,1);
		\draw[dashed, pstdhl] (1.5,0) .. controls (1.5,.5) and (2,.5) .. (2,1);
		\node at(1,.1){\small $\dots$}; \node at(1.5,.9){\small $\dots$};
	\end{scope}
	\draw [pstdhl] (0,0) node[below]{\small $\mu_1$} -- (0,1); 
}
\end{align*}
where the sum is over all black strands. 
Applying this relation recursively yields
\begin{align}
\tikzdiagl{
	\draw (2,0) .. controls (2,.5) and (.5,.5) .. (.5,1) node[pos=.1,tikzstar]{};
	\draw (.5,0) .. controls (.5,.5) and (1,.5) .. (1,1);
	\draw[dashed, pstdhl] (.5,0) .. controls (.5,.5) and (1,.5) .. (1,1);
	\draw (1.5,0) .. controls (1.5,.5) and (2,.5) .. (2,1);
	\draw[dashed, pstdhl] (1.5,0) .. controls (1.5,.5) and (2,.5) .. (2,1);
	\node at(1,.1){\small $\dots$}; \node at(1.5,.9){\small $\dots$};
	\draw [pstdhl] (0,0) node[below]{\small $\mu_1$} -- (0,1); 
}
\ =
% \ 
%\tikzdiagl{
%	\draw (2,0) .. controls (2,.5) and (.5,.5) .. (.5,1) node[pos=.9,tikzstar]{};
%	%
%	\draw (.5,0) .. controls (.5,.5) and (1,.5) .. (1,1);
%	\draw[dashed, pstdhl] (.5,0) .. controls (.5,.5) and (1,.5) .. (1,1);
%	\draw (1.5,0) .. controls (1.5,.5) and (2,.5) .. (2,1);
%	\draw[dashed, pstdhl] (1.5,0) .. controls (1.5,.5) and (2,.5) .. (2,1);
%	\node at(1,.1){\small $\dots$}; \node at(1.5,.9){\small $\dots$};
%	%
%	\draw [pstdhl] (0,0) node[below]{\small $\mu_1$} -- (0,1); 
%}
%\ +
 \sum (-1)^c \ 
\tikzdiagl{
	\draw (2,0) .. controls (2,.5) and (.5,.5) .. (.5,1) node[pos=.9,tikzstar]{};
	\draw (.5,0) .. controls (.5,.5) and (1,.5) .. (1,1);
	\draw[dashed, pstdhl] (.5,0) .. controls (.5,.5) and (1,.5) .. (1,1);
	\draw (1.5,0) .. controls (1.5,.5) and (2,.5) .. (2,1);
	\draw[dashed, pstdhl] (1.5,0) .. controls (1.5,.5) and (2,.5) .. (2,1);
	\node at(1,.1){\small $\dots$}; \node at(1.5,.9){\small $\dots$};
	\begin{scope}[xshift=2cm]
		\draw (2,0) .. controls (2,.5) and (.5,.5) .. (.5,1) node[pos=.9,tikzstar]{};
		\draw (.5,0) .. controls (.5,.5) and (1,.5) .. (1,1);
		\draw[dashed, pstdhl] (.5,0) .. controls (.5,.5) and (1,.5) .. (1,1);
		\draw (1.5,0) .. controls (1.5,.5) and (2,.5) .. (2,1);
		\draw[dashed, pstdhl] (1.5,0) .. controls (1.5,.5) and (2,.5) .. (2,1);
		\node at(1,.1){\small $\dots$}; \node at(1.5,.9){\small $\dots$};
	\end{scope}
	\node at (4.75,.5) {$\dots$};
	\begin{scope}[xshift=5cm]
		\draw (2,0) .. controls (2,.5) and (.5,.5) .. (.5,1) node[pos=.9,tikzstar]{};
		\draw (.5,0) .. controls (.5,.5) and (1,.5) .. (1,1);
		\draw[dashed, pstdhl] (.5,0) .. controls (.5,.5) and (1,.5) .. (1,1);
		\draw (1.5,0) .. controls (1.5,.5) and (2,.5) .. (2,1);
		\draw[dashed, pstdhl] (1.5,0) .. controls (1.5,.5) and (2,.5) .. (2,1);
		\node at(1,.1){\small $\dots$}; \node at(1.5,.9){\small $\dots$};
	\end{scope}
	\draw [pstdhl] (0,0) node[below]{\small $\mu_1$} -- (0,1); 
}
\label{eq:starslidingoverallcrossings}
\end{align}
where the sum is over all ways to resolve black/black crossings in the diagram, and $c$ is the number of resolved crossings. 
By applying \cref{eq:starslidingoverallcrossings} and its symmetric to
\[
\tikzdiagl
{
	\draw (2,-1) 	.. controls (2,-.75) .. (0,-.25)  
				.. controls (2.5,.5) .. (2.5,.75) 
				-- (2.5,1) node[pos=.5, tikzstar]{};
	\draw (2,1) 	.. controls (2,.75) .. (0,.25)  
				.. controls (2.5,-.5) .. (2.5,-.75) 
				-- (2.5,-1) node[pos=.5, tikzstar]{};
	\draw (.5,-1) .. controls (.5,-.5) and (1.5,-.5) .. (1.5,0) .. controls (1.5,.5) and (.5,.5) .. (.5,1);
	\draw[pstdhl,dashed] (.5,-1) .. controls (.5,-.5) and (1.5,-.5) .. (1.5,0) .. controls (1.5,.5) and (.5,.5) .. (.5,1);
	\draw (1.5,-1) .. controls (1.5,-.5) and (2.5,-.5) .. (2.5,0) .. controls (2.5,.5) and (1.5,.5) .. (1.5,1);
	\draw[pstdhl,dashed] (1.5,-1) .. controls (1.5,-.5) and (2.5,-.5) .. (2.5,0) .. controls (2.5,.5) and (1.5,.5) .. (1.5,1);
	\node at (1,-.9){\small $\dots$}; \node at (1,.9){\small $\dots$};
	\draw[pstdhl] (0,-1) node[below]{\small $\mu_1$} -- (0,0) node[pos=.75, nail]{}
			 -- (0,1) node[pos=.25, nail]{};
}
\]
we obtain a collection of diagrams that typically look like
\begin{equation} \label{eq:doublenaildiagcol}
\tikzdiagl{
	\draw (6,-1.5) .. controls (6,-1) and (.5,-1.25) .. (.5,-.5)
		.. controls (.5,-.375) .. (0,-.25).. controls (1,0) .. 
		(1,.5) node[pos=1,tikzstar]{} .. controls (1+.25,.75) and (4.5,.75) .. (4.5,1) .. controls (4.5,1.25) and (4,1.25) .. (4,1.5);  
	%
		%.. controls (.5,0) .. (0,.25) .. controls (1-.25,.375) ..
		%(1,.5) node[pos=1,tikzstar]{} .. controls (1+.25,.75) and (4.5,.75) .. (4.5,1) .. controls (4.5,1.25) and (4,1.25) .. (4,1.5);  
	%
	\draw (6.5,-1.5) .. controls (6.5,-1) and (5,-1) .. (5,-.5) node[pos=1,tikzstar]{}
		--
		(5,.5) node[pos=1,tikzstar]{} .. controls (5,1) and (6.5,1) .. (6.5,1.5);
	\draw (2, -1.5) .. controls (2,-1.25) and (2.5,-1.25) .. (2.5,-1) .. controls (2.5,-.75) and (1+.25,-.75) .. (1,-.5) node[pos=1,tikzstar]{}
		.. controls (1,0) .. (0,.25) .. controls (.5,.375) ..
		 (.5,.5) .. controls (.5,1.25) and (6,1) .. (6,1.5);
	%
		%.. controls (1-.25,-.375) .. (0,-.25) .. controls (.5,0) .. 
		%(.5,.5) .. controls (.5,1.25) and (6,1) .. (6,1.5);
	%
	%
	\draw (4, -1.5) .. controls (4,-1.25) and (4.5,-1.25) .. (4.5,-1) .. controls (4.5,-.75) and (3,-.75) .. (3,-.5) node[pos=1,tikzstar]{}
		--
		(3,.5)  .. controls (3,1) and (2,1) .. (2,1.5);
	\begin{scope}[xshift=0cm]
		\draw (.5,-1.5) .. controls (.5,-1) and (1.5,-1) .. (1.5,-.5)
			--
			(1.5,.5) .. controls (1.5,1) and (.5,1) .. (.5,1.5);
		\draw[dashed, pstdhl] (.5,-1.5) .. controls (.5,-1) and (1.5,-1) .. (1.5,-.5)
			--
			(1.5,.5) .. controls (1.5,1) and (.5,1) .. (.5,1.5);
		\draw (1.5,-1.5) .. controls (1.5,-1) and (2.5,-1) .. (2.5,-.5)
			--
			(2.5,.5) .. controls (2.5,1) and (1.5,1) .. (1.5,1.5);
		\draw[dashed, pstdhl] (1.5,-1.5) .. controls (1.5,-1) and (2.5,-1) .. (2.5,-.5)
			--
			(2.5,.5) .. controls (2.5,1) and (1.5,1) .. (1.5,1.5);
		\node at(1,-1.4){\small $\dots$};  \node at(2,0){\small $\dots$}; \node at(1,1.4){\small $\dots$};
	\end{scope}
	\begin{scope}[xshift=2cm]
		\draw (.5,-1.5) .. controls (.5,-1) and (1.5,-1) .. (1.5,-.5)
			--
			(1.5,.5) .. controls (1.5,1) and (.5,1) .. (.5,1.5);
		\draw[dashed, pstdhl] (.5,-1.5) .. controls (.5,-1) and (1.5,-1) .. (1.5,-.5)
			--
			(1.5,.5) .. controls (1.5,1) and (.5,1) .. (.5,1.5);
		\draw (1.5,-1.5) .. controls (1.5,-1) and (2.5,-1) .. (2.5,-.5)
			--
			(2.5,.5) .. controls (2.5,1) and (1.5,1) .. (1.5,1.5);
		\draw[dashed, pstdhl] (1.5,-1.5) .. controls (1.5,-1) and (2.5,-1) .. (2.5,-.5)
			--
			(2.5,.5) .. controls (2.5,1) and (1.5,1) .. (1.5,1.5);
		\node at(1,-1.4){\small $\dots$};  \node at(2,0){\small $\dots$}; \node at(1,1.4){\small $\dots$};
	\end{scope}
	\begin{scope}[xshift=4cm]
		\draw (.5,-1.5) .. controls (.5,-1) and (1.5,-1) .. (1.5,-.5)
			--
			(1.5,.5) .. controls (1.5,1) and (.5,1) .. (.5,1.5);
		\draw[dashed, pstdhl] (.5,-1.5) .. controls (.5,-1) and (1.5,-1) .. (1.5,-.5)
			--
			(1.5,.5) .. controls (1.5,1) and (.5,1) .. (.5,1.5);
		\draw (1.5,-1.5) .. controls (1.5,-1) and (2.5,-1) .. (2.5,-.5)
			--
			(2.5,.5) .. controls (2.5,1) and (1.5,1) .. (1.5,1.5);
		\draw[dashed, pstdhl] (1.5,-1.5) .. controls (1.5,-1) and (2.5,-1) .. (2.5,-.5)
			--
			(2.5,.5) .. controls (2.5,1) and (1.5,1) .. (1.5,1.5);
		\node at(1,-1.4){\small $\dots$};  \node at(2,0){\small $\dots$}; \node at(1,1.4){\small $\dots$};
	\end{scope}
	\draw[pstdhl] (0,-1.5) node[below]{\small $\mu_1$} -- (0,-.5) 
			 -- (0,.5) node[pos=.25, nail]{} node[pos=.75,nail]{}
			 -- (0,1.5);
}
\end{equation}
together with the following diagrams
\begin{align*}
(-1)^{c}
\tikzdiagl{
	\draw (7,-1.5) .. controls (7,-1) and (.5,-1.25) .. (.5,-.5)
		.. controls (.5,-.375) .. (0,-.25).. controls (1,0) .. 
		(1,.5) node[pos=1,tikzstar]{}  .. controls (1,1) and (7.5,.75) .. (7.5,1.5);
	%
		%.. controls (.5,0) .. (0,.25) .. controls (1-.25,.375) ..
		%(1,.5) node[pos=1,tikzstar]{} .. controls (1+.25,.75) and (4.5,.75) .. (4.5,1) .. controls (4.5,1.25) and (4,1.25) .. (4,1.5);  
	%
	%
	\draw (2, -1.5) .. controls (2,-1.25) and (2.5,-1.25) .. (2.5,-1) .. controls (2.5,-.75) and (1+.25,-.75) .. (1,-.5) node[pos=1,tikzstar]{}
		.. controls (1,0) .. (0,.25) .. controls (.5,.375) ..
		 (.5,.5) .. controls (.5,1.25) and (7,1) .. (7,1.5);
	%
		%.. controls (1-.25,-.375) .. (0,-.25) .. controls (.5,0) .. 
		%(.5,.5) .. controls (.5,1.25) and (6,1) .. (6,1.5);
	%
	%
	\draw (4, -1.5) .. controls (4,-1.25) and (4.5,-1.25) .. (4.5,-1) .. controls (4.5,-.75) and (3,-.75) .. (3,-.5) node[pos=1,tikzstar]{}
		--
		(3,.5)  .. controls (3,1) and (2,1) .. (2,1.5);
	\node at (5.25,0){$\dots$};
	\draw[color=white,xshift=-.2cm,line width=1.6pt] (4.75,-1.5) .. controls (4.75,-1) and (5.75,-1) .. (5.75,-.5);
	\draw[color=white,line width=1.6pt] (4.75,-1.5) .. controls (4.75,-1) and (5.75,-1) .. (5.75,-.5);
	\draw[color=white,xshift=.2cm,line width=1.6pt] (4.75,-1.5) .. controls (4.75,-1) and (5.75,-1) .. (5.75,-.5);
	\draw[color=white,xshift=-.2cm,line width=1.6pt] (4.25,1.5) .. controls (4.25,1) and (5.25,1) .. (5.25,.5);
	\draw[color=white,line width=1.6pt] (4.25,1.5) .. controls (4.25,1) and (5.25,1) .. (5.25,.5);
	\draw[color=white,xshift=.2cm,line width=1.6pt] (4.25,1.5) .. controls (4.25,1) and (5.25,1) .. (5.25,.5);
	\draw (7.5,-1.5) .. controls (7.5,-1) and (6,-1) .. (6,-.5) node[pos=1,tikzstar]{}
		--
		(6,.5) .. controls (6,1) and (5,1) .. (5,1.5);
	\begin{scope}[xshift=0cm]
		\draw (.5,-1.5) .. controls (.5,-1) and (1.5,-1) .. (1.5,-.5)
			--
			(1.5,.5) .. controls (1.5,1) and (.5,1) .. (.5,1.5);
		\draw[dashed, pstdhl] (.5,-1.5) .. controls (.5,-1) and (1.5,-1) .. (1.5,-.5)
			--
			(1.5,.5) .. controls (1.5,1) and (.5,1) .. (.5,1.5);
		\draw (1.5,-1.5) .. controls (1.5,-1) and (2.5,-1) .. (2.5,-.5)
			--
			(2.5,.5) .. controls (2.5,1) and (1.5,1) .. (1.5,1.5);
		\draw[dashed, pstdhl] (1.5,-1.5) .. controls (1.5,-1) and (2.5,-1) .. (2.5,-.5)
			--
			(2.5,.5) .. controls (2.5,1) and (1.5,1) .. (1.5,1.5);
		\node at(1,-1.4){\small $\dots$};  \node at(2,0){\small $\dots$}; \node at(1,1.4){\small $\dots$};
	\end{scope}
	\begin{scope}[xshift=2cm]
		\draw (.5,-1.5) .. controls (.5,-1) and (1.5,-1) .. (1.5,-.5)
			--
			(1.5,.5) .. controls (1.5,1) and (.5,1) .. (.5,1.5);
		\draw[dashed, pstdhl] (.5,-1.5) .. controls (.5,-1) and (1.5,-1) .. (1.5,-.5)
			--
			(1.5,.5) .. controls (1.5,1) and (.5,1) .. (.5,1.5);
		\draw (1.5,-1.5) .. controls (1.5,-1) and (2.5,-1) .. (2.5,-.5)
			--
			(2.5,.5) .. controls (2.5,1) and (1.5,1) .. (1.5,1.5);
		\draw[dashed, pstdhl] (1.5,-1.5) .. controls (1.5,-1) and (2.5,-1) .. (2.5,-.5)
			--
			(2.5,.5) .. controls (2.5,1) and (1.5,1) .. (1.5,1.5);
		\node at(1,-1.4){\small $\dots$};  \node at(2,0){\small $\dots$}; \node at(1,1.4){\small $\dots$};
	\end{scope}
	\begin{scope}[xshift=5cm]
		\draw (.5,-1.5) .. controls (.5,-1) and (1.5,-1) .. (1.5,-.5)
			--
			(1.5,.5) .. controls (1.5,1) and (.5,1) .. (.5,1.5);
		\draw[dashed, pstdhl] (.5,-1.5) .. controls (.5,-1) and (1.5,-1) .. (1.5,-.5)
			--
			(1.5,.5) .. controls (1.5,1) and (.5,1) .. (.5,1.5);
		\draw (1.5,-1.5) .. controls (1.5,-1) and (2.5,-1) .. (2.5,-.5)
			--
			(2.5,.5) .. controls (2.5,1) and (1.5,1) .. (1.5,1.5);
		\draw[dashed, pstdhl] (1.5,-1.5) .. controls (1.5,-1) and (2.5,-1) .. (2.5,-.5)
			--
			(2.5,.5) .. controls (2.5,1) and (1.5,1) .. (1.5,1.5);
		\node at(1,-1.4){\small $\dots$};  \node at(2,0){\small $\dots$}; \node at(1,1.4){\small $\dots$};
	\end{scope}
	\draw[pstdhl] (0,-1.5) node[below]{\small $\mu_1$} -- (0,-.5) 
			 -- (0,.5) node[pos=.25, nail]{} node[pos=.75,nail]{}
			 -- (0,1.5);
}
\\
(-1)^{c}
\tikzdiagl[yscale=-1]{
	\draw (7,-1.5) .. controls (7,-1) and (.5,-1.25) .. (.5,-.5)
		.. controls (.5,-.375) .. (0,-.25).. controls (1,0) .. 
		(1,.5) node[pos=1,tikzstar]{}  .. controls (1,1) and (7.5,.75) .. (7.5,1.5);
	%
		%.. controls (.5,0) .. (0,.25) .. controls (1-.25,.375) ..
		%(1,.5) node[pos=1,tikzstar]{} .. controls (1+.25,.75) and (4.5,.75) .. (4.5,1) .. controls (4.5,1.25) and (4,1.25) .. (4,1.5);  
	%
	%
	\draw (2, -1.5) .. controls (2,-1.25) and (2.5,-1.25) .. (2.5,-1) .. controls (2.5,-.75) and (1+.25,-.75) .. (1,-.5) node[pos=1,tikzstar]{}
		.. controls (1,0) .. (0,.25) .. controls (.5,.375) ..
		 (.5,.5) .. controls (.5,1.25) and (7,1) .. (7,1.5);
	%
		%.. controls (1-.25,-.375) .. (0,-.25) .. controls (.5,0) .. 
		%(.5,.5) .. controls (.5,1.25) and (6,1) .. (6,1.5);
	%
	%
	\draw (4, -1.5) .. controls (4,-1.25) and (4.5,-1.25) .. (4.5,-1) .. controls (4.5,-.75) and (3,-.75) .. (3,-.5) node[pos=1,tikzstar]{}
		--
		(3,.5)  .. controls (3,1) and (2,1) .. (2,1.5);
	\node at (5.25,0){$\dots$};
	\draw[color=white,xshift=-.2cm,line width=1.6pt] (4.75,-1.5) .. controls (4.75,-1) and (5.75,-1) .. (5.75,-.5);
	\draw[color=white,line width=1.6pt] (4.75,-1.5) .. controls (4.75,-1) and (5.75,-1) .. (5.75,-.5);
	\draw[color=white,xshift=.2cm,line width=1.6pt] (4.75,-1.5) .. controls (4.75,-1) and (5.75,-1) .. (5.75,-.5);
	\draw[color=white,xshift=-.2cm,line width=1.6pt] (4.25,1.5) .. controls (4.25,1) and (5.25,1) .. (5.25,.5);
	\draw[color=white,line width=1.6pt] (4.25,1.5) .. controls (4.25,1) and (5.25,1) .. (5.25,.5);
	\draw[color=white,xshift=.2cm,line width=1.6pt] (4.25,1.5) .. controls (4.25,1) and (5.25,1) .. (5.25,.5);
	\draw (7.5,-1.5) .. controls (7.5,-1) and (6,-1) .. (6,-.5) node[pos=1,tikzstar]{}
		--
		(6,.5) .. controls (6,1) and (5,1) .. (5,1.5);
	\begin{scope}[xshift=0cm]
		\draw (.5,-1.5) .. controls (.5,-1) and (1.5,-1) .. (1.5,-.5)
			--
			(1.5,.5) .. controls (1.5,1) and (.5,1) .. (.5,1.5);
		\draw[dashed, pstdhl] (.5,-1.5) .. controls (.5,-1) and (1.5,-1) .. (1.5,-.5)
			--
			(1.5,.5) .. controls (1.5,1) and (.5,1) .. (.5,1.5);
		\draw (1.5,-1.5) .. controls (1.5,-1) and (2.5,-1) .. (2.5,-.5)
			--
			(2.5,.5) .. controls (2.5,1) and (1.5,1) .. (1.5,1.5);
		\draw[dashed, pstdhl] (1.5,-1.5) .. controls (1.5,-1) and (2.5,-1) .. (2.5,-.5)
			--
			(2.5,.5) .. controls (2.5,1) and (1.5,1) .. (1.5,1.5);
		\node at(1,-1.4){\small $\dots$};  \node at(2,0){\small $\dots$}; \node at(1,1.4){\small $\dots$};
	\end{scope}
	\begin{scope}[xshift=2cm]
		\draw (.5,-1.5) .. controls (.5,-1) and (1.5,-1) .. (1.5,-.5)
			--
			(1.5,.5) .. controls (1.5,1) and (.5,1) .. (.5,1.5);
		\draw[dashed, pstdhl] (.5,-1.5) .. controls (.5,-1) and (1.5,-1) .. (1.5,-.5)
			--
			(1.5,.5) .. controls (1.5,1) and (.5,1) .. (.5,1.5);
		\draw (1.5,-1.5) .. controls (1.5,-1) and (2.5,-1) .. (2.5,-.5)
			--
			(2.5,.5) .. controls (2.5,1) and (1.5,1) .. (1.5,1.5);
		\draw[dashed, pstdhl] (1.5,-1.5) .. controls (1.5,-1) and (2.5,-1) .. (2.5,-.5)
			--
			(2.5,.5) .. controls (2.5,1) and (1.5,1) .. (1.5,1.5);
		\node at(1,-1.4){\small $\dots$};  \node at(2,0){\small $\dots$}; \node at(1,1.4){\small $\dots$};
	\end{scope}
	\begin{scope}[xshift=5cm]
		\draw (.5,-1.5) .. controls (.5,-1) and (1.5,-1) .. (1.5,-.5)
			--
			(1.5,.5) .. controls (1.5,1) and (.5,1) .. (.5,1.5);
		\draw[dashed, pstdhl] (.5,-1.5) .. controls (.5,-1) and (1.5,-1) .. (1.5,-.5)
			--
			(1.5,.5) .. controls (1.5,1) and (.5,1) .. (.5,1.5);
		\draw (1.5,-1.5) .. controls (1.5,-1) and (2.5,-1) .. (2.5,-.5)
			--
			(2.5,.5) .. controls (2.5,1) and (1.5,1) .. (1.5,1.5);
		\draw[dashed, pstdhl] (1.5,-1.5) .. controls (1.5,-1) and (2.5,-1) .. (2.5,-.5)
			--
			(2.5,.5) .. controls (2.5,1) and (1.5,1) .. (1.5,1.5);
		\node at(1,-1.4){\small $\dots$};  \node at(2,0){\small $\dots$}; \node at(1,1.4){\small $\dots$};
	\end{scope}
	\draw[pstdhl] (0,-1.5) -- (0,-.5) 
			 -- (0,.5) node[pos=.25, nail]{} node[pos=.75,nail]{}
			 -- (0,1.5) node[below]{\small $\mu_1$};
}
\end{align*}
and
\[
\tikzdiagl
{
	\draw (2,-1) 	.. controls (2,-.75) .. (0,-.25)  
				.. controls (2.5,.5) .. (2.5,.75) node[pos=.2, tikzstar]{}
				-- (2.5,1) ;
	\draw (2,1) 	.. controls (2,.75) .. (0,.25)  
				.. controls (2.5,-.5) .. (2.5,-.75) node[pos=.2, tikzstar]{}
				-- (2.5,-1) ;
	\draw (.5,-1) .. controls (.5,-.5) and (1.5,-.5) .. (1.5,0) .. controls (1.5,.5) and (.5,.5) .. (.5,1);
	\draw[pstdhl,dashed] (.5,-1) .. controls (.5,-.5) and (1.5,-.5) .. (1.5,0) .. controls (1.5,.5) and (.5,.5) .. (.5,1);
	\draw (1.5,-1) .. controls (1.5,-.5) and (2.5,-.5) .. (2.5,0) .. controls (2.5,.5) and (1.5,.5) .. (1.5,1);
	\draw[pstdhl,dashed] (1.5,-1) .. controls (1.5,-.5) and (2.5,-.5) .. (2.5,0) .. controls (2.5,.5) and (1.5,.5) .. (1.5,1);
	\node at (1,-.9){\small $\dots$}; \node at (1,.9){\small $\dots$};
	\draw[pstdhl] (0,-1) node[below]{\small $\mu_1$} -- (0,0) node[pos=.75, nail]{}
			 -- (0,1) node[pos=.25, nail]{};
}
\]
All the diagrams of the same shape as in \cref{eq:doublenaildiagcol} are zero since
\[
\tikzdiagl
{
	\draw (.5,-1) .. controls (.5,-.75) .. (0,-.5) 
			.. controls (1,0) .. (1,1) node[pos=.75,tikzstar]{};
	\draw (.5,1) .. controls (.5,.75) .. (0,.5) 
			.. controls (1,0) .. (1,-1)  node[pos=.75,tikzstar]{};
	\draw[pstdhl] (0,-1)node[below]{\small $\mu_1$} -- (0,0) node[midway, nail]{}
			 -- (0,1) node[midway, nail]{};
}
\ = - \ 
\tikzdiagl
{
	\draw (.5,-1) .. controls (.5,-.75) .. (0,-.5) 
			.. controls (1,0) .. (1,1) node[pos=.75,tikzstar]{};
	\draw (.5,1) .. controls (.5,.75) .. (0,.5)  node[pos=.25,tikzstar]{}
			.. controls (1,0) .. (1,-1) ;
	\draw[pstdhl] (0,-1)node[below]{\small $\mu_1$} -- (0,0) node[midway, nail]{}
			 -- (0,1) node[midway, nail]{};
}
\]
and because of \cref{eq:doublestarcommutes} and \cref{lem:doublecrossingnailzero}. 
Applying \cref{eq:starslidingoverallcrossings} to
\[
\tikzdiagl
{
	\draw (2.5,-1) 	.. controls (2.5,-.75) .. (0,-.25)  node[pos=.2, tikzstar]{}
				.. controls (2.5,.5) .. (2.5,.75) 
				-- (2.5,1) ;
	\draw (2,1) 		.. controls (2,.75) .. (0,.25) 
		 		.. controls (1,0) .. (1,-.25)
				.. controls (1,-.375) and (.4,-.375) .. (.4,-.6)
				.. controls (.4,-.75) and (2,-.75) .. (2,-1);
	\draw (.5,-1) .. controls (.5,-.5) and (1.5,-.5) .. (1.5,0) .. controls (1.5,.5) and (.5,.5) .. (.5,1);
	\draw[pstdhl,dashed] (.5,-1) .. controls (.5,-.5) and (1.5,-.5) .. (1.5,0) .. controls (1.5,.5) and (.5,.5) .. (.5,1);
	\draw (1.5,-1) .. controls (1.5,-.5) and (2.5,-.5) .. (2.5,0) .. controls (2.5,.5) and (1.5,.5) .. (1.5,1);
	\draw[pstdhl,dashed] (1.5,-1) .. controls (1.5,-.5) and (2.5,-.5) .. (2.5,0) .. controls (2.5,.5) and (1.5,.5) .. (1.5,1);
	\node at (1,-.9){\small $\dots$}; \node at (2,0){\small $\dots$}; \node at (1,.9){\small $\dots$};
	\draw[pstdhl] (0,-1) node[below]{\small $\mu_1$} -- (0,0) node[pos=.75, nail]{}
			 -- (0,1) node[pos=.25, nail]{};
}
%\ - \ 
%\tikzdiagl
%{
%	\draw (2.5,-1) 	.. controls (2.5,-.75) .. (0,-.25) 
%				.. controls (2.5,.5) .. (2.5,.75) 
%				-- (2.5,1) node[pos=.5, tikzstar]{};
%	%
%	\draw (2,1) 		.. controls (2,.75) .. (0,.25) 
%		 		.. controls (1,0) .. (1,-.25)
%				.. controls (1,-.375) and (.4,-.375) .. (.4,-.6)
%				.. controls (.4,-.75) and (2,-.75) .. (2,-1);
%	%
%	\draw (.5,-1) .. controls (.5,-.5) and (1.5,-.5) .. (1.5,0) .. controls (1.5,.5) and (.5,.5) .. (.5,1);
%	\draw[pstdhl,dashed] (.5,-1) .. controls (.5,-.5) and (1.5,-.5) .. (1.5,0) .. controls (1.5,.5) and (.5,.5) .. (.5,1);
%	\draw (1.5,-1) .. controls (1.5,-.5) and (2.5,-.5) .. (2.5,0) .. controls (2.5,.5) and (1.5,.5) .. (1.5,1);
%	\draw[pstdhl,dashed] (1.5,-1) .. controls (1.5,-.5) and (2.5,-.5) .. (2.5,0) .. controls (2.5,.5) and (1.5,.5) .. (1.5,1);
%	\node at (1,-.9){\small $\dots$}; \node at (2,0){\small $\dots$}; \node at (1,.9){\small $\dots$};
%	%
%	\draw[pstdhl] (0,-1) node[below]{\small $\mu_1$} -- (0,0) node[pos=.75, nail]{}
%			 -- (0,1) node[pos=.25, nail]{};
%}
\]
yields a collection of elements
\begin{align*}
&
(-1)^{c}
\tikzdiagl{
%	\draw (7,-1.5) .. controls (7,-1) and (.5,-1.25) .. (.5,-.5)
%		.. controls (.5,-.375) .. (0,-.25).. controls (1,0) .. 
%		(1,.5)   .. controls (1,1) and (7.5,.75) .. (7.5,1.5);
	 \draw (7,-1.5) .. controls (7,-1.25) and (.5,-1.25) .. (.5,-1)
	 	.. controls (.5,-.75) and (1,-.75) .. (1,-.5)
		.. controls (1,0) .. (0,.25) .. controls (.5,.375) ..
		 (.5,.5) .. controls (.5,1.25) and (7,1) .. (7,1.5);
	%
		%.. controls (.5,0) .. (0,.25) .. controls (1-.25,.375) ..
		%(1,.5) node[pos=1,tikzstar]{} .. controls (1+.25,.75) and (4.5,.75) .. (4.5,1) .. controls (4.5,1.25) and (4,1.25) .. (4,1.5);  
	%
	%
%	\draw (2, -1.5) .. controls (2,-1.25) and (2.5,-1.25) .. (2.5,-1) .. controls (2.5,-.75) and (1+.25,-.75) .. (1,-.5) node[pos=1,tikzstar]{}
%		.. controls (1,0) .. (0,.25) .. controls (.5,.375) ..
%		 (.5,.5) .. controls (.5,1.25) and (7,1) .. (7,1.5);
	%
	\draw (2, -1.5) .. controls (2,-1.25) and (2.5,-1.25) .. (2.5,-1) .. controls (2.5,-.75) and (.5+.25,-.75) .. (.5,-.5) node[pos=.65,tikzstar]{}
		.. controls (.5,-.375) .. (0,-.25).. controls (1,0) .. 
		(1,.5)   .. controls (1,1) and (7.5,.75) .. (7.5,1.5);
	%
		%.. controls (1-.25,-.375) .. (0,-.25) .. controls (.5,0) .. 
		%(.5,.5) .. controls (.5,1.25) and (6,1) .. (6,1.5);
	%
	%
	\draw (4, -1.5) .. controls (4,-1.25) and (4.5,-1.25) .. (4.5,-1) .. controls (4.5,-.75) and (3,-.75) .. (3,-.5) node[pos=1,tikzstar]{}
		--
		(3,.5)  .. controls (3,1) and (2,1) .. (2,1.5);
	\node at (5.25,0){$\dots$};
	\draw[color=white,xshift=-.2cm,line width=1.6pt] (4.75,-1.5) .. controls (4.75,-1) and (5.75,-1) .. (5.75,-.5);
	\draw[color=white,line width=1.6pt] (4.75,-1.5) .. controls (4.75,-1) and (5.75,-1) .. (5.75,-.5);
	\draw[color=white,xshift=.2cm,line width=1.6pt] (4.75,-1.5) .. controls (4.75,-1) and (5.75,-1) .. (5.75,-.5);
	\draw[color=white,xshift=-.2cm,line width=1.6pt] (4.25,1.5) .. controls (4.25,1) and (5.25,1) .. (5.25,.5);
	\draw[color=white,line width=1.6pt] (4.25,1.5) .. controls (4.25,1) and (5.25,1) .. (5.25,.5);
	\draw[color=white,xshift=.2cm,line width=1.6pt] (4.25,1.5) .. controls (4.25,1) and (5.25,1) .. (5.25,.5);
	\draw (7.5,-1.5) .. controls (7.5,-1) and (6,-1) .. (6,-.5) node[pos=1,tikzstar]{}
		--
		(6,.5) .. controls (6,1) and (5,1) .. (5,1.5);
	\begin{scope}[xshift=0cm]
		\draw (.5,-1.5) .. controls (.5,-1) and (1.5,-1) .. (1.5,-.5)
			--
			(1.5,.5) .. controls (1.5,1) and (.5,1) .. (.5,1.5);
		\draw[dashed, pstdhl] (.5,-1.5) .. controls (.5,-1) and (1.5,-1) .. (1.5,-.5)
			--
			(1.5,.5) .. controls (1.5,1) and (.5,1) .. (.5,1.5);
		\draw (1.5,-1.5) .. controls (1.5,-1) and (2.5,-1) .. (2.5,-.5)
			--
			(2.5,.5) .. controls (2.5,1) and (1.5,1) .. (1.5,1.5);
		\draw[dashed, pstdhl] (1.5,-1.5) .. controls (1.5,-1) and (2.5,-1) .. (2.5,-.5)
			--
			(2.5,.5) .. controls (2.5,1) and (1.5,1) .. (1.5,1.5);
		\node at(1,-1.4){\small $\dots$};  \node at(2,0){\small $\dots$}; \node at(1,1.4){\small $\dots$};
	\end{scope}
	\begin{scope}[xshift=2cm]
		\draw (.5,-1.5) .. controls (.5,-1) and (1.5,-1) .. (1.5,-.5)
			--
			(1.5,.5) .. controls (1.5,1) and (.5,1) .. (.5,1.5);
		\draw[dashed, pstdhl] (.5,-1.5) .. controls (.5,-1) and (1.5,-1) .. (1.5,-.5)
			--
			(1.5,.5) .. controls (1.5,1) and (.5,1) .. (.5,1.5);
		\draw (1.5,-1.5) .. controls (1.5,-1) and (2.5,-1) .. (2.5,-.5)
			--
			(2.5,.5) .. controls (2.5,1) and (1.5,1) .. (1.5,1.5);
		\draw[dashed, pstdhl] (1.5,-1.5) .. controls (1.5,-1) and (2.5,-1) .. (2.5,-.5)
			--
			(2.5,.5) .. controls (2.5,1) and (1.5,1) .. (1.5,1.5);
		\node at(1,-1.4){\small $\dots$};  \node at(2,0){\small $\dots$}; \node at(1,1.4){\small $\dots$};
	\end{scope}
	\begin{scope}[xshift=5cm]
		\draw (.5,-1.5) .. controls (.5,-1) and (1.5,-1) .. (1.5,-.5)
			--
			(1.5,.5) .. controls (1.5,1) and (.5,1) .. (.5,1.5);
		\draw[dashed, pstdhl] (.5,-1.5) .. controls (.5,-1) and (1.5,-1) .. (1.5,-.5)
			--
			(1.5,.5) .. controls (1.5,1) and (.5,1) .. (.5,1.5);
		\draw (1.5,-1.5) .. controls (1.5,-1) and (2.5,-1) .. (2.5,-.5)
			--
			(2.5,.5) .. controls (2.5,1) and (1.5,1) .. (1.5,1.5);
		\draw[dashed, pstdhl] (1.5,-1.5) .. controls (1.5,-1) and (2.5,-1) .. (2.5,-.5)
			--
			(2.5,.5) .. controls (2.5,1) and (1.5,1) .. (1.5,1.5);
		\node at(1,-1.4){\small $\dots$};  \node at(2,0){\small $\dots$}; \node at(1,1.4){\small $\dots$};
	\end{scope}
	\draw[pstdhl] (0,-1.5) node[below]{\small $\mu_1$} -- (0,-.5) 
			 -- (0,.5) node[pos=.25, nail]{} node[pos=.75,nail]{}
			 -- (0,1.5);
}
\\
& \ = - \ 
(-1)^{c}
\tikzdiagl{
	\draw (7,-1.5) .. controls (7,-1) and (.5,-1.25) .. (.5,-.5)
		.. controls (.5,-.375) .. (0,-.25).. controls (1,0) .. 
		(1,.5) node[pos=1,tikzstar]{}  .. controls (1,1) and (7.5,.75) .. (7.5,1.5);
	%
		%.. controls (.5,0) .. (0,.25) .. controls (1-.25,.375) ..
		%(1,.5) node[pos=1,tikzstar]{} .. controls (1+.25,.75) and (4.5,.75) .. (4.5,1) .. controls (4.5,1.25) and (4,1.25) .. (4,1.5);  
	%
	%
	\draw (2, -1.5) .. controls (2,-1.25) and (2.5,-1.25) .. (2.5,-1) .. controls (2.5,-.75) and (1+.25,-.75) .. (1,-.5) node[pos=1,tikzstar]{}
		.. controls (1,0) .. (0,.25) .. controls (.5,.375) ..
		 (.5,.5) .. controls (.5,1.25) and (7,1) .. (7,1.5);
	%
		%.. controls (1-.25,-.375) .. (0,-.25) .. controls (.5,0) .. 
		%(.5,.5) .. controls (.5,1.25) and (6,1) .. (6,1.5);
	%
	%
	\draw (4, -1.5) .. controls (4,-1.25) and (4.5,-1.25) .. (4.5,-1) .. controls (4.5,-.75) and (3,-.75) .. (3,-.5) node[pos=1,tikzstar]{}
		--
		(3,.5)  .. controls (3,1) and (2,1) .. (2,1.5);
	\node at (5.25,0){$\dots$};
	\draw[color=white,xshift=-.2cm,line width=1.6pt] (4.75,-1.5) .. controls (4.75,-1) and (5.75,-1) .. (5.75,-.5);
	\draw[color=white,line width=1.6pt] (4.75,-1.5) .. controls (4.75,-1) and (5.75,-1) .. (5.75,-.5);
	\draw[color=white,xshift=.2cm,line width=1.6pt] (4.75,-1.5) .. controls (4.75,-1) and (5.75,-1) .. (5.75,-.5);
	\draw[color=white,xshift=-.2cm,line width=1.6pt] (4.25,1.5) .. controls (4.25,1) and (5.25,1) .. (5.25,.5);
	\draw[color=white,line width=1.6pt] (4.25,1.5) .. controls (4.25,1) and (5.25,1) .. (5.25,.5);
	\draw[color=white,xshift=.2cm,line width=1.6pt] (4.25,1.5) .. controls (4.25,1) and (5.25,1) .. (5.25,.5);
	\draw (7.5,-1.5) .. controls (7.5,-1) and (6,-1) .. (6,-.5) node[pos=1,tikzstar]{}
		--
		(6,.5) .. controls (6,1) and (5,1) .. (5,1.5);
	\begin{scope}[xshift=0cm]
		\draw (.5,-1.5) .. controls (.5,-1) and (1.5,-1) .. (1.5,-.5)
			--
			(1.5,.5) .. controls (1.5,1) and (.5,1) .. (.5,1.5);
		\draw[dashed, pstdhl] (.5,-1.5) .. controls (.5,-1) and (1.5,-1) .. (1.5,-.5)
			--
			(1.5,.5) .. controls (1.5,1) and (.5,1) .. (.5,1.5);
		\draw (1.5,-1.5) .. controls (1.5,-1) and (2.5,-1) .. (2.5,-.5)
			--
			(2.5,.5) .. controls (2.5,1) and (1.5,1) .. (1.5,1.5);
		\draw[dashed, pstdhl] (1.5,-1.5) .. controls (1.5,-1) and (2.5,-1) .. (2.5,-.5)
			--
			(2.5,.5) .. controls (2.5,1) and (1.5,1) .. (1.5,1.5);
		\node at(1,-1.4){\small $\dots$};  \node at(2,0){\small $\dots$}; \node at(1,1.4){\small $\dots$};
	\end{scope}
	\begin{scope}[xshift=2cm]
		\draw (.5,-1.5) .. controls (.5,-1) and (1.5,-1) .. (1.5,-.5)
			--
			(1.5,.5) .. controls (1.5,1) and (.5,1) .. (.5,1.5);
		\draw[dashed, pstdhl] (.5,-1.5) .. controls (.5,-1) and (1.5,-1) .. (1.5,-.5)
			--
			(1.5,.5) .. controls (1.5,1) and (.5,1) .. (.5,1.5);
		\draw (1.5,-1.5) .. controls (1.5,-1) and (2.5,-1) .. (2.5,-.5)
			--
			(2.5,.5) .. controls (2.5,1) and (1.5,1) .. (1.5,1.5);
		\draw[dashed, pstdhl] (1.5,-1.5) .. controls (1.5,-1) and (2.5,-1) .. (2.5,-.5)
			--
			(2.5,.5) .. controls (2.5,1) and (1.5,1) .. (1.5,1.5);
		\node at(1,-1.4){\small $\dots$};  \node at(2,0){\small $\dots$}; \node at(1,1.4){\small $\dots$};
	\end{scope}
	\begin{scope}[xshift=5cm]
		\draw (.5,-1.5) .. controls (.5,-1) and (1.5,-1) .. (1.5,-.5)
			--
			(1.5,.5) .. controls (1.5,1) and (.5,1) .. (.5,1.5);
		\draw[dashed, pstdhl] (.5,-1.5) .. controls (.5,-1) and (1.5,-1) .. (1.5,-.5)
			--
			(1.5,.5) .. controls (1.5,1) and (.5,1) .. (.5,1.5);
		\draw (1.5,-1.5) .. controls (1.5,-1) and (2.5,-1) .. (2.5,-.5)
			--
			(2.5,.5) .. controls (2.5,1) and (1.5,1) .. (1.5,1.5);
		\draw[dashed, pstdhl] (1.5,-1.5) .. controls (1.5,-1) and (2.5,-1) .. (2.5,-.5)
			--
			(2.5,.5) .. controls (2.5,1) and (1.5,1) .. (1.5,1.5);
		\node at(1,-1.4){\small $\dots$};  \node at(2,0){\small $\dots$}; \node at(1,1.4){\small $\dots$};
	\end{scope}
	\draw[pstdhl] (0,-1.5) node[below]{\small $\mu_1$} -- (0,-.5) 
			 -- (0,.5) node[pos=.25, nail]{} node[pos=.75,nail]{}
			 -- (0,1.5);
}
\end{align*}
and the element
\[
\tikzdiagl
{
	\draw (2.5,-1) 	.. controls (2.5,-.75) .. (0,-.25)  node[pos=.875, tikzstar]{}
				.. controls (2.5,.5) .. (2.5,.75) 
				-- (2.5,1) ;
	\draw (2,1) 		.. controls (2,.75) .. (0,.25) 
		 		.. controls (.7,0) .. (.7,-.25)
				.. controls (.7,-.375) and (.4,-.375) .. (.4,-.6)
				.. controls (.4,-.75) and (2,-.75) .. (2,-1);
	\draw (.5,-1) .. controls (.5,-.5) and (1.5,-.5) .. (1.5,0) .. controls (1.5,.5) and (.5,.5) .. (.5,1);
	\draw[pstdhl,dashed] (.5,-1) .. controls (.5,-.5) and (1.5,-.5) .. (1.5,0) .. controls (1.5,.5) and (.5,.5) .. (.5,1);
	\draw (1.5,-1) .. controls (1.5,-.5) and (2.5,-.5) .. (2.5,0) .. controls (2.5,.5) and (1.5,.5) .. (1.5,1);
	\draw[pstdhl,dashed] (1.5,-1) .. controls (1.5,-.5) and (2.5,-.5) .. (2.5,0) .. controls (2.5,.5) and (1.5,.5) .. (1.5,1);
	\node at (1,-.9){\small $\dots$}; \node at (2,0){\small $\dots$}; \node at (1,.9){\small $\dots$};
	\draw[pstdhl] (0,-1) node[below]{\small $\mu_1$} -- (0,0) node[pos=.75, nail]{}
			 -- (0,1) node[pos=.25, nail]{};
}
\  = \ 
\tikzdiagl
{
	\draw (2.5,-1) 	.. controls (2.5,-.75) .. (0,-.25) 
				.. controls (2.5,.5) .. (2.5,.75)  node[pos=.125, tikzstar]{}
				-- (2.5,1) ;
	\draw (2,1) 		.. controls (2,.75) .. (0,.25) 
		 		.. controls (.7,0) .. (.7,-.25)
				.. controls (.7,-.375) and (.4,-.375) .. (.4,-.6)
				.. controls (.4,-.75) and (2,-.75) .. (2,-1);
	\draw (.5,-1) .. controls (.5,-.5) and (1.5,-.5) .. (1.5,0) .. controls (1.5,.5) and (.5,.5) .. (.5,1);
	\draw[pstdhl,dashed] (.5,-1) .. controls (.5,-.5) and (1.5,-.5) .. (1.5,0) .. controls (1.5,.5) and (.5,.5) .. (.5,1);
	\draw (1.5,-1) .. controls (1.5,-.5) and (2.5,-.5) .. (2.5,0) .. controls (2.5,.5) and (1.5,.5) .. (1.5,1);
	\draw[pstdhl,dashed] (1.5,-1) .. controls (1.5,-.5) and (2.5,-.5) .. (2.5,0) .. controls (2.5,.5) and (1.5,.5) .. (1.5,1);
	\node at (1,-.9){\small $\dots$}; \node at (2,0){\small $\dots$}; \node at (1,.9){\small $\dots$};
	\draw[pstdhl] (0,-1) node[below]{\small $\mu_1$} -- (0,0) node[pos=.75, nail]{}
			 -- (0,1) node[pos=.25, nail]{};
}
\ - \ 
\tikzdiagl
{
	\draw (2,-1) 	.. controls (2,-.75) .. (0,-.25)  
				.. controls (2.5,.5) .. (2.5,.75) node[pos=.2, tikzstar]{}
				-- (2.5,1) ;
	\draw (2,1) 	.. controls (2,.75) .. (0,.25)  
				.. controls (2.5,-.5) .. (2.5,-.75) node[pos=.2, tikzstar]{}
				-- (2.5,-1) ;
	\draw (.5,-1) .. controls (.5,-.5) and (1.5,-.5) .. (1.5,0) .. controls (1.5,.5) and (.5,.5) .. (.5,1);
	\draw[pstdhl,dashed] (.5,-1) .. controls (.5,-.5) and (1.5,-.5) .. (1.5,0) .. controls (1.5,.5) and (.5,.5) .. (.5,1);
	\draw (1.5,-1) .. controls (1.5,-.5) and (2.5,-.5) .. (2.5,0) .. controls (2.5,.5) and (1.5,.5) .. (1.5,1);
	\draw[pstdhl,dashed] (1.5,-1) .. controls (1.5,-.5) and (2.5,-.5) .. (2.5,0) .. controls (2.5,.5) and (1.5,.5) .. (1.5,1);
	\node at (1,-.9){\small $\dots$}; \node at (1,.9){\small $\dots$};
	\draw[pstdhl] (0,-1) node[below]{\small $\mu_1$} -- (0,0) node[pos=.75, nail]{}
			 -- (0,1) node[pos=.25, nail]{};
}
\]
Applying the symmetric of \cref{eq:starslidingoverallcrossings} to
\[
\ - \ 
\tikzdiagl
{
	\draw (2.5,-1) 	.. controls (2.5,-.75) .. (0,-.25) 
				.. controls (2.5,.5) .. (2.5,.75) 
				-- (2.5,1) node[pos=.5, tikzstar]{};
	\draw (2,1) 		.. controls (2,.75) .. (0,.25) 
		 		.. controls (1,0) .. (1,-.25)
				.. controls (1,-.375) and (.4,-.375) .. (.4,-.6)
				.. controls (.4,-.75) and (2,-.75) .. (2,-1);
	\draw (.5,-1) .. controls (.5,-.5) and (1.5,-.5) .. (1.5,0) .. controls (1.5,.5) and (.5,.5) .. (.5,1);
	\draw[pstdhl,dashed] (.5,-1) .. controls (.5,-.5) and (1.5,-.5) .. (1.5,0) .. controls (1.5,.5) and (.5,.5) .. (.5,1);
	\draw (1.5,-1) .. controls (1.5,-.5) and (2.5,-.5) .. (2.5,0) .. controls (2.5,.5) and (1.5,.5) .. (1.5,1);
	\draw[pstdhl,dashed] (1.5,-1) .. controls (1.5,-.5) and (2.5,-.5) .. (2.5,0) .. controls (2.5,.5) and (1.5,.5) .. (1.5,1);
	\node at (1,-.9){\small $\dots$}; \node at (2,0){\small $\dots$}; \node at (1,.9){\small $\dots$};
	\draw[pstdhl] (0,-1) node[below]{\small $\mu_1$} -- (0,0) node[pos=.75, nail]{}
			 -- (0,1) node[pos=.25, nail]{};
}
\]
yields a collection of elements
\begin{align*}
&-
(-1)^{c}
\tikzdiagl[yscale=-1]{
	\draw (7,-1.5) .. controls (7,-1) and (.5,-1.25) .. (.5,-.5)
		.. controls (.5,-.375) .. (0,-.25).. controls (1,0) .. 
		(1,.5) .. controls (1,.75) and (.5,.75) .. (.5,1) .. controls (.5,1.25) and (7,1.25) .. (7,1.5); % node[pos=1,tikzstar]{}  .. controls (1,1) and (7.5,.75) .. (7.5,1.5);
	%
		%.. controls (.5,0) .. (0,.25) .. controls (1-.25,.375) ..
		%(1,.5) node[pos=1,tikzstar]{} .. controls (1+.25,.75) and (4.5,.75) .. (4.5,1) .. controls (4.5,1.25) and (4,1.25) .. (4,1.5);  
	%
	%
	\draw (2, -1.5) .. controls (2,-1.25) and (2.5,-1.25) .. (2.5,-1) .. controls (2.5,-.75) and (1+.25,-.75) .. (1,-.5) node[pos=1,tikzstar]{}
		.. controls (1,0) .. (0,.25) .. controls (.5,.375) ..
		 (.5,.5) .. controls (.5,1.25) and (7.5,.75) .. (7.5,1.5); % .. controls (.5,1.25) and (7,1) .. (7,1.5);
	%
		%.. controls (1-.25,-.375) .. (0,-.25) .. controls (.5,0) .. 
		%(.5,.5) .. controls (.5,1.25) and (6,1) .. (6,1.5);
	%
	%
	\draw (4, -1.5) .. controls (4,-1.25) and (4.5,-1.25) .. (4.5,-1) .. controls (4.5,-.75) and (3,-.75) .. (3,-.5) node[pos=1,tikzstar]{}
		--
		(3,.5)  .. controls (3,1) and (2,1) .. (2,1.5);
	\node at (5.25,0){$\dots$};
	\draw[color=white,xshift=-.2cm,line width=1.6pt] (4.75,-1.5) .. controls (4.75,-1) and (5.75,-1) .. (5.75,-.5);
	\draw[color=white,line width=1.6pt] (4.75,-1.5) .. controls (4.75,-1) and (5.75,-1) .. (5.75,-.5);
	\draw[color=white,xshift=.2cm,line width=1.6pt] (4.75,-1.5) .. controls (4.75,-1) and (5.75,-1) .. (5.75,-.5);
	\draw[color=white,xshift=-.2cm,line width=1.6pt] (4.25,1.5) .. controls (4.25,1) and (5.25,1) .. (5.25,.5);
	\draw[color=white,line width=1.6pt] (4.25,1.5) .. controls (4.25,1) and (5.25,1) .. (5.25,.5);
	\draw[color=white,xshift=.2cm,line width=1.6pt] (4.25,1.5) .. controls (4.25,1) and (5.25,1) .. (5.25,.5);
	\draw (7.5,-1.5) .. controls (7.5,-1) and (6,-1) .. (6,-.5) node[pos=1,tikzstar]{}
		--
		(6,.5) .. controls (6,1) and (5,1) .. (5,1.5);
	\begin{scope}[xshift=0cm]
		\draw (.5,-1.5) .. controls (.5,-1) and (1.5,-1) .. (1.5,-.5)
			--
			(1.5,.5) .. controls (1.5,1) and (.5,1) .. (.5,1.5);
		\draw[dashed, pstdhl] (.5,-1.5) .. controls (.5,-1) and (1.5,-1) .. (1.5,-.5)
			--
			(1.5,.5) .. controls (1.5,1) and (.5,1) .. (.5,1.5);
		\draw (1.5,-1.5) .. controls (1.5,-1) and (2.5,-1) .. (2.5,-.5)
			--
			(2.5,.5) .. controls (2.5,1) and (1.5,1) .. (1.5,1.5);
		\draw[dashed, pstdhl] (1.5,-1.5) .. controls (1.5,-1) and (2.5,-1) .. (2.5,-.5)
			--
			(2.5,.5) .. controls (2.5,1) and (1.5,1) .. (1.5,1.5);
		\node at(1,-1.4){\small $\dots$};  \node at(2,0){\small $\dots$}; \node at(1,1.4){\small $\dots$};
	\end{scope}
	\begin{scope}[xshift=2cm]
		\draw (.5,-1.5) .. controls (.5,-1) and (1.5,-1) .. (1.5,-.5)
			--
			(1.5,.5) .. controls (1.5,1) and (.5,1) .. (.5,1.5);
		\draw[dashed, pstdhl] (.5,-1.5) .. controls (.5,-1) and (1.5,-1) .. (1.5,-.5)
			--
			(1.5,.5) .. controls (1.5,1) and (.5,1) .. (.5,1.5);
		\draw (1.5,-1.5) .. controls (1.5,-1) and (2.5,-1) .. (2.5,-.5)
			--
			(2.5,.5) .. controls (2.5,1) and (1.5,1) .. (1.5,1.5);
		\draw[dashed, pstdhl] (1.5,-1.5) .. controls (1.5,-1) and (2.5,-1) .. (2.5,-.5)
			--
			(2.5,.5) .. controls (2.5,1) and (1.5,1) .. (1.5,1.5);
		\node at(1,-1.4){\small $\dots$};  \node at(2,0){\small $\dots$}; \node at(1,1.4){\small $\dots$};
	\end{scope}
	\begin{scope}[xshift=5cm]
		\draw (.5,-1.5) .. controls (.5,-1) and (1.5,-1) .. (1.5,-.5)
			--
			(1.5,.5) .. controls (1.5,1) and (.5,1) .. (.5,1.5);
		\draw[dashed, pstdhl] (.5,-1.5) .. controls (.5,-1) and (1.5,-1) .. (1.5,-.5)
			--
			(1.5,.5) .. controls (1.5,1) and (.5,1) .. (.5,1.5);
		\draw (1.5,-1.5) .. controls (1.5,-1) and (2.5,-1) .. (2.5,-.5)
			--
			(2.5,.5) .. controls (2.5,1) and (1.5,1) .. (1.5,1.5);
		\draw[dashed, pstdhl] (1.5,-1.5) .. controls (1.5,-1) and (2.5,-1) .. (2.5,-.5)
			--
			(2.5,.5) .. controls (2.5,1) and (1.5,1) .. (1.5,1.5);
		\node at(1,-1.4){\small $\dots$};  \node at(2,0){\small $\dots$}; \node at(1,1.4){\small $\dots$};
	\end{scope}
	\draw[pstdhl] (0,-1.5) -- (0,-.5) 
			 -- (0,.5) node[pos=.25, nail]{} node[pos=.75,nail]{}
			 -- (0,1.5) node[below]{\small $\mu_1$};
}
\\
& \ = \ 
-
(-1)^{c}
\tikzdiagl[yscale=-1]{
	\draw (7,-1.5) .. controls (7,-1) and (.5,-1.25) .. (.5,-.5)
		.. controls (.5,-.375) .. (0,-.25).. controls (1,0) .. 
		(1,.5) .. controls (1,.75) and (.5,.75) .. (.5,1) .. controls (.5,1.25) and (7,1.25) .. (7,1.5); % node[pos=1,tikzstar]{}  .. controls (1,1) and (7.5,.75) .. (7.5,1.5);
	%
		%.. controls (.5,0) .. (0,.25) .. controls (1-.25,.375) ..
		%(1,.5) node[pos=1,tikzstar]{} .. controls (1+.25,.75) and (4.5,.75) .. (4.5,1) .. controls (4.5,1.25) and (4,1.25) .. (4,1.5);  
	%
	%
	\draw (2, -1.5) .. controls (2,-1.25) and (2.5,-1.25) .. (2.5,-1)  .. controls (2.5,-.75) and (1+.25,-.75) .. (1,-.5)
		.. controls (1,0) .. (0,.25)  .. controls (.5,.375) ..
		 (.5,.5) node[pos=1,tikzstar]{}  .. controls (.5,1.25) and (7.5,.75) .. (7.5,1.5); % .. controls (.5,1.25) and (7,1) .. (7,1.5);
	%
		%.. controls (1-.25,-.375) .. (0,-.25) .. controls (.5,0) .. 
		%(.5,.5) .. controls (.5,1.25) and (6,1) .. (6,1.5);
	%
	%
	\draw (4, -1.5) .. controls (4,-1.25) and (4.5,-1.25) .. (4.5,-1) .. controls (4.5,-.75) and (3,-.75) .. (3,-.5) node[pos=1,tikzstar]{}
		--
		(3,.5)  .. controls (3,1) and (2,1) .. (2,1.5);
	\node at (5.25,0){$\dots$};
	\draw[color=white,xshift=-.2cm,line width=1.6pt] (4.75,-1.5) .. controls (4.75,-1) and (5.75,-1) .. (5.75,-.5);
	\draw[color=white,line width=1.6pt] (4.75,-1.5) .. controls (4.75,-1) and (5.75,-1) .. (5.75,-.5);
	\draw[color=white,xshift=.2cm,line width=1.6pt] (4.75,-1.5) .. controls (4.75,-1) and (5.75,-1) .. (5.75,-.5);
	\draw[color=white,xshift=-.2cm,line width=1.6pt] (4.25,1.5) .. controls (4.25,1) and (5.25,1) .. (5.25,.5);
	\draw[color=white,line width=1.6pt] (4.25,1.5) .. controls (4.25,1) and (5.25,1) .. (5.25,.5);
	\draw[color=white,xshift=.2cm,line width=1.6pt] (4.25,1.5) .. controls (4.25,1) and (5.25,1) .. (5.25,.5);
	\draw (7.5,-1.5) .. controls (7.5,-1) and (6,-1) .. (6,-.5) node[pos=1,tikzstar]{}
		--
		(6,.5) .. controls (6,1) and (5,1) .. (5,1.5);
	\begin{scope}[xshift=0cm]
		\draw (.5,-1.5) .. controls (.5,-1) and (1.5,-1) .. (1.5,-.5)
			--
			(1.5,.5) .. controls (1.5,1) and (.5,1) .. (.5,1.5);
		\draw[dashed, pstdhl] (.5,-1.5) .. controls (.5,-1) and (1.5,-1) .. (1.5,-.5)
			--
			(1.5,.5) .. controls (1.5,1) and (.5,1) .. (.5,1.5);
		\draw (1.5,-1.5) .. controls (1.5,-1) and (2.5,-1) .. (2.5,-.5)
			--
			(2.5,.5) .. controls (2.5,1) and (1.5,1) .. (1.5,1.5);
		\draw[dashed, pstdhl] (1.5,-1.5) .. controls (1.5,-1) and (2.5,-1) .. (2.5,-.5)
			--
			(2.5,.5) .. controls (2.5,1) and (1.5,1) .. (1.5,1.5);
		\node at(1,-1.4){\small $\dots$};  \node at(2,0){\small $\dots$}; \node at(1,1.4){\small $\dots$};
	\end{scope}
	\begin{scope}[xshift=2cm]
		\draw (.5,-1.5) .. controls (.5,-1) and (1.5,-1) .. (1.5,-.5)
			--
			(1.5,.5) .. controls (1.5,1) and (.5,1) .. (.5,1.5);
		\draw[dashed, pstdhl] (.5,-1.5) .. controls (.5,-1) and (1.5,-1) .. (1.5,-.5)
			--
			(1.5,.5) .. controls (1.5,1) and (.5,1) .. (.5,1.5);
		\draw (1.5,-1.5) .. controls (1.5,-1) and (2.5,-1) .. (2.5,-.5)
			--
			(2.5,.5) .. controls (2.5,1) and (1.5,1) .. (1.5,1.5);
		\draw[dashed, pstdhl] (1.5,-1.5) .. controls (1.5,-1) and (2.5,-1) .. (2.5,-.5)
			--
			(2.5,.5) .. controls (2.5,1) and (1.5,1) .. (1.5,1.5);
		\node at(1,-1.4){\small $\dots$};  \node at(2,0){\small $\dots$}; \node at(1,1.4){\small $\dots$};
	\end{scope}
	\begin{scope}[xshift=5cm]
		\draw (.5,-1.5) .. controls (.5,-1) and (1.5,-1) .. (1.5,-.5)
			--
			(1.5,.5) .. controls (1.5,1) and (.5,1) .. (.5,1.5);
		\draw[dashed, pstdhl] (.5,-1.5) .. controls (.5,-1) and (1.5,-1) .. (1.5,-.5)
			--
			(1.5,.5) .. controls (1.5,1) and (.5,1) .. (.5,1.5);
		\draw (1.5,-1.5) .. controls (1.5,-1) and (2.5,-1) .. (2.5,-.5)
			--
			(2.5,.5) .. controls (2.5,1) and (1.5,1) .. (1.5,1.5);
		\draw[dashed, pstdhl] (1.5,-1.5) .. controls (1.5,-1) and (2.5,-1) .. (2.5,-.5)
			--
			(2.5,.5) .. controls (2.5,1) and (1.5,1) .. (1.5,1.5);
		\node at(1,-1.4){\small $\dots$};  \node at(2,0){\small $\dots$}; \node at(1,1.4){\small $\dots$};
	\end{scope}
	\draw[pstdhl] (0,-1.5) -- (0,-.5) 
			 -- (0,.5) node[pos=.25, nail]{} node[pos=.75,nail]{}
			 -- (0,1.5) node[below]{\small $\mu_1$};
}
\\
&\ = \ 
- (-1)^{c}
\tikzdiagl[yscale=-1]{
	\draw (7,-1.5) .. controls (7,-1) and (.5,-1.25) .. (.5,-.5)
		.. controls (.5,-.375) .. (0,-.25).. controls (1,0) .. 
		(1,.5) node[pos=1,tikzstar]{}  .. controls (1,1) and (7.5,.75) .. (7.5,1.5);
	%
		%.. controls (.5,0) .. (0,.25) .. controls (1-.25,.375) ..
		%(1,.5) node[pos=1,tikzstar]{} .. controls (1+.25,.75) and (4.5,.75) .. (4.5,1) .. controls (4.5,1.25) and (4,1.25) .. (4,1.5);  
	%
	%
	\draw (2, -1.5) .. controls (2,-1.25) and (2.5,-1.25) .. (2.5,-1) .. controls (2.5,-.75) and (1+.25,-.75) .. (1,-.5) node[pos=1,tikzstar]{}
		.. controls (1,0) .. (0,.25) .. controls (.5,.375) ..
		 (.5,.5) .. controls (.5,1.25) and (7,1) .. (7,1.5);
	%
		%.. controls (1-.25,-.375) .. (0,-.25) .. controls (.5,0) .. 
		%(.5,.5) .. controls (.5,1.25) and (6,1) .. (6,1.5);
	%
	%
	\draw (4, -1.5) .. controls (4,-1.25) and (4.5,-1.25) .. (4.5,-1) .. controls (4.5,-.75) and (3,-.75) .. (3,-.5) node[pos=1,tikzstar]{}
		--
		(3,.5)  .. controls (3,1) and (2,1) .. (2,1.5);
	\node at (5.25,0){$\dots$};
	\draw[color=white,xshift=-.2cm,line width=1.6pt] (4.75,-1.5) .. controls (4.75,-1) and (5.75,-1) .. (5.75,-.5);
	\draw[color=white,line width=1.6pt] (4.75,-1.5) .. controls (4.75,-1) and (5.75,-1) .. (5.75,-.5);
	\draw[color=white,xshift=.2cm,line width=1.6pt] (4.75,-1.5) .. controls (4.75,-1) and (5.75,-1) .. (5.75,-.5);
	\draw[color=white,xshift=-.2cm,line width=1.6pt] (4.25,1.5) .. controls (4.25,1) and (5.25,1) .. (5.25,.5);
	\draw[color=white,line width=1.6pt] (4.25,1.5) .. controls (4.25,1) and (5.25,1) .. (5.25,.5);
	\draw[color=white,xshift=.2cm,line width=1.6pt] (4.25,1.5) .. controls (4.25,1) and (5.25,1) .. (5.25,.5);
	\draw (7.5,-1.5) .. controls (7.5,-1) and (6,-1) .. (6,-.5) node[pos=1,tikzstar]{}
		--
		(6,.5) .. controls (6,1) and (5,1) .. (5,1.5);
	\begin{scope}[xshift=0cm]
		\draw (.5,-1.5) .. controls (.5,-1) and (1.5,-1) .. (1.5,-.5)
			--
			(1.5,.5) .. controls (1.5,1) and (.5,1) .. (.5,1.5);
		\draw[dashed, pstdhl] (.5,-1.5) .. controls (.5,-1) and (1.5,-1) .. (1.5,-.5)
			--
			(1.5,.5) .. controls (1.5,1) and (.5,1) .. (.5,1.5);
		\draw (1.5,-1.5) .. controls (1.5,-1) and (2.5,-1) .. (2.5,-.5)
			--
			(2.5,.5) .. controls (2.5,1) and (1.5,1) .. (1.5,1.5);
		\draw[dashed, pstdhl] (1.5,-1.5) .. controls (1.5,-1) and (2.5,-1) .. (2.5,-.5)
			--
			(2.5,.5) .. controls (2.5,1) and (1.5,1) .. (1.5,1.5);
		\node at(1,-1.4){\small $\dots$};  \node at(2,0){\small $\dots$}; \node at(1,1.4){\small $\dots$};
	\end{scope}
	\begin{scope}[xshift=2cm]
		\draw (.5,-1.5) .. controls (.5,-1) and (1.5,-1) .. (1.5,-.5)
			--
			(1.5,.5) .. controls (1.5,1) and (.5,1) .. (.5,1.5);
		\draw[dashed, pstdhl] (.5,-1.5) .. controls (.5,-1) and (1.5,-1) .. (1.5,-.5)
			--
			(1.5,.5) .. controls (1.5,1) and (.5,1) .. (.5,1.5);
		\draw (1.5,-1.5) .. controls (1.5,-1) and (2.5,-1) .. (2.5,-.5)
			--
			(2.5,.5) .. controls (2.5,1) and (1.5,1) .. (1.5,1.5);
		\draw[dashed, pstdhl] (1.5,-1.5) .. controls (1.5,-1) and (2.5,-1) .. (2.5,-.5)
			--
			(2.5,.5) .. controls (2.5,1) and (1.5,1) .. (1.5,1.5);
		\node at(1,-1.4){\small $\dots$};  \node at(2,0){\small $\dots$}; \node at(1,1.4){\small $\dots$};
	\end{scope}
	\begin{scope}[xshift=5cm]
		\draw (.5,-1.5) .. controls (.5,-1) and (1.5,-1) .. (1.5,-.5)
			--
			(1.5,.5) .. controls (1.5,1) and (.5,1) .. (.5,1.5);
		\draw[dashed, pstdhl] (.5,-1.5) .. controls (.5,-1) and (1.5,-1) .. (1.5,-.5)
			--
			(1.5,.5) .. controls (1.5,1) and (.5,1) .. (.5,1.5);
		\draw (1.5,-1.5) .. controls (1.5,-1) and (2.5,-1) .. (2.5,-.5)
			--
			(2.5,.5) .. controls (2.5,1) and (1.5,1) .. (1.5,1.5);
		\draw[dashed, pstdhl] (1.5,-1.5) .. controls (1.5,-1) and (2.5,-1) .. (2.5,-.5)
			--
			(2.5,.5) .. controls (2.5,1) and (1.5,1) .. (1.5,1.5);
		\node at(1,-1.4){\small $\dots$};  \node at(2,0){\small $\dots$}; \node at(1,1.4){\small $\dots$};
	\end{scope}
	\draw[pstdhl] (0,-1.5) -- (0,-.5) 
			 -- (0,.5) node[pos=.25, nail]{} node[pos=.75,nail]{}
			 -- (0,1.5) node[below]{\small $\mu_1$};
}
\end{align*}
by \cref{lem:doublecrossingnailzero}, 
and the element
\[
\ - \ \tikzdiagl
{
	\draw (2.5,-1) 	.. controls (2.5,-.75) .. (0,-.25) 
				.. controls (2.5,.5) .. (2.5,.75)  node[pos=.125, tikzstar]{}
				-- (2.5,1) ;
	\draw (2,1) 		.. controls (2,.75) .. (0,.25) 
		 		.. controls (.7,0) .. (.7,-.25)
				.. controls (.7,-.375) and (.4,-.375) .. (.4,-.6)
				.. controls (.4,-.75) and (2,-.75) .. (2,-1);
	\draw (.5,-1) .. controls (.5,-.5) and (1.5,-.5) .. (1.5,0) .. controls (1.5,.5) and (.5,.5) .. (.5,1);
	\draw[pstdhl,dashed] (.5,-1) .. controls (.5,-.5) and (1.5,-.5) .. (1.5,0) .. controls (1.5,.5) and (.5,.5) .. (.5,1);
	\draw (1.5,-1) .. controls (1.5,-.5) and (2.5,-.5) .. (2.5,0) .. controls (2.5,.5) and (1.5,.5) .. (1.5,1);
	\draw[pstdhl,dashed] (1.5,-1) .. controls (1.5,-.5) and (2.5,-.5) .. (2.5,0) .. controls (2.5,.5) and (1.5,.5) .. (1.5,1);
	\node at (1,-.9){\small $\dots$}; \node at (2,0){\small $\dots$}; \node at (1,.9){\small $\dots$};
	\draw[pstdhl] (0,-1) node[below]{\small $\mu_1$} -- (0,0) node[pos=.75, nail]{}
			 -- (0,1) node[pos=.25, nail]{};
}
\]
Comparing all the remaining terms, we observe that they cancel with each other, concluding the proof.
\end{proof}

%%%%%%%%%%%%%%%%	End of file	%%%%%%%%%%%%%

%%%%%%%%%%%%%%%%%%%%%%
% Bibliography					  %
%%%%%%%%%%%%%%%%%%%%%%
%%%%%%%%%%%%%%%%%%%%%%%%%%%%%%%%%%%%
%                 					  				  		 %
%	Bibliography   				 					 %
%                 					  						 %
%%%%%%%%%%%%%%%%%%%%%%%%%%%%%%%%%%%%

\bibliographystyle{bibliography/habbrv}
%\bibliography{bibliography/2VermaAndBraidGroup}	

%%%%%%%%%%%%%%%%	End of file	%%%%%%%%%%%%%

\end{document}